\def\th@plain{\slshape}\makeatother
\makeatletter\patchcmd{\th@remark}{\itshape}{\slshape}{}{}\makeatother
\newcounter{bidon}
\newcommand{\rdb}{\refstepcounter{bidon}}
\newcommand\Subsubsection[1]{
\rdb\addcontentsline{toc}{subsubsection}{#1} 
\subsubsection*{#1}}
\renewcommand\paragraph[1]{
\rdb
\addcontentsline{toc}{paragraph}{#1} 
\medskip \noindent $\bullet$ \textbf{#1}}
\newcommand{\DT}{\fD_\gT}
\newcommand{\VT}{\fV_\gT}
\newcommand{\di}{\,{\vert}\,}
\newcommand{\Di}{\mathrm{Di}}
\newcommand{\Vr}{\mathrm{Vr}}
\newcommand{\Rn}{\mathrm{Rn}}
\newcommand{\vr}{\mathrm{vr}}
\renewcommand{\SCO}[6]{
\xymatrix @R = .2em @C =4em{
                                  &1 \ar@{-}[dl] \ar@{-}[dr] \\
#3 \ar@{-}[ddr]                   &   & #6 \ar@{-}[ddl] \\
                                  &\bullet\ar@{-}[d] \\
                                  &\bullet   \\
#2 \ar@{-}[ddr] \ar@{-}[uur]      &   & #5 \ar@{-}[ddl] \ar@{-}[uul] \\
                                  &\bullet \ar@{-}[d] \\
                                  &\bullet  \\
#1 \ar@{-}[uur]                   &   & #4 \ar@{-}[uul] \\
                                  & 0 \ar@{-}[ul] \ar@{-}[ur] \\
}
}
\newcommand{\SCOR}[8]{
\xymatrix @R =.2em @C =4em {
                                  &\bullet \ar@{-}[ddl] \ar@{-}[ddr] \\
                                  &#7\ar@{-}[u] \\
#3 \ar@{-}[ddr]                   &   & #6 \ar@{-}[ddl] \\
                                  &\bullet\ar@{-}[d] \\
                                  &\bullet   \\
#2 \ar@{-}[ddr] \ar@{-}[uur]      &   & #5 \ar@{-}[ddl] \ar@{-}[uul] \\
                                  &\bullet \ar@{-}[d] \\
                                  &\bullet  \\
#1 \ar@{-}[uur]                   &   & #4 \ar@{-}[uul] \\
                                  &#8\ar@{-}[d] \\
                                  &\bullet \ar@{-}[uul] \ar@{-}[uur] \\
}
}
\renewcommand \DeuxRegles[2]{%
\vspace{-1.5em}\DeuxCols
{\begin{enumerate}  #1
\end{enumerate}
}
{\begin{enumerate}  #2
\end{enumerate}
}
\vspace{-.5em}
}
\renewcommand \Regles[1]{%
\vspace{-1.5em}\UneCol{
\begin{enumerate}
{#1} 
\end{enumerate}
}
\vspace{-.5em}
}
\DeclareMathAlphabet{\mathpzc}{OT1}{pzc}{m}{it}
\newcommand\Exists{\boldsymbol{\stixexists}}
\newcommand\VDash{\boldsymbol{\stixvdash}}
\newcommand\Land{\boldsymbol{\stixwedge}}
\newcommand\Lor{\boldsymbol{\stixvee}}
\newcommand\Top{\boldsymbol{\stixtop}}
\newcommand\Bot{\boldsymbol{\stixbot}}
\newcommand\bigLand{\boldsymbol{\stixbigwedge}}
\newcommand \vdg{\VDash}
\newcommand \Vd {\,\vdg}
\newcommand \vd {\,\,\vdg}
\newcommand \vii{\Land}
\newcommand \vuu{\Lor}
\newcommand \Vdi[1] {\mathrel{\Vd_{#1}}}
\newcommand \Vii{\bigLand}
\newcommand \sibrouillon[1]{}
\newcommand \hum[1] {\sibrouillon{\noindent {\sf hum: #1}}}
\begin{document}
\selectlanguage{english}

\thispagestyle{empty}
~ 
\vspace{3cm}

\noindent In this file you find the English version starting on the page  numbered \pageref{beginenglish}.

\medskip \noindent  {\Large \bf Valuative lattices and spectra}

\bigskip \noindent  
Then the French version begins on the page numbered \pageref{beginfrench}.

\medskip\noindent   {\Large \bf Treillis et spectres valuatifs}

\smallskip \noindent Le lecteur ou la lectrice sera sans doute surprise de l'alternance des sexes ainsi que de l'orthographe du mot 'corolaire', avec d'autres innovations auxquelles elle n'est pas habituée. En fait, nous avons essayé de suivre au plus près les préconisations de l'orthographe nouvelle recommandée, telle qu'elle est enseignée aujourd'hui dans les écoles en France.  

\bigskip\noindent   {\large \bf Authors}  

\smallskip \noindent Henri Lombardi, Université de Franche-Comté, F-25030 Besançon Cedex, France, \\
email: {\tt Henri.lombardi@univ-fcomte.fr}

\smallskip \noindent  Assia Mahboubi,  Nantes Université, École Centrale Nantes, CNRS, INRIA, LS2N, UMR 6004, F-44000 Nantes, France. email: {\tt Assia.Mahboubi@inria.fr}

\bigskip  \noindent The English version appears in the book

\noindent \textsl{Algebraic, Number Theoretic, and Topological Aspects of Ring Theory}

\noindent Editors: Jean-Luc Chabert • Marco Fontana • Sophie Frisch • Sarah Glaz • Keith Johnson

\noindent Springer 2023

\noindent ISBN 978-3-031-28846-3

\noindent DOI  10.1007/978-3-031-28847-0

\newpage
\thispagestyle{empty}

~

\pagestyle{headings}
\patchcmd{\sectionmark}{\MakeUppercase}{}{}{}
\setcounter{page}{0}\renewcommand\thepage{E\arabic{page}}

\newcommand \sitout[1]{}

\begingroup



\theoremstyle{plain}
\newtheorem{theorem}{Theorem}[subsection]
\newtheorem{thdef}[theorem]{Theorem and definition}
\newtheorem{lemma}[theorem]{Lemma}
\newtheorem{corollary}[theorem]{Corollary}
\newtheorem{proposition}[theorem]{Proposition}
\newtheorem{propdef}[theorem]{Proposition and definition}
\newtheorem{plcc}[theorem]{Concrete local-global principle}
\newtheorem{fact}[theorem]{Fact}
\newtheorem{valsatz}[theorem]{\vst}

\newtheorem{theoremc}[theorem]{Theorem\etoz}
\newtheorem{lemmac}{Lemma\etoz}
\newtheorem{corollaryc}{Corollairy\etoz}
\newtheorem{propositionc}{Proposition\etoz}
\newtheorem{factc}{Fait\etoz}
\newtheorem*{Principleofcoveringbyquotients}{Principle of covering by quotients}

\theoremstyle{definition}
\newtheorem{conjecture}[theorem]{Conjecture}
\newtheorem{definition}[theorem]{Definition}
\newtheorem{definitions}[theorem]{Definitions}
\newtheorem{notation}[theorem]{Notation}
\newtheorem{definota}[theorem]{Definition and notation} 
\newtheorem{convention}[theorem]{Convention}
\newtheorem{problem}[theorem]{Problem}
\newtheorem{question}[theorem]{Question}

\theoremstyle{remark}
\newtheorem{remark}[theorem]{Remark}
\newtheorem{remarks}[theorem]{Remarks}
\newtheorem{comment}[theorem]{Comment}
\newtheorem{comments}[theorem]{Comments}
\newtheorem{example}[theorem]{Example}
\newtheorem{examples}[theorem]{Examples}

\newcommand \Glio {\MA{\mathsf{Liog}}}

\newcommand{\vou}{\MA{\tsbf{ or }}}
\newcommand{\Vou}{\MA{\tsbf{OR}}}
\newcommand \EXists[1] {\tsbf{Introduce }{#1}\tsbf{ such that }\,}
\newcommand \vet {{\tsbf{,}}\,}
\newcommand \Atcl {\mathrm{Clat}}
\newcommand \Atclv {\mathrm{VClat}}
\newcommand \tcl {\mathrm{Clt}}
\newcommand \Tcl {\tcl}

\newcommand\comm{\rdb
\noi{\it Comment. }}

\newcommand\COM[1]{\rdb
\noi{\it Comment #1. }}

\newcommand\comms{\rdb
\noi{\it Comments. }}

\newcommand\Pb{\rdb
\noi{\bf Problem. }}

\newcommand \rem{\rdb
\noi{\sl Remark. }}

\newcommand \REM[1]{\rdb
\noi{\sl Remark#1. }}

\newcommand \rems{\rdb
\noi{\sl Remarks. }}

\newcommand \exl{\rdb
\noi{\bf Example. }}

\newcommand \EXL[1]{\rdb
\noi{\bf Example: #1. }}

\newcommand \exls{\rdb
\noi{\bf Examples. }}

\newcommand\gui[1]{``{#1}''}

\newcommand \thref[1] {Theorem~\ref{#1}}
\newcommand \paref[1] {page~\pageref{#1}}
\newcommand \pstfref[1] {Positivstellensatz formel~\ref{#1}}
\newcommand \pstref[1] {Positivstellensatz~\ref{#1}}

\newcommand \num {{n$^{\mathrm{ o}}$}}

\newcommand\subsubsec[1] {\subsubsection*{#1}}

\newcommand \hdr {induction hypothesis\xspace}
\newcommand \ssi {if and only if\xspace}
\newcommand \cnes {necessary and sufficient condition\xspace}
\newcommand \spdg {without loss of generality\xspace}
\newcommand \Propeq {T.F.A.E.\xspace}
\newcommand \propeq {t.f.a.e.\xspace}
\newcommand \disept {17$^{th}$ Hilbert's problem\xspace}

\def \cad {i.e.\xspace}

\newcommand \Vrai {\mathsf{True}}
\newcommand \Faux {\mathsf{False}}


\newcommand \Amo {$\gA$-module\xspace}
\newcommand \Amos {$\gA$-modules\xspace}

\newcommand \Bmo {$\gB$-module\xspace}
\newcommand \Bmos {$\gB$-modules\xspace}

\newcommand \Zmo {$\gZ$-module\xspace}
\newcommand \Zmos {$\gZ$-modules\xspace}

\newcommand \ZZmo {$\ZZ$-module\xspace}
\newcommand \ZZmos {$\ZZ$-modules\xspace}

\newcommand \Ali {$\gA$-\ali}
\newcommand \Alis {$\gA$-\alis}

\newcommand \Alg {$\gA$-\alg}
\newcommand \Algs {$\gA$-\algs}

\newcommand \kev {$\gk$-vector space\xspace}
\newcommand \kevs {$\gk$-vector spaces\xspace}

\newcommand \Kev {$\gK$-vector space\xspace}
\newcommand \Kevs {$\gK$-vector spaces\xspace}

\newcommand \klg {$\gk$-\alg}
\newcommand \klgs {$\gk$-\algs}

\newcommand \Klg {$\gK$-\alg}
\newcommand \Klgs {$\gK$-\algs}

\newcommand \ac {algebraically closed\xspace}
\newcommand \alc {\agq closure\xspace}

\newcommand \adv {valuation domain\xspace}
\newcommand \advs {valuation domains\xspace}

\newcommand \arv {valuation ring\xspace}
\newcommand \arvs {valuation rings\xspace}

\newcommand \agq {algebraic\xspace}

\newcommand \alg {algebra\xspace}
\newcommand \algs {algebras\xspace}

\newcommand \agB {Boolean \alg}

\newcommand \algo{algorithm\xspace}
\newcommand \algos{algorithms\xspace}

\newcommand \algq{algorithmic\xspace}

\newcommand \ali {\lin map\xspace}
\newcommand \alis {\lin maps\xspace}

\newcommand \anar {\ari \ri}
\newcommand \anars {\ari \ris}
\newcommand \Anars {\Ari \ris}

\newcommand \ari{arith\-metic\xspace}

\newcommand \auto {automorphism\xspace}
\newcommand \autos {automorphisms\xspace}


\newcommand \cac {algebraically closed field\xspace}
\newcommand \cacs {algebraically closed fields\xspace}

\newcommand \cara{characteristic\xspace}  

\newcommand \carn{characterization\xspace}  
\newcommand \carns{characterizations\xspace}  

\newcommand \cdi{discrete field\xspace}  
\newcommand \cdis{discrete fields\xspace}  

\newcommand \cdf{fraction field\xspace}
\newcommand \cdfs{fraction fields\xspace}
 
\newcommand \cli{integral closure\xspace}  
\newcommand \clis{integral closures\xspace}  

\newcommand \codi {discrete ordered field\xspace}
\newcommand \codis {discrete ordered fields\xspace}

\newcommand \coe {coefficient\xspace}
\newcommand \coes {coefficients\xspace}

\newcommand \cof {\cov}

\newcommand \coh {coherent\xspace}

\newcommand \coli {linear combination\xspace}
\newcommand \colis {linear combinations\xspace}

\newcommand \com {comaximal\xspace}

\newcommand \coo {coordinate\xspace}
\newcommand \coos {coordinates\xspace}

\newcommand \cop {complementary\xspace}

\newcommand \cosv {conservative\xspace}

\newcommand \cvd {valued discrete field\xspace}
\newcommand \cvds {valued discrete fields\xspace}

\newcommand \cvdsc {separably closed valued discrete field\xspace}
\newcommand \cvdscs {separably closed valued discrete fields\xspace}

\newcommand \cvdac {algebraically closed valued discrete field\xspace}
\newcommand \cvdacs {algebraically closed valued discrete fields\xspace}


\newcommand \dcd {residually discrete\xspace}

\newcommand \ddp {Prüfer domain\xspace}
\newcommand \ddps {Prüufer domains\xspace}

\newcommand \ddk {Krull dimension\xspace}

\newcommand \demo {proof\xspace}
\newcommand \dems {proofs\xspace}
\newcommand \demos {\dems}

\newcommand \dfn{definition\xspace}  
\newcommand \Dfn{Definition\xspace}  
\newcommand \Dfns{Definitions\xspace}  
\newcommand \dfns{definitions\xspace}  

\newcommand \dij{disjunctive\xspace}
\newcommand \wdij{weakly \dij}

\newcommand \discri{discriminant\xspace}
\newcommand \discris{discriminants\xspace}

\newcommand \dok {Dedekind domain\xspace}
\newcommand \doks {Dedekind domains\xspace}

\newcommand \dve {divisibility\xspace}

\newcommand \dvz {zerodivisor\xspace}
\newcommand \dvzs {zerodivisors\xspace}

\newcommand \eco{\com \elts}  

\newcommand \egmt{also\xspace} 

\newcommand \egt{equality\xspace} 
\newcommand \egts{equalities\xspace} 

\newcommand \elr{elementary\xspace}  

\newcommand \elt{element\xspace}  
\newcommand \elts{elements\xspace}  

\def \endo {endomorphism\xspace}
\def \endos {endomorphisms\xspace}

\newcommand \entrel {entailment relation\xspace}
\newcommand \entrels {entailment relations\xspace}

\newcommand \eqn  {equation\xspace}
\newcommand \eqns  {equations\xspace}

\newcommand \eqv  {equivalent\xspace}

\newcommand \eqvc  {equivalence\xspace}
\newcommand \eqvcs  {equivalences\xspace}

\newcommand \eseq{essentially equivalent\xspace} 
\newcommand \Eseq{Essentially equivalent\xspace} 

\newcommand \esid{essentially identical\xspace} 
\newcommand \Esid{Essentially identical\xspace} 

\newcommand \evc{vector space\xspace} 
\newcommand \evcs{vector spaces\xspace} 


\newcommand \fab {bounded \fcn}
\newcommand \fabs {bounded \fcns}

\newcommand \fac {total \fcn}

\newcommand \facile{\begin{proof}
Left to the reader.
\end{proof}}

\newcommand \fap {partial \fcn}
\newcommand \faps {partial \fcns}

\newcommand \fcn {factorization\xspace}
\newcommand \fcns {factorizations\xspace}

\newcommand \fdi{strongly discrete\xspace} 


\newcommand\gmq{geometric\xspace}

\newcommand\gne{generalised\xspace}

\newcommand\gnl{general\xspace}

\newcommand\gnlt{generally\xspace}

\newcommand\gnn{generalization\xspace}
\newcommand\gnns{generalizations\xspace}

\newcommand\gnq{generic\xspace}

\newcommand\grl{$\ell$-group\xspace}
\newcommand\grls{$\ell$-groups\xspace}

\newcommand \gtr{generator\xspace}  
\newcommand \gtrs{generators\xspace}  


\newcommand \homo {homomorphism\xspace}
\newcommand \homos {homomorphisms\xspace}

\newcommand \id {ideal\xspace}
\newcommand \ids {ideals\xspace}

\newcommand \idd {de\-ter\-mi\-nantal \id}
\newcommand \idds {de\-ter\-mi\-nantal \ids}

\newcommand \idema {maximal \id}
\newcommand \idemas {maximal \ids}

\newcommand \idep {prime \id}
\newcommand \ideps {prime \ids}

\newcommand \idemi {minimal prime\xspace}
\newcommand \idemis {minimal primes\xspace}

\newcommand \idf {Fitting \id}
\newcommand \idfs {Fitting \ids}

\newcommand \idm {idempotent\xspace}
\newcommand \idms {idempotents\xspace}

\newcommand \idp {principal \id}
\newcommand \idps {principal \ids}

\newcommand \idtr {indeterminate\xspace}
\newcommand \idtrs {indeterminates\xspace}

\newcommand \ifr {fractional \id}
\newcommand \ifrs {fractional \ids}

\newcommand \inteq {intuitively \eqv}

\newcommand \itf {\tf \id}
\newcommand \itfs {\tf \ids}

\newcommand \iso {isomorphism\xspace}
\newcommand \isos {isomorphisms\xspace}

\newcommand \iv {invertible\xspace}

\newcommand \lec {reader\xspace}

\newcommand \lgb {local-global\xspace}

\newcommand \lin {linear\xspace}

\newcommand \lon {localization\xspace}
\newcommand \lons {localizations\xspace}

\newcommand \lop {\lot principal\xspace}

\newcommand \losd {\lot \sdz\xspace}

\def \lot {locally\xspace}

\newcommand \mlp {principal \lon matrix\xspace}
\newcommand \mlps {principal \lon matrices\xspace}

\newcommand \mnp {manipulation\xspace}
\newcommand \mnps {manipulations\xspace}
\newcommand \mnr {\elr \mnp}
\newcommand \mnrs {\elr \mnps}

\newcommand \mo {monoid\xspace}
\newcommand \mos {monoids\xspace}
\newcommand \moco {\com \mos}

\newcommand \mpf {\pf module\xspace}
\newcommand \mpfs {\pf modules\xspace}

\newcommand \mpn {\pn matrix\xspace}
\newcommand \mpns {\pn matrices\xspace}

\newcommand \mpr {\pro module\xspace}
\newcommand \mprs {\pro modules\xspace}

\newcommand \mprn {\prn matrix\xspace}
\newcommand \mprns {\prn matrices\xspace}

\newcommand \mptf {\ptf module\xspace}
\newcommand \mptfs {\ptf modules\xspace}

\newcommand \mrc {projective module of constant rank\xspace}
\newcommand \mrcs {projective modules of constant rank\xspace}


\newcommand \ncr{necessary\xspace}

\newcommand \ncrt{necessarily\xspace}

\newcommand \ndz {regular\xspace}

\newcommand \noe {Noetherian\xspace}
\newcommand \noco {\noe\coh}

\newcommand \nst {Nullstellensatz\xspace}
\newcommand \nsts {Nullstellens\"atze\xspace}

\newcommand \odz {Zariski open set\xspace}

\newcommand \oqc {\qc open set\xspace}
\newcommand \oqcs {\qc open sets\xspace}

\newcommand \ort {orthogonal\xspace}


\newcommand \pa {saturated pair\xspace}
\newcommand \pas {saturated pairs\xspace}

\newcommand \pb{problem\xspace}  
\newcommand \pbs{problems\xspace}

\newcommand \peq {purely equational\xspace}

\newcommand \pf {finitely presented\xspace}

\newcommand \plg {\lgb principle\xspace}
\newcommand \plgs {\lgb principles\xspace}

\newcommand \plga {abstract \plg}
\newcommand \plgas {abstract \plgs}

\newcommand \Plgc {Concrete \plg}
\newcommand \plgc {concrete \plg}
\newcommand \plgcs {concrete \plgs}

\newcommand \pn {presentation\xspace}
\newcommand \pns {presentations\xspace}

\newcommand \pol {polynomial\xspace}
\newcommand \pols {polynomials\xspace}

\newcommand \polcar {characteristic \pol}

\newcommand \prc {rank constant \pro}

\newcommand \prmt {precisely\xspace}
\newcommand \Prmt {Precisely\xspace}

\newcommand \prn {projection\xspace}
\newcommand \prns {projections\xspace}

\newcommand \pro {projective\xspace}

\newcommand \proi {potential prime\xspace}
\newcommand \prois {potential primes\xspace}

\newcommand \proc {potential chain\xspace}
\newcommand \procs {potential chains\xspace}

\newcommand \proel {elementary \proc}
\newcommand \proels {elementary \procs}
\newcommand \proelo {\proel of length }
\newcommand \proelos {\proels of length }

\newcommand \prolo {\proc of length }
\newcommand \prolos {\procs of length }

\newcommand \prt {property\xspace}
\newcommand \prts {properties\xspace}

\newcommand \pst {Positivstellensatz\xspace}
\newcommand \psts {Positivstellens\"atze\xspace}

\newcommand \ptf {\tf \pro}


\newcommand \qc {quasi-compact\xspace}

\newcommand \qiri {pp-ring\xspace}
\newcommand \qiris {pp-rings\xspace}

\newcommand \ralg {Horn rule\xspace}
\newcommand \ralgs {Horn rules\xspace}

\newcommand \rcf {real closed field\xspace}
\newcommand \rcfs {real closed fields\xspace}

\newcommand \rdl {linear dependance relation\xspace}
\newcommand \rdls {linear dependance relations\xspace}

\newcommand \rdi {integral dependance relation\xspace}
\newcommand \rdis {integral dependance relations\xspace}

\newcommand \rdij {\dij rule\xspace}
\newcommand \rdijs {\dij rules\xspace}

\newcommand \rdv {valuative divisibility relation\xspace}
\newcommand \rdvs {valuative divisibility relations\xspace}

\newcommand \rdy {dynamical rule\xspace}
\newcommand \rdys {dynamical rules\xspace}

\newcommand \red {direct rule\xspace}
\newcommand \reds {direct rules\xspace}

\newcommand \rex {existential rule\xspace}
\newcommand \rexs {existential rules\xspace}

\newcommand \ri {ring\xspace}
\newcommand \ris {rings\xspace}


\newcommand \sad {dynamical algebraic structure\xspace}
\newcommand \sads {dynamical algebraic structures\xspace}
\newcommand \SAD {Dynamical algebraic structure\xspace}
\newcommand \SADs {Dynamical algebraic structures\xspace}

\newcommand \salg {algebraic structure\xspace}
\newcommand \salgs {algebraic structures\xspace}

\newcommand \sdz {without \dvz}

\newcommand \sfio {fundamental system of orthogonal idempotents\xspace}

\newcommand \sgr {\gtr set\xspace}
\newcommand \sgrs {\gtr sets\xspace}

\newcommand \sli {\lin \sys}
\newcommand \slis {\lin \syss}

\newcommand \spb {separable\xspace}
\newcommand \spl {separable\xspace}

\newcommand \sps {spectral space\xspace}
\newcommand \spss {spectral spaces\xspace}

\newcommand \sys {system\xspace}
\newcommand \syss {systems\xspace}

\newcommand \talg {Horn theory\xspace}
\newcommand \talgs {Horn theories\xspace}

\newcommand \tco {coherent theory\xspace}
\newcommand \tcos {coherent theories\xspace}

\newcommand \twdij {\wdij theory\xspace}
\newcommand \twdijs {\wdij theories\xspace}

\newcommand \tdy {dynamical theory\xspace}
\newcommand \tdys {dynamical theories\xspace}

\newcommand \tel {regular theory\xspace}
\newcommand \tels {regular theories\xspace}

\newcommand \telri {cartesian theory\xspace}
\newcommand \telris {cartesian theories\xspace}

\newcommand \tf {finitely generated\xspace}

\newcommand \tfo {formal theory\xspace}
\newcommand \tfos {theory formelles\xspace}

\newcommand \tgm {\gmq theory\xspace}
\newcommand \tgms {\gmq theories\xspace}

\newcommand \Tho {Theorem\xspace}
\newcommand \tho {theorem\xspace}
\newcommand \thos {theorems\xspace}

\newcommand \tpe {purely equational theory\xspace}
\newcommand \tpes {purely equational theories\xspace}

\newcommand \uvl {universal\xspace}

\newcommand \trdi {distributive lattice\xspace}
\newcommand \trdis {distributive lattices\xspace}

\newcommand \vfn {verification\xspace}
\newcommand \vfns {verifications\xspace}

\newcommand \vst {Valuativstellensatz\xspace}
\newcommand \vsts {Valuativstellensätze\xspace}

\newcommand \zed {zero-dimensional\xspace}
\newcommand \zedr {zero-dimensional reduced\xspace}


\newcommand \cov {constructive\xspace}

\newcommand \coma {\cov \maths}
\newcommand \clama {classical \maths}

\renewcommand \cot {constructively\xspace}

\newcommand \mathe {mathematical\xspace}
\newcommand \maths {mathematics\xspace}

\newcommand \matn {mathematician\xspace}

\newcommand \pte {excluded middle principle\xspace}

\newcommand \prco {\cov proof\xspace}
\newcommand \prcos {constructive proofs\xspace}

\newcommand \tcg {compactness theorem\xspace}
\newcommand \Tcgi {The \tcg implies the following result. }

\title{Valuative lattices and spectra}

\newcommand\Subsection[1]{
\rdb\addcontentsline{toc}{subsection}{#1} \subsection*{#1}}

\newcommand\Subsubsectio[2]{
\rdb\addcontentsline{toc}{subsubsection}{#2} 
\subsubsection*{#1}}

\newcommand\Section[1]{\subsection{#1}}
\newcommand\siAqi[1]{}
\newcommand\siGRL[1]{}

\def\proofname{\textsl{Proof}}

\author{Henri Lombardi, Assia Mahboubi}
\maketitle

\startcontents[english]

\rdb
\label{beginenglish}

\begin{abstract}
The first part of the present article consists in a survey about the dynamical \cov method designed using \tdys and \sads. Dynamical methods uncovers a hidden computational content for numerous abstract objects of \clama, which seem a priori inaccessible \cot, e.g., the \agq closure of a (discrete) field. When a \demo in \clama uses these abstract objects and results in a concrete outcome, dynamical methods  \gnlt make possible to discover an \algo for this concrete outcome.
The second part of the article applies this dynamical method to the theory of \dve. We compare two notions of valuative spectra present in the literature and we introduce a third notion, which is implicit in an article devoted to the \tdy of \ac \cvds. The two first notions are, respectively, due to Huber \& Knebusch and to Coquand. We prove that the corresponding valuative lattices are essentially the same. We establish formal \vsts corresponding to these theories, and we compare the various resulting notions of valuative dimensions.
\end{abstract}

\medskip \noindent {\bf Keywords}. 
Coherent theory, Dynamical theory, Dynamical algebraic structure, Constructive mathematics, 
Distributive lattice, Spectral space, 
Zariski lattice, Zariski spectrum, Valuative lattice, Valuative spectrum, Valuative dimensions. 

\smallskip \noindent {\bf MSC}. 03Fxx 13xx 18Fxx

\setcounter{tocdepth}{4}
\markboth{Contents}{Contents}
\small

\printcontents[english]{}{1}{}
\normalsize

\section{Introduction}
This paper is written in the style of \coma à la Bishop, see \cite{Bi67,BB85,BR1987,CACM,MRR,Yen2015}. An updated French version of \cite{CACM} is \cite{ACMC}.

We use the terminology and notations of \tdys. See \cite{BC2005,BC2019,Coq2005,CLR01,Lom98,Lom02,Lom06}.

We continue the \cov approach to \agq curves and valuation domains initiated in \cite{Coq2009,CL2016,CLQ2010,CLR01} and \cite[section~XIII-8]{CACM}. 

We hope to open the way for a simple \cov approach to the following topics. 
\begin{itemize}\itemsep=.1em
\item Understand (\cot) the theory of divisors of Weil-Kronecker when applied to a geometric ring\footnote{A finitely presented algebra over a discrete field.} (continuing the study made in \cite{CL2016}).
\item Understand (\cot) the \tho stating that the theory of the ring of algebraic integers is complete \cite{vdD88,vDM1990,Pre82,PS1990}.
\item Understand (\cot) the Riemann-Roch's \tho 
in its general algebraic version (\cite{Fulton,Edw1990}). 
\item Understand (\cot) the Grothendieck version of Riemann-Roch's \tho 
in~\cite{BS1958}.

\end{itemize}

\medskip 

The three first sections of the present article provide an overview of the constructive dynamical method. This method relies on dynamical theories and on dynamical algebraic structures. Its purpose is to unveil a computational content hidden in a number of abstract objects in classical mathematics. Dynamical methods provide effective counterparts even for objects seemingly inaccessible to a constructive treatment, e.g., the algebraic closure of a (discrete) field. They are typically able to turn a classical proof of a concrete result into an effective algorithm.

The second part of the article uses this method to study the theory of divisibility. In particular, we compare two notions of valuative spectrum present in the literature. The first one arises in the theory of valuation domains, where \cite{HK1994} introduced the valuative spectrum of an arbitrary commutative ring, akin to the Zariski spectrum and to the real spectrum. The second one is the constructive version of the valuative spectrum of an integral domain, introduced by \cite{Coq2009}.

Sections~\ref{secdival0} to~\ref{subsectrdisad} give general facts about \trdis, \spss, \tgms, \sads and relations between these objects. The central notion of \sad intuitively corresponds to that of an incompletely specified algebraic structure. The existence of models for these structures is a pervasive tool in classical mathematics. Most of the time, the only constructive counterpart to the existence of such a purely idealistic existence is the fact that the corresponding \sad does not collapse. But in general, this is in fact sufficient to establish constructively results obtained in classical mathematics via the purely ideal existence of these models.

Section~\ref{secdival} develops several \tdys for \advs, in relation with
 \sa{val} and~\sa{Val}, which correspond, respectively, to the approaches of \cite{HK1994} and of \cite{Coq2009}. Theorem~\ref{thValval} establishes the isomorphism of distributive lattices (and hence that of spectral spaces) corresponding to the \sads defined in theories~\sa{val} and \sa{Val}, for an \alg $\gk\to\gK$ when~$\gk$ is a sub-ring of a \cdi $\gK$.

Section~\ref{subsecdival} addresses the valuative dimension of commutative rings. In particular, we prove that the valuative dimension of a commutative ring $\gA$ is the \ddk of the associated \trdi with the \sad $\sa{val}(\gA,\gA)$. In the case of an integral domain, the isomorphism of distributive lattices of Theorem~\ref{thValval} establishes the correspondence with the valuative dimension defined constructively by \cite{Coq2009}. We also show constructively the equivalence of several constructive approaches to the valuative dimension of a ring, or of an algebra. For this purpose, we introduce the minimal pp-closure of a ring ~$\gA$, which replaces the too hypothetical \gui{integral domain generated by~$\gA$.}

Section~\ref{secdival6} revisits the \tdy of \cvds given in \cite{CLR01}. The objective of this section is akin to that of Section~\ref{secdival}: showing that the \sads associated with the theories \sa{val}, \sa{Val} and \sa{Vdf} are essentially the same. We rely on the formal \vsts established for \sa{Vdf} and we develop analogue formal \vsts for  \sa{val} and \sa{Val}. This provides an interesting historical perspective on various results à la \vst. In particular, the formal \vst~\ref{thVstformelvalter} explains the identity between the algebraic certificates associated with the formal theories, respectively, developed (implicitly or explicitly) in \cite{HK1994}, \cite{CLR01} and \cite{Coq2009}.


\section{Distributive lattices and \spss}\label{secdival0}
References: \cite{Sto37,CC00,CL05} and \cite[Chapters XI and XIII]{CACM}.

\subsection{The seminal paper by Stone}\label{subsecStoneAntiequiv}
In modern language, the main result of the seminal paper \cite{Sto37} can be stated as follows:

\noindent \textsl{The category of \trdis is, in \clama, antiequivalent to the category of \spss.}

Let us explain this with some details.

\Subsubsection{Ideals and filters in a \trdi}\label{subsecTrdiIdeFi}

If $\varphi :\gT\rightarrow \gT'$ is a \trdi \homo, $\varphi^{-1}(0)$ is called an \emph{\id of $\gT$}. An \id $\fb $ of $\gT$ is a subset of $\gT$ subjected to the following constraints
\begin{equation}\label{eqIdeal}
\left.
\begin{array}{rcl}
 & & 0 \in \fb \\
x,y\in \fb & \Longrightarrow & x\vu y \in \fb \\
x\in \fb ,\; z\in \gT& \Longrightarrow & x\vi z \in \fb \\
\end{array}
\right\}
\end{equation}
(the last is rewritten as $(x\in \fb ,\;y\leq x)\Rightarrow y\in
\fb$).
A \emph{\idp} is an \id generated by a single \elt $a$,
it is equal to $\dar a:=\sotq{x\in \gT}{x\leq a}$.%

The \id $\dar a$, equipped with the laws $\vi$ and $\vu$ of $\gT$, is a \trdi in which the maximum \elt is $a$. The canonical injection $\dar a\rightarrow \gT$ is not a morphism of \trdis because the image of $a$ is not equal to $1_\gT$. However, the 
map $\gT\rightarrow \dar a,\;x\mapsto x\vi a$ is a surjective morphism, which therefore defines $\dar a$ as a quotient structure $\gT/(a=1)$.

\rdb \label{NOTAuara}
The opposite notion to that of an \id is the notion of a {\em filter}. The principal filter generated by $a$ is equal to $\uar a$.%

Let $\fa$ be an \id and $\ff$ be a filter of $\gT$, we say that $(\fa,\ff)$
is a \emph{saturated pair} in~$\gT$ if we have the following implications
\[(g\in \ff,\; x\vi g \in \fa) \Longrightarrow x\in \fa \;\hbox{ and }\; (a\in \fa,\; x\vu a \in \ff) \Longrightarrow x\in \ff.
\]
A saturated pair can also be defined as being $(\varphi^{-1}(0),\varphi^{-1}(1))$ for a morphism $\varphi:\gT\to\gT'$ of \trdis. 
 When $(\fa,\ff)$ is a saturated pair, we have the \eqvcs
\[ 
1\in \fa\; \;\Longleftrightarrow\;\; 0\in \ff
\;\; \Longleftrightarrow\;\; (\fa,\ff)=(\gT ,\gT ).
\]

If $A$ and $B$ are two subsets of $\gT$ we denote
\begin{equation}\label{eqvuvi}
A\vu B=\big\{ a\vu b \mid a\in A,\,b\in B\,\big\} \; \hbox{ and } \; A\vi
B=\big\{ a\vi b \mid a\in A,\,b\in B\,\big\}.
\end{equation}
Then the \id generated by two \ids $\fa$ and $\fb$ is equal to
\begin{equation}\label{eqSupId}
\cI_\gT(\fa\cup \fb) = \fa\vu\fb.
\end{equation}
The set of \ids of $\gT$ itself forms a \trdi\footnote{Actually we need to introduce a restriction to truly obtain a set, in order to have a well-defined procedure to construct the \ids under consideration.
For example we can consider the set of \ids obtained from \idps via certain predefined operations, 
such as countable unions and intersections.} w.r.t.\ the inclusion and for greatest lower bound of $\fa$ and $\fb$, the \id 
\begin{equation}\label{eqInfId}
\fa\cap \fb=\fa\vi\fb.
\end{equation}

Thus the operations $\vu$ and $\vi$ defined in (\ref{eqvuvi}) correspond to the supremum and the infimum in the lattice of \ids.

~
When we consider the lattice of filters, 
we must pay attention to what the reversing of the order relation produces: $\ff\cap\ffg=\ff\vu\ffg$ is the infimum of the filters~$\ff$ and $\ffg$, whereas their supremum is equal to $\cF_\gT(\ff\cup \ffg)=\ff\vi \ffg$.

\Subsubsection{The spectrum of a \trdi} 

In \clama, a \textsl{prime ideal} $\fp$ of a \trdi $\gT\neq \Un$ is an ideal whose complement $\ff$ is a filter ({\sl
a prime filter}). The quotient lattice $\gT/(\fp=0,\allowbreak\ff=1)$ is isomorphic to~$\Deux$. Giving a \idep of~$\gT$ is the same thing as giving a lattice morphism $\gT\rightarrow \Deux$.
We will write $\theta_\fp:\gT\to\Deux$ the morphism
corresponding to the \idep $\fp$.

If $S$ is a system of generators for a 
\trdi $\gT$, a \idep~$\fp$ of $\gT$ is characterized by its trace $\fp\cap S$ (see \cite{CC00}).

\begin{definition} \label{defiSpecTrdi}
The (Zariski) \textsl{spectrum of the \trdi $\gT$} is the set $\Spec
\gT$ whose elements are \ideps of $\gT$, with the following topology: an open basis is provided by the subsets $\DT(a)\eqdefi\sotq{\fp\in\Spec
\gT}{a\notin\fp}=\sotq{\fp}{\theta_\fp(a)=1}$.
\end{definition}

In \clama we have
\begin{equation} \label{eqDa}
\left.
\begin{array}{rclcrcl}
 \DT(a\vi b) & = & \DT(a)\cap \DT(b) ,&\quad & \DT(0) & = & 
\emptyset ,\\
 \DT(a\vu b) & = & \DT(a)\cup \DT(b) ,&& \DT(1) & = & 
\Spec\,\gT.
 \end{array}
\right\}
\end{equation}

The complement of $\DT(a)$ is a \textsl{basic closed set} denoted by $\VT(a)$.

This notation is extended to $I\subseteq\gT$: we let
$\VT(I)\eqdefi\bigcap_{x\in I}\VT(x)$. {If $\fII$} is the ideal generated by $I$, one
has $\VT(I)=\VT(\fII)$. The closed set $\VT(I)$ is also called \textsl{the subvariety of $\SpecT$ defined by $I$}.

The closure of a point $\fp\in\SpecT$ is provided by all $\fq\supseteq \fp$. Maximal ideals are the closed points of $\SpecT$. The spectrum $\SpecT$ is empty iff $0=_\gT1$.

\begin{definition} \label{defiEspaceSpectral}
A topological space homeomorphic to a space $\Spec(\gT)$
is called a \textsl{\sps}. 
\end{definition}

Spectral spaces come from \cite{Sto37}.
\cite{Joh1986} calls them \textsl{coherent spaces}. \cite{BW74} give them the name \textsl{Stone space}. \cite{Hoc1969} uses the name \textsl{\sps} in a famous paper  where he proves that all \spss can be obtained as Zariski spectra of commutative rings. 

With classical logic and choice axiom, the space $\Spec (\gT)$
has \gui{enough points}: the lattice~$\gT$ can be recovered from its spectrum.

\smallskip 
An element (a point) $x$ of a \sps $X$ is the
\textsl{generic point of the closed subset~$F$} \hbox{if $F=\ov{\so{x}}$}. This point
(when it exists) is \ncrt unique because \spss
are Kolmogoroff. In fact, closed subsets $\ov{\so{x}}$ are (in \clama)
all irreducible closed subsets of $X$. The order relation $y\in\ov{\so{x}}$ will be denoted as $x\leq_X y$, and we have equivalences
\begin{equation} \label {eqOrdreSpec}
x\leq_X y\;\Longleftrightarrow\; \ov{\so{y}}\subseteq \ov{\so{x}}\,.
\end{equation}
When $X=\Spec(\gT)$ the order relation $\fp\leq_X \fq$ is merely the usual inclusion relation $\fp\subseteq \fq$ between \ideps of $\gT$.

\Subsubsection{Stone's antiequivalence}

First we have Krull's \tho.


\medskip\noindent
{\bf Krull's \tho }\label{ThKrull} (in \clama)\\ 
{\it Let $\fa$ be an \id and $\fv$ a filter of a distributive lattice $\gT$. Suppose that 
$\fa\cap\fv=\emptyset$.
Then there exists a \idep $\fp$ such \hbox{that $\fa\subseteq\fp$} and
$\fp\cap\fv=\emptyset$.
 }

\medskip
One deduces the following.
\begin{itemize}\itemsep=.05em
\item The map $a\in\gT\,\mapsto\,\DT(a)\in\cP(\Spec\,\gT)$
is injective: it identifies $\gT$ with a lattice of sets 
(\textsl{Birkhoff representation \tho}).
\item If $\varphi : \gT\to\gT'$ is an injective \homo the dual map
$\varphi^\star:\Spec\,\gT'\to\Spec\,\gT$ is onto.
\item Any \id of $\gT$ is the intersection of \ideps above it.
\item The map $\fII\mapsto \VT(\fII)$, from \ids of $\gT$ to 
closed subsets of $\Spec\,\gT$ is an \iso of posets (w.r.t.\ to inclusion 
and reversed inclusion).
\end{itemize}

One proves also the following results.
\begin{itemize}\itemsep=.05em
\item The \oqcs of $\Spec \,\gT$ are exactly the
$\DT(a)$'s. 
\item The \oqcs of $\Spec \,\gT$
form a \trdi of subsets of~$\Spec \,\gT$, isomorphic to $\gT$ (\egts (\ref{eqDa})). 
\item If $X$ is a \sps, its \oqcs form a \trdi, denoted as
$\OQC(X)$. 
\item For a \trdi $\gT$,
$\OQC(\Spec(\gT))$ is canonically isomorphic to $\gT$ and for a
\sps $X$, $\Spec(\OQC(X))$ is canonically 
homeomorphic to~$X$. 
\end{itemize}
 
\begin{definition} \label{defiApplispectrale}
A morphism $\varphi :\gT\rightarrow \gT'$ of \trdis gives
\textsl{by duality} a continuous map 
\[\varphi^\star:\Spec\,\gT'\rightarrow \Spec \,\gT, \,\fp\mapsto \varphi^{-1}(\fp)
\]
which is called a \textsl{spectral map}. 
\end{definition}

A map between
\spss is spectral \ssi the preimage
of any \oqc is a \oqc. So it is \ncrt continuous.

\smallskip The seminal paper by Stone gives the following \carn of \spss. They are the topological spaces satisfying the following \prts (see \cite[{II-3.3}, coherent locales]{Joh1986}, \cite{Sto37}):
\begin{itemize}\itemsep=.05em
\item The space is \qc.\footnote{The nowadays standard terminology is \qc, as in Bourbaki and Stacks, rather than compact.}
\item Every open set is a union of \oqcs.
\item The intersection of two \oqcs is a \oqc.
\item For two distinct points, there is an open set containing one of them but not the other.
\item For any closed set $F$ and any set $S$ of \oqcs such that 
\[\textstyle F\cap
\bigcap_{U\in S'} U\neq \emptyset\,\hbox{ for any finite subset }\,S'
\,\hbox{ of }\,S
\]
we have also $F\cap \bigcap_{U\in S} U\neq \emptyset$.
\end{itemize}

\smallskip \cite{Hoc1969} shows that the last property can be replaced with: 
\begin{itemize}\itemsep=.1em
\item Every irreducible closed set is the closure of a point.
\end{itemize}

\smallskip 
The precise modern formulation of Stone's antiequivalence is Theorem~\ref{thStoneAntiequiv}.
\begin{theorem} \label{thStoneAntiequiv}
 The contravariant functors $\Spec$ and $\OQC$ between the categories of \trdis and \spss define an antiequivalence. 
\end{theorem}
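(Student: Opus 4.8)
The plan is to assemble the theorem from the facts already collected above by organising them into a pair of contravariant functors together with two natural isomorphisms; everything takes place in \clama.

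\medskip\noindent\textbf{The two functors.} First I would record that $\Spec$ and $\OQC$ are contravariant functors. For $\Spec$, Definition~\ref{defiApplispectrale} attaches to a \trdi \homo $\varphi:\gT\to\gT'$ the spectral map $\varphi^\star:\Spec\gT'\to\Spec\gT$, $\fp\mapsto\varphi^{-1}(\fp)$; the identities $(\Id_\gT)^\star=\Id_{\Spec\gT}$ and $(\psi\circ\varphi)^\star=\varphi^\star\circ\psi^\star$ are immediate from this formula, and spectrality, hence continuity, was already noted. For $\OQC$, a spectral map $f:X\to Y$ sends a \oqc of $Y$ to a \oqc of $X$ by the very definition of a spectral map recalled above, so $U\mapsto f^{-1}(U)$ is a map $\OQC(Y)\to\OQC(X)$ preserving $\cup$, $\cap$, $\emptyset$ and the top element, i.e.\ a \trdi \homo; functoriality is again automatic.

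\medskip\noindent\textbf{Unit, counit, naturality.} Next I would exhibit the comparison maps. For a \trdi $\gT$, put $\eta_\gT:\gT\to\OQC(\Spec\gT)$, $a\mapsto\DT(a)$: it is injective by the consequences of Krull's \tho, it is onto because the \oqcs of $\Spec\gT$ are exactly the $\DT(a)$, and the equalities~\pref{eqDa} make it a lattice \iso. For a \sps $X$, put $\varepsilon_X:X\to\Spec(\OQC(X))$, $x\mapsto\sotq{U\in\OQC(X)}{x\notin U}$, which is a \idep of $\OQC(X)$; that $\varepsilon_X$ is a homeomorphism --- hence an \iso in the category of \spss, a homeomorphism between \spss being automatically spectral --- is precisely the statement recalled above that $\Spec(\OQC(X))$ is canonically homeomorphic to~$X$. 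Then I would check the two naturality squares: for $\varphi:\gT\to\gT'$, chasing $a\in\gT$ reduces commutativity to $(\varphi^\star)^{-1}(\DT(a))=\fD_{\gT'}(\varphi(a))$, which follows at once from $\varphi^\star(\fp')=\varphi^{-1}(\fp')$; for a spectral map $f:X\to Y$, both composites send $x\in X$ to the \idep $\sotq{U\in\OQC(Y)}{f(x)\notin U}$ of $\OQC(Y)$.

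\medskip\noindent\textbf{Conclusion and main obstacle.} Since $\eta$ and $\varepsilon$ are natural isomorphisms from the respective identity functors to $\OQC\circ\Spec$ and $\Spec\circ\OQC$, the contravariant functors $\Spec$ and $\OQC$ are quasi-inverse to one another, i.e.\ they define an antiequivalence between the category of \trdis and the category of \spss. Essentially all the mathematical substance has been isolated into the preparatory facts above --- Krull's \tho (where the axiom of choice enters), the description of the \oqcs of $\Spec\gT$, and the homeomorphism $\Spec(\OQC(X))\simeq X$ --- so the theorem itself is pure bookkeeping; were those facts not granted, the delicate point would be showing that $\varepsilon_X$ is a homeomorphism rather than merely a continuous bijection, namely its openness ($\varepsilon_X(U)$ is the basic open set of $\Spec(\OQC(X))$ determined by the \oqc $U$) together with its surjectivity, the latter being exactly where the last (\gui{quasi-compactness}) axiom in Stone's \carn of \spss is used.
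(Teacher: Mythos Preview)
The paper does not give a proof of this theorem: it is stated as the ``precise modern formulation'' of Stone's result, with all the ingredients (Krull's theorem and its consequences, the identification of the $\DT(a)$ with the \oqcs, and the canonical homeomorphism $\Spec(\OQC(X))\simeq X$) listed just before as a summary of \cite{Sto37}. Your proposal is correct and is exactly the standard assembly of those ingredients into a categorical antiequivalence; you have made explicit the bookkeeping the paper leaves to the reader.
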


In \coma, \spss may have no points, and we try to translate the discourse in \clama on \spss (very frequent in \alg) in a \cov discourse on corresponding \trdis.

A topological subspace $Y$ of a \sps $X$ is called a \textsl{sub\sps} if the inclusion morphism $Y\to X$ is a spectral map. The notion of sub\sps is translated by the notion of quotient \trdi (\thref{propSESP}). We have also good translations for the notions of Krull dimension, normal \sps, lying over morphisms, going up and going down (see Section~\ref{subsecAntiEquiv}). 

\smallskip 
When we replace $\gT$ with the opposite lattice $\gT\oop$, by reversing the order, points of $X$ remain the same ones in \clama and the topology is replaced with the \textsl{opposite topology}, where \oqcs are the subsets $\fV_\gT(a)$.

There is also the \textsl{constructible topology} (or patch topology) where \oqcs are Boolean combinations of $\fO_\gT(a)$ and $\fV_\gT(b)$. This \sps is the dual of the \agB $\Bo(\gT)$ generated by the \trdi $\gT$. This kind of \sps is a \textsl{Stone's space} in the today terminology. 

\Subsubsection{Finite \spss}
 
Finite \trdis correspond to finite \spss. These ones are merely finite posets since it suffices to know the closure of points for defining the topology. Basic opens are the $\dar a$'s. 
In \clama all open sets are quasi-compacts. These are the initial parts, and closed subsets are the final parts. Finally, a map between finite \spss is spectral \ssi it is non-decreasing.

So, we see that the general notion of \sps is a relevant \gnn of the notion of finite poset. See \cite[Theorem XI-5.6, duality between finite posets and finite \trdis]{CACM}.

In the finite case, if we identify the underlying sets of $\Spec\,\gT$ and $\Spec\,\gT\oop$, we get two opposite posets (reversing the order).

\subsection{Distributive lattices and \entrels}

A particularly important rule for distributive lattices, known as
 \textsl{cut}, is
\begin{equation}\label{coupure1}
 \bigl(\,x\vi a\; \leq\; b\,\bigr)\quad\&\quad \bigl(\,a\; \leq\; x\vu b\,\bigr)
\quad \Longrightarrow \quad a \leq\; b.
\end{equation}

For $A\in\Pfe(\gT)$ (finitely enumerated subsets of~$\gT$) we write
\[\ndsp \Vu A:=\Vu _{x\in A}x\qquad {\rm and}\qquad \Vi A:=\Vi _{\!x\in A}x.\]
We denote by $A \vda B$ or $A \vdash_\gT B$ the relation defined as follows over the set $\Pfe(\gT)$
\[A \vda B \; \; \equidef\; \; \Vi A\;\leq \;
\Vu B.
\]

This relation satisfies the following axioms, in which we write $x$ for $\{x\}$ and $A, B$ for $A\cup B$.
\[\arraycolsep3pt\begin{array}{rcrclll}
& & a &\vda& a &\; &(R)  \\[1mm]
A \vda B & \; \Longrightarrow \; & A,A' &\vda& B,B' &\; &(M)  \\[1mm]
(A,x \vda B)\;\;
\&
\;\;(A \vda B,x) & \; \Longrightarrow \; & A &\vda& B &\;
&(T).
\end{array}
\]
\rdb
We say that the relation is \textsl{reflexive}, \label{remotr} \textsl{monotone} and
\textsl{transitive}.
The third rule (transitivity) can be seen as a version of rule \pref{coupure1} and is also called the \textsl{cut} rule.

\begin{definition}
\label{defEntrel}
For an arbitrary set $S$, a relation over $\Pfe(S)$ which is reflexive, monotone and transitive is called an \textsl{\entrel}.
\end{definition}

The following \tho is fundamental. It says that the three \prts of \entrels are exactly what is needed for the interpretation in the form of a \trdi to be adequate.

\begin{theorem}[Fundamental \tho of \entrels] \label{thEntRel1} 
\emph{See \cite[Theorem 1]{CC00}, \cite[\hbox{XI-5.3}]{CACM}, \cite[Satz 7]{Lor1951}.} 
Let $S$ be a set with an \entrel $\vdash_S$ on $\Pfe(S)$. We consider the \trdi~$\gT$ defined by \gtrs and relations as follows: the \gtrs are the \elts of $S$ and the relations are the
\[A\; \vdash_\gT \; B
\]
each time that $A\; \vdash_S \; B$. Then, for all $A$,
 $B$ in $\Pfe(S)$, we have
\[A\; \vdash_\gT \; B
\; \Longrightarrow \; A\; \vdash_S \; B.
\]
\end{theorem}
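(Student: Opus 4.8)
The plan is to give an explicit description of the distributive lattice $\gT$ defined by the generators and relations, and to check that the map $A\mapsto \Vi A$, $B\mapsto\Vu B$ does not identify more elements than forced by $\vdash_S$. The cleanest route is the following. First I would recall the free distributive lattice on the set $S$: its elements can be represented in disjunctive normal form as finite sups of finite infs of generators, i.e.\ as finite sets $\{A_1,\ldots,A_n\}$ of elements of $\Pfe(S)$, with a partial order making $\{A_1,\ldots,A_n\}\le\{B_1,\ldots,B_m\}$ exactly when for each $i$ there is $j$ with $B_j\subseteq A_i$ (each $A_i$ is "above" some $B_j$). Then $\gT$ is the quotient of this free lattice by the congruence generated by the relations $\Vi A\le \Vu B$ for $A\vdash_S B$.

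The heart of the argument is to show directly that $\Vi A\le_\gT \Vu B$ implies $A\vdash_S B$. For this I would define, for each pair $(A,B)\in\Pfe(S)^2$, the "truth value" $A\vdash_S B$, and check that the subset of the free lattice given in DNF by $\{A_1,\ldots,A_n\}$ can be compared with $\{B_1,\ldots,B_m\}$ "modulo $\vdash_S$" in a way that is preserved by the lattice operations; equivalently, I would build a concrete model. The standard device: order-preserving maps from $\gT$ to $\Deux$ correspond to prime filters, and one shows that if $A\not\vdash_S B$ then there is a lattice morphism $\gT\to\Deux$ sending all of $A$ to $1$ and all of $B$ to $0$. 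Constructively one cannot invoke Krull, so instead I would argue syntactically: define a binary relation on $\Pfe(S)$ by the closure of $\vdash_S$ under $(R)$, $(M)$, $(T)$ — which is just $\vdash_S$ itself — and verify by induction on the construction of elements of $\gT$ (as iterated $\vi$ and $\vu$ of generators, modulo the lattice axioms and the imposed relations) that the relation "$u\le v$ holds in $\gT$" implies, when $u=\Vi A$ and $v=\Vu B$ for $A,B\in\Pfe(S)$, that $A\vdash_S B$. The induction reduces an arbitrary derivation of $\Vi A\le\Vu B$ in the lattice presentation to finitely many applications of the distributive lattice axioms and of the generating relations, and one checks each such step is absorbed by reflexivity, monotonicity and transitivity of $\vdash_S$ — monotonicity handles the lattice inequalities $\Vi A\le x$ for $x\in A$ and $x\le\Vu B$ for $x\in B$, the distributivity rearrangements are handled by $(M)$, and chaining is exactly $(T)$, the cut rule.

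Concretely I expect the argument to run: (i) present $\gT$ via normal forms (sups of infs of generators); (ii) show every element of $\gT$ equals some $\Vu_{i}\Vi A_i$; (iii) reduce the claim $\Vi A\vdash_\gT\Vu B\Rightarrow A\vdash_S B$ to the same claim where one side is a single inf and the other a single sup, using $(M)$ to split and recombine; (iv) observe that a proof of $\Vi A\le\Vu B$ in the presented lattice is, after putting in normal form, a statement that for the generating relations used, a finite "interpolation" chain exists, and reassemble it with $(T)$. The converse direction, $A\vdash_S B\Rightarrow A\vdash_\gT B$, is immediate since those relations are imposed by construction and $\vi,\vu$ in $\gT$ satisfy $\Vi A\le\Vu B$ whenever the relation is a generating one.

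The main obstacle is step (iii)–(iv): making precise, constructively, that "derivable in the distributive-lattice presentation with extra relations $R$" coincides with "derivable from $R$ using only $(R),(M),(T)$". This is essentially a cut-elimination / normal-form argument for the entailment calculus, and the delicate point is confluence — showing that the order of applying the lattice axioms does not matter and can always be pushed to the end, so that the generating relations combine only through cuts. I would handle this either by citing the normal-form description of free distributive lattices and checking the congruence generated by $R$ on normal forms is exactly $\vdash_S$, or by a direct proof-theoretic induction on the size of a derivation, eliminating "detours" through compound formulas. Everything else is routine verification that $(R),(M),(T)$ hold in any distributive lattice and that the three rules suffice to derive all lattice-theoretic consequences of the generating inequalities.
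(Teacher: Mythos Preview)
The paper does not actually prove this theorem: it is stated with references to \cite{CC00}, \cite{CACM}, and \cite{Lor1951}, and the text moves on immediately to a remark about notation. So there is no in-paper proof to compare against; the relevant comparison is with the cited literature, chiefly Cederquist--Coquand.

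Your outline is in the right spirit and matches the standard argument, but it is organised in a way that makes the hard step harder than necessary. You propose to build the free distributive lattice on $S$ first and then quotient by the congruence generated by the relations $\Vi A \le \Vu B$ for $A\vdash_S B$; you then correctly identify that the crux is controlling this congruence syntactically (your steps (iii)--(iv)), and you are honest that this is where the real work lies. The cleaner route, which is what the cited proofs do, avoids the free-lattice-then-quotient detour entirely: one \emph{directly} defines a preorder on $\Pfe(\Pfe(S))$ using $\vdash_S$ --- concretely, $\{A_1,\dots,A_n\}\preceq\{B_1,\dots,B_m\}$ iff for every $i$ and every choice function $g$ with $g(j)\in B_j$ one has $A_i\vdash_S \{g(1),\dots,g(m)\}$ --- and then checks that (a) this is a distributive lattice under the obvious $\vi,\vu$, and (b) the singleton map $S\to\Pfe(\Pfe(S))$ has the required universal property. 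The only nontrivial verification is transitivity of $\preceq$, and that is exactly where the cut rule $(T)$ is used; reflexivity and monotonicity of $\vdash_S$ handle the rest. With the lattice built this way, the statement $A\vdash_\gT B \Rightarrow A\vdash_S B$ is immediate from the definition of $\preceq$, and no separate ``confluence'' or proof-normalisation argument is needed.

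So: your plan would work, but you have set yourself up for a detour. Reorganising around a direct construction of $\gT$ collapses your steps (i)--(iv) into a single well-definedness check for $\preceq$, with cut doing the work.
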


\smallskip 
\rem The relation $x\vdash_S y$ is a priori a preorder, and not an order, on~$S$. Let us denote by~$\ov x$ the \elt $x$ seen in the ordered set~$\ov{S}$ defined by this preorder. For a subset~$A$ of~$S$ let us denote $\ov A=\sotq{\ov x}{x\in A}$.
In the \tho we consider a \trdi $\gT$ which gives on $S$ the same \entrel as $\vdash_S$.
Strictly speaking, we should have written $ \ov A\; \vdash_\gT \; \ov B$
instead of $A\; \vdash_\gT \; B$ since the \egt in $\gT$ is coarser than in $S$. In particular, it is $\ov{S}$, and not $S$, which can be identified with a subset of $\gT$.
\eoe

\subsection{Gluing \trdis and spectral subspaces}

\Subsubsection{Quotients, covers, gluing procedures}

A \emph{quotient lattice $\gT'$ of a lattice $\gT$} can \egmt be given by a binary relation $\preceq$ over $\gT$ satisfying the following \prts
\begin{equation}\label{eqPreceq}
\left.
\begin{array}{rcl}
a\leq b& \Longrightarrow & a\preceq b \\
a\preceq b,\,b\preceq c& \Longrightarrow & a\preceq c \\
a\preceq b,\,a\preceq c& \Longrightarrow & a\preceq b\vi c \\
b\preceq a,\,c\preceq a& \Longrightarrow & b\vu c\preceq a
\end{array}
\right\}
\end{equation}
The relation $\preceq$ then induces a lattice structure over the quotient set~$\gT'$ obtained with the new \egt%
\[(a,b\in\gT)\quad:\quad\quad a=_{\gT'}b \equidef (a\preceq b \;\mathrm{ and }\;
b\preceq a)
\]
Naturally if $\gT$ is distributive, the same goes for $\gT'$.

\begin{proposition}
\label{propIdealFiltre} Let $\gT$ be a \trdi and $(J,U)$ be a pair of subsets of $\gT$.
Consider the quotient $\gT'$ of $\gT$ defined by the relations $x=0$ for each $x\in J$, and $y=1$ for each $y\in U$. Then the inequality $a\leq_{\gT'}b$ is satisfied \ssi
there exist $J_0\in\Pfe( J)$ and $U_0\in\Pfe( U)$ such that
\begin{equation}\label{eqpropIdealFiltre}
a \vi \Vi U_0 \; \leq_\gT\; b \vu \Vu J_0.
\end{equation}
We will denote by $\gT/(J=0,U=1)$ this quotient lattice $\gT'.$
\end{proposition}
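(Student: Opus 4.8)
The plan is to realize the relation described by \eqref{eqpropIdealFiltre} as exactly the quotient preorder attached to $\gT'=\gT/(J=0,U=1)$. Recall from the discussion preceding the statement that a quotient of $\gT$ amounts to a relation $\preceq$ on $\gT$ satisfying the four conditions \eqref{eqPreceq}, and that $\gT/(J=0,U=1)$ is the one attached to the \emph{smallest} such $\preceq$ for which moreover $x\preceq 0$ for all $x\in J$ and $1\preceq y$ for all $y\in U$ (the reverse inequalities $0\preceq x$, $y\preceq 1$ being automatic from $0\leq x$, $y\leq 1$). Equivalently, since an arbitrary intersection of relations satisfying \eqref{eqPreceq} again satisfies \eqref{eqPreceq}, we have $a\leq_{\gT'}b$ iff the pair $(a,b)$ lies in \emph{every} relation $\preceq$ obeying \eqref{eqPreceq} and containing all pairs $(x,0)$ with $x\in J$ and $(1,y)$ with $y\in U$. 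So I let $\preceq_0$ denote the relation ``$a\preceq_0 b$ iff there are $J_0\in\Pfe(J)$ and $U_0\in\Pfe(U)$ with $a\vi\Vi U_0\leq_\gT b\vu\Vu J_0$'' (with the conventions $\Vi\emptyset=1$, $\Vu\emptyset=0$), and the goal is to show $\preceq_0\,=\,\leq_{\gT'}$.

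First I would verify that $\preceq_0$ satisfies \eqref{eqPreceq} and contains the two required families of pairs. Order-compatibility ($a\leq_\gT b\Rightarrow a\preceq_0 b$) is the case $J_0=U_0=\emptyset$; the pair $(x,0)$ comes from $J_0=\{x\}$, $U_0=\emptyset$, and $(1,y)$ from $U_0=\{y\}$, $J_0=\emptyset$. For transitivity (the cut rule), from $a\vi\Vi U_0\leq b\vu\Vu J_0$ and $b\vi\Vi U_1\leq c\vu\Vu J_1$ one meets the first inequality with $\Vi U_1$, uses distributivity of $\gT$ to push this meet inside $b\vu\Vu J_0$, substitutes the second inequality, and absorbs the leftover terms, obtaining $a\vi\Vi(U_0\cup U_1)\leq c\vu\Vu(J_0\cup J_1)$, i.e. $a\preceq_0 c$. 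The remaining two closure properties are handled the same way: for compatibility with $\vi$ on the right one expands a product $(b\vu\Vu J_0)\vi(c\vu\Vu J_1)$ by distributivity and bounds the mixed terms by $\Vu(J_0\cup J_1)$, and for compatibility with $\vu$ on the left one uses $(b\vu c)\vi x=(b\vi x)\vu(c\vi x)$. The only non-formal ingredient in this step is distributivity — which is exactly where the hypothesis on $\gT$ enters — so this is where I expect the real (if routine) work to be.

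Granting this, $\preceq_0$ is one of the relations in the intersection characterizing $\leq_{\gT'}$, hence $\leq_{\gT'}\,\subseteq\,\preceq_0$; this gives the necessity of \eqref{eqpropIdealFiltre}. For the converse, suppose $a\preceq_0 b$ via $J_0,U_0$, and apply the canonical morphism $\pi\colon\gT\to\gT'$: since $\pi(x)=0$ for $x\in J$ and $\pi(y)=1$ for $y\in U$, we get $\pi\big(\Vi U_0\big)=1$ and $\pi\big(\Vu J_0\big)=0$, so $\pi(a)=\pi(a)\vi 1=\pi\big(a\vi\Vi U_0\big)\leq_{\gT'}\pi\big(b\vu\Vu J_0\big)=\pi(b)\vu 0=\pi(b)$, that is $a\leq_{\gT'}b$. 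Thus $\preceq_0\,\subseteq\,\leq_{\gT'}$ as well, the two relations coincide, and this is precisely the claimed equivalence. The bulk of the argument is bookkeeping with the universal property of the quotient lattice and the empty-meet/empty-join conventions; the one point requiring genuine verification is the cut rule for $\preceq_0$.
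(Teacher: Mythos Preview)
Your argument is correct: you verify that the relation $\preceq_0$ defined by \eqref{eqpropIdealFiltre} satisfies the four closure properties \eqref{eqPreceq} together with $x\preceq_0 0$ for $x\in J$ and $1\preceq_0 y$ for $y\in U$, so that it contains the least such relation $\leq_{\gT'}$; and conversely you push \eqref{eqpropIdealFiltre} through the quotient map to see $\preceq_0\subseteq\leq_{\gT'}$. The cut and the two compatibility checks go through exactly as you sketch, using distributivity.

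The paper itself gives no proof of this proposition --- it is stated as a standard fact and then used --- so there is nothing to compare against. Your write-up is the expected verification and would fill the gap cleanly; the only cosmetic suggestion is that once you have shown $\preceq_0$ satisfies \eqref{eqPreceq} and the forcing conditions, the converse inclusion $\preceq_0\subseteq\leq_{\gT'}$ is already implicit (any relation satisfying \eqref{eqPreceq} is a quotient preorder, and $\leq_{\gT'}$ is by definition the one generated by those forcing conditions, hence the largest among the minimal ones is just $\preceq_0$ itself), so the separate argument via $\pi$ is a nice sanity check but not strictly needed.
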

In particular, for an ideal $\fa$, the natural morphism 
$\varphi:\gT\to\gT'=\gT/(\fa=0)$
satisfies $\varphi^{-1}(0_{\gT'})=\fa$. In the case of the quotient by a principal \id $\dar a$ we obtain $\gT/(\dar a=0)\simeq\uar a$ with the morphism $y\mapsto y\vu a$ from $\gT$ to $\uar a$.


We see in the example of totally ordered sets that a quotient structure of a \trdi is not \gnlt
characterized by the equivalence classes of $0$ and $1$.

In commutative \alg, when $\fa$ and $\fb$ are two \ids of a ring 
$\gA$, there is an exact sequence of \Amos 
\[
0\to\gA/(\fa\cap\fb)\vers{j}(\gA/\fa) \times 
(\gA/\fb)\vers{\delta}\gA/(\fa+\fb)\to 0 \quad (\delta(x,y)=x-y \mod \fa+\fb)
\]
otherwise said: the congruences system
$x\equiv a \mod \fa$, $x\equiv b \mod \fb$ has a solution \ssi
$a\equiv b\ \mod \fa+\fb$, and in this case, the solution is unique modulo 
$\fa\cap\fb$.
It is remarkable that this \gui{Chinese remainder \tho} generalizes to an \textsl{arbitrary}  system of congruences \ssi the ring is
\textsl{arithmetic} \cite[Theorem XII-1.6]{CACM}, \cad, if the lattice of \ids is distributive. By contrast, the usual Chinese remainder \tho  concerns the special case of a family of  pairwise comaximal \ids, and it works for an arbitrary ring.

Localizations provide other epimorphisms of the category of commutative rings. They enjoy~a fruitful gluing principle, analogous to the Chinese remainder \tho, in this instance the basic local-global principle. This principle asserts that: \textsl{if $(\xn)$ is a system of \eco in a ring $\gA$,
the morphism $\gA\to\prod_{i\in\lrbn}\gA[1/x_i]$ identifies $\gA$ to a} subproduct\footnote{A subobject of the considered finite product, in the category of commutative rings.} \textsl{of its localized rings} (see the \plgc XV-4.2 in~\cite{CACM}, and also II-2.3, XV-2.1, XV-2.2, XV-2.3, XV-2.4 and XV-2.5). 
These principles are  \cov versions of abstract \plgs. They ensure that certain  \prts of an \Amo or an \Alg are satisfied \ssi they are satisfied after \ localization in any \idep (or, sometimes, in the neighborhood of any \idep).

\smallskip Similarly, it is possible to reconstruct a \trdi from a finite number of quotients when the latter carry ``enough'' information. 
This can be seen either as a gluing procedure, or as~a Chinese remainder \tho for  \trdis.

Covering a \trdi by quotient lattices is dual to covering a spectral space by spectral subspaces.
Note, however, that a set-theoretical cover of the total space by a family of  spectral subspaces does not suffice to reconstruct  the spectral topology of the total space from those of the subspaces.
Similarly, the three-by-three compatibility conditions on gluing isomorphisms do not suffice to glue spectral spaces: usually, gluing also requires  some additional properties on their spectral subspaces.

Therefore, Theorems~\ref{factRecolTD} and~\ref{propRecolTD} consider very specific quotient \trdis. These results provide analogues, for the category of \trdis, to the similar results available in the category of \grls~\cite[Covering principles XI-2.10 and XI-2.21]{CACM} and in the category of modules over a commutative ring~\cite[Covering principles XI-4.19 and XIII-3.3, Gluing principles XV-4.4, XV-4.4 bis and XV-4.6]{CACM}. We can now describe the situation in detail.



\begin{definition}
\label{defRecolTD}
Let $\gT$ be a \trdi and $(\fa_i)_{i\in\lrbn}$ (respectively 
$(\ff_i)_{i\in\lrbn}$)
a finite family of \ids (respectively of filters) of $\gT$. We say that the \ids
$\fa_i$ \textsl{cover $\gT$} if~~$\bigcap_i\fa_i=\so{0}$. Similarly we say that
the filters $\ff_i$ \textsl{cover $\gT$} if~~$\bigcap_i\ff_i=\so{1}$.
\end{definition}

Let $\fb$ be an ideal of $\gT$; we write $x\equiv y\mod\fb$ as meaning $x\equiv y\mod (\fb=0)$. Let us recall that for $s\in\gT$ the quotient $\gT/(s=0)$ is isomorphic to the principal filter $\uar s$ (one sees this filter as a \trdi with $s$ as $0$ element).

\begin{theorem}[Covering a \trdi by suitable quotients]
\label{factRecolTD}
Let $\gT$ be a \trdi, $(\fa_i)_{i\in\lrbn}$ a finite family 
of principal ideals ($\fa_i=\dar s_i$) and $\fa=\bigcap_i\fa_i$.
\begin{enumerate}
\item If $(x_i)$ is a family in $\gT$ s.t.\ for each $i,j$ one has $x_i\equiv x_j\,\mod\,\fa_i\vu\fa_j$, then there exists a unique $x$ 
modulo
$\fa$ satisfying: $x\equiv x_i\,\mod\,\fa_i\;(i\in\lrbn)$.
\item Let us write $\gT_i=\gT/(\fa_i=0)$, 
$\gT_{ij}=\gT_{ji}=\gT/(\fa_i\vu\fa_j=0)$,
$\pi_i:\gT\to\gT_i$ and $\pi_{ij}:\gT_i\to\gT_{ij}$ the canonical maps.
If the ideals $\fa_i$ cover $\gT$, the system $(\gT,(\pi_i)_{i\in\lrbn})$ is the inverse limit of the diagram 
\[((\gT_i)_{1\leq i\leq n},(\gT_{ij})_{1\leq 
i<j\leq
n};(\pi_{ij})_{1\leq i\neq j\leq n}).
\]
\item The analogous result works with quotients by principal filters.

 {\small\hspace*{10em}{
\xymatrix @R=2em @C=7em{
   & \gT \ar[rd]^{\pi _{k}}\ar[d]^{\pi _{j}}\ar[ld]_{\pi _{i}}\\
 \gT _i\ar[d]_{\pi _{ij}}\ar@/-0.75cm/[dr]^{\pi _{ik}} &
  \gT _j\ar@/-.8cm/[dl]_{\pi _{ji}}\ar@/-.8cm/[dr]^{\pi _{jk}} &
  \gT _k\ar@/-0.75cm/[dl]_{\pi _{ki}}\ar[d]^{\pi _{kj}} &
\\
 \gT _{ij} & 
 \gT _{ik} & 
  \gT _{jk} 
}
}}

\end{enumerate}
\end{theorem}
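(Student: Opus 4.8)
The plan is to prove item (1) directly by an elementary computation inside $\gT$ using Proposition~\ref{propIdealFiltre}, then deduce item (2) as a formal consequence, and finally note that item (3) follows by the duality $\gT\rightsquigarrow\gT\oop$. Throughout, recall from the discussion after Proposition~\ref{propIdealFiltre} that $\gT_i=\gT/(\dar s_i=0)\simeq\uar s_i$, with $\pi_i(x)=x\vu s_i$, and that $\fa_i\vu\fa_j=\dar(s_i\vu s_j)$, so $\gT_{ij}\simeq\uar(s_i\vu s_j)$ with $\pi_{ij}(x\vu s_i)=x\vu s_i\vu s_j$. The hypothesis $\bigcap_i\fa_i=\so 0$ means exactly: if $x\le s_i$ for all $i$, then $x=0$ in $\gT$; equivalently, via Proposition~\ref{propIdealFiltre}, $a=_{\gT/(\fa=0)}b$ iff $a\vi s_i=b\vi s_i$ in $\gT$ for all $i$ — but in item (1) we only quotient by $\fa=\bigcap\fa_i$, so uniqueness modulo $\fa$ is precisely uniqueness in $\gT/(\fa=0)$.

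For item (1): given $(x_i)$ with $x_i\equiv x_j\bmod\fa_i\vu\fa_j$, i.e. $x_i\vu s_i\vu s_j=x_j\vu s_i\vu s_j$ for all $i,j$ (using that congruence mod $\dar s=0$ means equality after joining with $s$), I would guess the solution
\[
x \;:=\; \Vu_{i=1}^{n}\bigl(x_i\vi\und{s_i}^{\,c}\bigr)
\]
is not available since $\gT$ need not be complemented; instead the correct candidate is obtained differently. The standard trick: set $x=\Vi_{i}(x_i\vu s_i')$ where $s_i'$ plays the role of "away from $s_i$" — but again complements are unavailable. The genuinely correct construction uses the $s_i$ directly: put $x:=\Vu_i\bigl(\Vi_{j}(x_i\vu s_j)\bigr)$? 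I would instead proceed by checking that $x\equiv x_i\bmod\fa_i$ is equivalent to $x\vu s_i=x_i\vu s_i$, and look for $x$ with $x\vi\Vi_{j\ne i}(\ ?\ )$ small; the clean route is to verify directly, for the candidate
\[
x\;:=\;\Vu_{i=1}^n\bigl(x_i\vi w_i\bigr),\qquad w_i:=\Vi_{j\ne i}?,
\]
and this is where the real work lies. Concretely, I expect the proof in the source to exploit that, since the $\fa_i=\dar s_i$ are principal, one can write $x$ explicitly using finite meets and joins of the $x_i$ and $s_j$, and then the congruences $x_i\equiv x_j$ mod $\fa_i\vu\fa_j$ together with distributivity give $x\vu s_i=x_i\vu s_i$; uniqueness mod $\fa$ is immediate from $\bigcap_i\fa_i=\so 0$ once one checks via Proposition~\ref{propIdealFiltre} that two solutions $x,x'$ satisfy $x\vi s_i'=x'\vi s_i'$... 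Rather than force complements, the cleanest honest argument: both the existence and uniqueness statements are statements about the natural map $\gT/(\fa=0)\to\prod_i\gT_i$, and one checks it is injective (uniqueness, from $\bigcap\fa_i=\so0$ and Proposition~\ref{propIdealFiltre}) with image the equalizer of the two maps to $\prod_{i<j}\gT_{ij}$ (existence).

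For item (2): this is essentially a repackaging of item (1). The inverse limit of the diagram is by definition the set of compatible families $(y_i)_{i}\in\prod_i\gT_i$ with $\pi_{ij}(y_i)=\pi_{ji}(y_j)$ for all $i\ne j$, with the lattice structure induced componentwise. The map $\gT\to\varprojlim$ sending $x\mapsto(\pi_i(x))_i$ is well-defined and a lattice \homo. Item (1), applied with $x_i$ arbitrary representatives of a compatible family (lifting $y_i\in\gT_i=\uar s_i$ to $\gT$), gives surjectivity onto the inverse limit; and injectivity is exactly the statement that $\bigcap_i\Ker\pi_i=\bigcap_i\fa_i=\so0$ when the $\fa_i$ cover $\gT$, i.e. the kernel of $x\mapsto(\pi_i(x))_i$ is $\bigcap_i\fa_i$. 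One must also check the inverse-limit lattice structure matches, which is routine since joins and meets in each $\uar s_i$ are computed from those of $\gT$ and the compatibility maps are \homos.

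For item (3): apply (1) and (2) to the opposite lattice $\gT\oop$. Under $\gT\rightsquigarrow\gT\oop$ principal ideals become principal filters, $\fa_i\vu\fa_j$ becomes $\ff_i\vi\ff_j$ (the filter generated by $\ff_i\cup\ff_j$), "cover by ideals ($\bigcap\fa_i=\so0$)" becomes "cover by filters ($\bigcap\ff_i=\so1$)", and inverse limits are preserved since the functor $\gT\mapsto\gT\oop$ is a self-duality of the category of \trdis. Hence the analogous statement holds verbatim with filters in place of ideals.

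\textbf{Main obstacle.} The crux is the explicit construction in item (1) of the element $x$ glued from the $x_i$: because a distributive lattice is not Boolean, one cannot simply take a "partition of unity" combination, and the construction must be engineered so that, using \emph{only} the compatibility hypotheses $x_i\equiv x_j\bmod\fa_i\vu\fa_j$ and distributivity, one recovers $x\equiv x_i\bmod\fa_i$ for every $i$. I expect the argument to proceed by induction on $n$ (glue $x_1,\dots,x_{n-1}$ first, then glue the result with $x_n$ using the two-quotient case, where the relevant identity $a\vi\Vi U_0\le b\vu\Vu J_0$ of Proposition~\ref{propIdealFiltre} can be verified by hand), the two-object base case being the lattice-theoretic analogue of the Chinese remainder theorem already alluded to in the text.
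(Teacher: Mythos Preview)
The paper states this theorem without proof, presenting it as the \trdi analogue of covering principles proved in \cite{CACM} for \grls and modules (and referring to \cite{CLQ2006} for the companion gluing theorem). So there is no argument in the paper to compare against directly; I evaluate your proposal on its own merits.

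Your treatment of items (2) and (3) is fine: (2) is indeed the formal repackaging of (1) as an inverse-limit statement, and (3) follows by passing to $\gT\oop$. You also correctly locate the crux at item (1). The gap is that you never actually produce the element $x$. Your attempts via complements or partition-of-unity formulas are red herrings (as you realize), and the induction-on-$n$ fallback would work but is not needed.

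The explicit solution you are groping for is simply
\[
x \;:=\; \Vi_{i=1}^{n}(x_i\vu s_i).
\]
Then, by distributivity,
\[
x\vu s_k \;=\; \Vi_{i}(x_i\vu s_i\vu s_k).
\]
The term $i=k$ is $x_k\vu s_k$; for $i\ne k$ the compatibility hypothesis $x_i\vu s_i\vu s_k=x_k\vu s_i\vu s_k$ shows each term is $\ge x_k\vu s_k$, so the meet is exactly $x_k\vu s_k$, i.e.\ $x\equiv x_k\bmod\fa_k$. Uniqueness modulo $\fa=\dar\bigl(\Vi_i s_i\bigr)$ is the same computation: if $x\vu s_i=x'\vu s_i$ for every $i$, take the meet over $i$ and use distributivity to get $x\vu\Vi_i s_i=x'\vu\Vi_i s_i$. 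No induction, no Boolean tricks --- just one line of distributivity, which is exactly what principality of the $\fa_i$ buys you.
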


There is also a gluing procedure for quotient \trdis $\gT_i\to \gT_{ij}$ in certain particular cases.

\begin{definition}[Morphism of passage to quotient] \label{defimorquotlattice} 
Let $\gT$ be a \trdi and $u\in \gT$.
We identify $\uar u$ to the quotient $\gT/(\dar u=0)$ via the morphism $p_u:x\mapsto x\vu u$. 
More \gnlt, a morphism of \trdis $\alpha:\gT\to \gT'$ is called a \textsl{morphism of passage to quotient by the ideal $\dar u$} if there exists an \iso (\ncrt unique) $\lambda:\gT'\to \gT/(u=0)$ such that $\alpha=\lambda\circ p_u$.
\end{definition}

\begin{lemma}[In a \trdi, principal quotients are \gui{split}] \label{lemquoprinctrdi} ~ 
\\
Let $\pi:\gT \to \gT'$ be a morphism of \trdis and $s\in \gT$.
\Propeq
\begin{enumerate}
\item $\pi$ is a morphism of passage to quotient of $\gT$ by the principal 
ideal $\fa=\dar s$.
\item There exists a morphism $\varphi:\gT'\to\,\uar s$ such that
$\pi\circ \varphi=\Id_{\gT'}$.
\end{enumerate}
In this case $\varphi$ is uniquely determined by $\pi$ and $s$.\\
Naturally, the \gui{reversed} lemma is valid for a quotient by a principal filter. 
\end{lemma}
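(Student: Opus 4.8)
The plan is to read off both assertions from Definition~\ref{defimorquotlattice}. First I would record the relevant facts about $p_s\colon\gT\to\uar s$, $x\mapsto x\vu s$: it is a surjective lattice morphism identifying $\uar s$ with $\gT/(\dar s=0)$, and it is a retraction of the inclusion $\iota\colon\uar s\hookrightarrow\gT$, since $p_s(\iota(y))=y\vu s=y$ for every $y\in\uar s$. Using the uniqueness of the isomorphism appearing in Definition~\ref{defimorquotlattice}, I would reformulate assertion~1 as: the restriction $\bar\pi:=\pi|_{\uar s}\colon\uar s\to\gT'$ is a lattice isomorphism. Indeed, if $\pi$ factors through $p_s$ via an isomorphism $\lambda$, then $\bar\pi=\pi\circ\iota=\lambda\circ p_s\circ\iota=\lambda$; and conversely, if $\bar\pi$ is an isomorphism then $\pi(s)=\bar\pi(0_{\uar s})=0_{\gT'}$, so $\pi(x)=\pi(x)\vu\pi(s)=\pi(x\vu s)=\bar\pi(p_s(x))$, i.e. $\pi=\bar\pi\circ p_s$. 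So the lemma reduces to proving that assertion~2 holds iff $\bar\pi$ is an isomorphism, and that the witnessing $\varphi$ is then $\bar\pi^{-1}$.

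For $1\Rightarrow 2$ I would set $\varphi:=\bar\pi^{-1}\colon\gT'\to\uar s$, which is a lattice morphism; for $y'\in\gT'$ we have $\varphi(y')\in\uar s$, hence $\pi(\varphi(y'))=\bar\pi(\varphi(y'))=y'$, so $\pi\circ\varphi=\Id_{\gT'}$. Moreover any $\varphi$ fulfilling assertion~2 must coincide with $\bar\pi^{-1}$ (apply $\bar\pi$ and use injectivity, once assertion~1 is known), which gives the final uniqueness statement.

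For $2\Rightarrow 1$, starting from $\varphi\colon\gT'\to\uar s$ with $\pi\circ\varphi=\Id_{\gT'}$ (the composite passing through $\iota$): since $\varphi$ preserves $0$, we get $\varphi(0_{\gT'})=0_{\uar s}=s$, hence $\pi(s)=\pi(\varphi(0_{\gT'}))=0_{\gT'}$. Thus $\pi$ vanishes on $\dar s$ and factors uniquely as $\pi=\bar\pi\circ p_s$ with $\bar\pi=\pi|_{\uar s}$ a lattice morphism (it sends $0_{\uar s}=s$ to $0_{\gT'}$). The hypothesis then reads $\bar\pi\circ\varphi=\Id_{\gT'}$, so $\bar\pi$ is onto; to finish, it remains to show that $\bar\pi$ is injective, equivalently that $\varphi$ is a two-sided inverse of $\bar\pi$, i.e. $\varphi\circ\bar\pi=\Id_{\uar s}$.

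That last point is where I expect the real work to be. Having a section only makes $\bar\pi$ a split epimorphism, and $e:=\varphi\circ\bar\pi$ is a priori merely an idempotent endomorphism of $\uar s$ fixing $0_{\uar s}$ and $1_{\uar s}$, which in an arbitrary distributive lattice need not be the identity. The argument must exploit that $\varphi$ takes values in the \emph{principal} filter $\uar s$, together with the retraction identity $p_s\circ\iota=\Id_{\uar s}$, to pin $\varphi$ down as $\bar\pi^{-1}$; concretely, one has to verify that the kernel congruence of $\pi$ is exactly the congruence $\theta_s$ collapsing $\dar s$ to $0$, and not just that $\pi^{-1}(0)=\dar s$. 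The ``reversed'' statement, for a quotient by a principal filter $\uar t$ (with $\varphi\colon\gT'\to\dar t$ and $\pi\circ\varphi=\Id_{\gT'}$), then follows by applying the above to the opposite lattice $\gT\oop$.
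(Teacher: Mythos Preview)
The paper states this lemma without proof, so there is nothing to compare against directly. Your treatment of $1\Rightarrow 2$ and of the uniqueness of $\varphi$ is correct and is exactly the intended reading of Definition~\ref{defimorquotlattice}.

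Your hesitation about $2\Rightarrow 1$ is not a gap in \emph{your} argument but a genuine defect in the statement: as written, that implication is false. Take the four-element chain $\gT=\{0<s<t<1\}$, let $\gT'=\Deux$, and let $\pi$ be the quotient by $\dar t$ (so $\pi(1)=1'$ and $\pi(x)=0'$ for $x\leq t$). Define $\varphi:\gT'\to\uar s=\{s,t,1\}$ by $\varphi(0')=s$, $\varphi(1')=1$. Then $\varphi$ is a bounded-lattice morphism and $\pi\circ\varphi=\Id_{\gT'}$, yet $\pi$ is the quotient by $\dar t$, not by $\dar s$ (indeed $\gT/(\dar s=0)$ has three elements). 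So the idempotent $\varphi\circ\bar\pi$ on $\uar s$ that you worried about is, in this example, genuinely not the identity: it collapses $t$ to $s$.

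The natural repair is to strengthen condition~2 by also requiring $\varphi\circ\pi=p_s$ (equivalently, $\varphi$ surjective). Then restricting to $\uar s$ gives $\varphi\circ\bar\pi=\Id_{\uar s}$, and together with $\bar\pi\circ\varphi=\Id_{\gT'}$ this makes $\bar\pi$ an isomorphism, so your reduction goes through cleanly. This two-sided condition is the actual ``splitting'' of $p_s$ and is what is implicitly used in the gluing Theorem~\ref{propRecolTD}.
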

%

\begin{theorem}[Gluing \trdis]
\label{propRecolTD} 
Let~$I$ be a finite set, a diagram of \trdis 
\[{\big((\gT_i)_{i\in I},(\gT_{ij})_{i<j\in I},(\gT_{ijk})_{i<j<k\in I};
(\pi_{ij})_{i\neq j},(\pi_{ijk})_{i< j, j\neq k\neq i}\big)}
\]
and a family of \elts 
${(s_{ij})_{i\neq j\in I}\in \prod\nolimits_{i\neq j\in I}\gT_{i}}$
 satisfying the following \prts.
\begin{itemize}\itemsep=.1em
\item The diagram is commutative. 
\item If $i\neq j$, $\pi_{ij}$ is a quotient morphism w.r.t. the ideal $\dar s_{ij}$.
\item If $i$, $j$, $k$ are distinct, $\pi_{ij}(s_{ik})=\pi_{ji}(s_{jk})$ and $\pi_{ijk}$ is a quotient morphism w.r.t.\ the ideal $\dar\pi_{ij}(s_{ik})$. 
\end{itemize}

 {\small\hspace*{10em}
\xymatrix @R=2em @C=7em{
 \gT_i\ar[d]_{\pi _{ij}}\ar@/-0.75cm/[dr]^{\pi _{ik}} &
  \gT_j\ar@/-.8cm/[dl]_{\pi _{ji}}\ar@/-.8cm/[dr]^{\pi _{jk}} &
  \gT_k\ar@/-0.75cm/[dl]_{\pi _{ki}}\ar[d]^{\pi _{kj}} &
\\
 ~\gT_{ij}~ \ar[rd]_{\pi _{ijk}} & 
 ~\gT_{ik}~ \ar[d]^{\pi _{ikj}} & 
  ~\gT_{jk}~ \ar[ld]^{\pi _{jki}} 
\\
 & ~\gT_{ijk}~ 
\\
}
}

\smallskip \noindent Let $\big(\gT\,;\,(\pi_i)_{i\in I}\big)$ be the limit of the diagram. Then there exist $s_i$'s in $\gT$ such that the principal ideals $\dar s_i$ cover $\gT$ and the diagram is isomorphic to the one in Theorem~\ref{factRecolTD}.
More precisely each~$\pi_i$ is a quotient morphism w.r.t. the \id $\dar s_i$ and $\pi_i(s_j)=s_{ij}$ for all $i\neq j$.

\noindent The analogous result works with quotients by principal filters.
\end{theorem}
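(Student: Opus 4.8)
The plan is to realise the limit $\gT$ concretely and then construct all the required data by hand. Since distributive lattices form an equational class, the limit of the diagram may be taken to be the sublattice
\[
\gT \;=\; \bigl\{\,(x_i)_{i\in I}\in{\textstyle\prod_{i\in I}}\gT_i \;:\; \pi_{ij}(x_i)=\pi_{ji}(x_j)\ \text{ for all }\ i\neq j\,\bigr\}
\]
of the product, with the $\pi_i$ the projections; the components over the $\gT_{ij}$ and $\gT_{ijk}$ are determined, and commutativity of the diagram makes them consistent. In particular $(\pi_i)_{i\in I}\colon\gT\to\prod_i\gT_i$ is injective. First I would define, for each $j\in I$, the tuple $s_j\in\prod_i\gT_i$ whose $j$-th component is $0_{\gT_j}$ and whose $i$-th component, for $i\neq j$, is $s_{ij}$. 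That $s_j\in\gT$ is a short case analysis on an unordered pair $\{k,l\}$ of distinct indices: for $k,l\neq j$ it is exactly the hypothesis $\pi_{kl}(s_{kj})=\pi_{lk}(s_{lj})$ (the relation $\pi_{ij}(s_{ik})=\pi_{ji}(s_{jk})$ relabelled), while for $j\in\{k,l\}$, say $j=l$, the compatibility reads $\pi_{jk}(0_{\gT_j})=\pi_{kj}(s_{kj})$, both sides being $0_{\gT_{jk}}$ since $\pi_{jk}$ preserves $0$ and $\pi_{kj}$ is a quotient morphism by $\dar{s_{kj}}$. As $\pi_j(s_j)=0_{\gT_j}$ and $(\pi_i)$ is injective, this already yields $\bigcap_{i\in I}\dar{s_i}=\{0_\gT\}$: if $x\leq s_i$ for all $i$ then $\pi_i(x)\leq\pi_i(s_i)=0_{\gT_i}$ for all $i$, hence $x=0_\gT$.

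The core of the proof is to show that each $\pi_i\colon\gT\to\gT_i$ is a morphism of passage to quotient by the principal ideal $\dar{s_i}$. By Lemma~\ref{lemquoprinctrdi} this is equivalent to producing a section $\psi_i\colon\gT_i\to\uar{s_i}$ of $\pi_i$ inside $\gT$. I would set, for $a\in\gT_i$, $\psi_i(a)$ to be the tuple whose $i$-th component is $a$ and whose $k$-th component, for $k\neq i$, is $\varphi_{ki}\bigl(\pi_{ik}(a)\bigr)$, where $\varphi_{ki}\colon\gT_{ki}\to\uar{s_{ki}}$ (the principal filter taken in $\gT_k$) is the section of $\pi_{ki}$ furnished by Lemma~\ref{lemquoprinctrdi}. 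Every component of $\psi_i$ is a composite of lattice morphisms, so $\psi_i$ is a lattice morphism as soon as its image lies in $\gT$; that $\psi_i(a)\geq s_i$ (so the image lies in $\uar{s_i}$), that $\pi_i\circ\psi_i=\Id_{\gT_i}$, and that $\psi_i(0_{\gT_i})=s_i$, $\psi_i(1_{\gT_i})=1_\gT$, all follow at once from $\pi_{ki}\circ\varphi_{ki}=\Id$ and $\varphi_{ki}(0_{\gT_{ki}})=s_{ki}$.

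The one delicate point — and the step I expect to be the main obstacle — is that $\psi_i(a)$ is genuinely a compatible tuple, that is, that
\[
\pi_{kl}\bigl(\varphi_{ki}(\pi_{ik}(a))\bigr)=\pi_{lk}\bigl(\varphi_{li}(\pi_{il}(a))\bigr)
\]
for distinct $k,l\neq i$ (the pairs involving the index $i$ are immediate from $\pi_{ki}\circ\varphi_{ki}=\Id$). Here the triple overlaps are indispensable. By hypothesis the diagram morphism $\rho\colon\gT_{kl}\to\gT_{ikl}$ is a quotient morphism by $\dar{\pi_{kl}(s_{ki})}=\dar{\pi_{lk}(s_{li})}$, hence split; since both sides of the displayed equation lie above $\pi_{kl}(s_{ki})$ (because $\varphi_{ki}(\pi_{ik}(a))\geq s_{ki}$ and $\varphi_{li}(\pi_{il}(a))\geq s_{li}$), each is determined by its $\rho$-image. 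Using the section identities together with commutativity of the triangles at the corners $\gT_k$ and $\gT_l$ over $\gT_{ikl}$, one computes those images to be $\pi_{ikl}(\pi_{ik}(a))$ and $\pi_{ilk}(\pi_{il}(a))$, which agree by commutativity at the $\gT_i$-corner. The remainder is routine bookkeeping with the index symmetries of the diagram.

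It then remains to assemble the conclusion. With each $\pi_i$ a quotient morphism by $\dar{s_i}$ we have $\gT_i\simeq\gT/(\dar{s_i}=0)$; and since $\pi_i(s_j)=s_{ij}$ for $i\neq j$ while $\pi_{ij}$ is a quotient morphism by $\dar{s_{ij}}$, transitivity of quotients (Proposition~\ref{propIdealFiltre}) identifies $\gT_{ij}\simeq\gT/(\dar{(s_i\vee s_j)}=0)$ compatibly with all the $\pi$'s, and similarly for the triple quotients. Hence the diagram is isomorphic to the one of Theorem~\ref{factRecolTD}, the principal ideals $\dar{s_i}$ cover $\gT$, and $\pi_i(s_j)=s_{ij}$, as required. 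The statement for principal filters follows by passing to the opposite lattices.
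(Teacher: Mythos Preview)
The paper does not give a proof of this theorem: it observes that the corresponding gluing of spectral spaces is intuitively clear in classical mathematics, but that Theorem~\ref{propRecolTD} ``has a rather subtle proof'' and refers the reader to \cite{CLQ2006}. So there is nothing in the text to compare your argument against directly; your sketch must be judged on its own.

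Your overall plan is the natural one and is essentially correct: realize $\gT$ as the compatible tuples in $\prod_i\gT_i$, set $(s_j)_j=0$ and $(s_j)_i=s_{ij}$ for $i\ne j$, check the cover property, and build a section $\psi_i\colon\gT_i\to\uar{s_i}$ out of the sections $\varphi_{ki}$ of the $\pi_{ki}$. Your handling of the ``delicate point'' (compatibility of $\psi_i(a)$ across a pair $\{k,l\}$ with $k,l\ne i$) is right and is exactly where the triple-overlap hypothesis is used.

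There is, however, a real gap at the step where you pass from ``$\psi_i$ is a section of $\pi_i$ with values in $\uar{s_i}$'' to ``$\pi_i$ is a quotient by $\dar{s_i}$''. You invoke Lemma~\ref{lemquoprinctrdi} in the direction $(2)\Rightarrow(1)$, but that implication fails as stated: on the chain $0<a<b<c<1$ with $s=a$, the quotient $\pi$ by $\dar c$ admits the section $0\mapsto a$, $1\mapsto 1$ into $\uar a$, yet $\pi$ is not the quotient by $\dar a$. What is actually needed, and what your construction does deliver, is the stronger identity $\psi_i\circ\pi_i=p_{s_i}$: for $x\in\gT$ and $k\ne i$ one has
\[
\psi_i(\pi_i(x))_k=\varphi_{ki}(\pi_{ik}(x_i))=\varphi_{ki}(\pi_{ki}(x_k))=x_k\vee s_{ki}=(x\vee s_i)_k,
\]
using that the genuine section from the hypothesis on $\pi_{ki}$ satisfies $\varphi_{ki}\circ\pi_{ki}=p_{s_{ki}}$. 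This makes $\psi_i$ an isomorphism $\gT_i\simeq\uar{s_i}$ (equivalently, $\pi_i$ restricted to $\uar{s_i}$ is injective), hence $\pi_i$ is a quotient by $\dar{s_i}$. With this short verification added, your argument is complete.
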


\Subsubsection{The dual viewpoint}

\begin{definition} \label{defisubespace-spectral}
A subset $X'$ of a \sps $X$ is called a \textsl{sub\sps} when the topology on $X'$ induced by $X$ is spectral and
 $\OQC(X')=\sotq{U\cap X'}{U\in\OQC(X)}$ (in other words the canonical injection is a spectral morphism). 
\end{definition}

The following theorem explains that the notion of \textsl{spectral subspace} is translated by the notion of \textsl{quotient \trdi}. Some details are added. See also \thref{th-dico-trdi-spec-mor1}.

\begin{theoremc}[Sub\spss] ~
\label{propSESP} 
\begin{enumerate}
\item Let $\gT'$ be a quotient lattice of $\gT$ and $\pi:\gT\to\gT'$ the quotient morphism. Let us write $X'=\Spec\gT'$, $X=\Spec\gT$ and
$\pi^\star:X'\to X$ the dual map of $\pi$. Then $\pi^\star$ identifies $X'$ with a sub\sps of $X$. 
\item A subset $X'$ of a \sps $X$ is a sub\sps 
\ssi it is closed for the patch topology.
\item If $Z$ is an arbitrary subset of 
$X=\Spec\gT$, its closure for the patch topology
is given by 
$X'=\Spec\gT'$, where
$\gT'$ is the quotient lattice of~$\gT$ defined by the following preorder~$\preceq$:
\begin{equation} \label{eqSSES}
a\preceq b\quad \Longleftrightarrow\quad (\DT(a)\cap Z)\subseteq 
(\DT(b)\cap Z)
\end{equation}
\end{enumerate}
\end{theoremc}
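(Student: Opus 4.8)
The plan is to treat the three items largely independently, using Stone duality (Theorem~\ref{thStoneAntiequiv}) together with the explicit description of quotient lattices from Proposition~\ref{propIdealFiltre} and the description of the patch (constructible) topology recalled after Theorem~\ref{thStoneAntiequiv}. Throughout I would use the fact that, by Krull's theorem and the consequences listed after it, a prime $\fp$ of $\gT$ corresponds to a lattice morphism $\theta_\fp:\gT\to\Deux$, and that a morphism $\pi:\gT\to\gT'$ induces $\pi^\star:\Spec\gT'\to\Spec\gT$ by $\fp\mapsto\pi^{-1}(\fp)$, equivalently $\theta_\fq\mapsto\theta_\fq\circ\pi$.

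For item~(1), I would first check that $\pi^\star$ is injective: since $\pi$ is surjective, two morphisms $\theta,\theta':\gT'\to\Deux$ with $\theta\circ\pi=\theta'\circ\pi$ must agree, so distinct primes of $\gT'$ have distinct images. Next, injectivity plus continuity of $\pi^\star$ is automatic; the real content is that the subspace topology on $X'=\pi^\star(\Spec\gT')$ induced from $X$ coincides with the spectral topology of $\Spec\gT'$, and that $\OQC(X')=\{U\cap X' : U\in\OQC(X)\}$. For the latter I would observe that the basic opens of $\Spec\gT'$ are the $\DT[\gT']({\pi(a)})$ for $a\in\gT$ (since $\pi$ is onto), and that $(\pi^\star)^{-1}(\DT(a))=\DT[\gT']({\pi(a)})$, so $\pi^\star$ pulls basic opens back to basic opens and, being injective, identifies $\OQC(\Spec\gT')$ with the trace of $\OQC(X)$ on $X'$; this is exactly the defining condition of a sub\sps in Definition~\ref{defisubespace-spectral}. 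The quasi-compactness of the induced topology is inherited because $\Spec\gT'$ is itself a \sps.

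For item~(2), the "only if" direction follows from item~(1) once one recalls that the patch topology on $X$ is the Stone dual of the Boolean algebra $\Bo(\gT)$ and that a quotient $\gT\to\gT'$ induces a Boolean quotient $\Bo(\gT)\to\Bo(\gT')$, whose dual is a closed embedding of Stone spaces; so a sub\sps is patch-closed. For the "if" direction, given $X'\subseteq X$ patch-closed, I would produce the quotient lattice whose spectrum is $X'$ by the recipe of item~(3) (taking $Z=X'$, which already equals its patch-closure) and then invoke item~(1). So the crux is really item~(3).

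For item~(3), given an arbitrary $Z\subseteq X=\Spec\gT$, I would define $\gT'$ as the quotient of $\gT$ by the preorder $\preceq$ of \eqref{eqSSES} — this is a legitimate lattice quotient because one checks the four conditions of \eqref{eqPreceq} directly from the Boolean identities $\DT(a\vi b)=\DT(a)\cap\DT(b)$ and $\DT(a\vu b)=\DT(a)\cup\DT(b)$ of \eqref{eqDa}. Then I would show $\Spec\gT'$ is the patch-closure of $Z$ in two steps: first, each point $z\in Z$, i.e.\ each $\theta_z:\gT\to\Deux$, factors through the quotient map $\gT\to\gT'$ (because $a\preceq b$ forces $\theta_z(a)\le\theta_z(b)$, as $z$ lies in $\DT(a)\cap Z$ whenever $\theta_z(a)=1$), giving a canonical injection $Z\hookrightarrow\Spec\gT'=X'$ which by item~(1) is a topological embedding for both the spectral and the patch topologies; second, $X'$ is patch-closed by item~(1)/(2) and contains $Z$, and any patch-closed set containing $Z$ corresponds to a quotient through which all $\theta_z$ ($z\in Z$) factor, hence through which the relations $\eqref{eqSSES}$ hold, hence which factors through $\gT'$ — so $X'$ is contained in it. The main obstacle I anticipate is the density/minimality argument in this last step: showing that $Z$ is patch-dense in $X'=\Spec\gT'$, equivalently that $\gT\to\gT'$ is exactly the universal quotient killing no more than what $\eqref{eqSSES}$ demands. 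Concretely this amounts to: if $\pi(a)\le_{\gT'}\pi(b)$ then $\DT(a)\cap Z\subseteq\DT(b)\cap Z$, which is the definition of $\preceq$, so the point is that the preorder presentation of the quotient is faithful — and here I would lean on Proposition~\ref{propIdealFiltre} (or directly on the construction of quotients via $\preceq$ in \eqref{eqPreceq}) to be sure that no extra collapsing occurs, so that $\Spec\gT'$ has "enough points" coming from $Z$ in the patch topology.
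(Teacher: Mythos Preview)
The paper does not supply its own proof of this theorem. It is stated as a starred ``Theorem$^*$'' (environment \texttt{theoremc}), which in this paper marks classical results recalled from the literature; the surrounding section explicitly says ``We now recall a few results\dots and provide references to the proofs available in the literature,'' pointing to \cite{BW74}, \cite{CC00}, \cite{CL2001-2018}, \cite{Lom2020}. So there is no in-paper argument to compare against.

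Your plan is sound and follows the standard route. A couple of small remarks. For item~(1), you might add one line to check that the induced topology on $\pi^\star(\Spec\gT')$ is not just generated by the traces of basic opens but is actually spectral; this comes for free once you observe that $\pi^\star$ is a homeomorphism onto its image (a continuous bijection from a quasi-compact space to a $T_0$ space with the right basic opens). For item~(3), your worry about the ``faithfulness'' of the preorder presentation is not really an obstacle: since you have verified that $\preceq$ already satisfies the four closure conditions \eqref{eqPreceq}, the quotient order on $\gT'$ is \emph{exactly} $\preceq$, with no further collapsing. The minimality step then goes as you sketch: for any patch-closed $X''=\Spec\gT''\supseteq Z$, Birkhoff/Krull applied to $\gT''$ gives $\pi''(a)\le_{\gT''}\pi''(b)\iff \DT(a)\cap X''\subseteq\DT(b)\cap X''$, and intersecting with $Z\subseteq X''$ shows this implies $a\preceq b$; hence $\gT\to\gT'$ factors through $\gT\to\gT''$, i.e.\ $X'\subseteq X''$.
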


A gluing of \trdis as in  Theorem~\ref{propRecolTD} corresponds to a gluing of topological spaces along 2 by 2 intersections when they are compatible 3 by 3. In this way we glue \spss along suitable \oqcs\footnote{Variant: closed subsets complement of \oqcs. This corresponds to the opposite \trdis and the opposite topology on \spss.} when they are 3 by 3 compatible.\footnote{In Theorem~\ref{propRecolTD} the compatibility is described thanks to a \trdi $\gT_{ijk}$. When gluing topological spaces 
we have a priori three distinct version of $U_1\cap U_2\cap U_3$, respectively, subspaces of $U_1$, $U_2$ and $U_3$. So it is necessary to first identify the two versions of $U_i\cap U_j$ inside $U_i$ and $U_j$ through homeomorphisms. And the three identifications have to give the same $U_1\cap U_2\cap U_3$.} 

Here, the gluing of \spss corresponding to the figure is intuitively evident and easy to prove in \clama. In contrast, Theorem~\ref{propRecolTD} has a rather subtle proof (see \cite{CLQ2006}).

{
\smallskip 
\centerline{\includegraphics[width=11cm]{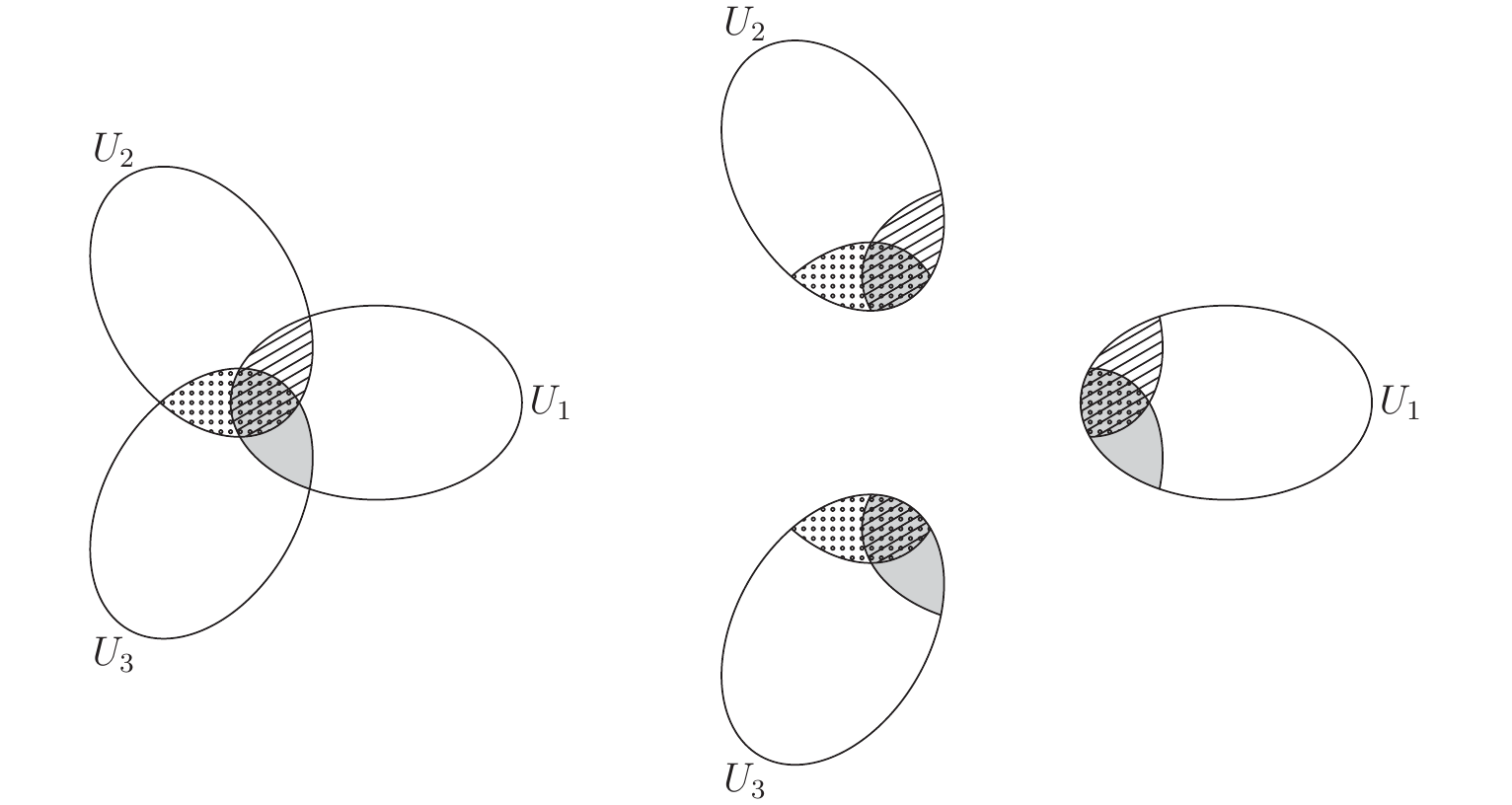}}
}

In the following subsection we give other relevant comparisons between \trdis and dual \spss.

\subsection{Short dictionary of Stone's antiequivalence}
\label{subsecAntiEquiv}

References: Krull's \tho page~\pageref{ThKrull}, \cite[Theorem~IV-2.6]{BW74}, \cite{CC00,CL2001-2018,Lom2020}.

We now recall a few results about Stone's antiequivalence of categories between distributive lattices and spectral spaces, and provide references to the proofs available in the literature. We also prove Theorem~\ref{lemrcfdimtrdi}, which finds applications in Section~\ref{subsecdival}.

\smallskip We are in the following context: $f:\gT\to\gT'$ is a morphism of \trdis and $\Spec(f)$, denoted as $f\sta$, is the dual morphism, from $X'=\Spec(\gT')$ to $X=\Spec(\gT)$.

\smallskip We first recall some usual \dfns in \clama.
\begin{itemize}\itemsep=.05em
\item The morphism $f$ is said to be \textsl{lying over} when~$f\sta$ is onto: any \idep of $\gT$ is the preimage of a \idep in~$\gT'$.
\item The morphism $f$ is said to be \textsl{going up} when one has: \textsl{if $\fq\in X'$, $f\sta(\fq)=\fp$, and $\fp\subseteq\fp_2$ in~$ X $, then there exists $\fq_2\in X'$ such that
$\fq\subseteq\fq_2$ and $f\sta(\fq_2)=\fp_2$}.
\item In a similar way $f$ is said to be \textsl{going down} when one has: \textsl{if $\fq\in X'$, $f\sta(\fq)=\fp$, and $\fp\supseteq\fp_2$ in~$ X $, then there exists $\fq_2\in X'$ such that $\fq\supseteq\fq_2$ and $f\sta(\fq_2)=\fp_2$}.
\item The morphism $f$ \textsl{has the incomparability property} when one has: if $\fq_1\subseteq \fq_2\in X$ and $f\sta(\fq_1)=f\sta(\fq_2)$ in~$X'$ then $\fq_1= \fq_2$.
\item The \sps $ X $ is said to be \textsl{normal} if for all $x$, the closure $\ov{\so z}$ contains a unique closed point.
\item The \sps $\SpecT$ is said to be \textsl{completely normal} if for all $x,y,z$ such that $x\in\ov{\so z}$ and $y\in\ov{\so z}$ one has $x\in\ov{\so y}$ or $y\in\ov{\so x}$. 
\end{itemize}

\Subsubsection{Properties of morphisms}

\begin{theorem} \label{th-dico-trdi-spec-mor1} \emph{\cite[Theorem~IV-2.6]{BW74}}
In \clama we have the following equivalences. 
\begin{enumerate}
\item $f\sta$ is onto ($f$ is lying over) $\Longleftrightarrow$ $f$ is injective $\Longleftrightarrow$ $f$ is a monomorphism $\Longleftrightarrow$ $f\sta$ is an epimorphism.
\item $f$ is an epimorphism $\Longleftrightarrow$ $f\sta$ is a monomorphism $\Longleftrightarrow$ $f\sta$ is injective.
\item $f$ is onto\footnote{In other words, $f$ is a quotient morphism.} $\Longleftrightarrow$ $f\sta$ is an \iso on its image, which is a sub\sps of $X$.
\end{enumerate} 
\end{theorem}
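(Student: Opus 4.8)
The plan is to reduce everything to Stone's antiequivalence (Theorem~\ref{thStoneAntiequiv}), to Krull's \tho (\paref{ThKrull}) together with its corollaries already recorded in the excerpt (the Birkhoff representation \tho, and: an injective \homo has surjective dual), and to the sub\sps dictionary of Theorem~\ref{propSESP}. Two soft categorical facts are used throughout. First, in any concrete category a surjective morphism is an epimorphism and an injective morphism is a monomorphism. Second, the converses hold in the two categories at hand: in the variety of \trdis every monomorphism is injective (test against the free \trdi on one generator, the three-element chain $0<t<1$, and factor through it), and in the category of \spss with spectral maps every monomorphism is injective (test against the one-point space $\Spec\Deux$, using that any constant map is spectral, since the preimage of a \oqc is then $\emptyset$ or the whole space). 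Being a contravariant equivalence, $\Spec$ exchanges monomorphisms with epimorphisms and preserves and reflects isomorphisms; and under the canonical natural \isos $\gT\simeq\OQC(\Spec\gT)$ the functor $\OQC$ carries $f^\star$ back to $f$.

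For item~1, I would run the cycle
\[
f^\star\ \text{onto}\ \Longrightarrow\ f^\star\ \text{epi}\ \Longrightarrow\ f\ \text{mono}\ \Longrightarrow\ f\ \text{injective}\ \Longrightarrow\ f^\star\ \text{onto},
\]
whose four implications come respectively from: surjective maps are epimorphisms; the antiequivalence; monomorphisms of \trdis are injective; and the Krull corollary. The extra links ``$f$ injective $\Leftrightarrow$ $f$ mono'' and ``$f$ mono $\Leftrightarrow$ $f^\star$ epi'' then collapse all four conditions into a single equivalence class. (If preferred, ``$f^\star$ onto $\Rightarrow$ $f$ injective'' can be proved directly from the identity $(f^\star)^{-1}(\DT(a))=\fD_{\gT'}(f(a))$ and the Birkhoff representation \tho.) For item~2, the antiequivalence gives ``$f$ epi $\Leftrightarrow$ $f^\star$ mono'' immediately, and the soft facts above give ``$f^\star$ mono $\Leftrightarrow$ $f^\star$ injective''; there is deliberately no clause linking this to surjectivity of $f$, since epimorphisms of \trdis need not be surjective.

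For item~3: if $f$ is onto, then $a\preceq b :\Leftrightarrow f(a)\le f(b)$ defines a relation on $\gT$ satisfying~\eqref{eqPreceq}, so $\gT'$ is, up to a unique \iso, the quotient \trdi $\gT/{\preceq}$ with $f$ as quotient morphism; Theorem~\ref{propSESP}(1) states exactly that $f^\star$ then identifies $X'=\Spec\gT'$ with a sub\sps of $X$, i.e.\ it is an \iso of \spss onto its image and that image is a sub\sps. Conversely, assume $f^\star$ is an \iso of \spss onto a sub\sps $Y\subseteq X$; write the (spectral) inclusion $j\colon Y\hookrightarrow X$ and the \iso $g\colon X'\to Y$, so $f^\star=j\circ g$. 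By Definition~\ref{defisubespace-spectral} the map $\OQC(j)\colon\OQC(X)\to\OQC(Y)$, $U\mapsto U\cap Y$, is surjective, hence a quotient morphism; since $\OQC(g)$ is an \iso and, under the canonical identifications, $f$ corresponds to $\OQC(f^\star)=\OQC(g)\circ\OQC(j)$, we conclude that $f$ is onto.

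The step I expect to cause the most friction is the bookkeeping behind the two ``monomorphism $=$ injective'' claims and behind item~3: the \spss form only a \emph{non-full} subcategory of \slbTop (spectral maps are special continuous maps), so these claims need the explicit observation that one-point maps and constant maps are spectral; and in item~3 one must track the naturality of Stone's antiequivalence closely enough that the factorization of $f^\star$ through the sub\sps $Y$ really descends to a factorization of $f$ through a surjection. Everything else is a routine assembly of results already stated in the excerpt.
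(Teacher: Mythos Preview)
The paper does not prove this theorem; it merely states it with the citation to \cite[Theorem~IV-2.6]{BW74}, in a subsection that explicitly says the authors ``recall a few results about Stone's antiequivalence \dots and provide references to the proofs available in the literature.'' There is therefore no paper proof to compare against. Your argument is correct and is a standard route to these equivalences from the ingredients already assembled in the excerpt: the antiequivalence (Theorem~\ref{thStoneAntiequiv}), the Krull corollary that injective lattice morphisms have surjective dual, and the sub\sps dictionary (Theorem~\ref{propSESP}); your handling of the mono/injective identifications in both categories (via the free object $\mathbf{3}$ on the lattice side and the one-point space $\Spec\Deux$ on the spectral side) is sound, and the converse direction of item~3 via $\OQC$ and the factorization $f^\star=j\circ g$ is the right bookkeeping.
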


There are bijective morphisms of \spss that are not isomorphisms. For example, the morphism $\Spec(\Bo(\gT))\to\Spec\gT$ is rarely an \iso and the lattice morphism $\gT\to\Bo(\gT)$ is an injective epimorphism which is rarely onto.

\begin{theorem} \label{th-dico-trdi-spec-mor2} \emph{(\cite{CL2001-2018})}.
In \clama we have the following equivalences. 
\begin{enumerate}
\item $f$ is going up $\Longleftrightarrow$ for each $a,c\in\gT$ and $y\in\gT'$ we have
\[
f(a)\leq f(c)\vu y \;\Rightarrow\;\exists x\in\gT\; (a\leq c \vu x \hbox{ and } f(x)\leq y).
\]

\item $f$ is going down $\Longleftrightarrow$ for each $a,c\in\gT$ and $y\in\gT'$ we have
\[
f(a)\geq f(c)\vi y \;\Rightarrow\;\exists x\in\gT\; (a\geq c \vi x \hbox{ and } f(x)\geq y).\]

\item $f$ has the \prt of incomparability $\Longleftrightarrow$ $f$ is zero-dimensional.\footnote{See \thref{th-dico-trdi-spec-dim2}.} 
\end{enumerate} 
\end{theorem}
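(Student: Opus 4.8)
The plan is to reduce everything to Krull's theorem (page~\pageref{ThKrull}) together with routine manipulations of ideals and filters in $\gT$ and $\gT'$; item~2 will then follow from item~1 by passing to the opposite lattices, and item~3 is essentially a restatement of the definition of a zero-dimensional morphism.

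For the implication $\Leftarrow$ in item~1, assume the displayed algebraic condition, take $\fq_1\in X'$, set $\fp_1=f^{-1}(\fq_1)$, and let $\fp_1\subseteq\fp_2$ in $X$. Form in $\gT'$ the ideal $\mathfrak j$ generated by $\fq_1\cup f(\fp_2)$ and the filter $G$ generated by $f(\gT\setminus\fp_2)$; since $\fp_2$ is an ideal of $\gT$ and $\gT\setminus\fp_2$ a prime filter, the former is stable under $\vu$ and the latter under $\vi$. The crucial step is to check $\mathfrak j\cap G=\emptyset$: a common element would yield $f(c)\leq q\vu f(a)$ with $q\in\fq_1$, $a\in\fp_2$, $c\notin\fp_2$, and then the algebraic condition applied with $a$ and $c$ exchanged and with $q$ in the role of $y$ produces $x\in\gT$ with $c\leq a\vu x$ and $f(x)\leq q$; but $f(x)\in\fq_1$ forces $x\in\fp_1\subseteq\fp_2$, whence $c\in\fp_2$, a contradiction. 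Krull's theorem then provides a prime $\fq_2\supseteq\mathfrak j$ disjoint from $G$, and one reads off $\fq_1\subseteq\fq_2$, then $\fp_2\subseteq f^{-1}(\fq_2)$ from $f(\fp_2)\subseteq\fq_2$, and $f^{-1}(\fq_2)\subseteq\fp_2$ from disjointness with $G$; hence $f^{-1}(\fq_2)=\fp_2$, as required.

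For the implication $\Rightarrow$ in item~1, assume $f$ is going up, let $f(a)\leq f(c)\vu y$, and suppose for contradiction that no $x$ satisfies $a\leq c\vu x$ and $f(x)\leq y$. Then $E=\sotq{x\in\gT}{f(x)\leq y}$ is an ideal of $\gT$ and $a$ lies outside the ideal generated by $\so{c}\cup E$, so Krull's theorem yields a prime $\fp$ of $\gT$ with $c\in\fp$, $E\subseteq\fp$, $a\notin\fp$. In $\gT'$ the principal ideal $\dar y$ is disjoint from the filter generated by $f(\gT\setminus\fp)$ (a common element would give $f(t)\leq y$ with $t\notin\fp$, hence $t\in E\subseteq\fp$), so Krull's theorem gives a prime $\fq_1$ with $y\in\fq_1$ and $f^{-1}(\fq_1)\subseteq\fp$. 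Applying going up to $\fq_1$, whose image is contained in $\fp$, produces $\fq\supseteq\fq_1$ with $f^{-1}(\fq)=\fp$; but then $y\in\fq$ and $f(c)\in\fq$, so $f(c)\vu y\in\fq$, hence $f(a)\in\fq$, i.e.\ $a\in\fp$, a contradiction. Item~2 is the order-dual: applying item~1 to $\gT\oop$ and $\gT'\oop$ reverses the specialization order on spectra (so ``going up'' becomes ``going down''), exchanges $\vu$ with $\vi$ and ideals with filters, and turns the going-up inequality into the going-down one.

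For item~3, the incomparability property says precisely that for each $\fp\in X$ no two distinct points of the fiber $(f\sta)^{-1}(\fp)$ are comparable for the specialization order, i.e.\ that every fiber is a spectral space of Krull dimension $\leq 0$; by definition this is the zero-dimensionality of $f$, the relevant bookkeeping (definition of the Krull dimension of a morphism, description of the fibers) being that of \thref{th-dico-trdi-spec-dim2}. I expect the main obstacle to be the two disjointness verifications that feed Krull's theorem in item~1: the whole point is to choose the ideal/filter pairs in $\gT$ and $\gT'$ so that the algebraic condition (resp.\ the going-up property) is exactly what is needed to close the argument. Item~3, by contrast, is not self-contained and genuinely leans on the later treatment of dimension of morphisms.
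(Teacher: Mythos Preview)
Your argument is correct. The two applications of Krull's theorem in item~1 are set up exactly right: in the $\Leftarrow$ direction the disjointness of $\mathfrak j$ and $G$ is precisely where the algebraic hypothesis enters, and in the $\Rightarrow$ direction the choice of $E=\sotq{x\in\gT}{f(x)\leq y}$ as an ideal of $\gT$, followed by the second separation in $\gT'$, correctly produces the $\fq_1$ to which going up applies. The duality reduction for item~2 is sound (prime ideals of $\gT\oop$ are complements of prime ideals of $\gT$, so inclusion reverses and going up becomes going down), and your reading of item~3 as a tautology modulo the fiberwise definition of the Krull dimension of a morphism is the intended one.

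The paper itself gives no proof of this theorem: both the English and French versions simply cite \cite{CL2001-2018}. So there is nothing to compare your argument to within the paper; you have supplied a complete classical proof where the authors defer to an external reference. Your approach---reduce to Krull's theorem via carefully chosen ideal/filter pairs---is the standard one and is presumably what the cited technical report contains.
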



\begin{theorem}[Open spectral map] \label{th-dico-trdi-spec-mor3} \emph{(\cite{Lom2020})}
In \clama \propeq 
\begin{enumerate}
\item $\Spec(f)$ is an open map.
\item There exists a map $\wi f:\gT'\to \gT$
with the following \prts.
\begin{enumerate}
\item \label{i2a} For $c\in\gT$ and $b\in\gT'$, one has $b\leq f(c) \Leftrightarrow \wi f(b)\leq c$. \\
In particular, $b\leq f(\wi f(b))$ and $\wi f(b_1\vu b_2)=\wi f(b_1)\vu \wi f(b_2)$.
\item \label{i2b} For $a,c\in\gT$ and $b\in\gT'$, one has \hbox{$f(a)\vi b\leq f(c) \Leftrightarrow a\vi\wi f(b)\leq c $}.
\item \label{i2c} For $a\in\gT$ and $b\in\gT'$, one has $\wi f(f(a)\vi b)=a\vi \wi f(b)$.
\item \label{i2d} For $a\in\gT$, one has $\wi f(f(a))=\wi f(1)\vi a$. 
\end{enumerate}
\item There exists a map $\wi f:\gT'\to \gT$
satisfying \prt \ref{i2b}. 
\item For $b\in \gT$ the g.l.b.\ {$\Vi\limits_{b\leq f(c)} c$} exists, and if we write it $\wi f(b)$, 
the \prt \ref{i2b} holds. 
\end{enumerate} 
\end{theorem}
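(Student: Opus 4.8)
The plan is to reduce the openness of $\Spec(f)\colon X'=\Spec\gT'\to X=\Spec\gT$ to a concrete statement about $\gT$ and $\gT'$ by computing the image $\Spec(f)\bigl(\fD_{\gT'}(b)\bigr)$ of a basic open set, and then to run the cycle $(1)\Rightarrow(2)\Rightarrow(3)\Rightarrow(4)\Rightarrow(1)$, with $(2)\Rightarrow(3)$ immediate (property~\ref{i2b} is one of the four properties listed in item~$2$). Two auxiliary facts are needed. \textbf{Fact 1:} for $b\in\gT'$ and $c\in\gT$ one has $\Spec(f)\bigl(\fD_{\gT'}(b)\bigr)\subseteq\DT(c)\iff b\leq f(c)$; the direction $\Leftarrow$ is immediate from $\Spec(f)^{-1}\bigl(\DT(c)\bigr)=\fD_{\gT'}\bigl(f(c)\bigr)$, and for $\Rightarrow$, if $b\not\leq f(c)$ then Krull's theorem yields a prime $\fp'$ of $\gT'$ with $f(c)\in\fp'$, $b\notin\fp'$, whose contraction $f^{-1}(\fp')$ lies in the image but not in $\DT(c)$. \textbf{Fact 2:} for $b\in\gT'$ and $\fp\in\Spec\gT$, one has $\fp\notin\Spec(f)\bigl(\fD_{\gT'}(b)\bigr)$ iff there exist $c\in\gT\setminus\fp$ and $d\in\fp$ with $b\vi f(c)\leq f(d)$. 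Indeed, the fiber of $\Spec(f)$ over $\fp$ is, as a set, $\Spec\gT''$ with $\gT''=\gT'/\bigl(f(\fp)=0,\ f(\gT\setminus\fp)=1\bigr)$; this fiber meets $\fD_{\gT'}(b)$ iff the image of $b$ in $\gT''$ is nonzero (Krull), and Proposition~\ref{propIdealFiltre} rewrites ``$b=0$ in $\gT''$'' as the stated condition, using that $\gT\setminus\fp$ is a filter (so a finite meet of its elements stays outside $\fp$) and $\fp$ an ideal (so a finite join of its elements stays in $\fp$).

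For $(1)\Rightarrow(2)$, assume $\Spec(f)$ open. Then $\Spec(f)\bigl(\fD_{\gT'}(b)\bigr)$ is quasi-compact (continuous image of the quasi-compact $\fD_{\gT'}(b)$) and open by hypothesis, hence equals $\DT(a)$ for a unique $a$; set $\wi f(b):=a$. Property~\ref{i2a} is then Fact~1 together with injectivity of $a\mapsto\DT(a)$; the displayed consequences follow ($b\leq f(\wi f(b))$ by taking $c=\wi f(b)$, and $\wi f(b_1\vu b_2)=\wi f(b_1)\vu\wi f(b_2)$ because $\Spec(f)$ and $\DT$ both commute with finite unions). For~\ref{i2b}, the set-theoretic identity $g\bigl(g^{-1}(A)\cap B\bigr)=A\cap g(B)$ gives $\Spec(f)\bigl(\fD_{\gT'}(f(a)\vi b)\bigr)=\DT(a)\cap\Spec(f)\bigl(\fD_{\gT'}(b)\bigr)=\DT\bigl(a\vi\wi f(b)\bigr)$; comparing this with Fact~1 applied to $f(a)\vi b$ yields~\ref{i2b}, and the same computation plus injectivity of $\DT$ gives~\ref{i2c}, of which~\ref{i2d} is the case $b=1$.

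For $(3)\Rightarrow(4)$, a $\wi f$ satisfying~\ref{i2b} gives, on setting $x=1$, the equivalence $b\leq f(y)\iff\wi f(b)\leq y$; taking $y=\wi f(b)$ shows $b\leq f(\wi f(b))$, so $\wi f(b)$ is the least $c$ with $b\leq f(c)$, hence equals $\Vi_{b\leq f(c)}c$ (which therefore exists), while~\ref{i2b} holds by hypothesis. For $(4)\Rightarrow(1)$, I claim $\Spec(f)\bigl(\fD_{\gT'}(b)\bigr)=\DT\bigl(\wi f(b)\bigr)$, which is open, so $\Spec(f)$ is open. Using Fact~2: if $\wi f(b)\in\fp$, then~\ref{i2b} at $x=1$, $y=\wi f(b)$ gives $b\leq f(\wi f(b))$, so $c=1$ and $d=\wi f(b)$ witness that $\fp$ is not in the image; conversely, if $b\vi f(c)\leq f(d)$ with $c\notin\fp$ and $d\in\fp$, then~\ref{i2b} gives $c\vi\wi f(b)\leq d\in\fp$, and primeness of $\fp$ with $c\notin\fp$ forces $\wi f(b)\in\fp$.

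The delicate step is Fact~2: one must identify the set-theoretic fiber of $\Spec(f)$ over a prime with the spectrum of the explicit quotient lattice $\gT'/\bigl(f(\fp)=0,\ f(\gT\setminus\fp)=1\bigr)$, extract from Proposition~\ref{propIdealFiltre} a workable criterion for an element to vanish in that quotient, and invoke Krull's theorem for the existence of a prime avoiding $b$. Everything else is formal manipulation in distributive lattices, and the appeals to classical logic are confined to the uses of Krull's theorem.
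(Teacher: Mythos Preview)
The paper does not give its own proof of this theorem; it merely cites \cite{Lom2020} and moves on, so there is no in-paper argument to compare against. Your cycle $(1)\Rightarrow(2)\Rightarrow(3)\Rightarrow(4)\Rightarrow(1)$ is correct: Fact~1 is just Birkhoff's representation theorem applied to the equality $\Spec(f)^{-1}\bigl(\DT(c)\bigr)=\fD_{\gT'}\bigl(f(c)\bigr)$; Fact~2 is a clean unpacking of the fiber via Proposition~\ref{propIdealFiltre} and Krull; the set-theoretic identity $g\bigl(g^{-1}(A)\cap B\bigr)=A\cap g(B)$ you use for~\ref{i2c} is valid without any hypothesis on $g$; and your $(4)\Rightarrow(1)$ via Fact~2 and primeness of~$\fp$ is sound. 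Two cosmetic remarks: in $(3)\Rightarrow(4)$ and $(4)\Rightarrow(1)$ you switch to variables $x,y$ where the statement uses $a,c$, which is momentarily confusing; and the statement itself has a typo (``$b\in\gT$'' in item~4 should read $b\in\gT'$), which you have silently and correctly interpreted.
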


\Subsubsection{Dimension properties}

In \clama, the dimension of a \sps, based on chains of irreducible closed subsets, is called its \textsl{Krull dimension}. The dimension of the empty \sps is $-1$.
Theorem~\ref{th-dico-trdi-spec-dim1} explains why the definition of the Krull dimension of a distributive lattice in constructive mathematics, denoted $\Kdim\gT$, is rather based on its Item \textsl{2}. The base case is for the trivial, singleton lattice, which has dimension $-1$. Since the equivalence between Item \textsl{2} and Item \textsl{3} is constructive, Item \textsl{3} also provides an appropriate constructive definition of the Krull dimension of a \trdi in non-trivial cases.

\cite{CL2003} explain the relation between Item \textsl{1} and Item \textsl{3} of the \tho, as well as the connection with the pioneering approach of \cite{Joy76}. Item \textsl{2} in the \tho goes back to \cite{CLR05}.

\begin{theorem}[Dimension of spaces] \label{th-dico-trdi-spec-dim1} \emph{See \cite{CL2003,CLR05}.} Let $n\in\NN$, in \clama 
\propeq
\begin{enumerate}
\item The \sps $\Spec(\gT)$ has Krull dimension $\leq n$ (defined using chains of primes).
\item For any $x\in\gT$ the quotient lattice $\gT/(x=0,I_x=0)$, where $I_x=\sotq{y}{x\vi y=0}$, has dimension $\leq n-1$.
\item 
For each sequence $(x_0,\dots,x_n)$ in $\gT$ there exists a \emph{complementary sequence} $(y_0,\dots,y_n)$, which means
\begin{equation}\label{eqC2G}
\left.\arraycolsep3pt
\begin{array}{rcl}
1& \vda & y_n, x_n\\
 y_n, x_n & \vda & y_{n -1}, x_{n -1} \\
\vdots~~~~& \vdots &~~~~ \vdots \\
 y_1, x_1& \vda & y_0, x_0 \\
y_0, x_0& \vda & 0  
\end{array}
\right\}
\end{equation}
\end{enumerate}
\end{theorem}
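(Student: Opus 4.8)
The plan is to establish $(1)\Leftrightarrow(2)$ by a topological argument in \clama based on Krull's \tho, and $(2)\Leftrightarrow(3)$ by induction on $n$; the bridge between the two parts is the identification of $\Spec\big(\gT/(x=0,I_x=0)\big)$ with the topological boundary of $\DT(x)$ in $X=\Spec\gT$. For this, one first checks via Krull's \tho that $\bigcap\sotq{\fq\in X}{x\notin\fq}=I_x$, hence $\ov{\DT(x)}=\VT(I_x)$, and therefore $\Spec\big(\gT/(x=0,I_x=0)\big)=\VT(x)\cap\VT(I_x)=\ov{\DT(x)}\setminus\DT(x)$, which is a sub\sps of $X$ by \thref{propSESP} (so that the Krull dimension referred to in $(2)$ is unambiguous).

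For $(2)\Rightarrow(1)$: given a chain of \ideps $\fp_0\subsetneq\cdots\subsetneq\fp_{n+1}$ in $X$, choose $a\in\fp_1\setminus\fp_0$ and set $U=\DT(a)$. Then $\fp_0\in U$ while $\fp_1,\dots,\fp_{n+1}\notin U$, and all of $\fp_1,\dots,\fp_{n+1}$ belong to $\ov{\so{\fp_0}}\subseteq\ov U$; hence they form a chain of length $n$ inside $\ov U\setminus U=\Spec\big(\gT/(a=0,I_a=0)\big)$, contradicting $(2)$. For $(1)\Rightarrow(2)$: a chain $\fp_0\subsetneq\cdots\subsetneq\fp_n$ inside $\Spec\big(\gT/(x=0,I_x=0)\big)=\ov{\DT(x)}\setminus\DT(x)$ can be extended downward. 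Since $\fp_0\in\ov{\DT(x)}$, one gets $x\vi b\neq 0$ for every $b\notin\fp_0$, so the filter $\ff$ generated by $\{x\}\cup(\gT\setminus\fp_0)$ does not contain $0$; Krull's \tho then yields a \idep $\fp_{-1}$ disjoint from $\ff$, whence $\fp_{-1}\subseteq\fp_0$ and $x\in\fp_0\setminus\fp_{-1}$, so $\fp_{-1}\subsetneq\fp_0$ and we obtain a chain of length $n+1$ in $X$, contradicting $(1)$.

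For $(2)\Leftrightarrow(3)$ I would induct on $n$, the base case $n=-1$ being the trivial lattice with the empty complementary sequence. The inductive step reduces to the following lemma, proved using the quotient description of \thref{propIdealFiltre}: \emph{a sequence $(x_0,\dots,x_n)$ of $\gT$ admits a complementary sequence \ssi its image $(\ov{x_1},\dots,\ov{x_n})$ in $\ov\gT:=\gT/(x_0=0,I_{x_0}=0)$ admits one.} Direction $(\Rightarrow)$ is immediate: if $(y_0,\dots,y_n)$ is complementary for $(x_0,\dots,x_n)$, then $y_0\in I_{x_0}$, so $\ov{y_0}=0=\ov{x_0}$ in $\ov\gT$ and $(\ov{y_1},\dots,\ov{y_n})$ works. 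For $(\Leftarrow)$, from a complementary sequence $(\ov{z_1},\dots,\ov{z_n})$ in $\ov\gT$ the quotient description provides $v_0,\dots,v_n\in I_{x_0}$ with $z_1\vi x_1\le x_0\vu v_0$, with $z_{i+1}\vi x_{i+1}\le z_i\vu x_i\vu x_0\vu v_i$ for $1\le i\le n-1$, and with $1\le z_n\vu x_n\vu x_0\vu v_n$; then setting $w:=v_0\vu\cdots\vu v_n\in I_{x_0}$, $y_0:=w$ and $y_i:=z_i\vu x_0\vu w$ for $1\le i\le n$, one checks directly, using $w\vi x_0=0$, that $(y_0,\dots,y_n)$ is a complementary sequence for $(x_0,\dots,x_n)$ in $\gT$. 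Granting the lemma, $(3)$ at level $n$ for $\gT$ is equivalent to: for every $x\in\gT$, $(3)$ at level $n-1$ holds for $\gT/(x=0,I_x=0)$; by the induction hypothesis this is $(1)$ at level $n-1$ for those lattices, that is, exactly $(2)$ at level $n$ for $\gT$ in view of the boundary identification. Together with $(1)\Leftrightarrow(2)$ this closes the induction.

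The main obstacle is the direction $(\Leftarrow)$ of the boundary lemma: one has to produce a \emph{single} auxiliary element $w$ that simultaneously absorbs every \gui{remainder} $v_i$ and the term $x_0$ all along the ladder of entailments, while keeping $w\vi x_0=0$ so that the bottom relation $y_0,x_0\vda 0$ is preserved. The bookkeeping of the two ladders across the shift from level $n$ to level $n-1$ also requires some care, but is routine.
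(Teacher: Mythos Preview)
The paper does not give an in-text proof of this theorem: it is stated as a survey result with references to \cite{CL2003,CLR05}, and the surrounding commentary only indicates which cited source handles which pair of items. So there is no ``paper's own proof'' to compare against beyond those pointers.

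Your argument is correct and is essentially the standard route of the cited works. The identification of $\Spec\big(\gT/(x=0,I_x=0)\big)$ with the boundary $\ov{\DT(x)}\setminus\DT(x)$ via $\ov{\DT(x)}=\VT(I_x)$ is exactly the geometric content behind $(1)\Leftrightarrow(2)$, and your two Krull-theorem arguments for $(2)\Rightarrow(1)$ and $(1)\Rightarrow(2)$ are fine. The boundary lemma for $(2)\Leftrightarrow(3)$ is also correct; your construction $w=v_0\vu\cdots\vu v_n$, $y_0=w$, $y_i=z_i\vu x_0\vu w$ does the job, and the verification is straightforward distributivity.

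One small organizational point: you announce an induction on $(2)\Leftrightarrow(3)$ but then invoke the induction hypothesis in the form ``$(3)_{n-1}\Leftrightarrow(1)_{n-1}$'', and close by appealing to the separately proved $(1)\Leftrightarrow(2)$. This is fine mathematically, but it would read more cleanly if you either (a) state up front that you induct on the full three-way equivalence, using the direct $(1)\Leftrightarrow(2)$ as part of each induction step, or (b) observe that ``dimension $\leq n-1$'' in item $(2)$ is by definition $(1)_{n-1}$, so the lemma plus the direct $(1)\Leftrightarrow(2)$ immediately reduces $(3)_n\Leftrightarrow(2)_n$ to the level-$(n-1)$ statement. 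Either packaging avoids the slight mismatch between what you say you are inducting on and what you actually use.
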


For example, in dimension $n\leq 2$, the inequalities in \pref{eqC2G} correspond to the following diagram in~$\gT$.
\[\SCO{x_0}{x_1}{x_2}{y_0}{y_1}{y_2}
\]

A zero-dimensional \trdi is a \agB.

Items \textsl{2} and \textsl{3} give a \cov meaning to the statement $\Kdim(\gT)\leq n$ for $n\geq -1$. But in the general case, it seems impossible to understand \cot the statement $\Kdim(\gT)=n$ (for $n\geq 0$).

Regarding the \ddk of commutative rings, see \cite{Lom02}, \cite{CL2003} and \cite[Chapter XIII]{CACM}. 
This \dfn of the \ddk is close to (and probably more general than) the one given by \cite[page 584]{Lur2009}, which was motivated by the case of certain non-Noetherian rings.

In addition, it is impossible to prove \cot $\Kdim(\RR)\leq 0$. As $\RR$ is a reduced local ring, $\Kdim(\RR)\leq 0$ is equivalent to
the assertion \textsl{any~$x$ is null or invertible}, \cad \LPO.

\begin{lemma} \label{lemDimQuot}
If $\gT'$ is a quotient of $\gT$, then $\Kdim(\gT')\leq \Kdim(\gT).$ 
\end{lemma}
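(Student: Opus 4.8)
The plan is to work directly with the constructive characterization of Krull dimension given by Item \textsl{3} of Theorem~\ref{th-dico-trdi-spec-dim1}, rather than with chains of primes. As usual in \coma, the inequality $\Kdim(\gT')\le\Kdim(\gT)$ is read as: for every integer $n\ge-1$, if $\Kdim(\gT)\le n$ then $\Kdim(\gT')\le n$.

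First I would record the only structural fact needed. Since $\gT'$ is a quotient of $\gT$, there is a surjective \trdi morphism $\pi\colon\gT\to\gT'$. Being a lattice morphism, $\pi$ commutes with finite $\vi$ and $\vu$, preserves $0$ and $1$, and is order preserving; hence it transports the entailment relation: if $A\vdash_\gT B$ for $A,B\in\Pfe(\gT)$, i.e.\ $\Vi A\le\Vu B$ in $\gT$, then $\Vi\pi(A)=\pi(\Vi A)\le\pi(\Vu B)=\Vu\pi(B)$, that is $\pi(A)\vdash_{\gT'}\pi(B)$, where $\pi(A)$ denotes the (finitely enumerated) image of $A$.

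Next, fix $n\ge-1$ with $\Kdim(\gT)\le n$ and let $(x'_0,\dots,x'_n)$ be a sequence in $\gT'$. Using the surjectivity of $\pi$, choose $x_i\in\gT$ with $\pi(x_i)=x'_i$ for each $i$. By hypothesis there is a complementary sequence $(y_0,\dots,y_n)$ in $\gT$ for $(x_0,\dots,x_n)$, i.e.\ the entailments of \pref{eqC2G} hold in $\gT$. Applying $\pi$ to each of these entailments and using $\pi(1)=1$, $\pi(0)=0$, the sequence $\bigl(\pi(y_0),\dots,\pi(y_n)\bigr)$ satisfies the very same system \pref{eqC2G} in $\gT'$ relative to $(x'_0,\dots,x'_n)$, so it is a complementary sequence. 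As $(x'_0,\dots,x'_n)$ was arbitrary, $\Kdim(\gT')\le n$. (The case $n=-1$ is included: $\Kdim(\gT)\le-1$ means $0=_\gT1$, hence $0=_{\gT'}1$; alternatively, it is the displayed argument with an empty list of $x_i$'s.)

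There is no genuine obstacle here: the argument is a routine transport of the complementary-sequence condition along the quotient morphism. The two points deserving a word of care are that a \trdi morphism preserves the relation $\vdash$ (immediate from preservation of $\vi$, $\vu$ and of the order), and that the statement must be understood through Item \textsl{3} of Theorem~\ref{th-dico-trdi-spec-dim1} rather than through the constructively problematic notion of equality of Krull dimensions mentioned just before the lemma. One could also deduce the result classically from Stone duality, since $\Spec(\gT')$ is then a subspace of $\Spec(\gT)$, but the direct proof above is the constructive one.
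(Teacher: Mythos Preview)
Your proof is correct and is exactly the intended argument: the paper's own proof consists of the single line ``Use Item \textsl{3} in Theorem~\ref{th-dico-trdi-spec-dim1},'' and you have simply spelled out the natural details of lifting a sequence along the surjection and pushing its complementary sequence back down.
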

%
\begin{proof}
Use Item \emph{3} in \thref{th-dico-trdi-spec-dim1}.
\end{proof}
%

\begin{lemma} \label{lemDimProd}
Let $\gT_1$ and $\gT_2$ be \trdis and $\gT=\gT_1\times \gT_2$. Then, for $n\geq -1$ we have 
\[
\Kdim(\gT)\leq n\hbox{ \ssi }\;\Kdim(\gT_i)\leq n\hbox{ for }i=1,2.
\] 
In a shortened form: $\Kdim(\gT_1\times \gT_2)=\sup(\Kdim\gT_1,\Kdim\gT_2)$. 
\end{lemma}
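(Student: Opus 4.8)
The plan is to prove the two inequalities $\sup(\Kdim\gT_1,\Kdim\gT_2)\le\Kdim(\gT)$ and $\Kdim(\gT)\le\sup(\Kdim\gT_1,\Kdim\gT_2)$ separately, using throughout the constructive characterization of Krull dimension by complementary sequences (Item~\textsl{3} of Theorem~\ref{th-dico-trdi-spec-dim1}), which interacts transparently with finite products.

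For the first inequality I would note that each factor $\gT_i$ is a quotient of $\gT=\gT_1\times\gT_2$: the projection $\pi_i\colon\gT\to\gT_i$ is a surjective morphism of \trdis, and any surjective lattice morphism exhibits its target as a quotient lattice, via the preorder $\preceq$ given by $a\preceq b\equidef\pi_i(a)\le\pi_i(b)$, which one checks at once satisfies \pref{eqPreceq}. (Concretely, $\gT_1\simeq\gT/(\dar u=0)$ for $u=(0_{\gT_1},1_{\gT_2})$, and symmetrically for $\gT_2$.) Lemma~\ref{lemDimQuot} then gives $\Kdim(\gT_i)\le\Kdim(\gT)$ for $i=1,2$, hence the claimed inequality.

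For the reverse inequality I would fix $n\ge-1$, assume $\Kdim(\gT_i)\le n$ for $i=1,2$, and take an arbitrary sequence $(x_0,\dots,x_n)$ in $\gT$, written $x_k=(x_k^{(1)},x_k^{(2)})$. Applying the hypothesis in each factor to the projected sequence yields a complementary sequence $(y_0^{(i)},\dots,y_n^{(i)})$ in $\gT_i$; the candidate witness in $\gT$ is $y_k:=(y_k^{(1)},y_k^{(2)})$. It then remains only to verify that $(y_0,\dots,y_n)$ is complementary to $(x_0,\dots,x_n)$ in $\gT$, that is, that every line of the chain \pref{eqC2G} holds in $\gT$. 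The key observation is that in a product lattice the order, the finite meets and the finite joins are all computed componentwise; hence an entailment $\Vi A\le\Vu B$ with $A$, $B$ finite subsets of $\{0,1\}\cup\{x_k,y_k\}$ holds in $\gT$ if and only if both its projections hold, in $\gT_1$ and in $\gT_2$. Since each projection of each line of \pref{eqC2G} holds by construction of the $y_k^{(i)}$, the line holds in $\gT$, and we conclude $\Kdim(\gT)\le n$.

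I do not expect a genuine obstacle here: once the componentwise decomposition of the entailment relation on a finite product is spelled out, everything is routine. The only points deserving a word of care are the verification that the projections are indeed quotient morphisms (so that Lemma~\ref{lemDimQuot} applies) and the degenerate case $n=-1$, where the whole statement collapses to the evident equivalence ``$\gT$ is trivial iff $\gT_1$ and $\gT_2$ are both trivial'', i.e.\ $(1,1)=(0,0)$ in $\gT_1\times\gT_2$ iff $1=0$ in $\gT_1$ and $1=0$ in $\gT_2$.
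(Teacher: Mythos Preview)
Your proof is correct and follows exactly the paper's approach: use that each $\gT_i$ is a quotient of $\gT$ (hence Lemma~\ref{lemDimQuot} gives one direction), and for the other direction pair up componentwise the complementary sequences obtained in each factor. The paper states this in two lines without your extra commentary on the projections or the case $n=-1$, but the argument is the same.
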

%
\begin{proof}
First, each $\gT_i$ is a quotient of $\gT$. In the other direction, assume each $\gT_i$ is of dimension~\hbox{$\leq n$} and let $(\xzn)$ in $\gT$, with $x_i=(a_i,b_i)$. If $(\azn)$ has the \cop sequence $(\yzn)$ in $\gT_1$ and $(\bzn)$ has the \cop sequence $(\zzn)$ in $\gT_2$, then $(\uzn)$ (where $u_i=(y_i,z_i)$) is \cop of $(\xzn)$ in $\gT$. 
\end{proof}

\begin{theorem}[Dimension of \trdis, case of a closed cover] \label{lemrcfdimtrdi}~\\
Let~$\gT$ be a \trdi, $\fa$, $\fb$ two \ids such that $\fa\cap\fb=\so0$, $\gT_\fa=\gT/(\fa=0)$ and $\gT_\fb=\gT/(\fb=0)$.
Then $\Kdim\gT=\sup(\Kdim\gT_\fa,\Kdim\gT_\fb)$. 
\end{theorem}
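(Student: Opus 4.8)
The plan is to establish the two inequalities separately. The inequality $\Kdim\gT\geq\sup(\Kdim\gT_\fa,\Kdim\gT_\fb)$ is immediate from Lemma~\ref{lemDimQuot}, since $\gT_\fa$ and $\gT_\fb$ are quotients of $\gT$. So the real content is the reverse inequality, for which it suffices to show: if $n\geq -1$ and $\Kdim\gT_\fa\leq n$ and $\Kdim\gT_\fb\leq n$, then $\Kdim\gT\leq n$. The case $n=-1$ is degenerate: then $1\in\fa$ and $1\in\fb$, so $\fa=\fb=\gT$, hence $\gT=\fa\cap\fb=\so0$ and $\Kdim\gT\leq -1$. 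Assume henceforth $n\geq 0$ and use the characterization of $\Kdim\leq n$ by complementary sequences (Item~\emph{3} of Theorem~\ref{th-dico-trdi-spec-dim1}). On the dual side this is just the assertion that $\Spec\gT=\VT(\fa)\cup\VT(\fb)$ is covered by two closed subspaces and that the dimension of such a union is the supremum of the dimensions of the pieces; what follows is the translation of that classical fact into complementary sequences.

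Write $\pi_\fa:\gT\to\gT_\fa$ and $\pi_\fb:\gT\to\gT_\fb$ for the quotient morphisms, and fix a sequence $(\xzn)$ in $\gT$. First I would apply the hypothesis $\Kdim\gT_\fa\leq n$ to the sequence $(\pi_\fa(x_0),\dots,\pi_\fa(x_n))$, obtain a complementary sequence for it in $\gT_\fa$, and lift it (arbitrarily, $\pi_\fa$ being onto) to a sequence $(\yzn)$ in $\gT$. Each defining inequality of a complementary sequence, read back in $\gT$ via Proposition~\ref{propIdealFiltre}, holds after joining a suitable element of $\fa$; replacing these finitely many elements by their join, which still belongs to $\fa$, I obtain a single $a\in\fa$ with
\[
y_n\vu x_n\vu a=1,\qquad y_i\vi x_i\ \leq\ y_{i-1}\vu x_{i-1}\vu a\ \ (1\leq i\leq n),\qquad y_0\vi x_0\ \leq\ a.
\]
Symmetrically, from $\Kdim\gT_\fb\leq n$ I get a sequence $(\zzn)$ in $\gT$ and an element $b\in\fb$ satisfying the analogous inequalities. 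Finally $a\vi b\in\fa\cap\fb=\so0$, so $a\vi b=0$.

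It then remains to glue the two sequences inside $\gT$. The key choice is to put $w_i:=(y_i\vu a)\vi(z_i\vu b)$ for $0\leq i\leq n$ and to check, by straightforward distributive computations, that $(w_0,\dots,w_n)$ is a complementary sequence for $(\xzn)$ in $\gT$. Indeed $w_n\vu x_n=(y_n\vu x_n\vu a)\vi(z_n\vu x_n\vu b)=1$; next $w_i\vi x_i\leq(y_i\vu a)\vi x_i=(y_i\vi x_i)\vu(a\vi x_i)\leq y_{i-1}\vu x_{i-1}\vu a$, and symmetrically $w_i\vi x_i\leq z_{i-1}\vu x_{i-1}\vu b$, whence $w_i\vi x_i\leq(y_{i-1}\vu x_{i-1}\vu a)\vi(z_{i-1}\vu x_{i-1}\vu b)=w_{i-1}\vu x_{i-1}$; and $w_0\vi x_0=\big((y_0\vu a)\vi x_0\big)\vi\big((z_0\vu b)\vi x_0\big)=(a\vi x_0)\vi(b\vi x_0)\leq a\vi b=0$, using $y_0\vi x_0\leq a$ and $z_0\vi x_0\leq b$. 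This yields $\Kdim\gT\leq n$, and hence the theorem.

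I do not expect a genuine obstacle here once the set-up is fixed. The one step that needs a little care is the first reduction: one has to notice that a complementary sequence in the quotient $\gT_\fa$ unwinds, through Proposition~\ref{propIdealFiltre}, into a sequence of $\gT$ whose complementary-sequence inequalities each hold only after joining some element of $\fa$, and that these witnesses can be merged into one element of $\fa$ exactly because $\fa$ is an ideal. The only mildly non-obvious ingredient afterwards is the formula $w_i=(y_i\vu a)\vi(z_i\vu b)$ for the glued sequence — the naive candidates $y_i\vi z_i$ and $y_i\vu z_i$ do not satisfy all three conditions — after which all verifications are elementary lattice identities.
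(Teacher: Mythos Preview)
Your argument is correct. The verifications for the glued sequence $w_i=(y_i\vu a)\vi(z_i\vu b)$ go through exactly as you write; in particular the step $(y_0\vu a)\vi x_0=a\vi x_0$ is justified since $y_0\vi x_0\leq a$ implies $y_0\vi x_0\leq a\vi x_0$, so the join $(y_0\vi x_0)\vu(a\vi x_0)$ collapses to $a\vi x_0$.

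Your route differs from the paper's. There the constructive proof first treats the principal case $\fa=\dar a$, $\fb=\dar b$ with $a\vi b=0$ by exhibiting a lattice morphism $r_{a,b}:\gT_a\times\gT_b\to\gT$, $(x,y)\mapsto(a\vu x)\vi(b\vu y)$, and checking $r_{a,b}\circ j_{a,b}=\Id_\gT$; this makes $\gT$ a quotient of $\gT_a\times\gT_b$, and Lemmas~\ref{lemDimQuot} and~\ref{lemDimProd} finish. The general case is then obtained by a filtered-colimit argument over the principal ideals contained in $\fa$ and $\fb$. Your approach bypasses both steps: by extracting a \emph{single} witness $a\in\fa$ (resp.\ $b\in\fb$) from the finitely many inequalities defining a complementary sequence in the quotient, you never need to reduce to principal ideals, and hence never need the colimit. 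Note that your gluing formula $w_i=(y_i\vu a)\vi(z_i\vu b)$ is precisely the paper's retraction $r_{a,b}$ applied coordinatewise, so the two arguments are at bottom the same computation --- yours simply unfolds it at the level of complementary sequences and is more self-contained.
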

%
\begin{proof}
We have a natural morphism $j_{\fa,\fb}:\gT\to \gT_\fa\times \gT_\fb, \,x\mapsto (\pi_\fa(x),\pi_\fb(x))$. Lemmas \ref{lemDimQuot} and \ref{lemDimProd} imply we have $\sup(\Kdim\gT_\fa,\Kdim\gT_\fb)\leq \Kdim\gT$.

\noindent Let us see the opposite inequality.

\noindent We begin by a \demo in \clama. 
Let $\fp$ be a \idep of $\gT$. If $\fp$ disappears in~$\gT_\fa$ we have $\fp\vu\fa=\gT$. In this case we have an $x\in\fa$ such that $x \notin \fp$, and for any $y\in\fb$, since $x\vi y=0$ and $\fp$ is prime, $y\in\fp$. So $\fb\subseteq \fp$ and $\fp$ remains a \idep in $\gT_\fb$.
Symmetrically, if $\fp$ disappears in $\gT_\fb$, it persists in~$\gT_\fa$.
Let us now consider a chain of \ideps in $\gT$ of maximal length.
The minimal \elt in the chain contains $\fa$ or $\fb$, so all the chain persists in $\gT_\fa$ or~$\gT_\fb$.

\noindent Finally let us give a \demo in \coma.\footnote{It should be interesting to check the \cov proof being a faithful translation of the classical one.} 

\noindent First we assume that $\fa$ and $\fb$ are \idps: $\fa=\dar a$ and $\fb=\dar b$ with $a\vi b=0$. 
We have a morphism of \textsl{unbounded \trdis}.\footnote{The image of this morphism is $\uar a$. We have $\pi_a(1)=1$, but $\pi_a(0)=a$.} $\gT_a\to \gT$, \hbox{$\pi_a(x)\mapsto a\vu x$} Let us consider the morphism $r_{a,b}:\gT_a\times \gT_b\to \gT,\,(x,y)\mapsto (a\vu x)\vi(b\vu y)$. We see that it is a morphism of \trdis because $r_{a,b}(0,0)=0$. We have $r_{a,b}\circ j_{a,b}=\Id_\gT$. 
Indeed, for $x\in \gT$, one has $(a\vu x)\vi(b\vu x)=(a\vi b)\vu x=x$. 
So $r_{a,b}$ is a surjective morphism and $\gT$ is a quotient of~$\gT_a\times \gT_b$. 

\noindent Let us see the \gnl case, where $\fa$ and $\fb$ are not \ncrt \tf. The quotient~$\gT_\fa$ is the filtered colimit of~$\gT_a$'s for $a\in\fa$ ($\fa$ is seen as a small filtered category). The lattice~$\gT_\fa\times \gT_\fb$ is the filtered colimit of~$\gT_a\times \gT_b$'s. 
Morphisms~$r_{a,b}$ are compatible,\footnote{For all $a\leq a'$ and $b\leq b'$, the triangle constituted by $r_{a',b'}$, $r_{a,b}$ and the morphism of passage to quotient $\gT_a\times \gT_b\to\gT_{a'}\times \gT_{b'}$ is commutative.} so they induce (by the \uvl \prt of a filtered colimit) a morphism $r_{\fa,\fb}:\gT_\fa\times \gT_\fb\to\gT$. For each $(a,b)$ we have the filtered colimit morphism $\iota_{a,b}:\gT_a\times \gT_b\to \gT_\fa\times \gT_\fb$ and we have the commutative diagram:

\centerline{
\xymatrix @C=1.2cm @R=1.2cm{
\gT\,\ar[dd]_{j_{\fa,\fb}}\ar[rr]^{j_{a,b}}\ar@/-1.5cm/[drrr]_{j_{a',b'}} && \,\gT_a\times \gT_b \ar[dd]^{r_{a,b}}\ar[ddll]_{\iota_{a,b}}\ar[dr]^{j_{(a,b),(a',b')}} 
\\
&&& \,\gT_{a'}\times \gT_{b'}\ar[dl]^{r_{a',b'}}\ar@/.5cm/[dlll]_{\iota_{a',b'}}
\\
\gT_\fa\times \gT_\fb\,\ar[rr]_{r_{\fa,\fb}} && \gT 
}
}

\noindent Hence
$r_{\fa,\fb}\circ j_{\fa,\fb}=\Id_\gT$.
\end{proof}

\begin{theorem}[Dimension of morphisms] \label{th-dico-trdi-spec-dim2} \emph{See \cite{CL2001-2018}, \cite[section XIII-7]{CACM}.} 
Let $\gT\subseteq \gT'$ and $f$ be the inclusion morphism. In \clama \propeq
\begin{enumerate}
\item The morphism $\Spec(f):\Spec(\gT')\to\Spec(\gT)$ has Krull dimension $\leq n$.
\item For any sequence $(x_0,\dots,x_n)$ in $\gT'$
there exists an integer $k\geq 0$ and \elts $a_1,\ldots,a_k\in \gT$ such that for each partition $(H,H')$ of $\{1,\ldots,k\}$, there exist $ y_0,\dots,y_n\in \gT'$ such that
\begin{equation} \label {eqdefDiTrRel}
\begin{array}{rclll}
\Vi_{j\in H'} a_j & \vda & y_n,\;x_n \\
y_n,\;x_n& \vda &y_{n-1},\;x_{n-1}  \\
\vdots\qquad & \vdots & \qquad \vdots \\
y_1,\;x_1& \vda & y_0,\;x_0 \\
y_0,\;x_0& \vda & \Vu_{j\in H} a_j \\
\end{array}
\end{equation}
\end{enumerate} 
\end{theorem}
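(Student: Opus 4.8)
The plan is to prove the equivalence in classical mathematics, using the standard description of the Krull dimension of a spectral map by its fibers: $\Spec(f)$ has Krull dimension $\le n$ iff for every $\fp\in\Spec\gT$ the fiber over $\fp$ has Krull dimension $\le n$, that fiber being $\Spec$ of the \emph{fiber lattice} $\gT'_\fp:=\gT'\big/\big(f(\fp)=0,\ f(\gT\setminus\fp)=1\big)$ (see \cite[section XIII-7]{CACM}; for $n=0$ this is the zero-dimensionality/incomparability equivalence of Theorem~\ref{th-dico-trdi-spec-mor2}). I write $a$ for $f(a)$ when $a\in\gT$. The combinatorial engine is the following elementary remark, which I would isolate as a lemma: fix $a_1,\dots,a_k\in\gT$ and a partition $(H,H')$ of $\{1,\dots,k\}$, and set $\gT'_{H,H'}:=\gT'\big/\big(\{a_j\}_{j\in H}=0,\ \{a_j\}_{j\in H'}=1\big)$; then a sequence $(x_0,\dots,x_n)$ of $\gT'$ together with some $(y_0,\dots,y_n)$ satisfies the relations \pref{eqdefDiTrRel} for this partition iff the image of $(x_0,\dots,x_n)$ in $\gT'_{H,H'}$ admits a complementary sequence in the sense of Theorem~\ref{th-dico-trdi-spec-dim1}. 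The forward implication is immediate, since pushing \pref{eqdefDiTrRel} into $\gT'_{H,H'}$ sends $\Vi_{j\in H'}a_j$ to $1$ and $\Vu_{j\in H}a_j$ to $0$; for the converse, Proposition~\ref{propIdealFiltre} turns a complementary sequence of $\gT'_{H,H'}$ into inequalities in $\gT'$ whose correction terms lie among the $a_j$ ($j\in H$ on the right, $j\in H'$ on the left), and replacing each $y_i$ by $(y_i\vi\Vi_{j\in H'}a_j)\vu\Vu_{j\in H}a_j$ reshapes them into exactly \pref{eqdefDiTrRel}. This last check is routine. Observe also that, as $(H,H')$ ranges over all partitions, $\Spec\gT'_{H,H'}=\Spec(f)^{-1}(C_{H,H'})$ where $C_{H,H'}:=\{\fp\in\Spec\gT: a_j\in\fp\ \text{for}\ j\in H,\ a_j\notin\fp\ \text{for}\ j\in H'\}$, and the sets $C_{H,H'}$ cover $\Spec\gT$ (they are the atoms of the finite Boolean algebra of subsets generated by the $\DT(a_j)$).

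For \textsl{2} $\Rightarrow$ \textsl{1}, I fix $\fp\in\Spec\gT$; by Theorem~\ref{th-dico-trdi-spec-dim1} it suffices to produce a complementary sequence for an arbitrary sequence of $\gT'_\fp$. I lift such a sequence to $(x_0,\dots,x_n)$ in $\gT'$, apply Item~\textsl{2} to obtain $a_1,\dots,a_k\in\gT$, and take $H=\{j:a_j\in\fp\}$, $H'=\{j:a_j\notin\fp\}$. The $(y_i)$ provided by \pref{eqdefDiTrRel} give, by the remark, a complementary sequence of the image of $(x_0,\dots,x_n)$ in $\gT'_{H,H'}$; since $\{a_j\}_{j\in H}\subseteq\fp$ and $\{a_j\}_{j\in H'}\subseteq\gT\setminus\fp$, the lattice $\gT'_\fp$ is a quotient of $\gT'_{H,H'}$, and complementary sequences are preserved by quotient morphisms, so this finishes the fiber and hence the direction.

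For \textsl{1} $\Rightarrow$ \textsl{2}, I fix $(x_0,\dots,x_n)$ in $\gT'$. For each $\fp\in\Spec\gT$ the fiber $\gT'_\fp$ has dimension $\le n$, so the image of $(x_0,\dots,x_n)$ there has a complementary sequence; pulling it back to $\gT'$ and unfolding its finitely many defining inequalities through Proposition~\ref{propIdealFiltre} produces a finite set $E_\fp\subseteq\gT$ of correction terms, split as those forced to $0$ at $\fp$ (a finite subset of $\fp$) and those forced to $1$ at $\fp$ (a finite subset of $\gT\setminus\fp$). The set $N_\fp$ of primes of $\gT$ on which every element of $E_\fp$ keeps its $\fp$-status is a basic patch-open containing $\fp$, so the $N_\fp$ cover $\Spec\gT$; as the patch topology is that of $\Spec\Bo(\gT)$, hence quasi-compact, finitely many $N_{\fp_1},\dots,N_{\fp_r}$ suffice, and I set $\{a_1,\dots,a_k\}=E_{\fp_1}\cup\dots\cup E_{\fp_r}$. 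Given a partition $(H,H')$: if $C_{H,H'}=\emptyset$ then $\Vi_{j\in H'}a_j\le_{\gT}\Vu_{j\in H}a_j$ and \pref{eqdefDiTrRel} holds with every $y_i=\Vu_{j\in H}a_j$; otherwise the atom $C_{H,H'}$ lies inside some $N_{\fp_s}$ (each $N_{\fp_s}$ is a union of atoms, and a nonempty atom meets the cover), which forces $E_{\fp_s}\cap\fp_s$ to land in $\{a_j\}_{j\in H}$ and $E_{\fp_s}\setminus\fp_s$ to land in $\{a_j\}_{j\in H'}$, so the complementary sequence found at $\fp_s$ is also a complementary sequence of the image of $(x_0,\dots,x_n)$ in $\gT'_{H,H'}$, and the remark yields \pref{eqdefDiTrRel}.

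The hard part is the uniformization in \textsl{1} $\Rightarrow$ \textsl{2}: turning the family of fiberwise complementary sequences — one per prime, each with its own correction terms — into a single finite list $a_1,\dots,a_k$ that serves all $2^k$ partitions simultaneously. This rests on quasi-compactness of the patch topology together with the observation that the Boolean atoms $C_{H,H'}$ refine any cover by sets $N_\fp$, so that each nonempty atom can reuse the data attached to one member of the finite subcover. Everything else — the absorption computation in the remark, the empty-atom case, and passage of complementary sequences to quotients — is routine bookkeeping.
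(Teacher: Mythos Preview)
The paper does not give its own proof of this theorem: it simply records the statement with references to \cite{CL2001-2018} and \cite[section~XIII-7]{CACM}. Your argument is correct and is exactly the kind of proof those references contain --- reduce to the fiberwise criterion $\Kdim(\gT'_\fp)\le n$, translate complementary sequences in the principal quotients $\gT'_{H,H'}$ into the chains \pref{eqdefDiTrRel} via Proposition~\ref{propIdealFiltre} (your substitution $y_i\mapsto(y_i\vi\Vi_{j\in H'}a_j)\vu\Vu_{j\in H}a_j$ does work, as one checks by distributivity), and in the direction \textsl{1}~$\Rightarrow$~\textsl{2} use quasi-compactness of the patch topology to uniformize the finitely many correction terms. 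The handling of empty atoms via Krull's theorem and the atom-refinement argument for nonempty ones are both sound.
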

For example, for the relative dimension $n\leq 2$, the inequalities in \pref{eqdefDiTrRel} correspond to the following diagram in~$\gT$.
with $u=\Vi_{j\in H'} a_j$ and $i=\Vu_{j\in H} a_j$.
\[\SCOR{x_0}{x_1}{x_2}{y_0}{y_1}{y_2}{u}{i}\]

Note that the dimension of the morphism $\gT\to\gT'$ is bounded by the dimension of $\gT'$: take the empty list ($k=0$) in Item \textsl{2} of \thref{th-dico-trdi-spec-dim2}. 

More \gnlt we have a \prco of the fundamental inequality given in \cite[A note on the dimension theory of rings]{Sei1953} for the case of commutative rings: $1+\dim \gT'\leq (1+\dim \gT)(1+\dim f)$.

\Subsubsection{Properties of spaces}

 A \trdi $\gT$ is said to be \textsl{normal} if each time one has $a \vu b = 1$ in $\gT$ there exist $x, y$ such that $a \vu x = b \vu y = 1$ and $x \vi y = 0$. See \cite{Weh2019,DST2019}.
Note that when replacing $x$ and $y$ with $x_1=x\vu(a\vi b)$ and $y_1=y\vu(a\vi b)$ we get $a \vu x_1 = b \vu y_1 = 1$ and $x_1 \vi y_1 = a\vi b$.

\begin{theorem} \label{th-dico-trdi-spec-esp1} \Propeq
\begin{enumerate}
\item The \sps $\Spec(\gT)$ is normal. 
\item The \trdi $\gT$ is normal. 
%
%
\end{enumerate}
\end{theorem}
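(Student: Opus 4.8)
The plan is to prove the two implications separately, using Krull's theorem (page~\pageref{ThKrull}) to produce prime and maximal ideals and the primeness of prime ideals for the separation step. Recall that the closed points of $\SpecT$ are exactly the maximal ideals and that the closure of a prime $\fp$ is $\{\fq\in\SpecT\mid \fq\supseteq\fp\}$; hence \emph{$\SpecT$ is normal iff every prime ideal of $\gT$ lies below a unique maximal ideal}. If $\gT$ is trivial ($0=_\gT1$) both items hold, so I may assume $0\ne_\gT1$; then every proper ideal is contained in a maximal one (Zorn), and every maximal ideal is prime (standard, using distributivity). I will freely use the notation $\fa\vu\fb=\cI_\gT(\fa\cup\fb)$ of \pref{eqSupId} for the ideal generated by two ideals.

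\textbf{$\gT$ normal $\Rightarrow\SpecT$ normal.} Let $\fp$ be a prime ideal and suppose $\fm_1\ne\fm_2$ are maximal ideals containing $\fp$. Since $\fm_2\not\subseteq\fm_1$, the ideal $\fm_1\vu\fm_2$ strictly contains $\fm_1$, so it equals $\gT$; hence $1=a\vu b$ with $a\in\fm_1$, $b\in\fm_2$. Apply normality of $\gT$ to get $x,y$ with $a\vu x=b\vu y=1$ and $x\vi y=0$. As $x\vi y=0\in\fp$ and $\fp$ is prime, either $x\in\fp\subseteq\fm_1$ or $y\in\fp\subseteq\fm_2$; the first gives $1=a\vu x\in\fm_1$, the second $1=b\vu y\in\fm_2$, both absurd. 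So $\fm_1=\fm_2$, and since a maximal ideal above $\fp$ exists, $\SpecT$ is normal.

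\textbf{$\SpecT$ normal $\Rightarrow\gT$ normal (contrapositive).} Assume $\gT$ is not normal, witnessed by $a,b$ with $a\vu b=1$ for which no separating pair exists. Introduce the filters $F_a=\{t\in\gT\mid t\vu a=1\}$ and $F_b=\{t\in\gT\mid t\vu b=1\}$ (they are filters because $\vi$ distributes over $\vu$), and let $G$ be the filter they generate. If $0\in G$ then $0$ is a finite meet of elements of $F_a\cup F_b$; regrouping the two parts (and taking $1$ for an empty part) yields $x\in F_a$, $y\in F_b$ with $x\vi y=0$, i.e.\ a forbidden pair. Hence $0\notin G$, and Krull's theorem applied to the ideal $\{0\}$ and the filter $G$ gives a prime $\fp$ with $\fp\cap G=\emptyset$, so $\fp\cap F_a=\fp\cap F_b=\emptyset$. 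Then $\fp\vu\dar a$ and $\fp\vu\dar b$ are proper ideals: $1\in\fp\vu\dar a$ would force $1=p\vu a$ with $p\in\fp$, i.e.\ $p\in\fp\cap F_a$. Pick maximal ideals $\fm_1\supseteq\fp\vu\dar a$ and $\fm_2\supseteq\fp\vu\dar b$; both contain $\fp$, and $\fm_1\ne\fm_2$ because $a\in\fm_1$ while $a\notin\fm_2$ (else $1=a\vu b\in\fm_2$). Thus $\fp$ sits below two distinct closed points and $\SpecT$ is not normal.

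The verifications that $F_a,F_b$ are filters, that $\fp\vu\dar a=\{p\vu z\mid p\in\fp,\ z\le a\}$, and that maximal ideals are prime are routine lattice bookkeeping. The only genuine design decision — and the one place the argument could stall — is the converse direction: one must spot that the filter $G$ generated by $F_a\cup F_b$ is the right object, so that failure of the separation property becomes exactly a prime $\fp$ whose two "shadows" $\fp\vu\dar a$ and $\fp\vu\dar b$ remain proper and therefore each lie below a maximal ideal, these being forced to be distinct.
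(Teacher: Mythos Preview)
Your proof is correct. The paper does not actually supply its own proof of this theorem: it is stated in the ``dictionary'' subsection~\ref{subsecAntiEquiv} as a known result, with references to \cite{Weh2019,DST2019} for details, so there is nothing to compare against. Your argument is the standard one and is carried out cleanly; the only cosmetic point is the momentary clash of notation in the converse direction, where $p$ denotes both the prime ideal $\fp$ and an element of it --- but the intended reading ($1=p'\vu z$ with $p'\in\fp$, $z\le a$, hence $p'\vu a=1$ and $p'\in\fp\cap F_a$) is clear.
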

\begin{theorem} \label{th-dico-trdi-spec-esp2} \Propeq
\begin{enumerate}
\item The \sps $\Spec(\gT)$ is completely normal. 
\item Each interval $[a,b]$ in $\gT$, seen as a \trdi, is normal. 
\item For all $a,b\in \gT$ there exist $x, y$ such that $a \vu b = a \vu y = x \vu b$ and $x \vi y = 0$.
\end{enumerate}
\end{theorem}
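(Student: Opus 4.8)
The plan is to route everything through Item~\textsl{3}: I will prove the equivalence of Items~\textsl{2} and~\textsl{3} by elementary (and constructive) lattice computations, and the equivalence of Items~\textsl{1} and~\textsl{3} by \idep arguments resting on Krull's \tho (\paref{ThKrull}); the equivalence of~\textsl{1} and~\textsl{2} then follows. The conceptual reason behind $1\Leftrightarrow2$ is that the interval $[a,b]=\gT/(\dar a=0,\uar b=1)$ is a quotient \trdi whose spectrum is the sub\sps $\VT(a)\cap\DT(b)$ of $\Spec\gT$ (\thref{propSESP}), so by \thref{th-dico-trdi-spec-esp1} the normality of $[a,b]$ amounts to that of $\VT(a)\cap\DT(b)$; but translating \emph{complete} normality of $\Spec\gT$ into a statement about all these subspaces still requires the \idep computation, which is why I run the latter directly.

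\emph{$2\Rightarrow3$.} Given $a,b\in\gT$, apply normality of the interval $[0,a\vu b]$ (a \trdi with bottom $0$ and top $a\vu b$) to the pair $(a,b)$, whose join is the top: this yields $u,v\in[0,a\vu b]$ with $a\vu u=b\vu v=a\vu b$ and $u\vi v=0$, and then $x:=v$, $y:=u$ satisfy $a\vu b=a\vu y=x\vu b$ and $x\vi y=0$, which is Item~\textsl{3}. \emph{$3\Rightarrow2$.} Fix an interval $[a_0,b_0]$ and $p,q\in[a_0,b_0]$ with $p\vu q=b_0$. Applying Item~\textsl{3} to $(p,q)$ gives $x,y$ with $p\vu y=x\vu q=p\vu q=b_0$ and $x\vi y=0$; set $x_1:=(y\vu a_0)\vi b_0$ and $y_1:=(x\vu a_0)\vi b_0$. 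These lie in $[a_0,b_0]$, and using distributivity together with $a_0\le b_0$ and $p,q\le b_0$ one gets $p\vu x_1=q\vu y_1=b_0$ and $x_1\vi y_1=\big((y\vi x)\vu a_0\big)\vi b_0=a_0$, i.e.\ $[a_0,b_0]$ is normal.

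\emph{$3\Rightarrow1$.} We argue in \clama. In $X=\Spec\gT$ the specialization order is inclusion of \ideps, so Item~\textsl{1} says that any two \ideps lying over a common \idep are comparable. Suppose $\fz\subseteq\fp$, $\fz\subseteq\fq$ with $\fp,\fq$ incomparable, and pick $a\in\fp\setminus\fq$, $b\in\fq\setminus\fp$. Item~\textsl{3} applied to $(a,b)$ yields $x,y$ with $a\vu b=a\vu y=x\vu b$ and $x\vi y=0\in\fz$; since $\fz$ is prime, $x\in\fz$ or $y\in\fz$. If $x\in\fz\subseteq\fq$, then $x\vu b\in\fq$ (as $b\in\fq$), so $a\le a\vu b=x\vu b$ forces $a\in\fq$, a contradiction; if $y\in\fz\subseteq\fp$, then $a\vu y\in\fp$ and $b\le a\vu b=a\vu y$ force $b\in\fp$, again a contradiction. \emph{$1\Rightarrow3$.} Again in \clama, by contraposition: assume Item~\textsl{3} fails for a pair $(a,b)$. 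The sets $F_a:=\sotq{u\in\gT}{a\le b\vu u}$ and $F_b:=\sotq{v\in\gT}{b\le a\vu v}$ are filters (distributivity gives closure under $\vi$) containing $a$, resp.\ $b$; clipping a would-be witness below $a\vu b$ shows that Item~\textsl{3} for $(a,b)$ is equivalent to $0\in F_a\vi F_b$, so the hypothesis says the filter $\cF_\gT(F_a\cup F_b)=F_a\vi F_b$ is proper. By Krull's \tho there is a \idep $\fz$ with $\fz\cap(F_a\vi F_b)=\emptyset$, hence $\fz\cap F_a=\fz\cap F_b=\emptyset$. The ideal generated by $\fz\cup\{b\}$ equals $\sotq{t}{t\le z\vu b\text{ for some }z\in\fz}$ and is disjoint from $\uar a$: otherwise $a\le z\vu b$ for some $z\in\fz$, i.e.\ $z\in\fz\cap F_a$. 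So Krull's \tho provides a \idep $\fp\supseteq\fz$ with $b\in\fp$, $a\notin\fp$; symmetrically (exchanging $a$ and $b$) a \idep $\fq\supseteq\fz$ with $a\in\fq$, $b\notin\fq$. Then $b\in\fp\setminus\fq$ and $a\in\fq\setminus\fp$, so $\fp,\fq$ are incomparable \ideps over $\fz$, contradicting Item~\textsl{1}.

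The main obstacle is the implication $1\Rightarrow3$: one must first recognise that the failure of Item~\textsl{3} for $(a,b)$ is exactly the properness of the filter $F_a\vi F_b$ — which requires checking that $F_a$ and $F_b$ are filters and handling the clipping of witnesses below $a\vu b$ — and then invoke Krull's \tho twice: once to detach a \idep $\fz$ from $F_a\vi F_b$, and once more to extend $\fz$ into the two incomparable \ideps; the delicate point is the verification that $\fz\cap F_a=\emptyset$ keeps $a$ out of the ideal generated by $\fz\cup\{b\}$. By comparison, $2\Leftrightarrow3$ is routine lattice algebra and $3\Rightarrow1$ is a short \idep chase.
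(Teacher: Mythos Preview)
The paper does not actually prove this theorem: it is stated in the ``short dictionary'' section (\S\ref{subsecAntiEquiv}) as a recalled result, with proofs deferred to the cited literature. There is therefore no paper proof to compare against, but your argument is correct and complete on its own terms. The constructive equivalence $2\Leftrightarrow3$ is handled cleanly via distributivity, and both directions of $1\Leftrightarrow3$ are sound: in $3\Rightarrow1$ the prime-ideal chase is exactly right, and in $1\Rightarrow3$ the key reformulation --- that failure of Item~\textsl{3} for $(a,b)$ is precisely the properness of the filter $F_a\vi F_b$ --- followed by two applications of Krull's \tho, goes through as written. One cosmetic point: in the $1\Rightarrow3$ paragraph you write ``$t\le z\vu b$ for some $z\in\fz$'', overloading the letter $z$ for both the prime ideal $\fz$ and a generic element of it; renaming the element avoids confusion, but the mathematics is unaffected.
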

\begin{theorem} \label{th-dico-trdi-spec-esp3} \Propeq
\begin{enumerate}
\item Any \oqc in $\Spec(\gT)$ is a finite union of irreducible \oqcs. 
\item For all $a_1,\dots,a_n,b_1,\dots,b_m$ 
one has $a_1,\dots,a_n\vdash_\gT b_1,\dots,b_m$ \ssi there is a~$j$ such that
 $a_1,\dots,a_n\vdash_\gT b_j$. 
\item 
The \trdi $\gT$ is constructed from a \sad corresponding to a \talg. 
%
%
\end{enumerate}
\end{theorem}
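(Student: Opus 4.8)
The plan is to prove the three statements equivalent, say by the cycle $\emph{3}\Rightarrow\emph{2}\Rightarrow\emph{1}\Rightarrow\emph{3}$, using the Fundamental \tho of \entrels (\thref{thEntRel1}) for everything involving \emph{2} and \emph{3}, and Stone's antiequivalence (\thref{thStoneAntiequiv}) together with the dictionary \pref{eqDa} for everything involving \emph{1}.

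For $\emph{3}\Rightarrow\emph{2}$, I would use that a \trdi built from a \sad over a \talg is presented by a set~$S$ of generators (the closed instances of the relation symbols) and relations all of which are \emph{direct rules}: each has a single atom, or nothing (a collapse rule), as its conclusion — that is precisely what makes a theory Horn, as opposed to allowing disjunctions or existentials in conclusions. By \thref{thEntRel1}, $\vdash_\gT$ is then the \entrel on~$S$ generated from these single-conclusion relations by reflexivity, monotonicity and cut. The key step is the lemma that such an \entrel has the \emph{disjunction property}: if $A\vda B$ then $A\vda b$ for some $b\in B$. I would prove it by induction on a derivation; only the cut rule needs thought, and it goes through because the cut formula occurs on one side only. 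Specialised to~$S$, and then transported along finite meets and joins, this is statement~\emph{2}.

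For $\emph{2}\Rightarrow\emph{3}$, I would recover $\gT$ from an explicit \talg and \sad: take one nullary relation symbol $P_a$ for each~$a$ in a generating set~$S$ of~$\gT$ (for instance $S=\gT$), with the direct rules $P_a\vda P_b$ for $a\le b$, $\ P_a,P_b\vda P_{a\vi b}$, $\ \vda P_1$, and the collapse rule $P_0\vda{}$, all of which are Horn; the \sad is the empty presentation over this theory, and by \thref{thEntRel1} it produces the lattice of the \entrel $\vdash'$ generated by these relations. It then remains to identify $\vdash'$ with $\vdash_\gT$. The inclusion $\vdash'\subseteq\vdash_\gT$ is immediate since each listed relation holds in~$\gT$; conversely, given $A\vda B$ in~$\gT$, statement~\emph{2} provides $b\in B$ with $\Vi A\le b$, and then cutting the rules $P_a,P_{a'}\vda P_{a\vi a'}$ together yields $A\vdash'P_{\Vi A}$, while $P_{\Vi A}\vdash'P_b$ followed by monotonicity restores the right-hand side~$B$. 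Hence the constructed lattice is isomorphic to~$\gT$.

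For $\emph{1}\Leftrightarrow\emph{2}$, I would pass through Stone's antiequivalence, identifying $\gT$ with $\OQC(\Spec\gT)$, so that the \oqcs are the $\DT(a)$, with $\DT(a\vi b)=\DT(a)\cap\DT(b)$, $\DT(a\vu b)=\DT(a)\cup\DT(b)$, and (in \clama, by Krull's \tho) $a\le b\Leftrightarrow\DT(a)\subseteq\DT(b)$. Under this dictionary an irreducible \oqc corresponds to a $\DT(c)$ with $c$ join-prime, so that \gui{every \oqc is a finite union of irreducible \oqcs} translates into a lattice-theoretic condition on how $\gT$ is generated by its join-prime elements, which one then matches with the disjunction property~\emph{2}; this part is a direct reformulation and ought to be the shortest. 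I expect the real difficulty to lie in $\emph{2}\Rightarrow\emph{3}$, namely in verifying that the single-conclusion presentation does not produce a lattice strictly larger than~$\gT$: this is exactly where statement~\emph{2} is indispensable, and it is the point at which one must keep careful track of the gap between the single-conclusion \emph{defining} relations of the presentation and the full multi-conclusion \entrel they generate.
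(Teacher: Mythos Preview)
The paper states this theorem without proof (the surrounding subsection is an explicit list of results recalled from the literature without demonstration), so there is no argument to compare against and your outline has to stand on its own.

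Your $3\Rightarrow2$ and $2\Rightarrow3$ are reasonable provided statement~\emph{2} is read as ranging over a generating set --- taken literally over all of~$\gT$, \emph{2} with $n=1$, $a_1=a\vu b$, $(b_1,b_2)=(a,b)$ forces $b\le a$ or $a\le b$, so~$\gT$ would be a chain, contradicting~\emph{3} already for the free bounded \trdi on two generators (which is the Zariski lattice of the obvious two-constant Horn theory). In particular you should not take $S=\gT$ in $2\Rightarrow3$. The substantive gap is in $1\Leftrightarrow2$: your identification of ``irreducible \oqc'' with ``$\DT(c)$ for $c$ join-prime'' is incorrect. Irreducibility of $\DT(c)$ as a topological space means that $\{0\}$ is a prime ideal of $\dar c$, not that $c$ is join-prime in~$\gT$; in the free \trdi on $x,y$ the element $x\vu y$ is not join-prime, yet $\DT(x\vu y)$ has generic point the prime $\{0\}$ and is irreducible. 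Worse, the $4$-element Boolean algebra $\{0,a,b,1\}$ with $a\vi b=0$, $a\vu b=1$ satisfies~\emph{1} (its spectrum is two isolated points, and $\DT(1)$ is their union) but cannot satisfy~\emph{3}: a \talg in the paper's sense excludes $\Bot$ on the right of $\vd$, so no finite list of non-$\Bot$ closed atomic formulas ever derives~$\Bot$; hence in the Zariski lattice of any Horn \sad the meet of two nonzero elements is nonzero, i.e.\ $\Spec$ is irreducible --- which fails here. So~\emph{1} as written does not imply~\emph{3}, and the equivalence $1\Leftrightarrow2$ cannot be rescued along the lines you sketch; the theorem itself appears to need a sharper formulation before a proof can succeed.
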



\section[Dynamical theories and \sads]{Finitary \tdys and \sads}\label{secSadTrdiSpec}

References: \cite{CLR01,Lom98,Lom06}. 
A more detailed text is in preparation~\cite{Lom-tgac}.

\subsection{Finitary \tdys}

Finitary \tdys have been introduced in \cite{CLR01}. They are a version \gui{without logic, purely computational} of \textsl{coherent theories} (first order theories where all axioms are translations 
of \rdys).\footnote{A usual terminology in mathematical logic speaks about \gui{formal first order theories} when quantifiers are only applied to usual variables, corresponding to elements of the reference set considered as a model of the theory. Second order is used for formal theories using quantifiers on variables in the powerset
 of the reference set. General \tgm, with infinite $\Vou$, is not exactly first order, but absolutely not second order. As a consequence general \tgms are considered as first order theories in the corresponding literature. We use more or less this tradition in our use of \tgms by omitting to speak of \gui{first order \tgms.} We use instead \gui{finitary \tgms.}}

In this paper the authors introduce the notions of \gui{dynamical theory} and of \gui{dynamical proof.}
See also the paper \cite{BC2005} describing some advantages of this approach, and pioneering articles \cite[Sections 1.5 and 4.2]{pra1971}, \cite{Mat75} and \cite{Lif80}.

Dynamical theories use only dynamical rules, \cad deduction rules of the form
\begin{equation} \label{eqRgeom0}
\Gamma \vd \EXists{\und{y^1}} \Delta_1
\vou \cdots\vou \EXists{\und{y^m}}\Delta_m
\end{equation}
where $\Gamma$ and the $\Delta_i$'s are lists of atomic formulae in the language $\cL$ of the theory $\sa{T}=(\cL,\cA)$.

A shortened form is
\begin{equation} \label{eqRgeom}
\Gamma \vd \Exists{\und{y^1}} \Delta_1
\vou \cdots\vou \Exists{\und{y^m}}\Delta_m
\end{equation}

%

Axioms are dynamical rules, and theorems are valid dynamical rules (validity is described in a simple way and uses only a computational machinery) 

If \sa{T} is a \tco, the corresponding \textsl{(finitary) \tdy} uses limited methods of proof.
\begin{itemize}\itemsep=.05em
\item First, only atomic formulae are used: no use of new formulae involving connectors or quantifiers. We manipulate only lists of atomic formulae in the language~$\cL$.
\item Second, axioms are not seen as true formulae, but as \textsl{deduction rules}: an axiom such that \pref{eqRgeom} is used as a \rdy. 
Variables in the lists $\und{y^j}$ are \textsl{bound variables}.
\item Third, proofs are only proofs of \textsl{\rdys}.
\item Forth, the unique way for proving a \rdy is a computational tree
\gui{without logic.} At the root of the tree we find hypotheses of the \tho
we want to prove. The tree is developed by applying axioms
along a pure computational algebraic machinery in the structure described by \sa{T}.
Precise formal \dfns are given in \cite{CLR01}.\\
When using an axiom as \pref{eqRgeom}, we substitute arbitrary terms $t_i$ to free variables $x_i$ in the rule. If these hypotheses are valid at a leaf of the proof tree, this leave becomes a node from which start branches of computation. In each branch one introduces fresh variables corresponding to bound variables $\und{y^k}$ (these fresh variables must be distinct of free variables seen in the terms $t_i$) and each conclusion of the list~$\Delta_k$ is valid in its branch.
Finally, a conclusion is \textsl{valid} when it is proved at each leave of a proof tree.
\end{itemize}

\smallskip In a \tdy, each sort $S$ has an \egt predicate~\hbox{$\cdot=_S\cdot$}. Axioms allow us to substitute a term $t$ by a term $t'$, when the rule $\vd t=t'$ is valid, in any occurrence of an atomic formula in a valid \rdy.\footnote{Naturally, it is not allowed that $t$ or $t'$ contains a variable $x$ under the scope of an $\Exists \,x$.}

Simplest axioms for this scope are the following ones.
First, \egt has to be an \eqvc relation

\DeuxRegles{
\Lab{eq1} $\vd x=x$
\Lab{Eq3} $\,\, x=y\vet y=z\vd x=z$
}{ 
\Lab{eq2} $\,\, x=y\vd y=x$ 
}

\noindent Second, for each function symbol $f$ and each predicate $P$ in the signature (we take them with arity one for simplicity) we have suitable compatibility axioms.

\DeuxRegles{
\Lab{eq$_f$} $\,\, x=y\vd f(x)=f(y) $
}{
\Lab{Eq$_P$} $\,\, x=y\vet P(x)\vd P(y) $
}

\Subsubsection{Collapsus}\label{NOTABot}

A \rdy is called a \textsl{collapsus rule} when the right member is \gui{$\Faux$,} denoted by~$\Bot$. The symbol $\Bot$ is \ncrt in the language. It is an atomic formula, a predicate of arity $0$. On can also see $\Bot$ as the empty disjunction. When $\Bot$ is proved, the universe of the discourse collapses, and all atomic formulae become \gui{true,} or at least \gui{valid.} This is the meaning of \gui{ex falso quod libet,} which is the relevant meaning of $\Faux$ in \coma. So, in a \tdy, the rules 

\Regles {\lab {False$_{P}$} $\,\,\Bot\vd P$} 

\noindent are valid for all atomic formulae.

The language has also the logical constant $\Top$ meaning \gui{$\Vrai$,} with the following axiom.\label{NOTATop} 

\Regles {\lab{True}$\vd \Top$}

The symbol $\Top$ can also be seen as the empty conjunction.\footnote{When there is nothing to prove, prove nothing and all is OK. In a \tdy with at least one sort~$S$, $\Top$ is \eqv to $x=_Sx$.}
Constants $\Bot$ and $\Top$ are the unique logical symbols used in \tdys.

\Subsubsection{Classification of \rdys} 

A \tdy is termed \textsl{propositional} when it does not involve any sort. In this case, constants are $\Top$ and $\Bot$, plus possibly other constants of arity zero, seen as propositional constants.

A \rdy with neither $\,\Exists\,$ nor $\Bot$, nor $\vou $ on the right of $\vd$ is called \textsl{\ralg}. 
A \tdy is a \textsl{\talg} when all axioms are \ralgs.
A~\talg with a single, equality predicate, is called an \emph{algebraic theory}.\footnote{
In~\cite{CLR01}, \ralgs are called algebraic rules, \talgs are called algebraic theories and algebraic theories are called \peq.
theories.}

\label{regledirecte}
A \ralg is \textsl{direct} when its hypothesis (on the left of $\vd$) features a list of predicates over variables only, and when these variables are moreover pairwise distinct. \cite{CLR01} make use of direct rules for constructing formal \nsts and their variants. The latter are algebraic certificates of collapsus for certain \tdys.

A \ralg which is not direct is a \textsl{simplification rule}.

We use the following typographical conventions: names of direct rules are in lower case, names of simplification rules start with a capital letter and names of other dynamical rules are in capital letters.

\smallskip A \tdy is \textsl{\wdij} if in the axioms if its conclusion features no $\Exists$. 
A \rdy is \textsl{existential simple} if its conclusion is of the form $\Exists \ux\; \Delta$, where $\Delta$ is a finite list of atomic formulae.
A \ralg can be considered as a particular case of existential simple rule.
A \tdy is \textsl{regular} if its axioms are \ralgs or existential simple rules. The theory of \textsl{Bézout rings} (each \itf is principal) is clearly regular. 

\smallskip 
A theory is \textsl{existentially rigid} if its existential axioms are simple and correspond to provably unique existences. 
This is a slight \gnn of \twdijs. 
A regular theory which is existentially rigid is called \textsl{cartesian}. This is a slight \gnn of \talgs.

A theory is \textsl{rigid} (or disjunctive) when all its axioms are:
\begin{itemize}
\item Horn rules 
\item Disjunctive rules of type  $\Gamma \vd P\vou Q$ with the provable rule $\Gamma, P\vet Q\vd \Bot$ 
\item Existentially rigid existential rules
\end{itemize}

The theory of discrete fields can be stated as a rigid \tdy using the invertibility predicate. The theory of discrete real closed fields can also be stated as a rigid \tdy, as opposed to the theory of discrete algebraically closed fields.

\Subsubsection{A basic example}\label{TpeAc}

The \textsl{theory \sa{Cr} of commutative rings} is the paradigmatic example of a \tpe. The signature is 
$
\Sigma_{\sA{Cr}}=(\cdot=0\mathrel{;}\cdot+\cdot,\cdot\times \cdot,-\,\cdot,0,1)
$ 
with only three axioms (\reds):

\DeuxRegles{
\Lab{cr1} $\vd 0=0$
\Lab{cr3} $\,\, x=0\vet y=0\vd x+y=0$
}
{
\Lab{cr2} $\,\,x=0\vd x\times y=0 $
}

\smallskip 
The term \gui{$x-y$} is an abbreviation of \gui{$x+(-y)$} and the predicate \gui{$\cdot=\cdot$} is \textsl{defined}
by the convention: \gui{$x=y$} is an abbreviation for \gui{$x-y=0$.}

\smallskip \noindent \emph{Explanation.} The computational machinery of \pols with integer \coes is added to the computational machinery of dynamical proofs. This machinery, external to the \tdy, rewrites any term as a \pol with integer \coes in a normal form.
For example, the distributivity axiom $x(y+z)=xy+xz$ is replaced with the automatic computation which reduces to $0$ the term
 $x(y+z)-(xy+xz)$.
Similarly, transitivity of equality is obtained using axiom \tsbf{cr3}.

\subsection{Dynamic \agq structures} 

\SADs are explicit in~\cite{Lom98,Lom06} and implicit in \cite{CLR01}, where they are described through their presentations.
They are also implicit in \cite{Lom02} and, last but not least, in \cite[{D5}]{D5}, which was a main source: it is possible to compute inside the \alc of a \cdi, even if it is impossible to construct the structure. So it suffices to consider the \alc as a \sad à la D5 rather than a usual \salg: \textsl{lazy evaluation à la D5 gives a \cov semantic for the \alc of a \cdi}.

\smallskip If $\sa{T}=(\cL,\cA)$ is a \tdy, a \textsl{\sad of type~\sa{T}} is given by a set $G$ of generators and a set $R$ of \textsl{relations}. 

 By \dfn a relation is a closed atomic formula $P(\und{t})$ on the language $\cL\,\cup\, G$ with closed terms~$t_i$ in this language. Such a relation gives the axiom \gui{$\Vd P(\und{t})$} in the \sad.

From a \cov viewpoint, $G$ can be seen as a set à la Bishop. So if two objects $a,\,b$ give \elts of $G$ and if we have $a=_Gb$, the relation $a=b$ is automatically present in the set $R$ of relations that define the \sad $\big((G,R),\sa{T}\big)$.

\begin{notation}\label{notasadreglevalid}
We shall indicate that the rule \gui{$\,\Gamma\Vd \dots$} is valid in the \sad $\gS=\big((G,R),\sa{T}\big)$ in the following abridged form: \gui{$\, \Gamma\Vdi\gS \dots$}.
\end{notation}

\begin{definota} \label{notaTcl}
Let $\gS=\big((G,R),\sa{T}\big)$ be a \sad of type $\sa{T}=(\cL,\cA)$. The set of closed terms of $\gS$, \cad terms constructed on $\cL\cup G$, is denoted by $\Tcl(\gS)$. The set of closed atomic formulae is denoted by~$\Atcl(\gS)$. 
A \ralg~$\vd P$ with~$P\in\Atcl(\gS)$ is called \textsl{a fact of $\gS$}. The set of valid facts in $\gS$ is denoted by $\Atclv(\gS)$.
 \end{definota}
Intuitively, a \sad is an incompletely specified usual \salg. 

\begin{example} \label{exaSaCd}  
For instance, we obtain a \sad of discrete field

\snic{\gK=\big((G,R),\sa{Df}\big)}

\noindent by taking~\hbox{$G=\so{a,b}$} and
$R=\so{105=0,\,a^2+b^2-1=0}.$ This dynamical \cdi corresponds to an arbitrary field of characteristic
$3$ or $5$ or $7$ generated by two \elts $\alpha$ and
$\beta$ such that~\hbox{$\alpha^2+\beta^2=1$}. 

\noindent In addition to the \rdys valid in all discrete fields, we now also have all the ones obtained by extending the language with constants in $G$ and by adding to the axioms the relations in $R$.\eoe 
\end{example}

Note that a valid \rdy in a \sad uses for its \demo a computational tree, without logic, with a finite number of \gtrs, relations and axioms. 

\begin{notation} \label{notaT(A)}
When $\gA$ is a usual \salg on the language of $\sa{T}$ we note $\sa{T}(\gA)$ the \sad we get by taking the following \pn $(G,R)$: $G$ is the set of \elts of $\gA$ and $R$ is the set of valid facts in $\gA$. 
For example, if $\sa{T}$ is the theory of Bézout rings, and $\gA$ is an arbitrary commutative ring, we put in $R$ relations $a+b-c=0$, $a'b'-c'=0$ and $a+a''=0$ when $a,b\dots,a''$ are \elts of $\gA$ and when the relations hold in $\gA$. We say that we have added to the theory \sa{T} the \textsl{positive diagram of $\gA$}.

\end{notation}

\Subsubsection{Constructive models versus classical models}\label{subsecmodelescofsSAD}
Let us consider a \sad $\gA=\big((G,R),\sa T\big)$ of type \sa{T}. For simplifying notations we assume there is only one sort. A \textsl{model of~$\gA$} is a usual (static) \salg~$M$ on the language 
associated to~$\gA$ and satisfying axioms of $\gA$ (the ones of $\sa{T}$ and those given by the \pn of $\gA$).

When $\gA$ is defined by the empty \pn, we have \textsl{models of \sa T}.

\smallskip 
So, the notion of model is based on the intuitive notion of \textsl{\salg} à la Bourbaki. 
We can say that these \salgs are \gui{static} in contrast to \gnl \sads. Note that here the underlying set of the structure is a naive set (or several naive sets if there are several sorts) which is structured by giving predicates and functions (in the naive meaning)
subject to certain axioms.

From a \cov viewpoint, axioms in models must hold with the \cov meaning of \gui{or} and \gui{there exists}: in order to prove that a given \salg satisfies the axioms, we have to use the intuitionist logic. 
Let us note also that the set theory we use in our (external) reasoning about \sads is the informal set theory of Bishop.

\subsection{Conservative extensions}

A \tdy \sa{T} is defined by a pair of sets $(\cL,\cA)$ where $\cL$
is (the signature of) the formal language we use, and $\cA$ is the set of axioms, which are \rdys on the language~$\cL$. 
From our \cov viewpoint, these two sets are intuitive sets à la Bishop. 
These sets are in all cases very different of sets which are usually considered in categorical logic, where the external framework is given as purely formal, in \sa{ZFC} or in an extension of \sa{ZFC}.
In a similar way, it should be possible to use a formalization of \coma compatible with Bishop's set theory for studying \gnl \prts of \tdys.
In this paper we work within the informal set theory of Bishop.

A \tdy $(\cL',\cA')$ is a \textsl{simple extension} of $(\cL,\cA)$ if $\cL$ and $\cA$ are subsets of $\cL'$ and $\cA'$ (with the categorical meaning in Bishop's book). In this case, the \rdys formulated in the language $\cL$ and valid in $\sa T$ are valid in  $\sa T\;'$.
 
\begin{definition} \label{defitdyidentiques}
Two \tdys \textsl{on the same language} are said to be \textsl{identical} if they prove the same \rdys, \cad if axioms of each one are valid rules of the other one. In this case, models are the same in \coma as they are in \clama.\end{definition}

\begin{definition} \label{defiextcons}
We say that a \tdy $\sab{T}'$ is a \textsl{simple conservative extension of the theory \sa{T}}
if it is a simple extension of \sa{T} and if \rdys of \sa{T} which are valid in $\sab{T}'$
are valid in \sa{T}.
\end{definition}

\noindent {\bf Informal \dfn.} Other extensions, more \gnl than those of \Dfn \ref{defitdyidentiques}, have to be considered as \gui{changing nothing to the theory} on one hand because they are conservative and on the other hand because \cov models of any \sad are the same. In this case we say that $\sab{T}'$ is an \textsl{intuitively equivalent} extension of the theory \sab{T}.

\smallskip We are now looking at a number of these extensions. 

\Subsubsection{\Esid extensions}

\paragraph{Adding abbreviations in the language}~

\smallskip For example, we may introduce a function symbol \gui{$\Som(\cdot,\cdot,\cdot)$} as an abbreviation: $\Som(x,y,z)$ 
is an abridged notation for $(x+y)+z$.

Clearly this type of \dfns \gui{changes nothing} to the \tdy, it is an \inteq extension.

\paragraph{Adding predicates: conjunction, disjunction, existence}\label{secajout-et-ou-ex}~

\smallskip Let $\Delta=(A_1,\dots,A_n)$ be a list of predicates.

One may consider that the following introduction and elimination rules \und{define} the conjunction, as in natural deduction.

\regles{
 \lab{Intro-$\vii_{\Delta}$} $\,\,A_1\vet\dots \vet A_n\vd A_1\vii \dots \vii A_n$
 \lab{Elim-$\vii_{\Delta}$} $\,\, A_1\vii \dots \vii A_n \vd A_1\vet\dots \vet A_n$
}

\noindent Last rule is \eqv to the conjunction of rules $A_1\vii \dots \vii A_n\vd A_i$.

\smallskip Disjunction is more complicated, because in dynamical proofs there is no $\vou$ on the left of $\vd$. One may introduce the disjunction connector 
$\vuu$ in the language by using rules inspired by natural deduction.

\regles{
 \lab{Elim-$\vuu_{\Delta}$} $\,\,A_1\vuu \dots \vuu A_n \vd \,A_1\vou \dots \vou A_n$
 \lab{Intro-$\vuu_{\Delta,1}$} $\,\,A_1 \vd A_1\vuu \dots \vuu A_n$
 \lab{~} $\vdots$
 \lab{Intro-$\vuu_{\Delta,n}$} $\,\,A_n \vd A_1\vuu \dots \vuu A_n$
 }
 
\smallskip 
We introduce the existential quantifier $\exists$ by the following introduction and elimination rules (note that it is not a joke).

\regles{
 \lab{Intro-$\exists_{x,A}$} $\,\, A(x) \vd \exists x\,A(x)$
 \lab{Elim-$\exists_{x,A}$} $\,\,\exists x\,A(x) \vd \Exists x\,A(x)$
}

\paragraph{Adding a function symbol in case of unique existence}~

\sni We consider a \tdy $\sa{T}=(\cL,\cA)$.
Assume that $\cL$ has a predicate $P(u,x,y)$ of arity $k+1$ (we give the example with $k=2$) 
and that \sa{T} prove the following \rdys

\DeuxRegles{
\lab{Ex$_{P,u}$} $ \vd \Exists u\;P(u,x,y)$
}
{
\lab{Uniq$_{P,u}$} $ P(u,x,y),\,P(v,x,y)\vd u=v $
}

\noindent 
Let $\sab{T}'$ be the \tdy we get by adding to \sa{T} a new function symbol $f$ and the following axiom 
 
\UneRegle{df$_{P,u,f}$}{$\,\,P(u,x,y)\vd u=f(x,y)$}

\noindent We say that
\textsl{the rule \tsbf{df$_{P,u,f}$} defines the function symbol $f$}. 
\\
In this case the \tdy 
$\sab{T}'$ is a conservative extension 
of \sa{T}.

\begin{lemma} \label{lemdefiesid}
For a \tdy using previously defined extensions 
(abbreviations, conjunction predicate, disjunction predicate, existential quantifier, function symbols in case of unique existence) produces a conservative extension which does not change \cov models. 
\end{lemma}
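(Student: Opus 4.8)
The plan is to treat each of the four kinds of extension separately, applied to an arbitrary \tdy, and then to close under composition. So let $\sa{T}=(\cL,\cA)$ be any \tdy and let $\sab{T}'$ be obtained from it by one of: adding an abbreviation; adding a conjunction predicate; adding a disjunction predicate; adding an existential predicate; or adding a function symbol in the case of a provably unique existence. Each such $\sab{T}'$ is by construction a simple extension of $\sa{T}$. I would prove two things about it: (i) the forgetful correspondence between models is a bijection, both in $\clama$ and in $\coma$, which is the identity on the underlying sets and on the interpretations of the symbols of $\cL$; (ii) $\sab{T}'$ is conservative over $\sa{T}$. The lemma then follows by a straightforward induction on the number of extension steps: if $\sa{T}=\sa{T}_0,\sa{T}_1,\dots,\sa{T}_N$ with each $\sa{T}_{i+1}$ one of the four extensions of $\sa{T}_i$, then a \rdy of $\sa{T}$ valid in $\sa{T}_N$ is, being in the language of $\sa{T}_{N-1}$, valid in $\sa{T}_{N-1}$ by (ii), hence in $\sa{T}$ by induction; and the model correspondences compose.

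For (i): in each case the new symbol has a \emph{forced} interpretation in any model $M$ of $\sa{T}$. The abbreviation $\Som(\cdot,\cdot,\cdot)$ unfolds to $(\cdot+\cdot)+\cdot$. The introduction and elimination rules for a conjunction predicate $A_1\vii\cdots\vii A_n$ force it to be the (constructive) conjunction of the $A_i$ in $M$; likewise a disjunction predicate must be the (constructive) disjunction, and an existential predicate $\exists x\,A(x)$ the (constructive) existential quantification of $A$. A function symbol $f$ added by a rule of the form $P(u,x,y)\vd u=f(x,y)$ must send $(x,y)$ to the unique $u$ with $P(u,x,y)$; this is a genuine (constructive) function on $M$ precisely because the existence and uniqueness rules for $P$ hold in $M$ with their constructive meaning. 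Conversely, restricting a model of $\sab{T}'$ to $\cL$ yields a model of $\sa{T}$, and the two operations are mutually inverse; so the classes of models of $\sa{T}$ and of $\sab{T}'$ are identified, and this identification does not disturb the constructive reading of ``is a model''.

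For (ii): the abbreviation case is vacuous, and the function-symbol case is the classical one already recorded in the paragraph preceding the lemma (back-translate by replacing $f$ with its graph, which turns atomic formulae into existential formulae, a move that \tdys accommodate; a dynamical proof in $\sab{T}'$ translates to one in $\sa{T}$). The core work is conservativity for the three connective/quantifier extensions, which I would obtain by a detour-elimination argument on dynamical proofs. Start from a dynamical proof in $\sab{T}'$ of a \rdy $\Gamma\vd\Delta$ whose hypotheses $\Gamma$ and conclusion $\Delta$ are written in $\cL$. Then the new predicate occurs neither in the root hypotheses nor at the leaves, so every occurrence of it in the proof tree has been introduced by an application of one of its introduction rules, and — since no axiom of $\sa{T}$, nor any other axiom of $\sab{T}'$, has this predicate on its left-hand side — the only rule that can later act on that occurrence is the matching elimination rule; in between it sits inert in the hypothesis lists. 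Each such introduction/elimination pair can be cut: for $\vii$ the two steps restore exactly the list one started with; for $\vuu$ one keeps only the branch of the elimination that comes from the disjunct that was introduced and discards the others, grafting and recursively transforming the kept subproof; for $\exists$ one substitutes the witnessing term used at the introduction for the fresh variable produced by the elimination, throughout the affected subproof (sound because that variable was fresh). An occurrence that is introduced but never consumed simply disappears when removed from the hypothesis lists of the leaves. Iterating, one removes all occurrences of the new predicate and is left with a dynamical proof in $\sa{T}$ of $\Gamma\vd\Delta$.

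I expect this detour-elimination step to be the only real difficulty: one must spell out, in the exact proof format of \cite{CLR01}, why an occurrence of the new predicate is inert between its introduction and the matching elimination, why commuting the elimination up to just after the introduction is a legitimate proof transformation, and why the resulting rewriting terminates — most easily by induction on the number of occurrences of the new predicate, or on the size of the proof. Everything else — unfolding abbreviations, checking that the forced interpretations yield models, and the composition of conservative extensions — is routine.
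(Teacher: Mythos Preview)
The paper does not prove this lemma: it is stated without proof, the surrounding section pointing to \cite{CLR01,Lom98,Lom06} and to the more detailed text in preparation \cite{Lom-tgac}. So there is no ``paper's own proof'' to compare against; your proposal is being measured against the standard argument that the authors take for granted.

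Your plan is the right one and would work. The model-theoretic half (i) is indeed routine: in each case the new symbol has a forced interpretation, and the forcing is constructively meaningful precisely because the intro/elim rules match the BHK clauses for $\wedge$, $\vee$, $\exists$, while the function-symbol case relies on the provable unique existence being satisfied in any model. The proof-theoretic half (ii) via detour elimination is also the standard route; your identification of the key point --- that the new predicate symbol is mentioned by no axiom of $\sa{T}$, hence between its introduction and its elimination it is inert in the hypothesis list --- is exactly what makes the local rewriting go through. Two places deserve a little more care than you indicate. First, for $\vuu$: between an introduction and the matching elimination the proof may have branched by \emph{other} disjunctive axioms, so a single introduction may be followed by several eliminations (one in each branch); your ``keep only the $A_i$-branch'' must therefore be applied at each such elimination, and the termination measure should be the total number of occurrences of the new predicate in the tree, not a single intro/elim pair. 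Second, for the function-symbol case your back-translation sketch is a bit too breezy: an atomic formula like $Q(f(s,t),f(s',t'))$ unfolds to a conjunction of graph conditions plus $Q$ on fresh variables, and in the dynamical format this means inserting, at the point where $f$ is first used, applications of the existence axiom \tsbf{Ex$_{P,u}$} to name witnesses and of \tsbf{Uniq$_{P,u}$} to identify repeated occurrences; nested $f$'s require iterating this. None of this is hard, but it is where the honest bookkeeping lives. With those two points spelled out, your argument is complete.
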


\begin{definition} \label{defiesid}~
\begin{enumerate}
\item Such an extension is called \textsl{\esid simple}. 
\item Two \tdys $\sa{T}_1$ and $\sa{T}_2$ are said to be \textsl{\esid} if we have a \tdy which is, perhaps after renamings in the signatures, an \esid extension of $\sa{T}_1$ and $\sa{T}_2$. 
\end{enumerate}
\end{definition}

\Subsubsection{\Eseq extensions} \label{subsectdyeseq}

References: \cite{BH2017}, \cite{Tse2017}. We now examine situations obtained by extending a \tdy by adding well defined new sorts. This corresponds to allowable set constructions in Bishop's set theory.

\paragraph {Introducing a subsort}~

\sni Let us consider a \tdy \sa{T} and a sort $S$ in this theory. Let us consider a unary predicate $P(\cdot)$ on objects of type~$S$.
We define a new \tdy by adding \textsl{the subsort $U$ of $S$ defined by the predicate $P$} in the following way.
\begin{enumerate}
\item We add $U$ in the sorts.
\item We add a function symbol $j_P:U\to S$.
\item We add the following axioms.

\DeuxRegles
{
\lab{sso$_P$} {$\vd_{u:U} P(j_P(u))$}
}
{
\lab{SSO$_P$} {$\,\,P(a)\vd_{a:S} \Exists u \; j_P(u)=a$}
}

\item We define the equality predicate $x=_{U}y$ on $U$ 
as an abbreviation of $j_P(x)=j_P(y)$. 
\end{enumerate}

\paragraph {Introducing a sort for a finite product of sorts}~

\sni Let us consider a \tdy \sa{T} and sorts $S_1$, \dots, $S_n$ in this theory. 
We define a new \tdy by adding the \textsl{finite product $S$ of sorts $S_i$ } in the following way.
\begin{enumerate}
\item We add a sort $S$.
\item For each $i\in \lrbn$ we add a function symbol $\pi_i$ of type $S\to S_i$.
\item We define $x=_{S}y$ on $S$ 
as an abbreviation of $\Vii_{i=1}^n \pi_i(x)=\pi_i(y)$. 
\item We add a function symbol $\mathrm{Pr}$ of type $S_1\times \cdots\times S_n\to S$. 
\item We add axioms

\Regles{ \lab{fps$_i$} $ \vd_{x_1:S_1,\dots,x_n:S_n} \; \pi_i\big(\mathrm{Pr}(\xn)\big)=x_i$ 
}

\end{enumerate}

\paragraph {Introducing a quotient sort}~

\sni Let us consider a \tdy \sa{T} and a sort $S$ of this theory. Let us consider a binary predicate $E(x,y)$ on $S$. We assume that the rules saying that $E(x,y)$ is an \eqvc relation on $S$ are valid in~$\sa{T}$. 
We define a new \tdy by adding \textsl{the quotient sort $B$ of $S$ w.r.t.\ the \eqvc relation $E$} in the following way.
\begin{enumerate}
\item We add the sort $B$ and an equality predicate $x=_By$ on $B$.
\item We add a function symbol $\pi_{E}$ of type $S\to B$.
\item We add axioms

\DeuxRegles{
\lab{qs1$_E$} {$\,\,E(a,b)\vd_{a,b:S}\; \pi_{E}(a)=_B \pi_{E}(b)$}
\lab{QS$_E$} $\vd_{x:B}\; \Exists a \; \pi_{E}(a)=_Bx$
}{
\lab{Qs2$_E$} {$\,\,\pi_{E}(a)=_B\pi_{E}(b)\vd_{a,b:S}\; E(a,b) $}
}
\end{enumerate}

\paragraph {Introducing a sort for a finite disjoint sum of sorts}~

\sni Let us consider a \tdy \sa{T} and sorts $S_1$, \dots, $S_n$ in this theory. 
We define a new \tdy by adding \textsl{the disjoint sum of sorts $S_i$} in the following way.
\begin{enumerate}
\item We add the sort $S$ and an \egt predicate $x=_Sy$ on $S$.
\item For each $i\in \lrbn$ we add a function symbol $j_{S_i,S}$ of type $S_i\to S$.
\item We add the following axioms (we use $j_i$ as an abbreviation for $j_{S_i,S}$)

\Regles{
\lab{Fds$_i$} {$\,\,j_i(a)=_Sj_i(b)\vd_{a,b:S_i}\; a=_{S_i}b$ \hfill for $1\leq i\leq n$\hspace{4cm}~}
\lab{FDS$_{i,k}$} {$\,\,j_i(a)=_Sj_k(b)\vd_{a:S_i,b:S_k}\;\Bot $ \hfill  for $1\leq i< k\leq n$\hspace{4cm}~}
\lab{FDS} $\vd_{x:S}\; \Exists a_1 \; j_1(a_1)=_Sx \;\vou\;\cdots\;\vou\;\Exists a_n \; j_n(a_n)=_Sx$}

\end{enumerate}

\begin{lemma} \label{lemmaddingsortes}
Consider a \tdy obtained by using previously defined extensions, as in Lemma \ref{lemdefiesid} or by introducing a new sort as in the previous examples.
Then the new theory is \inteq to the first one. 
\end{lemma}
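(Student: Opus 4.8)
The plan is to treat the four sort-adding constructions one at a time; the case of a \tdy obtained by mixing several of these constructions, or by combining them with the extensions of Lemma~\ref{lemdefiesid}, then follows by composing conservative, model-preserving extensions. For each single construction we check the two ingredients of an \inteq extension: conservativity over the language of the original theory \sa{T}, and invariance of the class of \cov models of any \sad of type \sa{T} (in particular, of \cov models of \sa{T} itself).

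The model part is the routine one. Carving out the subsort $\{\,a:S\mid P(a)\,\}$, forming a finite product $S_1\times\cdots\times S_n$, passing to a quotient $S/E$ by an \eqvc relation, and forming a finite disjoint sum $S_1\sqcup\cdots\sqcup S_n$ are all allowable constructions of Bishop's set theory. So, given a \cov model $M$ of \sa{T}, one builds the new carrier by that very construction, interprets the new symbols ($j_P$; the projections $\pi_i$ and the tupling $\mathrm{Pr}$; $\pi_E$; the injections $j_i$) in the evident way, and checks \emph{\cot} that the added axioms hold in the enlarged structure. Conversely, a \cov model of the extended theory, restricted to the old language, is a \cov model of \sa{T}. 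These two passages are mutually inverse up to a canonical isomorphism, since in each of the four cases the added axioms pin the new sort down up to unique isomorphism: the subsort axioms say $j_P$ is an injection whose image is exactly the elements satisfying $P$; the product axioms together with the definition of $=_S$ say $(\pi_1,\dots,\pi_n)$ is a bijection onto the product; the quotient axioms say $\pi_E$ is a surjection inducing precisely the relation $E$; the disjoint-sum axioms say the $j_i$ are injections with disjoint images covering $S$. The same argument, relativized to a \pn, handles \cov models of a \sad of type \sa{T}. Hence \cov models are unchanged.

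For conservativity we argue by back-translating \emph{dynamical proofs}. To each of the four extensions we attach an interpretation ${}^{\natural}$ of the extended theory into \sa{T} that decodes the new sort into old data: a variable of the subsort $U$ becomes a variable $x:S$ guarded by the hypothesis $P(x)$, with $j_P$ erased; a variable of the product sort becomes an $n$-tuple of variables, one in each $S_i$, with $\pi_i$ and $\mathrm{Pr}$ read as projection and tupling of tuples; a variable of the quotient sort $B$ becomes a variable $x:S$ with $=_B$ decoded as $E$; a variable of the disjoint-sum sort becomes a \emph{case split} on which injection $j_i$ produced it, with $j_i(a_i)$ (for a fresh $a_i:S_i$) substituted inside the $i$-th case. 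Extending ${}^{\natural}$ to terms, then to atomic formulae, then to lists of atomic formulae, one checks that it carries every axiom of the extended theory to a rule provable in \sa{T}: an axiom already in the language of \sa{T} goes to itself; the equality-compatibility axioms for the new sort follow from reflexivity, symmetry and transitivity of equality and from the compatibility axioms of the decoded data; and each genuinely new structural axiom of the construction becomes, after decoding, either a reflexivity-of-equality triviality or an instance of a rule already justified by Lemma~\ref{lemdefiesid} (its clauses on the connectors $\vii$, $\vuu$, on $\exists$, and on function symbols introduced in a case of unique existence). A dynamical proof being a finite well-founded computational tree whose leaves are discharged by instances of axioms, one transports the tree node by node through ${}^{\natural}$. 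A \rdy written entirely in the language $\cL$ is, modulo the trivial identifications above --- vacuous when no new sort appears in it --- its own image under ${}^{\natural}$; so a proof of such a rule in the extended theory yields a proof of it in \sa{T}. That is exactly conservativity. (One could instead invoke the general treatment of definitional extensions in coherent logic, cf.\ \cite{BH2017,Tse2017}, but the direct proof-translation fits the \cov setting used throughout.)

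The delicate point --- and the only place where the node-by-node transport is not mechanical --- is the disjoint-sum sort. Since the covering axiom \tsbf{FDS} is a genuine disjunction, a variable of the new sort cannot be decoded by a single old term; its decoding forces a branching of the proof tree over the $n$ summands. To legitimize this one must push such case splits up toward the root of the tree (just below the hypotheses) before erasing the new sort --- which is possible precisely because dynamical proofs already admit $\vou$ on the right of $\vd$ and support expansion by case analysis. Concretely, one shows by induction on the proof that whenever a branch introduces a fresh variable of disjoint-sum type via \tsbf{FDS}, the remainder of the tree can be duplicated across the $n$ cases, a cut-style manipulation in the spirit of the transitivity rule $(T)$, without altering the conclusion. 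The quotient sort carries a milder form of the same phenomenon: its decoded equality is coarser than strict equality on $S$, so one must verify that every use of $=_B$ in the proof is ultimately discharged through the reflexivity, symmetry, transitivity and compatibility rules for $E$ --- which is exactly what the axioms \tsbf{qs1} and \tsbf{Qs2} secure. Once these two reorganizations are in place, the subsort and product cases, and the composition of the extensions of Lemma~\ref{lemdefiesid} with the new ones, are straightforward bookkeeping.
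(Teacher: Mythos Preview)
The paper states this lemma without proof: it is presented as standard background, with the subsection header pointing to \cite{BH2017} and \cite{Tse2017} for the general theory of such definitional extensions. There is therefore no proof in the paper to compare against; your proposal is filling in details the authors chose to omit.

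Your two-part strategy---model invariance via the corresponding Bishop-style set constructions, conservativity via a syntactic back-translation ${}^{\natural}$ of dynamical proof trees---is the natural one and is carried out correctly in outline. The decoding you give for each of the four constructions is the expected one, and you are right that the added structural axioms become trivial or near-trivial after decoding. One minor remark: you may be overstating the difficulty of the disjoint-sum case. In a proof of a rule formulated entirely in the old language, no free variable of the new sum sort $S$ can occur, and no axiom of the extension introduces a fresh variable of sort $S$ via $\Exists$ (the bound variables in \tsbf{FDS} live in the summands $S_i$, not in $S$). Hence every term of sort $S$ arising in such a proof is explicitly of the form $j_i(t)$, and the ``case split'' is already resolved syntactically; the duplication-and-reorganization manoeuvre you describe is not actually needed here. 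With that simplification your argument goes through cleanly, and your closing reference to \cite{BH2017,Tse2017} matches the paper's own deferral.
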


\begin{definition} \label{defieseq}~
\begin{enumerate}
\item Such an extension is called \textsl{\eseq simple}. 
\item Two \tdys $\sa{T}_1$ and $\sa{T}_2$ are said to be \textsl{\eseq} if we have a \tdy which is, perhaps after renamings in the signatures, an \eseq extension of $\sa{T}_1$ and $\sa{T}_2$. 
\item One says that \textsl{the \tdy $\sab{T}'$ is an extension
 of the \tdy \sa{T}} if it is a simple extension of a theory which is \eseq to \sa{T}.
\end{enumerate}
 
\end{definition}

\Eseq extensions do not change \cov models of \sads. 
Other conservative extensions may change \cov models. We are now looking at a number of these extensions. 

\Subsubsection{Other conservative extensions}

\paragraph{Adding classical logic}~

\smallskip For a \tdy, accepting classical logic is the same thing as allowing systematically beyond Definition \ref{defiesid}, 
the introduction of a predicate negating a previously defined predicate~$P$. 

When introducing the opposite predicate $Q$ (denoted as $\lnot P$ or $\ov P$) we add the axioms of Boolean logic.

\DeuxRegles{
\labu $\;\;P\vet Q\vd \Bot$
}
{
\labu $\vd P\vou Q$
}

From a \cov viewpoint, adding classical logic means in models of \sads that all predicates be decidable.

\begin{theorem}[Cut elimination] 
\label{thFond} 
 When computing in a \tdy the use of classical logic produces a conservative extension.
\end{theorem}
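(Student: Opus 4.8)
The plan is to give a proof-theoretic argument based on cut elimination, in the spirit of Gentzen's Hauptsatz adapted to coherent (= dynamical) theories. First I would fix a two-sided sequent calculus $\mathsf{G}$ for $\sa{T}$: it has the structural rules (or one works with multisets to absorb them), the standard left/right logical rules for $\vii$, $\vuu$ and $\Exists$ (with the usual eigenvariable side condition), a rule for $\Bot$, and, for \emph{each} axiom of $\sa{T}$ — a \rdy of the form $\Gamma \vd \Exists \und{y^1}\,\Delta_1 \vou \cdots \vou \Exists \und{y^m}\,\Delta_m$ — a ``geometric'' non-logical rule scheme in the style of Negri--von Plato, whose premises and conclusion contain only atomic formulae. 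The point to check first is that the axioms one adds when ``adding classical logic'' — the Boolean pair $P\vet Q\vd\Bot$ and $\vd P\vou Q$, for each predicate $P$ and its freshly named complement $Q$ — are themselves dynamical rules of exactly that shape; hence they are absorbed into the same kind of non-logical rule scheme, and the calculus $\mathsf{G}^{\mathrm{cl}}$ for the extended theory is still of the very same format as $\mathsf{G}$, merely over a larger signature.

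Second, I would establish the bridge between the two formalisms: a \rdy on the language $\cL$ is provable by a dynamical proof tree in $\sa{T}$ augmented with classical logic if and only if the corresponding sequent $\Gamma\vdash\Delta$ is derivable, with cuts, in $\mathsf{G}^{\mathrm{cl}}$. The direction ``dynamical proof $\Rightarrow$ sequent derivation'' is a routine induction on the proof tree, reading the branchings as $\vuu$-left / $\Exists$-left steps and the axiom applications as instances of the geometric rules; the converse translates a sequent derivation back, using cut to implement the substitution of terms for the free variables of axiom instances. So it suffices to show that whenever a sequent $\Gamma\vdash\Delta$ with $\Gamma,\Delta$ \emph{atomic} and in the language $\cL$ is derivable in $\mathsf{G}^{\mathrm{cl}}$, it is already derivable in $\mathsf{G}$ over $\cL$ alone.

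Third, the heart of the matter is cut elimination for $\mathsf{G}^{\mathrm{cl}}$. One runs Gentzen's usual double induction on the pair (cut rank, cut height); the only new cases are those in which the cut formula is principal in an instance of a geometric non-logical rule. Because those rules have only atomic formulae in their premises, such a cut can be permuted upward past the rule instance without increasing the rank, exactly as in the Negri--von Plato treatment of geometric theories — and the Boolean-complement rules are a special case. Once $\mathsf{G}^{\mathrm{cl}}$ is shown cut-free-complete, the subformula property applies: in a cut-free derivation of an $\cL$-atomic sequent, every formula that occurs is either an $\cL$-atomic formula or a subformula of an instance of an axiom of $\sa{T}$. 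Since dynamical axioms are positive and built from $\cL$-atoms, no complement predicate $Q$ and no connective or quantifier foreign to $\sa{T}$ can occur; the derivation therefore lives entirely inside $\mathsf{G}$ over $\cL$, and translating it back via the bridge of the second step yields a genuine dynamical proof in $\sa{T}$.

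The main obstacle I expect is precisely the cut-elimination step: verifying that cuts against instances of the geometric and Boolean rules permute correctly and that the induction measure really decreases is where the restriction that dynamical rules have atomic hypotheses \emph{and} atomic conclusions is essential, and where the eigenvariable bookkeeping for $\Exists$ needs care. A secondary subtlety is the iterated character of ``adding classical logic'': complements of compound formulae are introduced as further fresh atomic predicates carrying their own Boolean axioms, so one must be sure the subformula argument rules out all of them at once — which it does, since none of them is a subformula of any $\sa{T}$-axiom or of the endsequent. (Alternatively one could argue semantically, via a Barr-type theorem using Boolean-valued or sheaf models, but a proof-theoretic argument stays closer to the computational reading of dynamical proofs used throughout the paper.)
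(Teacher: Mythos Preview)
The paper does not actually prove this theorem: it is stated as a known result (hence the bare label ``Cut elimination'') and no argument is given beyond the surrounding discussion. So there is no proof in the paper to compare against; the result is being imported from the proof-theoretic literature on coherent/geometric theories, in the spirit of the references \cite{CLR01,BC2005,Coq2005} cited at the head of Section~\ref{secSadTrdiSpec}.

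Your proposal is a correct and standard reconstruction of how that imported result is established. The Negri--von~Plato strategy you outline --- absorb each \rdy and each Boolean-complement pair as a non-logical rule with atomic active formulae, prove cut elimination for the resulting calculus, then invoke the subformula property to strip the complement predicates out of any cut-free derivation whose endsequent lies in~$\cL$ --- is exactly the right shape. Your identification of the delicate points (permutation of cuts past geometric rule instances, eigenvariable hygiene for $\Exists$, and the need to handle iterated complements of compound predicates) is accurate, and none of them is an actual obstruction. The semantic alternative you mention (a Barr-type argument) is also legitimate and is closer to how some of the cited references argue, but your syntactic route is fine and, as you say, better matched to the computational reading of dynamical proofs.
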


The use of classical logic is often seen by classical mathematicians as the possibility of transforming all \dems in \dems by contradiction.
For example the following rules are now \eqv.

\DeuxRegles{
\labu $\;\;A\vet B\vd C\vou D$
\labu $\;\;A\vet \ov C\vet\ov D \vd \ov B$}
{
\labu $\;\;A\vet B\vet \ov C\vet\ov D \vd \Bot$
\labu $\;\;B\vet \ov C\vet\ov D \vd \ov A$}

\paragraph{Skolemization}~ 

\sni In the following \tho, not only classical logic but also Skolemization
is authorized. This can be viewed as a local form of Choice.

\begin{theorem}[Skolemization, {\cite{BC2019}}] 
\label{thFondExists} 
~\\
Let us consider a \tdy \sa{T} and define $\sab{T}'$ to be the \gui{Skolemized} theory: all existential axioms are transformed 
by replacing $\Exists$ by the use of Skolem function symbols. 
Then~$\sab{T}'$ is a \cosv extension of \sa{T}.
\end{theorem}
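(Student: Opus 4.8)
The plan is to establish conservativity by \emph{eliminating the Skolem function symbols from dynamical proofs}. First I would dispose of the trivial direction: since $\sab{T}'$ retains all the function and predicate symbols of $\sa{T}$, and since each original existential axiom $\Gamma\vd\Exists\underline{y^1}\Delta_1\vou\cdots\vou\Exists\underline{y^m}\Delta_m$ is recovered from its Skolemized form $\Gamma\vd\Delta_1[\underline{y^1}{:=}\underline{f_1}(\underline{x})]\vou\cdots\vou\Delta_m[\underline{y^m}{:=}\underline{f_m}(\underline{x})]$ just by introducing the $\exists$'s, every rule of $\sa{T}$ is still a rule of $\sab{T}'$; only the converse needs an argument. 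I would also first invoke Theorem~\ref{thFond}: adding classical logic to $\sa{T}$ is already conservative, so it suffices to show that adjoining the Skolem symbols and the Skolemized axioms to the \emph{classical} theory $\sa{T}$ is conservative. (This makes all predicates decidable, which streamlines some of the later bookkeeping, though it is not essential.)

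For the core argument, fix a dynamical proof $\cD'$ in $\sab{T}'$ of a rule $R\colon\Gamma\vd\cC$ in which $\Gamma$, $\cC$ and every term occurring in them belong to the language $\cL$ of $\sa{T}$; I want to transform $\cD'$ into a dynamical proof of $R$ in $\sa{T}$. Each application in $\cD'$ of a Skolemized axiom fires at some node $N$ on a substitution instance $\Gamma(\underline{t})$ of its hypotheses and appends to branch $j$ the atoms $\Delta_j(\underline{t},\underline{f_j}(\underline{t}))$; the key invariant is that the Skolem term $\underline{f_j}(\underline{t})$ occurs in $\cD'$ only in the subtree below $N$. I would process these applications \emph{innermost first} — each time choosing an occurrence of a Skolem term minimal for the subterm order, so that its argument tuple $\underline{t}$ is already free of Skolem symbols — and replace the chosen application of the Skolemized axiom by an application of the \emph{original} existential axiom of $\sa{T}$. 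This opens the same $m$ branches and, in branch $j$, introduces fresh bound variables $\underline{v^j}$, chosen distinct from every variable and every term occurring anywhere in $\cD'$, together with the atoms $\Delta_j(\underline{t},\underline{v^j})$. Throughout branch $j$'s subtree I would then substitute $[\underline{f_j}(\underline{t}){:=}\underline{v^j}]$ in every atom, and observe that any later Skolemized-axiom step reintroducing the term $\underline{f_j}(\underline{t})$ has become redundant, its conclusion now being literally present. Every other step survives the substitution, because dynamical rules are stable under substitution of terms for free variables.

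Iterating this removes all Skolem symbols: each step strictly decreases a well-founded measure — the multiset of Skolem-term occurrences, with the subterm order factored in — so the procedure terminates. I would finish by checking that the resulting tree is a legal dynamical proof in $\sa{T}$: the freshness side conditions on the variables $\underline{v^j}$ hold by construction; the root is still the Skolem-free sequent $\Gamma$; and the leaves still carry instances of the Skolem-free conclusion $\cC$, hence are untouched by the substitutions. This yields a dynamical proof of $R$ in $\sa{T}$.

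The hard part will be the bookkeeping in the replacement step when, between the node $N$ where a Skolemized axiom fires and the leaves, there are further branchings coming from \emph{other} disjunctive or existential axioms: one must verify that pushing the introduction of the fresh $\underline{v^j}$ down into each such sub-branch (rather than performing it once) never violates a freshness side condition, never identifies two freshly introduced variables, and that the hypotheses $\Gamma(\underline{t})$ of the replacing existential axiom are genuinely available at $N$ — they are, since $\cD'$ derives them there, but propagating this invariant through the induction on proof trees is the delicate point. This is the dynamical-calculus analogue of the usual Skolemization/Herbrand manipulations and is carried out in detail in \cite{BC2019}. As a quick sanity check one can also note the classical shortcut — every model of $\sa{T}$ extends to a model of $\sab{T}'$ once one \emph{chooses} interpretations for the Skolem symbols, so by classical completeness for coherent theories an $\cL$-rule provable in $\sab{T}'$ is provable in $\sa{T}$ — but this argument uses the axiom of choice and, crucially, does not exhibit the effective proof transformation that the constructive framework is after, which is why the syntactic elimination above is the proof to give.
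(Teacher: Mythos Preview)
The paper does not prove this theorem at all: it is stated as a result imported from \cite{BC2019}, with no proof or even sketch given in the paper. Your proposal therefore goes well beyond what the paper itself provides; you outline a syntactic Skolem-elimination argument on dynamical proof trees (innermost-first replacement of Skolem terms by fresh existential witnesses, with a termination measure and bookkeeping of freshness side conditions), which is indeed the standard shape of such a proof and is what one expects \cite{BC2019} to carry out in detail. Since you too explicitly defer the delicate bookkeeping to that reference, your proposal and the paper are in the same position: both point to \cite{BC2019} for the actual work, with your version adding a correct informal roadmap.

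One small comment: the detour through Theorem~\ref{thFond} (adding classical logic first) is, as you yourself note, inessential, and it slightly obscures the argument rather than simplifying it --- the Skolem-elimination you describe works directly on dynamical proofs without any appeal to decidability of predicates, so you could drop that paragraph without loss.
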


\section[Distributive lattices associated to a \sad]{Distributive lattices and \spss associated to a \sad}
\label{subsectrdisad}

\subsection{Zariski spectrum and Zariski lattice of a commutative ring}

The Zariski lattice of a commutative ring can be defined using different extensions of the theory~\sa{Cr} of commutative rings.

We choose the theory of local rings since it plays a fundamental role in Grothendieck schemes.

We consider \prmt the \tdy \sa{Lr1} of \textsl{local rings with units}, based on the signature $(\,\cdot=0,\U(\cdot)\mathrel{;}\cdot+\cdot,\cdot\times \cdot,-\,\cdot,0,1\,)$.

The predicate~$\U(x)$ is defined as the invertibility predicate with the suitable axioms.
We add a collapsus axiom and Axiom \tsbf{LR} of local rings.

\DeuxRegles 
{\lab{CL$_{Lr1}$} $\,\, \U(0)\Vd \Bot$}
{\Lab{LR} $\,\, \U(x+y) \Vd \U(x) \vou \U(y)$}

Let $\gA$ be a commutative ring. We consider the \entrel $\vdash_{\gA,\mathrm{Zar} }$ on the underlying set of $\gA$ which is defined by the following \eqvc.
\vspace{-.8em}
\begin{equation} \label {eqZarclass}
\begin{aligned} 
 a_1,\dots,a_n &\,\vdash_{\gA,\mathrm{Zar}} c_1,\dots,c_m 
 \qquad\quad  \equidef  \\[.3em] 
\U(a_1)\vet \dots\vet \U(a_n) & \Vdi{\sA{Al1}(\gA)} \U(c_1)\vou \dots\vou \U(c_m) 
 \end{aligned}
\end{equation}

We define the \textsl{Zariski lattice of $\gA$}, denoted by $\ZarA$ or 
$\Zar(\gA)$,
as the one generated by the \entrel $\vdash_{\gA,\mathrm{Zar}}$.
 
The corresponding map $\DA:\gA\to\ZarA$ is called the \textsl{Zariski support of $\gA$}. When $\gA$ is fixed by the context we merely note $\rD$.

The usual \textsl{Zariski spectrum} in \clama is the dual \sps of $ZarA$.

Note that since $\rD(a_1)\vi\dots\vi\rD(a_n)=\rD(a_1\cdots a_n)$, \elts of $\ZarA$ can be written as $\rD(c_1,\dots,c_m):= \rD(c_1)\vu\dots\vu\rD(c_m)$.

\smallskip 
A more \elr theory is the theory \sa{Wzdr} of \textsl{\sdz} nontrivial rings. It is obtained by adding to \sa{Cr} a collapsus axiom and Axiom \tsbf{WZD}

\DeuxRegles {\lab{CL$_{R}$} $\,\, 1=0\Vd \Bot$}
{ \Lab{WZD} $\,\,xy=0 \Vd x=0 \vou y=0$.}

One proves the \eqvcs in the following \tho. Item $(4)$ is called a formal \nst. Hilbert's \nst is a more difficult topic. 
\begin{theorem}[\nst formel] \label{thNstFormel} ~\\
Let $\gA$ be a commutative ring, and $\an,c_1,\dots c_m\in\gA$. \Propeq
\vspace{-.5em}
\[ 
\begin{aligned} 
(1)\qquad\quad\;  \rD(a_1),\dots,\rD(a_n) &\,\vdash_{\ZarA} \rD(c_1),\dots,\rD(c_m) \\[.3em] 
(2)\qquad \U(a_1)\vet \dots\vet \U(a_n) &\Vdi{\sA{Al1}(\gA)} \U(c_1)\vou \dots\vou \U(c_m) \\ 
(3)\qquad c_1=0\vet \dots\vet c_m=0 &\Vdi{\sA{Asdz}(\gA)} a_1=0\vou \dots\vou a_n=0 \\ 
(4)\qquad \exists k>0\;\;(a_1 \cdots a_n)^k&\,\in\gen{c_1,\dots,c_m} 
 \end{aligned} \label {eqNstFormel}
\] 
\end{theorem}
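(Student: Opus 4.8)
The plan is to combine three ingredients: $(1)\Leftrightarrow(2)$ is essentially the definition of $\ZarA$ together with the fundamental theorem of entailment relations; $(4)\Rightarrow(2)$ and $(4)\Rightarrow(3)$ are direct constructions of dynamical proofs out of an algebraic identity; and $(2)\Rightarrow(4)$, $(3)\Rightarrow(4)$ — the substantial part — extract an algebraic certificate from a dynamical proof. Establishing $(2)\Leftrightarrow(4)$ and $(3)\Leftrightarrow(4)$ then yields the full equivalence (in particular $(2)\Leftrightarrow(3)$ drops out, with no direct comparison of the two theories).

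\emph{$(1)\Leftrightarrow(2)$.} The lattice $\ZarA$ is, by construction, the one generated (in the sense of Theorem~\ref{thEntRel1}) by the \entrel $\vdash_{\gA,\mathrm{Zar}}$ whose defining clause \pref{eqZarclass} is precisely $(2)$. Hence $(2)\Rightarrow(1)$ is immediate (the generating relations hold in the generated lattice) and $(1)\Rightarrow(2)$ is exactly the conclusion of Theorem~\ref{thEntRel1}; the only point to check, if not already recorded, is that $\vdash_{\gA,\mathrm{Zar}}$ satisfies reflexivity and monotonicity — both visible on \pref{eqZarclass} — and cut, which amounts to the admissibility of cut for dynamical proofs in $\sa{Lr1}(\gA)$ (graft the proof tree witnessing $\U(\vec a),\U(x)\Vdi{\sa{Lr1}(\gA)}\U(\vec c)$ into the appropriate leaves of the one witnessing $\U(\vec a)\Vdi{\sa{Lr1}(\gA)}\U(x),\U(\vec c)$), a basic feature of the dynamical proof machinery (\cite{CLR01}).

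\emph{$(4)\Rightarrow(2)$ and $(4)\Rightarrow(3)$.} Write $(a_1\cdots a_n)^k=\sum_{j=1}^{m}b_jc_j$ in $\gA$, an identity recorded in the positive diagram of $\gA$. For $(4)\Rightarrow(2)$: from $\U(a_1),\dots,\U(a_n)$ one derives $\U\big((a_1\cdots a_n)^k\big)$ (a product of units, and any of its powers, is a unit — derivable from the invertibility axioms), hence $\U\big(\sum_j b_jc_j\big)$ by substitution, then $\U(b_1c_1)\vou\cdots\vou\U(b_mc_m)$ by iterating Axiom~\tsbf{LR}, and finally $\U(c_1)\vou\cdots\vou\U(c_m)$ since a factor of a unit is a unit. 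For $(4)\Rightarrow(3)$: from $c_1=0,\dots,c_m=0$, Axiom~\tsbf{cr2} gives each $b_jc_j=0$, Axiom~\tsbf{cr3} gives $\sum_j b_jc_j=0$, hence $(a_1\cdots a_n)^k=0$; peeling off one factor at a time with Axiom~\tsbf{WZD} (and recursing on the exponent) then yields $a_1=0\vou\cdots\vou a_n=0$.

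\emph{$(3)\Rightarrow(4)$ and $(2)\Rightarrow(4)$ (the main obstacle).} I would treat $(3)\Rightarrow(4)$ first, since $\sa{Wzdr}$ has no existential axioms: a dynamical proof of $(3)$ unfolds as a finite binary tree whose nodes carry finitely generated ideals of $\gA$ — the root carries $\gen{c_1,\dots,c_m}$ (closing the hypotheses under \tsbf{cr2} and \tsbf{cr3} is exactly generating an ideal), each branching applies \tsbf{WZD} to some product $w=uv$ lying in the current ideal $\fc$ and passes to $\fc+\gen u$ and $\fc+\gen v$, and each leaf carries an ideal containing some $a_i$ or containing $1$. One then proves, by induction from the leaves, that at every node the carried ideal $\fc$ satisfies $(a_1\cdots a_n)^p\in\fc$ for some $p\ge1$: at a leaf this is clear, and at a branching one multiplies the two certificates $(a_1\cdots a_n)^{p_1}\in\fc+\gen u$ and $(a_1\cdots a_n)^{p_2}\in\fc+\gen v$, observing that modulo $\fc$ their product lies in $\gen{uv}=\gen w\subseteq\fc$, whence $(a_1\cdots a_n)^{p_1+p_2}\in\fc$. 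Applied at the root this is $(4)$. For $(2)\Rightarrow(4)$ the mechanism is the same, but the invertibility axioms of $\sa{Lr1}$ introduce fresh inverse-variables, so the ideals now live in localizations of $\gA$ at products of the $a_i$'s and of the auxiliary elements, and a final step contracts the accumulated denominators into a single power of $a_1\cdots a_n$; this is where the bookkeeping is heaviest, and the real obstacle. This direction is the formal Nullstellensatz for the Zariski lattice and is also available ready-made in \cite[Chapter XI]{CACM} and \cite{CC00}, which one may cite instead of redoing the induction.
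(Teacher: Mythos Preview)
The paper states this theorem without proof: it is presented as a known result (the formal Nullstellensatz), with the sentence ``One proves the equivalences in the following theorem'' and a reference to the surrounding literature, and then immediately passes to its corollaries. So there is no proof in the paper to compare against.

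Your proposal is correct and follows the standard route. The identification $(1)\Leftrightarrow(2)$ via Theorem~\ref{thEntRel1} is exactly how $\ZarA$ is set up; your direct constructions for $(4)\Rightarrow(2)$ and $(4)\Rightarrow(3)$ are the intended ones; and your tree induction for $(3)\Rightarrow(4)$ --- carrying a finitely generated ideal at each node, branching $\fc\leadsto(\fc+\gen u,\fc+\gen v)$ whenever \tsbf{WZD} is applied to $uv\in\fc$, and multiplying certificates at a merge --- is precisely the certificate-extraction argument of \cite{CLR01}. One small point worth making explicit at the leaves: besides ``$a_i\in\fc$'' you should also allow the collapse leaf ``$1\in\fc$'' (coming from \tsbf{CL$_R$}), which trivially gives $(a_1\cdots a_n)^1\in\fc$. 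For $(2)\Rightarrow(4)$ your sketch is honest about where the work lies; since the paper itself simply takes the result as known, citing \cite{CC00} or \cite[Chapter~XI]{CACM} as you do is entirely in keeping with its treatment.
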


As a consequence the \elt $\rD(c_1,\dots,c_m)$ of $\ZarA$ can be identified with the 
\id $\sqrt[\gA]{\gen{c_1,\dots,c_m}}$. Modulo this identification, the order relation is set inclusion.

\begin{corollary} \label{corZarA}
The lattice $\ZarA$ is generated by the least \entrel on (the underlying set of) $\gA$ such that (here $\bot=0_\gT$ and $\top=1_\gT$)

\DeuxRegles
{
\labu $\;\;0\vdash \bot$
\labu $\;\;ab\vdash a$
\labu $\;\;a+b\vdash a,b$
}
{
\labu $\;\; 1\vdash \top$
\labu $\;\;a,b\vdash ab$
} 

\noindent In other words, the map $\rD:\gA\to\ZarA$ satisfies the relations 
\[
\rD(0)=0,\;\rD(1)=1,\;\rD(ab)=\rD(a)\vi\rD(b),\;\rD(a+b)\leq \rD(a)\vu \rD(b),
\]
and any other map $\rD':\gA\to T$ satisfying these relations factorizes via $\ZarA$ with a unique lattice morphism $\ZarA\to T$.
\end{corollary}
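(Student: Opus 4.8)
The plan is to identify the defining \entrel $\vdash_{\gA,\mathrm{Zar}}$ of $\ZarA$ with the least \entrel $\vdash_0$ on $\gA$ that contains the five displayed rules, and then read off the statement. Since $\ZarA$ is by definition the \trdi generated by $\vdash_{\gA,\mathrm{Zar}}$, the equality $\vdash_{\gA,\mathrm{Zar}}=\vdash_0$ immediately makes $\ZarA$ the \trdi generated by $\vdash_0$, and the factorization property is then the universal property of the lattice presented by $\gA$ together with the relations of $\vdash_0$, obtained from \thref{thEntRel1}. The translation between the two formulations is routine: $0\vdash\bot$ and $1\vdash\top$ say $\rD'(0)=0$ and $\rD'(1)=1$; $ab\vdash a$ together with its consequence $ab\vdash b$ (by commutativity of $\gA$) and $a,b\vdash ab$ say $\rD'(ab)=\rD'(a)\vi\rD'(b)$; and $a+b\vdash a,b$ says $\rD'(a+b)\le\rD'(a)\vu\rD'(b)$.

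First I would check $\vdash_0\subseteq\vdash_{\gA,\mathrm{Zar}}$, i.e.\ that $\vdash_{\gA,\mathrm{Zar}}$ satisfies the five rules, equivalently that $\rD$ satisfies the four relations. This is immediate from \thref{thNstFormel}: under the identification of $\rD(c_1,\dots,c_m)$ with the radical \id $\sqrt[\gA]{\gen{c_1,\dots,c_m}}$ (inclusion as order, intersection as $\vi$, radical of the sum as $\vu$), one has $\rD(0)=\sqrt[\gA]{0}=0_\gT$, $\rD(1)=\gA=1_\gT$, $\rD(ab)=\sqrt[\gA]{\gen{ab}}=\sqrt[\gA]{\gen a}\cap\sqrt[\gA]{\gen b}=\rD(a)\vi\rD(b)$, and $\rD(a+b)=\sqrt[\gA]{\gen{a+b}}\subseteq\sqrt[\gA]{\gen{a,b}}=\rD(a)\vu\rD(b)$. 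Alternatively the rules can be checked straight from the definition \pref{eqZarclass} of $\vdash_{\gA,\mathrm{Zar}}$ and the axioms of $\sa{Lr1}$.

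The inclusion $\vdash_{\gA,\mathrm{Zar}}\subseteq\vdash_0$ is where \thref{thNstFormel} does the actual work. Let $\vdash'$ be any \entrel on $\gA$ containing the five rules, and $\rD'\colon\gA\to T$ the map to the \trdi $T$ it generates. Iterating $a,b\vdash ab$ gives $\rD'(x)\le\rD'(x^2)\le\cdots$, hence $\rD'(x)\le\rD'(x^k)$; and $x^k=x\cdot x^{k-1}$ with $ab\vdash a$ gives $\rD'(x^k)\le\rD'(x)$, so $\rD'(x^k)=\rD'(x)$ for $k\ge 1$, and likewise $\rD'(a_1\cdots a_n)=\rD'(a_1)\vi\cdots\vi\rD'(a_n)$. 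Now suppose $a_1,\dots,a_n\vdash_{\gA,\mathrm{Zar}}c_1,\dots,c_m$. By the equivalence $(1)\Leftrightarrow(4)$ of \thref{thNstFormel} there are $k>0$ and $b_1,\dots,b_m\in\gA$ with $(a_1\cdots a_n)^k=\sum_i b_ic_i$; applying $\rD'$ and using, in turn, the product rules, the rule $a+b\vdash a,b$ iterated over a finite sum, and $ab\vdash a$ with commutativity,
\[
\rD'(a_1)\vi\cdots\vi\rD'(a_n)=\rD'\big((a_1\cdots a_n)^k\big)=\rD'\big(\som_i b_ic_i\big)\le\Vu\nolimits_i\rD'(b_ic_i)\le\rD'(c_1)\vu\cdots\vu\rD'(c_m).
\]
Hence $a_1,\dots,a_n\vdash' c_1,\dots,c_m$; as $\vdash'$ was arbitrary, $a_1,\dots,a_n\vdash_0 c_1,\dots,c_m$.

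Putting the two inclusions together yields $\vdash_0=\vdash_{\gA,\mathrm{Zar}}$, hence that $\ZarA$ is the \trdi generated by $\vdash_0$, and (via \thref{thEntRel1}) the claimed factorization: any $\rD'\colon\gA\to T$ satisfying the four relations validates the five rules in $T$ and so extends to a unique lattice morphism $\varphi\colon\ZarA\to T$ with $\rD'=\varphi\circ\rD$, uniqueness being clear since $\rD(\gA)$ generates $\ZarA$. I expect no serious obstacle here: the mathematical content sits in \thref{thNstFormel}, and the only point needing a little care is invoking its equivalence $(4)$ in the direction $\vdash_{\gA,\mathrm{Zar}}\subseteq\vdash_0$, followed by the short manipulation of the product and sum rules displayed above.
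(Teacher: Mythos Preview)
Your proof is correct and is precisely the argument the paper intends: the corollary is stated immediately after \thref{thNstFormel} with no separate proof, the content being exactly that the algebraic certificate $(a_1\cdots a_n)^k\in\gen{c_1,\dots,c_m}$ from item~(4) can be unwound using the five rules, which is just what you do. The only cosmetic point is that the edge cases $n=0$ and $m=0$ (empty hypothesis or empty conclusion) are handled by the rules $\vdash 1$ and $0\vdash$, and your displayed computation covers them once one reads the empty product as $1$ and the empty sum as $0$.
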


\subsection[Real lattice of a commutative ring]{Real lattice and real spectrum of a commutative ring}
 
\noindent Let us consider a \textsl{\tdy of \codis}, based on the signature $(\,\cdot=0,\cdot\geq 0,\cdot>0\mathrel{;}\cdot+\cdot,-\,\cdot,0,1\,)$, for example the theory \sa{Dof} given in~\cite[Section 3]{CLR01}. 

The \textsl{real lattice}
 of a commutative ring $\gA$, denoted by $\Reel(\gA)$, is generated by the \entrel $\,\vdash_{\gA,\mathrm{Real}} $ on $\gA$ defined by the following \eqvc.
 
\vspace{-.8em}
\begin{equation} \label {eqreel}
\begin{aligned} 
 a_1,\dots,a_n &\,\vdash_{\gA,\mathrm{Real}} c_1,\dots,c_m 
 \qquad\quad  \equidef  \\[.3em] 
a_1>0\vet \dots\vet a_n>0 & \Vdi{\sA{Dof}(\gA)} c_1>0\vou \dots\vou c_m>0 
\end{aligned}
\end{equation}

We note $\rR:\gA\to \Reel(\gA)$ the corresponding map.
One proves the following \eqvc (\textsl{formal Positivstellensatz}) 
\[\rR(a_1),\dots,\rR(a_n) \,\vdash_{\Reel(\gA)} \rR(c_1),\dots,\rR(c_m) \Longleftrightarrow \exists k\in\NN\;\exists p\in C\;\;(a_1 \cdots a_n)^k+ p =0
\]
where $C$ is the positive cone generated by the $a_i$'s and $-c_j$'s.
If $n=0$ we replace $(a_1 \cdots a_n)^k$ with $1_\gA$.

The usual real spectrum $\Sper(\gA)$ is the dual \sps of $\Reel(\gA)$.
One can identify \elts of $\Sper(\gA)$ to prime cones of $\gA$. The \oqc corresponding to the \elt $\rR(a)\in\Reel\gA$ is given by $\sotq{\fc\in\Sper\gA}{-a\notin\fc}$.

One proves that the lattice 
$\Reel(\gA)$ is generated by the least \entrel on $\gA$ satisfying the following relations 

\DeuxRegles{
\labu $\,\,-x^2 \vdash $
\labu $\,\,x+y \vdash x , y $
\labu $\,\,xy \vdash x , -y $
}
{
\labu $\,\,\vdash 1 \phantom{x^2}$
\labu $\,\,x , y \vdash xy $
}

More \gnlt we can define $\Reel(\gA)$ and $\Sper(\gA)$ for any \sad $\gA$ of type \sa{Dof}.

For more details see \cite{CC00,Lom2020}.
 
\subsection{Other examples}

\paragraph{First example.} Let us consider a \sad $\gA=\big((G,R),\sa{T}\big)$ for a \tdy $\sa{T}=(\cL,\cA)$.
If $P(x,y)$ is a binary predicate in the signature, and if $Tcl =\Tcl(\gA)$ is the set of closed terms of~$\gA$, we get an \entrel $\vdash_{\gA,P}$ on $Tcl \times Tcl $ by letting
 
\vspace{-.8em}
\begin{equation} \label {eq1}
\begin{aligned} 
 (a_1,b_1),\dots,(a_n,b_n) &\,\vdash_{\gA,P} (c_1,d_1),\dots,(c_m,d_m) 
 \qquad\quad  \equidef  \\[.3em] 
 P(a_1,b_1)\vet \dots\vet P(a_n, b_n) & \Vdi{\gA} P(c_1, d_1)\vou \dots\vou P(c_m, d_m) 
 \end{aligned}
\end{equation}

\noindent Intuitively the \trdi generated by this \entrel is the lattice of \gui{truth values} of the predicate~$P$ in $\gA$.
 
\paragraph{More generally.} Let us consider a \sad $\gA=\big((G,R),\sa{T}\big)$ for a \tdy $\sa{T}=(\cL,\cA)$.
Let $S$ be a set of closed atomic formulae of $\gA$. We define the \entrel on $S$ associated to $\gA$ in the following way: 
 
\vspace{-.5em}
\begin{equation} \label {eq2}
\begin{aligned}
 A_1,\dots,A_n &\,\vdash_{\gA,S} B_1,\dots,B_m 
 \qquad\quad \equidef  \\[.2em] 
 A_1\vet \dots\vet A_n &\Vdi{\gA} B_1\vou\dots\vou B_m 
 \end{aligned}
\end{equation}
We can note $\Zar(\gA,S)$ the \trdi generated by this \entrel.



\paragraph{A conservative extension \sa{T1} of a \tdy \sa{T}} gives isomorphic Zariski lattices for \sads $\sa{T}(\gA)$ and $\sa{T1}(\gA)$, 
with a same set $S$ of closed terms. 
The lattice therefore gives a diminished image of the \sad. For example adding classical logic and Skolemizing a \tdy do not change associated lattices. Sometimes the study of the lattice is easier in the second theory.
But to recover the wealth of \tdys seen from a \cov viewpoint, it becomes necessary in \clama to use sheaves or toposes. 

\subsection{The absolute Zariski lattice of a \sad~$\gA$}

The (absolute) Zariski lattice of a \sad $\gA=\big((G,R),\sa{T}\big)$ is defined by taking~$S$ as the set $\Atcl(\gA)$ of all closed atomic formulae of~$\gA$. We denote it $\Zar(\gA,\sa T)$ or with a suitable name corresponding to the theory \sa{T}. For example~$\val(\gA)$
for the theory \Sa{val} (see \paref{subsubsecval}).
The \sps dual is called the \textsl{Zariski spectrum of the \sad $\gA$} (the valuative spectrum in the case of the theory \sa{val}). 

When the theory $\sa{T}$ is \wdij, $\Zar(\gA)$
is (up to a canonical isomorphism) the \trdi defined by the \entrel on
$\Atcl(\gA)$ generated by axioms in $R$ and instantiations of axioms of \sa{T} obtained by substituting variables by closed terms. 

\smallskip 
For example $(\ZarA)\oop$ can be seen as the absolute Zariski lattice of $\sa{Wzdr}(\gA)$.

\subsection{Spectrum and models in \clama}\label{specetmodeles} 

Here models are \gnlt seen from the viewpoint of \clama. 

One chooses for set $S$ of closed atomic formulae few predicates in the language, such that other predicates can be defined in \clama from those of $S$. 

Giving a point of the spectrum
$\Spec(\Zar(\gA,S))$, \cad a morphism \hbox{$\alpha:\Zar(\gA,S)\to\Deux$} means to give the truth value $\Vrai$ or $\Faux$ to closed atomic formulae in~$S$ ($\alpha(B)=1$ or~$0$). This implies giving also a truth value $\Vrai$ or $\Faux$ to all closed atomic formulae in~$\gA$. 
This allows us to construct a model of $\gA$ in the case of a \twdij.
We get in this way a \textsl{minimal} model with the meaning that all \elts are constructed from \gtrs in~$G$ by using function symbols in the signature.

\smallskip The choice of the set~$S$ is crucial for the topology
of the dual \sps. Two distinct choices for~$S$ can give the same points of the spectrum in \clama but may define two distinct \spss.


\section{Valuative lattice and spectrum of a commutative ring}\label{secdival}

\subsection{Valuative divisibility relation}\label{subsecdival1}

Reference: \cite{HK1994}.

\Subsubsection{Valuation domains, \rdv}\label{subsubsecdomval}

Remember that a subset $P$ of a $E$ is \textsl{detachable} when the \prt~\hbox{$x\in P$} is decidable for $x\in E$. For describing this situation in \tdys it is necessary to introduce a predicate $R(x)$ opposite to
the predicate $Q(x)$ meaning $x\in P$. So the following rules hold: {$\Vd  Q(x) \vou R(x)$}, {$  Q(x)\vet R(x)\vd \Bot$}. 

A ring is \textsl{integral} (or it is a \textsl{domain}) when each \elt is null or \ndz, and a ring is a \textsl{\cdi} when each \elt is null or invertible. These definitions do not exclude the trivial ring. 

A ring is \textsl{\sdz} when the nullity of a product $ab$ of two elements induces an explicit alternative, $a = 0$ or $b=0$. An integral domain is \sdz. In \clama the reciprocal is valid but not in \coma. 

We say that an ideal is \textsl{prime} if the quotient ring is \sdz. This definition does not exclude the ideal $\gen{1}$. 
The latter conventions are used in \cite{CACM}. They allow the authors to avoid 
negation and certain case by case reasonings, non-legitimate from a \cov viewpoint.

Nevertheless, for corresponding \tdys, we come back to the usual tradition for which local, \sdz, or integral rings, and \cdis, have to be nontrivial: this is \ncr for introducing collapsus axioms giving $\Bot$ as a possible conclusion in a \rdy. 

\smallskip A \textsl{valuation domain} $\gV$ is an integral domain whose \dve relation (on the multiplicative \mo $\gV/\gV\eti$) is a total order: $\forall x, y\,(x\di y\vu y\di x)$. 

If  $\gK$ is the  \cdf of $\gV$, $\gV$ is called a \textsl{valuation ring of $\gK$} and $(\gK,\gV)$ is called a \textsl{valued discrete field}. More generally, a subring $\gV$ of a discrete field $\gK$ is called a valuation ring of $\gK$ if for all $x \in \gK\eti$, $x$ or $x^{-1} \in \gV$.

From a \cov viewpoint, we define a \textsl{\cvd} by forcing the decidability of the relations $x\in \gV$ and $x\in \gV\eti$: this means that the divisibility in~$\gV$ has to be explicit. 

A valuation domain can also be characterized a local \dcd Bézout domain.\footnote{A local ring $\gA$ is said \textsl{\dcd} when the residual field $\gA/\Rad\gA$ is discrete. If the ring is nontrivial this means that units are detachable.}

In \cite[Section 4]{CLR01} a \tdy \sa{Vf} is introduced for valued discrete fields with this \cov viewpoint. The signature has three predicates $\Vr(x)$, $\U(x)$ and $\Rn(x)$, respectively, for $x\in \gV$, $x\in \gV\eti$ and $x\in\gV\setminus\gV\eti$. We come back to this theory in Section~\ref{secdival6}.

\smallskip 
In a \cvd $(\gK,\gV)$ we say that $x$ divides $y$ and we write $x\di y$ if there exists a $z\in\gV$ such that $xz=y$.
We note $\Gamma=\Gamma(\gV)$ the group $\gK\eti\!/\gV\eti$ (in additive notation), with the order relation $\leq$ induced by the relation $\di$ in $\gK\eti$. We note $\Gamma_\infty=\Gamma\cup\so\infty$ (where $\infty$ is a purely formal maximum \elt). Thus, the natural map $v:\gK\to\Gamma_\infty$ (with $v(0)=\infty$) is called the \textsl{valuation} of the \cvd. One has $v(xy)=v(x)+v(y)$ and $v(x+y)\geq \min(v(x),v(y))$ with \egt if $v(x)\neq v(y)$. We have also \fbox{$\gV=\sotq{x\in\gK}{v(x)\geq 0}$} and the unit group is characterized by \fbox{$\gV\eti=\sotq{x\in\gK}{v(x)= 0}$}.

\smallskip In \clama one defines a \textsl{valuative divisibility relation} $a\di b$ on a commutative ring $\gA$ as the reciprocal image of the divisibility relation on a \cvd $(\gK,\gV)$ by a ring morphism $\varphi:\gA\to\gK$. In other words, we have $a\di b$ in $\gA$ \ssi $\varphi(a)\di\varphi(b)$
\hbox{in $(\gK,\gV)$}.

\Subsubsection{Points of the valuative spectrum in \clama}\label{subsubsecspecval}

\begin{definition} \label{defispevclass}
In \clama, \elts of the \textsl{valuative spectrum $\,\Spev(\gA)$ of a commutative ring~$\gA$} are defined 
in the following way (see \cite{HK1994}): a point of $\SpevA$ is given by a pair~\hbox{$(\fp,\gV)$} where $\fp$ is a \idep of $\gA$ and $\gV$ a \arv of the fraction field $\gK=\Frac(\gA/\fp)$. 
 \end{definition}

This is analogous to the real spectrum: a point of $\SperA$ is given by a pair~\hbox{$(\fp,\gC)$} where $\fp$ is a \idep of $\gA$ and $\gC$ a positive cone of the fraction field $\gK=\Frac(\gA/\fp)$ (i.e., $\gC+\gC\subseteq \gC$, $\gC\cdot\gC\subseteq \gC$, $\gC\cup -\gC=\gK$, $\gC\cap -\gC=\so 0$).

\smallskip Distinct spectral topologies can be defined on the valuative spectrum, depending on the choice of basic \oqcs. 

This corresponds to suitable \trdis (as those described in Section~\ref{subsectrdisad}) which are defined w.r.t.\ \tdys that describe \prts of a \rdv.

We are now looking at a number of these theories. 

\subsection{Weakly disjunctive theories for a \rdv}

\Subsubsection{The theory \sa{val0}}\label{subsubsecval0}

\begin{definition} \label{defithval0}

One chooses the signature 
\[
\Sigma_\mathrm{val0}=(\,\cdot\di\cdot, \cdot=0 \mathrel{;} \cdot+\cdot, \cdot\times\cdot,-\,\cdot,0,1\,).
\] 
Axioms for $x=0$ and $a\di b$ 
are the following ones.

\DeuxRegles
{\Lab{vr0} $\,\,0\di x\Vd x=0$
\Lab{vr1} $\vd 1 \di -1$
\Lab{vr2} $\,\,a \di b \Vd ac \di bc$
\Lab{Vr1} $\,\,a \di b \vet b \di c \Vd a \di c$
}
{
\lab{CL$_{\val}$} $\,\, 0\di 1 \vd \Bot$ \quad (collapsus)
\Lab{Vr2} $\,\,a \di b \vet a \di c \Vd a \di b + c$
\Lab{VR1} $\vd a \di b \vou b\di a$
\Lab{VR2} $\,\,ax \di bx \Vd a \di b \vou 0 \di x$
}

We note \sa{val0} this \twdij.
 
\end{definition}

One proves easily $\vd -1 \di 1$, $\vd 1 \di 1$, $\vd 1 \di 0$, $\vd x \di x$, $\vd x \di 0\,$ and $\,x=0\Vd 0\di x$. Axioms \Tsbf{cr1}, \Tsbf{cr2} and \Tsbf{cr3} for commutative rings are satisfied.

Here is a precise statement saying that our theory describes correctly a \rdv.

\begin{theoremc}[Models of the theory \sa{val0} in \clama] \label{lemAxdiv} ~\\
In \clama a binary relation $a \di b$ on a ring $\gA$ satisfies axioms of \sa{val0} exactly in the following case. 
\begin{itemize}\itemsep=.1em
\item The set $\fp=\sotq{x\in \gA}{0\di x}$ is a \idep.  
We note $\ov a$ the \elt~$a$ of~$\gA$ seen in $\gA/\fp$, and $\gK$ the fraction field $\Frac(\gA/\fp)$. 
\item Fractions ${\ov a}/{\ov b}\in\gK$ such that $\ov b\neq 0$ and $b\di a$ make a \arv of $\gV$ of~$\gK$.  
\end{itemize}
In other words, axioms for $\cdot\di\cdot$ in \sa{val0} correspond to the \dfn of a \rdv (in \clama).
\end{theoremc}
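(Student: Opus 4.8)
The plan is to prove the two directions of the equivalence, each of which is a direct translation between the eight axioms of \sa{val0} and the ring-theoretic datum $(\fp,\gV)$. I will use freely the derived rules listed just before the statement ($\vdash x\di x$, $\vdash 1\di 0$, $\vdash 1\di 1$, $\vdash x\di 0$, and $x=0\vdash 0\di x$).

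First I would treat the implication from the axioms to the structure. Assuming $\di$ satisfies \sa{val0}, I would check that $\fp=\sotq{x\in\gA}{0\di x}$ is a \idep of $\gA$ (this is where \Tsbf{vr0} is used, to tie $0\di\cdot$ to the equality predicate): it contains $0$ since $\vdash 0\di 0$; it is stable under negation and addition because \Tsbf{vr2} (applied with the scalar $-1$) and \Tsbf{Vr2} give $0\di x\vdash 0\di -x$ and $0\di x,\,0\di y\vdash 0\di x+y$; it absorbs multiplication by \Tsbf{vr2}; and it is prime by \Tsbf{VR2}, reading $0\di xy$ as $0y\di xy$ to obtain $0\di x$ or $0\di y$. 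Properness of $\fp$ is the collapsus axiom $0\di 1\vdash\Bot$. I would then note that the axioms make $\di$ compatible with the congruence $x\equiv y$ defined by $0\di x-y$: from $\vdash x'\di 0$, transitivity \Tsbf{Vr1} and \Tsbf{Vr2} one gets $x'\di x$ and $x\di x'$ whenever $x\equiv x'$, so $\di$ descends to the domain $\ov\gA=\gA/\fp$, with fraction field $\gK=\Frac(\ov\gA)$. Now let $\gV$ be the set of $\xi\in\gK$ admitting a representation $\xi=\ov b/\ov a$ with $a\notin\fp$ and $a\di b$. The remaining work is threefold: (i) $\gV$ is a subring of $\gK$ — it contains $\ov 0$ and $\ov 1$ by $\vdash 1\di 0$ and $\vdash 1\di 1$, it is closed under sum by \Tsbf{vr2} and \Tsbf{Vr2} and under product by \Tsbf{vr2} and \Tsbf{Vr1} after passing to a common denominator, and it is closed under negation by combining \Tsbf{vr1} and \Tsbf{vr2} (which give $\vdash b\di -b$) with transitivity \Tsbf{Vr1}; (ii) $\gV$ is a \arv of $\gK$, because for $\xi=\ov b/\ov a\in\gK^{\times}$ (so $a,b\notin\fp$) axiom \Tsbf{VR1} gives $a\di b$ or $b\di a$, that is $\xi\in\gV$ or $\xi^{-1}\in\gV$; (iii) $\di$ is recovered as the pullback of divisibility in $(\gK,\gV)$ along $\gA\to\gK$ — for $a\in\fp$ (so $\ov a=0$) divisibility in $(\gK,\gV)$ reduces to $\ov b=0$, matched with $a\di b$ via \Tsbf{vr0}, its converse and transitivity, while for $a\notin\fp$ one direction is the definition of $\gV$ and the other uses \Tsbf{VR2} to cancel the (non-$\fp$) denominator produced by an arbitrary representation of $\ov b/\ov a$.

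Next I would do the converse. Given a \idep $\fp$, the domain $\ov\gA=\gA/\fp$, its fraction field $\gK$, a \arv $\gV$ of $\gK$, and the relation declaring $a\di b$ to hold iff $\varphi(a)$ divides $\varphi(b)$ in $(\gK,\gV)$ — equivalently: $\ov b=0$ when $\ov a=0$, and $\ov b/\ov a\in\gV$ when $\ov a\neq 0$ — I would verify each axiom in turn. After peeling off the degenerate cases in which $\ov a$ or $\ov b$ vanishes, each axiom reduces to one of: $\gV$ contains $0,1$ and is stable under $+,\times,-$ (this handles \Tsbf{vr1}, \Tsbf{vr2}, \Tsbf{Vr1}, \Tsbf{Vr2}); the dichotomy $\xi\in\gV$ or $\xi^{-1}\in\gV$ for $\xi\neq 0$ (this is \Tsbf{VR1}); integrality of $\ov\gA$, used to discard the spurious factor in \Tsbf{VR2}; and nontriviality of $\gK$, equivalently of $\ov\gA$, which makes the collapsus axiom valid since its hypothesis $0\di 1$ is then unrealizable. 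Axiom \Tsbf{vr0} holds by construction.

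I do not expect a genuine conceptual obstacle here; the difficulties are organizational. The main one is the relentless case split on whether an argument lies in $\fp$ (is \gui{zero}), which pervades both directions. The second is that every cancellation in the first implication must be funnelled through \Tsbf{VR2}, whose disjunctive conclusion $\cdots\vou 0\di x$ is harmless precisely because the denominator being cancelled avoids $\fp$; one must keep checking that this side condition is always met. A minor but easy-to-miss point is that stability of $\gV$ under negation does not follow from \Tsbf{vr2} alone and really needs \Tsbf{vr1} together with \Tsbf{Vr1}.
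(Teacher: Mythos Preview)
Your proposal is correct and follows essentially the same route as the paper: both directions are handled by direct translation between the axioms and the pair $(\fp,\gV)$, using \Tsbf{Vr2} and \Tsbf{vr2} to see that $\fp$ is an ideal, \Tsbf{VR2} for primality, compatibility of $\di$ with the congruence modulo $\fp$, \Tsbf{vr2}/\Tsbf{Vr1}/\Tsbf{Vr2} for the subring property, and \Tsbf{VR1} for the valuation-ring dichotomy. The paper dispatches the structure-to-axioms direction in one sentence (``one sees easily'') and omits your recovery step (iii); your version is simply more explicit on these points, which is fine.
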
 
%
\begin{proof} First, one sees easily that the axioms hold if $\varphi:\gA\to\gK$ is a morphism from $\gA$ to a field $\gK$, if $\gV$ is a \arv of $\gK$ and if $a\di b$ means: $\exists x\in \gV\;x\varphi(a)=\varphi(b)$. 

\noindent Let us now prove that the axioms describe correctly this situation (in \clama).
\\
Axioms \tsbf{vr1} and \tsbf{vr2} give valid rules $\Vd a \di a$ and $\Vd a\di 0$. So, recalling axiom \tsbf{Vr1}, the relation \hbox{$a\di b$}
defines a preorder. And if $0 \di 1$ we get $0\di b$ and $a\di b$ for all $a,b$ without using Axiom \tsbf{CL$_{\val}$}.
\\
A particular case of \tsbf{vr2} is the following valid rule.

\penalty-2500
\Regles{ 
\lab{vr2'} $\,\,1\di a\vdg b \di ab$} 

\noindent Using~\tsbf{Vr2} and \tsbf{vr2'}, one sees that for all $a\in \gA$ such that $1\di a$, the set 
\[\uar a:=\sotq{b\in\gA}{a \di b}
\]
 is an ideal of $\gA$. 
In particular $\fp=\uar 0$ is an ideal.
\\
Let us prove that the relation $\cdot\di\cdot$ passes to the quotient by $\fp=\uar 0$.
Indeed, assume $0\di x$, it suffices to prove that $a\di a+x$ and $a+x\di a$. First, $a\di 0\di x$ and $a\di a$, hence $a\di a+x$ using axiom~\tsbf{Vr2}. Now, since we have $a+x\di a+x$ 
we get $a+x\di a+x-x$, i.e., \ $a+x\di a$.
\\
Let us prove that $\gA/\fp$ is \sdz: if $0\di yx$ then $0x\di yx$, and \tsbf{VR2} gives $\,\,0\di xy\Vd 0\di x \vou 0\di y$.
Using \tsbf{vr2}, \tsbf{Vr1} and \tsbf{Vr2} one sees that the fractions $\ov a/\ov b$ of $\gK$ 
make a subring~$\gV$ of~$\gK$. 
Finally, two inverse \elts in $\gK$ are written
$\ov a/\ov b$ and $\ov b/\ov a$. 
So, Axiom~\tsbf{VR1}
implies~$\gV$ is a \arv of $\gK$.
\end{proof}

Note that $\varphi(\gA)$ is not \ncrt a subring of $\gV$.

 \thref{lemAxdiv} legitimates the following \dfn in \coma.
\begin{definition} \label{defirdvcoma}
A binary relation $a\di b$ on a commutative ring is called a \textsl{\rdv} if axioms of \sa{val0} are satisfied.
\end{definition}

\begin{remark} \label{rem1val0} 
The three axioms of commutative rings are valid in \sa{val0}.

\DeuxRegles{
\lab{cr1} $\vd 0=0$
\lab{cr3} $\,\,\vd x=0 \vet y=0 \Vd x+y=0$
}
{
\lab{cr2} $\,\, x=0 \Vd xy=0$
}

 So, we adopt the convention given in \cite{CLR01} 
 that the purely computational part of commutative rings
is treated outside the formal theory (see \paref{TpeAc}). 

If we consider a commutative ring $\gA$ and the \sad~$\sa{val0}(\gA)$, all closed terms of $\sa{val0}(\gA)$ are equal to \elts of $\gA$ and there is no need of the three axioms for proving closed \rdys. \eoe
\end{remark}

\begin{remark} \label{remColl} 
Without using the collapsus axiom, when the relation $0\di1$ is valid, the ring becomes a singleton. Adding the axiom $\,\,0\di1\Vd\Bot$ amounts to throw the trivial ring in the empty universe.\footnote{Or, perhaps? In a black hole.} \eoe
\end{remark}

\begin{remark} \label{remA1A6coma} 
In \coma if a ring $\gA$ with a binary relation $\cdot\di\cdot$ is a model of~\sa{val0} we get a \idep $\fp$ of $\gA$ and a suitable subring~$\gV$ of the total fraction ring $\gK$ of $\gA/\fp$. But $\gK$ is not a priori a \cdi. So we prefer to think about $\gA$ as the \sad $\sa{val0}(\gA)$, in which we can \gui{do as if} 
we were in \clama, where all fields are discrete. \eoe
\end{remark}

\Subsubsection{The theory \sa{val} and some dynamical rules provable in it}\label{subsubsecval}

\begin{definition} \label{defithval}
The theory \SA{val} is obtained from the theory $\sa{val0}$ by removing the predicate~\hbox{$\cdot=0$}, suppressing Axiom \tsbf{vr0} and defining $x=0$ as an abbreviation of~$0\di x$. 
 \end{definition}

We have the same conclusion as in \thref{lemAxdiv}: in \clama the models of the theory \sa{val} are \advs.

Without referring to models in \clama, the \tdy \sa{val} 
proves some usual \prts of \cvds as valid \rdys or, sometimes, as admissible \rdys.

First the fact that a \adv is local, normal and \sdz correspond to the following valid rules. Proofs are without surprise directly translated from those of \thref{lemAxdiv}.

\Regles{ \lab{WZD} $\,\,0 = bx \Vd 0 = b \vou 0 = x$.}

\noindent Proof: consider Axiom \Tsbf{VR2} with $a=0=0x$.\qed

\smallskip In particular we have $\,\,0\di x^2\Vd0\di x$. 

\Regles{ \Lab{LOC} $\,\, 1\di x\vet 1\di y\vet x+y\di 1 \Vd x \di 1 \vou y \di 1$.}

\noindent Using \tsbf{VR1} we open two branches, the one where $x\di y$, the other where $y\di x$. In the first one, since $x\di x$, \tsbf{Vr2} gives $x\di x+y$. So, by \tsbf{Vr1}, $x\di 1$. Symmetrically in the second branch $y\di 1$.\qed

\smallskip The third rule is \agq.

\Regles{\Lab{Nor} $\,\, 1\di a_1\vet\dots 1\di a_n \vet 
y\big(\sum_{k=0}^{n}a_kx^ky^{n-k}\big)\di x^{n+1} \Vd y \di x$.}

\noindent We open branches $x\di y$ and $y\di x$. We have to examine the first one; e.g., with $n=2$. Since $x\di y$, $1\di a_0$, $1\di a_1$ and $1\di a_2$, we get 
\[x^2\di y^2\di a_0y^2,\quad 
x^2\di xy \di a_1xy\quad \hbox{ and }\quad x^2\di a_2x^2.
\]
So, $x^2\di a_2x^2+a_1xy+a_0y^2$ by~\tsbf{Vr2}, and $yx^2\di x^3$. From $yx^2\di xx^2$ we deduce $\Vd 0\di x^2\vou y\di x\,$ by~\tsbf{VR2}. And in the branch where $0\di x^2$, we have $y\di 0\di x$. \qed

\smallskip Now an admissible rule.

\Regles {\Lab{DIV} $\,\,x\di y\Vd \Exists a\,(1\di a\vet ax=y)$}

\begin{lemma} \label{lemDivadmissible}
If a \rdij is valid in the \sad $\gB=\big((G,R),\sa{val}\big)$ when using~\tsbf{DIV}, it is also valid in~$\gB$.
\end{lemma}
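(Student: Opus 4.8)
Lemma \ref{lemDivadmissible} asserts that the rule **DIV** is *admissible* (not merely derivable) in the theory \sa{val}: any closed dynamical rule that can be proved in a \sad $\gB=\big((G,R),\sa{val}\big)$ with the help of **DIV** can already be proved without it. The plan is to argue by transforming an arbitrary proof tree that invokes **DIV** into one that does not, by showing that every use of the fresh variable $a$ introduced by **DIV** (with its two hypotheses $1\di a$ and $ax=y$) can be eliminated. The key observation is that **DIV** is a purely \emph{bookkeeping} rule: it merely names a quotient $y/x$ that the axioms of \sa{val} already guarantee exists internally, so nothing genuinely new is ever derived from the fresh constant $a$.

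Here is how I would carry it out. Consider a dynamical proof $\Pi$ in $\gB$ establishing some closed rule $\Gamma\Vd\Delta$, and suppose $\Pi$ uses **DIV** at some leaf: at that node the hypothesis $t\di t'$ is available (for closed terms $t,t'$), and we open one branch in which a fresh constant $a$ is introduced together with the facts $1\di a$ and $at=t'$. The rest of $\Pi$ along that branch is a dynamical proof in the \sad $\gB' = \big((G\cup\{a\}, R\cup\{1\di a,\ at=t'\}),\sa{val}\big)$. I want to replace this sub-proof by one that works directly in $\gB$ (with $t\di t'$ as an available fact). The natural route is a \emph{substitution/interpretation} argument: since in $\gB$ we already have $t\di t'$, the axioms of \sa{val} let us reason about the element $t'$ as being $a\cdot t$ for a suitable divisor $a$ of $1$, but only "up to the valuative relation". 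Concretely, I would show that for every closed atomic formula $F(a)$ occurring in the branch and provable in $\gB'$, the corresponding statement with $a$ suitably "unwound" is provable in $\gB$ from $t\di t'$. The predicates of \sa{val} are $\cdot\di\cdot$, so one checks: any provable fact of the form $u(a)\di v(a)$ (with $u,v$ closed terms possibly containing $a$) can be turned into a provable fact of $\gB$ by using $1\di a$, $at=t'$, axioms \tsbf{vr2}, \tsbf{Vr1}, \tsbf{Vr2}, and multiplying through by appropriate powers of $t$ to clear the "denominator" $a$ — exactly the manipulations already performed in the proofs of **Nor** and **DIV**'s motivating facts above. One then proceeds by induction on the structure of the proof tree in the branch, checking each axiom application of \sa{val} is preserved under this translation, and checking the collapsus axiom \tsbf{CL$_{\val}$} is respected (if $0\di 1$ becomes derivable after translation, it was already derivable in $\gB$ from $t\di t'$).

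I expect the main obstacle to be making precise the "unwinding" translation on \emph{arbitrary} closed terms containing the fresh constant $a$ — a term like $(a^2 + b a + c)\,t$ has no literal meaning in $\gB$, so the translation must operate at the level of the atomic formulas $u\di v$ rather than term-by-term, and one must verify it is well-defined and compatible with all the ring-equational rewriting that happens silently (Remark \ref{rem1val0}). The cleanest formulation is probably: assign to each hypothesis-branch a homomorphism-like "evaluation" sending $a\mapsto$ (a formal fraction $t'/t$) into the \sad $\sa{val}(\gV)$ attached to any model, but since we want a \emph{constructive, model-free} argument, I would instead phrase it as a syntactic lemma: \emph{from $t\di t'$, the theory \sa{val} proves, for each $n$ and each closed rule of the shape obtainable in the branch, a multiplied-up version obtained by replacing the conclusion's divisibilities $u\di v$ with $u[t'/t]\cdot t^{d}\di v[t'/t]\cdot t^{d}$ for $d$ large enough to clear denominators}. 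Verifying this for each axiom of \sa{val} (the real work: \tsbf{Vr2} interacts with the clearing exponent $d$, and \tsbf{VR2} needs the $0\di x$ disjunct handled) gives the inductive step, and applying it at the root recovers $\Gamma\Vd\Delta$ in $\gB$ directly. This is the approach taken for analogous admissibility results in \cite{CLR01}, and I would cite it for the general machinery of such proof-tree transformations.
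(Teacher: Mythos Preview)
Your approach is genuinely different from the paper's, and the paper in fact explicitly declines to attempt what you sketch. Immediately after stating the lemma, the authors write that ``the proof of this lemma seems rather difficult'' and defer it to Remark~\ref{remthVstformelval}, where it falls out as a corollary of the formal Valuativstellensatz~\ref{thVstformelval}. Their argument is indirect: the Valuativstellensatz shows that a disjunctive rule is valid in $\sa{val}(\gA)$ if and only if a certain algebraic certificate exists (an identity of the form~\pref{eqthVstformelval4} in $\gA[\uX,\uY]$), and the \emph{same} certificate characterizes validity in $\sa{Vdf}^+\!(\gA)$, a theory in which $\cdot\di\cdot$ is introduced by definition and \tsbf{DIV} is automatically valid. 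Since the algebraic characterization is identical for both theories, they prove the same disjunctive rules, and admissibility of \tsbf{DIV} follows.

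Your direct syntactic-elimination plan --- translate the fresh constant $a$ away by homogenizing with powers of $t$ --- is the natural first idea, and it may well work, but you have not actually carried it out, and the obstacles you yourself flag are exactly why the authors took the detour. The real difficulties are (i) the translation must operate on atomic formulas $u(a)\di v(a)$ rather than on terms, and the clearing exponent $d$ is not uniform across a branching proof; (ii) the disjunctive axiom \tsbf{VR2} creates branches with conclusions of different shapes ($a\di b$ versus $0\di x$), and the translation must commute with this branching in a way that preserves the \emph{same} final disjunction; (iii) nested applications of \tsbf{DIV} introduce several fresh constants, and the elimination must be iterated. None of these is obviously fatal, but none is handled in your sketch either. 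If you want a self-contained proof, the paper's route via the Valuativstellensatz is the one that has been checked; if you want an elementary proof-tree transformation, you would need to fill in the inductive verification you describe, and the paper gives no guarantee that this succeeds without surprises.
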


The proof of this lemma seems rather difficult. We can catch it after having proved a formal \vst: see Remark \ref{remthVstformelval}. 

\Subsubsection{\SADs of type \sa{val}}

\begin{definition} \label{defivalkK}~
\begin{enumerate}
\item Let $\gA$ be a commutative ring, we define the \sad $\sa{val}(\gA)$ as usual by taking the \pn given by the positive diagram of $\gA$.
\item Let $\gk\subseteq \gA$ be two rings,\footnote{We use $\gk$ for the first ring as an intuition given by the frequent context where $\gk$ is a \cdi.
} or more \gnlt let $\varphi:\gk\to\gA$ be an \alg. We note $\sa{val}(\gA,\gk)$ the \sad whose \pn is given by 
\begin{itemize}\itemsep=.1em
\item The positive diagram of~$\gA$ as commutative ring.
\item Axioms
$\vd 1 \di \varphi(x)$ for \elts $x$ of $\gk$.
%
\end{itemize}

%
%
\end{enumerate}
\end{definition}

The  two \sads $\sa{val}(\gA)$ and $\sa{val}(\gA,\gZ)$, where $\gZ$ is the least subring of~$\gA$ are canonically isomorphic.

\smallskip As a particular case of Rule \tsbf{WZD}, if $e$ is an \idm of~$\gA$, we get $\Vd e=0 \vou e=1$ in $\sa{val}(\gA)$. 
In particular, $\Vd 1 \di e$ and $\Vd 1\di f$ (where $f=1-e$). We deduce for any $a,b\in\gA$: $ae\di be\vet af\di bf\Vd a\di b$.
Indeed $a\di ae \di be$ and $a\di af \di bf$, so $a\di be+bf=b$. Let's summarize the situation.

\begin{lemma} \label{lemidmval}
If $e$ and $f\in\gA$ are two \cop \idms we have in $\sa{val}(\gA)$

\DeuxRegles
{\labu $\,\,a\di b\vd (ae\di be\vet af\di bf)$}
{\labu $\,\,ae\di be\vet af\di bf\vd a\di b $} 
\end{lemma}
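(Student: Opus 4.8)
The plan is to treat the two rules of the lemma separately, each being a short dynamical derivation in $\sa{val}(\gA)$.

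For the rule $a\di b\vd(ae\di be\vet af\di bf)$ I would simply apply Axiom \Tsbf{vr2} (namely $a\di b\Vd ac\di bc$) twice: once with the free variable $c$ instantiated by $e$, giving $a\di b\Vd ae\di be$, and once with $c$ instantiated by $f$, giving $a\di b\Vd af\di bf$. Since both conclusions are then derivable from the single hypothesis $a\di b$, their conjunction is too, with no case distinction needed.

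For the converse rule $ae\di be\vet af\di bf\vd a\di b$, the key input is the fact --- recorded in the paragraph just before the statement, obtained by applying Rule \tsbf{WZD} to the ring identity $e(1-e)=0$ --- that $\Vd 1\di e$ and $\Vd 1\di f$ hold in $\sa{val}(\gA)$. Granting this, from $1\di e$ the derived rule \tsbf{vr2'} (equivalently, a suitable instance of Axiom \Tsbf{vr2}) yields $a\di ae$; chaining this with the hypothesis $ae\di be$ through transitivity (Axiom \Tsbf{Vr1}) gives $a\di be$, and symmetrically $a\di bf$. Axiom \Tsbf{Vr2} (that is, $a\di b\vet a\di c\Vd a\di b+c$) then combines these into $a\di be+bf$, and since $be+bf=b(e+f)=b$ is a ring computation performed silently outside the formal theory (using $e+f=1$), this is exactly $a\di b$.

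I do not expect any genuine obstacle: the argument is a handful of axiom applications on top of the already-established fact that complementary idempotents behave as units in the dynamical valuation ring. The only point needing a little care is bookkeeping --- $\sa{val}$ carries no general equality predicate, so one must keep in mind that ``$x=y$'' abbreviates $0\di x-y$ and that polynomial identities in $\gA$ are discharged by the external ring machinery. If one prefers not to cite the preceding paragraph, re-deriving $\Vd 1\di e$ from $\Vd e=0\vou e=1$ (itself a consequence of \tsbf{WZD} applied to $e(1-e)=0$) is the single place where the proof tree branches, but both branches close at once, using $\vd 1\di 0$ in the branch $e=0$ and equality substitution in the branch $e=1$.
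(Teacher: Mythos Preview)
Your proof is correct and follows essentially the same route as the paper: the forward direction is two instances of \Tsbf{vr2}, and the converse uses $\Vd 1\di e$, $\Vd 1\di f$ (established in the paragraph preceding the lemma) together with \Tsbf{Vr1} and \Tsbf{Vr2} exactly as the paper does, with the ring identity $be+bf=b$ handled externally. Your closing remarks about the equality abbreviation and the optional re-derivation of $\Vd 1\di e$ are accurate but not needed for the core argument.
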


\smallskip Now a consequence of the validity of \Tsbf{Nor} in \sa{val}. 

\begin{lemma} \label{lem-y-di-x}
Let $\gk\subseteq \gA$ be two rings and $x\in\gA$, $y\in\gk$. Then if $x$ is integral\footnote{The \elt $x$ is said to be \textsl{integral over the \id} $\fa$ of $\gk$ if it is a zero of a \pol $x^{n+1}+\sum_{i=0}^n a_i x^{n-i}$ with $a_i\in\fa^i$ for each $i$ \cite[\Dfn XII-2.1]{CACM}.} over the \id $\gen{y}$ of $\gk$, the \sad $\sa{val}(\gA,\gk)$ proves~\hbox{$y\di x$}.
\end{lemma}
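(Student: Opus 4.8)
The plan is to deduce the lemma directly from the valid rule \Tsbf{Nor} of \sa{val}, read inside the \sad $\sa{val}(\gA,\gk)$, after rewriting the integrality hypothesis in exactly the algebraic shape that this rule expects.

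First I would unfold the hypothesis. Since $\gen{y}^j=\gk\,y^j$, the assumption that $x$ is integral over the \id $\gen{y}$ of $\gk$ means that there are an integer $n\ge0$ and \elts $b_1,\dots,b_{n+1}\in\gk$ with
\[
x^{n+1}+\sum_{j=1}^{n+1}b_j\,y^j\,x^{\,n+1-j}=0\quad\text{in }\gA.
\]
Setting $a_k:=b_{n+1-k}\in\gk$ for $0\le k\le n$ and reindexing by $j=n+1-k$, this identity becomes
\[
y\Bigl(\,\sum_{k=0}^{n}a_k\,x^{k}\,y^{\,n-k}\Bigr)=-\,x^{n+1}\quad\text{in }\gA.
\]
Being an equality between \elts of $\gA$, it belongs to the positive diagram of $\gA$ and may be used freely in $\sa{val}(\gA,\gk)$ (Remark~\ref{rem1val0}). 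Then I would check the hypotheses of \Tsbf{Nor} for these $a_k$: each $a_k$ is the image of an \elt of $\gk$, so the defining axioms $\vd1\di\varphi(c)$ of $\sa{val}(\gA,\gk)$ yield $\vd1\di a_k$ for every $k$ (including $k=0$); and from the provable rule $\vd-1\di1$ together with \tsbf{vr2} applied with $c=x^{n+1}$ one gets $\vd-x^{n+1}\di x^{n+1}$, hence, substituting the displayed ring identity, $\vd y\bigl(\sum_{k=0}^{n}a_k x^{k}y^{\,n-k}\bigr)\di x^{n+1}$. Instantiating \Tsbf{Nor}, which is valid in \sa{val} and therefore in the \sad $\sa{val}(\gA,\gk)$, with these data then produces $\vd y\di x$ in $\sa{val}(\gA,\gk)$, which is the claim.

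There is no genuine difficulty here; the real content is already in the proof that \Tsbf{Nor} is valid in \sa{val}. The only point that needs a little attention is the translation step: recognizing that an integral dependence relation over the \id $\gen{y}$ is precisely a relation of the form $x^{n+1}+y\bigl(\sum_{k=0}^{n}a_k x^{k}y^{\,n-k}\bigr)=0$ with all $a_k\in\gk$, and in particular that the lowest-degree coefficient $b_{n+1}y^{n+1}\in\gen{y}^{n+1}$ becomes the coefficient $a_0$, which is why one also uses the hypothesis $1\di a_0$. Should one wish to avoid citing \Tsbf{Nor}, the same conclusion can be obtained by replaying its branch argument inside $\sa{val}(\gA,\gk)$ (split on $x\di y$ versus $y\di x$ via \tsbf{VR1}, propagate the identity through \tsbf{vr2} and \tsbf{Vr2}, then conclude with \tsbf{VR2} and \tsbf{WZD}); invoking \Tsbf{Nor} as a black box is simply shorter.
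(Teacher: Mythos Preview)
Your proof is correct and is exactly what the paper intends: the lemma is stated there without a separate argument, simply as ``a consequence of the validity of \Tsbf{Nor} in \sa{val}'', and you have spelled out that consequence in full, including the reindexing of the integral-dependence relation into the shape \Tsbf{Nor} expects and the sign step via $-1\di1$.
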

We shall see in \thref{th-y-di-x} that this sufficient condition is also \ncr. 


\subsection{Valuative lattice and spectrum of a commutative ring}\label{subsecTrSpecval}

\Subsubsection{Several possible spectral topologies}

Let $\Gamma=\Gamma(\gV)$, then the point $(\fp,\gV)$ of $\Spev(\gA)$ is characterized by the associated valuation 
\[
w:\gA\to \Gamma_\infty, \;x \mapsto v(\pi_\fp(x))
\]
where $\pi_\fp:\gA\to\gA/\fp$ is the canonical surjection.

We use the notation $\SpevA$ for the spectrum with the topology which seems most natural to us, where \oqcs generating the topology are the following $\fO(a,b)$'s:
\[
\fO(a,b):=\sotq{w}{w(b)\leq w(a)},\quad a,b\in\gA
\]

In \cite{HK1994} the topology is generated by the following~$\fU(a,b)$'s:
\[
\fU(a,b):=\sotq{w}{w(b)\leq w(a),\,w(b)\neq \infty},\quad a,b\in\gA
\]
They note $\Spv(\gA)$ this \sps. 

\smallskip 
Thus, we get (with $\ov Y$ denoting the \cop set of $Y$) the following \egts. 
\begin{enumerate}
\item [$\bullet$] $\fU(b,b)=\sotq{w}{w(b)\neq \infty}=\ov{\fO(b,0)}$
\item [$\bullet$] $\fU(a,b)=\fO(a,b)\cap\,\ov{\fO(b,0)}$
\item [$\bullet$] $\ov{\fU(b,b)}=\sotq{w}{w(b)=\infty}=\fO(b,0)$ 
\item [$\bullet$] $\fO(a,b)=\fU(a,b)\cup\big(\ov{\fU(b,b)}\cap\,\ov{\fU(a,a)}\big)$ 
\end{enumerate}

\noindent Hence $\SpevA$ and
 $\SpvA$ define the same patch topology.

\smallskip The topology of $\SpvA$ considers as essential the \prt \gui{$b\di a\hbox{ and }b\neq 0$.}
This \dfn seems unnatural. In the same paper \cite{HK1994}, another spectral topology $\Spv'(\gA)$ is introduced with basic open sets $\fF(a,b):=\sotq{v}{v(a)<v(b)}=\ov{\fO(a,b)}$. This is the opposite \sps of $\SpevA$.

\smallskip 
We define now \trdis corresponding to these \spss.

\Subsubsection{The lattice $\val(\gA)$ and its spectrum $\Spev\gA$}

\begin{definition} \label{defivalA}
Let $\gA$ be a commutative ring. We consider the set (underlying)~$\gA\times \gA$ and we define on it the \entrel $\vdash_{\gA,\mathrm{val}}$ by the following \eqvc.

\vspace{-.8em}
\begin{equation} \label {eqvalA}
\begin{aligned} 
 (a_1,b_1),\dots,(a_n,b_n) &\,\vdash_{\gA,\mathrm{val}}\, (c_1,d_1),\dots,(c_m,d_m) 
 \quad \equidef  \\[.2em] 
 a_1\di b_1\vet\dots\vet a_n\di b_n &\Vdi{\sA{val}(\gA)} c_1\di d_1\vou \dots\vou c_m\di d_m 
 \end{aligned}
\end{equation}

\noindent The lattice $\val(\gA)$ is defined as generated by the \entrel $\vdash_{\gA,\mathrm{val}}$.

\noindent We note $\rDi_\gA:\gA\times \gA\to \val(\gA)$ (or simply $\rDi$) the corresponding map.
 \end{definition}

For example, with~\hbox{$a,b,c\in\gA$} 
\[\rDi(a,b)\vi\rDi(a,c)\leq \rDi(a,b+c),\quad \hbox{ in } \val(\gA).
\]
 
In fact, since $\cdot\di \cdot$ is the unique predicate of the \tdy \sa{val}, the lattice $\val(\gA)$ is the absolute Zariski lattice of the \sad $\sa{val}(\gA)$. 

Since the theory \sa{val} satisfies the rule \Tsbf{WZD}, the natural morphism $\val(\gA)\to\val(\Ared)$ is an isomorphism. 

\smallskip \thref{lemAxdiv} says that the \sad~\hbox{$\sa{val}(\gA)$} has in \clama minimal models given by the points of the valuative spectrum~\hbox{$\Spev(\gA)$}. 
After the explanations in paragraph \textsl{Spectrum and models in \clama} \paref{specetmodeles}, and examining the \dfn of basic open sets $\fO(a,b)$ generating the spectral topology, we get the following \tho in \clama.

\begin{theoremc} \label{thSpevSpecval}
The \sps $\Spev\gA$ is canonically homeomorphic to $\Spec(\val(\gA))$.
\end{theoremc}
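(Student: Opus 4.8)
The plan is to establish the homeomorphism by first exhibiting a canonical bijection between the underlying sets of $\Spec(\val(\gA))$ and $\Spev\gA$, and then checking that this bijection carries the family of \oqcs generating one topology onto the family generating the other. Both halves are an assembly of facts already at our disposal: the description of points of the spectrum of an absolute Zariski lattice in terms of minimal models (the discussion in Section~\ref{specetmodeles}), the fundamental theorem of \entrels (\thref{thEntRel1}), and the classification of the models of \sa{val} on a ring provided by \thref{lemAxdiv}.

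For the bijection of points, recall that $\val(\gA)$ is the \trdi generated by the \entrel $\vdash_{\gA,\mathrm{val}}$ of Definition~\ref{defivalA}, \cad the absolute Zariski lattice of the \sad $\sa{val}(\gA)$. A point of $\Spec(\val(\gA))$ is a lattice morphism $\theta\colon\val(\gA)\to\Deux$, and since the image of $\rDi$ generates $\val(\gA)$, such a $\theta$ is determined by the set $R_\theta=\sotq{(a,b)\in\gA\times\gA}{\theta(\rDi(a,b))=1}$. By \thref{thEntRel1} the generated lattice imposes no relation beyond those forced by $\vdash_{\gA,\mathrm{val}}$, so $\theta\mapsto R_\theta$ is a bijection onto the set of subsets $R\subseteq\gA\times\gA$ compatible with $\vdash_{\gA,\mathrm{val}}$, meaning that $A\vdash_{\gA,\mathrm{val}}B$ forces ($A\subseteq R\Rightarrow B\cap R\neq\emptyset$). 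Spelling out Definition~\ref{defivalA}, this compatibility says exactly that the binary relation on $\gA$ declared by ``$a\di b$ iff $(a,b)\in R$'' is a $\Deux$-valued model of the \twdij \sa{val} extending the positive diagram of $\gA$. Since \sa{val} has no operation symbols beyond those of commutative rings, any such model is a minimal model, carried by $\gA$ itself, \cad a \rdv on $\gA$ (Definition~\ref{defirdvcoma}) read in \clama.

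Now I would invoke \thref{lemAxdiv} — in the variant for \sa{val}, valid by the same proof with $x=0$ abbreviating $0\di x$ — which tells us that, in \clama, a \rdv on $\gA$ is the same datum as a pair $(\fp,\gV)$ with $\fp\in\Spec\gA$ and $\gV$ a \arv of $\gK=\Frac(\gA/\fp)$, hence as the associated valuation $w\colon\gA\to\Gamma_\infty$, $x\mapsto v(\pi_\fp(x))$ of Definition~\ref{defispevclass}. Composing these correspondences produces a canonical bijection between the underlying set of $\Spec(\val(\gA))$ and that of $\Spev\gA$, under which $\theta\leftrightarrow w$ precisely when $\theta(\rDi(a,b))=1\Leftrightarrow w(a)\leq w(b)$ for all $a,b\in\gA$. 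It then remains to compare topologies: the family $\sotq{\theta}{\theta(\rDi(a,b))=1}$, for $a,b\in\gA$, generates the topology of $\Spec(\val(\gA))$ because the $\rDi(a,b)$ generate the lattice $\val(\gA)$, and under the bijection each of its members becomes $\sotq{w}{w(a)\leq w(b)}=\fO(b,a)$; so the bijection carries this generating family bijectively onto the generating family $\so{\fO(a,b)}_{a,b\in\gA}$ of the topology of $\Spev\gA$ (the transposition $a\leftrightarrow b$ being immaterial), whence it is a homeomorphism. All identifications used along the way are canonical; one may alternatively view the result as an instance of Stone's antiequivalence (\thref{thStoneAntiequiv}), having recognised $\Spev\gA$ as the dual \sps of its lattice of \oqcs, which the construction identifies with $\val(\gA)$.

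The argument is more a matter of careful bookkeeping than of new content, so I do not expect a serious obstacle. The one genuinely delicate step — and the only point where classical logic truly enters, which is also why the statement lives in \clama — is the identification of the $\Deux$-valued models of the \wdij theory \sa{val} on $\gA$ with its minimal models, hence with honest \rdvs on $\gA$ in the sense covered by \thref{lemAxdiv}; once that is granted everything else is routine. A secondary nuisance to track is the mismatch between the indexing conventions for $\fO(a,b)$ and for $\rDi(a,b)$, which differ by the transposition of the two arguments and therefore have no effect on the families of basic open sets being compared.
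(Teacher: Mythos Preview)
Your proposal is correct and follows essentially the same approach as the paper: the paper's argument (given in the two paragraphs surrounding the statement rather than in a formal proof environment) invokes \thref{lemAxdiv} to identify minimal models of $\sa{val}(\gA)$ with points of $\Spev\gA$, appeals to the general discussion in Section~\ref{specetmodeles} for the correspondence between points of $\Spec(\val(\gA))$ and such models, and then observes that $\fO(a,b)$ corresponds to $\rDi(b,a)$. You spell out the same steps with more care, in particular making explicit the role of \thref{thEntRel1} and the bookkeeping around the $(a,b)\leftrightarrow(b,a)$ transposition, but the substance is the same.
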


In \coma, the \textsl{pointfree topology} given by the \trdi~$\val(\gA)$ is generated by formal opens: the \elts~$\rDi(a,b)$ of $\val(\gA)$. 

In \clama, where \spss have enough points, the topology
of $\Spec(\val(\gA))$ is a usual one, On the other hand we have defined (on the same underlying set) a topology on $\Spev \gA$
generated by open sets $\fO(a,b):=\sotq{w}{w(b)\leq w(a)}$. Clearly they correspond to formal $\rDi(b,a)$'s: this explains the homeomorphism between $\SpevA$
and $\Spec(\val(\gA))$.

\Subsubsection{The lattice $\vals(\gA)$ and the spectrum $\Spv\gA$}

Spaces $\Spev\gA$ and $\Spv\gA$ are different, but they have the same points given by suitable pairs~$(\fp,\gV)$.
 
We consider the \twdij \sa{val1} we get from $\sa{val}$ by adding the predicate~\hbox{$\cdot\neq 0$} opposite to $\cdot=0$. The theory \sa{val1} is a conservative extension of $\sa{val}$. Let us consider now the predicate \fbox{$J(a,b)\eqdef (a\di b \,\vii\, a\neq 0)$}.

We say that the \sps $\Spv\gA$ is homeomorphic to the spectrum of the following \trdi $\vals(\gA)$.

\begin{definition} \label{defivathe A}
The \trdi $\vals(\gA)$ is generated by the 
\entrel~$\vdash_{\gA,\mathrm{val}\sta}$ on $\gA\times \gA$ defined by the \eqvc
\vspace{-.8em}
\begin{equation} \label {eqvalp}
\begin{aligned} 
(a_1,b_1),\dots,(a_n,b_n) &\,\vdash_{\gA,\mathrm{val}\sta}\, (c_1,d_1),\dots,(c_m,d_m) 
 \quad \equidef  \\[.2em]
 J(a_1,b_1)\vet \dots\vet J(a_n,b_n) &\Vdi{\sA{val1}(\gA)} J(c_1,d_1)\vou \dots\vou J(c_m,d_m)
 \end{aligned}
\end{equation}

\noindent We note $\rJ_\gA:\gA\times \gA\to \vals(\gA)$ (or simply $\rJ$) the corresponding map.
 \end{definition}

Thus, the \elt $\rJ(b,a)$ of $\vals(\gA)$ corresponds to the open set $\fU(a,b)$ of 
$\Spv\gA$.

\subsection{Valuative lattice and spectrum of an \alg}\label{subsecTrSpecval2}

We consider in this section a \klg $\gA$, \cad a morphism $\varphi:\gk\to\gA$ of commutative rings. 

\Subsubsection{The lattice $\val(\gA,\gk)$ and the spectrum $\Spev(\gA,\gk)$}

We define the \trdi $\val(\gA,\gk)$
by using the \sad $\sa{val}(\gA,\gk)$ (see \dfn \ref{defivalkK}) in the same way as $\val(\gA)$
is defined by using the \sad $\sa{val}(\gA)$.

\begin{definition} \label{defivalkK0}
Let $\gk$ be a subring of a ring $\gA$, 
or more \gnlt let us consider a \klg $\varphi:\gk\to\gA$.
We define the \trdi $\val(\gA,\gk)$ as generated by the \entrel 
 $\vdash_{\gk,\gA,\mathrm{val}}$ on the set $\gA	\times \gA	$, which is defined by the following \eqvc. 
\vspace{-.8em}
\begin{equation} \label {eqvalkK}
\begin{aligned} 
 (a_1,b_1),\dots,(a_n,b_n) &\,\vdash_{\gk,\gA,\mathrm{val}}\, (c_1,d_1),\dots,(c_m,d_m) 
 \quad \equidef  \\[.2em] 
 a_1\di b_1\vet \dots\vet a_n\di b_n &\Vdi{\sA{val}(\gA,\gk)} c_1\di d_1\vou \dots\vou c_m\di d_m 
 \end{aligned}
\end{equation}

\noindent We note $\rDi_{\gk,\gA}:\gA	\times \gA \to \val(\gA,\gk)$ (or simply $\rDi$) the corresponding map.

\end{definition}

\begin{remark} \label{remvalKANOR} 
Since the theory \sa{val} satisfies the rule \Tsbf{Nor}, if 
$\ov\gk$ is the \cli of (the image of) $\gk$ in~$\gA\Red$, the \sad $\sa{val}(\gA,\gk)$ proves $1\di x$ for \ $x\in\ov\gk$. Thus, the natural morphism $\val(\gA,\gk)\to\val(\gA\Red,\ov\gk)$ is an isomorphism. \eoe 
\end{remark}

We note
 $\Spev(\gA,\gk):=\Spec(\val(\gA,\gk))$. Points of this spectrum correspond
to points $(\fp,\gV)$ of $\Spev(\gA)$ such that $\gV$ contains (the image of) $\gk$. Since the lattice $\val(\gA,\gk)$ is a quotient of $\val(\gA)$,
 $\Spev(\gA,\gk)$ is a sub\sps of $\Spev\gA$.

\smallskip
When $\gK$ is a \cdi transcendent over a subfield $\gk$, the \sps $\Spev(\gK,\gk)$ is often called the \textsl{Zariski-Riemann spectrum} of $(\gK,\gk)$, or also, the \textsl{abstract Riemann surface} of $(\gK,\gk)$. The points of this spectrum are the \arvs of $\gK$ containing $\gk$. 

In \coma we are mainly interested in the lattice $\val(\gK,\gk)$. The abstract Riemann surface is more than a \sps, a Grothendieck scheme. It has a good \cov description and the most important object is this scheme.

\smallskip Another special case is given when $\gA$ is the fraction field of a domain $\gk$, for example when~$\gk$ is the ring of all \agq integers.

\begin{lemma} \label{lem-valkk-valkK}
Let $\gk$ be a domain with fraction field $\gK$. The natural morphism $\val(\gk,\gk)\to\val(\gK,\gk)$ is an \iso. 
\end{lemma}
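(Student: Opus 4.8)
The plan is to construct an explicit two-sided inverse to the natural morphism by clearing denominators. Write $\gK=\Frac(\gk)$; since $\gk$ is a domain, $\gK$ is a \cdi and every regular element of $\gk$ becomes invertible in $\gK$. The natural morphism $f\colon\val(\gk,\gk)\to\val(\gK,\gk)$ is the one induced by the morphism of \sads $\sa{val}(\gk,\gk)\to\sa{val}(\gK,\gk)$ carrying a generator $a\in\gk$ to its image in $\gK$: this is legitimate because the positive diagram of $\gk$ maps into that of $\gK$ and the axioms $\vd 1\di x$ for $x\in\gk$ belong to both presentations.

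First I would check that $f$ is onto. A generator of $\val(\gK,\gk)$ has the form $\rDi(a/s,\,b/t)$ with $a,b\in\gk$ and $s,t$ regular in $\gk$, and I claim $\rDi(a/s,\,b/t)=\rDi(at,\,bs)$ in $\val(\gK,\gk)$; as $at,bs\in\gk$ this element equals $f\big(\rDi_{\gk,\gk}(at,bs)\big)$, and since such generators generate $\val(\gK,\gk)$ surjectivity follows. The claimed equality amounts to the two valid rules $a/s\di b/t\Vdi{\sa{val}(\gK,\gk)}at\di bs$ and $at\di bs\Vdi{\sa{val}(\gK,\gk)}a/s\di b/t$. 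The first is an instance of \Tsbf{vr2} with multiplier $st$, using the ring identities $st\cdot(a/s)=at$ and $st\cdot(b/t)=bs$ (which lie in the positive diagram of $\gK$). For the second, \Tsbf{VR2} applied to $(st)(a/s)\di(st)(b/t)$ gives $a/s\di b/t\vou 0\di st$, and in the branch $0\di st$ one multiplies by the closed term $(st)^{-1}$ (using \Tsbf{vr2}) to obtain $0\di 1$, hence $\Bot$ by the collapsus axiom.

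For injectivity I would introduce a candidate inverse $g\colon\val(\gK,\gk)\to\val(\gk,\gk)$ defined on generators by $g\big(\rDi(a/s,b/t)\big)=\rDi(at,bs)$, and verify (i) that it is independent of the chosen fraction representatives and (ii) that it maps the entailment relation defining $\val(\gK,\gk)$ into the one defining $\val(\gk,\gk)$, so that it induces a lattice morphism. Granting (i)--(ii), one has $g\circ f=\Id$ on generators (take $s=t=1$), hence $g\circ f=\Id_{\val(\gk,\gk)}$; together with the surjectivity of $f$ this forces $f$ to be an \iso, with inverse $g$.

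The main obstacle is point (ii): converting a dynamical proof valid in $\sa{val}(\gK,\gk)$ into one valid in $\sa{val}(\gk,\gk)$ after clearing denominators. A proof over $\gK$ mentions only finitely many elements of $\gK$, hence finitely many denominators, so one fixes a single regular $d\in\gk$ clearing all of them and replaces each closed term $\tau\in\gK$ of the tree by a suitable $d^{k}\tau\in\gk$; the instances of the $\sa{val}$-axioms and of the $\vd 1\di x$'s used survive this substitution up to multiplication by a power of $d$, which \Tsbf{vr2} and \Tsbf{VR2} absorb, while the ring relations drawn from the positive diagram of $\gK$ become, after multiplication by powers of $d$, ring relations valid in $\gk$. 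The delicate points are keeping the exponent bookkeeping coherent across the branchings of the proof tree and --- crucially --- finding, over $\gk$, a counterpart to the ``multiply by $(st)^{-1}$, then collapse'' step used above: here the hypothesis that $\gk$ is a domain (so the cleared denominators are regular, hence can be cancelled through \Tsbf{VR2}) is what must be exploited, and I expect this to be the part requiring the most care.
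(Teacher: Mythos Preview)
Your approach is essentially the same as the paper's: both reduce the problem to the claim that a proof tree in $\sa{val}(\gK,\gk)$ whose hypotheses and conclusions lie over $\gk$ can be rewritten as a proof tree in $\sa{val}(\gk,\gk)$ by clearing denominators. The paper dispatches this key step in a single sentence (``one may always avoid fractions in a \demo by using the art of getting rid of denominators''), whereas you give a more explicit outline of the bookkeeping involved and are candid that this is where the real work lies; your surjectivity argument via $\rDi(a/s,b/t)=\rDi(at,bs)$ is exactly the paper's observation that $x\di y$ and $ut\di sv$ are provably equivalent in $\sa{val}(\gK,\gk)$.
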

%
\begin{proof}
There are more atomic formulae in $\sa{val}(\gK,\gk)$ than in 
$\sa{val}(\gk,\gk)$. But if $x\di y$ is an atomic formula of 
$\sa{val}(\gK,\gk)$ where $x=\frac u v$ and $y=\frac s t$ with 
$u,v,s,t\in\gk$ and $v,t\neq 0$, formulae $x\di y$ and $ut\di sv$ are provably equivalent in $\sa{val}(\gK,\gk)$.

\noindent Thus it suffices to see that 
(for $a_i,b_i,c_j,d_j\in\gk$)
\[
a_1\di b_1\vet \dots\vet a_n\di b_n \Vdi{\sA{val}(\gK,\gk)} c_1\di d_1\vou \dots\vou c_m\di d_m
\]
\ssi 
\[
a_1\di b_1\vet \dots\vet a_n\di b_n \Vdi{\sA{val}(\gk,\gk)} c_1\di d_1\vou \dots\vou c_m\di d_m
\]
Indeed, one may always avoid fractions in a \demo by using the art of getting rid of denominators. 
\end{proof}
%

\Subsubsection{The center map (1)}\label{subseccomparvalZar0}

\begin{proposition}[The center map for \trdis $\val(\gA,\gA) $ and 
$\ZarA$] \label{propdefCentre} 
There exists a unique morphism of \trdi $\gamma:\ZarA\to\val(\gA,\gA)$
such that $\gamma(\DA(a))=\Di_\gA(a,1)$ for all $a\in\gA$. 
\end{proposition}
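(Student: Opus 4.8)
The plan is to use the universal property of the Zariski lattice $\ZarA$ given in Corollary~\ref{corZarA}: a lattice morphism out of $\ZarA$ is the same thing as a map $\rD':\gA\to T$ satisfying $\rD'(0)=0$, $\rD'(1)=1$, $\rD'(ab)=\rD'(a)\vi\rD'(b)$ and $\rD'(a+b)\leq\rD'(a)\vu\rD'(b)$. So it suffices to set $\rD'(a):=\Di_\gA(a,1)\in\val(\gA,\gA)$ and check these four relations hold in $\val(\gA,\gA)$; uniqueness is then automatic since the $\DA(a)$ generate $\ZarA$.

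First I would translate each required relation into the entailment relation $\vdash_{\gA,\gA,\mathrm{val}}$ of Definition~\ref{defivalkK0}, i.e.\ into a valid dynamical rule of the \sad $\sa{val}(\gA,\gA)$ about the divisibility predicate $\cdot\di\cdot$, recalling that here every element $a\in\gA$ satisfies $\vd 1\di a$ (that is the defining extra axiom of $\sa{val}(\gA,\gA)$). Concretely:
\begin{itemize}
\item $\rD'(0)=0$: one needs $\Di(0,1)=0_{\val(\gA,\gA)}$, i.e.\ the rule $0\di 1\vdg\Bot$, which is exactly the collapsus axiom \Tsbf{CL$_{\val}$}.
\item $\rD'(1)=1$: one needs $\Di(1,1)=1$, i.e.\ $\vd 1\di 1$, which is provable in \sa{val} (noted just after Definition~\ref{defithval0}).
\item $\rD'(ab)=\rD'(a)\vi\rD'(b)$: this amounts to the two rules $ab\di 1\vdg a\di 1$ together with $b\di 1$, and conversely $a\di 1,\ b\di 1\vdg ab\di 1$. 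For the first: from $ab\di 1$ and $a\di ab$ (a consequence of \tsbf{vr2} applied to $1\di b$, namely rule \tsbf{vr2'} giving $a\di ab$) and transitivity \tsbf{Vr1} we get $a\di 1$, and symmetrically $b\di 1$. For the converse: from $a\di 1$ we get $ab\di b$ by \tsbf{vr2}, and from $b\di 1$ and transitivity $ab\di 1$.
\item $\rD'(a+b)\leq\rD'(a)\vu\rD'(b)$: one needs the rule $a\di 1,\ b\di 1\vdg a+b\di 1$, which is \tsbf{Vr2} with $c:=a$, $b:=b$ wait — precisely, from $a\di 1$ we have $1\di 1$ so $a\di 1$; using $1\di a$ and $1\di b$ and $a\di 1,b\di 1$: from $1\di a$ and $a\di 1$ etc. Cleanest: by Remark~\ref{rem1val0}/the discussion around \Tsbf{LOC}, $\uar 0$-type arguments show $\sotq{x}{1\di x}$ is an ideal, but here we want the filter side; directly, from $a\di 1$ and $b\di 1$ axiom \tsbf{Vr2} (with the divisor $1$) does not immediately apply, so instead use: $1\di a\di 1$ forces nothing; better use $a+b\di a+b$ and $a+b\di$? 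Actually the correct route is: $1\di a$ and $1\di b$ give, via \tsbf{Vr2}, $1\di a+b$; combined with $a+b\di 1$ — but we must \emph{prove} $a+b\di 1$, so instead observe $a\di 1$, $b\di 1$ and that $\sotq{x}{x\di 1}$ is closed under the operation $x,y\mapsto x+y$: from $x\di1$, apply \tsbf{vr2} to get $xz\di z$ for all $z$, in particular nothing new; the genuine argument is that in any valuation ring the set of units-or-above is a filter, matching axiom \tsbf{Vr2} read on the quotient. I would write out this short derivation carefully.
\end{itemize}

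The routine but slightly delicate point — and the one I expect to be the main obstacle — is the last relation $\rD'(a+b)\leq\rD'(a)\vu\rD'(b)$, because unlike the others it is genuinely an \emph{inequality} and requires producing a valid dynamical rule $a\di 1,\,b\di 1\ \vdg\ (a+b)\di 1$ inside $\sa{val}(\gA,\gA)$ from the available axioms (essentially: in a valuation ring, if $a$ and $b$ are both nonunits, so is $a+b$ — the ``local ring'' property, compare rule \Tsbf{LOC}). I would derive it by opening the two branches $a\di b$ and $b\di a$ coming from \Tsbf{VR1}: in the branch $a\di b$, from $b\di 1$ and transitivity $a\di 1$, combined with $b\di 1$ and \tsbf{Vr2} (which gives $b\di a+b$ from $b\di a$... ) we get $a+b\di 1$; symmetrically in the other branch. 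Once all four relations are verified, Corollary~\ref{corZarA} delivers the unique lattice morphism $\gamma:\ZarA\to\val(\gA,\gA)$ with $\gamma(\DA(a))=\Di_\gA(a,1)$, completing the proof.
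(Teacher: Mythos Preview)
Your overall strategy is exactly the paper's: invoke the universal property of $\ZarA$ from Corollary~\ref{corZarA} and verify that $a\mapsto \Di_\gA(a,1)$ satisfies the defining relations. The first three relations are handled correctly. The fourth is not.

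You have translated $\rD'(a+b)\leq \rD'(a)\vu\rD'(b)$ into the rule $a\di 1,\ b\di 1\ \vdg\ (a+b)\di 1$. This is backwards. In the lattice, $x\leq y$ corresponds to the entailment $x\vdash y$; so $\Di(a+b,1)\leq \Di(a,1)\vu\Di(b,1)$ unfolds to
\[
a+b\di 1\ \Vdi{\sA{val}(\gA,\gA)}\ a\di 1\ \vou\ b\di 1,
\]
not the converse. And indeed the rule you are trying to prove is \emph{false}: in the valuation ring $\ZZ_{(3)}$ the elements $a=1$ and $b=2$ both satisfy $a\di 1$, $b\di 1$, yet $a+b=3$ does not satisfy $3\di 1$. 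That is why your attempted derivation in the last paragraph kept stalling.

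The correct rule is precisely \Tsbf{LOC}, which you yourself cite: in $\sa{val}(\gA,\gA)$ we have $1\di a$ and $1\di b$ as axioms of the presentation, so the required rule $\;1\di a\vet 1\di b\vet\, a+b\di 1\ \Vd\ a\di 1\vou b\di 1\;$ is available directly. This is exactly how the paper's proof proceeds (it lists $a+b\di1\Vd a\di1\vou b\di1$ as the relevant rule in $\sa{val}(\gA,\gA)$, then reduces it to $1\di a\vet 1\di b\vet\, a+b\di1\Vd a\di1\vou b\di1$ in $\sa{val}(\gA)$). Once you fix this one translation, your argument becomes the paper's argument.
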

%
\begin{proof}
Using Corollary \ref{corZarA}, it is sufficient to see that the following rules are valid in $\sa{val}(\gA,\gA)$.

\DeuxRegles
{
\labu $\,\,0\di1\Vd \Bot$
\labu $\,\,ab\di1\Vd a\di1$
\labu $\,\,a+b\di1\Vd a\di1\vou b\di1$
}
{
\labu $\Vd 1\di1$
\labu $\,\,a\di1\vet\, b\di1\Vd ab\di1$
}

\noindent And for this it is sufficient to see that the following rules are
 valid in $\sa{val}(\gA)$. 

\DeuxRegles
{
\labu $\,\,0\di1\Vd \Bot$
\labu $\,\,1\di b\vet\, ab\di1\Vd a\di1$
\labu $\,\,1\di a\vet\, 1\di b\vet\, a+b\di1\Vd a\di1\vou b\di1$
}
{
\labu $\Vd 1\di1$
\labu $\,\, a\di1\vet\, b\di1\Vd ab\di1$
}

\noindent And this is easy.
\end{proof}

In \clama, the dual viewpoint is given by the \textsl{center spectral map} 
$\;\Spev(\gamma):\Spev(\gA,\gA)\to \Spec\gA\,$: the image of the point $(\fp,\gV)$ of $\Spev(\gA,\gA)$, 
is the \idep $\varphi^{-1}(\Rad(\gV))$, where $\varphi:\gA\to\Frac(\gA/\fp)$ is the canonical map (note that $\varphi(\gA)\subseteq \gV$ because we have $1\di a$ when $a\in\gA$).

\begin{remark} \label{rempropdefCentre} 
\thref{thVstformelval4} proves that the center map is injective.
A more direct \demo should be interesting. \eoe 
\end{remark}

\begin{theorem} \label{cor2thVstformelval4}
If $\gk$ is an integral arithmetic ring (a \ddp), the morphism $\gamma:\Zar\gk \to \val(\gk,\gk)$
 in \ref{propdefCentre} is an isomorphism of \trdis. 
\end{theorem}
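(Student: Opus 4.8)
The plan is to exploit the fact that for a Prüfer domain $\gk$, every element of $\Zar\gk$ is a radical of a finitely generated ideal, and that finitely generated ideals of a Prüfer domain are invertible and locally principal, so the combinatorics of divisibility becomes tractable. By Proposition \ref{propdefCentre} we already have a lattice morphism $\gamma:\Zar\gk\to\val(\gk,\gk)$ with $\gamma(\DA(a))=\Di_\gk(a,1)$; it remains to prove $\gamma$ is injective and surjective. Surjectivity is the easier half: a generator of $\val(\gk,\gk)$ has the form $\rDi(a,b)$ with $a,b\in\gk$, and the claim is that in $\sa{val}(\gk,\gk)$ one has $a\di b$ provably equivalent to a Boolean combination of the $1\di c$, i.e. to $\DA$-type data. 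Concretely, since $\gk$ is arithmetic the ideal $\gen{a,b}$ is locally principal: there are comaximal $s,t$ (with $s+t$ a unit, or a partition of unity via $\DA(s)\vu\DA(t)=1$, $\DA(s)\vi\DA(t)=\DA(s t)$ tracking the comaximality) such that on the piece where $s$ is invertible $a\di b$ reduces to $a\di b$ with $\gen{a,b}=\gen a$, hence to $\top$ or to a divisibility between principal generators; using Rule \tsbf{WZD} and Lemma \ref{lemidmval}-style gluing one pushes this down to a statement about $\Zar\gk$. So every $\rDi(a,b)$ lies in the image of $\gamma$, and since these generate $\val(\gk,\gk)$, $\gamma$ is onto.

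For injectivity, the plan is to invoke Theorem \ref{thVstformelval4} (cited in Remark \ref{rempropdefCentre} as proving the center map is injective) — once that formal \vst is available, the center spectral map $\Spev(\gk,\gk)\to\Spec\gk$ is onto with the quotient lattice $\Zar\gk\hookrightarrow\val(\gk,\gk)$, i.e. $\gamma$ is injective. Alternatively, and more self-containedly, one argues at the level of \entrels: one must show that if $\DA(a_1),\dots,\DA(a_n)\vdash_{\val(\gk,\gk)}\DA(c_1),\dots,\DA(c_m)$ — unravelled via $\gamma$ as $a_1\di1\vet\cdots\vet a_n\di1\Vdi{\sA{val}(\gk,\gk)}c_1\di1\vou\cdots\vou c_m\di1$ — then already $\DA(a_1),\dots,\DA(a_n)\vdash_{\ZarA}\DA(c_1),\dots,\DA(c_m)$, equivalently (by the formal \nst, Theorem \ref{thNstFormel}, item (4)) some power of $a_1\cdots a_n$ lies in $\gen{c_1,\dots,c_m}$. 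The idea is to take a dynamical proof of the hypothesis in $\sa{val}(\gk,\gk)$ and, using that $\gk$ is Prüfer (so \tsbf{VR1}, \tsbf{VR2} become algebraically controllable: $a\di b$ or $b\di a$ corresponds to a locally-principal splitting, and $ax\di bx\Vd a\di b\vou 0\di x$ is governed by the invertibility of finitely generated ideals), collapse it to a computation inside $\gk$ that is exactly a \nst-type certificate. Admissibility of Rule \tsbf{DIV} (Lemma \ref{lemDivadmissible}) is likely needed here to normalise the divisibility hypotheses $1\di a_i$ into genuine ring equations $a_i u_i=1$... no, rather $1\di a_i$ already means $a_i$ is a unit of the relevant valuation ring; the point is to track that the only obstruction to $a_i$ dividing $1$ is a genuine non-invertibility witnessed by a prime, which is precisely the content of the \nst.

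The main obstacle I expect is injectivity of $\gamma$ without simply quoting Theorem \ref{thVstformelval4}: translating a dynamical proof in $\sa{val}(\gk,\gk)$ — which may branch on \tsbf{VR1} ($a\di b\vou b\di a$) and \tsbf{VR2} arbitrarily often and introduce auxiliary quotients — into an honest membership relation in $\gk$ requires knowing that each such branching is "algebraic" for a Prüfer domain, i.e. corresponds to inverting an element on one piece of a finite partition of $\Spec\gk$ into quasi-compact opens. This is where the hypothesis that $\gk$ is an integral arithmetic ring is essential and where the formal \vst does the real work; a direct argument would essentially reprove a fragment of that \vst. The cleanest writeup is therefore: prove surjectivity by the local-principal splitting sketched above, and deduce injectivity from \thref{thVstformelval4}, with a remark that a direct proof amounts to the Prüfer case of the general \vst. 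The remaining verifications (that $\gamma$ respects $\vi,\vu,0,1$ is already Proposition \ref{propdefCentre}; that the constructed inverse map on generators is well-defined on the \entrel) are routine and I would leave them to the reader.
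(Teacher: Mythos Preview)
Your overall strategy matches the paper's: defer injectivity to \thref{thVstformelval4} (exactly as in Remark~\ref{rempropdefCentre}), and prove surjectivity by exploiting the arithmetic-ring identities to write each generator $\rDi(a,b)$ in terms of generators of the form $\rDi(c,1)=\gamma(\DA(c))$.

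However, your surjectivity sketch is missing one concrete ingredient that the paper uses and that makes the argument go through cleanly. Before doing anything with the arithmetic identities, the paper invokes Lemma~\ref{lem-valkk-valkK} to replace $\val(\gk,\gk)$ by $\val(\gK,\gk)$ with $\gK=\Frac(\gk)$. This is not cosmetic: it means that whenever a branch of the dynamical reasoning yields $0\di b$ for a nonzero $b\in\gk$, that branch collapses because $b$ is invertible in $\gK$. Your gluing-style argument via \tsbf{WZD} and Lemma~\ref{lemidmval} would have to handle these $0\di b$ branches by hand in $\gk$ itself, and it is not clear how you would do so without essentially rediscovering this move. Once you work in $\val(\gK,\gk)$, the computation becomes short and explicit: from $sa=ub$, $tb=va$, $s+t=1$ one shows directly in $\sa{val}(\gK,\gk)$ that, for $b\neq 0$,
\[
a\di b \;\Vd\; t\di 1 \;\vou\; u\di 1,\qquad t\di 1\;\Vd\; a\di b,\qquad u\di 1\;\Vd\; a\di b,
\]
whence $\rDi(a,b)=\rDi(t,1)\vu\rDi(u,1)$; and $\rDi(a,0)=1$ trivially. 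This is sharper than the ``Boolean combination of the $1\di c$'' you were aiming for, and it does not need Rule~\tsbf{DIV} or any idempotent gluing. Your alternative self-contained route to injectivity (collapsing a dynamical proof to a \nst certificate) is, as you suspected, essentially a reproof of the relevant fragment of \thref{thVstformelval4}, and the paper does not attempt it.
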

%
\begin{proof} Let us note $\gK$ the fraction field of $\gk$. Using Lemma \ref{lem-valkk-valkK}, we replace $\val(\gk,\gk)$ with $\val(\gK,\gk)$ in the \demo.

\noindent We assume now that $\gamma$ is injective.

\noindent 
Let us prove that $\gamma$ is onto. 
By \dfn, in an \textsl{\anar}, for any couple $(a,b)$ we have $s,t,u,v$ satisfying 
\[
sa=ub, \;tb=va \,\hbox{ and }\,s+t=1.
\]
Let us see the consequences of the hypothesis $a\di b$ in $\sa{val}(\gK,\gk)$. \\
We have $ub=sa$ and $sa\di sb$ (because $a\di b$), thus $ub\di sb$; and by \Tsbf{VR2}: 
$a\di b\Vd u\di s \vou b=0$. 
\\
Since $s+t=1$, the rule \tsbf{LOC} gives $\Vd t\di 1 \vou s\di 1$.
\\
If $s\di1$, we have $\Vd u\di 1\vou b=0$. 
If $b\neq 0$, the branch $b=0$ dies because $b$ is invertible in $\gK$. We get in $\sa{val}(\gK,\gk)$: $\Vd u\di 1$.

\noindent Thus in the \sad $\sa{val}(\gK,\gk)$ we have the valid following rules.

\DeuxRegles
{
\labu $\,\,a\di b\vd t\di1\vou u\di1$ \quad if $b\neq 0$
\labu $\,\,t\di1 \vd a\di b$
}
{
\labu $\,\,u\di1\vd a\di b$
}

\smallskip \noindent So in the lattice $\val(\gK,\gk)$ we have 
\[\Di(a,b)=\Di(t,1)\vu \Di(u,1) \quad \hbox{ if } b\neq0.
\] 
As $\Di(a,0)=1$ this proves that $\gamma:\Zar\gk \to \val(\gK,\gk)$ is onto.
\end{proof}
%

\subsection{The theory \sa{Val} and the lattice $\Val(\gK,\gk)$}

The valuative lattice $\Val(\gK,\gk)$ is defined in the paper \cite[Space of Valuations]{Coq2009}. This lattice is very similar to $\val(\gK,\gk)$.
But its \dfn is implicitly based on a  \twdij \sa{Val}, which is distinct from
\sa{val}.

The theory \sa{Val} is an extension of the theory \sa{Cr}: one adds a predicate~$\Vr(x)$ and axioms that are satisfied in a \arv of a \cdi. The predicate $x\di y$ is not used, and axioms for $\Vr$ are minimalist.\footnote{Names \tsbf{vfi} and \tsbf{VF2} come from the theory of \cvds (see Section \ref{secdival6}).} 
The axioms we give are easily proved to be \eqv to those of \cite{CP2001,Coq2009}.

\DeuxRegles{
\lab{vf1} $\,\,x=0\vet \Vr(y) \Vd \Vr(x+y) $
\lab{vf3} $\,\,\Vr(x)\vet\Vr(y)\Vd \Vr(xy) $
\lab{VF2} $\,\,xy =1\Vd \Vr(x)\vou\Vr(y) $
}
{
\lab{vf2} $\vd \Vr(-1) $
\lab{vf4} $\,\, \Vr(x)\vet\Vr(y)\Vd \Vr(x+y) $
\lab{CL$_{\Val}$} $\,\, 0\di 1 \vd \Bot$ \quad (collapsus)

}

This theory is noted \sa{Val}. First an easy lemma.

\begin{lemma} \label{lemvalVal}
In the theory \sa{val}, when reading $\Vr(x)$ as an abbreviation of $1\di x$, axioms of \sa{Val} are valid rules. 
\end{lemma}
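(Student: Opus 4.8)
The plan is to verify, one axiom at a time, that each of the six axioms of \sa{Val} becomes a valid dynamical rule of \sa{val} once we read the unary predicate \(\Vr(x)\) as the abbreviation \(1\di x\). Since \sa{val} and \sa{Val} share the ring signature and the collapsus axiom \(\tsbf{CL}_{\val}:\ 0\di 1\vd\Bot\), the collapsus axiom \(\tsbf{CL}_{\Val}\) is literally identical and needs no work. It remains to treat \tsbf{vf1}, \tsbf{vf2}, \tsbf{vf3}, \tsbf{vf4} and \tsbf{VF2}.

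First I would dispatch the easy "direct" axioms. For \tsbf{vf2}, \(\vd\Vr(-1)\) unfolds to \(\vd 1\di -1\), which is exactly axiom \tsbf{vr1} of \sa{val0}, hence provable in \sa{val}. For \tsbf{vf1}, \(x=0\vet\Vr(y)\vd\Vr(x+y)\) unfolds (recall that in \sa{val} the formula \(x=0\) is by definition the abbreviation of \(0\di x\)) to \(0\di x\vet 1\di y\vd 1\di x+y\); from \(0\di x\) and \(0\di 1\) (provable) we get \(0\di x\) again, but more directly: \(1\di 0\di x\) (using \(\vd 1\di 0\), which is proved in the text right after Definition~\ref{defithval0}) and \(1\di y\), so by \tsbf{Vr2} we obtain \(1\di x+y\). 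For \tsbf{vf3} and \tsbf{vf4}, \(\Vr(x)\vet\Vr(y)\vd\Vr(xy)\) is \(1\di x\vet 1\di y\vd 1\di xy\): from \(1\di x\) and axiom \tsbf{vr2} (multiplying by \(y\)) we get \(y\di xy\), and composing with \(1\di y\) via transitivity \tsbf{Vr1} gives \(1\di xy\); similarly \(\Vr(x)\vet\Vr(y)\vd\Vr(x+y)\) is \(1\di x\vet 1\di y\vd 1\di x+y\), which is an instance of \tsbf{Vr2} with \(a=1\). (This also shows the stronger axiom \tsbf{vf4} subsumes the weaker-looking \tsbf{vf1}, but it is cleaner to check \tsbf{vf1} on its own as above, since its hypothesis involves \(=0\).)

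The one axiom requiring a genuine branching argument is \tsbf{VF2}: \(xy=1\vd\Vr(x)\vou\Vr(y)\), i.e.\ \(xy=1\vd 1\di x\vou 1\di y\). Here I would open the two branches provided by the totality axiom \tsbf{VR1}, namely \(\vd x\di y\vou y\di x\). In the branch where \(x\di y\): from \(x\di y\) and \tsbf{vr2} (multiply by \(x\)) we get \(x^2\di xy\), and since \(xy=1\) (the computational machinery of \sa{Cr}, incorporated into \sa{val} by Remark~\ref{rem1val0}, rewrites \(xy\) to \(1\)) we have \(x^2\di 1\); also \(1\di x^2\di 1\cdot x = x\)? — more carefully: \(x\cdot x\di x\cdot 1\) is \(x^2\di x\) from \(x\di 1\)... this direction needs the hypothesis differently. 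The clean route: from \(x\di y\) we get by \tsbf{vr2} that \(xx\di yx\), i.e.\ \(x^2\di 1\); on the other hand from \(\vd 1\di x\) trivially? No. Instead: \(x^2\di 1\) together with \(1\di x^2\) would give nothing directly, so I use the symmetric trick from \tsbf{VR2}-style reasoning: from \(x^2\di 1=xy\) and wanting \(1\di x\), note \(x\cdot x\di x\cdot y\) already used; rather take \(xy\di xy\), rewrite left side, \(1\di xy = x\cdot y\), and \(x\di y\) gives \(x\cdot 1\di x\cdot y\)? The genuinely safe argument is: in the branch \(x\di y\), multiply \(xy=1\) appropriately — from \(x\di y\), \tsbf{vr2} with multiplier \(x\) gives \(x\cdot x\di x\cdot y\), and rewriting \(x\cdot y\) to \(1\) yields \(x^2\di 1\); then since \(1\di x^2\) is not available, I instead observe \(y\cdot x\di y\cdot y\) from \(x\di y\) (multiplier \(y\)), i.e.\ \(1\di y^2\), and \(1\di y^2\vd 1\di y\) holds (it is the instance \(0\) of the remark "\(0\di x^2\vd 0\di x\)" transposed, or directly: \(y\di y^2\) and \tsbf{VR2} applied to \(1\cdot y\di y\cdot y\) gives \(1\di y\vou 0\di y\), and in the dead branch \(0\di y\) we get \(0\di 1\), collapsus). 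Symmetrically the branch \(y\di x\) yields \(1\di x\). So \tsbf{VF2} is valid.

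The main obstacle, then, is purely the bookkeeping for \tsbf{VF2}: getting the branching via \tsbf{VR1} and \tsbf{VR2} to land exactly on \(1\di x\) or \(1\di y\) while correctly invoking the ring-computation layer to rewrite \(xy\) as \(1\). All the other five cases are one- or two-line derivations from \tsbf{vr1}, \tsbf{vr2}, \tsbf{Vr1}, \tsbf{Vr2} and the already-established facts \(\vd 1\di 0\) and \(1\di y^2\vd 1\di y\). I would present the proof as a short list, one bullet per axiom of \sa{Val}, with \tsbf{VF2} given the explicit two-branch dynamical-proof argument and the rest stated as immediate instances.
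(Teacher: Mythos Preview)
Your treatment of \tsbf{vf1}--\tsbf{vf4} and the collapsus is correct; indeed the paper dismisses these as obvious and says the only point to prove is \tsbf{VF2}.

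For \tsbf{VF2} your argument has a gap. In the branch $x\di y$ you correctly reach $1\di y^2$ (multiply $x\di y$ by $y$ and substitute $xy=1$), but you then assert ``$y\di y^2$'' without justification, and this does \emph{not} follow from $1\di y^2$ alone. The fix is an extra \tsbf{VR1} branching on $(1,y)$: if $1\di y$ you are done; if $y\di 1$, then transitivity with $1\di y^2$ supplies the missing $y\di y^2$, after which your \tsbf{VR2} step applies, and in the sub-branch $0\di y$ you get $0\di y\di 1$, hence collapsus. (Your vague appeal to ``the remark $0\di x^2\vd 0\di x$ transposed'' is not a proof either: that remark is the instance $a=b=0$ of \tsbf{VR2}, and there is no one-step ``transposed'' version yielding $1\di y^2\vd 1\di y$.)

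The paper's route is shorter and actually proves the stronger rule $\Vr(xy)\vd\Vr(x)\vee\Vr(y)$, i.e.\ $1\di xy\vd 1\di x\vee 1\di y$. It branches via \tsbf{VR1} on $1\di x$ versus $x\di 1$. In the branch $x\di 1$, transitivity with the hypothesis $1\di xy$ gives $x\di xy$, i.e.\ $1\cdot x\di y\cdot x$; a single application of \tsbf{VR2} yields $1\di y\vee 0\di x$, and the sub-branch $0\di x$ collapses via $0\di x\di 1$. Your approach via $x\di y$ versus $y\di x$ can be completed, but it requires the extra nested branching above and only establishes the weaker rule with the explicit hypothesis $xy=1$.
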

%
\begin{proof}
The only point to be proved is the validity \Tsbf{VF2}. We prove the following \rdij, which is a priori stronger.

\Regles {\labu $\,\,\Vr(xy)\Vd \Vr(x)\vou\Vr(y) $}

\noindent As a special case of \tsbf{VR1} we get

\Regles {\labu $\vd \Vr(x)\vou x\di 1 $} 

\noindent If $x\di 1$, then $x\di 1\di xy$, thus $1.x\di y.x$ and using \tsbf{VR3}, $1\di y \vou x\di 0$; in the last case $1\di xy\di 0$, collapsus!
\end{proof}

Notations in the article \cite{Coq2009} are slightly different of ours. 
 In \cite{Coq2009},~$\gk$ is always a domain contained in a field $\gK$. In this context our~$\val(\gK,\gk)$ is the same as $\Val(\gK,\gk)$ in \cite{Coq2009}. The notation $\Val(\gk)$ is an abbreviation of~
 $\Val(\Frac(\gk),\gk)$. This corresponds to our $\val(\Frac(\gk),\gk)$, isomorphic to~$\sa{val}(\gk,\gk)$. If we note $\gZ$ the minimal subring of $\gk$, our $\val(\gk)\simeq \val(\gk,\gZ)$ corresponds to $\Val(\Frac(\gk),\gZ)$, 

By comparison with the theory \sa{val}, the only difficulty with the theory \sa{Val} is it applies only for integral domains $\gk$. 

\smallskip We prove now that 
$\val(\gK,\gk)$ and $\Val(\gK,\gk)$ are isomorphic \trdis in the context of the paper \cite{Coq2009}. First we recall the \dfn of the lattice 
$\Val(\gK,\gk)$.

\begin{definition} \label{defiValkK} Let $\gK$ be a \cdi and $\gk$ a subring of $\gK$.
\begin{enumerate}
\item 
The \sad $\sa{Val}(\gK,\gk)$ is obtained by adding to the axioms of~\sa{Val} the positive diagram of $\gK$ as a commutative ring and the rules~\hbox{$\Vd \Vr(x)$} for $x\in\gk$.

\item 
The lattice $\Val(\gK,\gk)$ defined in \cite{Coq2009} is by \dfn the \trdi generated by the \entrel $\vdash_{\gK,\gk,\mathrm{Val}}$ on $\gK$ defined by the following \eqvc.

\vspace{-.8em}
\begin{equation} \label {eqZarclassVal}
\begin{aligned} 
 a_1,\dots,a_n &\,\vdash_{\gK,\gk,\mathrm{Val}} c_1,\dots,c_m 
 \qquad\quad  \equidef  \\[.2em] 
\Vr(a_1)\vet \dots\vet \Vr(a_n) & \Vdi{\sA{Val}(\gK,\gk)} \Vr(c_1)\vou \dots\vou \Vr(c_m) 
 \end{aligned}
\end{equation}

\noindent We note $\rV_{\gK,\gk}:\gK\to \Val(\gK,\gk)$ (or simply $\rV$) the corresponding map.
\end{enumerate}
 \end{definition}

In the paper \cite{Coq2009}, the \entrel is taken on $\gK\etl$ rather than on~$\gK$.
This does not change the generated \trdi since $\Vr(0)$ may be replaced with $\Vr(1)$ (or $\Top$). 

So the \dfn of $\Val(\gK,\gk)$ is similar to $\val(\gK,\gk)$
(\dfns \ref{defivalkK} and \ref{defivalkK0}), and Lemma~\ref{lemvalVal} gives a natural morphism
\begin{equation} \label {eqtheta}
\theta:\Val(\gK,\gk)\to \val(\gK,\gk)
\end{equation}

We have to prove that this is an \iso when $\gK$ is a \cdi. A preliminary little job is \ncr.

\Subsubsection{The theory $\sa{Val}^+$}

In order to prove that $\theta$ is an \iso, we first extend the theory \sa{Val} by introducing the predicate $y\di z$ with the following axioms (they prove that this predicate is \eqv to $\exists x\,(\Vr(x)\vii z=xy)$).

\DeuxRegles{
\Lab{Div} $\,\,\Vr(x)\vet z=xy \Vd y\di z $
}
{
\lab{DIV} $\,\,y\di z\Vd \Exists x\;(\Vr(x)\vet z=xy) $
}

One sees easily that $\Vr(x)$ is \eqv to $1\di x$, and that $y\di z$ is \eqv to $\exists x\,(\Vr(x)\vii z=xy)$.

We note $\sa{Val}^+$ this new \tdy. It is an \esid extension of \sa{Val}
(see Lemma \ref{lemdefiesid}). In particular, \dij valid rules do not change, and the lattice defined from $\sa{Val}^+(\gK,\gk)$ is the same as the one defined from $\sa{Val}(\gK,\gk)$.

Note now that if we have a valid rule 
\begin{equation} \label {eqlem49}
a_1\di b_1\vet \dots\vet a_n\di b_n \Vdi{\sA{val}(\gK,\gk)} c_1\di d_1\vou \dots\vou c_m\di d_m\qquad (a_i, b_i, c_j, d_j\in\gK),
\end{equation}

\noindent a dynamical proof of this rule can use only terms in $\gK$. Indeed, \sa{val} is a \twdij and it should be necessary to have an existential axiom in order that new \elts appear as fresh variables.

\begin{lemma} \label{lemvalVal2} Let $\gK$ be a \cdi and $\gk$ a subring.
\begin{enumerate}
\item For \sads $\sa{Val}^+(\gK,\gk)$ and $\sa{val}(\gK,\gk)$ axioms of \sa{val} that are used for the \dfn of the lattice $\val(\gK,\gk)$, \cad for the \demo of a rule \pref{eqlem49}, are valid in $\sa{Val}^+$.
\item Consequently if a rule \pref{eqlem49} is valid in $\sa{val}(\gK,\gk)$ it is also valid in $\sa{Val}^+(\gK,\gk)$.
\item Consequently the morphism $\theta$ given in \pref{eqtheta} is injective.
\end{enumerate}
 
\end{lemma}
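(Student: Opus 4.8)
The plan is to reduce everything to item~\emph{1}; items~\emph{2} and~\emph{3} are then formal. I would begin by recording the two basic facts that, in $\sa{Val}^+$, the axioms \tsbf{Div} and \tsbf{DIV} make $y\di z$ provably equivalent to $\Exists x\,(\Vr(x)\vet z=xy)$, and hence (take $y=1$, $z=x$ and use the ring identity $x=x\cdot 1$) make $\Vr(x)$ provably equivalent to $1\di x$. For item~\emph{1} I would then check, one axiom at a time, that each proper axiom of \sa{val} --- namely \tsbf{vr1}, \tsbf{vr2}, \tsbf{Vr1}, \tsbf{Vr2}, \tsbf{VR1}, \tsbf{VR2} and the collapsus \tsbf{CL$_{\val}$} (the ring axioms \tsbf{cr1}--\tsbf{cr3} being the shared computational machinery) --- is a valid rule of the \sad $\sa{Val}^+(\gK,\gk)$. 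The collapsus is literally \tsbf{CL$_{\Val}$}; \tsbf{vr1} ($\vd 1\di -1$) comes from applying \tsbf{Div} to $\vd\Vr(-1)$ (axiom \tsbf{vf2}); and \tsbf{vr2} ($a\di b\Vd ac\di bc$), \tsbf{Vr1} ($a\di b\vet b\di c\Vd a\di c$) and \tsbf{Vr2} ($a\di b\vet a\di c\Vd a\di b+c$) all come by the same recipe: dissolve each hypothesis $u\di v$ through \tsbf{DIV} into a fresh multiplier satisfying $\Vr$, combine the multipliers by \tsbf{vf3} (for products) or \tsbf{vf4} (for sums), and recompose the conclusion through \tsbf{Div} using the relevant ring identity.

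The delicate axioms will be \tsbf{VR1} ($\vd a\di b\vou b\di a$) and \tsbf{VR2} ($ax\di bx\Vd a\di b\vou 0\di x$), since the $\Vr$-axioms of \sa{Val} are deliberately minimalist and say nothing about divisibility being a total relation; here I would use that $\gK$ is a \cdi. For \tsbf{VR1}, at given elements $a,b\in\gK$ I would split, using discreteness, on whether $a=0$, $b=0$, or both $a,b$ are invertible: if $a=0$ then $b\di a$, because $b\di 0$ follows from \tsbf{Div} with multiplier $0$ (using $\vd\Vr(0)$, an easy consequence of \tsbf{vf2}--\tsbf{vf4}), symmetrically if $b=0$; and if $a,b$ are both invertible, apply \tsbf{VF2} to $(ba^{-1})(ab^{-1})=1$ and conclude $a\di b$ or $b\di a$ by \tsbf{Div}, using the ring identities $(ba^{-1})a=b$ and $(ab^{-1})b=a$. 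For \tsbf{VR2} I would split on $x$: if $x=0$ then $0\di x$; if $x$ is invertible, dissolve $ax\di bx$ through \tsbf{DIV} into $\Vr(w)$ with $bx=w(ax)$, multiply both sides by $x^{-1}$ to get $b=wa$ (a ring computation), and conclude $a\di b$ by \tsbf{Div}. These two case analyses, whose legitimacy is precisely the discreteness of $\gK$, are what I expect to be the main obstacle.

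For item~\emph{2}: a rule of the form \pref{eqlem49} valid in $\sa{val}(\gK,\gk)$ possesses a dynamical proof, and since \sa{val} is \twdij no axiom used has an existential conclusion, so no fresh variable ever appears and every term occurring in the proof lies in $\gK$ (as already observed just before the statement). I would replay that proof inside $\sa{Val}^+(\gK,\gk)$: the purely ring-theoretic steps and the positive diagram of $\gK$ are common to the two \sads; each use of the extra axiom $\vd 1\di x$ ($x\in\gk$) is valid in $\sa{Val}^+(\gK,\gk)$ because $\vd\Vr(x)$ is an axiom there and $\Vr(x)\Leftrightarrow 1\di x$; and each application of a proper \sa{val}-axiom is replaced by its derivation from item~\emph{1} (the fresh variables used inside those sub-derivations are local and leave the global conclusion, which mentions only the $a_i,b_i,c_j,d_j$, untouched). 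Hence \pref{eqlem49} holds in $\sa{Val}^+(\gK,\gk)$.

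For item~\emph{3}: by the Fundamental \tho of \entrels (\thref{thEntRel1}) applied to the presentation of $\Val(\gK,\gk)$, one has $\rV(a_1)\vi\dots\vi\rV(a_n)\le\rV(c_1)\vu\dots\vu\rV(c_m)$ in $\Val(\gK,\gk)$ iff the rule $\Vr(a_1)\vet\dots\vet\Vr(a_n)\Vd\Vr(c_1)\vou\dots\vou\Vr(c_m)$ is valid in $\sa{Val}(\gK,\gk)$. Lemma~\ref{lemvalVal} (reading $\Vr$ as $1\di\cdot$, and noting that the axioms $\vd\Vr(x)$, $x\in\gk$, become the axioms $\vd 1\di x$ of $\sa{val}(\gK,\gk)$) shows this validity forces that of $1\di a_1\vet\dots\vet 1\di a_n\Vd 1\di c_1\vou\dots\vou 1\di c_m$ in $\sa{val}(\gK,\gk)$; conversely, item~\emph{2}, the equivalence $\Vr(x)\Leftrightarrow 1\di x$ in $\sa{Val}^+$, and the fact that $\sa{Val}^+$ is an \esid extension of \sa{Val} (so disjunctive valid rules are unchanged, cf.\ Lemma~\ref{lemdefiesid}) give the reverse implication. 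Since the last rule says exactly $\rDi(1,a_1)\vi\dots\vi\rDi(1,a_n)\le\rDi(1,c_1)\vu\dots\vu\rDi(1,c_m)$ in $\val(\gK,\gk)$, the \entrel on $\gK$ obtained by pulling $\le$ back along the morphism $\theta$ of \pref{eqtheta} (which sends $\rV(a)$ to $\rDi(1,a)$) coincides with the \entrel presenting $\Val(\gK,\gk)$; hence, again by \thref{thEntRel1}, $\theta$ identifies $\Val(\gK,\gk)$ with the sublattice of $\val(\gK,\gk)$ generated by the $\rDi(1,a)$, so in particular $\theta$ is injective.
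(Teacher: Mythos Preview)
Your proposal is correct and follows essentially the same route as the paper. The paper's proof also reduces everything to item~\emph{1}, verifies the Horn axioms \tsbf{vr1}, \tsbf{vr2}, \tsbf{Vr1}, \tsbf{Vr2} by the unfold/recombine recipe via \tsbf{DIV}/\tsbf{Div}, and handles \tsbf{VR1}, \tsbf{VR2} by the same case analysis on the discreteness of $\gK$ (zero versus invertible); items~\emph{2} and~\emph{3} are left as immediate consequences, so your added detail there is welcome but not a departure. One cosmetic point: for \tsbf{VR2} the paper, instead of dissolving $ax\di bx$ via \tsbf{DIV}, simply applies the already-verified \tsbf{vr2} with $c=x^{-1}$ to get $a=(ax)x^{-1}\di(bx)x^{-1}=b$ directly, which is a touch shorter than your version.
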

%
\begin{proof} \textsl{1}.
For example let us see the rule \Tsbf{Vr2}. Assume $a\di b$, one introduces a fresh variable~$z$ such that $az=b$ and $\Vr(z)$. So, $acz=bc$, and using \tsbf{Div}, we get $a\di b$.
Other axioms have also direct \dems, except for disjunctive axioms \tsbf{VR1} and \tsbf{VR2}. In this case we shall use the fact that in a \demo of a rule \pref{eqlem49}, \elts are always in $\gK$.

\DeuxRegles
{
\lab{VR1} $\vd a \di b \vou b\di a$
}
{
\lab{VR2} $\,\,ax \di bx \Vd a \di b \vou 0 \di x$
}

\noindent Let us see \tsbf{VR1}. In a \demo, $a$ and $b$ are \elts of $\gK$. If $a=0$, then $b\di a$ in $\sa{Val}^+(\gK,\gk)$ because~\hbox{$a=0.b$} and $\Vr(0)$. In a similar way, if $b=0$, then $a\di b$ in $\sa{Val}^+(\gK,\gk)$.
When $a$ and~$b$ are nonzero, one has in $\gK$ an $x=ba^{-1}$ and a $y=ab^{-1}$, and $xy=1$. So, in $\sa{Val}^+(\gK,\gk)$, we have $\vd \Vr(x)\vou \Vr(y)$. If $\Vr(x)$ is valid, then $1\di x$ and $a\di ax=b$ by \tsbf{Vr2}. 
Similarly if $\Vr(y)$, then $b\di a$.
\\
Let us see \tsbf{VR2}. Here $a,b,x$ are \elts of $\gK$. 
If $x=0$ then $0\di x$. If $x\neq 0$, we have an inverse~$x^{-1}$ in $\gK$, and so $a=axx^{-1} \di bxx^{-1}=b$ (we use \tsbf{Vr2}). 
\end{proof}

\begin{remark} \label{remlemvalVal2} 
The \demo of Lemma \ref{lemvalVal2} uses the fact that $\gK$ is a \cdi. It seems impossible to obtain an \iso
$\val(\gK,\gk)\simeq \Val(\gK,\gk)$ without assuming $\gK$ to be \zedr.
Extending further $\sa{Val}^+$ so as to prove all the axioms of $\sa{val}$ would require adding  additional axioms, but the latter would look too ad hoc.\eoe
\end{remark}

\Subsubsection{Isomorphism of lattices $\val(\gK,\gk)$ and $\Val(\gK,\gk)$}

In order to prove that the morphism $\theta$ given in \pref{eqtheta} is an \iso, it is now sufficient to prove the following lemma.

\begin{lemma} \label{lemvalValfin} Let $\gK$ be a \cdi and $\gk$ a subring. In $\sa{Val}^+(\gK,\gk)$, any rule
\[
a_1\di b_1\vet \dots\vet a_n\di b_n \Vd c_1\di d_1\vou \dots\vou c_m\di d_m\qquad (a_i, b_i, c_j, d_j\in\gK)
\]
is always equivalent to a rule
\[
\Vr(x_1)\vet \dots\vet \Vr(x_k) \vd \Vr(y_1)\vou \dots\vou \Vr(y_\ell) \qquad (x_i,y_j\in\gK\etl)
\]
\end{lemma}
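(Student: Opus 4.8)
The plan is to rewrite, inside the \sad $\sa{Val}^+(\gK,\gk)$, every atomic formula $u\di v$ with $u,v\in\gK$ into a provably equivalent formula involving the predicate $\Vr$ only, using crucially that $\gK$ is a \cdi so that each of $u,v$ is known to be either $0$ or invertible.

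First I would record a handful of provable equivalences in $\sa{Val}^+(\gK,\gk)$, all obtained from \tsbf{Div}, \tsbf{DIV} and the ring computations in $\gK$. (a)~If $u\in\gK$ is invertible, then for every $v\in\gK$ the formula $u\di v$ is provably equivalent to $\Vr(vu^{-1})$: from left to right, \tsbf{DIV} introduces a fresh $w$ with $\Vr(w)$ and $v=wu$, whence $w=vu^{-1}$ holds in $\gK$ and $\Vr(vu^{-1})$ follows by substitution; from right to left, since $v=(vu^{-1})u$ in $\gK$, \tsbf{Div} gives $u\di v$. (b)~$\Vr(0)$ and $\Vr(1)$ are valid (as $0,1\in\gk$, or via \tsbf{vf2}--\tsbf{vf3}), so applying \tsbf{Div} to the identities $0=0\cdot u$ and $u=1\cdot u$ shows that $u\di 0$ and $u\di u$ are valid for every $u\in\gK$. (c)~The rule \tsbf{vr2}, $u\di v\vd uc\di vc$, holds in $\sa{Val}^+(\gK,\gk)$ — it is among the \sa{val}-axioms already shown to be valid in $\sa{Val}^+$ in the proof of item \emph{1} of Lemma~\ref{lemvalVal2} — so, for $v\in\gK$ invertible, $0\di v$ provably entails $0\cdot v^{-1}\di v\cdot v^{-1}$, i.e.\ $0\di 1$, hence $\Bot$ by the collapsus axiom \tsbf{CL$_{\Val}$}.

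Next I would perform a finite case analysis on the given rule $a_1\di b_1\vet\dots\vet a_n\di b_n\vd c_1\di d_1\vou\dots\vou c_m\di d_m$. Since $\gK$ is a \cdi, each of the finitely many elements $a_i,b_i,c_j,d_j$ is either $0$ or invertible. Using (a)--(c), each hypothesis $a_i\di b_i$ is provably equivalent to $\Vr(b_i a_i^{-1})$ if $a_i$ and $b_i$ are both invertible, to $\Top$ (so the hypothesis may be deleted, by (b)) if $b_i=0$, and to $\Bot$ if $a_i=0$ and $b_i\neq 0$, in which case the whole rule is trivially valid. Likewise each disjunct $c_j\di d_j$ of the conclusion is provably equivalent to $\Vr(d_j c_j^{-1})$ if $c_j$ and $d_j$ are both invertible, to $\Top$ if $d_j=0$, in which case the whole rule is again trivially valid, and to $\Bot$ (so the disjunct may be deleted) if $c_j=0$ and $d_j\neq 0$. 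If the rule turns out to be trivially valid, it is equivalent to the (valid) rule $\vd\Vr(1)$, which has the required shape since $1\in\gK\etl$. Otherwise, after these substitutions every surviving hypothesis is $\Vr(b_i a_i^{-1})$ with $a_i,b_i$ invertible and every surviving disjunct is $\Vr(d_j c_j^{-1})$ with $c_j,d_j$ invertible, and all these arguments lie in $\gK\etl$; this is exactly a rule of the announced form — with the empty disjunction ($\ell=0$) permitted should every disjunct have been deleted.

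I expect the only delicate points to be bookkeeping rather than conceptual: one must check that replacing an atomic formula by a provably equivalent one, or deleting a provably true hypothesis or a provably false disjunct, preserves the equivalence class of the rule (routine from reflexivity, monotonicity, cut, and \emph{ex falso}), and that \tsbf{vr2} — an axiom of \sa{val} but not of \sa{Val} — is genuinely available in $\sa{Val}^+$ (covered by Lemma~\ref{lemvalVal2}). I would also remark that the very same case analysis shows that each generator $\rDi(a,b)$ of $\val(\gK,\gk)$ equals $\rDi(1,ba^{-1})$, $1_{\val(\gK,\gk)}$, or $0_{\val(\gK,\gk)}$, hence lies in the image of $\theta$; together with item \emph{3} of Lemma~\ref{lemvalVal2} this yields that $\theta$ is an \iso, which is the purpose of the lemma.
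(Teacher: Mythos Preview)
Your proof is correct and follows essentially the same approach as the paper: a finite case analysis on whether each $a_i,b_i,c_j,d_j$ is zero or invertible, deleting true hypotheses and false disjuncts, declaring the rule trivially valid when a hypothesis collapses or a disjunct is $\Top$, and replacing the surviving $u\di v$ by $\Vr(vu^{-1})$. The paper's proof is terser and omits the explicit justifications you give in (a)--(c), but the content is the same.
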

%
\begin{proof}
If $b_i=0$ we cancel $a_i\di b_i$ in the hypothesis (it is true).
If we have a $c_j=0$ with $d_j\neq 0$, we cancel $c_j\di d_j$ in the conclusion (this branch collapses). 
If there is an $a_i=0$ with $b_i\neq 0$, or if there is a\ $d_j=0$, the rule is valid.
It remains to be seen what happens when all elements are nonzero. In this case, we replace $a_i\di b_i$ and $c_j\di d_j$, respectively, with $\Vr(b_ia_i^{-1})$ and $\Vr(d_jc_j^{-1})$.
\end{proof}

We have proved the desired result.
\begin{theorem} \label{thValval}
Let $\gk$ a subring of a \cdi $\gK$. The natural morphism $\Val(\gK,\gk)\to \val(\gK,\gk)$
is an \iso.
\end{theorem}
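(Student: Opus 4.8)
The plan is to prove that the natural morphism $\theta:\Val(\gK,\gk)\to\val(\gK,\gk)$ of \pref{eqtheta} is a bijection; being already a morphism of \trdis, it will then be an \iso. Essentially all the work has been done in Lemmas~\ref{lemvalVal2} and~\ref{lemvalValfin} and in the passage to the auxiliary theory $\sa{Val}^+$, so the proof is mostly an assembly. Recall that $\sa{Val}^+$ is an \esid extension of $\sa{Val}$, so $\sa{Val}^+(\gK,\gk)$ defines the same lattice $\Val(\gK,\gk)$, and that in $\sa{Val}^+$ the predicate $\Vr(x)$ is provably equivalent to $1\di x$; thus $\theta$ is the lattice morphism sending $\rV(x)$ to $\rDi(1,x)$.

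For injectivity I would cite Lemma~\ref{lemvalVal2}.3 directly. Its proof shows that a disjunctive rule whose atoms all have entries in $\gK$ and which is valid in $\sa{val}(\gK,\gk)$ is already valid in $\sa{Val}^+(\gK,\gk)$: by item~1 of that lemma every axiom of $\sa{val}$ needed for a proof of such a rule is valid in $\sa{Val}^+(\gK,\gk)$ once $\di$-atoms are translated using inverses in $\gK$, and a dynamical proof in $\sa{val}$ of a rule whose atoms lie in $\gK$ introduces no fresh elements (the theory being weakly disjunctive, with no existential axiom to create them). Hence a relation between the $\rV(x)$'s holds in $\Val(\gK,\gk)$ iff the corresponding relation between the $\rDi(1,x)$'s holds in $\val(\gK,\gk)$, so $\theta$ reflects the partial order and is injective.

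For surjectivity it suffices to show that every generator $\rDi(a,b)$ of $\val(\gK,\gk)$ lies in the image of $\theta$, and here I would replay the case analysis from the proof of Lemma~\ref{lemvalValfin}. If $b=0$ then $\vd a\di b$ is valid, so $\rDi(a,b)=1=\theta(1)$. If $a=0$ and $b\neq0$ then $b$ is invertible in the \cdi $\gK$, so applying \tsbf{vr2} to $0\di b$ with $c=b^{-1}$ yields $0\di 1$, and the collapsus axiom gives the rule $0\di b\vd \Bot$, i.e.\ $\rDi(a,b)=0=\theta(0)$. If $a$ and $b$ are both nonzero, applying \tsbf{vr2} to $a\di b$ with $c=a^{-1}$ gives $1\di(ba^{-1})$, and applying it to $1\di(ba^{-1})$ with $c=a$ gives $a\di b$ back; so the two rules $a\di b\vd 1\di(ba^{-1})$ and $1\di(ba^{-1})\vd a\di b$ are valid in $\sa{val}(\gK,\gk)$, whence $\rDi(a,b)=\rDi(1,ba^{-1})=\theta(\rV(ba^{-1}))$. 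In each case $\rDi(a,b)\in\Im\theta$, so $\theta$ is onto, and therefore an \iso of \trdis.

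The one genuinely delicate ingredient, already isolated in Lemma~\ref{lemvalVal2} and commented on in Remark~\ref{remlemvalVal2}, is the transfer of validity from $\sa{val}(\gK,\gk)$ to $\sa{Val}^+(\gK,\gk)$: it really uses that $\gK$ is a \cdi, so that every nonzero element has an inverse available as a closed term, and that $\sa{val}$ is weakly disjunctive, so that a proof of a rule with atoms in $\gK$ never leaves $\gK$. Everything else — the short derivations from \tsbf{vr2} and the collapsus axiom, and the identification of the generators — is routine bookkeeping.
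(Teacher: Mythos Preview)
Your proof is correct and follows essentially the same route as the paper: injectivity is Lemma~\ref{lemvalVal2}.3, and surjectivity is the case analysis of Lemma~\ref{lemvalValfin}. The only cosmetic difference is that the paper phrases the case analysis as an equivalence of rules inside $\sa{Val}^+(\gK,\gk)$, whereas you carry it out directly in $\sa{val}(\gK,\gk)$ to show each generator $\rDi(a,b)$ equals $0$, $1$, or $\theta(\rV(ba^{-1}))$; your version is arguably cleaner for the surjectivity conclusion, since it avoids any question of transferring the equivalences back from $\sa{Val}^+$ to $\sa{val}$.
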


The following corollary applies to \thref{thValval} the \lgb \elr machinery\ \num1 that will be explained in Section 
\ref{sec-A-qi}. 
\begin{corollary} \label{corthValval}
Let $\gk$ be a \qiri and  $\gK=\Frac(\gk)$. The natural morphism $\Val(\gK,\gk)\to \val(\gK,\gk)$
is an \iso.
\end{corollary}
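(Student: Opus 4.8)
The plan is to deduce Corollary~\ref{corthValval} from Theorem~\ref{thValval} by applying the \lgb \elr machinery \num1 announced for Section~\ref{sec-A-qi}. The point is that Theorem~\ref{thValval} already handles the case where $\gk$ is a subring of a \cdi; a \qiri (pp-ring) is not itself a domain, but it behaves, after suitable localizations, as a finite product of domains. So the strategy is: reduce the statement about $\gk$ and $\gK=\Frac(\gk)$ to a family of statements about quotients $\gk/\fe_i$ (by the idempotents provided by the pp-structure, or rather by the \lgb machinery), each of which \emph{is} a domain sitting inside its own fraction field, to which Theorem~\ref{thValval} applies directly.

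First I would recall what the \lgb \elr machinery \num1 provides: given a \qiri $\gk$, one produces a family of \com (or rather jointly-covering) idempotents $(e_1,\dots,e_r)$ such that each $\gk[1/e_i]$, equivalently each quotient $\gk_i := \gk/(1-e_i)$, is an integral domain, and $\gk$ embeds into $\prod_i \gk_i$ in the way dual to a finite cover. Then $\gK=\Frac(\gk)$ decomposes correspondingly as $\prod_i \gK_i$ with $\gK_i=\Frac(\gk_i)$, each $\gK_i$ a \cdi. The key compatibility is that the \sad $\sa{val}(\gK,\gk)$ (resp.\ $\sa{Val}(\gK,\gk)$, via $\sa{Val}^+$) splits along this finite product: by Lemma~\ref{lemidmval} (and its analogue on the $\Vr$-side, using Rule~\tsbf{WZD} which forces $\Vd e=0\vou e=1$ for each idempotent $e$), a divisibility statement $a\di b$ in $\gK$ is equivalent, inside $\sa{val}(\gK,\gk)$, to the conjunction of the statements $a_i\di b_i$ in the components $\gK_i$. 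Hence both lattices $\val(\gK,\gk)$ and $\Val(\gK,\gk)$ are isomorphic to the corresponding products $\prod_i \val(\gK_i,\gk_i)$ and $\prod_i \Val(\gK_i,\gk_i)$, compatibly with the natural morphism $\theta$.

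Then I would conclude: $\theta:\Val(\gK,\gk)\to\val(\gK,\gk)$ is, under these identifications, the product of the morphisms $\theta_i:\Val(\gK_i,\gk_i)\to\val(\gK_i,\gk_i)$, each of which is an \iso by Theorem~\ref{thValval} since $\gk_i$ is a subring of the \cdi $\gK_i$. A finite product of isomorphisms of \trdis is an \iso, so $\theta$ is an \iso. This is essentially the standard pattern of \lgb \elr machineries: prove the result over domains (or over discrete fields), then transfer it to pp-rings by splitting along the idempotents that make the ring locally a domain.

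The main obstacle I expect is making precise the claim that the \sads $\sa{val}(\gK,\gk)$ and $\sa{Val}^+(\gK,\gk)$ genuinely \emph{split} along the finite family of idempotents, i.e.\ that every valid \rdij in the product \sad is deducible from the valid rules in the component \sads together with the splitting rules $\Vd e_i=0\vou e_i=1$. On the $\sa{val}$ side this is close to Lemma~\ref{lemidmval}; the delicate part is to check the disjunctive axioms \tsbf{VR1} and \tsbf{VR2} (and, on the $\Val$ side, \tsbf{VF2}) respect the decomposition — but this follows the same line as the proof of Lemma~\ref{lemvalVal2}, working component by component, since in each $\gK_i$ every element is null or invertible. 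Alternatively, and perhaps more cleanly, one invokes the \lgb \elr machinery \num1 as a black box exactly as it will be formulated in Section~\ref{sec-A-qi}, so that the only thing to verify is that the construction of $\val(-,-)$ and $\Val(-,-)$, and the morphism $\theta$, commute with the relevant base changes $\gk\rightsquigarrow\gk_i$; that commutation is formal from the definitions of the \entrels involved and from Theorem~\ref{thValval} applied to each $\gk_i\subseteq\gK_i$.
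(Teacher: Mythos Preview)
Your approach is the same as the paper's --- apply the local-global elementary machinery \num1 to reduce to Theorem~\ref{thValval} --- and your final ``Alternatively'' paragraph is exactly what the paper does in its one-line proof. One inaccuracy worth fixing: the machinery does \emph{not} produce a global finite decomposition of a pp-ring into integral domains (that would fail for, say, an infinite product of fields). Rather, for each specific rule you are checking, which involves only finitely many elements of $\gK$, the Splitting Lemma~\ref{thScindageQi} yields a \sfio making each of those elements null or regular in every component, hence null or invertible in the corresponding component of $\gK=\Frac(\gk)$; this is precisely what the proofs of Lemmas~\ref{lemvalVal2} and~\ref{lemvalValfin} actually require, so they go through componentwise and you reassemble via Lemma~\ref{lemidmval}. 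With that correction your argument is complete.
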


Note that the previous result does not work for an arbitrary ring $\gk$.


\section{Valuative dimensions}\label{subsecdival}

In order to transfer results obtained in the case of integral domains to that of an  arbitrary ring~$\gA$, one could hope for defining an idealistic \gui{integral domain generated by $\gA$.} Unfortunately, such a generated integral domain does not exists in general as a usual object. A dynamical variant of this strategy would consists in defining a \tdy \sa{Ai} of integral domains, and in considering the \sad $\sa{Ai}(\gA)$ as a reasonable replacement for the desired ideal object.

In this section, devoted to the topic of valuative dimension, we propose instead to make use of the ring $\Amin$, the \gui{minimal pp-closure of $\gA$.} The latter was proposed by T.~Coquand in an unpublished note about a substitute for the GCD algorithm in $\AX$ for the case of a commutative ring $\gA$. Using $\Amin$ has indeed proved to be efficient for studying the valuative dimension in \cite{CACM}.

For this purpose, we give in section~\ref{sec-A-qi} a detailed constructive account of the classical theory of pp-rings, before constructing the ring $\Amin$ in section~\ref{sec-Amin} and listing a few of its properties. We think that the \zed ring $\Frac(\Amin)$ is the best possible replacement for the \cdf of an integral domain. In particular, the natural morphism $\gA\to \Amin$ is an \iso when 
$\gA$ is an integral domain.

In section~\ref{sec3vdim}, we use the ring  $\Amin$ in the comparison of three possible constructive variants for the valuative dimension of a ring $\gA$ and of an algebra \hbox{$\gk\to\gA$}.

\subsection{pp-rings}\label{sec-A-qi}
 
{\bf \Dfns and notations.} ~
\begin{itemize}
\item When the ring $\gA$ is clear form the context, we use the abbreviated form $a\epr:=\Ann_\gA(a)$.
We note also  $\fa\epr$ the annihilator of the \id~$\fa$.
\item  An \elt $a$ such that $a\epr=0$ is called a \textsl{nonzerodivisor} or is said to be \textsl{\ndz}.
\item  A ring morphism $\gA\to\gB$ is said to be \textsl{\ndz} if it sends any \ndz \elt on a \ndz \elt.
\item  We note $\Ared=\gA/\!\sqrt[\gA]{\gen{0}}$ the reduced ring generated by $\gA$.
\item  We note $\cP_n$ the set of finite subsets of $\so{1,\dots,n}$.
\end{itemize}

\begin{propdef} \label{lem-idm-associe-0} Let $\gB$ be a commutative ring and $a\in\gB$.
\begin{enumerate}
\item 
If there is an \elt $c$ such that $ca=a$ and $c\epr=a\epr$, then $c$ is an \idm. This \elt is \ncrt unique and we say it is an \emph{attached \idm to~$a$ in $\gB$}; we note this $c$ as $e_a$ or $a\eci$.
\item An \idm $e\in \gB$ is an \idm attached to $a$ \ssi $ea=a$
 and $a+(1-e)$ is \ndz.
\item If $\varphi:\gB\to\gC$ is \ndz and if $a\in\gB$ has an attached \idm $e_a$ in $\gB$, then $\varphi(e_a)$ is an attached \idm to $\varphi(a)$ in $\gC$. 
\item If $a$ is an \idm, it is an attached \idm to itself.
\item The \elt $a$ is \ndz \ssi $1$ is an attached \idm to $a$.
\item 
If $a\eci$ and $b\eci$ are attached \idms to $a$ and $b$, then $a\eci b\eci$ is an attached \idm to $ab$.
\end{enumerate}
If each \elt of a ring $\gA$ has an attached \idm we say that $\gA$ is a \emph{\qiri} (\idps are projective).
Otherwise said, the annihilator of any element is generated by an \idm.
\end{propdef}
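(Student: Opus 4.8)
The plan is to prove the six items in sequence, using Item~\emph{2} as the pivot: once we know that an \idm $e$ is attached to $a$ \ssi $ea=a$ and $a+(1-e)$ is \ndz, Items~\emph{3}--\emph{6} reduce to short manipulations with the relations $ea=a$, $e^2=e$, $e(1-e)=0$. For Item~\emph{1}, given $ca=a$ and $c\epr=a\epr$, I would note that $(c-1)a=0$, hence $c-1\in a\epr=c\epr$, so $c^2-c=(c-1)c=0$; for uniqueness, if $c'$ also works then $(1-c')a=0$ gives $1-c'\in a\epr=c\epr$, whence $c=c'c$, symmetrically $c'=cc'$, and commutativity forces $c=c'$. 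This fixes the notation $e_a=a\eci$.

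For Item~\emph{2}, the forward direction: assuming $e^2=e$, $ea=a$ and $e\epr=a\epr$, from $x(a+(1-e))=0$ multiply by $e$ to get $xa=0$ (using $ea=a$, $e(1-e)=0$), so $x\in a\epr=e\epr$, hence $xe=0$ and then $x(1-e)=0$, so $x=0$; thus $a+(1-e)$ is \ndz. Conversely, assuming $e^2=e$, $ea=a$ and $a+(1-e)$ \ndz: the inclusion $e\epr\subseteq a\epr$ follows from $a=ea$; for $a\epr\subseteq e\epr$, from $xa=0$ compute $(xe)(a+(1-e))=x(ea)+0=xa=0$, so $xe=0$ by regularity of $a+(1-e)$. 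Hence $e\epr=a\epr$. This paragraph is where essentially all the content sits, and the inclusion $a\epr\subseteq e\epr$ is the one place the hypothesis on $a+(1-e)$ is actually used; I expect this to be the only delicate point of the whole statement.

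The remaining items are then formal. Item~\emph{4}: $a^2=a$ makes $a$ itself satisfy the defining conditions with $c=a$. Item~\emph{5}: take $e=1$ in Item~\emph{2}, or directly note $1\epr=0=a\epr$ \ssi $a$ is \ndz. Item~\emph{3}: by Item~\emph{2}, $a+(1-e_a)$ is \ndz in $\gB$; applying the \ndz morphism $\varphi$ gives $\varphi(a)+(1-\varphi(e_a))$ \ndz in $\gC$; since $\varphi(e_a)$ is an \idm with $\varphi(e_a)\varphi(a)=\varphi(a)$, the converse part of Item~\emph{2} shows $\varphi(e_a)$ is attached to $\varphi(a)$. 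Item~\emph{6}: with $e=a\eci$, $f=b\eci$, the element $ef$ is an \idm and $ef\cdot ab=(ea)(fb)=ab$; moreover $(ab)\epr=(ef)\epr$, since $x(ab)=0$ yields $xa\in b\epr=f\epr$, then $xf\in a\epr=e\epr$, hence $xef=0$, while the reverse inclusion follows from $ab=ef\,ab$. By Item~\emph{1} applied to $c=ef$, this $ef$ is the attached \idm to $ab$.

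Finally, for the closing ``otherwise said'' equivalence: if $a$ has an attached \idm $e_a$, then $\Ann_\gA(a)=e_a\epr=\gen{1-e_a}$ is generated by the \idm $1-e_a$; conversely, if $\Ann_\gA(a)=\gen{\varepsilon}$ with $\varepsilon^2=\varepsilon$, then $e:=1-\varepsilon$ is an \idm, $ea=a$ (as $\varepsilon a=0$), and $e\epr=\gen{\varepsilon}=a\epr$, so $e$ is the attached \idm to $a$. The parenthetical claim that \idps are \pro is the standard fact that $\gen a\simeq\gA/\Ann_\gA(a)$ is \pro exactly when $\Ann_\gA(a)$ is a direct summand of $\gA$, i.e.\ generated by an \idm.
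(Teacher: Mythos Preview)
Your proof is correct and complete. The paper itself does not give a proof of this proposition: it simply writes ``Left to the reader'' (the macro \verb|\facile|), so there is nothing to compare against beyond checking that your arguments are sound, which they are. Your use of Item~\emph{2} as the pivot for Items~\emph{3}--\emph{6} is exactly the natural organization, and each verification goes through as you indicate.
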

%
\facile

In a \qiri, for $a\in\gA$, let $e_a$ be the unique attached \idm to $a$. We have $\gA\simeq \gA[1/e_a]\times \aqo{\gA}{e_a}$.
In the ring $\gA[1/e_a]$, $a$ is \ndz, and in $\aqo{\gA}{e_a}$, $a$ is null.
We then have $e_{ab}=e_a e_b$, $e_aa=a$ and $e_0=0$.

Conversely, suppose that a commutative ring is equipped with a unary law $a\mapsto \ci{a}$ which satisfies the following three axioms
\begin{equation}\label{eqaqis}
\ci{a}\,a=a,\quad
\ci{(ab)}=\ci{a}\,\ci{b},\quad
\ci{0}=0.
\end{equation}
Then, for all $a\in\gA$, $\ci{a}$ is an attached \idm to $a$, thus the ring is a \qiri.

\begin{lemma}[Splitting lemma for \qiris] \label{thScindageQi}
Let $x_1$, \dots, $x_n$ be $n$ \elts in a \qiri~$\gA$.
There exists a \sfio $(e_j)$ of cardinality~$2^n$ such that in each of the components~\hbox{$\gA[1/e_j]$}, each~$x_i$ is null or \ndz.
\end{lemma}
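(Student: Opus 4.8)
The plan is to build the required \sfio explicitly out of the \idms attached to the $x_i$, with no recursion needed (a short induction on $n$, splitting off $x_n$ first, would work equally well).

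First I would, for each $i\in\lrbn$, let $e_i:=x_i\eci$ be the \idm attached to $x_i$ in the \qiri $\gA$, so that $e_ix_i=x_i$ and, by Item~\emph{2} of~\ref{lem-idm-associe-0} together with the remarks following it, $x_i$ is \ndz in $\gA[1/e_i]$ and null in $\gA[1/(1-e_i)]$. Then for each subset $H\subseteq\lrbn$ I would set
\[
e_H:=\Prod_{i\in H}e_i\cdot\Prod_{i\notin H}(1-e_i).
\]
The routine checks are: each $e_H$ is an \idm (a product of \idms); the $e_H$ are pairwise \ort, because if $H\ne H'$ some index $i$ lies in exactly one of them, contributing the factor $e_i(1-e_i)=0$; and $\sum_{H\subseteq\lrbn}e_H=\Prod_{i\in\lrbn}\big(e_i+(1-e_i)\big)=1$. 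Hence $(e_H)_{H\subseteq\lrbn}$ is a \sfio of cardinality $2^n$.

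Next I would verify that in each component $\gA[1/e_H]$ every $x_i$ is null or \ndz, with the case distinction governed by whether $i\in H$. If $i\notin H$ then $e_He_i=0$, so $e_i=0$ in $\gA[1/e_H]$ and $x_i=e_ix_i=0$ there. If $i\in H$ then $e_He_i=e_H$, so $1-e_i=0$ in $\gA[1/e_H]$; thus $\gA[1/e_H]$ is a direct factor of $\gA[1/e_i]$, in which $x_i$ is \ndz. Concretely: an element of $\gA[1/e_H]$ is the image of some $b\in\gA$, and $x_i\,b=0$ in $\gA[1/e_H]$ means $e_Hx_ib=0$ in $\gA$; multiplying the \ndz \elt $x_i+(1-e_i)$ by $e_Hb$ gives $x_ie_Hb+e_Hb-e_ie_Hb=x_ie_Hb=e_Hx_ib=0$, so $e_Hb=0$, i.e.\ the element was already $0$ in $\gA[1/e_H]$.

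There is essentially no hard part: the statement is a bookkeeping exercise with \idms. The only point that genuinely needs an argument rather than a one-line formal manipulation is that a \ndz \elt of $\gA[1/e_i]$ remains \ndz after the further passage to the direct factor $\gA[1/e_H]$ when $i\in H$, which is precisely the small computation displayed just above (equivalently: nullity and the \ndz character of an element are inherited by direct factors, and each $\gA[1/e_H]$ is a direct factor of $\gA$).
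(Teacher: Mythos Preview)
Your proof is correct and is exactly the standard explicit construction one would expect. The paper does not give a proof of this lemma (it is stated as a routine fact, with the surrounding text making clear that the splitting comes from the attached \idms $x_i\eci$), so your approach aligns with the intended argument: form the $2^n$ products $e_H=\prod_{i\in H}e_i\prod_{i\notin H}(1-e_i)$ and check each component.
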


Knowing how to systematically split a \qiri into two components leads to the following \gnl method.
The essential difference with the previous splitting lemma is that we  do not know a priori the finite family of \elts which will provoke the splitting.

\rdb
\mni {\bf Local-global \elr machinery \num1.} \label{MethodeQI} \cite[Section IV-6]{CACM}
{\sl Most \algos that work with nontrivial integral \ris can be modified in order to work with pp-rings, by splitting the ring into two components each time that the \algo written for the integral \ris uses the \gui{is this \elt null or \ndz?} test. In the first component the \elt in question is null, in the second it is \ndz.}

\medskip We state now a lemma analogous to Lemma XI-4.21 in \cite{CACM}, where we replace \gui{quasi-inverse of $a$} with \gui{attached \idm to $a$.}

\begin{lemma}[The ring generated by an attached \idm]\label{lem-idm-associe-1}
Let $\gA$ be a reduced ring.
\begin{enumerate}
\item Let $a\in\gA\subseteq\gC$. Assume that $\gC$ is reduced and $a$ has an attached \idm~$a\eci$ in~$\gC$. We note $\gB=\gA[\ci{a}]\subseteq \gC$. 
As an \Amo, we get $\gB=a\eci\gB\oplus(1-a\eci)\gB$, and as a ring 
\[\gB\simeq \aqo{\gB}{1-a\eci}\times \aqo{\gB}{a\eci}=
\gA_1\times \gA_2
\]
with $\gA_1 \simeq a\eci\gB$ and $\gA_2 \simeq (1-a\eci)\gB$ as \Amos.
\begin{enumerate}
\item The natural \homo $\mu_1:\gA\to\gA_1$ 
(via $\gA\to\gB\to\gA_1$) is onto. Its kernel is~\hbox{$a\epr:=\Ann_\gA(a)$}.
\item
The natural \homo $\mu_2:\gA\to\gA_2$ (via $\gA\to\gB\to\gA_2$) is onto.
Its kernel is the intersection \hbox{$\fa=\gA\cap a\eci\gB$}
and satisfies the double inclusion
\[ 
\big(a\epr\big)\epr=\Ann_\gA\big(\Ann_\gA(a)\big)\supseteq \fa \supseteq \DA(a)\eqno(*)
\] 
\end{enumerate}
In short $\gA[a\eci]=\gB\simeq\gA\sur a\epr\times \gA\sur{\fa}$.
\item Conversely for $a\in A$, if an \id $\fa$ of $\gA$ satisfies inclusions $(*)$, the \elt $(1,0)$ is an attached \idm to
 (the image of) $a$ in the ring 
 \[\gB:=\gA\sur a\epr\times \gA\sur{\fa}=:\gA_1\times \gA_2\] and the canonical \homo 
from $\gA$ to $\gB$ is injective.
\end{enumerate}
\end{lemma}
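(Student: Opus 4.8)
The statement to be proved is Lemma~\ref{lem-idm-associe-1}, which has two parts. Part~\textsl{1} describes the structure of $\gB=\gA[\ci a]$ when $\gA\subseteq\gC$ with $\gC$ reduced and $a$ admitting an attached \idm $a\eci$ in $\gC$; Part~\textsl{2} is the converse, producing an attached \idm from an ideal $\fa$ sandwiched as in $(*)$. I would organize the proof around the single idempotent $a\eci\in\gC$ and the decomposition $\gB=a\eci\gB\oplus(1-a\eci)\gB$.

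\textbf{Step 1 (ring decomposition of $\gB$).} Since $a\eci$ is an \idm lying in $\gB$ (because $\gB=\gA[a\eci]$ contains it by definition), the usual idempotent splitting gives $\gB\simeq\gB/(1-a\eci)\times\gB/(a\eci)$, and as \Amos $\gA_1:=\gB/(1-a\eci)\simeq a\eci\gB$, $\gA_2:=\gB/(a\eci)\simeq(1-a\eci)\gB$. This is purely formal. The only subtlety is that $\gB$ is reduced, which we need later; this follows because $\gB\subseteq\gC$ and $\gC$ is assumed reduced.

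\textbf{Step 2 ($\mu_1$ is onto with kernel $a\epr$).} The composite $\gA\to\gB\to\gA_1$ is onto because $\gA_1$ is, as an \Amo, generated by the image of $a\eci$, and $a\eci$ acts as $1$ on $\gA_1$ while $a\eci\,a=a$ in $\gC$; concretely the image of $a$ already generates, since in $\gA_1$ the element $a\eci$ becomes $1$ and $a\eci=a\eci\cdot a\eci$, and one checks $a\eci\in\gA\cdot a$ modulo $1-a\eci$ using $a\eci a=a$. For the kernel: $x\in\gA$ dies in $\gA_1$ iff $x a\eci=0$ in $\gB$; since $a\eci a=a$ this forces $xa=0$, and conversely if $xa=0$ then, $a\eci$ being attached to $a$ in $\gC$ (so $(a\eci)\epr=a\epr$ computed in $\gC$), we get $x a\eci=0$. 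Hence $\Ker\mu_1=\Ann_\gA(a)=a\epr$.

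\textbf{Step 3 ($\mu_2$ is onto, kernel $\fa=\gA\cap a\eci\gB$, double inclusion).} Surjectivity of $\gA\to\gA_2$ is the same argument with $1-a\eci$ in place of $a\eci$, using that $(1-a\eci)\cdot 1=(1-a\eci)$ and that $\gB=\gA[a\eci]$ means $\gA_2$ is generated over $\gA$ by $1-a\eci$, which is $0$ in... no: one shows $\gA\to\gA_2$ is onto because any element of $\gB$ is an $\gA$-combination of $1$ and $a\eci$, and $a\eci\equiv 0$ mod $a\eci$. The kernel is by definition $\{x\in\gA : x\in a\eci\gB\}=\gA\cap a\eci\gB=:\fa$. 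For $(*)$: the inclusion $\fa\supseteq\DA(a)$ uses reducedness — if $x^k\in\gen a$ then in $\gB$, $x^k\in a\eci\gB$, and since $\gB$ is reduced and $a\eci$ is \idm, $a\eci\gB$ is a reduced ring so $x\cdot a\eci$... one deduces $x\in a\eci\gB$, i.e.\ $x\in\fa$. The inclusion $\fa\subseteq\big(a\epr\big)\epr$: if $x\in\fa$ and $y\in a\epr$ then $ya=0$ so $ya\eci=0$, hence $yx=0$ because $x\in a\eci\gB$. The final isomorphism $\gB\simeq\gA/a\epr\times\gA/\fa$ then combines Steps~2 and~3.

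\textbf{Step 4 (converse, Part~\textsl{2}).} Given $\fa$ with $(a\epr)\epr\supseteq\fa\supseteq\DA(a)$, set $\gB=\gA_1\times\gA_2$ with $\gA_1=\gA/a\epr$, $\gA_2=\gA/\fa$, and let $\bar a=(\mu_1(a),\mu_2(a))$. One checks $e:=(1,0)$ is an attached \idm to $\bar a$: clearly $e\bar a=\bar a$ since $\mu_2(a)=0$ (as $a\in\gen a\subseteq\DA(a)\subseteq\fa$). For the annihilator condition $e\epr=\bar a\epr$: $e\epr=\{0\}\times\gA_2$, while $\bar a\epr=\Ann_{\gA_1}(\mu_1(a))\times\gA_2$, so it remains to see $\mu_1(a)$ is \ndz in $\gA_1=\gA/a\epr$, which holds because $\Ann_\gA(a)=a\epr$ by construction. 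Finally injectivity of $\gA\to\gB$ is exactly $a\epr\cap\fa=0$, and this follows from $\fa\subseteq(a\epr)\epr$ together with reducedness: an element of $a\epr\cap(a\epr)\epr$ squares to $0$, hence is $0$.

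\textbf{Main obstacle.} The routine idempotent bookkeeping is harmless; the one place demanding care is the inclusion $\fa\supseteq\DA(a)$ in Step~3 and the injectivity in Step~4, both of which genuinely use that $\gA$ (resp.\ $\gB$, $\gC$) is reduced — without reducedness the statement is false — so I would make sure the reducedness hypothesis is invoked explicitly at precisely those two points, mirroring the analogous Lemma~XI-4.21 of \cite{CACM} cited in the text.
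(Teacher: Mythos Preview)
Your plan is correct and follows essentially the same route as the paper's proof: idempotent splitting of $\gB=\gA[a\eci]$, identification of the two kernels, verification of $(*)$, and the converse via $a\epr\cap\fa=0$ plus reducedness. Two small points worth tightening. First, your surjectivity argument in Step~2 is tangled (the claim ``the image of $a$ already generates'' and ``$a\eci\in\gA\cdot a$ modulo $1-a\eci$'' is not what you need); the paper dispatches both $\mu_1$ and $\mu_2$ in one line by observing that $\pi_i(a\eci)\in\{0,1\}$, so every element of $\gB=\gA+\gA\,a\eci$ maps into the image of $\gA$ --- you rediscover this in Step~3, so just use it uniformly. Second, in Step~4 your justification that $\mu_1(a)$ is \ndz\ in $\gA/a\epr$ (``because $\Ann_\gA(a)=a\epr$ by construction'') is not a tautology: if $xa\in a\epr$ then $xa^2=0$, hence $(xa)^2=0$, hence $xa=0$ \emph{by reducedness of $\gA$}. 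So reducedness is invoked in three places, not two, and your ``Main obstacle'' paragraph should list this one as well.
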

\begin{proof} Let us note
$\pi_i:\gB\to\gA_i$ the canonical morphisms. Since $\pi_i(a\eci)= 0 \hbox{ or }1$, any \elt~$\pi_i(x)$ is equal to a $\mu_i(y)$ for an $y\in\gA$, so each $\mu_i$ is onto.

\smallskip \noindent \textsl{1a. } The kernel of $\mu_1$ is $\gA\cap\Ker \pi_1= \gA\cap\Ann_\gB(a)=\Ann_\gA(a)$.

\smallskip \noindent \textsl{1b.} The kernel of $\mu_2$ is $\fa:=\gA\cap\Ker \pi_2= \gA\cap a\eci\gB$. Clearly $a\in\fa$, thus $\DA(a)\subseteq \fa$. 
Finally we prove that
$\fa\,\Ann_\gA(a) = 0$, which implies
  $\fa\subseteq\Ann_\gA\big(\Ann_\gA(a)\big)$. Indeed if $x\in\fa$ and \hbox{$y\in \Ann_\gA(a)$}, we have $x=a\eci z$ for a $z\in\gB$ and $y\in \Ann_\gB(a)=\Ann_\gB(a\eci)$, thus $xy=za\eci y=0$.

\smallskip \noindent \textsl{2.} The image of $a$ in $\gB$ is $(a1_{\gA_1},0_{\gA_2})$, thus $(1_{\gA_1},0_{\gA_2})$
is an attached \idm to~$a$ in~$\gB$. Consider now an $x\in\gA$ whose image in $\gB$
is $0$.
On one hand $x=_{\gA_1}0$, so $ax=_\gA0$. On the other hand $x\,\Ann_\gA(a)=0$, so $x^2=_\gA0$, and $x=_\gA0$.
\end{proof}

On sees that the notation $\gA[a\eci]$ is 
ambiguous when $\DA(a)\neq\Ann_\gA\big(\Ann_\gA(a)\big)$.
This leads in particular to two natural notions of pp-closure of a reduced ring, depending on whether one decides to systematically favor the side $\DA(a)$ or the side $\Ann_\gA\big(\Ann_\gA(a)$.

Here is now a lemma which can simplify the construction of \qiris. It is analogous to Lemma XI-4.22 in \cite{CACM}, by replacing \gui{\zedr} with \gui{\qiri} and \gui{quasi-inverse} with \gui{attached \idm.}

\begin{lemma}\label{lem2qi} ~
\vspace{-.5em}
\begin{enumerate}\itemsep=1pt
\item When $a\in\gA\subseteq\gC$ where $\gC$ is a \qiri, we note $a\eci$ the attached \idm to $a$ in $\gC$. The smallest sub-\qiri of~$\gC$ containing~$\gA$ is equal to $\gA[(a\eci)_{a\in\gA}]$. 
\item More \gnlt, if $\gA\subseteq\gB$
where $\gB$ is reduced and if each \elt $a$ of $\gA$ has an attached \idm $a\eci$ in $\gB$,
then the subring $\gA[(a\eci)_{a\in\gA}]$ of~$\gB$ is a \qiri.
Moreover, each \elt of~$\gA[(a\eci)_{a\in\gA}]$ can be written \emph{in a standard form}, \cad as 
$ \som_j a_j e_j $ with $a_j$'s in $\gA$ and a list~$(e_j)_j$ of orthogonal \idms 
in the \agB generated by $a\eci$'s.\\
Finally, $\som_j{ a_j e_j}$ has an attached \idm in $\gB$ written in standard form: $\som_j 1 f_j=\som_j a_j\eci e_j$.
\end{enumerate}
\end{lemma}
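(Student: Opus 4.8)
The plan is to prove Item~\textsl{2} first, since it carries the real content, and then to deduce Item~\textsl{1}. Throughout write $\gR:=\gA[(a\eci)_{a\in\gA}]$. The first step for Item~\textsl{2} is to make the standard form explicit. Fix finitely many $\an\in\gA$. The \idms $a_1\eci,\dots,a_n\eci$ generate inside $\gB$ a finite \agB whose nonzero atoms are the \elts $g_I:=\prod_{i\in I}a_i\eci\,\prod_{i\notin I}(1-a_i\eci)$ for $I\subseteq\lrbn$; these form a \sfio with $\som_I g_I=1$, and $a_i\eci=\som_{I\ni i}g_I$. Hence any \pol in $a_1\eci,\dots,a_n\eci$ with \coes in $\gA$ rewrites as $\som_I c_I g_I$ with $c_I\in\gA$. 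Passing to the union over the finite subsets of $\gA$, every \elt of $\gR$ can be written $x=\som_j a_j e_j$ with $a_j\in\gA$ and $(e_j)$ a \sfio in the \agB generated by the $a\eci$'s (one may even impose $\som_j e_j=1$ by adding a term $0\cdot e_0$).

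Given such an $x=\som_j a_j e_j$, the candidate attached \idm is $x\eci:=\som_j a_j\eci\, e_j$. Orthogonality of the $e_j$ together with $(a_j\eci)^2=a_j\eci$ gives $(x\eci)^2=x\eci$, and $x\eci\, x=\som_j a_j\eci a_j e_j=\som_j a_j e_j=x$ since $a_j\eci a_j=a_j$. It remains to check $\Ann_\gB(x\eci)=\Ann_\gB(x)$. The inclusion $\subseteq$ is immediate from $x\eci x=x$. For $\supseteq$, suppose $xy=0$; multiplying by $e_j$ and using $e_j x=a_j e_j$ gives $a_j e_j y=0$, so $e_j y\in\Ann_\gB(a_j)=(1-a_j\eci)\gB$ (the last equality because $a_j\eci$ is the \idm attached to $a_j$ in $\gB$), hence $a_j\eci e_j y=0$, and summing over $j$ yields $x\eci\, y=0$. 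Now $x\eci=\som_j a_j\eci e_j$ is a sum of the orthogonal \idms $f_j:=a_j\eci e_j$ of $\gB$, so $x\eci\in\gR$; intersecting both annihilators with $\gR$ shows that $x\eci$ is also the \idm attached to $x$ inside $\gR$. Therefore $\gR$ is a \qiri, and the \idm attached to $x=\som_j a_j e_j$ is $\som_j 1\cdot f_j=\som_j a_j\eci e_j$ in standard form, which is precisely the final assertion. The case of an infinite family is obtained by the filtered colimit over the finite subsets of $\gA$.

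For Item~\textsl{1}, recall that a \qiri is reduced (if $x^2=0$ and $e$ is the \idm attached to $x$, then $ex=x$ while $x\in\Ann(x)=\Ann(e)$ gives $ex=0$, so $x=0$). Hence $\gC$ is reduced, each $a\in\gA$ has an \idm $a\eci$ attached to it in $\gC$, and Item~\textsl{2} applied with $\gB=\gC$ shows that $\gR$ is a sub-\qiri of $\gC$ containing $\gA$. For minimality, let $\gD$ be any sub-\qiri of $\gC$ with $\gA\subseteq\gD$; for $a\in\gA\subseteq\gD$, let $\varepsilon$ be the \idm attached to $a$ computed in $\gD$. Since the inclusion $\gD\hookrightarrow\gC$ is \ndz (it sends \ndz \elts to \ndz \elts), item~\textsl{3} of Proposition and definition~\ref{lem-idm-associe-0} shows that $\varepsilon$ is also an \idm attached to $a$ in $\gC$, whence $\varepsilon=a\eci$ by uniqueness. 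Thus $a\eci\in\gD$ for every $a\in\gA$, and $\gR\subseteq\gD$.

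I expect the delicate point to be that last identification $\varepsilon=a\eci$ across the inclusion: it genuinely needs $\gD\hookrightarrow\gC$ to preserve nonzerodivisors, for otherwise an \elt of $\gA$ could be \ndz in $\gD$ while being a \dvz in $\gC$, and the two attached \idms would not coincide. In the same vein, in Item~\textsl{2} it is important to verify the formula for $x\eci$ in the ambient ring $\gB$, where the annihilators of the $a_j$ are controlled through the $a_j\eci$, and only afterwards to descend to $\gR$. Everything else is routine \agB bookkeeping.
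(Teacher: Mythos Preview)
Your argument for Item~\textsl{2} is correct and matches the paper's: you reach the standard form by the same Boolean-algebra refinement, and your direct annihilator computation of $x\eci=\sum_j a_j\eci e_j$ is equivalent to the paper's appeal to the componentwise description of attached \idms in a finite product $\gB\simeq\prod_j e_j\gB$.

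The gap is in Item~\textsl{1}. Your minimality argument hinges on the assertion that the inclusion $\gD\hookrightarrow\gC$ is \ndz, and that is \emph{false} in general. Take $\gC=\gk[t]\times\gk$ (a \qiri) and embed $\gA=\gk[t]$ via $f\mapsto(f,f(0))$; set $\gD:=\gA$. Then $\gD\cong\gk[t]$ is a domain, hence a \qiri, yet $t$ is \ndz in $\gD$ while its image $(t,0)$ is a \dvz in $\gC$ (killed by $(0,1)$). Worse, under your reading of ``sub-\qiri'' as ``a subring which happens to be a \qiri'', Item~\textsl{1} itself fails in this example: $\gD=\gA$ is already such a subring, but $t\eci=(1,0)\notin\gA$, so $\gR=\gA[(a\eci)_{a\in\gA}]$ strictly contains $\gD$ (in fact $\gR=\gC$).

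The intended reading --- consistent with the paper's remark that \qiris have a \emph{purely equational} \dfn via the unary law $a\mapsto a\eci$, and with the analogue in \cite{CACM} for \zedr rings --- is that a sub-\qiri of~$\gC$ is a subring \emph{closed under the operation $\eci$ of $\gC$}. With that reading, minimality is immediate: any such $\gD\supseteq\gA$ contains $a\eci$ for every $a\in\gA$, hence $\gR\subseteq\gD$. And your own computation already shows $\gR$ is closed under~$\eci_\gC$, since $x\eci=\sum_j a_j\eci e_j\in\gR$. This is why the paper simply says Item~\textsl{2} ``implies clearly'' Item~\textsl{1}. So drop the \ndz-inclusion step entirely and invoke the equational substructure reading instead.
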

\begin{proof}
We prove Item \textsl{2}, which implies clearly Item \textsl{1}.\\
Among \elts of $\gB$, let us consider the ones written as sums of products~$ab\eci$ \hbox{with $a$, $b\in\gA$}. Clearly they form a subring of $\gB$\footnote{Use Item \textsl{4} in Lemma \ref{lem-idm-associe-0}.}
which is equal to $\gA[(a\eci)_{a\in\gA}]$.
This ring contains the \agB generated by \idms $a\eci$. So if an \elt is written in standard form, it is \hbox{in $\gA[(a\eci)_{a\in\gA}]$}. \\
Let $x=\som_j{ a_j e_j}$ and $y= \som_k{ b_k f_k}$ be two \elts written in standard form. \\
Then $xy=\som_{j,k}(a_jb_k)e_jf_k $, which is standard. 
Let us see the sum $x+y$. One may assume $\som_je_j=1$ by adding the \idm $e=(1-\sum_je_j)$ to the list. Same thing for $\som_kf_k$. 
Then $x+y=\som_{j,k}(a_j+b_k)e_jf_k$, which is standard. \\
Finally, in a finite product of rings $\prod_\ell\gA_\ell$, an \elt $x=(x_\ell)_\ell$ has an attached \idm \ssi each $x_\ell$ has an attached \idm ${x_\ell}\eci$ in $\gA_\ell$, and in this case $x\eci=({x_\ell}\eci)_\ell$. This provides the last statement and proves that $\gA[(a\eci)_{a\in\gA}]$ is a \qiri.
\end{proof}

\subsection{The ring $\Amin$}\label{sec-Amin}

Référence: \cite[section~\hbox{XIII-7}]{CACM}.

\begin{lemma}\label{lem0Amin}
Let $\gA$ be a reduced ring
and $a\in\gA$.
We define
\[\gA_{\so{a}}\eqdefi\gA\sur{a\epr}\times \gA\sur{({a\epr})\epr}
\]
and we note $\psi_a:\gA\to\gA_{\so{a}}$ the canonical \homo. 
\begin{enumerate}
\item $\psi_a(a)=(\pi_1(a),0)$ and $(1,0)$
is an attached \idm to $\psi_a(a)$ in $\gA_{\so{a}}$.
\item $\psi_a$ is injective (we may consider $\gA$ as a subring of $\gA_{\so{a}}$).
\item $\gA_{\so{a}}$ is a reduced ring.
\item If $a$ has an attached \idm, $\psi_a$ is an \iso.
For example this is the case when~$\gA$ is a \qiri.
\item Let $\fb$ be an \id in $\gA_{\so{a}}$, then the \id $\psi_a^{-1}(\fb\epr)=\fb\epr\cap\gA$ is an annihilator \id in~$\gA$.
\item The morphism $\psi_a:\gA\to\gA_{\so{a}}$ is \ndz.
\end{enumerate}
\end{lemma}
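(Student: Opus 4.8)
The plan is to put $\gB:=\gA_{\so a}=\gA\sur{a\epr}\times\gA\sur{(a\epr)\epr}$, to name $\mu_1,\mu_2$ the two canonical quotient \homos so that $\psi_a=(\mu_1,\mu_2)$, and to lean throughout on two elementary observations about the reduced ring $\gA$. First, for \emph{any} subset $\fc$ the \id $\Ann_\gA(\fc)$ is radical: from $x^n\fc=0$ one gets $(xz)^n=0$, hence $xz=0$, for each $z\in\fc$. In particular $a\epr$ and $(a\epr)\epr$ are radical, so both factor rings $\gA\sur{a\epr}$ and $\gA\sur{(a\epr)\epr}$ are reduced, and Item~\emph{3} is immediate since a finite product of reduced rings is reduced. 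Second, $a\in(a\epr)\epr$ by the very \dfn of $a\epr$, so $\mu_2(a)=0$ and $\psi_a(a)=(\mu_1(a),0)$, which is already the first half of Item~\emph{1}; the same observation applied to $x\in\Ker\psi_a=a\epr\cap(a\epr)\epr$ shows $x$ annihilates itself, so $x^2=0$ and $x=0$, which is Item~\emph{2}. From then on I would identify $\gA$ with $\psi_a(\gA)\subseteq\gB$.

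For the remaining closed computations I would go item by item. Item~\emph{1}: by the characterization of attached \idms in \thref{lem-idm-associe-0}, it remains to check that $(\mu_1(a),1)$ is \ndz in $\gB$, i.e.\ that $\mu_1(a)$ is \ndz in $\gA\sur{a\epr}$ (the second coordinate being a unit), which is again a one-line reduced-ring argument ($ax\in a\epr\Rightarrow a^2x=0\Rightarrow(ax)^2=0\Rightarrow ax=0$). Item~\emph{4}: if $e:=a\eci$ is an attached \idm to $a$ in $\gA$ — in particular whenever $\gA$ is a \qiri — then $a\epr=e\epr=(1-e)\gA$, hence $(a\epr)\epr=\big((1-e)\gA\big)\epr=e\gA$, so $\gB\simeq e\gA\times(1-e)\gA$ and $\psi_a$ becomes literally the Chinese remainder \iso attached to the \com \idms $e$ and $1-e$. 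Item~\emph{6}: if $x$ is \ndz in $\gA$, then $\mu_1(x)\mu_1(y)=0$ forces $x\cdot(ay)=0$, hence $ay=0$ and $\mu_1(y)=0$; and $\mu_2(x)\mu_2(y)=0$ forces $x\cdot(yz)=0$ for every $z\in a\epr$, hence $yz=0$ for all such $z$ and $y\in(a\epr)\epr$; so $\psi_a(x)$ is \ndz in the product.

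The fiddliest step, and the one I would treat most carefully, is Item~\emph{5}. The plan there is to use that an \id $\fb$ of a product ring $\gA_1\times\gA_2$ splits as $\fb=\fb_1\times\fb_2$ (take $\fb_i$ to be the $i$-th coordinate set, which is legitimate \cot since $(u,v)\in\fb$ implies $(u,0)=(1,0)(u,v)\in\fb$), to pull each $\fb_i$ back along $\mu_i$ to an \id $\fc_i$ of $\gA$, and then to unwind the two nested annihilators: $\psi_a^{-1}(\fb\epr)=\fb\epr\cap\gA=\mu_1^{-1}(\fb_1\epr)\cap\mu_2^{-1}(\fb_2\epr)$, with $\mu_1^{-1}(\fb_1\epr)=\Ann_\gA(a\fc_1)$ and $\mu_2^{-1}(\fb_2\epr)=\Ann_\gA(a\epr\fc_2)$, so that $\psi_a^{-1}(\fb\epr)=\Ann_\gA(a\fc_1+a\epr\fc_2)$ is visibly an annihilator \id. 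The difficulties here are purely organisational — keeping straight which annihilator is taken in which ring, and checking the ideal decomposition is constructively sound — rather than anything conceptual; as throughout, the only real tool is the cancellation trick "$x^nz=0\Rightarrow xz=0$" furnished by reducedness.
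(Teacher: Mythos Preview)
Your proof is correct in all parts. The paper itself does not supply a proof of this lemma: it leaves Items~\emph{1}--\emph{5} as immediate and only points to Exercise~XIII-19 of \cite{CACM} for Item~\emph{6}. Your direct arguments for Items~\emph{1} and~\emph{2} are essentially the content of Item~\emph{2} of the preceding Lemma~\ref{lem-idm-associe-1} (with $\fa=(a\epr)\epr$), and you use the same reduced-ring trick $x^2=0\Rightarrow x=0$ as that proof; your treatment of Item~\emph{5} via the product decomposition $\fb=\fb_1\times\fb_2$ and the identification $\mu_i^{-1}(\fb_i\epr)$ as an annihilator in $\gA$ is a clean unpacking not spelled out in the paper; and your argument for Item~\emph{6} is a valid direct substitute for the referenced exercise.
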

For Item \textsl{6}, see Exercice XIII-19 in \cite{CACM}.

\begin{lemma}\label{lem1Amin}
Let $\gA$ be a reduced ring
and $a,b\in\gA$. Then, with the notations of Lemma~\ref{lem0Amin},
the rings $(\gA_{\so{a}})_{\so{b}}$ and $(\gA_{\so{b}})_{\so{a}}$ are canonically isomorphic.
\end{lemma}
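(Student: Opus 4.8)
The plan is to compute $(\gA_{\{a\}})_{\{b\}}$ and $(\gA_{\{b\}})_{\{a\}}$ explicitly, exhibiting each as the same product of four canonical quotients of $\gA$.

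First I would record how the operation $\gC\mapsto\gC_{\{c\}}$ behaves on a finite product: if $\gC=\gC_1\times\gC_2$ is reduced and $c=(c_1,c_2)$, then $\Ann_\gC(c)=\Ann_{\gC_1}(c_1)\times\Ann_{\gC_2}(c_2)$, and the same for the double annihilator, whence $\gC_{\{c\}}\cong(\gC_1)_{\{c_1\}}\times(\gC_2)_{\{c_2\}}$ canonically. Applying this to $\gA_{\{a\}}=\gA/a\epr\times\gA/(a\epr)\epr$ and to the image of $b$ in it, one obtains $(\gA_{\{a\}})_{\{b\}}$ as a product of four rings, each of the shape $(\gA/\fc)\big/\ov{\fd}$ for suitable annihilator ideals $\fc,\fd$ of $\gA$. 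I would also note that $\gC_{\{c\}}$ is generated over $\gC$ by the unique \idm attached to $c$, so that both $(\gA_{\{a\}})_{\{b\}}$ and $(\gA_{\{b\}})_{\{a\}}$ are generated over $\gA$ by the attached \idms of $a$ and of $b$ — \idms that are intrinsic, being unique in any reduced \ndz extension of $\gA$ (Propdef~\ref{lem-idm-associe-0}).

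Next I would simplify the four factors to honest quotients of $\gA$. Writing $\epr$ for annihilators taken in $\gA$, a direct manipulation whose only non-formal ingredient is that $x^2=0\Rightarrow x=0$ in a reduced ring should give the four factors as
\[
\gA\big/(ab)\epr,\qquad \gA\big/\big(a\cdot\Ann_\gA(b)\big)\epr,\qquad \gA\big/\big(b\cdot\Ann_\gA(a)\big)\epr,\qquad \gA\big/\big(a\epr\cap b\epr\big)\epr .
\]
The computational inputs are the identities $\{\,y\in\gA : ya\in(ab)^{\perp\perp}\,\}=\big(a\cdot\Ann_\gA(b)\big)\epr$ (used for the two middle factors) and $\big(a\epr\cdot\big(b\cdot\Ann_\gA(a)\big)\epr\big)\epr=\big(a\epr\cap b\epr\big)\epr$ (used for the last factor). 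The second one is proved by observing that $a\epr\cdot\big(b\cdot\Ann_\gA(a)\big)\epr\subseteq a\epr\cap b\epr$, while conversely, if $w$ annihilates this product ideal and $u\in a\epr\cap b\epr$, then $u$ lies both in $a\epr$ and in $\big(b\cdot\Ann_\gA(a)\big)\epr$, so $wu^2=0$ and hence $(wu)^2=w\cdot wu^2=0$, giving $wu=0$.

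Finally, the displayed list of four ideals is visibly invariant under exchanging $a$ and $b$ — the outer two ideals are fixed and the inner two are interchanged — so the identical computation for $(\gA_{\{b\}})_{\{a\}}$ yields the same four-fold product. Since both the four factor rings and the \idms effecting the decomposition are intrinsic to $(\gA,a,b)$, the resulting isomorphism $(\gA_{\{a\}})_{\{b\}}\simeq(\gA_{\{b\}})_{\{a\}}$ is canonical and restricts to the identity on $\gA$. The step I expect to be the main obstacle is the second identity above, i.e.\ recognising the denominator of the fourth factor as the symmetric ideal $\big(a\epr\cap b\epr\big)\epr$; once the annihilator bookkeeping is organised this is short, but it is the only place where a little ingenuity is required.
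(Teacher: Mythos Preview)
The paper does not actually prove this lemma: it is stated without proof (the section refers to \cite[section~XIII-7]{CACM} for background), and only the remark following it illustrates the special case $ab=0$. So there is no ``paper's own proof'' to compare against line by line.

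Your argument is correct. The product decomposition you compute,
\[
(\gA_{\{a\}})_{\{b\}}\;\simeq\;\gA/(ab)\epr \,\times\, \gA/(a\,b\epr)\epr \,\times\, \gA/(b\,a\epr)\epr \,\times\, \gA/(a\epr\cap b\epr)\epr,
\]
is exactly the $n=2$ case of the paper's Lemma~\ref{lem4MorRc} (note that $\gen{a,b}\epr=a\epr\cap b\epr$, so your fourth ideal agrees with the paper's $\fa_{\{1,2\}}$). Your two annihilator identities are right; the only genuine use of reducedness is, as you say, the step $yxa^2=0\Rightarrow yxa=0$ via $(yxa)^2=yx\cdot yxa^2=0$, and the analogous squaring trick in your proof of the second identity. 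Your canonicity argument is also sound: each $\gC_{\{c\}}$ is generated over $\gC$ by the attached idempotent of $c$, the map $\gA\to(\gA_{\{a\}})_{\{b\}}$ is \ndz{} (composition of two \ndz{} maps, Lemma~\ref{lem0Amin}(6)), so by Proposition~\ref{lem-idm-associe-0}(1,3) the attached idempotents of $a$ and $b$ in either iterated ring are uniquely determined by $(\gA,a,b)$, and both rings equal $\gA[e_a,e_b]$.

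In short: your approach is the natural one, and it is precisely what the paper records (without proof) in the more general Lemma~\ref{lem4MorRc}.
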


\rem The case where $ab=0$ is typical: when we meet it, we would like to split the ring into components where things are \gui{clear.}
The previous construction then gives the three components
\[\gA\sur{(ab\epr)\epr}, \; \gA\sur{(a\epr b)\epr}\, \hbox{ and } \,\gA\sur{(a\epr b\epr)\epr}.\] 
In the first one, $a$ is \ndz and $b=0$, in the second one $b$ is \ndz and $a=0$, and in the third one $a=b=0$.
\eoe

\begin{thdef}[Minimal pp-closure]\label{thAmin} 
Let $\gA$ be a reduced ring.
We can define a ring $\Amin$ as a filtering colimit by iterating the basic construction which consists in replacing~$\gE$ (the \gui{current} ring, which contains $\gA$) by
\[\gE_{\so{a}}\eqdefi\gE\sur{a\epr}\times \gE\sur{({a\epr})\epr}=\gE\sur{\Ann_\gE (a) }\times \gE\sur{\Ann_\gE(\Ann_\gE (a) )},
\]
when $a$ ranges over $\gA$.
\begin{enumerate}\itemsep=1pt
\item This ring $\Amin$ is a \qiri, contains $\gA$ and is integral over $\gA$.
\item For all $x\in\Amin$,
$
x\epr\cap\gA$ is an annihilator \id in $\gA$.
\end{enumerate}
This ring $\Amin$ is called the \emph{minimal pp-closure of $\gA$}.
If $\gA$ is a \qiri, the natural morphism $\gA\to \Amin$ is an \iso.
 \\
In the general case, we take $\gA\qim\eqdefi (\Ared)\qim$ (when we do not know whether $\gA$ is reduced or not).

\end{thdef}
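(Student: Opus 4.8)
The plan is to realise $\Amin$ as a filtering colimit of the iterated constructions $\gE\rightsquigarrow\gE_{\so a}$ of Lemma~\ref{lem0Amin}, and then to read off each asserted property from the matching item of Lemmas~\ref{lem0Amin}, \ref{lem1Amin} and \ref{lem2qi}. For a finite list $\vec a=(a_1,\dots,a_n)$ of elements of $\gA$ set
\[
\gA_{\vec a}:=\bigl(\cdots\bigl((\gA_{\so{a_1}})_{\so{a_2}}\bigr)\cdots\bigr)_{\so{a_n}},
\]
a reduced ring containing $\gA$ (Lemma~\ref{lem0Amin}, items~2 and~3, applied $n$ times). Lemma~\ref{lem1Amin} gives, for all $a,b\in\gA$, a canonical isomorphism $(\gA_{\so a})_{\so b}\simeq(\gA_{\so b})_{\so a}$; since moreover splitting by an element that already has an attached \idm changes nothing (Lemma~\ref{lem0Amin} item~4), one deduces that $\gA_{\vec a}$ depends, up to a canonical isomorphism, only on the \emph{set} $S=\{a_1,\dots,a_n\}$; write $\gA_S$. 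Appending $b\in\gA$ gives a morphism $\gA_S\to\gA_{S\cup\so b}$ that is injective (Lemma~\ref{lem0Amin} item~2) and \ndz (item~6); composing, one obtains coherent transition morphisms $\gA_S\to\gA_{S'}$ for $S\subseteq S'$ over the directed set of finite subsets of $\gA$, and one defines $\Amin:=\varinjlim_S\gA_S$. As all transition maps are injective, $\Amin$ is reduced and each $\gA_S\to\Amin$ is injective; in particular, for $x\in\gA_S$ the \id $x\epr\cap\gA$ is the same whether $x\epr$ is computed in $\gA_S$ or in $\Amin$.

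\emph{Properties of $\Amin$.} Each $\gA_{\so a}=\gA\sur{a\epr}\times\gA\sur{(a\epr)\epr}$ is a quotient of the \Amo $\gA^2$, hence a \tf \Amo; thus each $\gA_S$ is a \tf \Amo and so is integral over $\gA$, whence every element of $\Amin$ is integral over $\gA$. Next, $\gA_{\so a}$ is generated as an $\gA$-algebra by the \idm $(1,0)$, which by Lemma~\ref{lem0Amin} item~1 is an attached \idm to $a$; iterating (and using that \ndz morphisms carry attached \idms to attached \idms, Proposition~\ref{lem-idm-associe-0} item~3, together with uniqueness of attached \idms, item~1), one gets $\gA_S=\gA\bigl[(a\eci)_{a\in S}\bigr]$, and passing to the colimit $\Amin=\gA\bigl[(a\eci)_{a\in\gA}\bigr]$ inside the reduced ring $\Amin$ itself, where $a\eci$ is the attached \idm of $a\in\gA$ in $\Amin$. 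Lemma~\ref{lem2qi} item~2, applied with $\gB=\Amin$, then says precisely that $\Amin=\gA[(a\eci)_{a\in\gA}]$ is a \qiri. This establishes item~1; and if $\gA$ is already a \qiri, then every $a$ has an attached \idm in $\gA$, so every $\psi_a$ — hence every transition morphism — is an isomorphism (Lemma~\ref{lem0Amin} item~4), and $\gA\to\Amin$ is an isomorphism.

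\emph{Annihilators and the general case.} Item~2 follows from the more general claim, proved by induction on the cardinality of $S$: \emph{for every finite $S\subseteq\gA$ and every \id $\fc$ of $\gA_S$, the \id $\fc\epr\cap\gA$ is an annihilator \id of $\gA$.} The case $S=\emptyset$ is a tautology. If $S=S'\cup\so a$ with $\gA_S=(\gA_{S'})_{\so a}$, Lemma~\ref{lem0Amin} item~5, applied over $\gA_{S'}$ in place of $\gA$, shows that $\fc\epr\cap\gA_{S'}$ is an annihilator \id of $\gA_{S'}$, say $\fc\epr\cap\gA_{S'}=\fd\epr$; then $\fc\epr\cap\gA=\fd\epr\cap\gA$, which is an annihilator \id of $\gA$ by the induction hypothesis for $(S',\fd)$. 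Taking $\fc=\gen x$ for $x\in\gA_S$, and recalling that $x\epr\cap\gA$ is computed equally in $\gA_S$ and in $\Amin$, yields item~2. Finally, for an arbitrary (possibly non-reduced) ring one simply sets $\gA\qim:=(\Ared)\qim$, as in the statement.

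\emph{Main obstacle.} The two points needing care are organising the iterated splittings into a bona fide directed system — for which the commutation Lemma~\ref{lem1Amin} and the remark that re-splitting an already split element is idle are exactly what is required — and the observation that splitting only by elements of the \emph{fixed} base ring $\gA$ already suffices to reach a \qiri. The latter is made transparent by Lemma~\ref{lem2qi} item~2 (equivalently, by the standard-form description of the elements of $\gA_S$ as $\gA$-linear combinations of \idms), and is the conceptual heart of the argument.
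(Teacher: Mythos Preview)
Your proof is correct and follows precisely the approach the paper sets up but does not itself carry out: the paper states the result with a reference to \cite[section~XIII-7]{CACM} and surrounds it with the auxiliary Lemmas~\ref{lem-idm-associe-0}, \ref{lem2qi}, \ref{lem0Amin}, \ref{lem1Amin}, but gives no explicit proof of Theorem~\ref{thAmin}. You have filled in exactly the argument those lemmas were designed to support---the commutation Lemma~\ref{lem1Amin} to organise the directed system, Lemma~\ref{lem0Amin} item~4 to pass from sequences to sets, the module-finiteness of each $\gA_S$ for integrality, Lemma~\ref{lem2qi} item~2 for the \qiri conclusion, and the induction on $|S|$ via Lemma~\ref{lem0Amin} item~5 for item~2---so there is nothing to compare.
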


We give now a description of each ring
we get at a finite stage of the construction of~$\Amin$.

\begin{lemma}\label{lem4MorRc}
Let $\gA$ be a reduced ring and $(\ua) = (\an)$ a list of~$n$ \elts 
in~$\gA$. For $I\in\cP_n$, we note $\fa_I$ the \id
\[\fa_I = \big(\Prod_{i\in I} \gen{a_i}\epr \Prod_{j\notin I} a_j\big)\epr
= \big(\gen{a_i, i \in I}\epr \Prod_{j\notin I} a_j\big)\epr
.\]
Then $\Amin$ contains the following ring, product of $2^n$
quotient rings of~$\gA$ (some may be trivial):
\[\gA_{\so\ua} = \Prod_{I\in\cP_n} \gA\sur{\fa_I}.\]
 \end{lemma}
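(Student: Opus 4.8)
The plan is to prove, by induction on $n$, that the ring obtained from $\gA$ by applying the basic construction successively at $a_1,\dots,a_n$ is $\Prod_{I\in\cP_n}\gA\sur{\fa_I}$; by the minimal pp-closure construction of \ref{thAmin} this ring occurs as one of the intermediate rings of the filtered colimit defining $\Amin$ (the order of the $a_i$ being immaterial by Lemma~\ref{lem1Amin}), and it embeds into $\Amin$ because the transition maps $\psi_a$ are injective and preserve reducedness (Lemma~\ref{lem0Amin}). The base case $n=0$ is clear: $\cP_0=\so\emptyset$, $\fa_\emptyset=\gA\epr=\so0$, and the $0$-th stage is $\gA$. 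For the inductive step, write the $n$-th stage as $\big(\gA_{\so{a_1,\dots,a_{n-1}}}\big)_{\so{a_n}}$ and use two elementary facts: (a) in a finite product $\gR=\prod_\ell\gR_\ell$ one has $\gR_{\so{(x_\ell)}}=\prod_\ell(\gR_\ell)_{\so{x_\ell}}$, since $x\epr$ and $(x\epr)\epr$ are computed factorwise; and (b) for an ideal $\fb$ of a ring $\gR$ and $b\in\gR$, taking preimages in $\gR$ of the annihilators of $\ov b$ in $\gR\sur\fb$ gives $(\gR\sur\fb)_{\so{\ov b}}=\gR\sur{(\fb:b)}\times\gR\sur{(\fb:(\fb:b))}$, where $(\fb:\ff):=\so{x\mid x\ff\subseteq\fb}$. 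When moreover $\fb=\fc\epr$ this simplifies: $(\fc\epr:b)=(b\fc)\epr$ for $b\in\gR$, and $(\fc\epr:\ff)=(\fc\ff)\epr$ for an ideal $\ff$.

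By the induction hypothesis, $\gA_{\so{a_1,\dots,a_{n-1}}}=\Prod_{I\in\cP_{n-1}}\gA\sur{\fc_I\epr}$, where unwinding the definition of $\fa_I$ for the list $(a_1,\dots,a_{n-1})$ gives $\fc_I:=\fq_I\,p_I$ with $\fq_I:=\gen{a_i,\ i\in I}\epr$ and $p_I:=\Prod_{j\in\{1,\dots,n-1\}\setminus I}a_j$. Applying (a) and then (b) with $\fb=\fc_I\epr$, the $n$-th stage becomes $\Prod_{I\in\cP_{n-1}}\big(\gA\sur{(a_n\fc_I)\epr}\times\gA\sur{((a_n\fc_I)\epr\fc_I)\epr}\big)$. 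Since $n\notin I$ we have $a_np_I=\Prod_{j\in\{1,\dots,n\}\setminus I}a_j$ and, as ideals, $a_n\fc_I=\gen{a_n}\fc_I=\fq_I\,(a_np_I)$; hence $(a_n\fc_I)\epr$ is exactly $\fa_I$ computed for the list $(a_1,\dots,a_n)$. Since $\gen{a_i,\ i\in I\cup\so n}\epr=\fq_I\cap a_n\epr$, the ideal $\fa_{I\cup\so n}$ for the list $(a_1,\dots,a_n)$ equals $\big((\fq_I\cap a_n\epr)\,p_I\big)\epr$. So, granting the identity $\big((a_n\fc_I)\epr\fc_I\big)\epr=\big((\fq_I\cap a_n\epr)\,p_I\big)\epr$, the displayed product reindexes — via the splitting of $\cP_n$ into the subsets not containing $n$ (a copy of $\cP_{n-1}$) and those of the form $I\cup\so n$ — precisely to $\Prod_{J\in\cP_n}\gA\sur{\fa_J}$, which completes the induction. (Trivial factors $\gA\sur{\fa_J}=\so0$ cause no difficulty.)

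It remains to prove the identity, which I would isolate as a lemma: \emph{in a reduced ring $\gA$, for any ideal $\fq$ and elements $a,p$, setting $\fc:=\gen p\fq$, one has $\big((a\fc)\epr\fc\big)\epr=\big(\gen p(\fq\cap a\epr)\big)\epr$.} I would prove it by comparing the annihilators of the ideals $(a\fc)\epr\fc$ and $\gen p(\fq\cap a\epr)$ elementwise, noting $a\fc=\gen{ap}\fq$. For one inclusion, let $y$ annihilate $\gen p(\fq\cap a\epr)$ and take $z\in(a\fc)\epr$, $e\in\fq$; since $z$ annihilates $\gen{ap}\fq$ and $e\in\fq$ we get $zape=0$, hence $pze\in\fq$ and $(pze)a=0$, so $pze\in\fq\cap a\epr$ and therefore $yp(pze)=yzp^2e=0$; consequently $(yzpe)^2=(yzp^2e)(yze)=0$, forcing $yzpe=0$ as $\gA$ is reduced. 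As the elements $zpe$ generate $(a\fc)\epr\fc$, this shows $y\in\big((a\fc)\epr\fc\big)\epr$. For the reverse inclusion, let $y$ annihilate $(a\fc)\epr\fc$ and take $w\in\fq$ with $wa=0$; then $w$ annihilates $\gen{ap}\fq$, so $w\in(a\fc)\epr$, and $pw\in\gen p\fq$, whence $y\,w\,(pw)=yw^2p=0$; then $(ypw)^2=(yw^2p)(yp)=0$, so $ypw=0$ by reducedness, i.e.\ $y$ annihilates $\gen p(\fq\cap a\epr)$. Applying this with $\fq=\fq_I$, $a=a_n$, $p=p_I$ yields the identity used above.

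The crux is this last lemma: it is the only place where the hypothesis that $\gA$ is reduced is genuinely used — twice, through the trick of squaring to absorb a spurious extra factor of $p$ — and it is what reconciles the ``double annihilator'' ideal $\big((a\fc)\epr\fc\big)\epr$ produced mechanically by the iterated construction with the ideal $\big((\fq\cap a\epr)\,p\big)\epr$ appearing in the stated closed formula for $\fa_I$. Everything else — distributing the basic construction over a finite product, and evaluating $(\fb:b)$ and $(\fb:(\fb:b))$ when $\fb$ is an annihilator ideal — is routine manipulation with annihilators.
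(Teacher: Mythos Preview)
The paper does not prove this lemma; it is stated as a description of the finite stages of the $\Amin$ construction, with the surrounding section referring to \cite[section~XIII-7]{CACM} for background. So there is no paper proof to compare against, and your argument stands on its own.

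Your proof is correct. The inductive scheme is the natural one, and the two routine facts you isolate --- that the basic construction $\gE\mapsto\gE_{\so a}$ distributes over finite products, and that for $\fb=\fc\epr$ the transporters $(\fb:b)$ and $(\fb:(\fb:b))$ are again annihilator ideals --- are exactly what is needed to reduce everything to a computation in $\gA$. The heart of the matter is indeed your isolated identity
\[
\big((a\fc)\epr\fc\big)\epr=\big(\gen p(\fq\cap a\epr)\big)\epr\qquad(\fc=\gen p\fq),
\]
and your two squaring arguments using reducedness are clean and correct. One small remark: the displayed equality in the statement, $\big(\Prod_{i\in I}\gen{a_i}\epr\cdot p_I\big)\epr=\big(\gen{a_i,i\in I}\epr\cdot p_I\big)\epr$, which you silently use by working with the second form, follows from the same circle of ideas --- in a reduced ring $(\fb\fc)\epr=(\fb\cap\fc)\epr$, hence ideals with the same annihilator yield the same $(\,\cdot\,p)\epr$. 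You might mention this in passing, but it is of the same flavour as your key lemma and poses no difficulty.
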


\subsection{Three \cov versions of valuative dimensions}\label{sec3vdim}

The valuative dimension of a commutative ring has many possible definitions.

In \clama the valuative dimension of a domain $\gk$, noted as $\Vdim(\gk)$, is equal to the maximum height of the valuation group $\Gamma(\gV)$ of a valuation ring $\gV$ satisfying $\gk\subseteq \gV \subseteq \Frac(\gk)$. The height of $\Gamma(\gV)$ is also the Krull dimension of $\gV$. Another definition of $\Vdim(\gk)$ is the  maximum length of a chain of \arvs of $\Frac(\gk)$ containing $\gk$.

Since the valuative dimension of a quotient domain of $\gk$ is bounded by
the valuative dimension of $\gk$, one extends the \dfn to an arbitrary ring in the following way: 
the valuative dimension of a ring $\gA$ is the l.u.b.\ of valuative dimensions of its quotients by prime ideals \cite{Cah90}.

\smallskip 
In the book \cite[section XIII-8]{CACM}, in the case of a domain $\gk$, authors use a classical \carn of the valuative dimension which is based on the Krull dimension of rings: the valuative dimension of a domain is the l.u.b.\ of Krull dimensions of overrings of~$\gk$ (the rings containing $\gk$ and contained in its fraction field). We note it $\vdim(\gk)$.
This \dfn of~$\vdim(\gk)$ is easily extended to \qiris. Finally, for an arbitrary ring, the authors use
the \dfn $\vdim(\gA)=\vdim(\Amin)$. They prove the following \eqvc  for their \dfn, \hbox{for $n\geq 0$}:
\fbox{$\vdim(\gA)\leq n\Leftrightarrow \Kdim(\AXn)\leq 2n$}. This \carn is known in \clama. So the \dfn in \cite{CACM} is an acceptable \cov version.

\smallskip On the other hand, for a domain $\gk$, it is clear in \clama that the valuative dimension of $\gk$ is equal to the dimension of the \trdi $\val(\Frac\gk,\gk)$. So it is also the Krull dimension of $\Val(\Frac\gk,\gk)\simeq \val(\Frac\gk,\gk)$ (see \thref{thValval}). So this is $\Vdim(\gk)$ introduced in \cite[Space of Valuations]{Coq2009}.

Since $\val(\Frac\gk,\gk)\simeq \val(\gk,\gk)$ (Lemma \ref{lem-valkk-valkK}), it is reasonable to propose as a \cov \dfn for the valuative dimension of an arbitrary ring $\gA$, the Krull dimension of the lattice $\val(\gA,\gA)$. We note this dimension $\Vdim(\gA)$.\footnote{For the moment we are not sure that it coincides to the classical definition in \clama.}

\smallskip Finally, in \clama, \cite{KY2020} give a new \cov \carn of the valuative dimension of an arbitrary commutative ring. 
 
\smallskip So we have three \cov \dfns: the one given in \cite[section XIII-8]{CACM}, the Krull dimension of $\val(\gA,\gA)$, and the \carn given by Kemper and Yengui. We now need to prove \cot that these three \cov \dfns are equivalent.

This proof will be given in an article by Yengui, Neuwirth and Lombardi.
The paper proves \cot that the first and the third \dfn are equivalent, and that for the case of domains they are equivalent to the second one (given in \cite{Coq2009}).

In fact, we now need to prove the following lemma.

\begin{lemma} \label{lemdimvalAAmin}
For an arbitrary commutative ring, the \trdis $\val(\gA,\gA)$ and $\val(\Amin,\Amin)$ have the same Krull dimension. 
\end{lemma}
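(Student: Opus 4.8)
The plan is to compare the two distributive lattices $\val(\gA,\gA)$ and $\val(\Amin,\Amin)$ via the natural morphism induced by the ring map $\gA\to\Amin$, and to show that on the level of Krull dimension nothing is lost or gained. The key structural facts already available are: (i) $\Amin$ is a filtering colimit of the finite-stage rings $\gA_{\so{\ua}}$ described in Lemma~\ref{lem4MorRc}, each of which is a finite product of quotients of $\gA$; (ii) $\gA\to\Amin$ is integral (Theorem and definition~\ref{thAmin}); (iii) $\Kdim$ of a distributive lattice passes well through quotients (Lemma~\ref{lemDimQuot}), finite products (Lemma~\ref{lemDimProd}), and is insensitive to filtering colimits on the lattice side; and (iv) the rule \textsf{Nor} is valid in $\sa{val}$, so elements integral over a subring become \gui{units} for the divisibility predicate (Lemma~\ref{lem-y-di-x} and Remark~\ref{remvalKANOR}).

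\textbf{Step 1: reduce to the finite stages.} First I would observe that $\val(-,-)$ commutes with filtering colimits of rings in the relevant sense: since $\Amin=\colim_{\ua}\gA_{\so{\ua}}$ and each closed atomic formula $x\di y$ of $\sa{val}(\Amin,\Amin)$ involves only finitely many elements, any valid rule is already valid at some finite stage. Hence $\val(\Amin,\Amin)$ is the filtering colimit of the lattices $\val(\gA_{\so\ua},\gA_{\so\ua})$, and $\Kdim\val(\Amin,\Amin)\le n$ iff $\Kdim\val(\gA_{\so\ua},\gA_{\so\ua})\le n$ for all $\ua$ (a complementary-sequence certificate in the colimit lives at a finite stage, and conversely restricts). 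So it suffices to prove that $\Kdim\val(\gA,\gA)=\Kdim\val(\gA_{\so\ua},\gA_{\so\ua})$ for every finite list $\ua=(\an)$, where $\gA_{\so\ua}=\prod_{I\in\cP_n}\gA/\fa_I$ with the ideals $\fa_I$ of Lemma~\ref{lem4MorRc}.

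\textbf{Step 2: the two inequalities.} For $\le$: by Lemma~\ref{lemDimProd} it is enough to bound $\Kdim\val(\gA/\fa_I,\gA/\fa_I)$, and since each $\fa_I$ is an annihilator ideal one should check that $\val(\gA/\fa,\gA/\fa)$ is a quotient of $\val(\gA,\gA)$ (the ring surjection $\gA\to\gA/\fa$ induces a surjection on the entailment relations defining the lattices, because $a\di b$ in $\gA$ implies $a\di b$ in the quotient, and the collapsus is respected); then Lemma~\ref{lemDimQuot} gives $\Kdim\val(\gA/\fa_I,\gA/\fa_I)\le\Kdim\val(\gA,\gA)$, hence $\Kdim\val(\gA_{\so\ua},\gA_{\so\ua})\le\Kdim\val(\gA,\gA)$. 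For $\ge$: here I would use that $\gA\to\gA_{\so\ua}$ is \ndz and integral, together with the key point that $\gA\hookrightarrow\gA_{\so\ua}$ identifies $\val(\gA,\gA)$ with a retract of $\val(\gA_{\so\ua},\gA_{\so\ua})$. Concretely, since $\gA_{\so\ua}$ is integral over $\gA$, the \textsf{Nor}-consequence (Lemma~\ref{lem-y-di-x}) shows $1\di x$ is provable in $\sa{val}(\gA_{\so\ua},\gA)$ for all $x\in\gA_{\so\ua}$, so $\val(\gA_{\so\ua},\gA_{\so\ua})=\val(\gA_{\so\ua},\gA)$; and then the pair $\gA\subseteq\gA_{\so\ua}$ with the retraction of rings $\gA_{\so\ua}\to\gA$ available on each finite stage (each $\gA/\fa_I$ maps to the component or one uses the diagonal section $\gA\to\gA_{\so\ua}$) yields a lattice section splitting the morphism $\val(\gA,\gA)\to\val(\gA_{\so\ua},\gA_{\so\ua})$, so the latter has dimension $\ge$ the former by Lemma~\ref{lemDimQuot} applied to the retract. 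Combining the two inequalities gives equality at each finite stage, and Step~1 concludes.

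\textbf{Main obstacle.} The delicate point is Step~2's $\ge$ direction: producing an honest \emph{section} (in the category of distributive lattices, or at least a map inducing a split surjection on lattices) of $\val(\gA,\gA)\to\val(\gA_{\so\ua},\gA_{\so\ua})$. The ring $\gA_{\so\ua}$ is a product of quotients, not an overring, so there is no ring retraction $\gA_{\so\ua}\to\gA$ in general; what saves us is that $\psi_a\colon\gA\to\gA_{\so{a}}$ is injective and \ndz (Lemma~\ref{lem0Amin}, Items~2 and~6), and one must translate injectivity-plus-\ndz into the statement that the induced lattice map is \gui{split} or at least dimension-preserving. I expect the cleanest route is to prove directly, using \textsf{Nor} and the splitting lemma for \qiris (Lemma~\ref{thScindageQi}), that a complementary sequence for a list in $\val(\gA_{\so\ua},\gA_{\so\ua})$ can be pushed down to one in $\val(\gA,\gA)$ after replacing each entry by a suitable \gui{\ndz part}, mirroring the colimit argument in the proof of Theorem~\ref{lemrcfdimtrdi}; getting the bookkeeping of the idempotents $e_j$ and the ideals $\fa_I$ right is where the real work lies.
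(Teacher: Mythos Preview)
Your reduction to the finite stages and the inequality $\Kdim\val(\gA_{\so\ua},\gA_{\so\ua})\le\Kdim\val(\gA,\gA)$ via quotients and products are fine, and indeed match the paper's setup. The genuine gap is exactly the one you flag as the Main obstacle: the $\ge$ direction. There is no reason to expect $\val(\gA,\gA)$ to be a \emph{retract} of $\val(\gA_{\so\ua},\gA_{\so\ua})$. Integrality only buys you $\val(\gA_{\so\ua},\gA_{\so\ua})=\val(\gA_{\so\ua},\gA)$; it does not produce a lattice section, and the injectivity/\ndz properties of $\gA\hookrightarrow\gA_{\so\ua}$ do not translate into a splitting of the induced lattice map. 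Your fallback idea of pushing complementary sequences down by hand amounts to reproving the hard half of Theorem~\ref{lemrcfdimtrdi} in this special case, and you have not said how.

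The paper avoids the section argument entirely. It reduces to a single step $\gE\mapsto\gE/\fa_1\times\gE/\fa_2$ with $\fa_1\fa_2=0$ (the basic step in the construction of $\Amin$), and then proves (this is Lemma~\ref{lemdimvalAA1A2}) that $\Kdim\vald(\gE)=\sup(\Kdim\vald(\gE/\fa_1),\Kdim\vald(\gE/\fa_2))$ by a \emph{covering} argument: $\vald(\gE/\fa_i)=\vald(\gE)/(\fb_i=1)$ for the filter $\fb_i$ generated by the $\rDi(0,x)$ with $x\in\fa_i$, and the crucial point is that $\fb_1\cap\fb_2=\so1$. This last equality is where $\fa_1\fa_2=0$ is used, via \tsbf{WZD}: for $x\in\fa_1,\,y\in\fa_2$ one has $xy=0$, hence $\Vdi{\sA{val}(\gE,\gE)}0\di x\vou 0\di y$, i.e.\ $\rDi(0,x)\vu\rDi(0,y)=1$. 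Then the filter version of Theorem~\ref{lemrcfdimtrdi} gives the dimension equality directly, with no section needed. This is the missing idea in your argument.
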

%
\begin{proof}
The ring $\Amin$ is a filtered colimit of rings we construct, starting with $\gA$, as described in \thref{thAmin}. If $\gE$ is obtained at a preceding stage and if $x\in\gE$, the new ring is \hbox{$\gE/x\epr\!\times\gE/(x\epr)\epr$}. This is $\gE\mapsto \gE/\fa_1\times\gE/\fa_2 $ with $\fa_1\fa_2=0$.
If we show that this kind of construction does not change the dimension of the valuative lattice, we are done. In fact, since dimensions of~$\gA$ and~$\Ared$ are the equal, we may assume our ring is reduced.
\end{proof}
%

\begin{lemma} \label{lemdimvalAA1A2}
Let $\gA$ be a reduced ring $\fa_1$, $\fa_2$ two \ids s.t.\ $\fa_1\fa_2=0$, $\gA_1=\gA/\fa_1$ and $\gA_2=\gA/\fa_2$.
Let us note $\vald(\gA):=\val(\gA,\gA)$.
Then we have 
\[\Kdim(\vald(\gA))=
\sup(\Kdim(\vald(\gA_1)),\Kdim(\vald(\gA_2)))=\Kdim(\vald(\gA_1\!\times \gA_2)).
\] 
\end{lemma}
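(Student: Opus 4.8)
The statement compares three Krull dimensions built from the valuative lattice functor $\vald(\cdot)=\val(\cdot,\cdot)$. The plan is to split it into two independent equalities: first $\Kdim(\vald(\gA))=\Kdim(\vald(\gA_1\times\gA_2))$, and second $\Kdim(\vald(\gA_1\times\gA_2))=\sup(\Kdim(\vald(\gA_1)),\Kdim(\vald(\gA_2)))$. The second equality is the easy one and should be reduced to Lemma~\ref{lemDimProd}: I would check that $\val(\gA_1\times\gA_2,\gA_1\times\gA_2)$ is canonically isomorphic to $\val(\gA_1,\gA_1)\times\val(\gA_2,\gA_2)$ as \trdis. This is because, in a finite product ring, the divisibility predicate $\cdot\di\cdot$ decomposes componentwise (using the fundamental system of orthogonal idempotents $(1,0),(0,1)$ and Lemma~\ref{lemidmval}, which already tells us that in $\sa{val}(\gA)$ a pair of complementary idempotents splits the relation $a\di b$ into the conjunction of its restrictions); hence any valid disjunctive rule over $\gA_1\times\gA_2$ is equivalent to a combination of rules over the two factors, which is exactly the statement that the \entrel, and so the generated lattice, is the product. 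Then Lemma~\ref{lemDimProd} gives $\Kdim(\val(\gA_1,\gA_1)\times\val(\gA_2,\gA_2))=\sup(\Kdim\val(\gA_1,\gA_1),\Kdim\val(\gA_2,\gA_2))$.

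The heart of the matter is the first equality, $\Kdim(\vald(\gA))=\Kdim(\vald(\gA_1\times\gA_2))$ under the hypothesis $\fa_1\fa_2=0$ with $\gA$ reduced. Here I would imitate the structure of the proof of Theorem~\ref{lemrcfdimtrdi} (dimension of a \trdi for a closed cover). The natural morphism $j:\gA\to\gA_1\times\gA_2$ induces a lattice morphism $\val(j):\val(\gA,\gA)\to\val(\gA_1\times\gA_2,\gA_1\times\gA_2)$. One direction is formal: since $\gA_i$ is a quotient of $\gA$, the \sad $\sa{val}(\gA_i,\gA_i)$ is obtained from $\sa{val}(\gA,\gA)$ by adding relations, so $\val(\gA_i,\gA_i)$ is a quotient of $\val(\gA,\gA)$; combined with the product decomposition above and Lemmas~\ref{lemDimQuot} and~\ref{lemDimProd} this gives $\Kdim(\vald(\gA_1\times\gA_2))\le\Kdim(\vald(\gA))$. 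For the reverse inequality, the strategy is to exhibit a lattice morphism $r:\val(\gA_1\times\gA_2,\gA_1\times\gA_2)\to\val(\gA,\gA)$ with $r\circ\val(j)=\Id$, which by Lemma~\ref{lemDimQuot} forces $\val(\gA,\gA)$ to be a quotient of $\val(\gA_1\times\gA_2,\gA_1\times\gA_2)$ and hence $\Kdim(\vald(\gA))\le\Kdim(\vald(\gA_1\times\gA_2))$.

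To construct $r$, I would first reduce to the case where $\fa_1=\dar a$ and $\fa_2=\dar b$ are ``principal'' in a suitable sense — here this means $\fa_1=\Ann(x)$-type ideals coming from a single element, mirroring the passage in Theorem~\ref{lemrcfdimtrdi} from principal ideals to arbitrary ones via a filtered colimit argument. Since $\Amin$ is built precisely from iterated constructions $\gE\mapsto\gE/x\epr\times\gE/(x\epr)\epr$ (see Theorem~\ref{thAmin} and Lemma~\ref{lemdimvalAAmin}), the relevant case is $\fa_1=x\epr$, $\fa_2=(x\epr)\epr$, whose product is $0$. On the level of lattices, I expect the section $r$ to be definable by a formula analogous to $r_{a,b}(u,v)=(a\vu u)\vi(b\vu v)$ from Theorem~\ref{lemrcfdimtrdi}, transported through the valuative support maps $\rDi$; the key computation will be that $r\circ\val(j)=\Id$ holds because for $x\in\gA$ one has (inside $\sa{val}(\gA,\gA)$) the decomposition of $\rDi(a,b)$ corresponding to the two components, using reducedness of $\gA$ to control the annihilators as in Lemma~\ref{lem-idm-associe-1}. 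The general (non-principal) case then follows by writing $\gA_\fa$ as a filtered colimit of the principal ones and invoking the universal property of filtered colimits to glue the sections $r$, exactly as in the final diagram of the proof of Theorem~\ref{lemrcfdimtrdi}.

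\textbf{Main obstacle.} The delicate point is verifying that the candidate section $r$ is actually a well-defined morphism of \trdis and that $r\circ\val(j)=\Id_{\val(\gA,\gA)}$ at the level of generators of the \entrel — i.e.\ that the identity $(a\vu\rDi_\gA(\cdot))\vi(b\vu\rDi_\gA(\cdot))=\rDi_\gA(\cdot)$ really does hold in $\val(\gA,\gA)$ when $\fa_1\fa_2=0$. Unlike the Zariski case treated in Theorem~\ref{lemrcfdimtrdi}, the valuative lattice is the absolute Zariski lattice of a \twdij whose only predicate is $\cdot\di\cdot$, so these identities must be read off from dynamical derivations in $\sa{val}(\gA,\gA)$ rather than from lattice arithmetic; getting the idempotent bookkeeping right (which requires $\gA$ reduced, so that $x\epr$ and $(x\epr)\epr$ behave like a complementary pair as in Lemma~\ref{lem-idm-associe-1}) is where the real work lies. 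The filtered-colimit gluing at the end is routine given the pattern already established for $\Kdim$ in this paper.
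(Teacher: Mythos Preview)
Your plan for the ``easy'' equality matches the paper: show $\vald(\gA_1\times\gA_2)\simeq\vald(\gA_1)\times\vald(\gA_2)$ via the idempotent splitting of Lemma~\ref{lemidmval}, then invoke Lemma~\ref{lemDimProd}.

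For the other equality you are making life much harder than necessary. You propose to rebuild, inside the valuative lattice, the section-and-filtered-colimit machinery that constitutes the \emph{proof} of Theorem~\ref{lemrcfdimtrdi}; the obstacle you flag (checking that the candidate $r$ is a well-defined retraction) is exactly the hard step of that proof, now transported into a murkier setting. The paper instead \emph{applies} Theorem~\ref{lemrcfdimtrdi} (in its dual form, for filters) directly to $\gT=\vald(\gA)$. The point is that passing from $\sa{val}(\gA,\gA)$ to $\sa{val}(\gA_i,\gA_i)$ simply adds the relations $0\di x$ for $x\in\fa_i$, so $\vald(\gA_i)=\vald(\gA)/(\fb_i=1)$ where $\fb_i$ is the filter generated by the $\rDi(0,x)$, $x\in\fa_i$. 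Everything then reduces to the one-line check that $\fb_1\cap\fb_2=\{1\}$: for $x\in\fa_1$ and $y\in\fa_2$ one has $xy=0$, hence by \tsbf{WZD} the rule $\Vd 0\di x\vou 0\di y$ is valid in $\sa{val}(\gA,\gA)$, i.e.\ $\rDi(0,x)\vu\rDi(0,y)=1$. No section $r$, no reduction to a principal case, no colimit gluing.

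So your route is not wrong in spirit --- carried through, it would amount to re-deriving Theorem~\ref{lemrcfdimtrdi} in this particular instance --- but it misses that the closed-cover dimension theorem was set up precisely so that it could be invoked here as a black box. The reducedness of $\gA$ and the annihilator bookkeeping from Lemma~\ref{lem-idm-associe-1} play no role in this step; only $\fa_1\fa_2=0$ together with \tsbf{WZD} is used.
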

%
\begin{proof}
\textsl{First equality.} From $\gA_i=\gA/\fa_i$ we deduce that $\vald(\gA_i)=\vald(\gA)/(\fb_i=1)$ for a suitable filter~$\fb_i$ of $\vald(\gA)$. \Prmt, from the \eqvc \pref{eqvalkK} of \Dfn~\ref{defivalkK0}, we get for example for $\gA_1$
the following \eqvcs:\footnote{Note the \sad $\sa{val}(\gA_1,\gA_1)$
is obtained by adding relations $0\di x$ when $x\in\fa_1$ to the \sad $\sa{val}(\gA,\gA)$).}
\begin{equation*} \label {eqvalAi}
\begin{aligned} 
 (a_1,b_1),\dots,(a_n,b_n) &\,\vdash_{\val(\gA_1,\gA_1)}\, (c_1,d_1),\dots,(c_m,d_m) 
 \quad \equidef  
\\[.2em] 
 a_1\di b_1\vet \dots\vet a_n\di b_n &\Vdi{\sA{val}(\gA_1,\gA_1)} c_1\di d_1\vou \dots\vou c_m\di d_m \quad \equidef \\[.2em] 
0\di x_1\vet \dots\vet 0\di x_r\vet a_1\di b_1\vet \dots\vet a_n\di b_n &\Vdi{\sA{val}(\gA,\gA)} c_1\di d_1\vou \dots\vou c_m\di d_m \equidef  \\[.2em]
 (a_1,b_1),\dots,(a_n,b_n) &\,\vdash_{\val(\gA,\gA)/(\fb_1=1)}\, (c_1,d_1),\dots,(c_m,d_m)
 \end{aligned}
\end{equation*} 
with some $x_j$'s in $\fa_1$ and where $\fb_1$ is the filter of $\val(\gA,\gA)$
generated by elements $(0,x)$ \hbox{with $x\in\fa_1$}. 
Then let us prove that $\fb_1\cap\fb_2=\so 1$. Indeed, the filter $\fb_1\cap\fb_2$ is generated by \elts $(0,x)\vu(0,y)$ with $x\in\fa_1$
and $y\in\fa_2$, and since $0\di xy$ we get by \tsbf{WZD}, the valid rule \hbox{$\Vdi{\sA{val}(\gA,\gA)}0\di x \vou 0\di y$}.
We conclude with \thref{lemrcfdimtrdi}.\\
\emph{Second equality.} First $\vald(\gA_1\!\times \gA_2)\simeq \vald(\gA_1)\times \vald(\gA_2)$. Next, the dimension of the product of two lattices is the sup of the two dimensions.
\end{proof}

\noindent \textsl{Note.} Since $\Amin$ is a \qiri, $\val(\Amin,\Amin)\simeq \val(\Frac (\Amin),\Amin)$. So, $\Vdim(\gA)$ is equal to $\Vdim(\Amin)$ with the meaning in the article \cite{Coq2009}.

\begin{definition} \label{defivaluabs}
The dimension of the \trdi 
$\val(\gA)\simeq\val(\gA,\gZ)$ 
(where~$\gZ$ is the minimal subring of $\gA$) 
is called the \textsl{absolute valuative dimension} of the ring~$\gA$ and is noted as $\vdima(\gA)$. 
\end{definition}

The dimension $\vdima(\gA):=\Kdim(\val(\gA))$ is a priori greater than or equal to $\vdim(\gA):=\Kdim(\val(\gA,\gA))$ and it is often strictly greater, as for the ring~$\QQ$: $\vdima(\QQ)=\Kdim(\val(\QQ,\ZZ))=\Kdim(\val(\ZZ,\ZZ))=1$, and $\vdim(\QQ)=\Kdim(\val(\QQ,\QQ))= 0$. \eoe

\begin{remark} \label{remdim AAmingeneral} 
Same \demos work for the dimension of \trdis $\Zar(\gA)$, $\val(\gA)$, $\Reel(\gA)$ and $\Heit(\gA)$.\footnote{See \cite{CLQ2006}.} So we have the following analogous results.
\[
\begin{array} {cccccc}
\Kdim(\gA)&=&\Kdim(\Amin)&=&\Kdim(\gA_1\!\times \gA_2)\\[.3em]
\vdima(\gA)&=&\vdima(\Amin)&=&\vdima(\gA_1\!\times \gA_2)\\[.3em]
\Rdim(\gA)&=&\Rdim(\Amin)&=&\Rdim(\gA_1\!\times \gA_2)\\[.3em]
\Jdim(\gA)&=&\Jdim(\Amin)&=&\Jdim(\gA_1\!\times \gA_2)
\end{array} \eqno \hbox{\eoe}
\] 
\end{remark}

%


\section{Comparisons with a theory of \cvds}\label{secdival6}

\subsection{Introduction}

In this section we prove that the \twdij \sa{val} and the \tdy \sa{Vdf}  of \cvds, which is   introduced in \cite[Section 4]{CLR01}, are more or less identical.
This is obtained through  the formal \vst \ref{CLlval1} for \cvds.
This implies various formal \vsts for \sa{val}.

We compare also with the formal \vst for the theory \sa{Val} given in \cite{Coq2009}.

Note that \thref{thValval} is already an essential comparison result between $\val$ and $\Val$.

\subsection{The theory $\sa{Vdf}$ of \cvds}\label{subsubseccorval}

Section 4 in \cite{CLR01} gives a formal Positivstellensatz for the theory \sa{Vdf} of \cvds and a Positivstellensatz à la Hilbert for \ac \cvds. We prefer here the name of \textsl{\vst}. We are mainly interested in formal \vsts, they give algebraic certificates for the provability  in diverse \tdys for some \rdys.

\smallskip First we recall axioms given in \cite{CLR01} for the theory $\SA{Vdf}$ of \cvds. The language is the one commutative rings to which we add predicates \hbox{$\cdot \neq 0$}, $\Vr(\cdot)$, $\Rn(\cdot)$, $\U(\cdot)$.
The first one, $\cdot \neq 0$, is seen as the invertibility predicate, the predicate 
$\Vr(\cdot)$ interprets belonging to  the valuation ring, the predicate $\Rn(\cdot)$
interprets \elts residually null and $\U(\cdot)$  the units (invertible  \elts in the valuation ring).
So we have the following signature
\[
\Sigma_{\sA{Vdf}}=(\,\cdot=0,\cdot\neq 0,\Vr(\cdot),\Rn(\cdot),\U(\cdot)\mathrel{;}\cdot+\cdot, \cdot\times\cdot,-\,\cdot,0,1\,)
\]
Axioms of the theory \sa{Vdf} are the following \rdys. As consequence, the field is discrete and the \dve relation is explicit.

\smallskip \noindent $\bullet$ First we have direct rules. 

\DeuxRegles{
\lab{cr1} $\vd 0=0 $
\lab{cr3} $\,\, x=0 \vet y=0 \Vd x+y=0 $
\Lab{vf1} $\,\,x = 0 \vet \Vr(y) \vd \Vr(x+y) $
\Lab{vf3} $\,\, \Vr(x) \vet \Vr(y) \vd \Vr(xy)$
\Lab{vf5} $\,\,x = 0 \vet \Rn(y) \vd \Rn(x+y)$
\Lab{vf7} $\,\,\Rn(x) \vet \Vr(y) \vd \Rn(xy)$
\Lab{vf9} $\,\,\Rn(x) \vd \Vr(x)$
\Lab{vf10} $\,\,x = 0 \vet \U(y) \vd \U(x+y)$
\Lab{vf12} $\,\,\U(x) \vet \U(y) \vd \U(xy)$
\Lab{vf14} $\,\,\U(x) \vd \Vr(x)$
\Lab{vf15} $\,\,x = 0 \vet y \not= 0 \vd x+y \not= 0$
\Lab{vf17} $\,\,\U(x) \vd x\not= 0$}
{
\lab{cr2} $\,\, x=0 \Vd xy=0 $
\lab{~} $\phantom{\,\, x=0 \vet y=0 \Vd x+y=0 }$
\Lab{vf2} $\vd \Vr(-1)$
\Lab{vf4} $\,\,\Vr(x) \vet \Vr(y) \vd \Vr(x+y)$
\Lab{vf6} $\vd \Rn(0)$
\Lab{vf8} $\,\,\Rn(x) \vet \Rn(y) \vd \Rn(x+y)$
\lab{~} $\phantom{\,\,x = 0 \vet \U(y) \vd \U(x+y)}$
\Lab{vf11} $ \vd \U(1)$
\Lab{vf13} $\,\,\Rn(x) \vet \U(y) \vd \U(x+y)$
\lab{~} $\phantom{\,\,\U(x) \vd \Vr(x)}$
\Lab{vf16} $\,\,x \not=0 \vet y\not= 0 \vd x y \not= 0$
}

\noindent $\bullet$ The collapsus is the same as for domains.

\DeuxRegles{
\Lab{CL} $\,\,0 \not= 0 \vd \Bot$
}
{\lab{~} $~$
}

\noindent $\bullet$ Finally, we have simplification  rules  \tsbf{Vf1} and \tsbf{Vf2} followed by 4 \rdys.
 
\DeuxRegles{
\Lab{Vf1} $\,\,x y=1 \vd x \not= 0$
\Lab{VF1} $\,\,x \not= 0 \vd \Exists y \,\,xy=1$
\Lab{VF3} $\,\,xy =1 \vd \Vr(x) \vou \Vr(y)$
}
{
\Lab{Vf2} $\,\,\Vr(xy)\vet \U(x) \vd \Vr(y)$
\Lab{VF2} $\vd x = 0 \vou x \not= 0$
\Lab{VF4} $\,\,\Vr(x) \vd \U(x) \vou \Rn(x)$
}

If we want to force the valuation to be nontrivial, we introduce a constant~$c$ with the axiom 

\Regles{\Lab{vf0}$\,\,\Vr(c)\vd \Bot $}

There is only one existential axiom, \tsbf{VF1}, and it corresponds to a unique existence: the theory is very near to a \twdij. 

For the \tdy \SA{Vdfsc} of \textsl{\cvdscs} we add to the theory~\sa{Vdf} axioms of \spb algebraic closure.

\Regles{
\Lab{VF6$_n$} $\,\, \disc_Y (f)\neq 0\vd \Exists y \; f(y)=0,\qquad\hbox{ where } f(Y)=Y^n + a_{n-1} Y^{n-1}+ \cdots+ a_1 Y + a_0.$
}


\smallskip For the \tdy \SA{Vdfac} of \textsl{\cvdacs} we add to the theory \sa{Vdf} axioms of \agq closure.

\Regles{
\Lab{VF5$_n$} $\vd \Exists y \; y^n + a_{n-1} y^{n-1}+ \cdots+ a_1 y + a_0 = 0$
}

\begin{lemma} \label{lemModelesCv}
Let $\gA$ be a commutative ring and $\sa{Vdf}(\gA)$ the \sad of type \sa{Vdf}
constructed on (the positive diagram of) $\gA$. To give a minimal model of $\sa{Vdf}(\gA)$ is the same thing as giving a couple $(\fp,\gV)$ where $\fp$ is a detachable \idep 
 of $\gA$ and $\gV$ is a \arv of $\gK=\Frac(\gA/\fp)$. Moreover divisibility has to be  decidable. 
\end{lemma}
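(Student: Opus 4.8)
The plan is to unfold the definitions on both sides and exhibit the correspondence explicitly. On the one hand, a minimal model $M$ of the \sad $\sa{Vdf}(\gA)$ is, by the discussion in \paref{specetmodeles} and the fact that \sa{Vdf} is essentially weakly disjunctive (it has a single existential axiom \tsbf{VF1} corresponding to a unique existence), obtained by assigning to the closed atomic formulae of $\gA$ truth values consistent with the axioms of \sa{Vdf}. The underlying set of $M$ is generated from the image of $\gA$ by the ring operations and the inverse (Skolem) function attached to \tsbf{VF1}. On the other hand, a couple $(\fp,\gV)$ with $\fp$ a detachable \idep and $\gV$ a \arv of $\gK=\Frac(\gA/\fp)$, together with decidable divisibility in $\gV$, is precisely the data of a \cvd equipped with a ring morphism from $\gA$. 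So the statement is a matter of checking that these two packages of data determine each other.

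First I would go from a model to a pair. Given a minimal model $M$, set $\fp=\sotq{x\in\gA}{x=0 \text{ holds in } M}$. Axioms \tsbf{cr1}, \tsbf{cr2}, \tsbf{cr3} make $\fp$ an ideal, and \tsbf{VF2} (the rule $\vd x=0\vou x\neq 0$) makes $\fp$ detachable; the collapsus axiom \tsbf{CL} ($0\neq 0\vd\Bot$) together with \tsbf{Vf1}/\tsbf{VF1} forces the quotient to be a nontrivial discrete field, so $M$ itself is (the minimal model over $\gA$ of) a discrete field $\gK$ containing $\gA/\fp$, and by minimality $\gK=\Frac(\gA/\fp)$. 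Then $\gV:=\sotq{x\in\gK}{\Vr(x) \text{ holds in } M}$ is a subring by \tsbf{vf2}, \tsbf{vf3}, \tsbf{vf4}, and axiom \tsbf{VF3} ($xy=1\vd\Vr(x)\vou\Vr(y)$) shows that for every nonzero $x\in\gK$ either $x$ or $x^{-1}$ lies in $\gV$, i.e.\ $\gV$ is a \arv of $\gK$. Axiom \tsbf{VF4} makes membership in $\gV^\times$ decidable, hence divisibility in $\gV$ is explicit. This is essentially the argument already carried out in the proof of \thref{lemAxdiv}, transported to the richer signature.

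Conversely, given $(\fp,\gV)$ with the stated detachability properties, I would build the minimal model by taking $\gK=\Frac(\gA/\fp)$ with its canonical map $\gA\to\gK$, and interpret $x=0$ as $x\in\fp$ (in $\gA$) or $x=0$ (in $\gK$), $x\neq 0$ as its complement, $\Vr(x)$ as $x\in\gV$, $\U(x)$ as $x\in\gV^\times$, and $\Rn(x)$ as $x\in\Rad\gV$ (equivalently $x\in\gV$ and $x^{-1}\notin\gV$, or $x=0$). Then one verifies each axiom of \sa{Vdf} in turn; all the direct rules \tsbf{vf1}--\tsbf{vf17} are immediate from the ring structure of the \cvd, \tsbf{CL} from nontriviality of $\gK$, \tsbf{Vf1}, \tsbf{Vf2}, \tsbf{VF1} from $\gK$ being a discrete field, and \tsbf{VF2}, \tsbf{VF3}, \tsbf{VF4} exactly from the three decidability hypotheses (detachability of $\fp$ and of $\gV$, explicitness of divisibility in $\gV$). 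Minimality holds because every element of $\gK$ is a quotient of images of elements of $\gA$, hence obtained from $\gA$ using the ring operations and the inverse function symbol.

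Finally I would note that the two constructions are mutually inverse: starting from a model, extracting $(\fp,\gV)$, and rebuilding the model returns the same truth-value assignment on closed atomic formulae because the predicates $\Vr$, $\U$, $\Rn$, $\neq 0$ were, in the minimal model, already forced by $\gV$ via \tsbf{VF3}, \tsbf{VF4}, \tsbf{VF2}; and starting from $(\fp,\gV)$ the round trip is obvious. The main obstacle, and the only point requiring genuine care rather than bookkeeping, is the verification that the minimal model attached to $(\fp,\gV)$ really is a model in the \emph{constructive} sense — that is, that the disjunctive axioms \tsbf{VF2}, \tsbf{VF3}, \tsbf{VF4} are satisfied with the constructive reading of $\vou$, which is exactly where the detachability/decidability hypotheses in the statement are indispensable and cannot be dropped; this mirrors Remark \ref{remlemvalVal2} about the necessity of discreteness.
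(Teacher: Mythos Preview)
Your approach is essentially the same as the paper's, only much more detailed: you treat both directions and the mutual-inverse check, whereas the paper gives a compressed one-direction argument (model $\to$ pair) and then the decidability point. One small imprecision worth tightening: you write that \tsbf{VF4} makes membership in $\gV^\times$ decidable, \emph{hence} divisibility in $\gV$ is explicit --- but what is actually needed is decidability of $\Vr$ on $\gK$, and \tsbf{VF4} alone only decides $\U$ versus $\Rn$ \emph{given} $\Vr$. The paper's argument fills this in: for $x\neq 0$ with inverse $y$, \tsbf{VF3} gives $\Vr(x)$ or $\Vr(y)$; in the second branch \tsbf{VF4} gives $\U(y)$ or $\Rn(y)$, and then $\Vr(x)$ holds iff $\U(y)$. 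With that one-line completion your argument is correct.
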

%
\begin{proof}
The \idep $\fp$ is the set of $x\in\gA$ such that $x=0$ is valid in the model. The quotient ring $\gB=\gA/\fp$ is integral, we note $\gK$ its fraction field, present  in the model thanks to Axiom~\tsbf{VF1}. The \arv  $\gV$
is then the set of fractions $z$ in $\gK$ such that $\Vr(z)$ is valid in the model. 
The divisibility relation is decidable because $\gV$ is a detachable subset of $\gK$: if $x\neq 0$, we have an inverse $y$ by \tsbf{VF1}. Then we have $\Vr(x)$ or $\Vr(y)$. In the second case, we have 
$\U(y)$ or $\Rn(y)$, and $\Vr(x)$ \ssi $\U(y)$. 
\end{proof}

It seems that \tsbf{VF1} is \ncr at the end of the proof. It implies that the model contains only fractions of \elts in $\gV$.

So,  \sads $\sa{Vdf}(\gA)$ and $\sa{val1}(\gA)$ have the same minimal models in \clama. But not \ncrt  in \coma since in $\sa{val1}(\gA)$ the \dve relation is not forced to be decidable.


We now define some  theories closely related to the theory \sa{Vdf}, which share a number of \thos with the latter. 
We make use of the terminology of \reds, simplification rules and \rdys, as explained in~\paref{regledirecte}.

\begin{definitions}[Geometric theories closely related to \sa{Vdf}] \label{defiApvandco} \label{defiCvd-}~
\begin{enumerate}
\item Let us note \sa{Apv}\footnote{Proto-valued rings in \cite{CLR01}.} the direct theory we get from the theory \sa{Vdf} when keeping only direct axioms, from~\tsbf{cr1} 
to~\tsbf{vf17}, and the collapsus.
\item The \talg \sa{Aqv}\footnote{Quasi-valued rings in \cite{CLR01}.} is intermediate between  \sa{Apv} and  \sa{Vdf}: we replace in \sa{Vdf}  Axioms \tsbf{VF1} to \tsbf{VF4} with the following simplification axioms, which are valid rules in \sa{Vdf}:
 
\DeuxRegles
{
 \Lab{Vf3} $\,\,\U(xy)\vet \Vr(x)\vet \Vr(y) \vd \U(y)$ 
 \Lab{Vf5} $\,\,\Rn(x^2) \vd \Rn(x)$ 
 \Lab{Vf7} $\,\,xy = 0\vet x \not= 0 \vd y= 0 $
}
{
 \Lab{Vf4} $\,\,\Rn(xy)\vet \U(x) \vd \Rn(y)$ 
 \Lab{Vf6} $\,\,xy \not= 0 \vd x\not=0$ 
 \Lab{Vf8} $\,\,x^2 = 0 \vd x = 0$ 
}

\vspace{-1.5em}
\Regles {\Lab{Vf9$_n$} $\,\,x^{n} =\sum_{k=0}^{n-1} a_k x^k \vet \Vr(a_{n-1})\vet \dots\vet \Vr(a_0) \vd \Vr(x)$}

The last axiom scheme says that $\gV$ is integrally closed.

\item The \twdij $\sa{Vdf}^-$ is obtained from $\sa{Aqv}$ by adding the three disjunctive axioms \tsbf{VF2}, \tsbf{VF3} and \tsbf{VF4}.

\item 
The theory $\sa{Vdf}^+$ is the extension of $\sa{Vdf}$ we get when we add
\begin{itemize}\itemsep=0em
\item The predicate $x \di y$ as abbreviation of \gui{$\exists z \,(\Vr(z) \vii xz=y$)} 
\item The predicate $x \ndi y$ as abbreviation of \gui{$y\neq 0\vii \exists z \,(\Rn(z)\vii yz=x)$} 
\end{itemize}
\end{enumerate}
\end{definitions}

We shall see that the theories \sa{Apv} and \sa{Vdf} collapse simultaneously (\ref{lemColsimApv}) and that the theories~\sa{Aqv} and~\sa{Vdf} prove the same \ralgs (\ref{thAqv}). Theories~$\sa{Vdf}^-$ and~\sa{Vdf} prove the same \rdijs, but we do not prove this result here.

The theory $\sa{Vdf}^+$  is by construction an \esid extension of $\sa{Vdf}$ (a fortiori a conservative extension).

\begin{definition} \label{defiCvdkA}
Let  $\gk\subseteq \gA$ be two rings, or more \gnlt let $\varphi:\gk\to\gA$ be an \alg, we note $\sa{Vdf}(\gA,\gk)$ the \sad whose \pn is given by 
\begin{itemize}\itemsep=0em
\item The positive diagram of~$\gA$ as commutative ring.
\item Axioms
$\vd \Vr(\varphi(x))$ for  $x$'s in $\gk$.
\end{itemize}
The \sad $\sa{Vdf}(\gA)$ is identical to $\sa{Vdf}(\gA,\gZ)$ where $\gZ$
is the minimal subring  of $\gA$.
 \end{definition}

We define in a similar way \sads $\sa{Apv}(\gA,\gk)$, $\sa{Aqv}(\gA,\gk)$, $\sa{Vdf}^-(\gA,\gk)$ and $\sa{Vdf}^+\!(\gA,\gk)$.

\subsection{Formal \vst for \sa{Vdf} and consequences}

\begin{valsatz}[Formal \vst for the theory \sa{Apv}] \label{CLlval1} ~\\
Let $GR=(G;\Rzero,\Rnz,\Rvr,\Rrn,\Ru)$ be a \pn over the signature $\Sigma_{\sA {Vdf}}$\footnote{More \prmt, $\Rzero$ is the subset of $\ZZ[G]$
whose \elts $p$ are assumed to be $=0$ in the \sad. Same thing for $\Rnz,\Rvr,\Rrn,\Ru$.} for a \sad. 
We note
\begin{itemize}\itemsep=0pt
\item $\Izero$ the \id of $\ZZ[G]$ generated by $\Rzero$ 
\item $\Mnz$ the \mo generated by $\Rnz$ 
\item $\Vvr$ the subring of $\ZZ[G]$ generated by $\Rvr \cup \Rrn \cup \Ru$ 
\item $\Irn$ the \id of $\Vvr$ generated by $\Rrn$ 
\item $\Mu$ the \mo generated by $\Ru$
\end{itemize}
\Propeq
\begin{enumerate}\itemsep=0pt
\item The \sad $(GR,\sa{Apv})$ collapses.
\item There is in $\ZZ[G]$ an \egt 
\[s (u+t) + z = 0\]
 with $s\in \Mnz$,
$u\in \Mu$, $t\in \Irn$ and $z\in \Izero$.
\end{enumerate}
\end{valsatz}

The following \tho is a crucial result in \cite{CLR01}.
It gives the condition of collapsus for \sads of type \sa{Vdfac}. 

\begin{theorem}[Simultaneous collapsus] \label{lemColsimApv} \emph{\cite[Theorem 4.3]{CLR01}} 
A \sad of type \sa{Apv} collapses \ssi it collapses as a \sad of type \sa{Vdf}, or of type \sa{Vdfac}, or of any other theory between \sa{Apv} and \sa{Vdfac}.
\end{theorem}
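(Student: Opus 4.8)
The plan is to split the biconditional into two implications: (i) if the \sad collapses as a \sad of type \sa{Apv} then it collapses as a \sad of type \sa{T} for every \sa{T} lying between \sa{Apv} and \sa{Vdfac}, and (ii) if it collapses as a \sad of type \sa{Vdfac} then it already collapses as a \sad of type \sa{Apv}. Together these suffice, because every intermediate \sa{T} proves all the axioms of \sa{Apv} while all of its own axioms are provable in \sa{Vdfac}; hence \sa{T}-collapse implies \sa{Vdfac}-collapse, and the two implications close the loop. Implication (i) is immediate: a dynamical derivation of $\Bot$ from $GR$ that uses only \sa{Apv}-axioms is a fortiori such a derivation in any extension of \sa{Apv}. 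So the entire content is (ii).

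For (ii) I would invoke the formal \vst~\ref{CLlval1}, which makes \sa{Apv}-collapse equivalent to the existence in $\ZZ[G]$ of an identity $s(u+t)+z=0$ with $s\in\Mnz$, $u\in\Mu$, $t\in\Irn$, $z\in\Izero$, and then produce such a certificate from a given \sa{Vdfac}-collapse by induction on a proof tree witnessing $GR\Vdi{\sA{Vdfac}}\Bot$: attach to each node a certificate of this shape for the (fresh-variable-enriched) presentation living at that node, and propagate it downward rule by rule. The direct rules \tsbf{cr1}--\tsbf{vf17} and the collapsus \tsbf{CL} are already \sa{Apv}-axioms and only enlarge the sets $\Rzero,\Rnz,\Rvr,\Rrn,\Ru$ in the controlled way already accounted for in~\ref{CLlval1}, so certificates lift verbatim; the simplification rules \tsbf{Vf1}, \tsbf{Vf2} are valid in \sa{Vdf} and their effect on the generated ideal/monoid/subring is absorbed by a short computation; the disjunctive rules \tsbf{VF2}, \tsbf{VF3}, \tsbf{VF4} split a leaf into two branches, each carrying a certificate by the induction hypothesis, and one recombines them into a single certificate for the parent exactly as in the proof of~\ref{CLlval1} (this is where the monoids $\Mnz$, $\Mu$ and the ideal $\Irn$ do their work).

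The genuinely delicate steps, and the ones I expect to be the main obstacle, are the existential rules. \tsbf{VF1} ($x\ne 0\vdg\Exists y\;xy=1$) adjoins a formal inverse, so at ring level one replaces $\ZZ[G]$ by the localization $\ZZ[G][1/x]$ and must clear the newly created denominator out of the certificate; and \tsbf{VF5}$_n$ (together with its separable variant \tsbf{VF6}$_n$) adjoins a root of a monic polynomial, i.e.\ a finite integral extension of $\ZZ[G]$, and one must descend a certificate living over the extension back to $\ZZ[G]$ by multiplying conjugates, all the while preserving the four membership conditions. Verifying that the certificate form of~\ref{CLlval1} is stable under localization and under algebraic extension is precisely the technical heart of \cite[Theorem 4.3]{CLR01}, and it is exactly the algebraic-closure axioms that make the \vst ``formal'' rather than ``à la Hilbert.'' A cheaper but less constructive alternative is to prove the contrapositive of (ii) semantically: if $(GR,\sa{Apv})$ does not collapse then~\ref{CLlval1} yields no certificate, in particular $\Mnz\cap\Izero=\emptyset$; pick a prime $\fp\supseteq\Izero$ of $\ZZ[G]$ avoiding $\Mnz$, form the domain $\gA=\ZZ[G]/\fp$ and its fraction field $\gK=\Frac(\gA)$, use Chevalley's extension theorem to get a valuation ring of $\gK$ containing the image of $\Vvr$, with $\Irn$ inside its maximal ideal and $\Mu$ among its units (extendability being guaranteed precisely by the absence of a certificate), then pass to an algebraic closure of $\gK$ and extend the valuation; the result is a classical model of $\sa{Vdfac}(GR)$, whence $(GR,\sa{Vdfac})$ does not collapse by completeness for geometric theories. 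This route trades the proof-tree bookkeeping for the classical completeness bridge between collapse and existence of models.
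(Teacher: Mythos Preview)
The paper does not give a proof of this theorem: it is stated with a bare citation to \cite[Theorem 4.3]{CLR01} and followed immediately by a remark, with no \texttt{proof} environment. There is therefore nothing in the present paper to compare your attempt against.

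That said, your outline is broadly faithful to how the result is actually established in \cite{CLR01}. The nontrivial direction is indeed (ii), and the constructive route there is exactly the one you sketch: one first has the algebraic collapse certificate for the direct theory \sa{Apv} (this is easy, since \sa{Apv} has only direct axioms plus the collapsus), and then one shows, axiom by axiom, that each additional rule of \sa{Vdf} and then of \sa{Vdfac} preserves the existence of such a certificate. You correctly single out \tsbf{VF1} (introduce an inverse, i.e.\ localize and clear denominators) and the algebraic-closure schema \tsbf{VF5}$_n$ (pass to a finite integral extension and descend via a product-of-conjugates/resultant argument) as the two places where real work happens; the disjunctive rules are handled by recombining two branch certificates, and the simplification rules \tsbf{Vf1}, \tsbf{Vf2} are absorbed algebraically. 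This is precisely the architecture of the proof in \cite{CLR01}.

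Two small caveats on your write-up. First, be careful with the logical dependence: \vst~\ref{CLlval1} as stated is for \sa{Apv} only, and its proof (for a direct theory) is independent of the present theorem; it is then the present theorem that \emph{extends} the certificate description to all theories up to \sa{Vdfac}, not the other way round. Second, your ``cheaper'' semantic alternative is a legitimate classical proof, but the appeal to ``completeness for geometric theories'' at the end is doing real work (it needs the classical completeness/compactness bridge), so it does not give the constructive content that \cite{CLR01} aims for; that is why the paper and \cite{CLR01} favour the proof-tree induction.
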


\begin{remark} \label{remlemColsimApv} 
If $(\gK,\gV)$ is a \cvd and if $\gL$ is a field extension of $\gK$
we deduce in \clama that the \adv $\gV$
can be extended in a \adv~$\gW$ of~$\gL$ such that $\gW\cap \gK=\gV$
\cite[Remark 4.6]{CLR01}.
\eoe 
\end{remark}

\Tho 4.18 in \cite{CLR01} is a \vst à la Hilbert for \ac  \cvd. It is obtained from \thref{lemColsimApv} by using the fact that the formal theory  which corresponds to \sa{Vdfac} is complete
(when the \cara of  $\gK$ and the one of the residual field are fixed). 

\smallskip The following \tho describes provable facts in  \sads of type \sa{Vdf}.

\begin{theorem} \label{prop-factvalf} \emph{\cite[Proposition 4.14]{CLR01}}
\\
Let $GR=(G;\Rzero,\Rnz,\Rvr,\Rrn,\Ru)$ be a \pn over the signature~$\Sigma_{\sA{Vdf}}$. Let $p$ be an \elt of $\Zg$.
We define $\Izero$, $\Mnz$, $\Vvr$, $\Irn$ and $\Mu$ as in \thref{CLlval1}. We consider proofs in the \sad $(GR,\sa{Vdf})$.
\begin{enumerate}\itemsep=0pt
\item [a)] A dynamical proof of $p = 0$ 
gives an \egt in $\Zg$ of type 
\[ p^n m(u+j) + i = 0
\]
with $m \in \Mnz$, $u \in \Mu$, $j \in \Irn$ and $i \in \Izero$.
\item [b)] A dynamical proof of $p \not= 0$
gives an \egt in $\Zg$ of type 
\[m (u+j) + i + bp = 0\]
with $m \in \Mnz$, $u \in \Mu$, $j \in \Irn$, $i \in \Izero$ and $b \in \Zg $.
\item [c)] A dynamical proof of $\Vr(p)$ gives an \egt in $\Zg$ of type  
\[m ( (u+j)p^{n+1} + a_n p^n +\cdots + a_1 p + a_0 ) + i = 0\] 
with $m \in \Mnz$, $u \in \Mu$,
$j \in \Irn$,  $a_k$’s $\in \Vvr$ and $i\in \Izero$.
\item [d)] A dynamical proof of $\Rn(p)$ gives an \egt in $\Zg$ of type 
\[ m ( (u+j)p^{n+1} + j_n p^n +\cdots + j_1 p + j_0 ) + i = 0\]
with $m \in \Mnz$,
$u \in\Mu$, $j$
and $j_k$’s $\in \Irn$ and $i \in \Izero$.
\item [e)] A dynamical proof of $\U(p)$ 
gives an \egt in $\Zg$ of type 
\[m ( (u+j) p^{n+1} + a_np^n +\cdots + a_1 p + (u' + j') ) + i = 0\]  
with $m \in \Mnz$,
$u,u'\in \Mu$, $j,j' \in \Irn$,  $a_k$’s $\in \Vvr$ and $i \in \Izero$.
\end{enumerate}
\end{theorem}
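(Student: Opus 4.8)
The plan is to derive all five certificates uniformly from the formal \vst~\ref{CLlval1} for $\sa{Apv}$ and the simultaneous collapsus theorem~\ref{lemColsimApv}, via the principle that a fact $\vd P$ is provable in a \sad exactly when enlarging its presentation by a suitable ``negation of $P$'' forces a collapse. From a dynamical proof of the relevant fact about $p$ in the \sad $(GR,\sa{Vdf})$ I build an enlarged presentation $GR^{\sharp}$ (over a possibly enlarged generating set), verify that $(GR^{\sharp},\sa{Vdf})$ collapses, deduce from~\ref{lemColsimApv} that $(GR^{\sharp},\sa{Apv})$ collapses, and apply~\ref{CLlval1} to $GR^{\sharp}$ to obtain an identity $s(u+t)+z=0$ in $\ZZ[G^{\sharp}]$ with $s\in\Mnz^{\sharp}$, $u\in\Mu^{\sharp}$, $t\in\Irn^{\sharp}$, $z\in\Izero^{\sharp}$. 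Each claimed formula is then this identity reread through the explicit way the data $(\Izero,\Mnz,\Vvr,\Irn,\Mu)$ changes under the chosen enlargement. For a) I take $GR^{\sharp}=GR$ with $p$ thrown into $\Rnz$; then $p=0$ and $p\neq 0$ are both provable, the generic compatibility of $\cdot\neq0$ with equality yields $0\neq0$, and \tsbf{CL} yields $\Bot$; since $\Mnz^{\sharp}$ is the monoid generated by $\Rnz\cup\{p\}$ one has $s=p^{n}m$ with $m\in\Mnz$, $n\ge0$, whence $p^{n}m(u+j)+i=0$. For b) I throw $p$ into $\Rzero$; the collapse is symmetric, $\Izero^{\sharp}=\Izero+p\,\Zg$, so $z=i+bp$ and the identity reads $m(u+j)+i+bp=0$.

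For c) and d) the negation of the fact about $p$ passes through the multiplicative inverse, so I adjoin a fresh generator $q$ with the ring relation $pq=1$ together with one unary relation on $q$: $\Rn(q)$ (\cad $q$ thrown into $\Rrn$) for c), and $\Vr(q)$ (\cad $q$ thrown into $\Rvr$) for d). In both cases $(GR^{\sharp},\sa{Vdf})$ collapses: combining the hypothesis on $p$ with \tsbf{vf7} produces $\Rn(pq)=\Rn(1)$, and $\Rn(1)\vd\Bot$ holds in $\sa{Vdf}$ — a two-line consequence of \tsbf{vf2}, \tsbf{vf11}, \tsbf{VF4}, \tsbf{vf13}, \tsbf{vf17} and \tsbf{CL}. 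Applying~\ref{lemColsimApv} and~\ref{CLlval1} to $GR^{\sharp}$ yields an identity in $\ZZ[G,q]$; since $\ZZ[G]$ is a polynomial ring, hence a domain in which $p$ is regular (the degenerate case $p=0$, where $pq=1$ collapses directly, being treated apart), I specialize $q\mapsto 1/p$ — which kills the $pq-1$ contribution to $\Izero^{\sharp}$ — and multiply by a high enough power of $p$ to return to an honest identity in $\ZZ[G]$. Sorting by the power of $p$: the factor $u\in\Mu$ and the constant term of $t$ land together in front of the top power $p^{n+1}$, giving the distinguished coefficient $u+j$ with $j\in\Irn$; the element $z$ becomes the summand $i\in\Izero$; and the remaining coefficients of $t$ occupy the middle powers. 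Whether those middle coefficients lie in $\Irn$ (case d) or in $\Vvr$ (case c) is exactly governed by whether $\Irn^{\sharp}=\Irn\,\Vvr[q]$ (when $q\in\Rvr$) or $\Irn^{\sharp}=\Irn\,\Vvr[q]+q\,\Vvr[q]$ (when $q\in\Rrn$), and this reproduces the forms stated in c) and d).

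Case e) is the delicate one and, I expect, the main obstacle. Adjoining $q$ with $pq=1$, the theory $\sa{Vdf}$ proves $\U(p)\;\Leftrightarrow\;\Vr(p)\,\vii\,\Vr(q)$ — forward from \tsbf{vf14} and the $\sa{Aqv}$-rule \tsbf{Vf2}, backward from \tsbf{Vf3} — so a dynamical proof of $\vd\U(p)$ yields dynamical proofs of $\vd\Vr(p)$ and of $\vd\Vr(q)$ in $(GR\cup\{pq=1\},\sa{Vdf})$. Running the analysis of c) on each of these (once with $p$ distinguished, once with $q$) produces two certificates of type c), one a polynomial in $p$ and one a polynomial in $q$; reconciling them through $pq=1$ and a further multiplication by a power of $p$ yields a single identity whose two special coefficients $u+j$ and $u'+j'$, sitting at the outer powers $p^{n+1}$ and $p^{0}$, are precisely the two distinguished top-power terms of these two c)-certificates. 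Making this merge precise, and checking that the $\Mu$-, $\Irn$- and $\Vvr$-parts land exactly where \ref{prop-factvalf} asserts, is where the bookkeeping is heaviest; by contrast, the genuinely deep inputs — the simultaneous collapsus theorem~\ref{lemColsimApv} and the formal \vst~\ref{CLlval1}, both of~\cite{CLR01} — are taken as given, and one must only bear in mind that the $\sa{Aqv}$-level simplification rules (valid in $\sa{Vdf}$) are used freely in all the collapse derivations.
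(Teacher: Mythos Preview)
The paper gives no proof of this result; it simply quotes \cite[Proposition 4.14]{CLR01}. Your collapse-based strategy is the natural one, and your treatment of a)--d) is correct.

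For e), however, there is a genuine gap. When you apply case~c) to $\Vr(q)$ inside $GR'=GR\cup\{q,\,pq{=}1\}$, the resulting certificate
\[
m_2\bigl((u_2+j_2)\,q^{\,n_2+1}+a_{n_2}q^{\,n_2}+\cdots+a_0\bigr)+i_2=0
\]
lives in $\ZZ[G,q]$ with $i_2\in\Izero'=\Izero\,\ZZ[G,q]+(pq-1)\,\ZZ[G,q]$, not in $\Izero$. After $q\mapsto 1/p$ the term $i_2$ becomes an element of $\Izero\,\ZZ[G][1/p]$, and the power of $p$ needed to clear it is controlled by the $q$-degree of some representative $a\in\Izero\,\ZZ[G,q]$ with $i_2-a\in(pq-1)$ --- which is in general not bounded by $n_2+1$ (equivalently: $p$ need not be regular modulo $\Izero$). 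The effect is that your second identity in $\ZZ[G]$ comes out as $m_2\bigl((u_2+j_2)\,p^{K}+\cdots\bigr)+i_2'=0$ with the special coefficient $(u_2+j_2)$ sitting at degree $K>0$. This stray $p^{K}$ cannot be absorbed into $m\in\Mnz$, so your merged identity does not land in form~e).

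The repair is to obtain the second certificate \emph{directly}, bypassing $\Vr(q)$: just add $p$ to $\Rrn$ (no $q$ at all). From $\vd\U(p)$ one derives collapse via $\Rn(-p)$ (\tsbf{vf7}), then $\U(0)$ (\tsbf{vf13}), then $0\neq 0$ (\tsbf{vf17}) and \tsbf{CL}. The \vst~\ref{CLlval1} now yields, in $\ZZ[G]$ itself,
\[
m_2\bigl((u_2+j_2)+b_1p+\cdots+b_Mp^{M}\bigr)+i_2=0,\qquad b_k\in\Vvr,\ i_2\in\Izero,
\]
with no saturation issue. Writing $P_1(p)$ for your certificate from~c) applied to $\Vr(p)$ (top coefficient $u_1+j_1$) and $P_2(p)$ for the display above, the single identity
\[
m_1m_2\bigl(P_1(p)\,p^{M+1}+P_2(p)\bigr)+\bigl(m_2\,p^{M+1}i_1+m_1i_2\bigr)=0
\]
is exactly of form~e): top coefficient $u_1+j_1$, constant coefficient $u_2+j_2$, all intermediate coefficients in $\Vvr$, and the bracket lies in $\Izero$.
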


\smallskip 
\begin{corollary} \label{cor-prop-factvalf} 
Let $\gk\subseteq \gA$ be two rings, $x\in\gA$ and $\yn\in\gk$.
\begin{enumerate}
\item The rule $\vd x= 0$ is valid in $\sa{Vdf}(\gA,\gk)$ \ssi $x$ is nilpotent.
\item The rule $\vd x\neq 0$ is valid in $\sa{Vdf}(\gA,\gk)$ \ssi $x$
is invertible in~$\gA$.
\item The rule $\vd \Vr(x)$ is valid in $\sa{Vdf}(\gA,\gk)$ \ssi $x$
is integral over $\gk$.
\item The rule $\Rn(y_1)\vet \dots\vet \Rn(y_n) \vd \Rn(x)$ is valid in $\sa{Vdf}(\gA,\gk)$
\ssi~$x$ is weakly integral\footnote{This means that $x$ is a zero of a monic \pol whose nondominant \coes are in the \id.} over the \id $\gen{\yn}$ of $\gk$. 
\end{enumerate}
 
\end{corollary}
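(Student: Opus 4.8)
Corollary~\ref{cor-prop-factvalf} is a direct specialization of Theorem~\ref{prop-factvalf} to the \sad $\sa{Vdf}(\gA,\gk)$, where the \pn $GR$ is prescribed: $G$ is the underlying set of~$\gA$, $\Rzero$ consists of the polynomials vanishing in~$\gA$ (together with relations encoding the ring structure), $\Rnz=\emptyset$, $\Rvr$ consists of (the images of) the \elts of~$\gk$, and $\Rrn=\Ru=\emptyset$. With this choice one computes the auxiliary objects of Theorem~\ref{prop-factvalf}: $\ZZ[G]\twoheadrightarrow\gA$ identifies $\ZZ[G]/\Izero$ with~$\gA$ (because $\Rzero$ contains the whole positive diagram of~$\gA$ as a ring); $\Mnz=\so 1$ since $\Rnz=\emptyset$; $\Mu=\so 1$ since $\Ru=\emptyset$; $\Irn=\so 0$ since $\Rrn=\emptyset$; and $\Vvr$ maps onto the subring of~$\gA$ generated by (the image of)~$\gk$, i.e.\ onto~$\gk$ itself when $\gk\subseteq\gA$. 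So in each of the five identities of Theorem~\ref{prop-factvalf}, working modulo $\Izero$ (that is, inside~$\gA$), we may take $m=1$, $u=u'=1$, $j=j'=0$, and the coefficients $a_k$ lie in~$\gk$, $j_k$ vanish.

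\textbf{Item by item.} First I would treat the four asserted equivalences separately, in each case invoking the "only if" direction from Theorem~\ref{prop-factvalf} and checking the "if" direction by exhibiting an explicit dynamical proof.
For \emph{1}, a dynamical proof of $x=0$ yields (item a, with $m=u=1$, $j=0$) an identity $x^n\cdot 1+0=0$ in~$\gA$, i.e.\ $x^n=0$: $x$ is nilpotent. Conversely if $x^n=0$ in~$\gA$ then $x=0$ is in the positive diagram after using \tsbf{Vf8} ($x^2=0\vd x=0$) repeatedly, or directly: $x^n=0$ gives $\Rn$-type reasoning unnecessary — one simply notes $x^n=0$ is a relation and iterated use of \tsbf{Vf8} (valid in~$\sa{Vdf}$) yields $\vd x=0$.
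For \emph{2}, item b with $m=u=1$, $j=i$ absorbed gives $1+bx=0$ in~$\gA$, i.e.\ $x$ invertible; conversely if $xy=1$ in~$\gA$ then \tsbf{Vf1} gives $\vd x\neq 0$.
For \emph{3}, item c with $m=u=1$, $j=0$ gives $x^{n+1}+a_nx^n+\cdots+a_0=0$ in~$\gA$ with $a_k\in\gk$: $x$ is integral over~$\gk$. Conversely, an integral dependence relation $x^{n+1}+\sum a_kx^k=0$ with $a_k\in\gk$ is, using $\vd\Vr(a_k)$ (axiom of $\sa{Vdf}(\gA,\gk)$) and the integral-closure axiom scheme \tsbf{Vf9$_n$} (valid in~$\sa{Vdf}$), exactly what yields $\vd\Vr(x)$.
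For \emph{4}, item d with $m=u=1$, $j=0$, and the $j_k\in\Irn$ now being $\ZZ[\gk]$-combinations of the $\Rn$-hypotheses $y_1,\dots,y_n$, gives an identity $x^{n+1}+j_nx^n+\cdots+j_0=0$ in~$\gA$ where each $j_k$ lies in the ideal $\gen{\yn}$ of~$\gk$ — which is precisely weak integrality of~$x$ over $\gen{\yn}$. The converse is the analogue of the argument for~\emph{3}, using the $\Rn$-companions of axioms \tsbf{vf5}--\tsbf{vf9} and \tsbf{Vf5} to push a weak integral dependence relation through a dynamical proof of $\Rn(x)$ from $\Rn(y_1),\dots,\Rn(y_n)$.

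\textbf{Main obstacle.} The routine direction is extracting the equivalences from Theorem~\ref{prop-factvalf}; the only genuine care needed is the bookkeeping of what $\Vvr$ and $\Irn$ become. The subtle point is that Theorem~\ref{prop-factvalf} is stated for an \emph{arbitrary} \pn, so $\Vvr$ is the subring of $\ZZ[G]$ generated by $\Rvr\cup\Rrn\cup\Ru$, and I must check that its image in $\gA$ is exactly $\gk$ (not merely contained in it) — this uses $\gk\subseteq\gA$, and more generally for an algebra $\varphi:\gk\to\gA$ it is the image $\varphi(\gk)$, which is why the statement is phrased with $\gk\subseteq\gA$. Likewise for item~d, one must verify that the ideal $\Irn$ of $\Vvr$ generated by $\Rrn=\so{y_1,\dots,y_n}$ has image in $\gA$ equal to the ideal $\gen{\yn}$ of~$\gk$ (again using $\gk\subseteq\gA$), so that "weak integrality over the \id $\gen{\yn}$" is the right reading of the resulting polynomial identity. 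I expect the hardest bit of write-up to be the "if" directions, which require pointing to the specific direct/simplification axioms of $\sa{Vdf}$ (notably \tsbf{Vf8}, \tsbf{Vf9$_n$}, and their $\Rn$-analogues via \tsbf{vf5}--\tsbf{vf9}, \tsbf{Vf5}) that turn an algebraic certificate back into a dynamical proof; but none of these presents a real difficulty once the dictionary of Theorem~\ref{prop-factvalf} is in place.
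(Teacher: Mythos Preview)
Your proposal is correct and matches the paper's approach: the paper gives no proof for this corollary, treating it as an immediate specialization of Theorem~\ref{prop-factvalf} to the \pn underlying $\sa{Vdf}(\gA,\gk)$, which is exactly what you carry out. Your bookkeeping of $\Izero$, $\Mnz$, $\Mu$, $\Vvr$, $\Irn$ is right (with the understanding that for item~\textsl{4} one augments $\Rrn$ by $\{y_1,\dots,y_n\}$, as you do in your ``Main obstacle'' paragraph), and the ``if'' directions via \tsbf{Vf8}, \tsbf{Vf1}, \tsbf{Vf9$_n$}, and the $\Rn$-axioms are the intended easy checks.
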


\begin{remark} \label{rem-cor-prop-factvalf} 
From Item \textsl{3} we deduce easily  in \clama that a domain~$\gk$ has for integral closure in its fraction field $\gK$ the intersection of \arvs of $\gK$ containing~$\gk$ \cite[Remark~4.15]{CLR01}.
\\
From Item \textsl{4}, we deduce  in \clama that if $(\gk,\fm)$ is a local domain, the \id~$\fm$ is the intersection of $\gk$ with the \idema of a \arv of the fraction field $\gK$ \cite[Corollary~4.7]{CLR01}. \eoe
 
\end{remark}

A remarkable consequence of \thref{prop-factvalf} is \thref{thAqv}.
\begin{theorem} \label{thAqv} \cite[Theorem 4.17]{CLR01}.
Theories \sa{Aqv} and \sa{Vdfac} prove the same \ralgs.
Same thing  for any intermediate theory.
\end{theorem}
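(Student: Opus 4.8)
The statement to be proved is Theorem~\ref{thAqv}: the theories \sa{Aqv} and \sa{Vdfac} prove the same \ralgs (and likewise for any theory intermediate between them, hence in particular for \sa{Vdf}). Since \sa{Aqv} is a \talg lying between \sa{Apv} and \sa{Vdf}, and \sa{Vdfac} is obtained from \sa{Vdf} by adding the algebraic closure axiom schemes \Tsbf{VF5$_n$}, every axiom of \sa{Aqv} is a valid rule of \sa{Vdfac}; so one direction is trivial: any \ralg provable in \sa{Aqv} is provable in \sa{Vdfac}, and the same holds for any theory between them. The whole content is the converse: if a \ralg $\Gamma\vd C$ (with $C$ a single atomic formula, no $\Exists$, no $\vou$, no $\Bot$ on the right) is provable in \sa{Vdfac}, then it is already provable in \sa{Aqv}.

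\textbf{Key steps.} The plan is to reduce the provability of a \ralg to a collapsus statement, and then invoke the simultaneous collapsus theorem. First I would reformulate: a \ralg $\Gamma\vd C$ is provable in a theory $\sa T$ iff the \sad obtained by adjoining $\Gamma$ as relations and the negation of $C$ collapses. Concretely, $C$ is one of $x=0$, $x\neq0$, $\Vr(x)$, $\Rn(x)$, $\U(x)$ applied to some term; for each case I would add the fresh data witnessing $\lnot C$ (e.g.\ for $C$ being $\Vr(p)$ add a new element and relations forcing $p$ to be non-integral-looking, or more uniformly work with the "opposite" predicate made available because \sa{Vdfac} — having classical logic-like disjunctive axioms \tsbf{VF2}, \tsbf{VF4} — decides these predicates). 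Then $\Gamma\vd C$ holds in $\sa T$ iff the \sad $(G\cup\{\text{witnesses}\},\ R_\Gamma\cup R_{\lnot C})$ collapses as a \sad of type $\sa T$. Now apply Theorem~\ref{lemColsimApv} (Simultaneous collapsus): a \sad of type \sa{Apv} collapses iff it collapses as a \sad of any theory between \sa{Apv} and \sa{Vdfac}. Hence collapsus in \sa{Vdfac} is equivalent to collapsus in \sa{Aqv} (both being intermediate), which translates back to provability of $\Gamma\vd C$ in \sa{Aqv}. A cleaner route, avoiding re-deriving the collapsus characterization from scratch, is to use the explicit description of provable facts given by Theorem~\ref{prop-factvalf}: items (a)–(e) give, for a dynamical proof of each kind of atomic formula in a \sad of type \sa{Vdf}, an algebraic certificate (an identity in $\ZZ[G]$ of a specific polynomial shape). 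I would check that each such certificate is, conversely, realizable by a dynamical proof already in \sa{Aqv} — this is where the integrally-closed axiom scheme \Tsbf{Vf9$_n$} and the simplification rules \Tsbf{Vf3}–\Tsbf{Vf8} of \sa{Aqv} do the work that \tsbf{VF1}–\tsbf{VF4} (and \Tsbf{VF5$_n$}) did in \sa{Vdfac}. Since \ralgs have the special shape where the proof tree has a single branch and no fresh existential variables are needed at the root, the certificate lives in $\ZZ[G]$ itself with $G$ the original generators, and reconstructing the \sa{Aqv}-proof from the certificate is a bounded, syntactic task.

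\textbf{Main obstacle.} The delicate point is handling the existential and disjunctive axioms \tsbf{VF1}–\tsbf{VF4} and the closure schemes \Tsbf{VF5$_n$} of \sa{Vdfac}: these genuinely introduce new elements and case splits in a dynamical proof, and a priori a proof of a \ralg in \sa{Vdfac} may use them heavily. The heart of the argument is to show these uses can always be eliminated when the conclusion is a \ralg — essentially a cut-elimination / Positivstellensatz-extraction argument. The cleanest way to finance this is precisely Theorem~\ref{lemColsimApv}: it already packages the hard work (the proof in \cite[Theorem~4.3]{CLR01}), reducing everything between \sa{Apv} and \sa{Vdfac} to the same collapsus criterion of the formal \vst~\ref{CLlval1}; combined with Theorem~\ref{prop-factvalf} describing the shape of certificates for each predicate, the passage from "provable in \sa{Vdfac}" to "provable in \sa{Aqv}" becomes a matter of reading off that the certificate uses only ingredients available in \sa{Aqv}. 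So the plan is: (1) trivial direction by inclusion of axioms; (2) reduce provability of a \ralg to collapsus of an associated \sad; (3) apply simultaneous collapsus (Theorem~\ref{lemColsimApv}) to equate collapsus in \sa{Vdfac} and in \sa{Aqv}; (4) translate back to provability in \sa{Aqv}; (5) remark that the same argument works for every intermediate theory since Theorem~\ref{lemColsimApv} covers them all.
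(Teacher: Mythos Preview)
Your ``cleaner route'' via Theorem~\ref{prop-factvalf} is exactly what the paper indicates (it says only that the result is a consequence of that theorem), and your identification of the axiom scheme \Tsbf{Vf9$_n$} together with \Tsbf{Vf2}--\Tsbf{Vf8} as the tools that convert each algebraic certificate in items (a)--(e) back into a genuine \sa{Aqv}-proof is the right idea. That is the actual work, and it is essentially what is done in \cite{CLR01}.

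However, your ``route (a)'' as a self-contained argument has a genuine gap at step~(4). The biconditional ``$\Gamma\vd C$ holds in $\sa T$ iff the \sad $(\Gamma,\lnot C)$ collapses in $\sa T$'' is valid in \sa{Vdfac} and in \sa{Vdf} because those theories \emph{decide} each predicate (via \tsbf{VF1}--\tsbf{VF4}); but it is \emph{not} valid in \sa{Aqv}, which is a \talg with no disjunctions. Concretely, for $C=\Vr(p)$ you encode $\lnot C$ by adjoining a fresh $Y$ with $pY=1$ and $\Rn(Y)$; knowing that this enlarged \sad collapses in \sa{Aqv} gives you an \sa{Aqv}-proof of $\Bot$ \emph{from $\Gamma$ together with the fresh generator $Y$ and its relations}, not an \sa{Aqv}-proof of $\Vr(p)$ from $\Gamma$ alone. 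Eliminating $Y$ is precisely where one must pass through the explicit certificate of \vst~\ref{CLlval1}, rewrite it in the shape of Theorem~\ref{prop-factvalf}(c), and then invoke \Tsbf{Vf9$_n$} (after the standard trick of replacing $p$ by $(u+j)p$ to make the leading coefficient monic) and \Tsbf{Vf2}. In other words, your step~(4) is not a translation but the whole content of route~(b); the two routes are not alternatives. A minor additional point: Theorem~\ref{prop-factvalf} is stated for \sa{Vdf}, so to cover \sa{Vdfac} you should note that \sa{Vdf} and \sa{Vdfac} prove the same facts, which does follow from simultaneous collapsus since both theories decide each atomic predicate. Finally, your remark that a \ralg ``has a proof tree with a single branch'' is not right: the \emph{conclusion} is a single atom, but a dynamical proof in \sa{Vdfac} may branch and introduce fresh variables freely; the point is only that the same atom must be reached at every leaf.
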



\subsection{Formal \vsts for $\sa{val}$ and $\sa{Vdf}^+$}

In this section we prove a crucial result: a formal \vst for the theory~\sa{val},
a consequence of Theorems \ref{CLlval1} and \ref{lemColsimApv}.

We consider a ring $\gA$, we are searching for  a formal \vst for the \sad $\sa{val}(\gA)$.

Recall that the theory $\sa{Vdf}^+$ is defined at Item \textsl{4} of \Dfn\ref{defiApvandco} and that the theory \sa{val1} is the conservative extension of \sa{val} we get when we add the predicate $\cdot \neq 0$ opposite to~\hbox{$\cdot = 0$}.

\begin{lemma} \label{lemCv+val}~
\begin{enumerate}
\item The predicate $x\ndi y$ is the opposite predicate to $x\di y$ in the theory
$\sa{Vdf}^+$.
\item Axioms of \sa{val} are valid in $\sa{Vdf}^+$. 
\item The following \ralg is valid in \sa{val1}.
\vspace{-.2em}

\Regles{
\Lab{VR2.5} $\,\,x\neq 0\vet ax \di bx \Vd a \di b$
} 
\end{enumerate}
\end{lemma}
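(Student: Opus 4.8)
The plan is to unfold, once and for all, the definitions of $\di$ and $\ndi$ in $\sa{Vdf}^+$, namely that $x\di y$ abbreviates $\exists z\,(\Vr(z)\vii xz=y)$ and $x\ndi y$ abbreviates $y\neq 0\vii\exists z\,(\Rn(z)\vii yz=x)$, and then discharge the three items by direct dynamical proofs in increasing order of difficulty. Item~\textsl{3} is the easiest and stays inside $\sa{val1}$: from $ax\di bx$, Axiom \tsbf{VR2} opens the branches $a\di b$ and $0\di x$; the first branch is the conclusion, while in the second $0\di x$ is by definition $x=0$, which together with the hypothesis $x\neq 0$ fires the collapsus axiom of $\sa{val1}$ (the predicate $\cdot\neq0$ being opposite to $\cdot=0$), so $a\di b$ holds on that leaf too.

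For Item~\textsl{2} I would check each axiom of $\sa{val}$ in $\sa{Vdf}^+$ separately. The purely algebraic ones are mechanical once $\di$ is unfolded: for \tsbf{vr1} use the witness $z=-1$ and $\vd\Vr(-1)$; for \tsbf{vr2} reuse the witness; for \tsbf{Vr1} and \tsbf{Vr2} combine the two witnesses, closing $\Vr$ under product (\tsbf{vf3}) and sum (\tsbf{vf4}); for the collapsus, a witness for $0\di1$ forces $0=1$ (external ring computation), hence $\U(0)$, hence $0\neq0$ by \tsbf{vf17} and $\Bot$ by \tsbf{CL}. The two genuinely disjunctive axioms \tsbf{VR1} and \tsbf{VR2} use the decidability built into $\sa{Vdf}$: split on $x=0$ versus $x\neq0$ via \tsbf{VF2}; when the pertinent element vanishes the required divisibility holds with witness~$0$ (using $\vd\Vr(0)$, from \tsbf{vf6} and \tsbf{vf9}), and when it does not vanish one inverts it with \tsbf{VF1}, forms the quotient, and decides which element lies in~$\gV$ using \tsbf{VF3} (with \tsbf{Vf2} to move $\Vr$ across an inverse). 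Care is needed to keep these witnesses as genuine terms built from the fresh inverse produced by \tsbf{VF1}, invoking the existential introduction for $\di$ only at the end.

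Item~\textsl{1} is the real work: one must establish both $\vd(x\di y)\vou(x\ndi y)$ and $(x\di y)\vet(x\ndi y)\vd\Bot$ in $\sa{Vdf}^+$. For the disjunction I would case-split on $x=0$ / $x\neq0$ and then on $y=0$ / $y\neq0$ (Axiom \tsbf{VF2} each time): whenever $x=0$ or $y=0$ one of the two disjuncts holds with witness $0$ (using $\Vr(0)$, resp.\ $\Rn(0)$); when $x\neq0$ and $y\neq0$, both are invertible, so introduce $z_0$ with $xz_0=y$, apply \tsbf{VF3} to $z_0\cdot z_0^{-1}=1$, conclude $x\di y$ in the branch $\Vr(z_0)$, and in the branch $\Vr(z_0^{-1})$ apply \tsbf{VF4} to $z_0^{-1}$, concluding $x\di y$ if $\U(z_0^{-1})$ (pull $\Vr$ back with \tsbf{Vf2}) and $x\ndi y$ if $\Rn(z_0^{-1})$ (witness $z_0^{-1}$, and $y\neq0$ is at hand). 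For exclusivity, from $\Vr(z_1),xz_1=y$ and $y\neq0,\Rn(z_2),yz_2=x$ one splits $x$ with \tsbf{VF2}, kills the branch $x=0$ (it forces $y=0$, contradicting $y\neq0$), and in the surviving branch inverts $x$ to get $z_1z_2=1$; then $\Rn(z_2)\vet\Vr(z_1)\vd\Rn(1)$ by \tsbf{vf7}, next $\Rn(1)\vd\Rn(-1)$ again by \tsbf{vf7} with $\Vr(-1)$, then $\Rn(-1)\vet\U(1)\vd\U(0)$ by \tsbf{vf13}, and finally $\U(0)\vd 0\neq0\vd\Bot$ by \tsbf{vf17} and \tsbf{CL}.

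The main obstacle is not any single inference but the $\sa{Vdf}$-bookkeeping in Item~\textsl{1}: squeezing $\Bot$ out of $\Rn(1)$ using only the available direct axioms for $\Rn$, $\U$ and $\cdot\neq0$, and, for the existence half, organising the nested case analysis (vanishing of $x$, vanishing of $y$, then \tsbf{VF3}, then \tsbf{VF4}) so that every leaf of the proof tree produces either a bona fide $\di$-witness or a bona fide $\ndi$-witness. Everything else reduces to unfolding the definition of $\di$ and to the commutative-ring computation machinery that $\sa{Vdf}$ carries along (cf.\ Remark~\ref{rem1val0}).
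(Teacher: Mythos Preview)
Your proposal is correct. The paper gives no proof for this lemma: it simply writes \emph{Left to the reader}, so your detailed sketch actually supplies what the authors omit. Your organisation---Item~\textsl{3} by a direct appeal to \tsbf{VR2} and the collapsus in $\sa{val1}$, Item~\textsl{2} by unfolding $\di$ and checking each $\sa{val}$ axiom (with \tsbf{VF2}, \tsbf{VF1}, \tsbf{VF3} handling the disjunctive ones), and Item~\textsl{1} by the nested case split followed by the $\Rn(1)\to\U(0)\to 0\neq0\to\Bot$ chain---is exactly the routine verification the authors had in mind.
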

\facile

\Subsubsection{A formal \vst for $\sa{val}(\gA)$}

\begin{valsatz}[A formal \vst for $\sa{val}(\gA)$ 
 and $\sa{Vdf}^+\!(\gA)$\footnote{Note that if $\gA=\ZZ$ and if $a_i,b_i,c_j$ and $d_j$ are \idtrs we get a  \gnl formal \vst for the theory $\sa{Vdf}^+$ or for the theory \sa{val}.
}]\label{thVstformelval}~\\ 
Let $\gA$ be a commutative ring and $a_i$, $b_i$, $c_j$, $d_j\in\gA$. \Propeq 
\begin{enumerate}\itemsep=0pt
\item ~ 

\vspace{-2.4em}
\begin{equation} \label {eqthVstformelval1}
 (a_1,b_1),\dots,(a_n,b_n) \,\vdash_{\val(\gA)} \, (c_1,d_1),\dots,(c_m,d_m) 
\end{equation}
\item ~ 

\vspace{-2.4em} 
\begin{equation} \label {eqthVstformelval2}
a_1\di b_1\vet \dots\vet a_n\di b_n \Vdi{\sA{val1}(\gA)} c_1\di d_1\vou \dots\vou c_m\di d_m
\end{equation}
\item ~ 

\vspace{-2.4em} 
\begin{equation} \label {eqthVstformelval3}
a_1\di b_1\vet \dots\vet a_n\di b_n \Vdi{\sA{Vdf}^+\!(\gA)} c_1\di d_1\vou \dots\vou c_m\di d_m
\end{equation}
\item 
 Introducing \idtrs $X_i$ ($i\in\lrbn$) and $Y_j$ ($j\in\lrbm$)
we have in the ring $\gA[\uX,\uY]$ an \egt in the following form 
\begin{equation} \label {eqthVstformelval4}
d \,\big(1+\som_{j=1}^mY_jP_j(\uX,\uY)\big)\equiv 0 \mod \gen{(X_ia_i-b_i)_{i\in\lrbn},(Y_jd_j-c_j)_{j\in\lrbm}} 
\end{equation}
 where $d$ is in the \mo generated by  $d_j$'s, and  $P_j(\uX,\uY)$'s are in $\ZZ[\uX,\uY]$.
\item 
 Let us note $y_j=\frac{c_j}{d_j}$ vied in $\gB=\gA[\frac{1}{d_1\cdots d_m}]$. Let us introduce \idtrs $X_i$ ($i\in\lrbn$). One has in the ring $\gB[\uX]$ an \egt in the following form 
\begin{equation} \label {eqthVstformelval5}
1+\som_{j=1}^my_jP_j(\uX,\uy)\equiv 0 \mod \gen{(X_ia_i-b_i)_{i\in\lrbn}}
\end{equation}
 where $P_j(\uX,\uY)$'s are in $\ZZ[\uX,\uY]$.
\end{enumerate}
\end{valsatz}

%
\begin{proof} Item \textsl{5} is a simple rewriting of Item \textsl{4}.

\smallskip \noindent \textsl{1} $\Leftrightarrow$ \textsl{2}. By \dfn, Item \textsl{1} means that
the following rule is valid: 
\begin{equation} \label {eqthVstformelval6}
a_1\di b_1\vet \dots\vet a_n\di b_n \Vdi{\sA{val}(\gA)} c_1\di d_1\vou \dots\vou c_m\di d_m
\end{equation}
But \sa{val1} is a conservative extension of \sa{val}.

\smallskip \noindent \textsl{1} $\Rightarrow$ \textsl{3}. 
By Item \textsl{2} of Lemma \ref{lemCv+val}, the rule \pref{eqthVstformelval3} is valid.

\smallskip \noindent \textsl{3} $\Rightarrow$ \textsl{4}. 
The rule \pref{eqthVstformelval3} is equivalent to the following collapsus
\begin{equation} \label {eqthVstformelval3+}
a_1\di b_1\vet \dots\vet a_n\di b_n\vet c_1\ndi d_1\vet \dots\vet c_m\ndi d_m \Vdi{\sA{Vdf}^+\!(\gA)} \Bot 
\end{equation}
Inside the theory $\sa{Vdf}^+$, $a_i\di b_i$
is \eqv to  the existence of an $x_i$ such that $\Vr(x_i)$ and $x_ia_i=b_i$.
This~$x_i$ can be represented by a fresh variable $X_i$. Similarly $ c_j\ndi d_j$
is \eqv to the existence of an~$y_j$ such that $\Rn(y_j)$, $y_j d_j=c_j$ and $d_j\neq 0$. 
This~$y_j$ can be represented by a fresh variable $Y_j$.
The validity of collapsus \pref{eqthVstformelval3+} is then \eqv to the collapsus of the family of conditions 
\[
(\Vr(X_i),\,X_ia_i-b_i=0)_{i\in\lrbn},\; (\Rn(Y_j),\, Y_jd_j-c_j=0,\, d_j\neq 0)_{j\in\lrbm}
\]
in the \sad constructed on the positive diagram of $\gA$ by adding \gtrs $X_i$ and $Y_j$, \cad in the ring $\gA[\uX,\uY]$. We conclude with \vst~\ref{CLlval1} (with $s=u=1$ and $z=0$).

\smallskip \noindent \textsl{4} $\Rightarrow$ \textsl{2}. 
Let us assume an identity \pref{eqthVstformelval4}. We have to prove the validity of the rule~\pref{eqthVstformelval2}. We make a case by case reasoning using $\vd x=0\vou x\neq 0$ and $\vd a\di b\vou b\di a$.
\\
In a branch  where one $d_j$ is supposed null 
the rule \pref{eqthVstformelval2} is valid. So we can assume the $d_j$'s are nonzero, hence regular (rule \Tsbf{VR2.5}). 
\\
In a branch where $a_i=0$, the hypothesis of \pref{eqthVstformelval2}
implies $b_i=0$ and we can replace~\hbox{$a_i$, $b_i$} and~$X_i$ with $0$ 
in~\pref{eqthVstformelval4}. 
\\
Finally, we need to prove \pref{eqthVstformelval2} in the case where $a_i$'s and  $d_j$'s are nonzero, hence \ndz. Let us consider now $\gB$, the total fraction ring of $\gA$. We get in $\gB$ an \egt 
\begin{equation} \label {eqthVstformelval60}
0 =_\gB 1+\som_j \frac{c_j}{d_j} P_j\left(\frac{b_1}{a_1},\dots,\frac{b_n}{a_n},
\frac{c_1}{d_1},\dots,\frac{c_m}{d_m}\right)
\end{equation}
where \pols $P_j$'s have \coes in $\ZZ$. Let $\delta$ be a bound of degrees in  $Y_k$'s in  $P_j$'s 
and $\epsilon_i$ a bound of degrees in $X_i$'s. We let $d:=\prod_k d_k$, we multiply the preceding \egt by $u=d^\delta\prod_ia_i^{\epsilon_i}$ and we get
\begin{equation} \label {eqthVstformelval7}
u =_\gB \som_j \frac{c_j}{d_j} Q_j(b_1,a_1,\dots,b_n,a_n,
c_1,d_1,\dots,c_m,d_m)
\end{equation}
where each $Q_j$ is homogeneous of degree $\delta$ in each $(c_k,d_k)$ and of degree $\epsilon_i$ in $(a_i,b_i)$.
We let $e_j=\prod_{k:k\neq j} d_k$ and we multiply the preceding \egt by $d$. We get an \egt in $\gA$
\[
ud=\som_j c_j e_j q_j .
\] 
We now proceed by case analysis, and for the desired conclusion, we still need to deal with the case where we suppose $d_j\di c_j$ for each $j$. 
By case analysis again, we assume that one $c_je_j$, e.g.,\ $c_1e_1$, divides all the other ones. Then we get
\begin{itemize}\itemsep=.1em
\item $u\di q_j$ for each $q_j$ ($u$ divides each \coe of $q_j$'s) 
\item $uc_1 e_1\di c_j e_jq_j$ for each $q_j$  
\item so $c_1 e_1u\di\som_j c_j e_j q_j=ud=d_1e_1u$  
\end{itemize}

\noindent Finally, the rule \tsbf{VR2.5} allows us to simplify by $e_1u$: we get $c_1 \di d_1$.\\
Phew! 
\end{proof}
%

\begin{remark} \label{remvst1=>4} 
Perhaps a more direct proof of  the implication \textsl{1} $\Rightarrow$ \textsl{4} is possible,
without using the formal \vst \ref{CLlval1} for \cvds. On one hand one should prove that \prt \pref{eqthVstformelval4} defines an \entrel over $\gA\times \gA$ (the cut rule seems difficult), on the other hand that this \entrel satisfies
axioms of \sa{val}. 
For example the rule \Tsbf{Vr2}, $\,\,a \di b \vet a \di c \Vd a \di b + c$
corresponds to the following \egt 
\[
(b+c)(1-y(x_1+x_2))\equiv 0 \,\mod\gen{x_1a-b,\,x_2a-c,\,y(b+c)-a}.\eqno\hbox{\eoe}
\]
\end{remark}

\Subsubsection{Admissibility of the rule \tsbf{DIV} for the \twdij \sa{val}}

\begin{remark} \label{remthVstformelval} 
A corollary of \vst~\ref{thVstformelval} is  Lemma \ref{lemDivadmissible} concerning the admissibility of the existential rule   \Tsbf{DIV} in the theory \sa{val}. 
Indeed, the formal \vst is established for the theory $\sa{Vdf}^+$ (see Item  \textsl{4} in Definition \ref{defiApvandco}), where we introduce $\cdot\di\cdot$ with its \dfn. So, in this theory the rule~\tsbf{DIV} is valid. But the formal \vst works for the theory~\sa{val}. Thus,  \rdijs that are valid for the predicate $x\di y$ remain the same for the theory \sa{val} when we add the axiom \tsbf{DIV}. \eoe 
\end{remark}

\Subsubsection{A formal \vst for $\sa{val}(\gA,\gk)$}

A slight variant of \vst~\ref{thVstformelval}.

\begin{valsatz}[A formal \vst for $\sa{val}(\gA,\gk)$ and $\sa{Vdf}^+\!(\gA,\gk)$]\label{thVstformelvalbis} ~\\
Let $\gk\subseteq \gA$ be two commutative rings and $a_i$, $b_i$, $c_j$, $d_j\in\gA$. \Propeq 
\begin{enumerate}
\item One has 
\begin{equation} \label {eqthVstformelvalbis1}
 (a_1,b_1),\dots,(a_n,b_n) \vdash_{\val(\gA,\gk)} (c_1,d_1),\dots,(c_m,d_m) 
\end{equation}
%
\item One has 
\begin{equation} \label {eqthVstformelvalbis1.5}
a_1\di b_1\vet \dots\vet a_n\di b_n \Vdi{\sA{val}(\gA,\gk)} c_1\di d_1\vou \dots\vou c_m\di d_m
\end{equation}
\item One has 
\begin{equation} \label {eqthVstformelvalbis2}
a_1\di b_1\vet \dots\vet a_n\di b_n \Vdi{\sA{Vdf}^+\!(\gA,\gk)} c_1\di d_1\vou \dots\vou c_m\di d_m
\end{equation}
\item Let us note $y_j=\frac{c_j}{d_j}$ viewed in $\gB=\gA[\frac{1}{d_1\cdots d_m}]$. Consider \idtrs $X_i$ \hbox{($i\in\lrbn$)}. One has in the ring $\gB[\uX]$ an \egt in the following form 
\begin{equation} \label {eqthVstformelVabisl5}
1+\som_{j=1}^my_jP_j(\uX,\uy)\equiv 0 \mod \gen{(X_ia_i-b_i)_{i\in\lrbn}}
\end{equation}
 where  $P_j(\uX,\uY)$'s are in $\gk[\uX,\uY]$.
\end{enumerate}
\noindent 
\emph{Variant.} We are interested in understanding the \eqvc between Items 2, 3 and 4 when some $a_i$ and $c_j$ are null, \cad when we assume that some $b_i$ or $d_j$ are null. Let us note $e_i$ and $f_j$ the \elts of $\gA$ assumed to be null on the left and on the right of~$\vd$.\\
 \Propeq
\begin{enumerate}\setcounter{enumi}{4}
\item One has 

\vspace{-1.3em} 
\[ 
\begin{array}{c} 
a_1\di b_1\vet \dots\vet a_n\di b_n\vet e_{1}=0\vet \dots\vet e_k=0 \Vdi{\sA{val}(\gA,\gk)}~\hspace{10em} \\[.3em]
~\hspace{5em} c_1\di d_1\vou \dots\vou c_m\di d_m\vou f_1=0\vou \dots\vou f_\ell=0
 \end{array}
\]

\item Let us note $y_j=\frac{c_j}{d_j}$ viewed in $\gB=\gA[\frac{1}{d_1\cdots d_m\cdot f_1\cdots f_\ell}]$. Consider \idtrs $X_i$ ($i\in\lrbn$). One has in the ring $\gB[\uX]$ an \egt in the following form 
\begin{equation} \label {eqthVstformelVabis5}
1+\som_{j=1}^my_jP_j(\uX,\uy)\equiv 0 \mod \gen{(X_ia_i-b_i)_{i\in\lrbn}, (e_i)_{i\in\lrbk}}
\end{equation}
 where  $P_j(\uX,\uY)$'s are in $\gk[\uX,\uY]$.
\end{enumerate}
 
\end{valsatz}
\begin{proof} Equivalence of Items \textsl{1}, \textsl{3} and \textsl{4}
corresponds to the \eqvc of Items~\textsl{1},~\textsl{3} and~\textsl{5} in \vst \ref{thVstformelval}. Note that $\ZZ[\uX,\uY]$ (in Item \textsl{5}) is now replaced with $\gk[\uX,\uY]$ (in Item~\textsl{4}) since by hypothesis  \elts of $\gk$ are now integral.
\\
We now need to add a few words explaining the variant
(a similar variant work also for \vst~\ref{thVstformelval}):  Eq. \pref{eqthVstformelVabis5} is simply the analogous of  Eq. \pref{eqthVstformelVabisl5} for the ring $\aqo\gB{e_1,\dots,e_\ell}$.
\end{proof}

As corollary of \vst~\ref{thVstformelvalbis} we get the result which was announced after Lemma~\ref{lem-y-di-x}.

\begin{theorem} \label{th-y-di-x} 
Let $\gk\subseteq \gA$ be two rings, $x\in\gA$ and $y\in\gk$. 
\begin{enumerate}\itemsep=0pt
\item A \sad $\sa{val}(\gA,\gk)$ proves $y\di x$ \ssi $x$ is integral over the \id $\gen{y}$ of $\gk$. 
\item Same thing for $\sa{Vdf}^+\!(\gA,\gk)$.
\end{enumerate}

\end{theorem}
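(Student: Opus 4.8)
\textbf{Proof plan for Theorem~\ref{th-y-di-x}.}
The plan is to derive this theorem as a direct corollary of the formal \vst~\ref{thVstformelvalbis}, in exactly the same way Corollary~\ref{cor-prop-factvalf} is derived from \thref{prop-factvalf}. The key observation is that the statement ``$y\di x$ is provable'' is the special case $n=0$, $m=1$, $(c_1,d_1)=(y,x)$ of the entailment studied in \vst~\ref{thVstformelvalbis}: that is, we look at the rule $\vd\; y\di x$ in $\sa{val}(\gA,\gk)$, which by Item~\textsl{2} of that \vst is equivalent to Item~\textsl{4}, the algebraic certificate. Item~\textsl{2} for $\sa{Vdf}^+\!(\gA,\gk)$ is handled simultaneously, which gives the second clause of the theorem for free. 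So the whole content of the theorem is to check that the certificate of Item~\textsl{4}, specialized to this case, says precisely ``$x$ is integral over the ideal $\gen{y}$ of $\gk$.''

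First I would write out Item~\textsl{4} with $n=0$, so there are no indeterminates $X_i$ and no generators $X_ia_i-b_i$: we work in $\gB=\gA[1/x]$ (the only $d_j$ is $x$), set $y_1=y/x\in\gB$, and the certificate reads
\[
1+\frac{y}{x}\,P\!\left(\frac{y}{x}\right)=0 \quad\text{in }\gB,
\]
where $P(Y)\in\gk[Y]$. Clearing denominators: if $P$ has degree $\leq d$, multiply by $x^{d+1}$ to obtain an identity $x^{d+1}+y\,x^{d}P(y/x)\cdot(\text{appropriate power})=0$; more carefully, writing $P(Y)=\sum_{i=0}^{d}p_iY^i$ with $p_i\in\gk$, the equation $1+(y/x)\sum p_i(y/x)^i=0$ multiplied by $x^{d+1}$ gives
\[
x^{d+1}+\sum_{i=0}^{d}p_i\,y^{\,i+1}\,x^{\,d-i}=0 \quad\text{in }\gB,
\]
and since $x$ is \ndz in $\gB$ (it is invertible there) this identity already holds in the subring generated by $\gA$, i.e.\ it is an identity in $\gA$ once one checks the denominators really cancel — which they do, the left side being a polynomial expression in $x$ and $y$ with coefficients in $\gk$. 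This is exactly an integral dependence relation $x^{d+1}+\sum_{i=0}^{d} a_i x^{d-i}=0$ with $a_i=p_i y^{i+1}\in\gen{y}^{\,i+1}\subseteq\gen{y}^{i}\cdot\gen y$, hence $a_i\in\gen{y}^{i}$ as required by the definition of integrality over an ideal recalled in the footnote to Lemma~\ref{lem-y-di-x}. Conversely, an integral dependence relation of $x$ over $\gen y$ is immediately rearranged into the form of Item~\textsl{4} by dividing through by $x^{d+1}$ in $\gB$, which gives the reverse implication and recovers Lemma~\ref{lem-y-di-x} on the nose.

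The two remaining chores are bookkeeping. One must confirm that the ``no $X_i$, single $d_j$'' specialization of \vst~\ref{thVstformelvalbis} is legitimate, including the degenerate sub-case where $x$ (the unique $d_j$) might be a zerodivisor or zero in $\gA$: here the \emph{Variant} clause of \vst~\ref{thVstformelvalbis} is the relevant one, and when $x=0$ the rule $\vd y\di x$ is trivially valid while ``$x=0$ is integral over $\gen y$'' holds with the relation $x=0\cdot y^0$... — actually one checks that $x$ nilpotent suffices and is forced, matching Item~\textsl{1} of Corollary~\ref{cor-prop-factvalf}; I would simply note that the argument of that corollary applies verbatim. Second, for the $\sa{Vdf}^+$ clause one invokes Item~\textsl{3} of \vst~\ref{thVstformelvalbis}, which is part of the same chain of equivalences, so nothing new is needed. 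The main (and only mild) obstacle is the careful denominator-clearing to see that the certificate in $\gB=\gA[1/x]$ descends to a genuine integral dependence relation with coefficients in the correct powers of $\gen y$; this is a routine homogeneity count of the sort already performed inside the proof of \vst~\ref{thVstformelval} (the passage through Eq.~\pref{eqthVstformelval7}), and presents no real difficulty.
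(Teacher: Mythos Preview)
Your approach is correct and matches the paper's exactly: specialize \vst~\ref{thVstformelvalbis} to $n=0$, $m=1$, $(c_1,d_1)=(y,x)$, and observe that Item~\textsl{4} unwinds to an integral dependence relation of $x$ over~$\gen{y}$. One minor slip to clean up: the polynomial identity $x^{d+1}+\sum_i p_i\,y^{i+1}x^{d-i}=0$ in $\gB=\gA[1/x]$ need not descend to $\gA$ directly (being polynomial in $x,y$ does not help when $x$ is a zerodivisor in $\gA$), but multiplying through by the killing power $x^N$ yields $x^{N+d+1}+\sum_i p_i\,y^{i+1}x^{N+d-i}=0$ in $\gA$, which is still an integral dependence relation over $\gen{y}$ of higher degree---so your closing ``careful denominator-clearing'' remark already anticipates this and the argument goes through.
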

\begin{proof}
\textsl{1}. This is a consequence  of the \eqvc of Items \textsl{2} and \textsl{4} in \vst~\ref{thVstformelvalbis} when $n=0$ and $m=1$.

\smallskip \noindent \textsl{2}. Use Item \textsl{1} and the \eqvc of Items \textsl{2} and \textsl{3} in \vst~\ref{thVstformelvalbis}.
\end{proof}

\Subsubsection{The center map (2)}\label{subseccomparvalZar}

We are now fulfilling the promise made in  Remark \ref{rempropdefCentre}. In particular, this finishes the  \demo of \thref{cor2thVstformelval4}.

\begin{theorem}[The morphism $\gamma:\ZarA\to \val(\gA,\gA)$ is injective] \label{thVstformelval4}~\\
Let $\gA$ be a commutative ring and $a_i$, $c_j \in\gA$. \Propeq 
\begin{enumerate}
\item One has 
\[
 (a_1,1),\dots,(a_n,1) \,\,\vdash_{\val(\gA,\gA)} (c_1,1),\dots,(c_m,1)
\]

\item One has in the ring $\gA$ an \egt  
\[
 \big(\prod\nolimits_{i=1}^n a_i\big)^r+\som_{j=1}^mc_jp_j = 0
 \]  
\item One has in the Zariski lattice of $\gA$
\[\rD(a_1),\dots,\rD(a_n)\,\,\vdash_{\Zar(\gA)} \rD(c_1),\dots,\rD(c_m)
\]
\end{enumerate}
In particular the center map (\dfn \ref{propdefCentre}) $\gamma:\ZarA\to \val(\gA,\gA)$ is injective. 
\end{theorem}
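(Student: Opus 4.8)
The plan is to prove the cycle of equivalences $\textsl{1}\Rightarrow\textsl{2}\Rightarrow\textsl{3}\Rightarrow\textsl{1}$, and then read off the injectivity of $\gamma$ from the equivalence $\textsl{1}\Leftrightarrow\textsl{3}$ together with Corollary~\ref{corZarA} and Proposition~\ref{propdefCentre}. The implication $\textsl{1}\Rightarrow\textsl{2}$ is the substantial one, and I would obtain it as the special case of \vst~\ref{thVstformelvalbis} in which $\gk=\gA$ and all the $b_i$ and $d_j$ equal $1$. Indeed, Item~\textsl{1} here is exactly Item~\textsl{1} of \vst~\ref{thVstformelvalbis} for that choice, and Item~\textsl{4} of that \vst then says that in $\gB=\gA[\frac{1}{d_1\cdots d_m}]=\gA$ (since $d_j=1$) one has
\[
1+\som_{j=1}^m c_j P_j(\uX,\uc)\equiv 0 \mod \gen{(X_ia_i-1)_{i\in\lrbn}}
\]
with $P_j\in\gA[\uX,\uY]$. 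Specializing each $X_i$ to $a_i$ is not legitimate directly, so instead I would clear the $X_i$ modulo $\gen{X_ia_i-1}$: multiplying by a suitable power $\big(\prod_i a_i\big)^r$ kills all the $X_i$-denominators, turning the congruence into a genuine identity in $\gA$ of the shape $\big(\prod_{i} a_i\big)^r + \som_j c_j p_j = 0$ with $p_j\in\gA$. That is precisely Item~\textsl{2}.

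The implication $\textsl{2}\Rightarrow\textsl{3}$ is an immediate translation into the Zariski lattice: from $\big(\prod_i a_i\big)^r\in\gen{c_1,\dots,c_m}$ and the formal \nst (Theorem~\ref{thNstFormel}, equivalence of its Items~$(1)$ and $(4)$, together with $\rD(a_1)\vi\cdots\vi\rD(a_n)=\rD(a_1\cdots a_n)$) one gets $\rD(a_1),\dots,\rD(a_n)\vdash_{\Zar(\gA)}\rD(c_1),\dots,\rD(c_m)$. For $\textsl{3}\Rightarrow\textsl{1}$ I would apply the morphism $\gamma:\ZarA\to\val(\gA,\gA)$ of Proposition~\ref{propdefCentre}: it sends $\rD(a)$ to $\Di_\gA(a,1)$ and is a lattice morphism, hence carries the relation $\rD(a_1),\dots,\rD(a_n)\vdash_{\ZarA}\rD(c_1),\dots,\rD(c_m)$ to $\Di(a_1,1),\dots,\Di(a_n,1)\vdash_{\val(\gA,\gA)}\Di(c_1,1),\dots,\Di(c_m,1)$, which is exactly Item~\textsl{1}. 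Finally, injectivity of $\gamma$: an element of $\ZarA$ is a finite join $\rD(a_1,\dots,a_n)$, its image is $\Vu_i\Di(a_i,1)$, and $\gamma$ is injective iff $\rD(\ua)\leq\rD(\uc)$ in $\ZarA$ whenever $\Vu_i\Di(a_i,1)\leq\Vu_j\Di(c_j,1)$ in $\val(\gA,\gA)$; but the latter is Item~\textsl{1} and the former is Item~\textsl{3}, so the equivalence $\textsl{1}\Leftrightarrow\textsl{3}$ gives exactly this.

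The main obstacle is the first implication: everything rests on having \vst~\ref{thVstformelvalbis} available, and on correctly handling the passage from the modular congruence in $\gA[\uX]$ (Item~\textsl{4} of that \vst) to an honest polynomial identity in $\gA$, i.e.\ the bookkeeping of which power $r$ of $\prod_i a_i$ is needed to clear all the $X_i$ and to absorb the terms coming from the ideal $\gen{(X_ia_i-1)_i}$. This is the ``art of getting rid of denominators'' and is routine once one tracks the $\uX$-degree bound $\epsilon_i$ of the $P_j$'s, exactly as in the proof of \vst~\ref{thVstformelval}; I would not belabour it. Everything else ($\textsl{2}\Rightarrow\textsl{3}$ via the formal \nst, $\textsl{3}\Rightarrow\textsl{1}$ via $\gamma$, and the final remark on injectivity) is bookkeeping with the results already in hand.
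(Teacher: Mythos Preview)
Your proposal is correct and follows essentially the same route as the paper: the hard implication $\textsl{1}\Rightarrow\textsl{2}$ is obtained as the special case $\gk=\gA$, $b_i=d_j=1$ of \vst~\ref{thVstformelvalbis}, and $\textsl{2}\Leftrightarrow\textsl{3}$ is the formal \nst~\ref{thNstFormel}. The only cosmetic difference is that the paper phrases the denominator-clearing step more crisply as the isomorphism $\gA[\uX]/\gen{(X_ia_i-1)_i}\simeq\gA[1/(a_1\cdots a_n)]$ (so that the congruence becomes an equality in the localization, which one then pulls back to $\gA$), rather than speaking of ``$X_i$-denominators'' in $\gA[\uX]$.
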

%
\begin{proof}
Equivalence of Items \textsl{1} and \textsl{2} is a special case of the one given in \vst~\ref{thVstformelvalbis} (Items \textsl{1} and \textsl{4} \egt \pref{eqthVstformelVabisl5}): computing modulo $Xa-1$ in $\AX$ amounts to calculate in~$\gA[\frac 1 a]$).
Equivalence of Items \textsl{2} and~\textsl{3} is the formal \nst~\ref{thNstFormel}.
\end{proof}

\Subsubsection{Another formal \vst for $\sa{val}(\gA,\gk)$}

Another slight variant of \vst~\ref{thVstformelval}, which is particularly useful after Remark~\ref{remthVstformelvalter}.

\begin{valsatz}[Another formal \vst for $\sa{val}(\gA,\gk)$ and $\sa{Vdf}^+\!(\gA,\gk)$]\label{thVstformelvalter} 
Let $\gk\subseteq \gA$ be two commutative rings and $b_i, d_j\in\gA$. \Propeq 
\begin{enumerate}
\item One has 
\begin{equation} \label {eqthVstformelvalter1}
 (1,b_1),\dots,(1,b_n) \,\,\vdash_{\val(\gA,\gk)} (1,d_1),\dots,(1,d_m) 
\end{equation}
%
\item One has 
\begin{equation} \label {eqthVstformelvalter2}
 \Vr(b_1)\vet \dots\vet \Vr(b_n) \Vdi{\sA{Vdf}(\gA,\gk)} \Vr(d_1)\vou \dots\vou \Vr(d_m)
\end{equation}
%
\item One has 
\begin{equation} \label {eqthVstformelvalter2.5}
 \vr(b_1)\vet \dots\vet \vr(b_n) \Vdi{\sA{val2}(\gA,\gk)} \vr(d_1)\vou \dots\vou \vr(d_m)
\end{equation}
%
\item Let us note $\gB=\gA[\frac{1}{d_1\cdots d_m}]$,
we have in the ring $\gB$ an \egt 
\begin{equation} \label {eqthVstformelvalter3}
1=\som_{j=1}^md_j^{-1}P_j(b_1,\dots,b_n,d_1^{-1},\dots,d_m^{-1}) 
\end{equation}
 where $P_j(\Xn,\Ym)$'s are in $\gk[\uX,\uY]$.
\item (Case when $\gA=\gK$ is a \cdi) One has 
\begin{equation} \label {eqthVstformelvalter5} 
 \Vr(b_1)\vet \dots\vet \Vr(b_n) \Vdi{\sA{Val}(\gK,\gk)} \Vr(d_1)\vou \dots\vou \Vr(d_m)
\end{equation}
%
\item (Case when $\gA=\gK$ is a \cdi) One has 
\begin{equation} \label {eqthVstformelvalter6} 
 \rV(b_1),\dots,\rV(b_n) \,\,\vdash_{\Val(\gK,\gk)} \rV(d_1),\dots,\rV(d_m) 
\end{equation}
\end{enumerate}
\end{valsatz}
%
\begin{proof}
First items are a special case of \vst~\ref{thVstformelval}. The \eqvc with two last items (they are \eqv by \dfn)  is a consequence of the lattice \iso between  $\val(\gK,\gk)$ and $\Val(\gK,\gk)$ (\thref{thValval}). So we have $\gB=\gK$.
\end{proof}
%

\begin{remark} \label{remthVstformelvalter} 
As it was expected, we recover here the formal \vst for the predicate $\Vr$ given in the article \cite{Coq2009} 
for the case where $\gA$ is a \cdi and~$b_i$'s and~$d_j$'s are nonzero. 
See also \cite{CP2001} and \cite{Lom2000}. \eoe 
\end{remark}


\addcontentsline{toc}{section}{References}

\markboth{References}{References}

\small

\normalsize
\endgroup
\stopcontents[english]

\clearpage
\newpage
\thispagestyle{empty}
~
\clearpage
\newpage

\setcounter{page}{1}
\renewcommand\thepage{F\arabic{page}}\renewcommand\theHsection{F\arabic{section}}

\clearpage
\setcounter{section}{0}
\setcounter{theorem}{0}
\setcounter{equation}{0}

\selectlanguage{french}
\def\frenchproofname{\textsl{Démonstration}}

\newcommand \Vrai {\mathsf{Vrai}}
\newcommand \Faux {\mathsf{Faux}}
\renewcommand \det {\MA{\textrm{dét}}}
\renewcommand \Res {\textrm{Rés}}
\renewcommand \rdef {\textrm{déf}}
\renewcommand \Reel {\MA{\textsf{Réel}}}
\renewcommand\SA[1]{\rdb\sa{#1}\label{ftheorie#1}}
\renewcommand\Sa[1]{\hyperref[ftheorie#1]{\sa{#1}}}

\renewcommand\Lab[1]{\rdb\item[\tsbf{#1}]\label{fAx#1}}
\renewcommand\Tsbf[1]{\hyperref[fAx#1]{\tsbf{#1}}}

\FrenchFootnotes

\title{Treillis et spectres valuatifs}



\theoremstyle{plain}
\newtheorem{ftheorem}{Théorème}[subsection]
\newtheorem{fthdef}[ftheorem]{Théorème et définition}
\newtheorem{fpstf}[ftheorem]{Positivstellensatz formel}
\newtheorem{fpst}[ftheorem]{Positivstellensatz}
\newtheorem{flemma}[ftheorem]{Lemme}
\newtheorem{fcorollary}[ftheorem]{Corolaire}
\newtheorem{fconjecture}[ftheorem]{Conjecture}
\newtheorem{fproposition}[ftheorem]{Proposition}
\newtheorem{fpbu}[ftheorem]{Problème universel}
\newtheorem{fprpta}[ftheorem]{Propriétés attendues}
\newtheorem{fpropdef}[ftheorem]{Proposition et définition}
\newtheorem{ffact}[ftheorem]{Fait}
\newtheorem{fplcc}[ftheorem]{Principe local-global concret}

\newtheorem{ftheoremc}[ftheorem]{Th\'{e}or\`{e}me\etoz}
\newtheorem{flemmac}[ftheorem]{Lemme\etoz}
\newtheorem{fcorollaryc}[ftheorem]{Corolaire\etoz}
\newtheorem{fproprietec}[ftheorem]{Propri\'{e}t\'{e}\etoz}
\newtheorem{fpropositionc}[ftheorem]{Proposition\etoz}
\newtheorem{ffactc}[ftheorem]{Fait\etoz}
\newtheorem{fvalsatz}[ftheorem]{\vst}

\theoremstyle{definition}
\newtheorem{fconvention}[ftheorem]{Convention}
\newtheorem{fdefinition}[ftheorem]{Définition}
\newtheorem{fdfni}[ftheorem]{Définition informelle}
\newtheorem{fdefinitions}[ftheorem]{Définitions}
\newtheorem{fnotation}[ftheorem]{Notation}
\newtheorem{fproblem}[ftheorem]{Problème}
\newtheorem{fquestion}[ftheorem]{Question}
\newtheorem{fquestions}[ftheorem]{Questions}
\newtheorem{fcontext}[ftheorem]{Contexte}
\newtheorem{fdefinitionc}[ftheorem]{Définition\etoz}
\newtheorem{fdefinota}[ftheorem]{Définition et notation}

\theoremstyle{remark}
\newtheorem{fexample}[ftheorem]{Exemple}
\newtheorem{fexamples}[ftheorem]{Exemples}
\newtheorem{fnotes}[ftheorem]{Notes}
\newtheorem{fremark}[ftheorem]{Remarque}
\newtheorem{fremarks}[ftheorem]{Remarques}
\newtheorem{fcomment}[ftheorem]{Commentaire}



\newcommand{\vou}{\MA{\tsbf{ ou }}}
\newcommand{\Vou}{\MA{\tsbf{OU}}}
\newcommand \EXists[1] {\tsbf{Introduire }{#1}\tsbf{ tel que }\,}
\newcommand \vet {{\tsbf{,}}\,}
\newcommand \Atcl {\mathrm{Atcl}}
\newcommand \Tcl {\mathrm{Tcl}}
\newcommand \Atclv {\mathrm{Atclv}}

\newcommand{\adr}{anneau de Prestel\xspace}
\newcommand{\adrs}{anneaux de Prestel\xspace}
\newcommand{\rdnr}{relation de divisibilité nilradicale\xspace}
\newcommand{\rdnrs}{relations de divisibilité nilradicale\xspace}
\newcommand{\rdr}{relation de divisibilité radicale\xspace}
\newcommand{\rdrs}{relations de divisibilité radicale\xspace}



\newcounter{MF}
\newcommand\stMF{\stepcounter{MF}}

\newcommand{\lec}{\stMF\ifodd\value{MF}lecteur\xspace\else 
lectrice\xspace\fi}

\newcommand{\lecs}{\stMF\ifodd\value{MF}lecteurs\xspace\else 
lectrices\xspace\fi}

\newcommand{\alec}{\stMF\ifodd\value{MF}au lecteur\xspace\else%
à la lectrice\xspace\fi}

\newcommand{\dlec}{\stMF\ifodd\value{MF}du lecteur\xspace\else%
de la lectrice\xspace\fi}

\newcommand{\llec}{\stMF\ifodd\value{MF}le lecteur\xspace\else la lectrice\xspace\fi}

\newcommand{\Llec}{\stMF\ifodd\value{MF}Le lecteur\xspace\else La lectrice\xspace\fi}

\newcommand{\lui}{\ifodd\value{MF}lui\xspace\else
elle\xspace\fi}

\newcommand{\celui}{\ifodd\value{MF}celui\xspace\else
celle\xspace\fi}

\newcommand{\ceux}{\ifodd\value{MF}ceux\xspace\else
celles\xspace\fi}

\newcommand{\er}{\ifodd\value{MF}er\xspace\else
ère\xspace\fi}

\newcommand{\eux}{\ifodd\value{MF}eux\xspace\else
elles\xspace\fi}

\newcommand{\eUx}{\ifodd\value{MF}eux\xspace\else
euse\xspace\fi}

\newcommand{\eUX}{\ifodd\value{MF}eux\xspace\else
euses\xspace\fi}

\newcommand{\leux}{\ifodd\value{MF}leux\xspace\else
leuse\xspace\fi}

\newcommand{\il}{\ifodd\value{MF}il\xspace\else
elle\xspace\fi}

\newcommand{\ien}{\ifodd\value{MF}ien\xspace\else
ienne\xspace\fi}

\newcommand{\iens}{\ifodd\value{MF}iens\xspace\else
iennes\xspace\fi}

\newcommand{\e}{\ifodd\value{MF}\xspace \else e\xspace\fi}

\newcommand{\n}{\ifodd\value{MF}n\xspace\else nne\xspace\fi}

\makeatletter
\newcommand{\la}{\@ifstar{\ifodd\value{MF}le\else
la\fi}{\stMF\ifodd\value{MF}le\else la\fi}}
\makeatother

\newcommand \rem{\rdb
\noi{\sl Remarque. }}

\newcommand \REM[1]{\rdb
\noi{\sl Remarque#1. }}

\newcommand \rems{\rdb
\noi{\sl Remarques. }}

\newcommand \exl{\rdb
\noi{\bf Exemple. }}

\newcommand \EXL[1]{\rdb
\noi{\bf Exemple: #1. }}

\newcommand \exls{\rdb
\noi{\bf Exemples. }}

\newcommand \thref[1] {théorème~\ref{#1}}
\newcommand \paref[1] {page~\pageref{#1}}
\newcommand \pstfref[1] {Positivstellensatz formel~\ref{#1}}
\newcommand \pstref[1] {Positivstellensatz~\ref{#1}}

\newcommand\oge{\leavevmode\raise.3ex\hbox{$\scriptscriptstyle\langle\!\langle\,$}}
\newcommand\feg{\leavevmode\raise.3ex\hbox{$\scriptscriptstyle\,\rangle\!\rangle$}}

\newcommand\gui[1]{\oge{#1}\feg}

\newcommand \facile{\begin{proof}
La démonstration est laissée \alec.
\end{proof}
}

\newcommand \num {{n$^{\mathrm{ o}}$}}

\newcommand\comm{\rdb
\noi{\sl Commentaire. }}

\newcommand\COM[1]{\rdb
\noi{\sl Commentaire #1. }}

\newcommand\comms{\rdb
\noi{\sl Commentaires. }}

\newcommand\Pb{\rdb
\noi{\bf Problème. }}

\newcommand\eoq{\hbox{}\nobreak
\vrule width 1.4mm height 1.4mm depth 0mm}

\newcommand \Cad {C'est-à-dire\xspace}
\newcommand \recu {récurrence\xspace}
\newcommand \hdr {hypothèse de \recu}
\newcommand \cad {c'est-à-dire\xspace}
\newcommand \cade {c'est-à-dire encore\xspace}
\newcommand \ssi {si, et seulement si,\xspace}
\newcommand \cnes {condition nécessaire et suffisante\xspace}
\newcommand \spdg {sans perte de généralité\xspace}
\newcommand \Spdg {Sans perte de généralité\xspace}

\newcommand \Propeq {Les propriétés suivantes sont 
équivalentes.}
\newcommand \propeq {les propriétés suivantes sont 
équivalentes.}

\newcommand \Kev {$\gK$-\evc}
\newcommand \Kevs {$\gK$-\evcs}

\newcommand \Lev {$\gL$-\evc}
\newcommand \Levs {$\gL$-\evcs}

\newcommand \Qev {$\QQ$-\evc}
\newcommand \Qevs {$\QQ$-\evcs}

\newcommand \kev {$\gk$-\evc}
\newcommand \kevs {$\gk$-\evcs}

\newcommand \lev {$\gl$-\evc}
\newcommand \levs {$\gl$-\evcs}

\newcommand \Alg {$\gA$-\alg}
\newcommand \Algs {$\gA$-\algs}

\newcommand \Blg {$\gB$-\alg}
\newcommand \Blgs {$\gB$-\algs}

\newcommand \Clg {$\gC$-\alg}
\newcommand \Clgs {$\gC$-\algs}

\newcommand \klg {$\gk$-\alg}
\newcommand \klgs {$\gk$-\algs}

\newcommand \llg {$\gl$-\alg}
\newcommand \llgs {$\gl$-\algs}

\newcommand \Klg {$\gK$-\alg}
\newcommand \Klgs {$\gK$-\algs}

\newcommand \Llg {$\gL$-\alg}
\newcommand \Llgs {$\gL$-\algs}

\newcommand \QQlg {$\QQ$-\alg}
\newcommand \QQlgs {$\QQ$-\algs}

\newcommand \Rlg {$\gR$-\alg}
\newcommand \Rlgs {$\gR$-\algs}

\newcommand \RRlg {$\RR$-\alg}
\newcommand \RRlgs {$\RR$-\algs}

\newcommand \ZZlg {$\ZZ$-\alg}
\newcommand \ZZlgs {$\ZZ$-\algs}

\newcommand \Amo {$\gA$-module\xspace}
\newcommand \Amos {$\gA$-modules\xspace}

\newcommand \Bmo {$\gB$-module\xspace}
\newcommand \Bmos {$\gB$-modules\xspace}

\newcommand \Cmo {$\gC$-module\xspace}
\newcommand \Cmos {$\gC$-modules\xspace}

\newcommand \kmo {$\gk$-module\xspace}
\newcommand \kmos {$\gk$-modules\xspace}

\newcommand \Kmo {$\gK$-module\xspace}
\newcommand \Kmos {$\gK$-modules\xspace}

\newcommand \Lmo {$\gL$-module\xspace}
\newcommand \Lmos {$\gL$-modules\xspace}

\newcommand \Vmo {$\gV$-module\xspace}
\newcommand \Vmos {$\gV$-modules\xspace}

\newcommand \Ali {application $\gA$-\lin}
\newcommand \Alis {applications $\gA$-\lins}

\newcommand \Kli {application $\gK$-\lin}
\newcommand \Klis {applications $\gK$-\lins}

\newcommand \Bli {application $\gB$-\lin}
\newcommand \Blis {applications $\gB$-\lins}

\newcommand \Cli {application $\gC$-\lin}
\newcommand \Clis {applications $\gC$-\lins}

\newcommand \ac{algébriquement clos\xspace}  

\newcommand \acl {anneau \icl}
\newcommand \acls {anneaux \icl}

\newcommand \adp {anneau de Pr\"ufer\xspace}
\newcommand \adps {anneaux de Pr\"ufer\xspace}

\newcommand \adpc {\adp \coh}
\newcommand \adpcs {\adps \cohs}

\newcommand \adu {\alg de décomposition universelle\xspace}
\newcommand \adus {\algs de décomposition universelle\xspace}

\newcommand \adv {anneau de valuation\xspace}
\newcommand \advs {anneaux de valuation\xspace}

\newcommand \advl {anneau \dvla} 
\newcommand \advls {anneaux \dvlas} 

\newcommand \Afr {Anneau \frl}
\newcommand \Afrs {Anneaux \frls}
\newcommand \afr {anneau \frl}
\newcommand \afrs {anneaux \frls}

\newcommand \aFr {\hyperref[theorieAfr]{anneau \frl}}
\newcommand \aFrs {\hyperref[theorieAfr]{anneau \frls}}

\newcommand \afrr {\afr réduit\xspace}
\newcommand \afrrs {\afrs réduits\xspace}
\newcommand \Afrrs {\Afrs réduits\xspace}

\newcommand \afrvr {\afr avec \ravs}
\newcommand \aFrvr {\hyperref[theorieAfrrv]{\afrvr}}
\newcommand \afrvrs {\afrs avec \ravs}

\newcommand \aftr {anneau réticulé \ftm réel\xspace}
\newcommand \aftrs {anneaux réticulés \ftm réels\xspace}

\newcommand \aG {\alg galoisienne\xspace}
\newcommand \aGs {\algs galoisiennes\xspace}

\newcommand \agB {\alg de Boole\xspace}
\newcommand \agBs {\algs de Boole\xspace}

\newcommand \agH {\alg de Heyting\xspace}
\newcommand \agHs {\algs de Heyting\xspace}

\newcommand \agq{algébrique\xspace}
\newcommand \agqs{algébriques\xspace}

\newcommand \agqt{algébriquement\xspace}

\newcommand \aKr {anneau de Krull\xspace}
\newcommand \aKrs {anneaux de Krull\xspace}

\newcommand \alg {algèbre\xspace}
\newcommand \algs {algèbres\xspace}

\newcommand \algo{algorithme\xspace}
\newcommand \algos{algorithmes\xspace}

\newcommand \algq{algorithmique\xspace}
\newcommand \algqs{algorithmiques\xspace}

\newcommand \ali {application \lin}
\newcommand \alis {applications \lins}

\newcommand \alo {anneau local\xspace}
\newcommand \alos {anneaux locaux\xspace}

\newcommand \algb {anneau \lgb}
\newcommand \algbs {anneaux \lgbs}

\newcommand \alrd {\alo \dcd}
\newcommand \alrds {\alos \dcds}

\newcommand \anar {anneau \ari}
\newcommand \anars {anneaux \aris}

\newcommand \anor {anneau normal\xspace}
\newcommand \anors {anneaux normaux\xspace}

\newcommand \apf {\alg \pf}
\newcommand \apfs {\algs \pf}

\newcommand \apG {\alg prégaloisienne\xspace}
\newcommand \apGs {\algs prégaloisiennes\xspace}

\newcommand \arc {anneau réel clos\xspace}
\newcommand \aRc {\hyperref[theorieArc]{\arc}}
\newcommand \arcs {anneaux réels clos\xspace}

\newcommand \ari{arithmétique\xspace}  
\newcommand \aris{arithmétiques\xspace}  

\newcommand \Asr {Anneau \str}
\newcommand \Asrs {Anneaux \strs}
\newcommand \asr {anneau \str}
\newcommand \asrs {anneaux \strs}

\newcommand \asrvr {\asr avec \ravs}
\newcommand \asrvrs {\asrs avec \ravs}

\newcommand \atf {\alg \tf}
\newcommand \atfs {\algs \tf}

\newcommand \auto {automorphisme\xspace}
\newcommand \autos {automorphismes\xspace}


\newcommand \bdg {base de Gr\"obner\xspace}
\newcommand \bdgs {bases de Gr\"obner\xspace}

\newcommand \bdp {base de \dcn partielle\xspace}
\newcommand \bdps {bases de \dcn partielle\xspace}

\newcommand \bdf {base de \fap\xspace}

\newcommand \Bif {Borne inférieure\xspace} %
\newcommand \bif {borne inférieure\xspace} %
\newcommand \bifs {bornes inférieures\xspace} %

\newcommand \bsp {borne supérieure\xspace} %
\newcommand \bsps {borne supérieures\xspace} %


\newcommand \cac{corps \ac}  

\newcommand \calf{calcul formel\xspace}  

\newcommand \cara{caractéristique\xspace}  
\newcommand \caras{caractéristiques\xspace}  

\newcommand \carn{caractérisation\xspace}  
\newcommand \carns{caractérisations\xspace}  

\newcommand \carar{caractériser\xspace}

\newcommand \carf{de caractère fini\xspace}  

\newcommand \cdac{\cdi \ac}  
\newcommand \cdacs{\cdis \ac}  
\newcommand \cdi{corps discret\xspace}
\newcommand \cdis{corps discrets\xspace}

\newcommand \cdf{corps de fractions\xspace}

\newcommand \cdH{code de Hensel\xspace}
\newcommand \cdHs{codes de Hensel\xspace}

\newcommand \cdr{corps de racines\xspace}
  
\newcommand \cdv{changement de variables\xspace}  
\newcommand \cdvs{changements de variables\xspace}  

\newcommand \cla {clôture \agq}
\newcommand \clas {clôtures \agqs}

\newcommand \cli {clôture intégrale\xspace}
\newcommand \clis {clôtures intégrales\xspace}

\newcommand \codi {corps ordonné discret\xspace}
\newcommand \codis {corps ordonnés discrets\xspace}

\newcommand \coe {coefficient\xspace}
\newcommand \coes {coefficients\xspace}

\newcommand \coh {cohérent\xspace}
\newcommand \cohe {cohérente\xspace}
\newcommand \cohs {cohérents\xspace}

\newcommand \cohc {cohérence\xspace}

\newcommand \coli {combinaison \lin}
\newcommand \colis {combinaisons \lins}

\newcommand \com {comaximaux\xspace}
\newcommand \come {comaximales\xspace}

\newcommand \coo {coordonnée\xspace}
\newcommand \coos {coordonnées\xspace}

\newcommand \cop {complémentaire\xspace}
\newcommand \cops {complémentaires\xspace}

\newcommand \cosv {conservative\xspace}
\newcommand \cosvs {conservatives\xspace}

\newcommand \cOsv {\hyperref[defithconserv]{conservative}\xspace}
\newcommand \cOsvs {\hyperref[defithconserv]{conservatives}\xspace}

\newcommand \covr {corps ordonné avec \ravs}
\newcommand \covrs {corps ordonnés avec \ravs}

\newcommand \cpb {compatible\xspace} 
\newcommand \cpbs {compatibles\xspace} 

\newcommand \cpbt {compatibilité\xspace} 
\newcommand \cpbtz {compatibilité} 

\newcommand \crc {corps réel clos\xspace}
\newcommand \crcs {corps réels clos\xspace}

\newcommand \crcd {corps réel clos discret\xspace}
\newcommand \crcds {corps réels clos discrets\xspace}

\newcommand \cqi {clôture \qi} 
\newcommand \cqis {clôtures \qis} 

\newcommand \cvd{corps valué discret\xspace}
\newcommand \cvds{corps valués discrets\xspace}

\newcommand \cvdu{\cvd ultramétrique\xspace}
\newcommand \cvdus{\cvds ultramétriques\xspace}


\newcommand \dcd {résiduellement discret\xspace}
\newcommand \dcds {résiduellement discrets\xspace}

\newcommand \dcn {décomposition\xspace}
\newcommand \dcns {décompositions\xspace}

\newcommand \dcnb {\dcn bornée\xspace}

\newcommand \dcnc {\dcn complète\xspace}

\newcommand \dcnp {\dcn partielle\xspace}

\newcommand \dcp {décomposable\xspace}
\newcommand \dcps {décomposables\xspace}

\newcommand \ddk {dimension de~Krull\xspace}
\newcommand \ddi {de dimension inférieure ou égale à~}

\newcommand \ddp {domaine de Pr\"ufer\xspace}
\newcommand \ddps {domaines de Pr\"ufer\xspace}

\newcommand \Demo{Démonstration\xspace}     

\newcommand \demo{démonstration\xspace}     
\newcommand \demos{démonstrations\xspace}     

\newcommand \dems{démonstrations\xspace}

\newcommand \deno{dénominateur\xspace}     
\newcommand \denos{dénominateurs\xspace}   

\newcommand \deter {déterminant\xspace}  
\newcommand \deters {déterminants\xspace}  
  
\newcommand \Dfn{Définition\xspace}  
\newcommand \Dfns{Définitions\xspace}  
\newcommand \dfn{définition\xspace}  
\newcommand \dfns{définitions\xspace}  

\newcommand \dftr {droite réticulée \ftm réelle\xspace}
\newcommand \dftrs {droites réticulées \ftm réelles\xspace}

\newcommand \dij{disjonctive\xspace}  
\newcommand \dijs{disjonctives\xspace}  
 
\newcommand \dil{différentiel\xspace}  
\newcommand \dils{différentiels\xspace}  
\newcommand \dile{différentielle\xspace}  
\newcommand \diles{différentielles\xspace}  

\newcommand \dip{diviseur principal\xspace}
\newcommand \dips{diviseurs principaux\xspace}

\newcommand \discri{discriminant\xspace}  
\newcommand \discris{discriminants\xspace}  

\newcommand \divle {dimension divisorielle\xspace}

\newcommand \dit{distributivité\xspace}

\newcommand \dlg{d'élargissement\xspace}  

\newcommand \dok {domaine de Dedekind\xspace}
\newcommand \doks {domaines de Dedekind\xspace}

\newcommand \dvla {à diviseurs\xspace}
\newcommand \dvlas {à diviseurs\xspace}

\newcommand \dvld {\dvlt décomposé\xspace} %
\newcommand \dvlds {\dvlt décomposés\xspace} %

\newcommand \dvlg {divisoriel\xspace} 
\newcommand \dvlgs {divisoriels\xspace} 

\newcommand \dvli {\dvlt inversible\xspace} 
\newcommand \dvlis {\dvlt inversibles\xspace} 

\newcommand \dvlt {divisoriellement\xspace} %

\newcommand \dvz {diviseur de zéro\xspace}
\newcommand \dvzs {diviseurs de zéro\xspace}

\newcommand \dve {divisibilité\xspace}

\newcommand \dvee {à \dve explicite\xspace}

\newcommand \dvr {diviseur\xspace}
\newcommand \dvrs {diviseurs\xspace}


\newcommand \Eds {Extension des scalaires\xspace}
\newcommand \edss {extensions des scalaires\xspace}
\newcommand \eds {extension des scalaires\xspace}

\newcommand \eco {\elts \com}

\newcommand \egmt {également\xspace}

\newcommand \egt {égalité\xspace}
\newcommand \egts {égalités\xspace}

\newcommand \eli{élimination\xspace}  

\newcommand \elr{élémentaire\xspace}  
\newcommand \elrs{élémentaires\xspace}  

\newcommand \elrt{élémentairement\xspace}  

\newcommand \elt{élément\xspace}  
\newcommand \elts{éléments\xspace}  

\def \endo {endomorphisme\xspace}
\def \endos {endomorphismes\xspace}

\newcommand \entrel {relation implicative\xspace}
\newcommand \entrels {relations implicatives\xspace}

\newcommand \eqn {équation\xspace}  
\newcommand \eqns {équations\xspace}  

\newcommand \eqv {équivalent\xspace}  
\newcommand \eqve {équivalente\xspace}  
\newcommand \eqvs {équivalents\xspace}  
\newcommand \eqves {équivalentes\xspace}  

\newcommand \eqvc {équivalence\xspace}  
\newcommand \eqvcs {équivalences\xspace}  

\newcommand \esid {essentiellement identique\xspace}  
\newcommand \esids {essentiellement identiques\xspace}  

\newcommand \Esid {\hyperref[defitdyesidentiques]{\esid}}  
\newcommand \Esids {\hyperref[defitdyesidentiques]{\esids}}  

\newcommand \eseq {essentiellement \eqve}  
\newcommand \eseqs {essentiellement \eqves}  

\newcommand \Eseq {\hyperref[defitheseq]{\eseq}}  
\newcommand \Eseqs {\hyperref[defitheseq]{\eseqs}}  

\newcommand\evc{espace vectoriel\xspace} 
\newcommand\evcs{espaces vectoriels\xspace}

\newcommand \fab {\fcn bornée\xspace}
\newcommand \fabs {\fcns bornées\xspace}

\newcommand \fat {\fcn totale\xspace}
\newcommand \fats {\fcn totales\xspace}

\newcommand \fap {\fcn partielle\xspace}
\newcommand \faps {\fcns partielles\xspace}

\newcommand \fip {filtre premier\xspace}
\newcommand \fips {filtres premiers\xspace}

\newcommand \fipma {\fip maximal\xspace}
\newcommand \fipmas {\fips maximaux\xspace}

\newcommand \fcn {factorisation\xspace}
\newcommand \fcns {factorisations\xspace}

\newcommand \fdi {fortement discret\xspace}
\newcommand \fdis {fortement discrets\xspace}

\newcommand \fsa {fermé \sagq}
\newcommand \fsas {fermés \sagqs}

\newcommand \fsagc {fonction \sagc}
\newcommand \fsagcs {fonctions \sagcs}

\newcommand \fmt {formellement\xspace}

\newcommand \frl {fortement réticulé\xspace}
\newcommand \frle {fortement réticulée\xspace}
\newcommand \frls {fortement réticulés\xspace}

\newcommand \ftm {fortement\xspace}

\newcommand\gmt{géométrie\xspace}  
\newcommand\gmts{géométries\xspace}  

\newcommand\gaq{\gmt \agq}  

\newcommand\gmq{géométrique\xspace}  
\newcommand\gmqs{géométriques\xspace}  

\newcommand\gmqt{géométriquement\xspace}  

\newcommand\gne{généralisé\xspace}  
\newcommand\gnee{généralisée\xspace}  
\newcommand\gnes{généralisés\xspace}  
\newcommand\gnees{généralisées\xspace}  

\newcommand\gnl{général\xspace}  
\newcommand\gnle{générale\xspace}  
\newcommand\gnls{généraux\xspace}  
\newcommand\gnles{générales\xspace}  

\newcommand\gnlt{généralement\xspace}  

\newcommand\gnn{généralisation\xspace}  
\newcommand\gnns{généralisations\xspace}  

\newcommand\gnq {générique\xspace}  
\newcommand\gnqs {génériques\xspace}  

\newcommand\gnr{généraliser\xspace}  

\newcommand \gns{généralise\xspace}

\newcommand \gnt{généralité\xspace}
\newcommand \gnts{généralités\xspace}

\newcommand \grl{groupe \rtl}
\newcommand \grls{groupes \rtls}

\newcommand \gRl {\hyperref[theorieGrl]{\grl}}
\newcommand \gRls {\hyperref[theorieGrl]{\grls}}

\newcommand\gtr{générateur\xspace}  
\newcommand\gtrs{générateurs\xspace}  


\newcommand \homo {homomorphisme\xspace}
\newcommand \homos {homomorphismes\xspace}

\newcommand \hmg {homogène\xspace}
\newcommand \hmgs {homogènes\xspace}

\newcommand \icftr {intervalle compact réticulé \ftm réel\xspace}
\newcommand \icftrs {intervalles compacts réticulés \ftm réels\xspace}

\newcommand \icl {intégralement clos\xspace}
\newcommand \icle {intégralement close\xspace}
\newcommand \icles {intégralement closes\xspace}

\newcommand \icsr {intervalle compact \stm réticulé\xspace}
\newcommand \icsrs {intervalles compacts \stm réticulés\xspace}

\newcommand \icrc {intervalle compact réel clos\xspace}
\newcommand \icrcs {intervalles compact réels clos\xspace}

\newcommand \id {idéal\xspace}
\newcommand \ids {idéaux\xspace}

\newcommand \ida {\idt \agq}
\newcommand \idas {\idts \agqs}

\newcommand \idc  {\idt de Cramer\xspace}
\newcommand \idcs {\idts de Cramer\xspace}

\newcommand \idd {idéal déterminantiel\xspace}
\newcommand \idds {idéaux déterminantiels\xspace}

\newcommand \idema {idéal maximal\xspace}
\newcommand \idemas {idéaux maximaux\xspace}

\newcommand \idep {idéal premier\xspace}
\newcommand \ideps {idéaux premiers\xspace}

\newcommand \idemi {\idep minimal\xspace}
\newcommand \idemis {\ideps minimaux\xspace}

\newcommand \idf {idéal de Fitting\xspace}
\newcommand \idfs {idéaux de Fitting\xspace}

\newcommand \idif {idéal \dvlg fini\xspace}
\newcommand \idifs {idéaux \dvlgs finis\xspace}

\newcommand \idli {idéal \dvli\xspace} 
\newcommand \idlis {idéaux \dvlis\xspace} 

\newcommand \idm {idempotent\xspace}
\newcommand \idms {idempotents\xspace}
\newcommand \idme {idempotente\xspace}
\newcommand \idmes {idempotentes\xspace}

\newcommand \idp {idéal principal\xspace}
\newcommand \idps {idéaux principaux\xspace}

\newcommand \idt {identité\xspace}
\newcommand \idts {identités\xspace}

\newcommand \idtr {indéterminée\xspace}
\newcommand \idtrs {indéterminées\xspace}

\newcommand \ifr {idéal fractionnaire\xspace}
\newcommand \ifrs {idéaux fractionnaires\xspace}

\newcommand \imd {immédiat\xspace}
\newcommand \imde {immédiate\xspace}
\newcommand \imds {immédiats\xspace}
\newcommand \imdes {immédiates\xspace}

\newcommand \imdt {immédiatement\xspace}

\newcommand \indtr {inf-demi-treillis\xspace} 

\newcommand \inteq {intuitivement \eqve}
\newcommand \inteqs {intuitivement \eqves}

\newcommand \Inteq {\hyperref[defextintequiv]{\inteq}}
\newcommand \Inteqs {\hyperref[defextintequiv]{\inteqs}}

\newcommand \ing {inverse \gne}
\newcommand \ings {inverses \gnes}

\newcommand \iMP {inverse de Moore-Penrose\xspace}
\newcommand \iMPs {inverses de Moore-Penrose\xspace}

\newcommand \ipp {\idep potentiel\xspace}
\newcommand \ipps {\ideps potentiels\xspace}

\newcommand \ird {irréductible\xspace}
\newcommand \irds {irréductibles\xspace}

\newcommand \iso {isomorphisme\xspace}
\newcommand \isos {isomorphismes\xspace}

\newcommand \itf {idéal \tf}
\newcommand \itfs {idéaux \tf}

\newcommand \itid {intuitivement identique\xspace}
\newcommand \itids {intuitivement identiques\xspace}

\newcommand \iv {inversible\xspace}
\newcommand \ivs {inversibles\xspace}

\newcommand \ivdg {inverse divisoriel\xspace} 
\newcommand \ivdgs {inverses divisoriels\xspace} 

\newcommand \ivde {inverse divisorielle\xspace} 
\newcommand \ivdes {inverses divisorielles\xspace} 

\newcommand \ivda {inverse divisoriel\xspace} 
\newcommand \ivdas {inverses divisoriels\xspace} 


\newcommand \lgb {local-global\xspace}
\newcommand \lgbe {locale-globale\xspace}
\newcommand \lgbs {local-globals\xspace}

\newcommand \lin {linéaire\xspace}
\newcommand \lins {linéaires\xspace}

\newcommand \lint {linéairement\xspace}

\newcommand \lmo {\lot monogène\xspace}
\newcommand \lmos {\lot monogènes\xspace}

\newcommand \lnl {\lot \nl}
\newcommand \lnls {\lot \nls}

\newcommand \lot {localement\xspace}

\newcommand \lon {localisation\xspace}
\newcommand \lons {localisations\xspace}

\newcommand \lop {\lot principal\xspace}
\newcommand \lops {\lot principaux\xspace}

\newcommand \lsdz {\lot \sdz}

\newcommand \mdi {module des \diles}

\newcommand \mlm {module \lmo}
\newcommand \mlms {modules \lmos}

\newcommand \mlmo {matrice de localisation
monogène\xspace}
\newcommand \mlmos {matrices de localisation
monogène\xspace}

\newcommand \mlp {matrice de localisation
principale\xspace}
\newcommand \mlps {matrices de localisation
principale\xspace}

\newcommand \mo {mono\"{\i}de\xspace}
\newcommand \mos {mono\"{\i}des\xspace}

\newcommand \moco {\mos \com}

\newcommand \molo {morphisme de \lon\xspace}
\newcommand \molos {morphismes de \lon\xspace}

\newcommand \mom {monôme\xspace}
\newcommand \moms {monômes\xspace}

\newcommand \moquo {morphisme de passage au quotient\xspace}
\newcommand \moquos {morphismes de passage au quotient\xspace}

\newcommand \mpf {module \pf}
\newcommand \mpfs {modules \pf}

\newcommand \mpl {module plat\xspace}
\newcommand \mpls {modules plats\xspace}

\newcommand \mpn {matrice de \pn}
\newcommand \mpns {matrices de \pn}

\newcommand \mprn {matrice de \prn}
\newcommand \mprns {matrices de \prn}

\newcommand \mptf {module \ptf}
\newcommand \mptfs {modules \ptfs}

\newcommand \mrc {module \prc}
\newcommand \mrcs {modules \prcs}

\newcommand \mtf {module \tf}
\newcommand \mtfs {modules \tf}


\newcommand \ncr{nécessaire\xspace}       
\newcommand \ncrs{nécessaires\xspace}       

\newcommand \ncrt{nécessairement\xspace}       

\newcommand \ndz {régulier\xspace}
\newcommand \ndzs {réguliers\xspace}

\newcommand \nl {simple\xspace}
\newcommand \nls {simples\xspace}

\newcommand \noco {\noe \coh}
\newcommand \nocos {\noes \cohs}

\newcommand \Noe {Noether\xspace}

\newcommand \noe {noethérien\xspace}
\newcommand \noes {noethériens\xspace}
\newcommand \noee {noethérienne\xspace}
\newcommand \noees {noethériennes\xspace}

\newcommand \noet {noethérianité\xspace}

\newcommand \nst {Nullstellensatz\xspace}
\newcommand \nsts {Nullstellens\"atze\xspace}

\newcommand \op{opération\xspace}  
\newcommand \ops{opérations\xspace}
\newcommand \opari{\op\ari}  
\newcommand \oparis{\ops\aris}  
\newcommand \oparisv{\ops\arisv}  

\newcommand \oqc {ouvert \qc}
\newcommand \oqcs {ouverts \qcs}

\newcommand \ort{orthogonal\xspace}  
\newcommand \orte{orthogonale\xspace}  
\newcommand \orts{orthogonaux\xspace}  
\newcommand \ortes{orthogonales\xspace}  


\newcommand \pa {couple saturé\xspace}
\newcommand \pas {couples saturés\xspace}
 
\newcommand \paral{parallèle\xspace}  
\newcommand \parals{paallèles\xspace}  

\newcommand \paralm{parallèlement\xspace}

\newcommand \pb{problème\xspace}  
\newcommand \pbs{problèmes\xspace}  

\newcommand \peq {purement équationnelle\xspace}
\newcommand \peqs {purement équationnelles\xspace}

\newcommand \pf {de \pn finie\xspace}

\newcommand \plc {résiduellement \zed}
\newcommand \plcs {résiduellement \zeds}

\newcommand \Plg {Principe \lgb}
\newcommand \plg {principe \lgb}
\newcommand \plgs {principes \lgbs}

\newcommand \plga {\plg abstrait\xspace}
\newcommand \plgas {\plgs abstraits\xspace}

\newcommand \Plgc {\Plg concret\xspace}
\newcommand \plgc {\plg concret\xspace}
\newcommand \plgcs {\plgs concrets\xspace}

\newcommand \pn {présentation\xspace}
\newcommand \pns {présentations\xspace}

\newcommand \pog {\pol \hmg\xspace}
\newcommand \pogs {\pols \hmgs\xspace}

\newcommand \Pol {Polynôme\xspace}
\newcommand \Pols {Polynômes\xspace}

\newcommand \pol {polynôme\xspace}
\newcommand \pols {polynômes\xspace}

\newcommand \poll{polynomial\xspace}  
\newcommand \polls{polynomiaux\xspace}  
\newcommand \polle{polynomiale\xspace}  
\newcommand \polles{polynomiales\xspace}  

\newcommand \pollt{polynomialement\xspace}  

\newcommand \polfon {\pol fondamental\xspace}
\newcommand \polmu {\pol rang\xspace}
\newcommand \polmus {\pols rang\xspace}
\newcommand \polcar {\pol caractéristique\xspace}
\newcommand \polmin {\pol minimal\xspace}

\newcommand \prc {\pro de rang constant\xspace}
\newcommand \prcs {\pros de rang constant\xspace}

\newcommand \prcc {principe de \rcc}
\newcommand \prca {principe de \rca}
\newcommand \prce {principe de \rce}

\newcommand \prmt {précisément\xspace}
\newcommand \Prmt {Précisément\xspace}

\newcommand \prn {projection\xspace}
\newcommand \prns {projections\xspace}

\newcommand \pro {projectif\xspace}
\newcommand \pros {projectifs\xspace}

\newcommand \prr {projecteur\xspace}
\newcommand \prrs {projecteurs\xspace}

\newcommand \Prt {Propriété\xspace}
\newcommand \Prts {Propriétés\xspace}
\newcommand \prt {propriété\xspace}
\newcommand \prts {propriétés\xspace}

\newcommand \ptf {\pro \tf}
\newcommand \ptfs {\pros \tf}

\newcommand \qc {quasi-compact\xspace}
\newcommand \qcs {quasi-compacts\xspace}

\newcommand \qi {quasi intègre\xspace}
\newcommand \qis {quasi intègres\xspace}

\newcommand \qiv {quasi inverse\xspace}
\newcommand \qivs {quasi inverses\xspace}

\newcommand \qnl {quasi-\nl}
\newcommand \qnls {quasi-\nls}

\newcommand \ralg {règle \agq}
\newcommand \ralgs {règles \agqs}

\newcommand \rav {racine virtuelle\xspace}
\newcommand \ravs {racines virtuelles\xspace}

\newcommand \rcc {\rcm concret\xspace}
\newcommand \rca {\rcm abstrait\xspace}
\newcommand \rce {\rcc des égalités\xspace}

\newcommand \rcm {recollement\xspace}
\newcommand \rcms {recollements\xspace}

\newcommand \rcv {recouvrement\xspace} 
\newcommand \rcvs {recouvrements\xspace}

\newcommand \rde {relation de dépendance\xspace}
\newcommand \rdes {relations de dépendance\xspace}

\newcommand \rdi {\rde intégrale\xspace}
\newcommand \rdis {\rdes intégrales\xspace}

\newcommand \rdij {règle \dij}
\newcommand \rdijs {règles \dijs}

\newcommand \rdl {\rde \lin}
\newcommand \rdls {\rdes \lins}

\newcommand \rdt {résiduellement\xspace}

\newcommand \rdv {relation de \dve valuative\xspace}
\newcommand \rdvs {relations de \dve valuative\xspace}

\newcommand \rdy {règle dynamique\xspace}
\newcommand \rdys {règles dynamiques\xspace}

\newcommand \red {règle directe\xspace}
\newcommand \reds {règles directes\xspace}

\newcommand \rex {\hyperref[defexistsimple]{règle existentielle simple}\xspace}
\newcommand \rexs {\hyperref[defexistsimple]{règles existentielles simples}\xspace}

\newcommand \rexri {\hyperref[defitdyexrig]{règle existentielle rigide}\xspace}
\newcommand \rexris {\hyperref[defitdyexrig]{règles existentielles rigides}\xspace}

\newcommand \rsim {règle de simplification\xspace}
\newcommand \rsims {règles de simplification\xspace}

\newcommand \rtl {réticulé\xspace}
\newcommand \rtls {réticulés\xspace}

\newcommand \rmq {\rcm de quotients\xspace} 
\newcommand \rvq {\rcv par quotients\xspace} 
\newcommand \rmqs {\rcms de quotients\xspace} %
\newcommand \rvqs {\rcvs par quotients\xspace} %

\newcommand \rpf {réduite-de-présentation-finie\xspace}
\newcommand \rpfs {réduites-de-présentation-finie\xspace}


\newcommand \sad {\salg dynamique\xspace}
\newcommand \sads {\salgs dynamiques\xspace}

\newcommand \sagq {semi\agq}
\newcommand \sagqs {semi\agqs}

\newcommand \sagc {\sagq continue\xspace}
\newcommand \sagcs {\sagqs continues\xspace}

\newcommand \salg {structure \agq}
\newcommand \salgs {structures \agqs}

\newcommand \scentrel {relation semi-implicative\xspace}
\newcommand \scentrels {relations semi-implicatives\xspace}

\newcommand \sca {schéma affine\xspace}
\newcommand \scas {schémas affines\xspace}

\newcommand \scf {schéma spectral\xspace}
\newcommand \scfs {schémas spectraux\xspace}

\newcommand \scl {schéma \elr}
\newcommand \scls {schémas \elrs}

\newcommand \sdo {\sdr \orte}
\newcommand \sdos {\sdrs \ortes}

\newcommand \sdr {somme directe\xspace}
\newcommand \sdrs {sommes directes\xspace}

\newcommand \sdz {sans \dvz}

\newcommand \sfio {système fondamental d'\idms orthogonaux\xspace}
\newcommand \sfios {systèmes fondamentaux d'\idms orthogonaux\xspace}

\newcommand \sgr {\sys \gtr}
\newcommand \sgrs {\syss \gtrs}

\newcommand \slgb {strictement \lgb}
\newcommand \slgbs {strictement \lgbs}

\newcommand \sli {\sys \lin}
\newcommand \slis {\syss \lins}

\newcommand \smq {symétrique\xspace}
\newcommand \smqs {symétriques\xspace}

\newcommand \spb {séparable\xspace}  
\newcommand \spbs {séparables\xspace}

\newcommand \spe {spécification\xspace}
\newcommand \spes {spécifications\xspace}

\newcommand \spi {\spe incomplète\xspace}
\newcommand \spis {\spes incomplètes\xspace}

\newcommand \spl {séparable\xspace}  
\newcommand \spls {séparables\xspace}

\newcommand \spo {semipolynôme\xspace}
\newcommand \spos {semipolynômes\xspace}

\newcommand \spt{séparabilité\xspace}

\newcommand \srg {suite régulière\xspace}
\newcommand \srgs {suites régulières\xspace}

\newcommand \stf {strictement fini\xspace}
\newcommand \stfs {strictement finis\xspace}
\newcommand \stfe {strictement finie\xspace}
\newcommand \stfes {strictement finies\xspace}

\newcommand \stl {stablement libre\xspace}
\newcommand \stls {stablement libres\xspace}

\newcommand \stm {strictement\xspace}

\newcommand \str {\stm réticulé\xspace}
\newcommand \stre {\stm réticulée\xspace}
\newcommand \strs {\stm réticulés\xspace}
\newcommand \stres {\stm réticulées\xspace}

\newcommand \sul {supplémentaire\xspace}
\newcommand \suls {supplémentaires\xspace}

\newcommand \Sut {Support\xspace}
\newcommand \Suts {Supports\xspace}
\newcommand \sut {support\xspace}

\newcommand \syc {\sys de coordonnées\xspace}
\newcommand \sycs {\syss de coordonnées\xspace}

\newcommand \syp {\sys \poll}
\newcommand \syps {\syss \polls}

\newcommand \sys {système\xspace}
\newcommand \syss {systèmes\xspace}

\newcommand \talg {théorie \agq}
\newcommand \talgs {théories \agqs}

\newcommand \tco {théorie cohérente\xspace}
\newcommand \tcos {théories cohérentes\xspace}

\newcommand \tdij {théorie \dij}
\newcommand \tdijs {théories \dijs}

\newcommand \tdy {théorie dynamique\xspace}
\newcommand \tdys {théories dynamiques\xspace}

\newcommand \tel {théorie existentielle\xspace}
\newcommand \tels {théories existentielles\xspace}

\newcommand \telri {théorie existentiellement rigide\xspace}
\newcommand \telris {théories existentiellement rigides\xspace}

\newcommand \tf {de type fini\xspace}

\newcommand \tfo {théorie formelle\xspace}
\newcommand \tfos {théorie formelles\xspace}

\newcommand \tgm {théorie \gmq}
\newcommand \tgms {théories \gmqs}

\newcommand \Tho {Théorème\xspace}
\newcommand \Thos {Théorèmes\xspace}
\newcommand \tho {théorème\xspace}
\newcommand \thos {théorèmes\xspace}

\newcommand \thoc {théorème$\mathbf{^*}$~}

\newcommand \tpe {théorie \peq}
\newcommand \tpes {théories \peqs}

\newcommand \trdi {treillis distributif\xspace}
\newcommand \trdis {treillis distributifs\xspace}

\newcommand \trel {transformation \elr}
\newcommand \trels {transformations \elrs}

\newcommand \umd {unimodulaire\xspace}
\newcommand \umds {unimodulaires\xspace}

\newcommand \unt {unitaire\xspace}
\newcommand \unts {unitaires\xspace}

\newcommand \uvl {universel\xspace}
\newcommand \uvle {universelle\xspace}
\newcommand \uvls {universels\xspace}
\newcommand \uvles {universelles\xspace}


\newcommand \vala {valeur absolue\xspace}
\newcommand \valas {valeurs absolues\xspace}

\newcommand \valn {valuation\xspace}
\newcommand \valns {valuations\xspace}

\newcommand \valu {\vala \ultm}
\newcommand \valus {\valas \ultms}

\newcommand \vfn {vérification\xspace}
\newcommand \vfns {vérifications\xspace}

\newcommand \vmd {vecteur \umd}
\newcommand \vmds {vecteurs \umds}

\newcommand \vst {Valuativstellensatz\xspace}
\newcommand \vsts {Valuativstellensätze\xspace}

\newcommand \zed {zéro-dimensionnel\xspace}
\newcommand \zede {zéro-dimensionnelle\xspace}
\newcommand \zeds {zéro-dimensionnels\xspace}
\newcommand \zedes {zéro-dimensionnelles\xspace}

\newcommand \zedr {\zed réduit\xspace}
\newcommand \zedrs {\zeds réduits\xspace}

\newcommand \zmt {\tho de Zariski-Grothendieck\xspace}


\newcommand \cof {constructif\xspace}
\newcommand \cofs {constructifs\xspace}

\newcommand \cov {constructive\xspace}
\newcommand \covs {constructives\xspace}

\newcommand \coma {\maths\covs}
\newcommand \clama {\maths classiques\xspace}

\renewcommand \cot {constructivement\xspace}

\newcommand \matn {mathématicien\xspace}
\newcommand \matne {mathématicienne\xspace}
\newcommand \matns {mathématiciens\xspace}
\newcommand \matnes {mathématiciennes\xspace}

\newcommand \maths {mathématiques\xspace}
\newcommand \mathe {mathématique\xspace}

\newcommand \prco {démonstration \cov}
\newcommand \prcos {démonstrations \covs}

\thickmuskip = 7mu plus 2mu

\pagestyle{headings}
\patchcmd{\sectionmark}{\MakeUppercase}{}{}{}

\stMF
\startcontents[french]

\author{Henri Lombardi, Assia Mahboubi}
\maketitle

\rdb
\label{beginfrench}

\begin{abstract} 
La première partie de cet article est un survey concernant la méthode \cov dynamique mise au point en utilisant les \tdys et les \sads.
La méthode dynamique fournit un contenu calculatoire caché pour de nombreux objets abstraits des \clama qui semblent à priori inaccessibles \cot, e.g. la clôture \agq d'un corps (discret). Lorsqu'une \demo en \clama utilise ces objets abstraits et aboutit à un résultat concret, la méthode dynamique permet \gnlt de découvrir un \algo pour ce résultat concret.
La deuxième partie de l'article applique cette méthode à la théorie de la \dve. Nous comparons deux notions de spectre valuatif présents dans la littérature et nous introduisons une troisième notion, implicite dans un article qui traite la \tdy des \cvds \ac.
Les deux premières notions sont dues respectivement à Huber \& Knebusch et Thierry Coquand. Nous montrons que les treillis valuatifs correspondants sont essentiellement les mêmes. Nous établissons des \vsts formels correspondant à ces théories et nous comparons les différentes notions de dimension valuative qui en résultent.
\end{abstract}

\noindent {\bf Mots clés}. 
Théorie \cohe, \tdy, \sad, \coma, 
\trdi, espace spectral, 
treillis de Zariski, spectre de Zariski, treillis valuatif, spectre valuatif, dimensions valuatives. 

\smallskip \noindent {\bf MSC}. 03Fxx 13xx 18Fxx

\small

\setcounter{tocdepth}{4}
\markboth{Table des matières}{Table des matières}

\printcontents[french]{}{1}{}
\normalsize

\section{Introduction}
Cet article est écrit dans le style des \coma à la Bishop. Voir les ouvrages \cite{fBi67,fBB85,fBR1987,fACMC,fMRR,fYen2015}.

Dans tout ce qui suit, on utilise le vocabulaire et les notations des théories dynamiques. Voir \cite{fBC2005,fBC2019,fCoq2005,fCLR01,fLom98,fLom02,fLom06}.

En fait il s'agit ici de s'éclaircir les idées sur la théorie constructive des anneaux de valuation et des courbes \agqs telle qu'elle est déjà abordée dans \cite{fCoq2009,fCL2016,fCLQ2010,fCLR01}  et \cite[section~XIII-8]{fACMC}.  

Nous espérons que cette étude nous ouvrira la voie pour 
\begin{itemize}
\item comprendre (\cot) la théorie des diviseurs de Weil-Kronecker appliquée à un anneau géométrique intègre (donc prolonger l'étude de \cite{fCL2016});
\item comprendre (\cot) le \tho qui dit que la théorie de l'anneau des entiers algébriques est complète \cite{fvdD88,fvDM1990,fPre82,fPS1990};
\item  comprendre (\cot) le \tho de 
Riemann-Roch dans sa version algébrique générale (\cite{fFulton,fEdw1990}); 
\item  comprendre (\cot) la version Grothendieck du \tho de 
Riemann-Roch exposée dans~\cite{fBS1958}.

\end{itemize}

\medskip
Cet article commence dans les trois premières sections  par un survey concernant la méthode \cov dynamique mise au point en utilisant les \tdys et les \sads. 
La méthode dynamique fournit un contenu calculatoire caché pour de nombreux objets abstraits des \clama qui semblent à priori inaccessibles \cot, e.g. la clôture \agq d'un corps (discret). Lorsqu'une \demo en \clama utilise ces objets abstraits et aboutit à un résultat concret, la méthode dynamique permet \gnlt de découvrir un \algo pour ce résultat concret. 

La deuxième partie de l'article applique cette méthode à la théorie de la \dve.

Dans la théorie des anneaux de valuation, \cite{fHK1994} ont introduit le spectre valuatif d'un anneau commutatif arbitraire, à l'image du spectre de Zariski et du spectre réel. Par ailleurs \cite{fCoq2009} a introduit la version \cov du spectre valuatif d'un anneau intègre. Il s'agissait notamment pour nous de comparer ces deux notions en toute généralité.   

Dans les sections \ref{fsecdival0} à \ref{fsubsectrdisad} nous donnons des généralités sur les \trdis, les espaces spectraux, les \tgms, les \sads, et les relations entre ces différents objets.
La notion centrale de \sad correspond à l'idée intuitive d'une structure \agq incomplètement spécifiée. L'existence de modèles de ces structures est un outil omniprésent en \clama. Cette existence purement idéale a bien souvent comme seule contrepartie \cov le fait que la \sad en question ne s'effondre pas. Mais cela suffit en \gnl pour démontrer \cot les résultats concrets obtenus en \clama en utilisant l'existence purement idéale de ces modèles.   

La section \ref{fsecdival} est consacrée à plusieurs \tdys pour les \advs,
autour de celles nommées \sa{val} et \sa{Val}, lesquelles correspondent respectivement aux approches de \cite{fHK1994} et \cite{fCoq2009}.
Le \thref{fthValval} établit l'isomorphisme des \trdis (et donc celui des espaces spectraux) correspondant aux \sads définies dans les théories \sa{val} et \sa{Val} pour une \alg $\gk\to\gK$ lorsque~$\gk$ est un sous anneau d'un \cdi $\gK$.

La section \ref{fsubsecdival} aborde la dimension valuative des anneaux commutatifs et des \algs.
En particulier nous montrons que la dimension valuative d'un anneau commutatif $\gA$ est la \ddk du \trdi associé à la \sad $\sa{val}(\gA,\gA)$. Dans le cas d'un anneau intègre, la comparaison avec la dimension valuative définie \cot par \cite{fCoq2009} résulte de l'isomorphisme de \trdis établi au \thref{fthValval}. 
Nous établissons aussi \cot l'équivalence entre plusieurs approches \covs de différentes notions de dimension valuative d'un anneau ou d'une \alg.
Pour cela nous introduisons la clôture \qi minimale d'un anneau~$\gA$, qui sert de substitut à un trop hypothétique \gui{anneau intègre engendré par $\gA$}.  

La section \ref{fsecdival6} rappelle la \tdy \sa{Cvd} des \cvds donnée dans \cite{fCLR01}. L'objectif de cette section est semblable à celui de la section \ref{fsecdival}: on doit montrer que les \sads associées aux théories
 \sa{val},  \sa{Val} et  \sa{Cvd} sont essentiellement les mêmes.
Nous nous appuyons sur les \vsts formels établis pour \sa{Cvd} et développons des \vsts formels analogues pour  \sa{val} et \sa{Val}.
 Ceci donne une comparaison historique intéressante entre 
divers résultats de type \vst.   
En particulier le \vst formel~\ref{fthVstformelvalter} explique l'identité entre les certificats \agqs  issus des théories formelles développées (implicitement ou explicitement) dans \cite{fHK1994}, \cite{fCLR01} et \cite{fCoq2009}.

\smallskip \noindent {\bf Remerciements.} Nous remercions le rapporteur pour ses conseils fort pertinents.

\section{Treillis distributifs et espaces spectraux}\label{fsecdival0}
Références: \cite{fSto37,fCC00,fCL05} et \cite[Chapitres XI et XIII]{fACMC}.

\subsection{L'article fondateur de Stone}\label{fsubsecStoneAntiequiv}
L'article fondateur   \cite{fSto37} se traduit en langage moderne comme suit.

\textsl{La catégorie des \trdis est, en \clama, anti\eqve à la catégorie des espaces spectraux.}

Nous expliquons ceci un peu plus en détail.

\Subsubsection{Idéaux et filtres dans un \trdi}\label{fsubsecTrdiIdeFi}
 
Un \textsl{idéal} $\fb $ d'un \trdi $(\gT,\vi,\vu,0,1)$ est une
partie qui satisfait les contraintes:
\begin{equation}\label{feqIdeal}
\left.
\begin{array}{rcl}
  & & 0 \in \fb  \\
x,y\in \fb & \Longrightarrow  & x\vu y \in \fb  \\
x\in \fb ,\; z\in \gT& \Longrightarrow  & x\vi z \in \fb  \\
\end{array}
\right\}
\end{equation}
On note $\gT/(\fb=0)$ le treillis quotient obtenu en forçant les \elts de $\fb$ à être nuls. On peut aussi définir les \ids comme les noyaux de morphismes.

Un \textsl{\idp} est un \id engendré par un seul \elt $a$, il est égal à \hbox{$\dar a:=\sotq{x\in \gT}{x\leq a}$}. 
L'\id $\dar a$, muni des lois $\vi$ et $\vu$ de $\gT$ est un \trdi
dans lequel l'\elt maximum est $a$. L'injection canonique $\dar
a\rightarrow \gT$ n'est pas un morphisme de \trdis parce que
l'image de $a$ n'est pas égale à $1_\gT$. Par contre
l'application $\gT\rightarrow \dar a,\;x\mapsto x\vi a$
est un morphisme surjectif, qui définit donc $\dar a$ comme une
structure quotient $\gT/(a=1)$.

La notion de \textsl{filtre} est la notion opposée (\cad obtenue en renversant la relation d'ordre) à celle d'idéal.

Soient $\fa$ un \id et $\fv$ un filtre de $\gT$.
On dit que $(\fa,\fv)$ est un \textsl{couple saturé} dans~$\gT$ si
$$
(g\in \fv,\; x\vi g \in \fa) \Longrightarrow x\in \fa,
\hbox{ et }(a\in \fa,\; x\vu a \in \fv) \Longrightarrow x\in \fv.
$$

On peut aussi définir un couple saturé comme un couple $(\varphi^{-1}(0),\varphi^{-1}(1))$ pour un morphisme  $\varphi:\gT\to\gT'$ de \trdis.
Lorsque $(\fa,\fv)$ est un couple saturé, on a
les \eqvcs
$$
1\in \fa\; \;\Longleftrightarrow\;\; 0\in \fv
\;\; \Longleftrightarrow\;\; (\fa,\fv)=(\gT ,\gT )
$$

Si $A$ et $B$ sont deux parties de $\gT$ on note
\begin{equation}\label{feqvuvi}
A\vu B=\sotq{ a\vu b}  {a\in A,\,b\in B}  \; \hbox{ et } \; A\vi
B=\sotq{ a\vi b}  {a\in A,\,b\in B}.
\end{equation}
Alors l'\id engendré par deux \ids $\fa$ et $\fb$ est égal à
\begin{equation}\label{feqSupId}
\cI_\gT(\fa\cup \fb) = \fa\vu\fb.
\end{equation}

L'ensemble des \ids de $\gT$ forme lui-même un \trdi\footnote{En fait
il faut introduire une restriction pour obtenir vraiment un ensemble, de fa\c{c}on à ce que l'on ait un procédé bien défini de construction des \ids concernés.
Par exemple on peut considérer l'ensemble des \ids obtenus à partir des \idps par certaines opérations prédéfinies, comme les réunions et intersections dénombrables, les transporteurs etc.} pour
l'inclusion, avec pour borne inférieure de $\fa$ et $\fb$ l'\id 
\begin{equation}\label{feqInfId}
\fa\cap \fb=\fa\vi\fb.
\end{equation}
Ainsi les opérations $\vu$ et $\vi$ définies en (\ref{feqvuvi})
correspondent au sup et au inf dans le treillis des \ids.

Quand on considère le treillis des filtres, il faut
faire attention à ce que produit le renversement de la relation
d'ordre: $\fv_1\cap\fv_2=\fv_1\vu\fv_2$ est le inf des filtres~$\fv_1$ et $\fv_2$,
tandis que leur sup est le treillis engendré par $\fv_1\cup \fv_2$, égal à $\fv_1\vi\fv_2$.

\Subsubsection{Le spectre d'un \trdi} 

En \clama un {\em \id premier} $\fp$ d'un \trdi $\gT\neq \Un$ est un \id dont
le complémentaire $\fv$ est un filtre (qui est alors un {\em
filtre premier}).  On~a alors $\gT/(\fp=0,\fv=1)\simeq\Deux$.  Il
revient au même de se donner un \idep de $\gT$ ou un morphisme de
\trdis $\gT\rightarrow \Deux$.


On vérifie facilement que si $S$ est une partie génératrice du
\trdi $\gT$, un \idep~$\fp$ de $\gT$ est complètement
caractérisé par sa trace sur $S$ (cf.  \cite{fCC00}).

\begin{fdefinition} \label{fdefiSpecTrdi}
 Le \textsl{spectre d'un \trdi $\gT$} est l'ensemble $\Spec
\,\gT$ de ses \ideps, muni de la topologie suivante: une base
d'ouverts est donnée par les 
$$
\DT(a)\eqdefi\sotq{\fp\in\Spec
\,\gT}{a\notin\fp},\quad a\in \gT.
$$
 \end{fdefinition}
On vérifie en \clama que
\begin{equation} \label{feqDa}
\left.
\begin{array}{rclcrcl}
  \DT(a\vi b)   & =  & \DT(a)\cap \DT(b) ,&\quad & \DT(0)  & =  & 
\emptyset  ,\\
  \DT(a\vu b)   & =  & \DT(a)\cup \DT(b) ,&&  \DT(1) & =  &  
\Spec\,\gT.
  \end{array}
\right\}
\end{equation}

Le complémentaire de $\DT(a)$ est un fermé que l'on note $\VT(a)$.

On étend la notation $\VT(a)$ comme suit: si $I\subseteq\gT$, on
pose $\VT(I)\eqdefi\bigcap_{x\in I}\VT(x)$.  Si~$I$ engendre l'idéal $\fII$, on a $\VT(I)=\VT(\fII)$.  On dit parfois que $\VT(I)$ est \textsl{la
variété associée~à~$I$}.

\begin{fdefinition} \label{fdefiEspaceSpectral}
 Un espace topologique homéomorphe à un espace $\Spec(\gT)$
est appelé un \textsl{espace spectral}.  \end{fdefinition}

Les espaces spectraux proviennent de l'étude  \cite{fSto37}.

 \cite{fJoh1986}  appelle ces espaces des \textsl{espaces cohérents}.
\cite{fBW74} les appellent \textsl{Stone spaces}. Le nom \textsl{spectral space} est donné par \cite{fHoc1969}.

Avec la logique classique et l'axiome du choix, l'espace $\Spec (\gT)$
a \gui{suffisamment de points}: on peut retrouver le treillis $\gT$
à partir de son spectre.

\smallskip 
On dit qu'un point $\fp$ d'un espace spectral $X$ est le
\textsl{point générique du fermé $F$} \hbox{si $F=\ov{\so{\fp}}$}.  Ce point
(quand il existe) est nécessairement unique car les espaces spectraux
sont des espaces de Kolmogoroff.  En fait, les fermés $\ov{\so{\fp}}$
sont exactement tous les fermés irréductibles de $X$.  La relation
d'ordre $\fq\in\ov{\so{\fp}}$ sera notée $\fp\leq_X \fq$, et \hbox{l'on a} les équivalences
\begin{equation} \label {feqOrdreSpec}
\fp\leq_X \fq\;\Longleftrightarrow\; \ov{\so{\fq}}\subseteq \ov{\so{\fp}}\,.
\end{equation}
Les points fermés de $\Spec(\gT)$ sont les \idemas de $\gT$.
Lorsque $X=\Spec(\gT)$ la relation $\fp\leq_X \fq$ est simplement la
relation d'inclusion usuelle $\fp\subseteq \fq$ entre \ideps du \trdi $\gT$.

\Subsubsection{L'antiéquivalence de Stone}

Tout d'abord on a le \tho de Krull.


\medskip\noindent
{\bf \Tho de Krull }\label{fThKrull} (en \clama)\\ 
{\it Soient $\fa$  un \id et $\fv$ un filtre du \trdi $\gT$. Supposons que 
$\fa\cap\fv=\emptyset$.
Alors il existe un \idep $\fp$ tel \hbox{que $\fa\subseteq\fp$} et
$\fp\cap\fv=\emptyset$.
  }

\medskip
On en déduit que:
\begin{itemize}
\item L'application $a\in\gT\,\mapsto\,\DT(a)\in\cP(\Spec\,\gT)$
est injective: elle identifie $\gT$ à un treillis d'ensembles 
(\textsl{\tho de
représentation de Birkhoff}).
\item Si $\varphi : \gT\to\gT'$ est un \homo injectif l'application
$\varphi^\star:\Spec\,\gT'\to\Spec\,\gT$ obtenue par dualité est 
surjective.
\item Tout \id de $\gT$ est intersection des \ideps qui le 
contiennent.
\item L'application $\fII\mapsto \VT(\fII)$, des \ids de $\gT$ vers 
les fermés
de $\Spec\,\gT$ est un \iso d'ensembles ordonnés (pour l'inclusion 
et
l'inclusion renversée).
\end{itemize}

On montre aussi les résultats suivants.
\begin{itemize}
\item Les \oqcs de $\Spec \,\gT$ sont exactement les
$\DT(a)$. 
\item Les \oqcs de $\Spec \,\gT$
forment un \trdi de parties de~$\Spec \,\gT$, isomorphe à $\gT$ (d'après les \egts \eqref{feqDa}). 
\item Si  $X$ est un espace spectral, ses \oqcs forment un \trdi, noté
$\OQC(X)$. 
\item Pour tout \trdi $\gT$,
$\OQC(\Spec(\gT))$ est canoniquement isomorphe à $\gT$ et pour tout
espace spectral $X$, $\Spec(\OQC(X))$ est canoniquement 
homéomorphe~à~$X$. 
\end{itemize}
 
\begin{fdefinition} \label{fdefiApplispectrale}
Tout \homo $\varphi :\gT\rightarrow \gT'$ de \trdis fournit
\textsl{par dualité} une application continue 
\[\varphi^\star:\Spec\,\gT'\rightarrow \Spec \,\gT, \,\fp\mapsto \varphi^{-1}(\fp)
\]
qui est appelée une \textsl{application spectrale}. 
\end{fdefinition}

Pour qu'une application entre
espaces spectraux soit spectrale il faut et il suffit que l'image
réciproque de tout \oqc soit un \oqc. En particulier elle doit être continue.

\smallskip L'article fondateur de Stone donne la \carn suivante des espaces spectraux:
\begin{itemize}
\item L'espace est de Kolmogoroff (i.e., de type $\mathrm{T}_0$): 
étant donnés deux points il existe un voisinage de l'un des deux qui ne contient pas l'autre.
\item L'espace est \qc.
\item L'intersection de deux \oqcs est un \oqc.
\item Tout ouvert est réunion d'\oqcs.
\item Pour tout fermé $F$ et pour tout ensemble $S$ d'\oqcs tels 
que 
\[
\textstyle F\cap
\bigcap_{U\in S'} U\neq \emptyset\,\hbox{   pour toute partie finie  }\,S'
\,\hbox{  de  }\,S
\]
on a aussi
$F\cap \bigcap_{U\in S} U\neq \emptyset$.
\end{itemize}

\smallskip En présence des quatre premières propriétés,  \cite{fHoc1969} montre que la dernière  peut se
reformuler comme suit: 
\begin{itemize}
\item tout fermé irréductible admet un point
générique (i.e., est l'adhérence d'un point).
\end{itemize}

\smallskip L'article de Stone démontre pour l'essentiel que la catégorie des espaces spectraux et applications spectrales est antiéquivalente à
celle des \trdis (\cite[{II-3.3}, coherent locales]{fJoh1986},\cite{fSto37}).
La formulation moderne précise est la suivante.
\begin{ftheorem} \label{fthStoneAntiequiv}
Les foncteurs contravariants $\Spec$ et $\OQC$ entre la catégorie des \trdis et celle des espaces spectraux définissent une antiéquivalence. 
\end{ftheorem}

En \coma les espaces spectraux manquent souvent de points, et on essaie de traduire les discours des \clama sur les espaces spectraux (très fréquents en algèbre) en des discours \cofs sur les \trdis correspondants.

Un sous-espace topologique $Y$ d'un espace spectral $X$ est appelé un sous-espace spectral si le morphisme d'inclusion $Y\to X$ est une application spectrale.  La notion de sous-espace spectral est traduite par la notion de \trdi quotient (\thref{fpropSESP}). On~a \egmt des traductions satisfaisantes par exemple pour les notions de dimension de Krull, d'espace spectral normal, de morphismes lying over, going up et going down (voir la section~\ref{fsubsecAntiEquiv}). 

\smallskip 
Lorsque l'on remplace $\gT$ par le treillis opposé $\gT\oop$, obtenu en renversant la relation d'ordre, les points du spectre $X$ restent les mêmes en \clama et la topologie est remplacée par la \textsl{topologie opposée}, pour laquelle les \oqcs sont les fermés $\fV_\gT(a)$.

Il y a aussi la \textsl{topologie constructible} pour laquelle les \oqcs sont les combinaisons booléennes des $\fO_\gT(a)$ et $\fV_\gT(b)$. C'est le   dual de l'\agB $\Bo(\gT)$ engendrée par le \trdi $\gT$. Ce type d'espace spectral est aujourd'hui appelé un \textsl{espace de Stone}. 

\Subsubsection{Espaces spectraux finis}
 
 Dans le cas des \trdis \textsl{finis} on obtient les espaces spectraux
 finis, qui ne sont rien d'autre que les ensembles ordonnés finis,
 (car il suffit de connaître l'adhérence des points pour
 connaître la topologie) avec pour base d'ouverts les $\dar a$. 
 Les ouverts sont tous quasi-compacts, ce sont les parties initiales, et les
 fermés sont les parties finales.  Enfin, une application entre
 espaces spectraux finis est spectrale \ssi elle est croissante (pour
 les relations d'ordre associées).

La notion d'espace spectral apparaît ainsi comme une
généralisation pertinente de la notion d'ensemble ordonné fini
au cas infini. Voir \cite[Théorème XI-5.6, dualité entre ensembles ordonnés finis et \trdis finis]{fACMC}.

Dans le cas fini, si l'on identifie les ensembles sous-jacents à $\Spec\,\gT$ et $\Spec\,\gT\oop$ les deux spectres sont presque les mêmes: c'est le
même ensemble ordonné au renversement près de la relation
d'ordre.  En outre les ouverts et les fermés sont simplement échangés.

\subsection{Treillis distributifs et relations implicatives}

Une règle particulièrement importante
pour les \trdis, appelée \textsl{coupure}, est la
suivante
\begin{equation}\label{fcoupure1}
\hbox{si } \,  x\vi a \leq  b  \, \hbox{ et } \,  a \leq x\vu  b 
\,\hbox{ alors } \, (a \leq  b).
\end{equation}

Si $A\in\Pfe(\gT)$ (ensemble des parties finiment énumérées de~$\gT$)  on notera
\[\ndsp \Vu  A:=\Vu _{x\in A}x\qquad {\rm et}\qquad \Vi  A:=\Vi _{\!x\in A}x.
\]
On  note $A \vda B$ ou $A \vdash_\gT B$ la relation définie comme suit  sur l'ensemble $\Pfe(\gT)$:

\snic{A \vda B \; \; \equidef\; \; \Vi  A\;\leq \;
\Vu  B.}

Cette relation vérifie les axiomes suivants, dans lesquels on
écrit $x$ pour $\{x\}$ et $A, B$  \hbox{pour $ A\cup B$}.

\vspace{-.5em}
\[\arraycolsep3pt\begin{array}{rcrclll}
&    & x  &\vda& x    &\; &(R)     \\[1mm]
 \hbox{si } A \vda B &   \hbox{alors}  & A,A' &\vda& B,B'   &\; &(M)     \\[1mm]
\hbox{si } (A,x \vda B) \hbox{ et }  (A \vda B,x) 
& \hbox{alors} & A &\vda& B &\;
&(T).
\end{array}
\]
On dit que la relation est \textsl{réflexive}, \label{fremotr} \textsl{monotone} et
\textsl{transitive}.
La troisième règle (transitivité) peut être vue comme une
réécriture de la règle (\ref{fcoupure1}) et s'appelle \egmt la
règle de \textsl{coupure}.

\begin{fdefinition}
\label{fdefEntrel}
Pour un ensemble $S$ arbitraire, une relation binaire sur  $\Pfe(S)$  qui est
réflexive, monotone et transitive est
appelée une {\em \entrel} (en anglais, {\em entailment relation}).
\end{fdefinition}

Le \tho suivant est fondamental. Il dit que les
trois propriétés des \entrels sont exactement ce qu'il faut pour que
l'interprétation d'une relation implicative comme la trace de celle d'un \trdi soit adéquate.

\begin{ftheorem}[\Tho fondamental des \entrels]
\label{fthEntRel1} {\rm   \cite{fCC00}, \cite[\hbox{XI-5.3}]{fACMC}, \cite[Satz 7]{fLor1951}}.
Soit un ensemble~$S$  avec une \entrel
$\vdash_S$ sur $\Pfe(S)$. On considère le \trdi~$\gT$ défini par
\gtrs et relations comme suit: les \gtrs sont les
\elts de $S$ et les relations sont les

\snic {A\, \vdash_\gT \,  B}

\noindent chaque fois que $A\, \vdash_S \, B$.  Alors, pour tous $A$,
 $B$ dans $\Pfe(S)$,  on a

\snic {\hbox{si }A\, \vdash_\gT \,  B
\hbox{ alors } A\, \vdash_S \,  B.}

\noindent 
En particulier, deux \elts $x$ et $y$ de $S$ définissent le même \elt de $\gT$ \ssi
\hbox{on a} $x \vdash_S   y$ et $y \vdash_S   x$.
\end{ftheorem}

\smallskip 
\rem La relation $x\vdash_S y$ est à priori une relation de préordre, et non une relation d'ordre, sur~$S$. Notons $\ov x$ l'\elt $x$ vu dans l'ensemble ordonné~$\ov{S}$ associé à ce préordre, et pour une partie~$A$ de~$S$ notons $\ov A=\sotq{\ov x}{x\in A}$.
Dans l'énoncé du \tho on considère un \trdi $\gT$ qui donne sur $S$ la même \entrel que $\vdash_S$.
En toute rigueur, on aurait dû noter $ \ov A\, \vdash_\gT \, \ov B$
plutôt que $A\, \vdash_\gT \,  B$ pour tenir compte du fait que l'\egt dans $\gT$ est plus grossière que dans $S$. En particulier c'est $\ov{S}$, et non pas $S$, qui s'identifie à une partie de $\gT$.
\eoe

\subsection{Recollement de \trdis et d'espaces spectraux}

\Subsubsection{Quotients, recouvrements, recollements}

Un \textsl{\trdi quotient $\gT'$ de $\gT$} est donné par une relation 
binaire
$\preceq$ sur $\gT$ vérifiant les propriétés suivantes:
\begin{equation} \label{feqPreceq}
\left.
\begin{array}{rcl}
a\leq b&  \Longrightarrow  & a\preceq b   \\
a\preceq b,\,b\preceq c&  \Longrightarrow  & a\preceq c   \\
a\preceq b,\,a\preceq c&  \Longrightarrow  & a\preceq b\vi c   \\
b\preceq a,\,c\preceq a&  \Longrightarrow  & b\vu c\preceq a
\end{array}
\right\}
\end{equation}

\begin{fproposition}
\label{fpropIdealFiltre} Soient $\fa$ un \id et $\fv$ un filtre d'un \trdi $\gT$.
On considère le quotient $\gT'$ de $\gT$ défini par les
relations $x=0$ pour les $x\in \fa$ et $y=1$ pour les $y\in \fv$. Alors
  on a $x\leq_{\gT'}y$ \ssi
il existe  $a\in\fa$ et $v\in\fv$ tels que:
\begin{equation} \label{feqpropIdealFiltre}
x \vi v \; \leq_\gT\; y \vu a
\end{equation}
Nous noterons $\gT/(\fa=0,\fv=1)$ ce treillis quotient $\gT'.$
\end{fproposition}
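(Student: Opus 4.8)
The plan is to exhibit the explicit relation
\[
x\preceq y \;\;\equidef\;\; \exists\, a\in\fa,\ \exists\, v\in\fv\quad x\vi v\ \leq_\gT\ y\vu a
\]
on $\gT$ and to show that it is precisely the quotient preorder defining $\gT'$. Recall that $\gT'$ is, by construction, the quotient of $\gT$ by the coarsest preorder satisfying \eqref{feqPreceq} which sends every element of $\fa$ below $0$ and every element of $\fv$ above $1$; so it is enough to verify (i) that $\preceq$ satisfies \eqref{feqPreceq}, hence defines a quotient lattice $\gT_\preceq$; (ii) that in $\gT_\preceq$ every $a\in\fa$ becomes $0$ and every $v\in\fv$ becomes $1$; and (iii) that $\preceq$ is contained in every preorder with these two properties. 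Putting (i)--(iii) together gives $\preceq\ =\ \leq_{\gT'}$, which is exactly the announced equivalence \eqref{feqpropIdealFiltre}.

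For (i), the comparison clause $a\leq_\gT b\Rightarrow a\preceq b$ is immediate by taking $a=0\in\fa$ and $v=1\in\fv$. For transitivity, given $x\vi v_1\leq y\vu a_1$ and $y\vi v_2\leq z\vu a_2$, I would meet the first inequality with $v_2$ and distribute: $x\vi v_1\vi v_2\leq(y\vi v_2)\vu(a_1\vi v_2)\leq(z\vu a_2)\vu a_1$, and then observe that $v_1\vi v_2\in\fv$ and $a_1\vu a_2\in\fa$ because $\fv$ is a filter and $\fa$ an ideal. The two compatibility clauses ($x\preceq y$ and $x\preceq z$ imply $x\preceq y\vi z$; $y\preceq x$ and $z\preceq x$ imply $y\vu z\preceq x$) are strictly analogous one-line distributive computations. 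These are the only places where distributivity of $\gT$ and the closure properties of $\fa$, $\fv$ are used.

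For (ii), note $a\vi 1\leq_\gT 0\vu a$ for $a\in\fa$ and $1\vi v\leq_\gT v\vu 0$ for $v\in\fv$, so indeed $a\preceq 0$ and $1\preceq v$ in $\gT_\preceq$. For (iii), let $\preceq'$ be any preorder on $\gT$ satisfying \eqref{feqPreceq} with $a\preceq'0$ for $a\in\fa$ and $1\preceq'v$ for $v\in\fv$; a short argument from \eqref{feqPreceq} shows $\preceq'$ is compatible with $\vi$ and $\vu$. Then from $x\vi v\leq_\gT y\vu a$ (with $a\in\fa$, $v\in\fv$) and the fact that $v$ is $\preceq'$-equivalent to $1$ and $a$ to $0$, one gets $x\equiv x\vi v\preceq' y\vu a\equiv y$, hence $x\preceq'y$; so $\preceq\ \subseteq\ \preceq'$. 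The two remarks following the statement ($\fv=\gT$, and the case of a principal ideal $\dar a$) then come out by specialising.

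The step I expect to take the most care is not any single computation but the bookkeeping around ``the quotient defined by the relations'': one should phrase the proof so that $\gT'$ is produced explicitly by the relation $\preceq$ above rather than through an abstract smallest congruence, and verify cleanly that the auxiliary $\preceq'$ in (iii) is genuinely compatible with the two lattice operations. Everything else reduces to routine distributive manipulations and the defining closure properties of ideals and filters.
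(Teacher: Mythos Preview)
Your proof is correct and complete. The paper, however, states this proposition without proof in both the English and the French versions: it is treated there as a standard fact about quotient distributive lattices, with the explicit description \eqref{feqpropIdealFiltre} simply asserted. So there is no ``paper's own proof'' to compare against; you have supplied what the paper omits.

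One small remark on presentation: in step~(iii) you write that ``a short argument from \eqref{feqPreceq} shows $\preceq'$ is compatible with $\vi$ and $\vu$''. This is indeed short, but since it is the only place where the four clauses of \eqref{feqPreceq} are used together (and not just transitivity), it would strengthen the write-up to spell it out: from $a\preceq' b$ one gets $a\vi c\leq a\preceq' b$ and $a\vi c\leq c\preceq' c$, hence $a\vi c\preceq' b\vi c$ by the third clause, and dually for~$\vu$. Everything else is exactly as you describe.
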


En particulier, si le couple $(\fa,\fv)$ est saturé, le morphisme naturel 
$\varphi:\gT\to\gT'$
vérifie $\varphi^{-1}(0_{\gT'})=\fa$ et $\varphi^{-1}(1_{\gT'})=\fv$. 

Dans le cas d'un \idp  $\dar a$ on obtient $\gT/(\dar a=0)\simeq\uar a$ avec le morphisme $y\mapsto y\vu a$ de $\gT$ vers $\uar a$.

\smallskip 
En \alg commutative, si $\fa$ et $\fb$ sont deux \ids d'un anneau 
$\gA$
on a une suite exacte de \Amos 
\[
0\to\gA/(\fa\cap\fb)\vers{j}(\gA/\fa) \times 
(\gA/\fb)\vers{\delta}\gA/(\fa+\fb)\to 0 \quad (\delta(x,y)=x-y \mod \fa+\fb)
\]
qu'on peut lire en langage courant: le système de congruences  
$x\equiv
a \mod \fa$, $x\equiv b \mod \fb$ admet une solution \ssi $a\equiv
b\ \mod \fa+\fb$, et dans ce cas la solution est unique modulo 
$\fa\cap\fb$.
Il est remarquable que ce \gui{\tho des restes chinois} se 
généralise à un
système \textsl{quelconque} de congruences \ssi l'anneau est
\textsl{arithmétique} \cite[Theorem XII-1.6]{fACMC}, \cad si le treillis des \ids est distributif
(le \tho des restes chinois \gui{contemporain} concerne le cas 
particulier d'une
famille d'\ids deux à deux comaximaux, et il fonctionne sans hypothèse sur
l'anneau de base).

D'autres épimorphismes de la catégorie des anneaux commutatifs 
sont les localisations. Et un principe de recollement pour les localisations, extrêmement fécond, est le principe local-global): \textsl{si $(\xn)$ est un système d'\eco d'un anneau $\gA$
le morphisme $\gA\to\prod_{i\in\lrbn}\gA[1/x_i]$ identifie $\gA$ à un} sous-produit fini\footnote{Un sous-objet du produit fini en question, dans la catégorie des anneaux commutatifs.} \textsl{de ses localisés} (voir le principe \lgb concret XV-4.2 dans~\cite{fACMC} et aussi II-2.3, XV-2.1, XV-2.2, XV-2.3, XV-2.4 et XV-2.5). 
Ces résultats sont des versions \covs de \plgas qui disent que certaines \prts d'un \Amo ou d'une \Alg  sont satisfaites \ssi elles sont satisfaites après localisation en n'importe quel \idep (ou, parfois, au voisinage de n'importe quel \idep).

\smallskip De la même manière, on peut reconstruire un \trdi à partir
d'un nombre fini de ses quotients
si l'information que ces derniers contiennent est \gui{suffisante}. 
On peut voir ceci au choix comme une procédure de recollement 
(de passage du local au global), ou comme une version du \tho des restes chinois pour les \trdis. 

La notion de recouvrement  d'un \trdi par des treillis quotients est la notion duale de celle de recouvrement d'un espace spectral par des sous-espaces spectraux. Mais il ne suffit pas que des sous-espaces spectraux recouvrent l'espace total au sens d'une réunion ensembliste pour que le processus de reconstruction de la topologie spectrale de l'espace total à partir de celles de ses sous-espaces fonctionne correctement. De même la notion de recollement d'espaces spectraux au moyen d'isomorphismes de recollement compatibles entre eux trois par trois ne fonctionne pas si l'on utilise des isomorphismes de recollement le long de sous-espaces spectraux arbitraires.

C'est pourquoi dans les \thos \ref{ffactRecolTD} et \ref{fpropRecolTD} qui suivent nous nous limitons à des treillis quotients bien particuliers.

Ces résultats sont des analogues, pour la catégorie des \trdis, des résultats pour des notions similaires dans la catégorie des \grls (voir \cite[Principes de recouvrement XI-2.10 et XI-2.21]{fACMC}) et dans la catégorie des modules sur un anneau commutatif (voir \cite[Principes de recouvrement XI-4.19 et XIII-3.3, Principes de recollement XV-4.4, XV-4.4 bis et XV-4.6]{fACMC}).

\smallskip Voyons ces choses plus précisément.

\begin{fdefinition}
\label{fdefRecolTD}
Soit $\gT$ un \trdi, $(\fa_i)_{i=1,\ldots n}$ (resp. 
$(\fv_i)_{i=1,\ldots n}$)
une famille finie d'\ids (resp. de filtres)  de $\gT$.  On dit que 
les \ids
$\fa_i$ \textsl{recouvrent $\gT$} si $\bigcap_i\fa_i=\so{0}$. De 
même on dit
que les filtres $\fv_i$ \textsl{recouvrent $\gT$} si 
$\bigcap_i\fv_i=\so{1}$.
\end{fdefinition}

\smallskip Rappelons que pour $s\in\gT$ le quotient $\gT/(s=0)$ est isomorphe au filtre principal $\uar s$ que l'on voit comme un \trdi dont l'\elt minimum est $s$.

\begin{ftheorem}[Recouvrement d'un \trdi par des quotients convenables]
\label{ffactRecolTD}~\\
Pour un \id $\fb$ nous écrivons $x\equiv y\mod \fb$ comme 
abréviation pour  $x\equiv y$ \hbox{$\mod(\fb=0)$}.
Soit $\gT$ un \trdi, $(\fa_i)_{i=1,\ldots n}$ une famille finie 
d'\ids principaux ($\fa_i=\dar s_i$)  de~$\gT$ et $\fa=\bigcap_i\fa_i= \dar \Vi_i s_i $.
\begin{enumerate}
\item Si $(x_i)$ est une famille d'\elts de $\gT$ et si pour 
chaque $i,j$ on
a $x_i\equiv x_j\mod \fa_i\vu\fa_j$, alors il existe un unique $x$ 
modulo
$\fa$ vérifiant:  $x\equiv x_i\mod \fa_i\;(i=1,\ldots ,n)$.
\item Notons $\gT_i=\gT/(\fa_i=0)$, 
$\gT_{ij}=\gT_{ji}=\gT/(\fa_i\vu \fa_j=0)$,
$\pi_i:\gT\to\gT_i$ et $\pi_{ij}:\gT_i\to\gT_{ij}$ les projections 
canoniques.
Si les $\fa_i$ recouvrent $\gT$
, alors $(\gT,(\pi_i)_{i=1,\ldots n})$  est 
la limite
projective du diagramme $\big((\gT_i)_{1\leq i\leq n},(\gT_{ij})_{1\leq 
i<j\leq
n};(\pi_{ij})_{1\leq i\neq j\leq n}\big)$ (figure ci-dessous)
\end{enumerate}

{\hspace*{10em}{
\xymatrix @R=2em @C=7em{
          &  \gT \ar[rd]^{\pi _{k}}\ar[d]^{\pi _{j}}\ar[ld]_{\pi _{i}}\\
 \gT _i\ar[d]_{\pi _{ij}}\ar@/-0.75cm/[dr]^{\pi _{ik}} &
     \gT _j\ar@/-.75cm/[dl]_{\pi _{ji}}\ar@/-.75cm/[dr]^{\pi _{jk}} &
        \gT _k\ar@/-0.75cm/[dl]_{\pi _{ki}}\ar[d]^{\pi _{kj}} &
\\
 \gT _{ij}  & 
    \gT _{ik}   & 
      \gT _{jk}   
}
}}
\begin{enumerate}\setcounter{enumi}{2}
\item Soit maintenant $(\fv_i)_{i=1,\ldots n}$ une famille finie de 
filtres principaux,
notons $\gT_i=\gT/(\fv_i=1)$, 
$\gT_{ij}=\gT_{ji}=\gT/(\fv_i\vi\fv_j=1)$,
$\pi_i:\gT\to\gT_i$ et $\pi_{ij}:\gT_i\to\gT_{ij}$ les projections 
canoniques.
Si les $\fv_i$ recouvrent $\gT$, $(\gT,(\pi_i)_{i=1,\ldots n})$  est 
la limite
projective du diagramme $\big((\gT_i)_{1\leq i\leq n},(\gT_{ij})_{1\leq 
i<j\leq
n};(\pi_{ij})_{1\leq i\neq j\leq n}\big)$.
\end{enumerate}
\end{ftheorem}

%

\medskip Il y a aussi une procédure de recollement proprement dit dans certains cas particuliers pour les treillis quotients $\gT_i\to \gT_{ij}$ que l'on veut recoller.

\begin{fdefinition}[Morphisme de passage au quotient entre \trdis] \label{fdefimorquottreillis}~\\
Soit $\gT$ un \trdi et $u\in \gT$.
On peut identifier le filtre principal $\uar u$ au quotient $\gT/(\dar u=0)$ via le morphisme $p_u:x\mapsto x\vu u$. 
Plus \gnlt, un morphisme de \trdis $\alpha:\gT\to \gT'$ est appelé un \textsl{morphisme de passage au quotient par l'idéal $\dar u$} s'il existe un \iso (\ncrt unique) $\lambda:\gT'\to \gT/(u=0)$ tel que $\alpha=\lambda\circ p_u$.
\end{fdefinition}

\begin{flemma}[Dans un \trdi, les quotients principaux sont \gui{scindés}] \label{flemquoprinctrdi} ~ 
\\
Soit $\pi:\gT  \to \gT'$ un morphisme de \trdis et $s\in \gT$.
\Propeq
\begin{enumerate}
\item $\pi$ est un morphisme de passage au quotient de $\gT$ par l'idéal principal $\fa=\dar s$.
\item Il existe un morphisme $\varphi:\gT'\to\,\uar s$ tel que
$\pi\circ \varphi=\Id_{\gT'}$.
\end{enumerate}
Dans ce cas $\varphi$ est uniquement déterminé par $\pi$ et $s$.\\
Naturellement, l'énoncé analogue \gui{renversé} est valable pour un quotient par un filtre principal. 
\end{flemma}
%

\begin{ftheorem}[Recollement de \trdis]
\label{fpropRecolTD}
  Supposons donnés un ensemble fini totalement ordonné~$I$ et dans la catégorie des \trdis  un diagramme

\snic{\big((\gT_i)_{i\in I},(\gT_{ij})_{i<j\in I},(\gT_{ijk})_{i<j<k\in I};
(\pi_{ij})_{i\neq j},(\pi_{ijk})_{i< j, j\neq k\neq i}\big)}

\noindent 
comme dans la figure ci-après, 
ainsi qu'une famille d'\elts 

\snic
{(s_{ij})_{i\neq j\in I}\in \prod\nolimits_{i\neq j\in I}\gT_{i}}

\noindent satisfaisant les conditions suivantes:
\begin{itemize}
\item le diagrammme est commutatif ($\pi_{ijk}\circ \pi_{ij}=\pi_{ikj}\circ \pi_{ik}$ pour tous $i$, $j$, $k$ distincts), 
\item pour $i\neq j$, $\pi_{ij}$ est un morphisme de passage au quotient par l'\id $\dar s_{ij}$,
\item pour $i$, $j$, $k$ distincts, $\pi_{ij}(s_{ik})=\pi_{ji}(s_{jk})$ et  $\pi_{ijk}$ est un morphisme de passage au quotient par \hbox{l'\id $\dar\pi_{ij}(s_{ik})$}.   
\end{itemize}

\medskip  {\hspace*{10em}\small
\xymatrix @R=2em @C=7em{
 \gT_i\ar[d]_{\pi _{ij}}\ar@/-0.75cm/[dr]^{\pi _{ik}} &
     \gT_j\ar@/-.75cm/[dl]_{\pi _{ji}}\ar@/-.75cm/[dr]^{\pi _{jk}} &
        \gT_k\ar@/-0.75cm/[dl]_{\pi _{ki}}\ar[d]^{\pi _{kj}} &
\\
 ~\gT_{ij}~ \ar[rd]_{\pi _{ijk}} & 
    ~\gT_{ik}~  \ar[d]^{\pi _{ikj}} & 
      ~\gT_{jk}~  \ar[ld]^{\pi _{jki}} 
\\
   &  ~\gT_{ijk}~ 
\\
}
}

\smallskip \noindent Alors si $\big(\gT\,;\,(\pi_i)_{i\in I}\big)$ est la limite projective du diagramme, les~\hbox{$\pi_i:\gT\to \gT_i$} forment un recouvrement par quotients principaux de $\gT$, et le diagramme est isomorphe à celui obtenu
dans le \thref{ffactRecolTD}.
Plus précisément, il existe des $s_i\in\gT$ tels que chaque~$\pi_i$ est un morphisme de passage au quotient par l'\id $\dar s_i$ et $\pi_i(s_j)=s_{ij}$ pour tous $i\neq j$.

\noindent Le résultat analoque est valable pour les quotients par des filtres principaux.
\end{ftheorem}

\Subsubsection{Le point de vue dual}

\begin{fdefinition} \label{fdefisous-espace-spectral}
Une partie $X'$ d'un espace spectral $X$ est appelée un \textsl{sous-espace
spectral} lorsque la topologie induite par $X$  fait de $X'$ un espace spectral et que
 $\OQC(X')=\sotq{U\cap X'}{U\in\OQC(X)}$ (autrement dit l'injection canonique est une application spectrale). 
\end{fdefinition}

Le théorème suivant explique que la notion de \textsl{sous-espace spectral} est traduite par la notion de \textsl{\trdi quotient} dans l'antiéquivalence de catégories expliquée dans la section \ref{fsubsecStoneAntiequiv}.  

\begin{theoremc}[Caractérisations des sous-espaces spectraux]
\label{fpropSESP} ~   
\begin{enumerate}
\item  Soit $\gT'$ un treillis quotient de $\gT$ et $\pi:\gT\to\gT'$ la
projection canonique.  Notons \hbox{$X'=\Spec\,\gT'$}, $X=\Spec\,\gT$ et
$\pi^\star:X'\to X$ l'injection duale de $\pi$. Alors l'injection~$\pi^\star$ identifie~$X'$ à un sous-espace
spectral de $X$. 
\item Une partie $X'$ d'un espace spectral $X$ est un sous-espace 
spectral \ssi elle est fermée pour la topologie constructible.
\item Si $Z$ est une partie arbitraire d'un espace spectral 
$X=\Spec\,\gT$, son adhérence pour la topologie constructible est égale à 
$X'=\Spec\,\gT'$, où
$\gT'$ est le treillis quotient de~$\gT$ défini par la relation de 
préordre $\preceq$ suivante:
\begin{equation} \label{feqSSES}
a\preceq b\quad \Longleftrightarrow\quad (\fDT(a)\cap Z)\subseteq 
(\fDT(b)\cap Z)
\end{equation}
\end{enumerate}
\end{theoremc}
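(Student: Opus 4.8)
Je traiterais d'abord le point \textsl{1} à partir de l'antiéquivalence de Stone (\thref{fthStoneAntiequiv}), j'en déduirais le point \textsl{3} par une vérification directe portant sur le préordre \eqref{feqSSES}, et j'obtiendrais enfin le point \textsl{2} en combinant \textsl{1}, \textsl{3} et la compacité de la topologie constructible. Pour \textsl{1}, je poserais $\pi^\star(\fq)=\pi^{-1}(\fq)$ ; on vérifie que $\pi^{-1}(\fq)$ est un \idep de $\gT$ (image réciproque d'un \idep par un morphisme de \trdis), que $(\pi^\star)^{-1}\big(\DT(a)\big)=\fD_{\gT'}(\pi(a))$ pour tout $a\in\gT$ — donc $\pi^\star$ est spectrale, en particulier continue — et que $\pi^\star$ est injective puisque $\pi$ est surjective. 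Comme $\pi$ est surjective, tout \oqc de $\Spec\gT'$ s'écrit $\fD_{\gT'}(\pi(a))$, d'image $\DT(a)\cap X'$ par $\pi^\star$, où $X'$ désigne l'image de $\pi^\star$ ; on en conclut que $\pi^\star$ est un homéomorphisme de $\Spec\gT'$ sur $X'$ et que $\OQC(X')=\sotq{U\cap X'}{U\in\OQC(X)}$, \cad que $X'$ est un sous-espace spectral de $X$ au sens de la définition \ref{fdefisous-espace-spectral}.

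Pour le point \textsl{3}, je vérifierais d'abord que la relation $\preceq$ de \eqref{feqSSES} satisfait les contraintes \eqref{feqPreceq} — réflexivité et transitivité sont immédiates, les deux dernières contraintes résultent des \egts \eqref{feqDa} — de sorte que $\gT'$ est un \trdi quotient de $\gT$. Via le point \textsl{1} et en explicitant quels \ideps de $\gT$ proviennent de $\Spec\gT'$, on identifie $\Spec\gT'$ à la partie $X'$ de $X$ formée des \ideps $\fp$ tels que, pour tous $a,b$ vérifiant $\DT(b)\cap Z\subseteq\DT(a)\cap Z$, on ait $\fp\in\DT(b)\Rightarrow\fp\in\DT(a)$. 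Reste à montrer que $X'$ est l'adhérence de $Z$ pour la topologie constructible. D'abord $Z\subseteq X'$ : si $\fp\in Z$, $\fp\in\DT(b)$ et $\DT(b)\cap Z\subseteq\DT(a)\cap Z$, alors $\fp\in\DT(a)$. Ensuite $X\setminus X'$ est la réunion des ouverts basiques $\DT(b)\cap\VT(a)$ tels que $\DT(b)\cap\VT(a)\cap Z=\emptyset$ ; c'est un ouvert constructible disjoint de $Z$, donc $X'$ est fermé et contient $Z$ ; et si $F\supseteq Z$ est constructiblement fermé, $X\setminus F$ est réunion d'ouverts basiques $\DT(a)\cap\VT(b)$ disjoints de $Z$, donc inclus dans $X\setminus X'$, d'où $X'\subseteq F$.

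Pour le point \textsl{2} : si $X'$ est fermé pour la topologie constructible, il coïncide avec sa propre adhérence constructible, donc par le point \textsl{3} appliqué à $Z=X'$ on a $X'=\Spec\gT'$ (plongé via \textsl{1}), qui est un sous-espace spectral d'après \textsl{1}. Réciproquement, si $X'$ est un sous-espace spectral, c'est en particulier un espace spectral, sa topologie constructible est compacte, et l'inclusion $X'\hookrightarrow X$ étant spectrale elle induit une application continue de $X'$ muni de sa topologie constructible vers $X$ muni de la sienne, qui est séparée (espace de Stone) ; l'image, à savoir $X'$, est donc fermée pour la topologie constructible de $X$.

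L'étape que j'anticipe comme la plus délicate est le point \textsl{3} : la vérification que $\preceq$ définit bien un quotient, et surtout le passage aux complémentaires identifiant $X'$ à l'adhérence constructible de $Z$, qui utilise que les ouverts basiques de la topologie constructible sont exactement les $\DT(a)\cap\VT(b)$. Pour le point \textsl{2}, l'argument de compacité repose sur le fait classique que la topologie constructible d'un espace spectral en fait un espace de Stone ; on peut aussi le reformuler via la dualité de Stone pour les \agBs, appliquée au morphisme surjectif $\OQC(X)\to\OQC(X')$, qui donne dualement un plongement fermé d'espaces de Stone.
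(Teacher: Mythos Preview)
The paper does not give a proof of this theorem: it is stated as a starred result (\textsl{Théorème}$^*$), i.e.\ a classical fact quoted without demonstration, and the text moves on immediately to the discussion of recollements. There is therefore nothing to compare your argument against.

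That said, your sketch is correct and self-contained. Point~\textsl{1} is the standard verification that a surjective lattice morphism dualises to a spectral embedding; your computation $(\pi^\star)^{-1}\big(\DT(a)\big)=\fD_{\gT'}(\pi(a))$ together with the surjectivity of $\pi$ gives both continuity and the identification of $\OQC(X')$. Point~\textsl{3} is handled cleanly: the key observation that $\DT(b)\cap Z\subseteq\DT(a)\cap Z$ is equivalent to $\DT(b)\cap\VT(a)\cap Z=\emptyset$ is exactly what makes the complement $X\setminus X'$ a union of basic constructible opens missing $Z$. Point~\textsl{2} then follows as you indicate, the compactness argument (image of a compact in a Hausdorff space) being the natural way to get closedness in the patch topology. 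Your anticipation that the basic constructible opens are precisely the $\DT(a)\cap\VT(b)$ is correct in a distributive lattice, since finite meets of $\DT$'s and finite joins inside $\VT$'s collapse to a single $\DT(a)$ and a single $\VT(b)$.
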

Un recollement de \trdis selon le  \thref{fpropRecolTD} correspond à un recollement d'espaces topologiques donnés avec des intersections 2 à 2 compatibles trois par trois. On peut recoller ainsi un nombre fini d'espaces spectraux donnés avec des intersections 2 à~2 qui sont des \oqcs\footnote{Variante: des fermés complémentaires d'\oqcs. C'est le même résultat, mais pour le treillis opposé et la topologie opposée sur le spectre.}  de chaque espace compatibles trois par trois\footnote{Dans le \thref{fpropRecolTD} la compatibilité trois par trois est donnée simplement par un \trdi $\gT_{ijk}$. Lorsqu'on recolle des espaces topologiques, on a à priori trois versions différentes de $U_1\cap U_2\cap U_3$ contenues respectivement dans $U_1$, $U_2$ et $U_3$. On doit d'abord identifier les deux versions de $U_i\cap U_j$ contenues dans $U_i$ et $U_j$ au moyen d'homéomorphismes. Et les trois identifications doivent être compatibles pour les trois versions de $U_1\cap U_2\cap U_3$.}. 

Ici, alors que le recollement des espaces spectraux est une évidence intuitive facile à démontrer en \clama, correspondant à la figure ci-dessous, la \demo du \thref{fpropRecolTD} est assez subtile (voir \cite{fCLQ2006}).

{
\smallskip 
\centerline{\includegraphics[width=11cm]{recollement.pdf}}
}

Dans la sous-section qui suit nous donnons d'autres exemples de comparaisons éclairantes entre \trdis et espaces spectraux duaux.

\subsection{Petit dictionnaire de l'antiéquivalence de catégories}
\label{fsubsecAntiEquiv}

Références: le théorème de Krull page~\pageref{fThKrull}, \cite[Theorem~IV-2.6]{fBW74}, \cite{fCC00,fCL2001-2018,fLom2020}.

Nous signalons ici sans démonstrations quelques résultats  déjà établis par ailleurs qui précisent l'antiéquivalence de catégories entre \trdis et espaces spectraux. Nous ajoutons les lemmes \ref{flemDimQuot}, \ref{flemDimProd} et le (nouveau) \thref{flemrcfdimtrdi} qui sera utilisé dans la section~\ref{fsubsecdival}.

\smallskip Le contexte est le suivant: soit $f:\gT\to\gT'$ un morphisme de treillis distributifs et $\Spec(f)$, noté $f\sta$, le morphisme dual, de $X'=\Spec(\gT')$ vers $X=\Spec(\gT)$, dans la catégorie des espaces spectraux.

Rappelons quelques \dfns usuelles en \clama.
\begin{itemize}
\item Le morphisme $f$ est dit \textsl{lying over} (en français, il possède la propriété de relèvement) lorsque~$f\sta$ est surjectif: tout \idep de $\gT$ est image réciproque d'un \idep de~$\gT'$.
\item  Le morphisme $f$ est dit \textsl{going up} (en français, il possède la propriété de montée pour les chaines d'\ideps) lorsque l'on a: \textsl{si $\fq\in X'$, $f\sta(\fq)=\fp$, et $\fp\subseteq\fp_2$ dans~$ X $, il existe  $\fq_2\in X'$ tel que
$\fq\subseteq\fq_2$ et $f\sta(\fq_2)=\fp_2$}.
\item  De même $f$ est dit \textsl{going down} (en français, il possède la propriété de descente pour les chaines d'\ideps) lorsque l'on a: \textsl{si $\fq\in X'$, $f\sta(\fq)=\fp$, et $\fp\supseteq\fp_2$ dans~$ X $, il existe  $\fq_2\in X'$ tel que $\fq\supseteq\fq_2$ et  $f\sta(\fq_2)=\fp_2$}.
\item  On dit que le morphisme $f$ \textsl{possède la propriété d'incomparabilité} lorsque ses  \gui{fibres} sont formées d'\ideps deux à deux incomparables: si $\fq_1\subseteq \fq_2\in X$ et $f\sta(\fq_1)=f\sta(\fq_2)$ dans~$X'$ alors $\fq_1= \fq_2$.
\item  L'espace spectral $ X $ est dit \textsl{normal} si tout point est majoré par un unique point fermé (tout \idep de $\gT$ est contenu dans un unique \idema).
\item L'espace spectral  $\SpecT$ est dit \textsl{complètement normal} si pour tous $x,y,z$ avec $x\in\ov{\so z}$ et $y\in\ov{\so z}$ on a $x\in\ov{\so y}$ ou 
$y\in\ov{\so x}$. 
\end{itemize}

\Subsubsection{Propriétés des morphismes}

\begin{ftheorem} \emph{\cite[Theorem~IV-2.6  et section V-8]{fBW74}}  \label{fth-dico-trdi-spec-mor1} 
 En \clama on a les équivalences suivantes.   
\begin{enumerate}
\item $f\sta$ est surjectif ($f$ est lying over) $\Longleftrightarrow$  $f$ est injectif $\Longleftrightarrow$ $f$ est un monomorphisme $\Longleftrightarrow$ $f\sta$ est un épimorphisme.
\item $f$ est un épimorphisme $\Longleftrightarrow$ $f\sta$ est un monomorphisme $\Longleftrightarrow$ $f\sta$ est injectif.
\item $f$ est surjectif\footnote{Autrement dit, puisque c'est une structure équationnelle, $f$  est un morphisme de passage au quotient.} $\Longleftrightarrow$ $f\sta$ est un isomorphisme sur son image, qui est un
sous-espace spectral de $X'$.
\end{enumerate} 
\end{ftheorem}

 Il y a des épimorphismes de \trdis non surjectifs.
Cela correspond à la possibilité d'un morphisme bijectif entre espaces spectraux qui ne soit pas un isomorphisme. Par exemple le morphisme spectral bijectif  $\Spec\Bo(\gT)\to\Spec\gT$ n'est pas (en général) un \iso et le morphisme de treillis correspondant $\gT\to\Bo(\gT)$ est un \gui{épimono} qui n'est pas (en général) surjectif.

\begin{ftheorem} \label{fth-dico-trdi-spec-mor2} 
 En \clama on a les équivalences suivantes.   
\begin{enumerate}
\item $f$ est going up $\Longleftrightarrow$  pour
tous $a,c\in\gT$ et $y\in\gT'$ on a
$$
f(a)\leq f(c)\vu y \;\Rightarrow\;\exists x\in\gT\; (a\leq c \vu x \hbox{ et } f(x)\leq y).
$$ 
\item $f$ est going down $\Longleftrightarrow$  pour
tous $a,c\in\gT$ et $y\in\gT'$ on a
$$
f(a)\geq f(c)\vi y \;\Rightarrow\;\exists x\in\gT\; (a\geq c \vi x \hbox{ et } f(x)\geq y).
$$
\item $f$ possède la propriété d'incomparabilité $\Longleftrightarrow$ $f$ est zéro-dimensionnel\footnote{Voir ci-dessous \ref{fth-dico-trdi-spec-dim2}.}.   
\end{enumerate} 
\end{ftheorem}
%
\begin{proof}
Voir \cite{fCL2001-2018}.
\end{proof}
%

\begin{ftheorem}[Open spectral map, \cite{fLom2020}] \label{fth-dico-trdi-spec-mor3} 
En \clama \propeq   
\begin{enumerate}
\item $\Spec(f)$ est une application ouverte.
\item  Il existe une application $\wi f:\gT'\to \gT$
satisfaisant les \prts suivantes.
\begin{enumerate}
\item \label{fi2a} Pour tous $c\in\gT$ et $b\in\gT'$, $b\leq f(c) \Leftrightarrow \wi f(b)\leq c$. \\
En particulier $b\leq f(\wi f(b))$ et $\wi f(b_1\vu b_2)=\wi f(b_1)\vu \wi f(b_2)$.
\item \label{fi2b} Pour tous $a,c\in\gT$ et $b\in\gT'$, $f(a)\vi b\leq f(c) \Leftrightarrow a\vi\wi f(b)\leq c $.
\item \label{fi2c} Pour tous $a\in\gT$ et $b\in\gT'$, $\wi f(f(a)\vi b)=a\vi \wi f(b)$.
\item \label{fi2d} Pour tout $a\in\gT$, $\wi f(f(a))=\wi f(1)\vi a$. 
\end{enumerate}
\item  Il existe une application $\wi f:\gT'\to \gT$
satisfaisant la \prt \ref{fi2b}.  
\item  Pour tout $b\in \gT$ la \bif \smash{$\Vi\limits_{b\leq f(c)} c$} existe, et si on la note $\wi f(b)$, 
 la \prt~\ref{fi2b} est satisfaite.  
\end{enumerate} 
\end{ftheorem}

\Subsubsection{Propriétés de dimension}

La dimension d'un espace spectral au sens des chaines de fermés irréductibles est aussi appelée sa \textsl{dimension de Krull}.  
La dimension de l'espace spectral vide est égale à $-1$.
Le \thref{fth-dico-trdi-spec-dim1} explique pourquoi on choisit en \coma de définir la dimension de Krull d'un \trdi $\gT$, notée $\Kdim\gT$, selon le point \emph{2}. L'initialisation se fait avec la dimension $-1$ pour le treillis trivial, réduit à un point.
Comme l'\eqvc entre les points \textsl{2} et \textsl{3} est \cov,  le point \textsl{3} donne aussi une \dfn \cov acceptable de la dimension de Krull d'un \trdi pour $n\geq 0$.

L'article \cite{fCL2003} explique le lien entre les points \textsl{1} et \textsl{3} dans le \tho  ainsi que le lien avec l'approche pionnière de \cite{fJoy76}. Le point \textsl{2} dans le \tho remonte à \cite{fCLR05}.

\begin{ftheorem}[Dimension des espaces] \emph{Voir \cite{fCL2003,fCLR05}.} \label{fth-dico-trdi-spec-dim1}   
En \clama, pour un \trdi non trivial et pour $n\geq 0$, \propeq
\begin{enumerate}
\item L'espace spectral $\Spec(\gT)$ est de dimension $\leq n$.  
\item Pour tout $x\in\gT$ le treillis quotient $\gT/(x=0,I_x=0)$, où $I_x=\sotq{y}{x\vi y=0}$, est de dimension $\leq n-1$. 
\item  
Pour toute suite $(x_0,\dots,x_n)$ dans $\gT$ il existe une suite \emph{complémentaire}  $(y_0,\dots,y_n)$ au sens suivant
\begin{equation}\label{feqC2G}
\left.\arraycolsep3pt
\begin{array}{rcl}
1& \vda  &   y_n, x_n\\
 y_n,  x_n & \vda  &  y_{n -1}, x_{n -1}  \\
\vdots~~& \vdots  &~~  \vdots \\
  y_1, x_1& \vda  &  y_0, x_0  \\
y_0, x_0& \vda  & 0     
\end{array}
\right\}
\end{equation}
\end{enumerate}
\end{ftheorem}

Par exemple, pour $n=2$ les inégalités dans le point \textsl{3} correspondent au dessin suivant dans le treillis~$\gT$.
$$\SCo{1.5pt}{1.2cm}{x_0}{x_1}{x_2}{y_0}{y_1}{y_2}$$

Un \trdi de dimension $\leq 0$ est une \agB. 

Les points \textsl{2} et \textsl{3} donnent un sens \cof à l'assertion $\Kdim(\gT)\leq n$ pour $n\geq -1$, mais l'assertion $\Kdim(\gT)=n$
ne semble pas pouvoir être  définie \cot dans le cas général (pour $n\geq 0$).

Concernant la \ddk des anneaux commutatifs voir \cite{fLom02,fCL2003}, \cite[chapitre XIII]{fACMC}. 
Cette \dfn de la \ddk  est proche de (et sans doute plus générale que) celle donnée par \cite[page 584]{fLur2009},
introduite pour couvrir certains cas d'anneaux non \noes.    

Par ailleurs il est impossible de démontrer \cot que $\Kdim(\RR)\leq 0$. En effet, comme~$\RR$ est un anneau local réduit, $\Kdim(\RR)\leq 0$ équivaut~à:  \textsl{tout~$x$ est nul ou inversible}, \cad \LPO.

\begin{flemma} \label{flemDimQuot}
Si $\gT'$ est un quotient de $\gT$, alors $\Kdim(\gT')\leq \Kdim(\gT).$ 
\end{flemma}
%
\begin{proof}
On applique le point \emph{3} du \thref{fth-dico-trdi-spec-dim1}.
\end{proof}
%

\begin{flemma} \label{flemDimProd}
Soient $\gT_1$ et $\gT_2$ deux \trdis et $\gT=\gT_1\times \gT_2$. Alors, pour tout entier $n\geq -1$ on a 
\[
\Kdim(\gT)\leq n\hbox{  \ssi  }\;\Kdim(\gT_i)\leq n\hbox{  pour  }i=1,2.
\]
On écrit cela sous forme abrégée comme suit: $\Kdim(\gT_1\times \gT_2)=\sup(\Kdim\gT_1,\Kdim\gT_2)$.   
\end{flemma}
%
\begin{proof}
D'une part, chaque $\gT_i$ est un quotient de $\gT$. D'autre part, supposons  que chaque~$\gT_i$ est de dimension $\leq n$ et soit $(\xzn)$ dans $\gT$, avec $x_i=(a_i,b_i)$. Si $(\azn)$ admet la suite \cop $(\yzn)$ dans $\gT_1$ et $(\bzn)$ admet la suite \cop $(\zzn)$ dans $\gT_2$, alors la suite $(\uzn)$ (où $u_i=(y_i,z_i)$) est \cop de $(\xzn)$ dans $\gT$. 
\end{proof}

\begin{ftheorem}[Dimension des \trdis, cas d'un recouvrement fermé] \label{flemrcfdimtrdi}~\\
Soient~$\gT$ un \trdi, $\fa$, $\fb$ deux \ids tels que $\fa\cap\fb=\so0$, $\gT_\fa=\gT/(\fa=0)$ et $\gT_\fb=\gT/(\fb=0)$.
Alors $\Kdim\gT=\sup(\Kdim\gT_\fa,\Kdim\gT_\fb)$. 
\end{ftheorem}
%
\begin{proof}
On~a un morphisme naturel $j_{\fa,\fb}:\gT\to \gT_\fa\times \gT_\fb, \,x\mapsto (\pi_\fa(x),\pi_\fb(x))$ de sorte que d'après les lemmes \ref{flemDimQuot} et \ref{flemDimProd} on~a $\sup(\Kdim\gT_\fa,\Kdim\gT_\fb)\leq \Kdim\gT$.

\noindent Voyons l'inégalité opposée.

\noindent Commençons par une \demo en \clama. 
Soit $\fp$ un \idep de~$\gT$. Si l'\id $\fp$ disparait dans $\gT_\fa$ on a $\fp\vu\fa=\gT$. Dans ce cas, on a un $x\in\fa$ tel que $x \notin \fp$, et pour un $y\in\fb$ arbitraire, comme $x\vi y=0$ et $\fp$ est premier, $y\in\fp$. Ainsi $\fb\subseteq \fp$ et $\fp$ reste un \idep dans $\gT_\fb$.
Symétriquement, si $\fp$ disparait dans $\gT_\fb$, il persiste dans~$\gT_\fa$.
Considérons maintenant une chaine d'\ideps dans $\gT$ de longueur maximale.
Le plus petit \elt de la chaine contient $\fa$ ou $\fb$, et donc toute la chaine persiste dans $\gT_\fa$ ou~$\gT_\fb$.

\noindent Enfin, on donne une \demo en \coma\footnote{Il serait intéressant de savoir si la \demo \cov résulte de la \demo classique par une procédure \gnle.}. 

\noindent Tout d'abord supposons $\fa$ et $\fb$ principaux: $\fa=\dar a$ et $\fb=\dar b$ avec $a\vi b=0$.  
On~a le morphisme injectif pour les structures de \textsl{\trdis non bornés\footnote{L'image de ce morphisme est $\uar a$. On~a bien $\pi_a(1)=1$, mais $\pi_a(0)=a$.}} $\gT_a\to \gT$, \hbox{$\pi_a(x)\mapsto a\vu x$}. On considère alors le morphisme $r_{a,b}:\gT_a\times \gT_b\to \gT,\,(x,y)\mapsto (a\vu x)\vi(b\vu y)$. On voit que c'est un morphisme de \trdis, car $r_{a,b}(0,0)=0$.  On~a $r_{a,b}\circ j_{a,b}=\Id_\gT$. 
En effet pour $x\in \gT$, $(a\vu x)\vi(b\vu x)=(a\vi b)\vu x=x$.  
Donc $r_{a,b}$ est un morphisme surjectif et $\gT$ est un quotient de $\gT_a\times \gT_b$. 

\noindent Voyons maintenant le cas \gnl où l'on ne suppose pas $\fa$ et $\fb$ \tf (donc principaux). Le quotient $\gT_\fa$ est colimite filtrée des $\gT_a$ pour les $a\in\fa$ (vu comme une petite catégorie filtrante).  Le treillis $\gT_\fa\times \gT_\fb$ est colimite filtrée des $\gT_a\times \gT_b$. 
Les morphismes~$r_{a,b}$ sont compatibles\footnote{Si $a\leq a'$ et $b\leq b'$, le triangle formé par $r_{a',b'}$,  $r_{a,b}$ et le morphisme de passage au quotient $\gT_a\times \gT_b\to\gT_{a'}\times \gT_{b'}$ est commutatif.}, donc induisent (par la \prt \uvle des colimites filtrées) un morphisme $r_{\fa,\fb}:\gT_\fa\times \gT_\fb\to\gT$. Pour chaque $(a,b)$ on a le morphisme  de colimite filtrée $\iota_{a,b}:\gT_a\times \gT_b\to \gT_\fa\times \gT_\fb$ et l'on obtient le diagramme commutatif:

\centerline{
\xymatrix @C=1.2cm @R=1.2cm{
\gT\,\ar[dd]_{j_{\fa,\fb}}\ar[rr]^{j_{a,b}}\ar@/-1.5cm/[drrr]_{j_{a',b'}}   && \,\gT_a\times \gT_b \ar[dd]^{r_{a,b}}\ar[ddll]_{\iota_{a,b}}\ar[dr]^{j_{(a,b),(a',b')}}   
\\
&&& \,\gT_{a'}\times \gT_{b'}\ar[dl]^{r_{a',b'}}\ar@/.5cm/[dlll]_{\iota_{a',b'}}
\\
\gT_\fa\times \gT_\fb\,\ar[rr]_{r_{\fa,\fb}}    && \gT  
}
}

\noindent On obtient donc
$r_{\fa,\fb}\circ j_{\fa,\fb}=\Id_\gT$.
\end{proof}

\begin{ftheorem}[Dimension des morphismes] \emph{Voir \cite{fCL2001-2018}, \cite[section XIII-7]{fACMC}.} \label{fth-dico-trdi-spec-dim2}   
Soit $\gT\subseteq \gT'$ et $f$ le morphisme d'inclusion. En \clama \propeq
\begin{enumerate}
\item Le morphisme $\Spec(f):\Spec(\gT')\to\Spec(\gT)$ est de dimension $\leq n$.
\item  Pour toute liste $(x_0,\dots,x_n)$ dans $\gT'$
il existe un entier $k\geq 0$ et des éléments $a_1,\ldots,a_k\in \gT$ tels que  pour tout couple de parties 
complémentaires $(H,H')$ de $\{1,\ldots,k\}$, il existe $ y_0,\dots,y_n\in \gT'$ tels 
que
\begin{equation} \label {feqdefDiTrRel}
\begin{array}{rclll}
\Vi_{j\in H'} a_j & \vda  &  y_n,\;x_n    \\
y_n,\;x_n& \vda  &y_{n-1},\;x_{n-1}      \\
\vdots\qquad & \vdots  & \qquad  \vdots    \\
y_1,\;x_1& \vda  &  y_0,\;x_0    \\
y_0,\;x_0& \vda  &  \Vu_{j\in H} a_j    \\
\end{array}
\end{equation}
\end{enumerate} 
\end{ftheorem}
Par exemple pour la dimension relative $\leq 2$ cela correspond au dessin suivant dans $\gT$ pour chaque $(H,H')$, avec $u=\Vi_{j\in H'} a_j$ et $i=\Vu_{j\in H} a_j$.
$$\SCOr{x_0}{x_1}{x_2}{y_0}{y_1}{y_2}{u}{i}$$

Notons que la dimension du morphisme $\gT\to\gT'$ est inférieure à la dimension de $\gT'$: prendre la liste vide ($k=0$) dans le point \textsl{2} du \tho~\ref{fth-dico-trdi-spec-dim2}. 

Plus \gnlt on a une \prco de l'inégalité fondamentale donnée dans \cite[A note on the dimension theory of rings]{fSei1953} dans le cas de la dimension des anneaux commutatifs: $1+\dim \gT'\leq (1+\dim \gT)(1+\dim f)$.

\Subsubsection{Propriétés des espaces}

 Un \trdi $\gT$ est dit \textsl{normal} lorsque chaque fois que $a \vu b = 1$ dans $\gT$  il existe $x, y$ tels que   $a \vu x = b \vu y = 1$ et  $x \vi y = 0$. Voir \cite{fWeh2019,fDST2019}.
Notons qu'en remplaçant $x$ et $y$ par $x_1=x\vu(a\vi b)$ et $y_1=y\vu(a\vi b)$
on obtient $a \vu x_1 = b \vu y_1 = 1$ et  $x_1 \vi y_1 = a\vi b$.

\begin{ftheorem} \label{fth-dico-trdi-spec-esp1}  \Propeq
\begin{enumerate}
\item L'espace spectral $\Spec(\gT)$ est normal. 
\item Le \trdi $\gT$ est normal. 
%
%
\end{enumerate}
\end{ftheorem}
\begin{ftheorem} \label{fth-dico-trdi-spec-esp2}  \Propeq
\begin{enumerate}
\item L'espace spectral $\Spec(\gT)$ est complètement normal. 
\item Tout intervalle $[a,b]$ de $\gT$, vu comme \trdi, est normal.  
\item Pour tous $a,b\in \gT$ il existe $x, y$ tels que   $a \vu b = a \vu y = x \vu b$ et  $x \vi y = 0$.
\end{enumerate}
\end{ftheorem}
\begin{ftheorem} \label{fth-dico-trdi-spec-esp3}  \Propeq
\begin{enumerate}
\item Dans $\Spec(\gT)$ tout \oqc est une réunion finie d'\oqcs irréductibles. 
\item 
Pour tous $a_1,\dots,a_n,b_1,\dots,b_m$ on a  $a_1,\dots,a_n\vdash_\gT b_1,\dots,b_m$ \ssi
 $a_1,\dots,a_n\vdash_\gT b_j$  pour un $j$. 
\item 
Le \trdi $\gT$ est construit à partir d'une  \sad pour une \talg. 
%
%
\end{enumerate}
\end{ftheorem}


\section[Théories dynamiques et \sads]{Théories dynamiques et \sads finitaires}\label{fsecSadTrdiSpec}

Références: \cite{fCLR01,fLom98,fLom06}. 
Une étude plus détaillée en cours de rédaction se trouve dans~\cite{fLom-tgac}.

\subsection{Théories dynamiques finitaires}

Les \tdys (finitaires) ont été  introduites dans \cite{fCLR01}. Elles sont une version purement calculatoire \gui{sans logique} des théories cohérentes (théories \gui{du premier ordre}
 dont tous les axiomes sont les traductions de \rdys)\footnote{Une terminologie courante en logique mathématique consiste à réserver \gui{théorie formelle du premier ordre} aux théories qui utilisent la quantification sur les seules variables usuelles, qui représentent des éléments courants d'un ensemble de référence, modèle de la théorie. Le second ordre est réservé aux théories formelles qui autorisent la quantification sur des variables prises dans l'ensemble des parties de l'ensemble de référence. Les théories \gmqs infinitaires, avec la présence de $\Vou$ infinis portant sur les \elts de l'ensemble de référence, ne sont pas vraiment du premier ordre, et pas du tout du second ordre. Il s'ensuit que dans la littérature concernant les \tgms, finitaires ou non, on les considère souvent comme des théories du premier ordre. Nous respectons à bas bruit cette tradition en omettant \gnlt de parler de \gui{théories du premier ordre}, au profit de \gui{théories finitaires}.}.

Dans l'article en question sont introduites les notions de \gui{dynamical theory} et de \gui{dynamical proof}.
Voir \egmt: l'article \cite{fBC2005} qui décrit un certain nombre d'avantages fournis par cette approche, et les articles précurseurs \cite[Sections 1.5 et 4.2]{fpra1971}, \cite{fMat75} et \cite{fLif80}.

Les \tdys utilisent uniquement des  \rdys, \cad des règles de la forme
\begin{equation} \label{feqRgeom0}
\Gamma  \vd   \EXists{\und{y^1}} \Delta_1
\vou \cdots\vou \EXists{\und{y^m}}\Delta_m
\end{equation}
où $\Gamma$ et les $\Delta_i$ sont des listes de formules atomiques du langage considéré. 

Sous forme succincte:\label{fNOTvalou} 
\begin{equation} \label{feqRgeom}
\Gamma  \vd   \Exists{\und{y^1}} \Delta_1
\vou \cdots\vou \Exists{\und{y^m}}\Delta_m
\end{equation}


%
%
Les axiomes sont des \textsl{\rdys} et les \textsl{\thos} sont les \rdys validées à partir des axiomes par un arbre de preuve selon un processus \elr bien défini.

Si \sa{T} est une \tco, la \textsl{\tdy finitaire} correspondante s'en différencie seulement 
par un usage extrêmement limité des méthodes de \demo:
\begin{itemize}
\item  Premièrement, on n'utilise jamais d'autres formules que les formules 
atomiques: on n'introduit jamais aucun nouveau prédicat utilisant des connecteurs 
logiques ou des quantificateurs. Seules sont manipulées des listes de formules 
atomiques du langage~$\cL$.
\item  Deuxièmement, les axiomes ne sont 
pas vus comme des formules vraies, mais comme des \textsl{règles de déduction}: un axiome tel que \pref{feqRgeom} est utilisé en tant que \rdy.  
Les variables dans les listes $\und{y^j}$ sont dites \textsl{muettes}.
\item  Troisièmement, on ne prouve que des \textsl{\rdys}.%
\item  Quatrièmement, la seule
manière de prouver une \rdy est un calcul arborescent
\gui{sans logique}. \`A la racine de l'arbre se trouvent 
des hypothèses
du \tho que l'on veut prouver. L'arbre se développe en appliquant les axiomes 
selon une pure machinerie de calcul algébrique dans la structure.
Les \dfns formelles précises sont données dans \cite{fCLR01}.\\
Lorsque l'on applique un axiome tel que \pref{feqRgeom}, on substitue aux variables libres $x_i$ présentes dans la règle des termes arbitraires $t_i$ du langage. Si les hypothèses, réécrites avec ces termes,
sont déjà prouvées à une feuille de l'arbre de preuve, alors cette feuille devient un noeud d'où partent des branches de calcul dans chacune desquelles on introduit des variables fraiches correspondant aux
variables muettes $\und{y^k}$ (ces variables fraiches doivent être distinctes des variables libres présentes dans les termes~$t_i$) et chaque conclusion de la liste~$\Delta_k$ est valide dans sa branche.
Enfin, on déclare \textsl{valide} une conclusion prouvée à chaque feuille d'un arbre de preuve ainsi construit.
\end{itemize}

\smallskip Dans une \tdy, chaque sorte est accompagnée d'un prédicat d'\egt~\hbox{$\cdot=\cdot$}. En outre, des axiomes  autorisent la substitution d'un terme $t$ par un terme $t'$, lorsque la règle $\vd t=t'$ est valide dans la théorie, en n'importe quelle occurrence d'une formule atomique présente dans une \rdy valide\footnote{Cela exclut le cas où $t$ ou $t'$ contient une variable  $x$ sous la dépendance d'un $\Exists \,x$.}.

Les axiomes les plus simples répondant à cette exigence sont les suivants.
Tout d'abord le fait que l'\egt est une relation d'\eqvc

\DeuxRegles{
\Lab{eq1} $\vd  x=x$
\Lab{Eq3} $\,\, x=y\vet y=z\vd  x=z$
}{ 
\Lab{eq2} $\,\, x=y\vd  y=x$ 
}

\noindent Ensuite pour chaque symbole de fonction $f$ et chaque symbole de prédicat $P$ dans la signature (nous les avons pris unaires pour simplifier).

\DeuxRegles{
\Lab{eq$_f$} $\,\, x=y\vd  f(x)=f(y) $
}{
\Lab{Eq$_P$} $\,\, x=y\vet P(x)\vd  P(y) $
}

\Subsubsection{Collapsus}\label{fNOTABot}

 Une \rdy s'appelle une \textsl{règle de collapsus} ou \textsl{d'effondrement} lorsque le second membre est \gui{le $\Faux$}, que l'on note $\Bot$. Le symbole $\Bot$ fait obligatoirement partie du langage. Il doit être rangé avec les formules atomiques, comme un prédicat d'arité nulle.  On peut aussi voir $\Bot$ comme désignant la disjonction vide. Lorsque l'on a prouvé  $\Bot$, l'univers du discours s'effondre, et toute formule atomique du langage est alors réputée \gui{vraie}, ou du moins \gui{valide}. C'est l'application de la règle  \gui{ex falso quod libet}, qui est la signification intuitive pertinente du $\Faux$ en \coma. Ainsi, dans une \tdy les règles 

\Regles {\lab {Faux$_{P}$} $\,\,\Bot\vd P$} 

\noindent sont valides pour toutes les formules atomiques.

On  donne aussi dans le langage la constante logique $\Top$ pour \gui{le $\Vrai$}, avec pour axiome la \ralg. \label{fNOTATop} 

\Regles {\lab{Vrai}$\vd \Top$}

On peut aussi voir $\Top$ comme désignant la conjonction vide\footnote{Quand il n'y a rien à démontrer, ne démontrons rien et tout sera OK. Par ailleurs, dans une \tdy comportant au moins une sorte $S$, $\Top$ est \eqv à $x=_Sx$.}.
Les constantes $\Bot$ et $\Top$ sont  les seuls vrais symboles logiques dans les \tdys.


\Subsubsection{Classification des règles dynamiques} 

Nous utilisons la terminologie donnée dans \cite{fCLR01} et ajoutons quelques précisions.

\smallskip Une \tdy est dite \textsl{propositionnelle} s'il n'y a aucune sorte. Les constantes sont alors $\Top$, $\Bot$ et en \gnl d'autres constantes d'arité nulle, vues comme des constantes de propositions.  

\smallskip Une  \rdy qui ne contient dans la conclusion (à droite du $\vd$) ni $\vou $, ni~$\,\Exists\,$, ni~$\Bot$ est appelée une \textsl{règle algébrique}. Une \tdy est dite \textsl{\agq} lorsqu'elle ne comporte comme axiomes que des \ralgs. Dans la littérature anglaise concernant la logique catégorique (étudiée dans le cadre des \clama), les \talgs ont pour nom \textsl{Horn theory}.

\label{fregledirecte}
Une \ralg est dite \textsl{directe} si, dans l'hypothèse (à gauche du $\vd$), il y a une liste de prédicats portant uniquement sur des variables, toutes les variables étant distinctes. 
\cite{fCLR01} utilisent les \reds pour construire les \nsts formels et leurs variantes, qui sont des certificats \agqs d'effondrement pour certaines \sads.  

Les \ralgs qui ne sont pas directes sont appelées des \textsl{règles de simplification}.

Typographiquement on opère les distinctions suivantes: le nom d'une \red est entièrement en minuscule, le nom d'une règle de simplification commence par une majuscule, le nom d'une \rdy qui n'est pas une \ralg est entièrement en majuscule.   
  
Un cas particulier de \talg est fourni par les \textsl{\tpes} qui sont les \talgs avec une seule sorte et pour seul prédicat le prédicat d'\egt. Dans la littérature anglaise, les \tpes ont pour nom \textsl{algebraic theory}.

\smallskip  Une \tdy est dite \textsl{\dij} si dans les axiomes, il n'y a pas de $\Exists$ dans la conclusion. On pourrait les appeler \textsl{weakly disjunctive} en anglais. 

\smallskip 
Une \rdy est dite \textsl{existentielle simple} si la conclusion est de la forme $\Exists \ux\; \Delta$ où $\Delta$ est une liste finie de formules atomiques.

Une \tdy est dite \textsl{existentielle} si ses axiomes sont tous des \ralgs ou existentielles simples (une \ralg peut d'ailleurs être considérée comme un cas particulier de \rex). Une théorie existentielle typique est la théorie des \textsl{anneaux de Bézout} (tout \itf est principal). Dans la littérature anglaise, une théorie existentielle est
appelée une \textsl{regular theory}.

\smallskip 
Les théories \textsl{existentiellement rigides} sont les \tdys dans lesquelles les axiomes existentiels sont simples et correspondent à des existences uniques. 
Cela généralise (de très peu) les théories \dijs. 

Une théorie existentielle existentiellement rigide
est dite \textsl{cartésienne}.
Cela généralise (de très peu) les \talgs. Dans la littérature anglaise on dit  \textsl{cartesian theory}.

\smallskip Une théorie est dite \textsl{rigide} si les axiomes sont de trois types 
\begin{itemize}
\item des \ralgs;
\item des règles \dijs du type $\Gamma\vd P\vou Q$ avec la règle prouvable $\Gamma\vet P\vet Q\vd \Bot$;
\item  des règles existentielles existentiellement rigides.
\end{itemize}
La théorie des \cdis peut être énoncée sous forme d'une \tdy rigide
en utilisant le prédicat d'inversibilité. La théorie des \crcds peut aussi être énoncée sous forme d'une \tdy rigide, contrairement à la théorie des \cdacs.  
Dans la littérature anglaise une théorie rigide est appelée une \textsl{disjunctive theory}.   

\Subsubsection{Un exemple de base}

La \textsl{théorie des anneaux commutatifs} \sa{Ac} est l'exemple paradigmatique d'une \tpe. Elle utilise la signature 
$
\Sigma_{\sA{Ac}}=(\cdot=0\mathrel{;}\cdot+\cdot,\cdot\times \cdot,-\,\cdot,0,1)
$ 
avec les seuls axiomes suivants (ce sont des \reds):\label{fTpeAc} 

\DeuxRegles{
\Lab{ac1} $\vd  0=0$
\Lab{ac3} $\,\, x=0\vet y=0\vd  x+y=0$
}
{
\Lab{ac2} $\,\,x=0\vd x\times y=0 $
}

\smallskip 
Le terme \gui{$x-y$} est
une abréviation de \gui{$x+(-y)$} et le prédicat binaire \gui{$\cdot=\cdot$} est \textsl{défini}
par la convention: \gui{$x=y$} est une abréviation pour \gui{$x-y=0$}.

\smallskip \noindent \emph{Explication.} Outre la machinerie calculatoire des preuves dynamiques, on utilise la machinerie calculatoire des \pols commutatifs à \coes entiers (\gui{extérieure} à la théorie), qui réécrit tout terme (formé sur les constantes et les variables) comme un \pol à \coes entiers sous une forme normale prédéfinie.

\noindent La règle de distributivité $x(y+z)=xy+xz$, par exemple, est alors confiée à un calcul automatique qui réduit à $0$ le terme
 $x(y+z)-(xy+xz)$. 
La transitivité de l'\egt est obtenue de la même manière en utilisant l'axiome \tsbf{ac3}.

\subsection{Structures \agqs dynamiques} 

Les \sads (finitaires) sont explicitement nommées dans~\cite{fLom98,fLom06}. Dans  \cite{fCLR01} elles sont implicites, mais décrites sous la forme de leurs présentations.
Elles sont \egmt implicites dans \cite{fLom02}, et, last but not least, dans \cite{fD5}, qui a été une source d'inspiration essentielle: on peut calculer de manière sûre dans la clôture algébrique d'un corps discret, même quand il n'est pas possible de construire cette clôture algébrique. 
Il suffit donc de considérer la clôture \agq comme une \sad \gui{à la D5} plutôt que comme une structure \agq usuelle: \textsl{l'évaluation paresseuse à la D5 fournit une sémantique \cov pour la clôture \agq d'un \cdi}.

\smallskip Si $\sa{T}=(\cL,\cA)$ est une \tdy, une \textsl{\sad de type~\sa{T}} est donnée par un 
ensemble $G$ de générateurs et un ensemble $R$ de \textsl{relations}. 

Une relation est par \dfn une formule atomique close $P(\und{t})$ construite sur le langage  $\cL\,\cup\, G$ avec des termes~$t_i$ clos dans ce langage. À une telle relation est associé l'axiome  \gui{$\Vd  P(\und{t})$} de la \sad.

D'un point de vue \cof l'ensemble $G$ peut être vu comme un ensemble à la Bishop. Cela implique que pour deux objets construits $a,b$ qui représentent des \elts de $G$, si l'on~a $a=_Gb$, la relation $a=b$ doit être présente dans l'ensemble $R$ des relations qui définissent la \sad $\big((G,R),\sa{T}\big)$.

\begin{fnotation}\label{fnotasadreglevalide}
Nous indiquerons que la règle  \gui{$\,\Gamma\Vd \dots$} est valide dans la \sad $\gS=\big((G,R),\sa{T}\big)$ sous la forme abrégée: \gui{$\, \Gamma\Vdi\gS \dots$}. 
\end{fnotation}

\begin{fdefinota} \label{fnotaTcl}
Soit $\gS=\big((G,R),\sa{T}\big)$ une \sad de type $\sa{T}=(\cL,\cA)$. L'ensemble des termes clos de $\gS$, \cad les termes construits sur $\cL\cup G$,  se note $\Tcl(\gS)$. L'ensemble des formules atomiques closes  se note~$\Atcl(\gS)$. 
Une \ralg~$\vd P$ pour~$P\in\Atcl(\gS)$ s'appelle \textsl{un fait de $\gS$}. L'ensemble des faits de $\gS$ valides dans $\gS$  se note $\Atclv(\gS)$.
  \end{fdefinota}

Intuitivement, une \sad est une structure algébrique usuelle incomplètement spécifiée. 

\begin{fexample} \label{fexaSaCd}  
 Par exemple on obtient une \sad de corps discret 
\[\ndsp\gK=\big((G,R),\sa{Cd}\big)\]
en prenant~\hbox{$G=\so{a,b}$} et 
$R=\so{105=0,\,a^2+b^2-1=0}.$ Ce \cdi dynamique  correspond à n'importe quel corps de caractéristique
$3$ ou $5$ ou $7$ engendré par deux \elts $\alpha$ et
$\beta$ vérifiant~\hbox{$\alpha^2+\beta^2=1$}. 

\noindent Outre les règles dynamiques valides dans tous les corps discrets,
il y a maintenant celles que l'on obtient en élargissant le
langage avec les constantes prises dans $G$ et en ajoutant aux  axiomes les 
relations prises dans $R$. \eoe
\end{fexample}

Notons qu'une \rdy valide dans une \sad utilise pour sa \demo un
pur calcul arborescent, sans logique, avec un nombre fini de \gtrs, de relations et d'axiomes. 

\begin{fnotation} \label{fnotaT(A)}
Lorsque $\gA$ est une \salg usuelle sur le langage de $\sa{T}$ (ou sur une partie de ce langage) on note $\sa{T}(\gA)$ la \sad obtenue en prenant la présentation $(G,R)$ suivante: $G$ est l'ensemble des \elts de $\gA$ et $R$ est l'ensemble des faits valides dans $\gA$. 
Par exemple si $\sa{T}$ est la théorie des anneaux de Bézout,  et $\gA$ est un anneau commutatif arbitraire, on doit mettre dans $R$ les relations $a+b-c=0$, $a'b'-c'=0$ et $a+a''=0$ lorsque $a,b,a',b',c',a''$ sont des \elts de $\gA$ et que les relations sont satifaites dans $\gA$. On dit que l'on a ajouté à la théorie \sa{T} le \textsl{diagramme positif de $\gA$}.

\end{fnotation}

\Subsubsection{Modèles constructifs versus modèles classiques}\label{fsubsecmodelescofsSAD}
On considère  une \sad  $\gA=\big((G,R),\sa T\big)$  de type \sa{T} avec une ou plusieurs sortes. Pour simplifier les notations nous supposons une seule sorte. Un \textsl{modèle  de~$\gA$}  est une \salg usuelle (statique) $M$ décrite dans le langage 
associé à~$\gA$ et vérifiant les axiomes de $\gA$ (ceux de $\sa{T}$ et ceux donnés par la \pn de $\gA$).

Lorsque $\gA$ est défini par la \pn vide, on parle de \textsl{modèles de \sa T}.

\smallskip 
La notion de modèle est donc basée à priori sur une notion intuitive de \textsl{structure \agq} à la Bourbaki. 
Nous pouvons qualifier ces \salgs de \gui{statiques} par contraste avec les \sads \gnles. Notons qu'ici l'ensemble \gui{sous-jacent} à la structure est un ensemble \gui{naïf} (ou plusieurs ensembles naïfs s'il y a plusieurs sortes) structuré par la donnée de prédicats et de fonctions (au sens naïf)
soumis à certains axiomes.

D'un point de vue \cof, les modèles doivent satisfaire les axiomes en respectant le sens intuitif du \gui{ou} et du \gui{il existe}: pour prouver qu'une \salg particulière satisfait les axiomes, on autorise uniquement la logique intuitionniste. 
Notons aussi que la théorie des ensembles à laquelle nous nous référons est à priori celle, informelle, de Bishop.

\subsection{Extensions conservatives}

Une \tdy \sa{T} est définie par un couple d'ensembles $(\cL,\cA)$ où $\cL$
est (la signature du) le langage formel utilisé, et $\cA$ l'ensemble des axiomes, qui sont des \rdys sur le langage $\cL$.  
Pour nous, d'un point de vue \cof, ces deux ensembles sont des ensembles intuitifs à la Bishop. Ils peuvent éventuellement être formalisés dans une théorie formelle \cov lorsqu'on étudie les \prts \gnles des \tdys. 
Ils sont en tout cas très différents des ensembles considérés usuellement en logique catégorique, où le cadre général est fourni comme un cadre purement formel, dans \sa{ZFC} ou dans l'une de ses extensions.

Une \tdy $(\cL',\cA')$ est une \textsl{extension simple} de $(\cL,\cA)$ si $\cL$ et $\cA$ sont des sous-ensembles de $\cL'$ et $\cA'$ (au sens catégorique défini dans Bishop).
Dans ce cas, les \rdys formulées dans $\cL$ et valides dans $\sa T$ sont valides dans $\sa T\;'$.    
 
\begin{fdefinition} \label{fdefitdyidentiques}
Deux \tdys \textsl{sur le même langage} sont dites \textsl{identiques} si elles prouvent les mêmes \rdys, \cad si les axiomes de chacune sont des règles valides dans l'autre. Dans ce cas les modèles sont les mêmes aussi bien en \coma qu'en \clama.
\end{fdefinition}

\begin{fdefinition} \label{fdefiextcons}
 On dit qu'une \tdy $\sab{T}'$ est une \textsl{extension conservative simple de la théorie \sa{T}}
si c'est un extension simple de \sa{T} et si les \rdys formulables dans \sa{T} et valides dans $\sab{T}'$
sont valides dans \sa{T}.
\end{fdefinition}

\noindent {\bf Définition informelle}. D'autres extensions, plus \gnles que celles de la \dfn \ref{fdefitdyidentiques}, doivent être considérées comme \gui{ne changeant rien à la théorie} d'une part parce que ce sont des extensions conservatives et d'autre part parce que les modèles \cofs de toute \sad ne changent pas non plus. On dit alors que $\sab{T}'$ est un extension \textsl{intuitivement équivalente} de la théorie \sa T.

\smallskip Nous allons en examiner un certain nombre. 

\Subsubsection{Extensions  \esids}

\paragraph{Ajout de  simples abréviations dans le langage}~

\smallskip Par exemple, on peut introduire un symbole de fonction \gui{$\Som(\cdot,\cdot,\cdot)$} pris comme une abréviation: $\Som(x,y,z)$ 
est un notation abrégée pour $(x+y)+z$.

 Il est  clair que ce type de \dfns \gui{ne change rien} à la \tdy: c'est une extension \inteq.

\paragraph{Ajout d'un prédicat de conjonction, de disjonction ou d'existence}\label{fsecajout-et-ou-ex}~

\smallskip Soit $\Delta=(A_1,\dots,A_n)$ une liste de prédicats.

On peut considérer que les règles d'introduction et d'élimination suivantes \und{définissent} la conjonction, comme en déduction naturelle.

\regles{
  \lab{Intro-$\vii_{\Delta}$} $\,\,A_1\vet\dots \vet A_n\vd A_1\vii  \dots \vii  A_n$
  \lab{Elim-$\vii_{\Delta}$} $\,\, A_1\vii \dots \vii  A_n \vd A_1\vet\dots \vet A_n$
}

\noindent La deuxième règle équivaut à la conjonction des règles $A_1\vii \dots \vii  A_n\vd A_i$.

\smallskip Le cas de la disjonction est un peu plus compliqué, car dans les preuves dynamiques, il n'y a pas de $\vou$ à gauche de $\vd$. On peut  introduire dans le langage le connecteur 
$\vuu$
pour créer de nouvelles formules en utilisant les règles inspirées de la déduction naturelle.

\regles{
  \lab{Elim-$\vuu_{\Delta}$} $\,\,A_1\vuu \cdots \vuu A_n \vd  \,A_1\vou \dots \vou A_n$
  \lab{Intro-$\vuu_{\Delta,1}$} $\,\,A_1 \vd  A_1\vuu \cdots \vuu A_n$
  \lab{~} $\vdots$
  \lab{Intro-$\vuu_{\Delta,n}$} $\,\,A_n \vd  A_1\vuu \cdots \vuu A_n$
 }
 
\smallskip 
L'utilisation du quantificateur existentiel dans le langage
d'une \tdy se fait au moyen des règles d'introduction et d'élimination suivantes (la deuxième n'est pas une plaisanterie, elle justife notre usage de $\Exists$ bien distingué de $\exists$).

\regles{
  \lab{Intro-$\exists_{x,A}$} $\,\, A(x) \vd \exists x\,A(x)$
  \lab{Elim-$\exists_{x,A}$} $\,\,\exists x\,A(x) \vd  \Exists x\,A(x)$
}

\paragraph{Ajout d'un symbole de fonction en cas d'existence unique}~

\smallskip On considère une \tdy  $\sa{T}=(\cL,\cA)$.
Supposons que $\cL$ contienne un prédicat  $P(u,x,y)$ d'arité $k+1$ (on donne l'exemple avec $k=2$) 
et que \sa{T} prouve les \rdys suivantes

\DeuxRegles{
\lab{Ex$_{P,u}$} $ \vd  \Exists u\;P(u,x,y)$
}
{
\lab{Uniq$_{P,u}$} $ P(u,x,y),\,P(v,x,y)\vd u=v $
}

\noindent 
 Soit alors $\sab{T}'$  la \tdy obtenue à partir de \sa{T} en ajoutant un nouveau symbole de fonction $f$ et l'axiome suivant 
 
\UneRegle{df$_{P,u,f}$}{$\,\,P(u,x,y)\vd u=f(x,y)$}

\noindent On dit que
\textsl{la règle \tsbf{df$_{P,u,f}$} définit le symbole de fonction $f$}. 
\\
Dans ce cas la \tdy 
$\sab{T}'$ est une extension conservative 
de la théorie \sa{T}.

\begin{flemma} \label{flemdefiesid}
Pour une \tdy  l'utilisation des facilités précédentes 
(abréviations, conjonction, disjonction, quantificateur existentiel, symboles de fonction en cas d'existence unique) produit une extension conservative qui ne change pas les modèles constructifs.  
\end{flemma}

\begin{fdefinition} \label{fdefiesid}~
\begin{enumerate}
\item Une telle extension est appelée une \textsl{\esid simple}. 
\item Deux \tdys sont dites \textsl{\esids} s'il existe une \tdy qui est, éventuellement après des renommages  dans les signatures, une extension \esid des deux premières. 
\end{enumerate}
\end{fdefinition}

\Subsubsection{Extensions essentiellement \eqves} \label{fsubsectdyeseq}

On examine maintenant des situations obtenues en étendant une \tdy par ajout de nouvelles sortes bien définies. Ceci correspond à des constructions légitimes d'ensembles chez Bishop.

Références: \cite{fBH2017}, \cite{fTse2017}.

\paragraph {Introduction d'une sous-sorte}~

\smallskip On considère une \tdy \sa{T} et une sorte $S$ de cette théorie. On considère un prédicat $P(\cdot)$ de la théorie portant sur les objets de type~$S$.
On définit une nouvelle \tdy  comportant \gui{la sous-sorte $U$ de la sorte $S$ définie par le prédicat $P$} de la manière suivante.
\begin{enumerate}
\item On ajoute $U$ dans les sortes.
\item On ajoute une fonction $j_P:U\to S$ dans les fonctions.
\item On ajoute les axiomes

\DeuxRegles
{
\lab{sso$_P$} {$\vd_{u:U}  P(j_P(u))$}
}
{
\lab{SSO$_P$} {$\,\,P(a)\vd_{a:S} \Exists u \; j_P(u)=a$}
}

\item On définit un prédicat $x=_{U}y$ d'\egt sur $U$ 
comme une abréviation de $j_P(x)=j_P(y)$.  
\end{enumerate}

\paragraph {Introduction d'une sorte produit fini de sortes}~

\smallskip On considère une \tdy \sa{T}  et des sortes $S_1$, \dots, $S_n$ de cette théorie. 
On définit une nouvelle \tdy  comportant \gui{la sorte $S$ produit fini des $S_i$} de la manière suivante.
\begin{enumerate}
\item On ajoute $S$ dans les sortes.
\item On ajoute pour chaque $i\in \lrbn$ un symbole de fonction $\pi_i$ de type $S\to S_i$.
\item On définit  $x=_{S}y$  sur $S$ 
comme une abréviation de $\Vii_{i=1}^n \pi_i(x)=\pi_i(y)$.  
\item On ajoute  un symbole de fonction $\mathrm{Pr}$ de type $S_1\times \cdots\times S_n\to S$ dans les fonctions. 
\item On ajoute les axiomes

\Regles{ \lab{spf$_i$} $ \vd_{x_1:S_1,\dots,x_n:S_n} \; \pi_i\big(\mathrm{Pr}(\xn)\big)=x_i$  
}

\end{enumerate}

\paragraph  {Introduction d'une sorte quotient}~

\smallskip On considère une \tdy \sa{T}  et une sorte $S$ de cette théorie. On considère un prédicat binaire $E(x,y)$ sur $S$. On suppose que les trois règles qui signifient que $E(x,y)$ est une relation d'\eqvc sur $S$ sont valides dans~$\sa{T}$. 
On définit une nouvelle \tdy  comportant \gui{la sorte $B$ quotient de $S$ par la relation d'\eqvc $E$} de la manière suivante.
\begin{enumerate}
\item On ajoute $B$ dans les sortes et un prédicat binaire $x=_By$ sur $B$.
\item On ajoute un symbole de fonction $\pi_{E}$ de type $S\to B$.
\item On ajoute les axiomes

\DeuxRegles{
\lab{sq1$_E$} {$\,\,E(a,b)\vd_{a,b:S}\; \pi_{E}(a)=_B \pi_{E}(b)$}
\lab{SQ$_E$} $\vd_{x:B}\; \Exists a \; \pi_{E}(a)=_Bx$
}{
\lab{Sq2$_E$} {$\,\,\pi_{E}(a)=_B\pi_{E}(b)\vd_{a,b:S}\; E(a,b) $}
}
\end{enumerate}

\paragraph {Introduction d'une sorte somme disjointe finie}~

\smallskip On considère une \tdy \sa{T}  et des sortes $S_1$, \dots, $S_n$ de cette théorie. 
On définit une nouvelle \tdy  comportant \gui{la sorte $S$ somme disjointe des $S_i$} de la manière suivante.
\begin{enumerate}
\item On ajoute $S$ dans les sortes et un prédicat binaire $x=_Sy$ sur $S$.
\item On ajoute pour chaque $i\in \lrbn$ un symbole de fonction $j_{S_i,S}$ de type $S_i\to S$.
\item On ajoute les axiomes (dans lesquels $j_i$ est une abréviation pour $j_{S_i,S}$)

\Regles{
\lab{Sdf$_i$} {$\,\,j_i(a)=_Sj_i(b)\vd_{a,b:S_i}\; a=_{S_i}b$ \hfill  pour $1\leq i\leq n$\hspace{4cm}~}
\lab{SDF$_{i,k}$} {$\,\,j_i(a)=_Sj_k(b)\vd_{a:S_i,b:S_k}\;\Bot  $ \hfill     pour $1\leq i< k\leq n$\hspace{4cm}~}
\lab{SDF} $\vd_{x:S}\; \Exists a_1 \; j_1(a_1)=_Sx \;\vou\;\cdots\;\vou\;\Exists a_n \; j_n(a_n)=_Ax$}

\end{enumerate}

\begin{flemma} \label{flemmasortesommefinie}
Lorsque l'on introduit une nouvelle sorte, sous-sorte, sorte produit fini, quotient ou somme disjointe finie de sortes présentes dans une \tdy (avec  les symboles de fonctions et  axiomes qui définissent la nouvelle sorte) on obtient une nouvelle \tdy \inteq à la première. 
\end{flemma}

\begin{fdefinition} \label{fdefitheseq}  On considère une \tdy  $\sa{T}$.
\begin{enumerate}
\item  Une extension simple  $\sab{T}'$ de \sa{T} est dite \textsl{extension essentiellement équivalente  simple de \sa{T}} si elle est identique à \sa{T} ou si elle est une extension \inteq obtenue  en application répétée d'ajouts 
autorisés dans les sous-sections précédentes:
\begin{itemize}
\item ajouts d'abréviations, de symboles de prédicats ou 
   de symboles de fonctions selon le lemme \ref{flemdefiesid},
\item ajouts   de nouvelles sortes bien définies selon le lemme  
\ref{flemmasortesommefinie}.
\end{itemize}
\item On dit que  \sa{T} et $\sab{T}'$ sont des théories \textsl{essentiellement équivalentes} si l'on peut trouver une \tdy $\sab{T}''$ qui est une extension essentiellement équivalente simple à la fois de \sa{T} et de $\sab{T}'$, à un renommage éventuel près de
certains symboles de sortes, de fonctions et de prédicats dans \sa T ou~$\sab{T}'$.  
\item On dit que \textsl{la \tdy $\sab{T}'$ est une extension
 de la \tdy \sa{T}} si elle est une extension simple d'une théorie essentiellement équivalente à \sa{T}.
\end{enumerate}
 
\end{fdefinition}

Les extensions \eseqs ne changent pas les modèles \cofs d'une \sad. 
Il en va autrement pour d'autres extensions conservatives qui peuvent rendre les modèles précédents non \cofs. C'est ce que nous allons examiner maintenant. 

\Subsubsection{Autres extensions conservatives}

\paragraph{Ajout de la logique classique}~

\smallskip Pour une \tdy,  l'utilisation de la logique classique revient à autoriser systématiquement, outre les facilités de la \dfn \ref{fdefitheseq}, 
l'introduction d'un prédicat opposé à tout prédicat déjà défini. 

Pour un prédicat $P$ déjà défini, on introduit alors  un prédicat opposé $Q$, noté $\lnot P$ ou $\ov P$, qui satisfait les deux axiomes de la logique booléenne.

\DeuxRegles{
\labu $\;\;P\vet Q\vd \Bot$
}
{
\labu $\vd P\vou Q$
}

D'un point de vue \cof, cela signifie que dans les modèles d'une \sad, on demande désormais que le prédicat soit décidable.

\begin{ftheorem}[Élimination des coupures] 
\label{fthFond} 
Pour une \tdy,  l'utilisation de la logique classique produit une extension conservative.
\end{ftheorem}

Un avantage de la logique classique est souvent intériorisé sous la forme de ramener toutes les \demos à des \demos par l'absurde.

Par exemple la règle

\Regles{
\labu $\;\;A\vet B\vd C\vou D$
}

\noindent devient \eqve à la règle

\Regles{
\labu $\;\;A\vet B\vet \ov C\vet\ov D \vd \Bot$
}

\paragraph{Skolémisation}~  

\smallskip Dans le \tho qui suit, ce n'est pas seulement la logique classique qui est utilisée, mais la skolémisation, laquelle peut être vue comme une forme locale de l'axiome du choix.

\begin{ftheorem}[Skolémisation, {\cite{fBC2019}}] 
\label{fthFondExists} 
~\\
On considère une \tdy \sa{T}. Notons $\sab{T}'$ la théorie \gui{skolémisée}, où l'on a skolémisé tous les axiomes existentiels en remplaçant les $\Exists$  par l'introduction de symboles de fonctions. 
Alors~$\sab{T}'$  est une extension \cosv de  \sa{T}.
\end{ftheorem}

\section[Treillis distributifs associés à une \sad]{Treillis distributifs et espaces spectraux associés à une \sad}
\label{fsubsectrdisad}

\subsection{Le treillis et le spectre de Zariski d'un anneau commutatif}

Le treillis de Zariski d'un anneau commutatif peut être obtenu à partir des règles valides dans différentes extensions de la théorie \sa{Ac} des anneaux commutatifs.

Nous choisissons la théorie des anneaux locaux en raison de leur rôle fondamental dans les schémas de Grothendieck.

On considère \prmt une \tdy des \textsl{anneaux locaux avec unités} \sa{Al1}, basée sur la signature $(\,\cdot=0,\U(\cdot)\mathrel{;}\cdot+\cdot,\cdot\times \cdot,-\,\cdot,0,1\,)$.

Cette théorie est une extension de la théorie des anneaux commutatifs.
Un prédicat~$\U(x)$ est défini comme le prédicat d'inversibilité au moyen des deux axiomes convenables.
On~ajoute l'axiome de collapsus  et l'axiome \tsbf{AL} des anneaux locaux proprement dit\footnote{Cela fait de la théorie \sa{Al1} une théorie \esid à une \tdij extension de la théorie des annaux commutatifs.}.

\DeuxRegles 
{\lab{CL$_{Al1}$} $\,\, \U(0)\Vd \Bot$}
{\Lab{AL} $\,\, \U(x+y) \Vd \U(x) \vou \U(y)$}

Soit $\gA$ un anneau commutatif. Considérons la \entrel $\vdash_{\gA,\mathrm{Zar} }$ sur l'ensemble sous-jacent à $\gA$ définie par l'\eqvc suivante 
\vspace{-.8em}
\begin{equation} \label {feqZarclass}
\begin{aligned} 
 a_1,\dots,a_n  &\,\vdash_{\gA,\mathrm{Zar}}    c_1,\dots,c_m   
  \qquad\quad     \equidef     \\[.3em] 
\U(a_1)\vet \dots\vet  \U(a_n) & \Vdi{\sA{Al1}(\gA)}   \U(c_1)\vou \dots\vou   \U(c_m) 
 \end{aligned}
\end{equation}

On définit le \textsl{treillis de Zariski de $\gA$}, noté $\ZarA$ ou 
$\Zar(\gA)$,
 comme celui engendré par la \entrel  $\vdash_{\gA,\mathrm{Zar}}$.
 
L'application correspondante $\DA:\gA\to\ZarA$ s'appelle le \textsl{support de Zariski de $\gA$}. Lorsque~$\gA$ est fixé par le contexte on note simplement $\rD$.

Le \textsl{spectre de Zariski} usuel est l'espace spectral dual de ce \trdi.

Notons que puisque $\rD(a_1)\vi\dots\vi\rD(a_n)=\rD(a_1\cdots a_n)$, les \elts de $\ZarA$ sont tous de la forme $\rD(c_1,\dots,c_m):= \rD(c_1)\vu\dots\vu\rD(c_m)$.

\smallskip 
Une théorie plus \elr est la théorie \sa{Asdz} des anneaux non triviaux \textsl{\sdz}, qui est obtenue à partir de \sa{Ac} en ajoutant l'axiome de collapsus et l'axiome \tsbf{SLZ}

\DeuxRegles {\lab{CL$_{Asdz}$} $\,\, 1=0\Vd \Bot$}
{ \Lab{SDZ} $\,\,xy=0  \Vd x=0 \vou y=0$.}

On démontre alors les \eqvcs suivantes (la première résulte de la définition
de~$\ZarA$). L'\eqvc avec le point \textsl{4} est l'objet du \nst formel.  Le \nst de Hilbert proprement dit est un résultat plus difficile. 
\begin{ftheorem}[\nst formel] \label{fthNstFormel} ~\\
 Soit $\gA$ un anneau commutatif, et $\an,c_1,\dots c_m\in\gA$. \Propeq
\vspace{-.5em}
\[ 
\begin{aligned} 
(1)\qquad\quad\;     \rD(a_1),\dots,\rD(a_n)  &\,\vdash_{\ZarA}    \rD(c_1),\dots,\rD(c_m)  \\[.3em] 
(2)\qquad    \U(a_1)\vet \dots\vet  \U(a_n)  &\Vdi{\sA{Al1}(\gA)}   \U(c_1)\vou \dots\vou   \U(c_m)  \\ 
(3)\qquad   c_1=0\vet \dots\vet  c_m=0  &\Vdi{\sA{Asdz}(\gA)}   a_1=0\vou \dots\vou   a_n=0  \\ 
(4)\qquad   \exists k>0\;\;(a_1 \cdots a_n)^k&\,\in\gen{c_1,\dots,c_m} 
 \end{aligned}  \label {feqNstFormel}
\] 
\end{ftheorem}

On peut donc identifier l'\elt $\rD(c_1,\dots,c_m)$ de $\ZarA$ 
à l'\id $\sqrt[\gA]{\gen{c_1,\dots,c_m}}$. Modulo cette identification, la relation d'ordre est la relation d'inclusion.

\begin{fcorollary} \label{fcorZarA}
Le treillis $\ZarA$ est engendré par la plus petite \entrel sur (l'ensemble sous-jacent~à)~$\gA$ satisfaisant les relations suivantes
(ici $\bot=0_\gT$ et $\top=1_\gT$).

\DeuxRegles
{
\labu $\;\;0\vdash \bot$
\labu $\;\;ab\vdash a$
\labu $\;\;a+b\vdash a,b$
}
{
\labu $\;\; 1\vdash \top$
\labu $\;\;a,b\vdash ab$
} 

\noindent En d'autres termes, l'application $\rD:\gA\to\ZarA$ satisfait les relations 
\[
\rD(0)=0,\;\rD(1)=1,\;\rD(ab)=\rD(a)\vi\rD(b),\;\rD(a+b)\leq \rD(a)\vu \rD(b),
\]
et toute autre application $\rD':\gA\to T$ qui satisfait ces relations se factorise via $\ZarA$ avec un unique morphisme de \trdis $\ZarA\to T$.
\end{fcorollary}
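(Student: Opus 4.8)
The plan is to derive the whole statement from two results already available: the formal \nst (Theorem~\ref{fthNstFormel}), which says that $a_1,\dots,a_n \vdash_{\gA,\mathrm{Zar}} c_1,\dots,c_m$ holds exactly when $(a_1\cdots a_n)^k \in \gen{c_1,\dots,c_m}$ for some $k>0$, and the fundamental theorem of \entrels (Theorem~\ref{fthEntRel1}), by which $\ZarA$ is precisely the \trdi presented by $\gA$ as set of generators together with the relations $A \vdash_\gT B$ for $A \vdash_{\gA,\mathrm{Zar}} B$. So it suffices to prove: (i) $\vdash_{\gA,\mathrm{Zar}}$ is the \emph{least} \entrel on $\gA$ satisfying the five displayed rules, equivalently that $\rD$ satisfies the four listed identities/inequality; and (ii) the universal factorization property of $\rD$.

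For (i), one inclusion is immediate from Theorem~\ref{fthNstFormel} with exponent $k=1$: the memberships $ab \in \gen a$, $(a+b)\in\gen{a,b}$, $ab\in\gen{ab}$, $1\in\gen 1$, $0\in\gen{}$ show that $\vdash_{\gA,\mathrm{Zar}}$ satisfies all five rules; reading this off on $\rD$ gives $\rD(ab)\le\rD(a)$ and $\rD(a)\vi\rD(b)\le\rD(ab)$, hence $\rD(ab)=\rD(a)\vi\rD(b)$, together with $\rD(a+b)\le\rD(a)\vu\rD(b)$, $\rD(0)=0$, $\rD(1)=1$. Conversely, let $\vdash'$ be any \entrel on $\gA$ satisfying the five rules; I must show $\vdash_{\gA,\mathrm{Zar}}\,\subseteq\,\vdash'$. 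From $a,b\vdash' ab$ with monotonicity and cut one gets $a_1,\dots,a_n\vdash' a_1\cdots a_n$; taking $b=a$ in the same rule gives $a\vdash' a^2$, and iterated cut gives $u\vdash' u^k$ for all $u,k$; hence $a_1,\dots,a_n\vdash'(a_1\cdots a_n)^k$. Writing $(a_1\cdots a_n)^k=\sum_j\lambda_jc_j$, iterating $x+y\vdash' x,y$ yields $\sum_j\lambda_jc_j\vdash'\lambda_1c_1,\dots,\lambda_mc_m$, while $ab\vdash' a$ gives $\lambda_jc_j\vdash' c_j$; repeated cut then gives $(a_1\cdots a_n)^k\vdash' c_1,\dots,c_m$, and one final cut gives $a_1,\dots,a_n\vdash' c_1,\dots,c_m$. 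This proves (i), so by Theorem~\ref{fthEntRel1} the lattice generated by that least \entrel is $\ZarA$.

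For (ii), let $\rD':\gA\to T$ be a map into a \trdi $T$ satisfying the four relations, and define $A\vdash' B$ iff $\Vi\rD'(A)\le\Vu\rD'(B)$ in $T$ (empty meet $=1_T$, empty join $=0_T$). This relation is reflexive and monotone, and transitive because the cut rule \eqref{fcoupure1} holds in every \trdi; hence $\vdash'$ is an \entrel, and the four relations on $\rD'$ are exactly the five rules for $\vdash'$. By part (i), $\vdash_{\gA,\mathrm{Zar}}\,\subseteq\,\vdash'$, so $\rD'$ respects every defining relation of the presentation of $\ZarA$; by the universal property of that presentation there is a unique morphism of \trdis $\varphi:\ZarA\to T$ with $\varphi\circ\rD=\rD'$, uniqueness because $\rD(\gA)$ generates $\ZarA$.

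The main obstacle is the converse inclusion in (i): converting a radical‑membership certificate $(a_1\cdots a_n)^k\in\gen{c_1,\dots,c_m}$ into a derivation using only reflexivity, monotonicity and cut. The manipulations (products, then powers, then distributing the sum and dividing out coefficients) are elementary but must be organised carefully. An alternative, possibly smoother, route bypasses this by using the identification of $\rD(c_1,\dots,c_m)$ with the radical \id $\sqrt[\gA]{\gen{c_1,\dots,c_m}}$ already recorded in the text: then (i) reduces to $\sqrt{\gen{ab}}=\sqrt{\gen a}\cap\sqrt{\gen b}$, $\sqrt{\gen{a+b}}\subseteq\sqrt{\gen{a,b}}$, etc., and for (ii) one simply sets $\varphi(\rD(c_1,\dots,c_m))=\Vu_j\rD'(c_j)$ and checks well‑definedness from $\rD'(u^k)=\rD'(u)$ and $\rD'(\sum_j\lambda_jc_j)\le\Vu_j\rD'(c_j)$, both of which follow from the four relations; the underlying arithmetic is the same either way.
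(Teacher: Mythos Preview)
Your argument is correct and is exactly the route the paper intends: the corollary is stated without proof, immediately after the formal \nst, and your derivation from Theorem~\ref{fthNstFormel} together with Theorem~\ref{fthEntRel1} is the natural unpacking. One small omission: in the converse inclusion of~(i) your chain of cuts handles only the case $n\ge 1$ and $m\ge 1$; the boundary cases $n=0$ (empty hypothesis, so you need $\vdash' 1$ before cutting against $1\vdash' c_1,\dots,c_m$) and $m=0$ (nilpotent product, so you need $0\vdash'$ after reaching $a_1,\dots,a_n\vdash' 0$) are precisely where the two remaining rules $\vdash 1$ and $0\vdash$ enter---you listed them but never invoked them, so just add one sentence covering these.
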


\subsection{Le treillis et le spectre réels d''un anneau commutatif}
  
\noindent On considère une \textsl{\tdy des corps ordonnés discrets}~\sa{Cod} basée sur la signature $(\,\cdot=0,\cdot\geq 0,\cdot>0\mathrel{;}\cdot+\cdot,-\,\cdot,0,1\,)$, par exemple celle décrite dans~\cite[Section~3]{fCLR01}. 

 Le \textsl{treillis réel}
 d'un anneau commutatif $\gA$, noté $\Reel(\gA)$, est engendré par la \entrel $\,\vdash_{\gA,\mathrm{Réel}} $ sur (l'ensemble sous-jacent à) $\gA$ définie par l'\eqvc suivante
 
\vspace{-.8em}
\begin{equation} \label {feqreel}
\begin{aligned} 
 a_1,\dots,a_n  &\,\vdash_{\gA,\mathrm{Réel}}    c_1,\dots,c_m   
  \qquad\quad     \equidef     \\[.3em] 
a_1>0\vet \dots\vet  a_n>0 & \Vdi{\sA{Cod}(\gA)}   c_1>0\vou \dots\vou   c_m>0 
\end{aligned}
\end{equation}

On note $\rR:\gA\to \Reel(\gA)$ l'application correspondante.
On a alors l'\eqvc suivante (que l'on appelle un \textsl{Positivstellensatz formel}) 
\[\rR(a_1),\dots,\rR(a_n) \,\vdash_{\Reel(\gA)}    \rR(c_1),\dots,\rR(c_m) \Longleftrightarrow \exists k\in\NN\;\exists p\in C\;\;(a_1 \cdots a_n)^k+ p =0
\]
où $C$ désigne le cône positif engendré par les $a_i$ et les $-c_j$.
Si $n=0$ on remplace $(a_1 \cdots a_n)^k$ par~$1_\gA$.

Le spectre réel usuel $\Sper(\gA)$ est l'espace spectral dual de ce \trdi $\Reel(\gA)$.
Les \elts de $\Sper(\gA)$ s'identifient en \clama aux cônes premiers de $\gA$. L'\oqc correspondant à l'\elt $\rR(a)\in\Reel\gA$ est donné par $\sotq{\fc\in\Sper\gA}{-a\notin\fc}$.

On peut démontrer que treillis 
$\Reel(\gA)$ est engendré par la plus petite \entrel sur $\gA$ satisfaisant les relations suivantes

\DeuxRegles{
\labu $\,\,-x^2 \vdash  $
\labu $\,\,x+y \vdash x , y $
\labu $\,\,xy \vdash x  , -y $
}
{
\labu $\,\,\vdash 1  \phantom{x^2}$
\labu $\,\,x , y \vdash xy $
}

De manière \gnle on peut définir $\Reel(\gA)$ et $\Sper(\gA)$ pour toute \sad $\gA$ du type \sa{Cod}.

Pour plus de détails voir \cite{fCC00,fLom2020}.
   
\subsection{Autres exemples}

\paragraph{Premier exemple.} Considérons une  \sad $\gA=\big((G,R),\sa{T}\big)$ pour une \tdy $\sa{T}=(\cL,\cA)$.
 Si $P(x,y)$ est un prédicat binaire dans la signature, et si  $Tcl=\Tcl(\gA)$  est l'ensemble des termes clos  de~$\gA$, on obtient une \entrel $\vdash_{\gA,P}$ sur  $Tcl \times Tcl$  en posant
 
\vspace{-.8em}
\begin{equation} \label {feq1}
\begin{aligned} 
 (a_1,b_1),\dots,(a_n,b_n)  &\,\vdash_{\gA,P}    (c_1,d_1),\dots,(c_m,d_m)   
  \qquad\quad     \equidef     \\[.3em] 
   P(a_1,b_1)\vet \dots\vet  P(a_n, b_n) & \Vdi{\gA}   P(c_1, d_1)\vou \dots\vou   P(c_m, d_m) 
 \end{aligned}
\end{equation}

Intuitivement le \trdi engendré par cette \entrel est le treillis des \gui{valeurs de vérité} du prédicat~$P$ dans la \sad $\gA$.
 
\paragraph{Plus généralement.} Considérons une  \sad $\gA=\big((G,R),\sa{T}\big)$ pour une \tdy $\sa{T}=(\cL,\cA)$.
Soit $S$  un ensemble de formules atomiques closes de $\gA$. On définit la \entrel sur $S$ associée à $\gA$  comme suit:  
 
\vspace{-.5em}
\begin{equation} \label {feq2}
\begin{aligned}
 A_1,\dots,A_n  &\,\vdash_{\gA,S} B_1,\dots,B_m   
   \qquad\quad \equidef     \\[.2em] 
    A_1\vet \dots\vet  A_n &\Vdi{\gA} B_1\vou\dots\vou B_m   
 \end{aligned}
\end{equation}
On pourra noter $\Zar(\gA,S)$ le \trdi engendré par cette \entrel.

%

\paragraph{Une extension conservative \sa{T1} d'une \tdy \sa{T}} donne lieu à des treillis de Zariski isomorphes pour les \sads $\sa{T}(\gA)$ et $\sa{T1}(\gA)$, 
pour un même ensemble $S$ de termes clos. 
Le treillis donne donc une image amoindrie de la \sad. Par exemple ajouter la logique classique et skolémiser une \tdy ne changent pas les treillis associés.
Mais le treillis peut être plus facile à étudier dans la deuxième théorie.
Pour retrouver la richesse des \tdys vues d'un point de vue \cof, il faut alors faire appel à la théorie des faisceaux ou des topos.

\subsection{Le treillis de Zariski (absolu) d'une \sad $\gA$}

Pour une \sad $\gA=\big((G,R),\sa{T}\big)$, le treillis de Zariski (absolu)  de~$\gA$ est défini en prenant pour~$S$ l'ensemble $\Atcl(\gA)$ de toutes les formules atomiques closes de~$\gA$. On le note $\Zar(\gA)$ ou $\Zar(\gA,\sa T)$ ou par un nom particulier correspondant à la théorie \sa{T}, par exemple~$\val\gA$
pour la théorie \Sa{val} (voir \paref{fsubsubsecval}).

L'espace spectral dual est appelé le \textsl{spectre de Zariski (absolu) de la \sad $\gA$}. On peut aussi lui attribuer un nom particulier. 

Lorsque la théorie $\sa{T}$ est \dij, le \trdi $\Zar(\gA)$
est (à un isomorphisme canonique près) celui défini par la \entrel sur 
$\Atcl(\gA)$ engendrée par les axiomes dans $R$ et les instantiations des axiomes de \sa{T} obtenues en y remplaçant les variables par des termes clos arbitraires. 

Pour un anneau commutatif $\gA$, on peut voir en particulier $(\ZarA)\oop$ comme le treillis de Zariski absolu de $\sa{Asdz}(\gA)$.

\subsection{Spectre et modèles en \clama}\label{fspecetmodeles} 

Les modèles dont on parle ici sont en \gnl vus du point de vue des \clama. Cela est justifié par le fait que l'utilisation de la logique classique ne change pas la validité des \rdys écrites dans le langage de départ. En particulier le \trdi $\Zar(\gA,S)$ reste le même lorsqu'on ne change pas $S$ tout en ajoutant la logique classique.

En général on choisit
pour ensemble $S$ les formules atomiques closes construites sur un seul ou sur un petit nombre de prédicats du langage, en s'arrangeant pour que les autres prédicats puissent être définis à partir de ceux de $S$ dans la théorie formelle correspondante, au moins en \clama. 

En effet, donner un point du spectre
$\Spec(\Zar(\gA,S))$, \cad un morphisme \hbox{$\alpha:\Zar(\gA,S)\to\Deux$} revient à attribuer la valeur $\Vrai$ ou  $\Faux$ aux formules atomiques closes de~$S$ (selon que~$\alpha(B)=1$ ou~$0$), donc aussi à toutes les formules atomiques closes de~$\gA$ lorsque~$S$ est bien choisi. 
Cela permet donc de construire un modèle de $\gA$ dans le cas d'une \tdij.
 Le modèle est \textsl{minimal} au sens où tous ses \elts sont construits  à partir des \gtrs~$G$ au moyen des symboles de fonction donnés dans la signature.

\smallskip  Le choix de l'ensemble~$S$ influe alors sur la topologie
de l'espace spectral associé. Deux choix différents de~$S$ peuvent donner les mêmes points du spectre en \clama mais pas la même topologie, et donc définir deux espaces spectraux différents.


%


\section{Treillis et spectre valuatifs d'un anneau commutatif}\label{fsecdival}

\subsection{Relation de divisibilité valuative}\label{fsubsecdival1}

Référence: \cite{fHK1994}.

\Subsubsection{Domaines de valuation, relation de divisibilité valuative}\label{fsubsubsecdomval}

Rappelons qu'une partie $P$ d'un ensemble $E$ est dite \textsl{détachable} lorsque la \prt~\hbox{$x\in P$} est décidable pour les $x\in E$. 
Pour décrire cette situation dans le cadre d'une \tdy on introduit les règles suivantes comme axiomes, pour un prédicat $Q(x)$ qui traduit l'appartenance $x\in P$, et pour le prédicat opposé $R(x)$ que l'on est obligé d'introduire dans la signature: \gui{$\Vd Q(x) \vou R(x)$}  et \gui{$Q(x) \vet R(x) \Vd \Bot $}

\smallskip Nous disons qu'un anneau est \textsl{intègre} (ou qu'il est un domaine d'intégrité), lorsque tout \elt est nul ou régulier, et qu'un anneau est un \textsl{corps discret} lorsque tout \elt est nul ou inversible. Ces \dfns n'excluent pas l'anneau trivial. 

Un anneau est dit \textsl{\sdz} lorsque, chaque fois que l'on a $ab=0$
dans l'anneau, on a l'alternative explicite: $a=0$ ou $b=0$.  
 Un anneau intègre est \sdz. La réciproque, valable en \clama, n'est pas assurée \cot.

Nous disons qu'un idéal est \textsl{premier} s'il donne au quotient un anneau \sdz. Cela n'exclut pas l'\id $\gen{1}$.
Les conventions précédentes (adoptées dans \cite{fACMC}) évitent d'utiliser la négation et permettent d'éviter certains raisonnements cas par cas  litigieux d'un point de vue \cof.

Mais pour les \tdys correspondantes, nous revenons à la tradition qui considère qu'un anneau local, \sdz, ou intègre, ainsi qu'un corps discret, doit être non trivial: cela permet d'avoir un axiome de collapsus qui introduit $\Bot$ comme conclusion possible d'une \rdy.  

\smallskip Un \textsl{domaine de valuation} $\gV$ est un anneau intègre dans lequel la relation de \dve (sur le \mo multiplicatif $\gV/\gV\eti$) est un ordre total: $\forall x, y\,(x\di y\vu y\di x)$. 

Si $\gK$ est le \cdf de $\gV$, on dit alors que $\gV$ est un \textsl{anneau de valuation du \cdi $\gK$} et que $(\gK,\gV)$ est un \textsl{corps discret valué}. Plus \gnlt, si $\gK$ est un \cdi, un sous-anneau $\gV$ est appelé un \adv de $\gK$ lorsque, pour tout $x\in\gK\eti$,
$x$ ou~$x^{-1} \in \gV$. 

%
%
%
%

D'un point de vue \cof, on définit un \textsl{\cvd} en imposant que le caractère discret concerne non seulement la relation d'égalité, mais aussi les relations $x\in \gV$ et $x\in \gV\eti$: elles doivent être décidables, ce qui veut dire que la relation de divisibilité dans~$\gV$ doit être explicite. Un domaine de valuation pour lequel la relation de \dve est explicite peut alors être caractérisé comme un domaine de Bézout local résiduellement discret\footnote{Un anneau local $\gA$ est dit \textsl{\dcd} lorsque le corps résiduel $\gA/\Rad\gA$ est discret. Si l'anneau est non trivial cela revient à dire que le groupe des unités $\Ati$ est une partie détachable.}.

En \clama tout corps valué est trivialement un \cvd en raison du principe du tiers exclu.

Dans la section 4 de l'article  \cite{fCLR01} est décrite une \tdy \sa{Cvd} pour les \cvds avec ce point de vue \cof. Sont utilisés pour cela trois prédicats $\Vr(x)$, $\U(x)$ et $\Rn(x)$ respectivement pour $x\in \gV$, $x\in \gV\eti$ et $x\in\gV\setminus\gV\eti$. Nous~y reviendrons dans la section~\ref{fsecdival6}.

\smallskip 
 Dans un \cvd $(\gK,\gV)$  on écrit $x\di y$ pour la relation de \dve étendue~à~$\gK$, \cad \prmt lorsqu'il existe \hbox{un $z\in\gV$} tel que $xz=y$.
On note $\Gamma=\Gamma(\gV)$ le groupe $\gK\eti\!/\gV\eti$ (noté additivement), avec la relation d'ordre $\leq$ induite par la relation~\hbox{$\cdot\di\cdot$} dans $\gK\eti$. On note $\Gamma_\infty=\Gamma\cup\so\infty$ (où $\infty$ est un \elt maximum ajouté de manière purement formelle). Dans ces conditions, l'application naturelle $v:\gK\to\Gamma_\infty$ (avec~\hbox{$v(0)=\infty$}) est appelée la \textsl{valuation} du \cvd. On~a alors $v(xy)=v(x)+v(y)$, $v(x+y)\geq \min(v(x),v(y))$ avec égalité si $v(x)\neq v(y)$. On~a aussi \fbox{$\gV=\sotq{x\in\gK}{v(x)\geq 0}$} et le groupe des unités est caractérisé par l'\egt \fbox{$\gV\eti=\sotq{x\in\gK}{v(x)= 0}$}.

\smallskip En \clama on définit une \textsl{relation de divisibilité valuative $a\di b$} sur un anneau commutatif $\gA$ comme l'image réciproque de la relation de divisibilité sur un \cvd $(\gK,\gV)$ par un homomorphisme d'anneau $\varphi:\gA\to\gK$. Autrement dit, on a $a\di b$ dans~$\gA$ \ssi $\varphi(a)\di\varphi(b)$
\hbox{dans $(\gK,\gV)$}.

\Subsubsection{Les points du spectre valuatif en \clama}\label{fsubsubsecspecval}

\begin{fdefinition} \label{fdefispevclass}
En \clama on définit les \elts du \textsl{spectre valuatif $\,\Spev(\gA)$ d'un anneau commutatif~$\gA$}
de la manière suivante (voir \cite{fHK1994}): un point de $\SpevA$ est donné par un couple~\hbox{$(\fp,\gV)$} où $\fp$ est un \idep de $\gA$ et $\gV$ un anneau de valuation du corps de fractions $\gK=\Frac(\gA/\fp)$. 
 \end{fdefinition}

On reconnait l'analogue du spectre réel: un point de $\SperA$ est donné par un couple~\hbox{$(\fp,\gC)$} où $\fp$ est un \idep de $\gA$ et $\gC$ un cône positif du corps de fractions $\gK=\Frac(\gA/\fp)$ (i.e. $\gC+\gC\subseteq \gC$, $\gC\cdot\gC\subseteq \gC$, $\gC\cup -\gC=\gK$, $\gC\cap -\gC=\so 0$).

Différentes topologies spectrales peuvent être définies sur le spectre valuatif au moyen d'ouverts convenables qui engendrent la topologie. 

Cela correspond à des \trdis convenables comme décrits dans la section~\ref{fsubsectrdisad} en rapport avec des \tdys qui décrivent 
de manière satisfaisante les \prts d'une
relation de divisibilité valuative, pour une signature donnée. Nous commençons par décrire certaines de ces théories.  

\subsection{Théories disjonctives pour la relation de \dve valuative}

\Subsubsection{La théorie \sa{val0}}\label{fsubsubsecval0}

\begin{fdefinition} \label{fdefithval0}

On choisit  la signature 
\[\Sigma_\mathrm{val0}=(\,\cdot\di\cdot, \cdot=0 \mathrel{;} \cdot+\cdot, \cdot\times\cdot,-\,\cdot,0,1\,).\]  
Les axiomes pour $x=0$ et $a\di b$ 
peuvent être pris comme suit pour décrire correctement une relation de divisibilité valuative.

\DeuxRegles
{\Lab{av0} $\,\,0\di  x\Vd x=0$
\Lab{av1}  $\vd 1 \di  -1$
\Lab{av2} $\,\,a \di  b \Vd ac \di  bc$
\Lab{Av1} $\,\,a \di  b \vet b \di  c \Vd a \di  c$
}
{
\lab{CL$_{\val}$} $\,\, 0\di 1 \vd \Bot$ \quad (collapsus)
\Lab{Av2} $\,\,a \di  b \vet a \di  c \Vd a \di  b + c$
\Lab{AV1} $\vd a \di  b \vou b\di a$
\Lab{AV2} $\,\,ax \di  bx  \Vd a \di  b \vou 0 \di x$
}

On note \sa{val0} la \tdij ainsi définie.
 
\end{fdefinition}

On démontre facilement $\vd -1 \di  1$, $\vd 1 \di  1$, $\vd 1 \di  0$,  $\vd x \di  x$, $\vd x \di  0$ et $x=0\Vd 0\di  x$. On vérifie aussi que les axiomes \Tsbf{ac1}, \Tsbf{ac2} et \Tsbf{ac3} des anneaux commutatifs sont satisfaits.

Donnons un énoncé précis pour l'affirmation selon laquelle la formalisation est correcte.

\begin{ftheoremc}[Modèles de la théorie \dij \sa{val0} en \clama] \label{flemAxdiv} ~\\
En \clama une relation binaire $a \di  b$ sur un anneau $\gA$ satisfait les axiomes ci-dessus exactement dans le cas suivant: 
\begin{itemize}
\item l'ensemble $\fp=\sotq{x\in \gA}{0\di x}$ est un \idep, 
on note $\ov a$ l'\elt~$a$ de~$\gA$ vu dans $\gA/\fp$, on note $\gK$ le corps de fractions $\Frac(\gA/\fp)$, 
\item les fractions  ${\ov a}/{\ov b}\in\gK$ telles que $\ov b\neq 0$ et $b\di a$ forment un anneau de valuation $\gV$ de~$\gK$, 
\end{itemize}
En d'autres termes les axiomes proposés pour $\cdot\di\cdot$ dans \sa{val0} correspondent à la \dfn d'une relation de \dve valuative (en \clama).
\end{ftheoremc}  
%
\begin{proof}  Tout d'abord, on voit facilement que les axiomes sont vérifiés si $\varphi:\gA\to\gK$ est un morphisme de $\gA$ dans un corps $\gK$, si $\gV$ est un \adv de $\gK$ et si $a\di b$ signifie: $\exists x\in \gV\;x\varphi(a)=\varphi(b)$. 

\noindent Montrons maintenant que les axiomes sont suffisants pour décrire cette situation (en \clama).
\\
Les axiomes \tsbf{av1} et \tsbf{av2} donnent les règles valides $\Vd a \di a$ et $\Vd a\di 0$. Ainsi, vu l'axiome \tsbf{Av1}, la relation \hbox{$a\di b$}
définit un préordre. Et si $0 \di 1$ alors on obtient $0\di b$ et $a\di b$ pour tous $a,b$ sans utiliser l'axiome \tsbf{CL$_{\val}$}.
\\
Un cas particulier de \tsbf{av2} est la règle valide suivante

\penalty-2500
\Regles{ 
\lab{av2'} $\,\,1\di a\vdg b \di ab$} 

\noindent En utilisant~\tsbf{Av2} et \tsbf{av2'}, on voit que pour tout $a\in \gA$ tel que $1\di a$, l'ensemble 
\[\uar a:=\sotq{b\in\gA}{a \di b}
\] 
est un idéal de $\gA$. 
En particulier $\fp=\uar 0$ est un idéal.
\\
Montrons que la relation $\cdot\di\cdot$ passe au quotient par $\fp=\uar 0$.
En effet, supposons $0\di x$. Il suffit de montrer que  $a\di a+x$ et $a+x\di a$. Tout d'abord $a\di 0\di x$ et $a\di a$, donc $a\di a+x$  par l'axiome~\tsbf{Av2}. Ensuite $a+x\di a+x$ donc par le point précédent  $a+x\di a+x-x$, i.e. $a+x\di a$.
\\
Montrons que $\gA/\fp$ est \sdz: si $0\di yx$ alors $0x\di yx$, et \tsbf{AV2} donne donc \hbox{$\,\,0\di xy\Vd 0\di x \vou 0\di y$}.
\\
En utilisant \tsbf{av2}, \tsbf{Av1} et \tsbf{Av2} on vérifie que les fractions $\ov a/\ov b$ de $\gK$ 
forment un sous-anneau~$\gV$ de~$\gK$. 
\\ 
Enfin, deux \elts inverses l'un de l'autre dans $\gK$ s'écrivent
$\ov a/\ov b$ et $\ov b/\ov a$.  
L'axiome \tsbf{AV1}
implique donc que $\gV$ est un anneau de valuation de $\gK$.
\end{proof}

On notera que $\varphi(\gA)$ n'est pas \ncrt un sous-anneau de $\gV$.

Vu le \thref{flemAxdiv}, on pose en \coma la \dfn suivante.
\begin{fdefinition} \label{fdefirdvcoma}
Une relation binaire $a\di b$ sur un anneau commutatif est appelée une \textsl{\rdv} si elle satisfait les axiomes de \sa{val0}.
\end{fdefinition}

\begin{fremark} \label{frem1val0}  
Dans la théorie \sa{val0} sont valides  les trois axiomes des
anneaux commutatifs.

\vspace{.3em}

\DeuxRegles{
\lab{ac1}  $\,\,\vd  0=0$
\lab{ac3}  $\,\,\vd  x=0 \vet y=0 \Vd  x+y=0$
}
{
\lab{ac2}  $\,\,  x=0 \Vd   xy=0$
}

Nous adoptons ici la convention donnée dans \cite{fCLR01} 
selon laquelle la partie purement calculatoire des anneaux commutatifs
est gérée \gui{à l'extérieur} de la théorie formelle  (voir \paref{fTpeAc}). 

Si maintenant on considère un anneau commutatif $\gA$ et la \sad~$\sa{val0}(\gA)$, tous les termes clos de $\sa{val0}(\gA)$ sont égaux à des \elts de $\gA$ et l'on n'a pas besoin de faire appel aux trois axiomes pour les \demos des \rdys \textsl{closes}. \eoe 
\end{fremark}

\begin{fremark} \label{fremColl}  
Sans l'axiome de collapsus, la relation $0\di1$, lorsqu'elle est valide, réduit~$\gK$ à l'anneau trivial. L'ajout de l'axiome $\,\,0\di1\Vd\Bot$ revient à faire disparaitre l'anneau trivial dans le néant\footnote{Ou, qui sait?, dans un trou noir.}.  \eoe
\end{fremark}

\begin{fremark} \label{fremA1A6coma} 
En \coma un anneau $\gA$ muni d'une relation $\cdot\di\cdot$ satisfaisant les axiomes de \sa{val0} donne un \idep $\fp$ de $\gA$ puis un sous-anneau \gui{convenable}~$\gV$ de l'anneau total des fractions $\gK$ de $\gA/\fp$. Mais $\gK$ n'est pas à priori un \cdi. Il est alors plus confortable de penser en termes de \sad, qui laisse la possibilité de \gui{faire comme si} 
on était en \clama, où tous les corps sont discrets. \eoe
\end{fremark}

\Subsubsection{La théorie \sa{val}, quelques \rdys prouvables}\label{fsubsubsecval}

\begin{fdefinition} \label{fdefithval}
La théorie  \SA{val}  est obtenue à partir de la théorie $\sa{val0}$ définie comme suit. La signature est obtenue en supprimant le prédicat $\cdot=0$. 
\[\Sigma_\mathrm{val}=(\,\cdot\di\cdot  \mathrel{;} \cdot+\cdot, \cdot\times\cdot,-\,\cdot,0,1\,)\] 
On supprime l'axiome \tsbf{av0}  de $\sa{val0}$ et on définit la relation $x=0$ comme une abréviation de~$0\di x$.  
 \end{fdefinition}

On aboutit à la même conclusion que dans le \thref{flemAxdiv}:  en \clama les modèles de la théorie \sa{val} sont les domaines de valuation.

Nous allons voir, sans référence aux modèles en \clama, que la \tdy \sa{val} 
retrouve certaines \prts des \cvds comme \rdys valides ou, parfois, seulement admissibles.

Tout d'abord le fait qu'un anneau de valuation est un anneau \sdz local normal correspond aux trois règles valides suivantes. Les démonstrations suivent sans surprise la démonstration du \thref{flemAxdiv}.

\vspace{.3em}

\Regles{ \lab{SDZ} $\,\,0 =  bx  \Vd 0 =  b \vou 0 = x$.}

\noindent Pour prouver \tsbf{SDZ} il suffit de prendre la règle \Tsbf{AV2} avec $a=0=0x$.\qed

\noindent  En particulier on a $\,\,0\di x^2\Vd0\di x$.  

\vspace{.3em}

\Regles{ \Lab{LOC} $\,\, 1\di  x\vet 1\di y\vet  x+y\di 1  \Vd x \di  1 \vou y \di 1$.}

\noindent D'après \tsbf{AV1} on ouvre deux branches, l'une où $x\di y$, l'autre où $y\di x$. Dans la première, comme $x\di x$, on a par \tsbf{Av2}, $x\di x+y$. Donc, par \tsbf{Av1}, $x\di 1$. Symétriquement dans l'autre branche $y\di 1$.\qed

\smallskip La troisième règle est \agq.

\vspace{.3em}

\Regles{ \Lab{Nor} $\,\, 1\di  a_1\vet\dots 1\di a_n \vet 
y\big(\sum_{k=0}^{n}a_kx^ky^{n-k}\big)\di x^{n+1} \Vd y \di  x$.}

\noindent On ouvre les branches $x\di y$ et $y\di x$. Il faut traiter la première. Prenons par exemple $n=2$. Puisque~$x\di y$, $1\di a_0$, $1\di a_1$ et $1\di a_2$,  on a 
\[x^2\di y^2\di a_0y^2,\quad 
x^2\di xy \di  a_1xy\quad \hbox{ et }\quad x^2\di a_2x^2.\] 
Donc $x^2\di a_2x^2+a_1xy+a_0y^2$ par~\tsbf{Av2}, et $yx^2\di x^3$. De $yx^2\di xx^2$ on déduit $\Vd 0\di x^2\vou y\di x\,$  par~\tsbf{AV2}. Et dans la branche où $0\di x^2$, \hbox{on a $y\di 0\di x$}. \qed

\smallskip Voici maintenant une règle admissible (en un sens précisé juste après) dans la théorie~\sa{val}.

\vspace{.3em}

\Regles {\Lab{DIV} $\,\,x\di y\Vd \Exists a\,(1\di a\vet ax=y)$}

\begin{flemma} \label{flemDivadmissible}
Si une \rdij 
est prouvable dans une \sad $\gB=\big((G,R),\sa{val}\big)$ après l'ajout de l'axiome~\tsbf{DIV}, elle est \egmt prouvable dans~$\gB$.
\end{flemma}

Il semble cependant que la preuve d'admissibilité n'est pas facile et
qu'elle doive attendre le \vst formel. Voir la remarque \ref{fremthVstformelval}. Nous n'utiliserons pas ce lemme avant de l'avoir démontré.

\Subsubsection{Structures \agqs dynamiques de type \sa{val}}

\begin{fdefinition} \label{fdefivalkK}~
\begin{enumerate}
\item Si $\gA$ est un anneau commutatif, la \sad $\sa{val}(\gA)$ est obtenue comme d'habitude en ajoutant le diagramme positif de l'anneau $\gA$.
\item Si $\gk\subseteq \gA$ sont deux anneaux\footnote{Nous utilisons $\gk$ comme petit anneau pour nous référer à l'intuition donnée dans la situation fréquente où~$\gk$ est un \cdi.
}, ou plus \gnlt si $\varphi:\gk\to\gA$ est une \alg, on note $\sa{val}(\gA,\gk)$ la \sad  dont la présentation est donnée par 
\begin{itemize}
\item le diagramme positif de~$\gA$ comme anneau commutatif;
\item les axiomes
$\vd 1 \di \varphi(x)$ pour les \elts $x$ de $\gk$.
%
\end{itemize}

%
%
\end{enumerate}
\end{fdefinition}

Les deux \sads $\sa{val}(\gA)$ et $\sa{val}(\gA,\gZ)$, où $\gZ$ est le plus petit sous-anneau de $\gA$, sont canoniquement isomorphes.

\smallskip Comme cas particulier de la règle \tsbf{SDZ}, si $e$ est un \idm d'un anneau~$\gA$, \hbox{on a $\Vd e=0 \vou e=1$} dans $\sa{val}(\gA)$. 
En particulier $\Vd 1 \di e$ et $\Vd 1\di f$ (où $f=1-e$). On en déduit alors pour tous $a,b\in\gA$: $ae\di be\vet af\di bf\Vd a\di b$.
En effet $a\di ae \di be$ et $a\di af \di bf$ donc $a\di be+bf=b$. Résumons la situation.

\begin{flemma} \label{flemidmval}
Si $e$ et $f\in\gA$ sont deux \idms complémentaires on a dans $\sa{val}(\gA)$

\vspace{.3em}

\DeuxRegles
{\labu $\,\,a\di b\vd (ae\di be\vet af\di bf)$}
{\labu $\,\,ae\di be\vet af\di bf\vd a\di b $}  
\end{flemma}

\smallskip Voici maintenant une conséquence de la validité de \Tsbf{Nor} dans \sa{val}. 

\begin{flemma} \label{flem-y-di-x}
Soient $\gk\subseteq \gA$ deux anneaux et $x\in\gA$, $y\in\gk$. Alors si $x$ est entier\footnote{L'\elt $x$ est dit \textsl{entier sur un \id} $\fa$ de $\gk$ s'il annule un \pol $x^{n+1}+\sum_{i=0}^n a_i x^{n-i}$ avec $a_i\in\fa^i$ pour chaque $i$ \cite[\dfn XII-2.1]{fACMC}. } sur l'\id~$\gen{y}$ de $\gk$, la \sad $\sa{val}(\gA,\gk)$ prouve~\hbox{$y\di x$}.
\end{flemma}
Nous verrons plus loin (\thref{fth-y-di-x}) que cette condition suffisante est \egmt \ncr. 


\subsection{Treillis et spectres valuatifs d'un anneau commutatif}\label{fsubsecTrSpecval}

\Subsubsection{Plusieurs topologies spectrales concurrentes}

Avec $\Gamma=\Gamma(\gV)$, le point $(\fp,\gV)$ de $\Spev(\gA)$ est caractérisé par la \gui{valuation} associée  
\[
w:\gA\to \Gamma_\infty, \;x \mapsto v(\pi_\fp(x))
\]
où $\pi_\fp:\gA\to\gA/\fp$ est la surjection canonique.

Nous réservons la notation $\SpevA$ pour le spectre  muni de la topologie qui nous semble la plus naturelle, celle où les ouverts qui engendrent la topologie sont les $\fO(a,b)$ définis comme suit:
\[
\fO(a,b):=\sotq{w}{w(b)\leq w(a)},\quad a,b\in\gA
\]

La topologie préférée dans \cite{fHK1994} est donnée par les ouverts générateurs de la topologie~$\fU(a,b)$ définis comme suit:
\[
\fU(a,b):=\sotq{w}{w(b)\leq w(a),\,w(b)\neq \infty},\quad a,b\in\gA
\]
L'espace spectral ainsi défini est noté $\Spv(\gA)$. 

\smallskip 
On~a alors (en notant $\ov Y$ la partie complémentaire de $Y$)  
\begin{enumerate}
\item [$\bullet$] $\fU(b,b)=\sotq{w}{w(b)\neq \infty}=\ov{\fO(b,0)}$
\item [$\bullet$] $\fU(a,b)=\fO(a,b)\cap\,\ov{\fO(b,0)}$
\item [$\bullet$] $\ov{\fU(b,b)}=\sotq{w}{w(b)=\infty}=\fO(b,0)$ 
\item [$\bullet$] $\fO(a,b)=\fU(a,b)\cup\big(\ov{\fU(b,b)}\cap\,\ov{\fU(a,a)}\big)$ 
\end{enumerate}

\noindent Ainsi   $\SpevA$ et
 $\SpvA$ définissent la même topologie constructible.

\smallskip Du point de vue d'une \tdy décrivant une relation de divisibilité valuative, la topologie de $\SpvA$ revient à considérer comme essentielle la \prt \gui{$b\di a\hbox{ et }b\neq 0$}.
Cette \dfn semble peu naturelle, d'autant qu'un peu plus loin dans l'article \cite{fHK1994} est introduite une topologie spectrale $\Spv'(\gA)$ avec pour ouverts de base les $\fF(a,b):=\sotq{v}{v(a)<v(b)}=\ov{\fO(a,b)}$.

\smallskip 
Nous décrivons maintenant les \trdis correspondant à ces espaces spectraux.

\Subsubsection{Le treillis $\val(\gA)$  et le spectre $\Spev\gA$}

\begin{fdefinition} \label{fdefivalA}
Soit $\gA$ un anneau commutatif. On considère l'ensemble (sous-jacent à)~$\gA\times \gA$ sur lequel on définit la \entrel $\vdash_{\gA,\mathrm{val}}$ par l'\eqvc suivante

\vspace{-.8em}
\begin{equation} \label {feqvalA}
\begin{aligned} 
 (a_1,b_1),\dots,(a_n,b_n)  &\,\vdash_{\gA,\mathrm{val}}\,   (c_1,d_1),\dots,(c_m,d_m)   
  \quad    \equidef     \\[.2em] 
  a_1\di b_1\vet\dots\vet a_n\di b_n &\Vdi{\sA{val}(\gA)}   c_1\di d_1\vou \dots\vou  c_m\di d_m 
 \end{aligned}
\end{equation}

\noindent Le treillis $\val(\gA)$ est défini comme celui engendré par la \entrel $\vdash_{\gA,\mathrm{val}}$.

\noindent On note $\rDi_\gA:\gA\times \gA\to \val(\gA)$ (ou simplement $\rDi$ si le contexte est clair) l'application correspondante.
 \end{fdefinition}

On~a par exemple, pour~\hbox{$a,b,c\in\gA$}   
\[\rDi(a,b)\vi\rDi(a,c)\leq \rDi(a,b+c),\quad \hbox{ dans } \val(\gA).\]
 
En fait, comme $\cdot\di \cdot$ est le seul prédicat de la \tdy \sa{val}, le treillis
$\val(\gA)$ est le treillis de Zariski absolu de la \sad  $\sa{val}(\gA)$. 

Comme la théorie \sa{val} satisfait la règle \Tsbf{SDZ}, le morphisme naturel  $\val(\gA)\to\val(\Ared)$ est un isomorphisme. 

\smallskip  D'après le \thref{flemAxdiv} la \sad~\hbox{$\sa{val}(\gA)$} admet en \clama pour modèles minimaux les points du spectre valuatif~\hbox{$\Spev(\gA)$} décrit au début de cette section. 
D'après ce qui est expliqué dans le paragraphe \textsl{Spectre et modèles en \clama} \paref{fspecetmodeles}, et vu la \dfn des ouverts $\fO(a,b)$ qui engendrent la topologie spectrale, on obtient le théorème suivant en \clama.

\begin{ftheoremc} \label{fthSpevSpecval}
L'espace spectral $\Spev\gA$ est canoniquement homéomorphe au spectre du \trdi $\val(\gA)$.
\end{ftheoremc}

En \coma, la \textsl{topologie sans point} donnée par le \trdi~$\val(\gA)$ est engendrée par les ouverts formels que sont les \elts~$\rDi(a,b)$ de $\val(\gA)$.  

En \clama, où il y a suffisamment de points dans 
$\Spec(\val(\gA))$, la topologie est une topologie usuelle. Par ailleurs on a défini l'espace~$\Spev \gA$,
dont les ouverts sont engendrés les $\fO(a,b):=\sotq{w}{w(b)\leq w(a)}$. 
Ceux-ci correspondent clairement aux $\rDi(b,a)$. Cela explique l'homéomorphisme entre $\Spev\gA$ et $\Spec(\val(\gA))$.

\smallskip \hum{Ne devrait-on pas permuter $a$ et $b$ dans la notation $\rDi(a,b)$?
Même si cela ne change pas les ouverts,  c'est un peu troublant
que $\fO(a,b)$ corresponde à $\rDi(b,a)$.}

\Subsubsection{Le treillis $\vals(\gA)$ et le spectre $\Spv\gA$}

 Comme indiqué précédemment, l'espace $\Spev\gA$ est à distinguer de l'espace $\Spv\gA$, bien que dans chacun de ces espaces les points puissent être représentés par les mêmes \hbox{couples $(\fp,\gV)$}.
 
 Considérons la \tdij \sa{val1} obtenue à partir $\sa{val}$ en ajoutant le prédicat~\hbox{$\cdot\neq 0$} opposé à $\cdot=0$. C'est une extension conservative de $\sa{val}$. Considérons maintenant le prédicat \fbox{$J(a,b)\eqdef (a\di b \,\vii\, a\neq  0)$}.

Alors, l'espace spectral $\Spv\gA$ est le spectre du \trdi $\vals(\gA)$ suivant.

\begin{fdefinition} \label{fdefival'A}
Le \trdi $\vals(\gA)$ est engendré par la 
\entrel~$\vdash_{\gA,\mathrm{val}\sta}$ sur (l'ensemble sous-jacent à)~$\gA\times \gA$ définie par

\vspace{-.8em}
\begin{equation} \label {feqvalp}
\begin{aligned} 
(a_1,b_1),\dots,(a_n,b_n)  &\,\vdash_{\gA,\mathrm{val}\sta}\,   (c_1,d_1),\dots,(c_m,d_m)   
  \quad    \equidef     \\[.2em]
 J(a_1,b_1)\vet \dots\vet J(a_n,b_n) &\Vdi{\sA{val1}(\gA)}   J(c_1,d_1)\vou \dots\vou  J(c_m,d_m)
 \end{aligned}
\end{equation}

\noindent On note $\rJ_\gA:\gA\times \gA\to \vals(\gA)$ l'application correspondante
(ou simplement $\rJ$ si le contexte est clair).
 \end{fdefinition}

Ainsi l'\elt $\rJ(b,a)$ de $\vals(\gA)$ correspond à l'ouvert $\fU(a,b)$ de 
$\Spv\gA$. Ces ouverts engendrent la topologie de $\Spv\gA$.

\subsection{Treillis et spectre valuatifs  d'une algèbre}\label{fsubsecTrSpecval2}

On considère dans cette section une \klg $\gA$, \cad un morphisme $\varphi:\gk\to\gA$ d'anneaux commutatifs. 

\Subsubsection{Le treillis $\val(\gA,\gk)$  et le spectre $\Spev(\gA,\gk)$}

Nous définissons le \trdi $\val(\gA,\gk)$
à partir de la \sad $\sa{val}(\gA,\gk)$ (voir \dfn \ref{fdefivalkK}) de la même manière que  $\val(\gA)$
est défini à partir de \sad $\sa{val}(\gA)$.

\begin{fdefinition} \label{fdefivalkK0}
Soit $\gk$  un sous-anneau d'un anneau $\gA$, 
ou plus \gnlt considérons une \klg $\varphi:\gk\to\gA$.
  On définit le \trdi $\val(\gA,\gk)$ comme celui engendré par la \entrel 
  $\vdash_{\gk,\gA,\mathrm{val}}$ sur l'ensemble $\gA	\times \gA	$ définie comme suit. 
\vspace{-.8em}
\begin{equation} \label {feqvalkK}
\begin{aligned} 
 (a_1,b_1),\dots,(a_n,b_n)  &\,\vdash_{\gk,\gA,\mathrm{val}}\,   (c_1,d_1),\dots,(c_m,d_m)   
  \quad    \equidef     \\[.2em] 
  a_1\di b_1\vet \dots\vet a_n\di b_n &\Vdi{\sA{val}(\gA,\gk)}   c_1\di d_1\vou \dots\vou  c_m\di d_m 
 \end{aligned}
\end{equation}

\noindent On note $\rDi_{\gk,\gA}:\gA	\times \gA \to \val(\gA,\gk)$ (ou simplement $\rDi$ si le contexte est clair) l'application correspondante.

\end{fdefinition}

\begin{fremark} \label{fremvalKANOR} 
Comme la théorie \sa{val} satisfait la règle \Tsbf{Nor}, si  
$\ov\gk$ est la \cli de (l'image de)~$\gk$ dans~$\gA\Red$, la \sad $\sa{val}(\gA,\gk)$ prouve $1\di x$ pour \hbox{les $x\in\ov\gk$}. Par suite  le morphisme naturel  $\val(\gA,\gk)\to\val(\gA\Red,\ov\gk)$ est un isomorphisme.   \eoe 
\end{fremark}

Nous noterons
 $\Spev(\gA,\gk)$ le spectre du treillis $\val(\gA,\gk)$. Les points de ce spectre correspondent
aux points $(\fp,\gV)$ de  $\Spev(\gA)$ tels que $\gV$ contienne (l'image de) $\gk$. Comme le treillis $\val(\gA,\gk)$ est un quotient de $\val(\gA)$,
 $\Spev(\gA,\gk)$ est un sous-espace spectral de $\Spev\gA$.

\smallskip
Lorsque $\gK$ est un \cdi transcendant sur un sous-corps $\gk$, l'espace spectral $\Spev(\gK,\gk)$ est souvent appelé le \textsl{spectre de Zariski-Riemann} de $(\gK,\gk)$, ou encore, la \textsl{surface de Riemann abstraite}  de $(\gK,\gk)$.  Ses points sont les anneaux de valuation de $\gK$ qui contiennent $\gk$. 

En \coma on étudie plutôt le treillis valuatif $\val(\gK,\gk)$, dont le spectre est $\Spev(\gK,\gk)$. En fait cette surface de Riemann abstraite n'est pas seulement un espace spectral mais un schéma de Grothendieck ayant pour base cet espace spectral, et c'est ce schéma qu'il est véritablement utile d'étudier, aussi bien du point de vue classique que du point de vue constructif.

\smallskip Un autre cas particulier est celui où $\gA$ est le corps des fractions d'un anneau intègre $\gk$, par exemple dans l'étude de la théorie de l'anneau $\gk$ de tous les entiers \agqs.

\begin{flemma} \label{flem-valkk-valkK}
Soit $\gk$ un anneau intègre de corps de fractions $\gK$. Le morphisme naturel $\val(\gk,\gk)\to\val(\gK,\gk)$ est un \iso. 
\end{flemma}
%
\begin{proof}
Il y a plus de formules atomiques dans $\sa{val}(\gK,\gk)$ que dans 
$\sa{val}(\gk,\gk)$. Mais si $x\di y$ est une formule atomique de 
$\sa{val}(\gK,\gk)$ où $x=\frac u v$ et $y=\frac s t$ avec 
$u,v,s,t\in\gk$ et $v,t\neq 0$, les formules $x\di y$ et $ut\di sv$ sont prouvablement \eqves dans $\sa{val}(\gK,\gk)$.

\noindent Il nous suffit donc de voir que l'on a
(pour des $a_i,b_i,c_j,d_j\in\gk$)
\[
a_1\di b_1\vet \dots\vet a_n\di b_n  \Vdi{\sA{val}(\gK,\gk)}   c_1\di d_1\vou \dots\vou  c_m\di d_m
\]
\ssi 
\[
a_1\di b_1\vet \dots\vet a_n\di b_n  \Vdi{\sA{val}(\gk,\gk)}   c_1\di d_1\vou \dots\vou  c_m\di d_m
\]
En effet, l'utilisation de fractions dans une \demo peut toujours être évitée par la technique usuelle qui consiste à chasser les dénominateurs. 
\end{proof}
%

\Subsubsection{Le morphisme centre (1)}\label{fsubseccomparvalZar0}

\begin{fproposition}[Le morphisme centre pour les \trdis $\val(\gA,\gA) $ et~$\ZarA$] \label{fpropdefCentre} 
Il existe un unique morphisme de \trdi  $\gamma:\ZarA\to\val(\gA,\gA)$
tel que \hbox{$\gamma(\DA(a))=\Di_\gA(a,1)$} pour tout $a\in\gA$. 
\end{fproposition}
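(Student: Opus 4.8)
The plan is to use the universal property of the Zariski lattice established in Corollary~\ref{corZarA}. That corollary characterizes $\ZarA$ as the free distributive lattice receiving a map $\rD':\gA\to T$ satisfying the four relations $\rD'(0)=0$, $\rD'(1)=1$, $\rD'(ab)=\rD'(a)\vi\rD'(b)$, and $\rD'(a+b)\leq\rD'(a)\vu\rD'(b)$; any such map factors uniquely through $\DA$. So it suffices to exhibit the map $\gA\to\val(\gA,\gA)$, $a\mapsto\Di_\gA(a,1)$, and verify these four relations hold in $\val(\gA,\gA)$. Uniqueness of $\gamma$ is then automatic from the universal property, so there is nothing further to prove on that front.

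Concretely, I would translate each of the four required relations for $\rD'(a):=\Di_\gA(a,1)$ into a statement about the entailment relation $\vdash_{\gA,\gA,\mathrm{val}}$ defining $\val(\gA,\gA)$, hence (by Definition~\ref{defivalkK0}) into the provability of a dynamical rule in the \sad $\sa{val}(\gA,\gA)$. Explicitly: $\rD'(0)=0$ becomes the collapsus rule $0\di 1\Vd\Bot$ (Axiom \tsbf{CL$_{\val}$}); $\rD'(1)=1$ becomes $\vd 1\di 1$ (an easy consequence of \tsbf{vr1} and \tsbf{vr2} already noted after Definition~\ref{defithval0}); $\rD'(ab)=\rD'(a)\vi\rD'(b)$ splits into the two rules $ab\di 1\vd a\di 1$ and $a\di 1,\,b\di 1\vd ab\di 1$; and $\rD'(a+b)\leq\rD'(a)\vu\rD'(b)$ becomes $a+b\di 1\vd a\di 1\vou b\di 1$. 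These are exactly (modulo the obvious reduction from rules with hypothesis $1\di a$ etc.\ that appears because of how $\val(\gA,\gA)$ adds the axioms $\vd 1\di\varphi(x)$) the rules the author lists in the displayed double-column array inside Proposition~\ref{propdefCentre}'s sketch. The key observation making this reduction work is that in the \sad $\sa{val}(\gA,\gA)$ we have $\vd 1\di a$ for every $a\in\gA$ (these are precisely the extra axioms of $\sa{val}(\gA,\gA)$ over $\sa{val}(\gA)$), so a rule like $1\di b,\,ab\di 1\Vd a\di 1$ valid in $\sa{val}(\gA)$ yields the desired $ab\di 1\Vd a\di 1$ in $\sa{val}(\gA,\gA)$.

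The remaining work is to check that the four (or five) rules listed above are valid in the \twdij~$\sa{val}$ (equivalently in $\sa{val}(\gA)$, since these are closed rules over the positive diagram of $\gA$). For $\vd 1\di 1$ and the collapsus this is immediate. For $a\di 1,\,b\di 1\vd ab\di 1$: from $a\di 1$ and \tsbf{vr2'} (the valid rule $1\di b\vd ab\di b$ — here with roles arranged so that $a\di 1$ gives $ab\di b$) together with $b\di 1$ and transitivity \tsbf{Vr1}, one gets $ab\di 1$. For $ab\di 1\vd a\di 1$ (using $1\di b$): since $a\di ab$ (from $1\di b$ via \tsbf{vr2'}) and $ab\di 1$, transitivity gives $a\di 1$. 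For $a+b\di 1\vd a\di 1\vou b\di 1$ (using $1\di a$, $1\di b$): this is essentially Rule \tsbf{LOC} proved in Subsection~\ref{subsubsecval}, whose proof opens the two branches $a\di b$ and $b\di a$ supplied by \tsbf{VR1} and concludes $a\di 1$ in the first, $b\di 1$ in the second. None of these is hard; each is a two- or three-line dynamical derivation from the axioms of \sa{val}.

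I do not expect a genuine obstacle here — the author himself ends the sketch with "And this is easy." The only point requiring a little care is bookkeeping: one must be careful to distinguish the rules one needs in $\sa{val}(\gA,\gA)$ (where $1\di a$ is free for $a\in\gA$) from the rules one actually proves in the bare theory $\sa{val}$ (where one must carry $1\di a$, $1\di b$, $1\di c$ as explicit hypotheses), and to invoke Definition~\ref{defivalkK0} correctly so that validity of the dynamical rules really does give the claimed inequalities in the generated distributive lattice $\val(\gA,\gA)$. Once the rules are verified, Corollary~\ref{corZarA} delivers both the existence and the uniqueness of $\gamma$ at one stroke, completing the proof.
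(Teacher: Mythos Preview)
Your proposal is correct and follows essentially the same approach as the paper: invoke the universal property of $\ZarA$ from Corollary~\ref{corZarA}, list the five rules that must hold in $\sa{val}(\gA,\gA)$, reduce them to rules in the bare theory $\sa{val}$ by discharging the extra axioms $1\di a$ for $a\in\gA$, and verify these in $\sa{val}$. The paper stops at ``And this is easy'' after displaying the two lists of rules, whereas you actually supply the short derivations (including the identification of the disjunctive rule with \tsbf{LOC}); your write-up is thus a fleshed-out version of exactly the argument the authors intend.
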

%
\begin{proof}
D'après le corolaire \ref{fcorZarA}, il suffit de vérifier que les règles suivantes sont valides dans $\sa{val}(\gA,\gA)$.

\DeuxRegles
{
\labu $\,\,0\di1\Vd \Bot$
\labu $\,\,ab\di1\Vd a\di1$
\labu $\,\,a+b\di1\Vd a\di1\vou b\di1$
}
{
\labu $\Vd 1\di1$
\labu $\,\,a\di1\vet\, b\di1\Vd ab\di1$
}

\noindent Et il suffit pour cela de vérifier que les règles suivantes sont valides dans $\sa{val}(\gA)$. 

\DeuxRegles
{
\labu $\,\,0\di1\Vd \Bot$
\labu $\,\,1\di b\vet\, ab\di1\Vd a\di1$
\labu $\,\,1\di a\vet\, 1\di b\vet\, a+b\di1\Vd a\di1\vou b\di1$
}
{
\labu $\Vd 1\di1$
\labu $\,\, a\di1\vet\, b\di1\Vd ab\di1$
}

\noindent Ce que l'on démontre facilement.
\end{proof}

En \clama, le point de vue dual est donné par \textsl{l'application spectrale centre}  
$\;\Spev(\gamma):\Spev(\gA,\gA)\to \Spec\gA\,$ qui, au point $(\fp,\gV)$ de $\Spev(\gA,\gA)$, 
fait correspondre l'\idep $\varphi^{-1}(\Rad(\gV))$, où $\varphi:\gA\to\Frac(\gA/\fp)$ est l'application canonique (notez que $\varphi(\gA)\subseteq \gV$ parce que l'on force $1\di a$ pour les $a\in\gA$).

\begin{fremark} \label{frempropdefCentre} 
On démontrera plus loin (\thref{fthVstformelval4}) que le morphisme centre est injectif.
Sans doute une \demo plus directe est possible. \eoe   
\end{fremark}

\begin{ftheorem} \label{fcor2thVstformelval4}
Si $\gk$ est un anneau arithmétique intègre (un \ddp), le morphisme $\gamma:\Zar\gk \to \val(\gk,\gk)$
défini en \ref{fpropdefCentre}  est un isomorphisme de \trdis. 
\end{ftheorem}
%
\begin{proof} Notons $\gK$ le corps de fractions de $\gk$. D'après le lemme \ref{flem-valkk-valkK}, on peut remplacer $\val(\gk,\gk)$ par $\val(\gK,\gk)$ dans la \demo.

\noindent On suppose déjà démontré que $\gamma$ est injectif.

\noindent 
Démontrons que $\gamma$ est surjectif. 
Par \dfn, dans un \textsl{\anar}, pour tout couple $(a,b)$ on a $s,t,u,v$ vérifiant 
[sa=ub, \;tb=va \,\hbox{ et }\,s+t=1.]
Étudions les conséquences de l'hypothèse $a\di b$ dans $\sa{val}(\gK,\gk)$.   \\
On~a $ub=sa$ et $sa\di sb$ (car $a\di b$), donc $ub\di sb$ et par \Tsbf{AV2}: 
$a\di b\Vd u\di s \vou b=0$.  
\\
Comme $s+t=1$, la règle \tsbf{LOC}  donne $\Vd t\di 1 \vou s\di 1$.
\\
Dans le cas $s\di1$, on a $\Vd u\di 1\vou b=0$. 
Si $b\neq 0$, la branche $b=0$ meurt parce que $b$ est inversible dans $\gK$. Cela donne dans $\sa{val}(\gK,\gk)$:  $\Vd u\di 1$.

\noindent On obtient alors pour la \sad $\sa{val}(\gK,\gk)$ les règles valides suivantes.

\DeuxRegles
{
\labu $\,\,a\di b\vd t\di1\vou u\di1$\quad  si $b\neq 0$
\labu $\,\,t\di1 \vd a\di b$
}
{
\labu $\,\,u\di1\vd a\di b$
}

\smallskip \noindent Et donc dans le treillis $\val(\gK,\gk)$ on a l'\egt 
\[
\Di(a,b)=\Di(t,1)\vu \Di(u,1) \quad \hbox{ si } b\neq0.
\]
Comme  $\Di(a,0)=1$ cela montre que $\gamma:\Zar\gk \to \val(\gK,\gk)$ est surjectif.
\end{proof}
%

\subsection{La théorie \sa{Val} et le treillis $\Val(\gK,\gk)$}

Le treillis valuatif $\Val(\gK,\gk)$ défini dans l'article \cite[Space of Valuations]{fCoq2009} 
est un treillis tout à fait semblable au treillis $\val(\gK,\gk)$.
Mais sa \dfn s'appuie implicitement sur une \tdij \sa{Val} différente de 
\sa{val}.

Il s'agit de la théorie \sa{Ac} des anneaux commutatifs à laquelle on ajoute un prédicat~$\Vr(x)$ soumis à certains axiomes qui sont satisfaits par un \adv d'un \cdi. Le prédicat $x\di y$ n'est pas utilisé, et les axiomes pour $\Vr$ sont minimalistes\footnote{Les noms \tsbf{cvi} et \tsbf{CV2}
proviennent de la théorie des \cvds abordée dans la section \ref{fsecdival6}.}. On vérifie facilement que les axiomes que nous donnons sont \eqvs à ceux des articles \cite{fCP2001,fCoq2009}.

\DeuxRegles{
\lab{cv1} $\,\,x=0\vet \Vr(y) \Vd  \Vr(x+y)   $
\lab{cv3} $\,\,\Vr(x)\vet\Vr(y)\Vd \Vr(xy)   $
\lab{CV2} $\,\,xy =1\Vd  \Vr(x)\vou\Vr(y)  $
}
{
\lab{cv2} $\vd  \Vr(-1)   $
\lab{cv4} $\,\, \Vr(x)\vet\Vr(y)\Vd \Vr(x+y)   $
\lab{CL$_{\Val}$} $\,\, 0\di 1 \vd \Bot$ \quad (collapsus)
}

Nous noterons cette théorie \sa{Val}. Nous commençons par un lemme facile.

\begin{flemma} \label{flemvalVal}
Dans la théorie \sa{val}, si on lit $\Vr(x)$ comme une abréviation de $1\di x$, les axiomes de \sa{Val} sont des règles valides.  
\end{flemma}
%
\begin{proof}
Le seul point non évident est la validité de \Tsbf{CV2}. Démontrons la \rdij suivante, a priori plus forte.

\Regles {\labu $\,\,\Vr(xy)\Vd  \Vr(x)\vou\Vr(y)  $}

\noindent On~a comme cas particulier de \tsbf{AV1} la validité de la règle

\Regles {\labu $\vd  \Vr(x)\vou x\di 1  $} 

\noindent Si $x\di 1$, alors $x\di 1\di xy$, donc $1.x\di y.x$ et par \tsbf{AV3}, $1\di y \vou x\di 0$; dans le dernier cas  $1\di xy\di 0$, collapsus.
\end{proof}

Les notations de l'article \cite{fCoq2009} sont légèrement différentes des nôtres. 
 
Dans \cite{fCoq2009},~$\gk$ est toujours un anneau intègre contenu dans un corps $\gK$. Dans ce contexte, notre $\val(\gK,\gk)$ correspond bien  au treillis noté $\Val(\gK,\gk)$ dans \cite{fCoq2009}. Par contre $\Val(\gk)$ est une abréviation de $\Val(\Frac(\gk),\gk)$. Cela correspond à notre $\val(\Frac(\gk),\gk)$, isomorphe à $\sa{val}(\gk,\gk)$. Notons $\gZ$  le plus petit sous-anneau  de $\gk$. Notre $\val(\gk)\simeq \val(\gk,\gZ)$ correspond à $\Val(\Frac(\gk),\gZ)$,  

En comparaison de la théorie \sa{val}, le seul défaut que l'on puisse trouver à la théorie \sa{Val} est de n'être pas assez \gnle, puisqu'à priori elle réclame des anneaux intègres. 

\smallskip Nous allons maintenant nous convaincre que les deux treillis 
$\val(\gK,\gk)$ et $\Val(\gK,\gk)$ sont isomorphes dans le contexte de l'article \cite{fCoq2009}. Il nous faut d'abord rappeler la \dfn du treillis   
$\Val(\gK,\gk)$.

\begin{fdefinition} \label{fdefiValkK} Soit $\gK$ un \cdi et $\gk$ un sous-anneau de $\gK$.
\begin{enumerate}
\item 
La \sad $\sa{Val}(\gK,\gk)$ est obtenue en ajoutant aux axiomes de~\sa{Val} le diagramme positif de $\gK$ comme anneau commutatif et les règles~\hbox{$\Vd \Vr(x)$} pour \hbox{les $x\in\gk$}.

\item 
 Le treillis $\Val(\gK,\gk)$ de \cite{fCoq2009} est par \dfn le \trdi engendré par la \entrel $\vdash_{\gK,\gk,\mathrm{Val}}$ sur $\gK$ définie par l'\eqvc suivante.

\vspace{-.8em}
\begin{equation} \label {feqZarclassVal}
\begin{aligned} 
 a_1,\dots,a_n  &\,\vdash_{\gK,\gk,\mathrm{Val}}    c_1,\dots,c_m   
  \qquad\quad     \equidef     \\[.2em] 
\Vr(a_1)\vet \dots\vet  \Vr(a_n) & \Vdi{\sA{Val}(\gK,\gk)}   \Vr(c_1)\vou \dots\vou   \Vr(c_m) 
 \end{aligned}
\end{equation}

\noindent On note $\rV_{\gK,\gk}:\gK\to \Val(\gK,\gk)$ (ou simplement $\rV$ si le contexte est clair) l'application correspondante.
\end{enumerate}
 \end{fdefinition}

Dans l'article \cite{fCoq2009}, la relation implicative est prise sur $\gK\etl$ plutôt que sur $\gK$.
Mais cela ne change rien au \trdi engendré puisque $\Vr(0)$ peut être remplacé  par $\Vr(1)$ (ou par $\Top$). 

Ainsi $\Val(\gK,\gk)$ relève de la même \dfn que $\val(\gK,\gk)$
(\dfns \ref{fdefivalkK} et \ref{fdefivalkK0}) et le lemme \ref{flemvalVal} nous donne un morphisme naturel
\begin{equation} \label {feqtheta}
\theta:\Val(\gK,\gk)\to \val(\gK,\gk)
\end{equation}

Nous allons démontrer qu'il s'agit d'un \iso lorsque $\gK$ est un \cdi. Un petit travail préparatoire est \ncr.

\Subsubsection{La théorie $\sa{Val}^+$}

Pour démontrer que $\theta$ est un \iso, on commence par étendre la théorie \sa{Val} en introduisant le prédicat $y\di z$ au moyen des axiomes suivants qui le définissent comme équivalent à $\exists x\,(\Vr(x)\vii  z=xy)$.

\DeuxRegles{
\Lab{Div} $\,\,\Vr(x)\vet z=xy \Vd  y\di z   $
}
{
\lab{DIV} $\,\,y\di z\Vd  \Exists x\;(\Vr(x)\vet z=xy)  $
}

On voit facilement que $\Vr(x)$ équivaut à $1\di x$, et que $y\di z$ équivaut à $\exists x\,(\Vr(x)\vii  z=xy)$.

La nouvelle théorie, que nous notons $\sa{Val}^+$, est une extension \esid de \sa{Val}
puisqu'on s'est limité à introduire un nouveau prédicat correctement défini. 
En particulier, les \rdijs valides ne changent pas, et le \trdi défini à partir de  $\sa{Val}^+(\gK,\gk)$ est le même que celui défini à partir de $\sa{Val}(\gK,\gk)$. 

Maintenant nous notons que pour toute règle valide dans \sa{val} de la forme
\begin{equation} \label {feqlem49}
a_1\di b_1\vet \dots\vet a_n\di b_n  \Vdi{\sA{val}(\gK,\gk)}   c_1\di d_1\vou \dots\vou  c_m\di d_m\qquad (a_i, b_i, c_j, d_j\in\gK),
\end{equation}
\noindent   une démonstration dynamique de cette règle  peut se dérouler entièrement avec des termes pris dans $\gK$. En effet, \sa{val} est une \tdij et seul un axiome existentiel permettrait d'introduire par force de nouveaux \elts au titre de variables fraiches.

\begin{flemma} \label{flemvalVal2} Soient $\gK$ un \cdi et $\gk$ un sous-anneau.
\begin{enumerate}
\item Dans le cas des \sads $\sa{Val}^+(\gK,\gk)$ et $\sa{val}(\gK,\gk)$  les axiomes de \sa{val} utilisés pour la \dfn du treillis $\val(\gK,\gk)$, \cad pour la \demo  d'une règle  \pref{feqlem49}, sont valides pour $\sa{Val}^+$.
\item  En conséquence une règle \pref{feqlem49} valide pour $\sa{val}(\gK,\gk)$ est aussi valide dans $\sa{Val}^+(\gK,\gk)$.
\item En conséquence le morphisme $\theta$ donné en \pref{feqtheta} est injectif.
\end{enumerate}
  
\end{flemma}
%
\begin{proof} \textsl{1}.
Voyons par exemple la règle \Tsbf{Av2}. On suppose $a\di b$, on introduit donc une variable fraiche~$z$ telle que $az=b$ et $\Vr(z)$. Donc $acz=bc$, donc par \tsbf{Div}, $a\di b$.
Les autres axiomes ont \egmt des \demos directes, sauf pour les axiomes disjonctifs~\tsbf{AV1} et~\tsbf{AV2}. Nous utilisons alors le fait que dans une \demo d'une règle \pref{feqlem49}, les \elts sont toujours dans $\gK$.

\DeuxRegles
{
\lab{AV1} $\vd a \di  b \vou b\di a$
}
{
\lab{AV2} $\,\,ax \di  bx  \Vd a \di  b \vou 0 \di x$
}

\noindent Voyons \tsbf{AV1}. Dans une \demo, $a$ et $b$ sont des \elts de $\gK$. Si $a=0$, alors $b\di a$ dans $\sa{Val}^+(\gK,\gk)$ parce que $a=0.b$ et $\Vr(0)$. De même si $b=0$, alors $a\di b$ dans $\sa{Val}^+(\gK,\gk)$.
Si $a$ et~$b$ sont non nuls alors, on a dans $\gK$ un $x=ba^{-1}$ et un $y=ab^{-1}$, et $xy=1$. Donc dans  $\sa{Val}^+(\gK,\gk)$, on a $\vd \Vr(x)\vou \Vr(y)$. Si $\Vr(x)$, alors $1\di x$ donc $a\di ax=b$ en vertu de~\tsbf{Av2}. 
De même si $\Vr(y)$, alors $b\di a$.
\\
Voyons \tsbf{AV2}. Ici $a,b,x$ sont des \elts de $\gK$.  
Si $x=0$ on a $0\di x$. Si $x\neq 0$, il a un inverse~$x^{-1}$ dans $\gK$, et alors $a=axx^{-1}\di bxx^{-1}=b$ (on utilise \tsbf{Av2}). 
\end{proof}

\begin{fremark} \label{fremlemvalVal2} 
La démonstration du lemme \ref{flemvalVal2} utilise de manière décisive le fait que l'anneau~$\gK$ est un \cdi. Il ne semble pas qu'on puisse obtenir un \iso
$\val(\gK,\gk)\simeq \Val(\gK,\gk)$ lorsque $\gK$ n'est pas supposé \zedr. Il faudrait ajouter plusieurs axiomes à \sa{Val}  pour que $\sa{Val}^+$  prouve les axiomes de  $\sa{val}$, mais ces axiomes paraitraient beaucoup trop ad hoc.  \eoe
\end{fremark}

\Subsubsection{Isomorphisme des treillis $\val(\gK,\gk)$ et $\Val(\gK,\gk)$}

Pour démontrer que le morphisme $\theta$ donné en \pref{feqtheta} est un \iso, il nous suffit maintenant de démontrer le lemme suivant.

\begin{flemma} \label{flemvalValfin} Soient $\gK$ un \cdi et $\gk$ un sous-anneau. Dans $\sa{Val}^+(\gK,\gk)$, une règle
\[
a_1\di b_1\vet \dots\vet a_n\di b_n  \Vd   c_1\di d_1\vou \dots\vou  c_m\di d_m\qquad (a_i, b_i, c_j, d_j\in\gK)
\]
est toujours \eqve  à une règle
\[\Vr(x_1)\vet \dots\vet  \Vr(x_k)  \vd   \Vr(y_1)\vou \dots\vou   \Vr(y_\ell) \qquad (x_i,y_j\in\gK\etl)
\]

\end{flemma}
%
\begin{proof}
Pour les $b_i$ nuls, on supprime  $a_i\di b_i$ dans l'hypothèse, car elle est vraie.
Pour les $(c_j,d_j)$ avec $c_j=0$ et $d_j\neq 0$, on supprime  $c_j\di d_j$ dans la conclusion, car elle collapse. 
S'il~y~a un $a_i$ nul avec $b_i\neq 0$, ou s'il y a un $d_j$ nul, la règle est valide (elle équivaut à $\vd\Vr(1)$).
Il reste à voir le cas où tous les \elts sont non nuls. Dans ce cas, on peut remplacer  $a_i\di b_i$ et $c_j\di d_j$ respectivement par $\Vr(b_ia_i^{-1})$ et $\Vr(d_jc_j^{-1})$.
\end{proof}

On~a donc démontré le résultat souhaité.

\begin{ftheorem} \label{fthValval}
Soit $\gk$ un sous-anneau d'un \cdi $\gK$. Le morphisme naturel $\Val(\gK,\gk)\to \val(\gK,\gk)$
est un \iso.
\end{ftheorem}

Le corolaire suivant est obtenu à partir du théorème précédent en appliquant la machinerie locale-globale \elr\ \num1 expliquée dans la section 
\ref{fsec-A-qi}. 
\begin{fcorollary} \label{fcorthValval}
Soit $\gk$ un anneau \qi et soit  $\gK=\Frac(\gk)$. Le morphisme naturel $\Val(\gK,\gk)\to \val(\gK,\gk)$
est un \iso.
\end{fcorollary}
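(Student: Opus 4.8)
The plan is to derive this from \thref{thValval} by means of the local-global elementary machinery of Section~\ref{sec-A-qi}, rather than by a direct argument. When $\gk$ is a pp-ring, $\gK=\Frac(\gk)$ is a zero-dimensional reduced ring, but in general not a subring of a discrete field (for instance $\gk=\ZZ\times\ZZ$ is a pp-ring that embeds in no field), so \thref{thValval} cannot be invoked as a black box; its \emph{proof}, on the other hand, can be re-run component by component.

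First I would recall that the natural morphism $\theta\colon\Val(\gK,\gk)\to\val(\gK,\gk)$ is always available (Lemma~\ref{lemvalVal}, reading $\Vr(x)$ as $1\di x$), and that, exactly as for \thref{thValval}, it is an isomorphism as soon as an entailment relation between finitely many elements of $\gK$ is valid in $\sa{Val}^+(\gK,\gk)$ iff it is valid in $\sa{val}(\gK,\gk)$; concretely this is the content of Lemmas~\ref{lemvalVal2} and~\ref{lemvalValfin}. As in Lemma~\ref{lem-valkk-valkK}, clearing denominators lets one assume that all the elements in play lie in $\gk$. Inspecting the proofs of Lemmas~\ref{lemvalVal2} and~\ref{lemvalValfin}, the only use made of the hypothesis ``$\gK$ is a discrete field'' consists of finitely many applications of the test ``$x=0$ or $x$ invertible in $\gK$'', each applied to an element of $\gK$ produced by the dynamical proof at hand (for Lemma~\ref{lemvalValfin}, to one of the given $a_i,b_i,c_j,d_j$).

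The key step is then to replace this test by a splitting of $\gk$. By the local-global elementary machinery of Section~\ref{sec-A-qi}, underpinned by the Splitting lemma~\ref{thScindageQi}, each such test can be resolved by passing to two components determined by the idempotent attached to the relevant numerator: in one component that numerator is regular, hence invertible in the total fraction ring, and in the other it is null. Iterating, the proof of \thref{thValval} goes through verbatim with $(\Frac(\gk_s),\gk_s)$ in place of $(\gK,\gk)$ for each of the finitely many components $\gk_s$ so obtained, even though $\Frac(\gk_s)$ need not be discrete. To finish one glues: both $\val(-,-)$ and $\Val(-,-)$ send a finite product decomposition $\gk\simeq\prod_s\gk_s$, and hence $\gK\simeq\prod_s\Frac(\gk_s)$, to a product of distributive lattices, compatibly with $\theta$ --- this being checked on the defining entailment relations, just as is already done for $\Zar$ and $\Reel$ and as in the proof of Lemma~\ref{lemdimvalAA1A2} for $\val(\gA,\gA)$ --- and a lattice morphism is an isomorphism iff each of its components is.

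I expect the main difficulty to be organisational rather than substantive. The two points to be spelled out carefully are: (i) that the finitely many null-or-invertible decisions required inside the proofs of Lemmas~\ref{lemvalVal2} and~\ref{lemvalValfin} can indeed be answered by a finite splitting of $\gk$, i.e.\ that, after clearing denominators, the elements to be tested are among those built from the data, which is exactly what makes the machinery of Section~\ref{sec-A-qi} applicable; and (ii) the compatibility of $\val(-,-)$ and $\Val(-,-)$ with a finite product of rings, at the level of closed atomic formulae and the associated entailment relations. Both are routine. Finally, the failure of the statement for an arbitrary ring $\gk$ (noted immediately after the corollary) is precisely the failure, without the pp-hypothesis, of such a splitting to turn the offending denominators into regular elements.
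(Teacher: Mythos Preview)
Your proposal is correct and follows exactly the approach the paper indicates: the paper's entire argument is the single sentence that the corollary ``applies to \thref{thValval} the local-global elementary machinery n$^{\mathrm o}$1 explained in Section~\ref{sec-A-qi}'', and you have spelled out precisely what that means --- identifying the finitely many null/invertible tests inside the proofs of Lemmas~\ref{lemvalVal2} and~\ref{lemvalValfin}, replacing each by a splitting of the pp-ring via attached idempotents, and gluing the resulting product decomposition of the lattices. Your two ``organisational'' points (i) and (ii) are exactly the content of the machinery, and your closing remark about why the hypothesis cannot be dropped matches the paper's own comment after the corollary.
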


Notons que le résultat précédent ne s'étend pas au cas d'un anneau $\gk$ arbitraire.


\section{Dimension valuative d'un anneau, d'une algèbre}\label{fsubsecdival}

Lorsque l'on veut transférer des résultats obtenus pour les anneaux intègres à un anneau arbitraire~$\gA$, une solution pourrait être de définir un objet idéal \gui{l'anneau intègre générique engendré par~$\gA$}. Mais cet anneau n'existe pas en tant qu'objet usuel. Une variante dynamique de cette solution serait de définir une \tdy \sa{Ai} des anneaux intègres puis de considérer la \sad $\sa{Ai}(\gA)$ comme substitut raisonnable de l'objet idéal convoité.

Dans cette section, concernant le sujet de la dimension valuative, nous proposons plutôt d'utiliser l'anneau $\Amin$ \gui{clôture \qi minimale de $\gA$} qui a été défini par T.~Coquand dans une note non publiée concernant un substitut à l'\algo du pgcd dans $\AX$ lorsque~$\gA$ n'est pas un \cdi. L'utilisation de $\Amin$ a déjà fait la preuve de son efficacité pour la dimension valuative dans \cite{fACMC}.

Pour cela nous donnons dans la section \ref{fsec-A-qi} 
une version \cov détaillée de la théorie classique des anneaux \qis (pp-rings dans la littérature anglaise), puis nous construisons 
l'anneau $\Amin$ dans la section \ref{fsec-Amin} en indiquant quelques
\prts de cet objet. Nous pensons que l'anneau \zed $\Frac(\Amin)$ est
le meilleur substitut possible au \cdf d'un anneau intègre.
En particulier le morphisme naturel $\gA\to\Amin$ est un \iso lorsque~$\gA$ est un anneau intègre.

Dans la section \ref{fsec3vdim} nous utilisons l'anneau $\Amin$ dans la comparaison de trois versions \covs possibles pour les dimensions valuatives d'un anneau $\gA$ et d'une algèbre \hbox{$\gk\to\gA$}.

\subsection{Anneaux \qis}\label{fsec-A-qi}

\noindent 
{\bf Définitions et notations.} ~
\begin{itemize} 
\item
Si le contexte est clair on notera\, $a\epr\!:=\Ann_\gA(a)$.
 Nous utilisons aussi la notation~$\fa\epr$  pour l'annulateur d'un \id~$\fa$.
\item Un \elt $a$ tel que $a\epr=0$ est dit \textsl{\ndz} ou \textsl{non diviseur de zéro}.
\item  Un morphisme d'anneaux $\gA\to\gB$ est dit \textsl{\ndz} s'il transforme tout \elt \ndz en un \elt \ndz.
\item  On note  $\Ared=\gA/\!\sqrt[\gA]{\gen{0}}$ l'anneau réduit engendré par $\gA$.
\item  On note $\cP_n$ l'ensemble des parties finies de $\so{1,\dots,n}$. 
\end{itemize}

\begin{fpropdef} \label{flem-idm-associe-0} Soit $\gB$ un anneau commutatif et soit $a\in\gB$.
\begin{enumerate}
\item 
Si  on a un \elt $c$ tel que $ca=a$ et $c\epr=a\epr$, alors $c$  est un \idm. Cet \elt  est \ncrt unique et il est dit \emph{\idm attaché à~$a$ dans $\gB$}, on le notera $e_a$ ou~$a\eci$.
\item Un \idm $e\in \gB$ est \idm attaché à $a$ \ssi  $ea=a$
 et  $a+(1-e)$ est \ndz.
\item Si $\varphi:\gB\to\gC$ est \ndz et si $a\in\gB$ possède un \idm attaché $e_a$ dans $\gB$, alors $\varphi(e_a)$ est \idm attaché à $\varphi(a)$ dans $\gC$.  
\item Si $a$ est un \idm, il est \idm attaché à lui-même.
\item L'\elt $a$ est \ndz \ssi $1$  est \idm attaché à $a$.
\item 
Si $a\eci$ et $b\eci$ sont \idms attachés $a$ et $b$, alors $a\eci b\eci$ est \idm attaché à $ab$.
\end{enumerate}
Un anneau $\gA$ est dit \emph{\qi\footnote{En anglais, un \gui{pp-ring}: les \idps sont projectifs.}} si tout \elt $a$ possède un \idm attaché.
Autrement dit, l'annulateur de tout \elt est engendré par un \idm.
\end{fpropdef}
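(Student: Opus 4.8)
The plan is to route everything through Item~\textsl{2}, which is the only item requiring a real argument; once it is in hand, Items~\textsl{3}--\textsl{6} and the closing equivalence become verifications that the three properties ``$e$ is an \idm'', ``$ea=a$'' and ``$a+(1-e)$ is \ndz'' are each preserved by the operation at stake. I would begin with Item~\textsl{1}: if $ca=a$ and $c\epr=a\epr$, then $(c-1)a=ca-a=0$ puts $c-1$ in $a\epr=c\epr$, so $c(c-1)=0$, i.e.\ $c^2=c$; uniqueness follows because if $c,c'$ are both attached to $a$ then $c-1\in a\epr=(c')\epr$ gives $cc'=c'$, and symmetrically $cc'=c$, whence $c=c'$.

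For Item~\textsl{2}, the delicate direction is $(\Leftarrow)$. Assume $e^2=e$, $ea=a$ and $a+(1-e)$ \ndz. The inclusion $e\epr\subseteq a\epr$ is immediate from $ea=a$. For $a\epr\subseteq e\epr$, take $x$ with $xa=0$ and set $y=xe$; then $ya=xea=xa=0$ and $y(1-e)=xe-xe^2=0$, so $y\bigl(a+(1-e)\bigr)=0$, and since $a+(1-e)$ is \ndz this forces $y=xe=0$. For $(\Rightarrow)$, let $e$ be the attached \idm and suppose $x\bigl(a+(1-e)\bigr)=0$; multiplying by $e$ and using $ea=a$, $e^2=e$ gives $xa=0$, hence $x\in a\epr=e\epr$, i.e.\ $xe=0$; expanding $x\bigl(a+(1-e)\bigr)=xa+x-xe=0$ then yields $x=0$, so $a+(1-e)$ is \ndz.

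Items~\textsl{4} and~\textsl{5} are then immediate from Item~\textsl{2}: an \idm $a$ satisfies $a\cdot a=a$ and $a+(1-a)=1$, which is \ndz; and $1$ satisfies $1\cdot a=a$ with $a+(1-1)=a$, so $1$ is attached to $a$ exactly when $a$ is \ndz. For Item~\textsl{3}, a \ndz morphism $\varphi$ sends \idms to \idms and preserves the identity $e_a a=a$ and the \ndz property, so $\varphi(a+(1-e_a))=\varphi(a)+(1-\varphi(e_a))$ is \ndz in $\gC$ and Item~\textsl{2} applies. Item~\textsl{6} is the most computational: with $e=a\eci$ and $f=b\eci$, the product $ef$ is an \idm and $(ef)(ab)=(ea)(fb)=ab$; to see $ab+(1-ef)$ is \ndz, suppose $x\bigl(ab+(1-ef)\bigr)=0$; multiplying by $ef$ and using $(ef)(ab)=ab$ gives $xab=0$; then $xbe$ annihilates $a$, since $(xbe)a=x(ab)e=0$, so $xbe\in a\epr=e\epr$ and therefore $xbe=0$; next $xef$ annihilates $b$, since $(xef)b=(xbe)f=0$, so $xef\in b\epr=f\epr$ and therefore $xef=0$; finally expanding $x\bigl(ab+(1-ef)\bigr)=xab+x-xef=0$ gives $x=0$.

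Finally, for the displayed equivalence: if every $a$ has an attached \idm $e_a$, then $a\epr=e_a\epr=\gen{1-e_a}$ (from $(1-e_a)e_a=0$ and, for $xe_a=0$, $x=x(1-e_a)$), an ideal generated by the \idm $1-e_a$; conversely, if $a\epr=\gen{\epsilon}$ with $\epsilon^2=\epsilon$, then $c:=1-\epsilon$ is an \idm, $ca=a$ since $\epsilon a=0$, and $c\epr=a\epr$ (for $xc=0$ one gets $x=x\epsilon$ hence $xa=0$; for $xa=0$ one gets $x\in\gen{\epsilon}$ hence $xc=0$), so $c$ is the attached \idm to $a$ by Item~\textsl{1}. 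I expect the only step needing attention to be the chain $xab=0\Rightarrow xbe=0\Rightarrow xef=0$ in Item~\textsl{6}: nothing there is deep, but one has to keep track of which annihilator each auxiliary \elt falls into before invoking $e\epr=a\epr$ and $f\epr=b\epr$.
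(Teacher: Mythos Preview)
Your proof is correct in every item; the only implicit step is the passage from $xbe\in e\epr$ to $xbe=0$ in Item~\textsl{6}, which follows at once since $xbe\cdot e=xbe^2=xbe$. The paper offers no proof of this proposition (it is dispatched with \textsl{la démonstration est laissée au lecteur}), so there is nothing to compare against; your strategy of routing Items~\textsl{3}--\textsl{6} through the characterization in Item~\textsl{2} is the natural one.
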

%
\facile

Comme $e_a$  est \idm, le morphisme naturel $\gA\to \gA[1/e_a]\times\aqo{\gA}{e_a} $ est un \iso.
Dans l'anneau $\gA[1/e_a]$, l'\elt $a$ est \ndz, et dans $\aqo{\gA}{e_a}$, $a$ est nul\footnote{En \clama, on a une partition du spectre de Zariski en deux ouverts de base, sur le premier $a=0$, sur le second $a$ est régulier.}.
On~a $e_{ab}=e_a e_b$, $e_aa=a$ et $e_0=0$.
Les anneaux \qis ont une \dfn purement équationnelle. Supposons en effet qu'un anneau commutatif soit muni d'une loi  unaire \hbox{$a\mapsto
\ci{a}$} qui vérifie les trois axiomes suivants:
\begin{equation}\label{feqaqis}
a\eci a=a,\quad
(ab)\eci=a\eci b\eci,\quad
0\eci=0
\end{equation}
Alors,   pour tout $a\in\gA$,  $a\epr=\gen{1-a\eci}$ et $a\eci$
est \idm attaché à $a$, de sorte que l'anneau  est \qi.

\begin{flemma}[Lemme de scindage \qi] \label{fthScindageQi}
Soient $n$ \elts $x_1$, \dots, $x_n$ dans un anneau \qi~$\gA$.
Il existe un \sfio $(e_j)$ de cardinal~$2^n$ tel que dans chacune des
composantes~\hbox{$\gA[1/e_j]$}, chaque~$x_i$ est nul ou \ndz. 
\end{flemma}

Le fait de pouvoir scinder systématiquement en deux composantes un anneau \qi
conduit à la méthode \gnle suivante.
La différence essentielle avec le lemme de scindage précédent
est que l'on  ne connait pas à priori la famille finie d'\elts qui
va provoquer le scindage.

\rdb
\mni {\bf Machinerie locale-globale \elr\ \num1.}\label{fMethodeQI}
{\sl La plupart des \algos qui fonctionnent avec les anneaux intègres non
triviaux peuvent être modifiés de manière à fonctionner avec les anneaux
\qis, en scindant l'anneau
en deux composantes chaque fois que l'\algo écrit pour les anneaux intègres
utilise le test
\gui{cet \elt est-il nul ou \ndz?}. Dans la première composante l'\elt en question
est nul, dans la seconde il est \ndz.}

\medskip Voici maintenant l'analogue du lemme XI-4.21 dans \cite{fACMC}, en remplaçant \gui{quasi inverse de $a$} par \gui{\idm attaché à $a$}.

\begin{flemma}[Anneau engendré par un \idm attaché]\label{flem-idm-associe-1}
Soit $\gA$ un anneau réduit.
\begin{enumerate}
\item Soit $a\in\gA\subseteq\gC$. On suppose que $\gC$ est réduit et que $a$ admet un \idm attaché~$a\eci$ dans $\gC$. On note $\gB=\gA[\ci{a}]\subseteq \gC$.\\
En tant que \Amo, $\gB=a\eci\gB\oplus(1-a\eci)\gB$, et en tant qu'anneau 
\[\gB\simeq \aqo{\gB}{1-a\eci}\times \aqo{\gB}{a\eci}=
\gA_1\times \gA_2
\]
avec $\gA_1 \simeq a\eci\gB$ 
 et  $\gA_2 \simeq (1-a\eci)\gB$ comme \Amos.
\begin{enumerate}
\item L'\homo naturel~\hbox{$\mu_1:\gA\to\gA_1$} 
(obtenu via $\gA\to\gB\to\gA_1$) est surjectif. Son noyau est~\hbox{$a\epr:=\Ann_\gA(a)$}.
\item
L'\homo naturel $\mu_2:\gA\to\gA_2$ (obtenu via $\gA\to\gB\to\gA_2$) est surjectif.
Son  noyau est l'intersection \hbox{$\fa=\gA\cap a\eci\gB$}
et  vérifie la double inclusion
$$ 
\big(a\epr\big)\epr=\Ann_\gA\big(\Ann_\gA(a)\big)\supseteq \fa \supseteq \DA(a)\eqno(*)  
$$
\end{enumerate}
En bref $\gA[a\eci]=\gB\simeq\gA\sur a\epr\times \gA\sur{\fa}$.
\item Inversement pour un $a\in A$, si un \id $\fa$ de $\gA$ vérifie les inclusions $(*)$, l'\elt $(1,0)$ est un \idm attaché à
 (l'image de) $a$ dans l'anneau 
 \[\gB:=\gA\sur a\epr\times \gA\sur{\fa}=:\gA_1\times \gA_2\] et l'\homo canonique
de $\gA$ dans $\gB$ est injectif.
\end{enumerate}
\end{flemma}
\begin{proof} Notons
$\pi_i:\gB\to\gA_i$ les \homos canoniques. Comme $\pi_i(a\eci)= 0 \hbox{ ou }1$, tout \elt $\pi_i(x)$ est égal à un $\mu_i(y)$ pour un $y\in\gA$, donc chaque $\mu_i$ est surjectif.

\smallskip \noindent \textsl{1a. } Le noyau de $\mu_1$  est $\gA\cap\Ker \pi_1= \gA\cap\Ann_\gB(a)=\Ann_\gA(a)$.

\smallskip \noindent \textsl{1b.}  Le noyau de $\mu_2$  est $\fa:=\gA\cap\Ker \pi_2= \gA\cap a\eci\gB$. On~a clairement $a\in\fa$ donc $\DA(a)\subseteq \fa$. 
Enfin, montrons que
$\fa\,\Ann_\gA(a) = 0$, d'où
     $\fa\subseteq\Ann_\gA\big(\Ann_\gA(a)\big)$. Soient en effet $x\in\fa$ et $y\in \Ann_\gA(a)$. On~a $x=a\eci z$ pour un $z\in\gB$ et $y\in \Ann_\gB(a)=\Ann_\gB(a\eci)$, donc $xy=za\eci y=0$.

\smallskip \noindent \textsl{2.} L'image de $a$ dans $\gB$ est $(a1_{\gA_1},0_{\gA_2})$, donc $(1_{\gA_1},0_{\gA_2})$
est bien un \idm attaché à~$a$ dans $\gB$. Soit maintenant $x\in\gA$ dont l'image dans $\gB$
est $0$.
D'une part $x=_{\gA_1}0$, donc $ax=_\gA0$. D'autre part  $x\,\Ann_\gA(a)=0$ donc $x^2=_\gA0$, et $x=_\gA0$.
\end{proof}

On voit
que la notation $\gA[a\eci]$ présente une 
ambigüité  lorsque $\DA(a)\neq\Ann_\gA\big(\Ann_\gA(a)\big)$.
Cela conduit en particulier à deux notions naturelles de clôture \qi  d'un anneau réduit, selon que l'on décide de privilégier systématiquement le coté $\DA(a)$ ou le coté $\Ann_\gA\big(\Ann_\gA(a)$.

Voici maintenant un lemme qui peut permettre de simplifier la construction d'anneaux \qis. C'est l'analogue du lemme XI-4.22 dans \cite{fACMC}, en remplaçant \gui{\zedr} par \gui{\qi} et \gui{quasi inverse} par \gui{\idm attaché}.

\begin{flemma}\label{flem2qi}~
\begin{enumerate}
\item Soient $\gA\subseteq\gC$ avec $\gC$ \qi. On note $a\eci$ l'\idm de $\gC$ attaché à $a\in\gA$. Le plus petit sous-anneau \qi de~$\gC$ contenant~$\gA$ est égal à $\gA[(a\eci)_{a\in\gA}]$. 
\item Plus \gnlt, si $\gA\subseteq\gB$
avec $\gB$ réduit et si tout \elt $a$ de $\gA$ possède un \idm attaché $a\eci$ dans $\gB$,
alors le sous-anneau $\gA[(a\eci)_{a\in\gA}]$ de~$\gB$ est \qi.
En outre, tout \elt de $\gA[(a\eci)_{a\in\gA}]$ peut être écrit \emph{sous forme standard}, \cad sous forme 
$ \som_j   a_j e_j $    avec les $a_j\in\gA$ et
une liste $(e_j)_j$ d'\idms 
deux à deux \orts dans l'\agB engendrée par les $a\eci$.\\
Enfin, $\som_j{  a_j e_j}$ possède un \idm associé dans $\gB$ écrit sous forme standard:  $\som_j  1  f_j=\som_j  a_j\eci  e_j$.
\end{enumerate}
\end{flemma}
\begin{proof}
Il suffit de démontrer le point \textsl{2}.\\
Parmi les \elts de $\gB$, ceux qui s'écrivent comme sommes de produits~$ab\eci$ avec $a$, $b\in\gA$ forment clairement un sous-anneau de $\gB$\footnote{Utiliser le point \textsl{4} du lemme \ref{flem-idm-associe-0}.},
qui est donc égal à $\gA[(a\eci)_{a\in\gA}]$.
Cet anneau contient l'\agB engendrée par les \idms $a\eci$. Donc tout \elt écrit sous forme standard est dans $\gA[(a\eci)_{a\in\gA}]$. \\
Soient $x=\som_j{  a_j e_j}$ et $y= \som_k{  b_k f_k}$ deux \elts écrits sous forme standard. \\
Alors $xy=\som_{j,k}(a_jb_k)e_jf_k $, qui est une écriture standard. 
Voyons la somme $x+y$. On peut supposer $\som_je_j=1$ en ajoutant   l'\idm $e=(1-\sum_je_j)$ à la liste. Même chose pour $\som_kf_k$. 
 Alors $x+y=\som_{j,k}(a_j+b_k)e_jf_k$. \\
Enfin, dans un produit fini d'anneaux $\prod_\ell\gA_\ell$, un \elt $x=(x_\ell)_\ell$ possède un \idm attaché \ssi chaque $x_\ell$  possède un \idm attaché ${x_\ell}\eci$ dans $\gA_\ell$, et dans ce cas $x\eci=({x_\ell}\eci)_\ell$. Ceci donne le dernier point de l'énoncé et achève de démontrer que $\gA[(a\eci)_{a\in\gA}]$ est \qi.
\end{proof}

\subsection{L'anneau $\Amin$}\label{fsec-Amin}

Référence: \cite[section~\hbox{XIII-7}]{fACMC}.

\begin{flemma}\label{flem0Amin}
Soit   $\gA$ un  anneau réduit
et $a\in\gA$.
On définit
\[\gA_{\so{a}}\eqdefi\gA\sur{a\epr}\times \gA\sur{({a\epr})\epr}
\]
et l'on note $\psi_a:\gA\to\gA_{\so{a}}$  l'\homo canonique. 
\begin{enumerate}
\item $\psi_a(a)=(\pi_1(a),0)$  et $(1,0)$
est \idm attaché à $\psi_a(a)$ dans $\gA_{\so{a}}$.
\item $\psi_a$ est injectif (on peut identifier $\gA$ à un sous-anneau de $\gA_{\so{a}}$).
\item L'anneau $\gA_{\so{a}}$ est réduit.
\item Si $a$ possède un \idm attaché, $\psi_a$ est un \iso.
C'est le cas par exemple si $\gA$ est \qi.
\item Soit $\fb$  un \id  dans  $\gA_{\so{a}}$, alors l'\id $\psi_a^{-1}(\fb\epr)=\fb\epr\cap\gA$ est un \id annulateur dans $\gA$.
\item L'injection  $\psi_a:\gA\to\gA_{\so{a}}$ est un morphisme \ndz.
\end{enumerate}
\end{flemma}
Pour le point \textsl{6}, voir l'exercice XIII-19 dans \cite{fACMC}.

\begin{flemma}\label{flem1Amin}
Soit   $\gA$ réduit
et $a,b\in\gA$. Alors avec les notations du lemme \ref{flem0Amin}
 les deux anneaux $(\gA_{\so{a}})_{\so{b}}$ et $(\gA_{\so{b}})_{\so{a}}$ sont canoniquement isomorphes.
\end{flemma}

\rem Le cas où $ab=0$ est typique: quand on le rencontre, on voudrait
bien scinder l'anneau en composantes où les choses sont \gui{claires}.
La construction précédente donne alors les trois composantes
\[
\gA\sur{(ab\epr)\epr}, \; \gA\sur{(a\epr b)\epr}\, \hbox{ et } \,\gA\sur{(a\epr b\epr)\epr}.
\]
Dans la première $a$ est \ndz et $b=0$, dans la seconde
$b$ est \ndz et $a=0$, et dans la troisième $a=b=0$.
\eoe

\begin{fthdef}[Clôture \qi minimale]\label{fthAmin} ~ 
\\
Soit $\gA$ un anneau réduit.
On définit l'anneau $\Amin$ comme colimite filtrée\footnote{En application du lemme \ref{flem1Amin}.} 
 en itérant la construction de base qui consiste à
 remplacer~$\gE$ (l'anneau \gui{en cours}, qui contient $\gA$) par
\[\gE_{\so{a}}\eqdefi\gE\sur{a\epr}\times \gE\sur{({a\epr})\epr}=\gE\sur{\Ann_\gE (a) }\times \gE\sur{\Ann_\gE(\Ann_\gE (a) )},\]
 lorsque $a$ parcourt $\gA$.
\begin{enumerate}
\item  Cet anneau $\Amin$ est \qi, il contient $\gA$ et il est entier sur $\gA$.
\item Pour tout $x\in\Amin$,
$x\epr\cap\gA$ est un \id annulateur dans $\gA$.
\item L'injection naturelle $\gA\to\Amin$ est un morphisme \ndz.
\item Si $\gA$ est \qi, cette injection est un \iso.
\end{enumerate}
Cet anneau $\Amin$ est appelé la \emph{clôture \qi minimale de $\gA$}.
\\
Dans le cas \gnl (on ignore si $\gA$ est réduit)
  on définit $\gA\qim\!\eqdefi (\Ared)\qim$.
\end{fthdef}

Voici une description de chaque anneau
obtenu à un étage fini de la construction de~$\Amin$.

\begin{flemma}\label{flem4MorRc}
Soit $\gA$ un anneau réduit et $(\ua) = (\an)$ une suite de~$n$ \elts 
de~$\gA$.  Pour $I\in\cP_n$, on note $\fa_I$ l'\id
\[\fa_I = \big(\Prod_{i\in I} \gen{a_i}\epr \Prod_{j\notin I} a_j\big)\epr
= \big(\gen{a_i, i \in I}\epr \Prod_{j\notin I} a_j\big)\epr
.\]
Alors $\Amin$ contient l'anneau suivant, produit de $2^n$
anneaux quotients de~$\gA$ (certains éventuellement nuls):
\[\gA_{\so\ua} = \Prod_{I\in\cP_n} \gA\sur{\fa_I}.\]
 \end{flemma}

\subsection{Trois versions \covs des dimensions valuatives}\label{fsec3vdim}

La dimension valuative d'un anneau commutatif peut être abordée de différentes manières.

En \clama la dimension valuative d'un anneau intègre $\gk$, notée $\Vdim(\gk)$, est égale à la hauteur maximum du groupe de valuation $\Gamma(\gV)$ d'un anneau de valuation $\gV$ tel que $\gk\subseteq \gV \subseteq \Frac(\gk)$. La hauteur de $\Gamma(\gV)$ est aussi la dimension de Krull de l'anneau $\gV$. Enfin $\Vdim(\gk)$ est encore la longueur maximum d'une chaine de sous-\advs de $\Frac\gk$ contenant $\gk$.

Comme la dimension valuative d'un quotient intègre de $\gk$ est inférieure où égale
à la dimension valuative de $\gk$, on peut étendre légitimement la \dfn à un anneau arbitraire comme suit:    
la dimension valuative d'un anneau $\gA$ est la borne supérieure des dimensions valuatives de ses quotients intègres  \cite{fCah90}.

\smallskip 
Dans \cite[section XIII-8]{fACMC}, pour le cas d'un anneau intègre $\gk$, les auteurs utilisent une \carn de la dimension valuative classique
basée uniquement sur la dimension de Krull des anneaux: la dimension valuative d'un anneau intègre est la borne supérieure des dimensions de Krull des sur-anneaux de $\gk$ (les anneaux squeezés entre $\gk$ et son corps de fractions). Nous la noterons $\vdim(\gk)$.
On étend sans difficulté cette \dfn au cas des anneaux \qis. Enfin, dans le cas \gnl les auteurs ont posé
par \dfn $\vdim(\gA)=\vdim(\Amin)$. Ils ont démontré pour cette \dfn l'\eqvc suivante dans le cas d'un anneau arbitraire,  pour $n\geq 0$:
\fbox{$\vdim(\gA)\leq n\Leftrightarrow \Kdim(\AXn)\leq 2n$}. Cette \carn est connue en \clama, la \dfn donnée dans \cite{fACMC} est donc bien une version \cov acceptable.

\smallskip Par ailleurs, pour un anneau intègre, il est clair en \clama que la dimension valuative de $\gk$ est égale à la dimension du \trdi 
$\val(\Frac\gk,\gk)$. Donc aussi à celle de $\Val(\Frac\gk,\gk)\simeq \val(\Frac\gk,\gk)$ (voir le \thref{fthValval}).
Comme $\val(\Frac\gk,\gk)\simeq \val(\gk,\gk)$ (lemme \ref{flem-valkk-valkK}), il est logique de proposer comme \dfn \cov alternative de la dimension valuative d'un anneau arbitraire $\gA$, la dimension du treillis $\val(\gA,\gA)$. Nous la noterons $\Vdim(\gA)$\footnote{Sans pour autant être assuré dès maintenant que cela coïncide en \clama avec la \dfn classique.}.

\smallskip Enfin, toujours en \clama, \cite{fKY2020} ont donné une nouvelle \carn \cov de la dimension valuative d'un anneau commutatif arbitraire. 
 
\smallskip Il reste à démontrer \cot que les trois \dfns \covs disponibles sont bien équivalentes. Celle développée dans \cite[section XIII-8]{fACMC}, la dimension du \trdi $\val(\gA,\gA)$ et la \carn donnée par Kemper et Yengui.

Ce sera l'objet d'un article par Yengui, Neuwirth et Lombardi, presque terminé.
L'article prouve \cot que la première et la troisième \dfn sont \eqves, et que dans le cas intègre elles sont \eqves à la seconde (qui remonte à \cite{fCoq2009}).
Il reste en fait à démontrer le lemme suivant.

\begin{flemma} \label{flemdimvalAAmin}
Pour un anneau commutatif arbitraire, les \trdis $\val(\gA,\gA)$ et $\val(\Amin,\Amin)$ ont même dimension de Krull.  
\end{flemma}
%
\begin{proof}
L'anneau $\Amin$ est une colimite filtrée d'anneaux obtenus, en partant 
de~$\gA$, au moyen de la construction décrite dans le \thref{fthAmin}. 
Si $\gE$ est l'anneau en cours et si $x\in\gE$, le nouvel anneau est $\gE/x\epr\!\times\gE/(x\epr)\epr$. Il s'agit donc d'une construction du type $\gE\mapsto \gE/\fa_1\times\gE/\fa_2 $ avec $\fa_1\fa_2=0$.
Si on démontre qu'une étape de cette construction ne modifie pas la dimension du treillis valuatif, on a terminé. En fait comme les dimensions ne changent pas en passant de $\gA$ à~$\Ared$, il suffit de démontrer le résultat pour un anneau réduit.
\end{proof}
%

\begin{flemma} \label{flemdimvalAA1A2}
Soit un anneau commutatif réduit $\gA$, $\fa_1$, $\fa_2$ deux \ids tels que $\fa_1\fa_2=0$, $\gA_1=\gA/\fa_1$ et $\gA_2=\gA/\fa_2$.
Notons $\vald(\gA):=\val(\gA,\gA)$.
Alors 
$$\Kdim(\vald(\gA))=
\sup(\Kdim(\vald(\gA_1)),\Kdim(\vald(\gA_2)))=\Kdim(\vald(\gA_1\!\times \gA_2)).
$$  
\end{flemma}
%
\begin{proof}
\textsl{Première égalité.} De $\gA_i=\gA/\fa_i$ on déduit $\vald(\gA_i)=\vald(\gA)/(\fb_i=1)$ pour un filtre~$\fb_i$ convenable de $\vald(\gA)$. \Prmt, en reprenant l'\eqvc  \pref{feqvalkK} de la \dfn~\ref{fdefivalkK0}, nous obtenons par exemple pour  $\gA_1$
les \eqvcs suivantes  en notant que la \sad $\sa{val}(\gA_1,\gA_1)$
est obtenue à partir de $\sa{val}(\gA,\gA)$ en ajoutant les relations $0\di x$ pour les $x\in\fa_1$:
\begin{equation*} \label {feqvalAi}
\begin{aligned} 
 (a_1,b_1),\dots,(a_n,b_n)  &\,\vdash_{\val(\gA_1,\gA_1)}\,   (c_1,d_1),\dots,(c_m,d_m)   
  \quad    \equidef     
\\[.2em] 
  a_1\di b_1\vet \dots\vet a_n\di b_n &\Vdi{\sA{val}(\gA_1,\gA_1)}   c_1\di d_1\vou \dots\vou  c_m\di d_m  \quad    \equidef  \\[.2em] 
0\di x_1\vet \dots\vet 0\di x_r\vet  a_1\di b_1\vet \dots\vet a_n\di b_n &\Vdi{\sA{val}(\gA,\gA)}   c_1\di d_1\vou \dots\vou  c_m\di d_m \equidef     \\[.2em]
 (a_1,b_1),\dots,(a_n,b_n)  &\,\vdash_{\val(\gA,\gA)/(\fb_1=1)}\,   (c_1,d_1),\dots,(c_m,d_m)
 \end{aligned}
\end{equation*} 
pour des $x_j\in\fa_1$ et où $\fb_1$ est le filtre de $\val(\gA,\gA)$
engendré par les $(0,x)$ pour $x\in\fa_1$.\\
On démontre ensuite que $\fb_1\cap\fb_2=\so 1$. En effet le filtre $\fb_1\cap\fb_2$ est engendré par les \elts de la forme $(0,x)\vu(0,y)$ avec $x\in\fa_1$
et $y\in\fa_2$, et comme $0\di xy$ on obtient par \tsbf{SDZ}, la règle valide $\Vdi{\sA{val}(\gA,\gA)}0\di x \vou 0\di y$.
\\
On conclut avec le  \thref{flemrcfdimtrdi}.\\
\emph{Deuxième égalité.} On démontre  $\vald(\gA_1\!\times \gA_2)\simeq \vald(\gA_1)\times \vald(\gA_2)$. Ensuite: la dimension d'un produit de deux treillis est le sup des deux dimensions.
\end{proof}

\noindent \textsl{Note.} Comme $\Amin$ est \qi, $\val(\Amin,\Amin)\simeq \val(\Frac(\Amin),\Amin)$, et $\Vdim(\gA)$ défini ci-dessus est égal à $\Vdim(\Amin)$ au sens de l'article \cite{fCoq2009}.

\begin{fdefinition} \label{fdefivaluabs}
La dimension du \trdi 
$\valA\simeq\val(\gA,\gZ)$ 
(où $\gZ$ est le plus petit sous-anneau de $\gA$) 
est appelée la \textsl{dimension valuative absolue} de l'anneau $\gA$ et notée $\vdima(\gA)$.  
\end{fdefinition}

La dimension $\vdima(\gA):=\Kdim(\valA)$ est à priori supérieure ou égale à $\vdim(\gA):=\Kdim(\val(\gA,\gA))$ et elle peut être strictement supérieure, comme  pour l'anneau~$\QQ$: $\vdima(\QQ)=\Kdim(\val(\QQ,\ZZ))=\Kdim(\val(\ZZ,\ZZ))=1$, et $\vdim(\QQ)=\Kdim(\val(\QQ,\QQ))= 0$. \eoe

\begin{fremark} \label{fremdim AAmingeneral} 
Les mêmes \demos fonctionnent pour la dimension des \trdis $\Zar(\gA)$, $\val(\gA)$, $\Reel(\gA)$ et $\Heit(\gA)$\footnote{Voir \cite{fCLQ2006}.} et fournissent donc les résultats analogues suivants
(sous les mêmes hypothèses).
\vspace{-1em}
\[
\begin{array} {cccccr}
\Kdim(\gA)&=&\Kdim(\Amin)&=&\Kdim(\gA_1\!\times \gA_2)\\[.3em]
\vdima(\gA)&=&\vdima(\Amin)&=&\vdima(\gA_1\!\times \gA_2)\\[.3em]
\Rdim(\gA)&=&\Rdim(\Amin)&=&\Rdim(\gA_1\!\times \gA_2)\\[.3em]
\Jdim(\gA)&=&\Jdim(\Amin)&=&\Jdim(\gA_1\!\times \gA_2) 
\end{array} \eqno\hbox{\eoe}
\]   
\end{fremark}


\section{Comparaisons avec une théorie des \cvds}\label{fsecdival6}

\subsection{Introduction}

Nous démontrons dans cette section que la \tdij \sa{val} et la \tdy des \cvds \sa{Cvd} introduite dans \cite[Section 4]{fCLR01} sont presque identiques.
On utilise pour cela le \vst formel \ref{fCollval1} des \cvds.
On en déduit des \vsts formels pour  \sa{val}.

Cela permet aussi de comparer avec le \vst formel pour la théorie \sa{Val} démontré dans \cite{fCoq2009}.

Notons que nous avons déjà démontré un résultat de comparaison essentiel entre $\val$  et $\Val$ avec le \thref{fthValval}.

\subsection{La théorie $\sa{Cvd}$ des \cvds}\label{fsubsubseccorval}

Dans l'article \cite{fCLR01} la section 4 étudie le Positivstellensatz pour les \cvds algébriquement clos. Nous l'appellerons ici \textsl{Valuativstellensatz}. Dans l'étude présente, nous sommes avant tout intéressés par les \vsts formels, lesquels donnent des certificats \agqs de prouvabilité dans diverses \tdys pour certaines \rdys.

\smallskip Nous rappelons tout d'abord les axiomes qui sont donnés dans \cite{fCLR01} pour la théorie $\SA{Cvd}$ des \cvds. Le langage est celui des anneaux commutatifs auquel on ajoute quatre prédicats \hbox{$\cdot \neq 0$}, $\Vr(\cdot)$, $\Rn(\cdot)$, $\U(\cdot)$.
Le premier,  $\cdot \neq 0$, est vu comme prédicat d'inversibilité, le prédicat 
$\Vr(\cdot)$ interprète l'appartenance à l'anneau de valuation, le prédicat $\Rn(\cdot)$
interprète les \elts résiduellement nuls et $\U(\cdot)$ les unités (les \elts inversibles dans l'anneau de valuation).
La signature est donc la suivante
\[
\Sigma_{\sA{Cvd}}=(\,\cdot=0,\cdot\neq 0,\Vr(\cdot),\Rn(\cdot),\U(\cdot)\mathrel{;}\cdot+\cdot, \cdot\times\cdot,-\,\cdot,0,1\,)
\]
Les axiomes de la théorie \sa{Cvd} sont les \rdys suivantes. Ils impliquent que le corps est discret et que la relation de \dve est explicite.

\smallskip \noindent $\bullet$ Tout d'abord les règles directes. 

\DeuxRegles{
\lab{ac1} $\vd  0=0   $
\lab{ac3} $\,\,  x=0 \vet y=0 \Vd  x+y=0   $
\Lab{cv1} $\,\,x = 0 \vet  \Vr(y)   \vd   \Vr(x+y) $
\Lab{cv3} $\,\, \Vr(x) \vet  \Vr(y) \vd   \Vr(xy)$
\Lab{cv5} $\,\,x = 0 \vet  \Rn(y)  \vd   \Rn(x+y)$
\Lab{cv7} $\,\,\Rn(x) \vet   \Vr(y)  \vd   \Rn(xy)$
\Lab{cv9} $\,\,\Rn(x)   \vd  \Vr(x)$
\Lab{cv10} $\,\,x = 0 \vet  \U(y)   \vd   \U(x+y)$
\Lab{cv12} $\,\,\U(x) \vet  \U(y)  \vd   \U(xy)$
\Lab{cv14} $\,\,\U(x)   \vd  \Vr(x)$
\Lab{cv15} $\,\,x = 0 \vet  y \not= 0   \vd   x+y \not= 0$
\Lab{cv17} $\,\,\U(x)  \vd   x\not= 0$}
{
\lab{ac2} $\,\,  x=0 \Vd   xy=0   $
\lab{~} $\phantom{\,\,  x=0 \vet y=0 \Vd  x+y=0   }$
\Lab{cv2} $\vd  \Vr(-1)$
\Lab{cv4} $\,\,\Vr(x) \vet  \Vr(y)   \vd  \Vr(x+y)$
\Lab{cv6} $\vd   \Rn(0)$
\Lab{cv8} $\,\,\Rn(x) \vet  \Rn(y)   \vd  \Rn(x+y)$
\lab{~} $\phantom{\,\,x = 0 \vet  \U(y)   \vd   \U(x+y)}$
\Lab{cv11}  $ \vd   \U(1)$
\Lab{cv13} $\,\,\Rn(x) \vet  \U(y)   \vd   \U(x+y)$
\lab{~} $\phantom{\,\,\U(x)   \vd  \Vr(x)}$
\Lab{cv16} $\,\,x \not=0 \vet y\not= 0   \vd   x y \not= 0$
}

\noindent $\bullet$ L'axiome d'effondrement est celui des anneaux intègres.

\DeuxRegles{
\Lab{CL} $\,\,0 \not= 0  \vd   \Bot$
}
{\lab{~} $~$
}

\noindent $\bullet$ Enfin, on a les règles de simplification \tsbf{Cv1} et \tsbf{Cv2} suivies de 4 \rdys.
 
\DeuxRegles{
\Lab{Cv1} $\,\,x y=1   \vd   x \not= 0$
\Lab{CV1} $\,\,x \not= 0   \vd   \Exists y \,\,xy=1$
\Lab{CV3} $\,\,xy =1   \vd   \Vr(x)  \vou   \Vr(y)$
}
{
\Lab{Cv2} $\,\,\Vr(xy)\vet  \U(x)  \vd   \Vr(y)$
\Lab{CV2} $\vd   x = 0  \vou   x \not= 0$
\Lab{CV4} $\,\,\Vr(x)   \vd   \U(x)  \vou   \Rn(x)$
}

Si l'on veut imposer que la valuation soit non triviale, il faut 
introduire une constante~$c$ avec l'axiome 

\Regles{\Lab{cv0}$\,\,\Vr(c)\vd \Bot $}

Notons que le seul axiome existentiel, à savoir \tsbf{CV1}, correspond à une existence unique, donc la théorie est existentiellement rigide, ce qui nous éloigne peu du cadre des \tdijs. 

Pour la \tdy \SA{Cvdsc} des \cvds \textsl{séparablement clos} on ajoute à la théorie \sa{Cvd} les axiomes de clôture \spb.

\Regles{
\Lab{CV6$_n$} $\,\, \disc_Y (f)\neq 0\vd   \Exists y \; f(y)=0,\qquad\hbox{ où } f(Y)=Y^n + a_{n-1} Y^{n-1}+ \cdots+ a_1 Y + a_0.$
}


\smallskip Pour la \tdy \SA{Cvdac} des \cvds \textsl{\ac} on ajoute à la théorie \sa{Cvd} les axiomes de clôture \agq.

\Regles{
\Lab{CV5$_n$} $\vd   \Exists y \; y^n + a_{n-1} y^{n-1}+ \cdots+ a_1 y + a_0 = 0$
}

\begin{flemma} \label{flemModelesCv}
Soit $\gA$ un anneau commutatif et $\sa{Cvd}(\gA)$ la \sad de type \sa{Cvd}
construite sur (le diagramme positif de) $\gA$. Donner un modèle minimal de $\sa{Cvd}(\gA)$ revient à donner un couple $(\fp,\gV)$ où $\fp$ est un \idep 
détachable de $\gA$ et $\gV$ un anneau de valuation du corps $\gK=\Frac(\gA/\fp)$. En outre la relation de divisibilité doit être décidable.   
\end{flemma}
%
\begin{proof}
L'\idep $\fp$ est l'ensemble des $x\in\gA$ tels que $x=0$ est valide dans le modèle. L'anneau quotient $\gB=\gA/\fp$ est intègre, on note $\gK$ son corps de fractions, qui existe dans le modèle en vertu de l'axiome \tsbf{CV1}. L'anneau de valuation $\gV$
est alors l'ensemble des fractions $z$ dans $\gK$ telles que $\Vr(z)$ est valide dans le modèle. 
\\
La relation de divisibilité est décidable parce que $\gV$ est une partie détachable de $\gK$: si $x\neq 0$ on a un inverse $y$ en vertu de \tsbf{CV1}. On~a alors $\Vr(x)$ ou $\Vr(y)$. Dans le deuxième cas, on a 
$\U(y)$ ou $\Rn(y)$, et $\Vr(x)$ \ssi $\U(y)$.  
\end{proof}

Il semble que \tsbf{CV1} est \ncr dans le dernier argument. Il sert à forcer le modèle à ne contenir que des fractions d'\elts de $\gV$.

Ainsi, les \sads $\sa{Cvd}(\gA)$ et $\sa{val1}(\gA)$ ont les mêmes modèles minimaux en \clama. Mais pas en \coma car pour $\sa{val1}(\gA)$ on ne suppose pas que la relation de divisibilité soit décidable.


\medskip Nous définissons maintenant des théories étroitement reliées à la théorie \sa{Cvd} et qui partagent certains \thos avec elle. Nous utilisons la terminologie des \reds, règles de simplification et autres règles dynamiques que nous avons expliquée \paref{fregledirecte}.

\begin{fdefinitions}[Théories \gmqs étroitement reliées à \sa{Cvd}] \label{fdefiApvandco}  \label{fdefiCvd-}~
\begin{enumerate}
\item On note \sa{Apv}\footnote{Proto-valued rings dans \cite{fCLR01}.}  la théorie directe obtenue à partir de la théorie \sa{Cvd} en ne gardant que les axiomes directs, de~\tsbf{ac1} 
à~\tsbf{cv17}, et l'axiome d'effondrement.
\item La théorie \agq \sa{Aqv}\footnote{Quasi-valued rings dans \cite{fCLR01}.}  est intermédiaire entre la théorie directe \sa{Apv} et la \tdy \sa{Cvd}: on remplace dans  \sa{Cvd} les axiomes \tsbf{CV1} à \tsbf{CV4} par les axiomes de simplification suivants, qui sont des règles valides dans \sa{Cvd}:
 
\DeuxRegles
{
 \Lab{Cv3} $\,\,\U(xy)\vet \Vr(x)\vet \Vr(y) \vd  \U(y)$ 
 \Lab{Cv5} $\,\,\Rn(x^2)  \vd  \Rn(x)$ 
 \Lab{Cv7} $\,\,xy = 0\vet  x \not= 0   \vd  y= 0  $
}
{
 \Lab{Cv4} $\,\,\Rn(xy)\vet  \U(x)  \vd  \Rn(y)$ 
 \Lab{Cv6} $\,\,xy \not= 0  \vd  x\not=0$ 
 \Lab{Cv8} $\,\,x^2 = 0  \vd  x = 0$ 
}

\vspace{-.6em}
\Regles {\Lab{Cv9$_n$} $\,\,x^{n} =\sum_{k=0}^{n-1} a_k x^k \vet \Vr(a_{n-1})\vet \dots\vet \Vr(a_0)  \vd  \Vr(x)$}

Le dernier schéma d'axiomes dit que l'anneau intègre $\gV$ est intégralement clos.

\item La \tdij $\sa{Cvd}^-$ est la théorie $\sa{Aqv}$ à laquelle on ajoute les
trois axiomes disjonctifs \tsbf{CV2}, \tsbf{CV3} et \tsbf{CV4}.

\item 
La théorie $\sa{Cvd}^+$ est l'extension de  $\sa{Cvd}$  obtenue en ajoutant
\begin{itemize}
\item le prédicat $x \di y$ comme  abréviation de \gui{$\exists z \,(\Vr(z) \vii xz=y$)}, 
\item  le prédicat $x \ndi y$ comme abréviation de \gui{$y\neq 0\vii \exists z \,(\Rn(z)\vii yz=x)$}. 
\end{itemize}
\end{enumerate}
\end{fdefinitions}

On verra que les théories \sa{Apv} et \sa{Cvd} s'effondrent simultanément (\ref{flemColsimApv}), que les théories~\sa{Aqv} et~\sa{Cvd} prouvent les mêmes \ralgs (\ref{fthAqv}). Les théories~$\sa{Cvd}^-$ et~\sa{Cvd} prouvent les mêmes \rdijs, mais ce résultat ne sera pas démontré ici.

Quant à la théorie $\sa{Cvd}^+$, elle est par construction une extension \esid de  $\sa{Cvd}$, à fortiori conservative.

\begin{fdefinition} \label{fdefiCvdkA}
Si $\gk\subseteq \gA$ sont deux anneaux, ou plus \gnlt si $\varphi:\gk\to\gA$ est une \alg, on note $\sa{Cvd}(\gA,\gk)$ la \sad  dont la présentation est donnée par 
\begin{itemize}
\item le diagramme positif de~$\gA$ comme anneau commutatif;
\item les axiomes
$\vd \Vr(\varphi(x))$ pour les \elts $x$ de $\gk$.
\end{itemize}
La \sad $\sa{Cvd}(\gA)$ est identique à $\sa{Cvd}(\gA,\gZ)$ où $\gZ$
est le sous-anneau minimal de $\gA$.
 \end{fdefinition}

On définit de manière analogue les \sads  $\sa{Apv}(\gA,\gk)$, $\sa{Aqv}(\gA,\gk)$, $\sa{Cvd}^-(\gA,\gk)$ et $\sa{Cvd}^+\!(\gA,\gk)$.

\subsection{\vst formel pour \sa{Cvd} et conséquences}

\begin{fvalsatz}[Valuativstellensatz formel pour la théorie \sa{Apv}] \label{fCollval1} ~\\
Soit $GR=(G;\Rzero,\Rnz,\Rvr,\Rrn,\Ru)$ une \pn sur la signature $\Sigma_{\sA {Cvd}}$\footnote{Plus \prmt, $\Rzero$ est la partie de $\ZZ[G]$ formée par les \elts $p$ pour lesquels on suppose $p=0$ dans la \sad. Et la même chose pour $\Rnz,\Rvr,\Rrn,\Ru$.} pour une \sad. 
\\
On note
\begin{itemize}
\item $\Izero$ l'\id de $\ZZ[G]$ engendré par $\Rzero$,
\item  $\Mnz$ le \mo engendré par $\Rnz$,
\item  $\Vvr$ le sous-anneau de $\ZZ[G]$ engendré par $\Rvr \cup \Rrn \cup \Ru$,
\item $\Irn$ l'idéal de $\Vvr$ engendré par $\Rrn$ et
\item  $\Mu$ le \mo engendré par $\Ru$.
\end{itemize}
\Propeq
\begin{enumerate}
\item La \sad $(GR,\sa{Apv})$ s'effondre (i.e., la règle $0\neq 0$ est valide).
\item Il y a  dans $\ZZ[G]$ une \egt  
\[
s (u+t) + z = 0
\] 
avec $s\in \Mnz$,
$u\in \Mu$, $t\in \Irn$ et $z\in \Izero$.
\end{enumerate}
\end{fvalsatz}

Le \tho suivant est un résultat essentiel de \cite{fCLR01}.
Il donne la condition d'effondrement pour les \sads de type \sa{Cvdac}. 

\begin{ftheorem}[Effondrement simultané] \label{flemColsimApv} \emph{\cite[Theorem 4.3]{fCLR01}}\\
Une \sad de type \sa{Apv} s'effondre \ssi elle s'effondre en tant que \sad de type  \sa{Cvd}, ou de type   \sa{Cvdac}, ou de toute autre théorie intermédiaire entre \sa{Apv} et \sa{Cvdac}.
\end{ftheorem}

\begin{fremark} \label{fremlemColsimApv} 
Si $(\gK,\gV)$ est un \cvd et si $\gL$ est un corps extension de $\gK$
on déduit en \clama du \vst précédent que l'anneau de valuation~$\gV$
peut être étendu en un \adv $\gW$ de $\gL$ tel que $\gW\cap \gK=\gV$
\cite[Remark~4.6]{fCLR01}.
\eoe    
\end{fremark}

Le \tho 4.18 dans \cite{fCLR01} est un \vst à la Hilbert pour les \cvds \ac, obtenu à partir du \thref{flemColsimApv} en s'appuyant sur le fait que la théorie formelle correspondant à \sa{Cdvac} est complète
(lorsque la \cara du \cvd $\gK$ et celle du corps résiduel sont fixées). 

Le \tho suivant décrit les faits prouvables dans  les \sads de type \sa{Cdv}.

\begin{ftheorem} \label{fprop-factvalf} \emph{\cite[Proposition 4.14]{fCLR01}}
\\
Soit ${GR}=(G;\Rzero,\Rnz,\Rvr,\Rrn,\Ru)$ une présentation  
sur la signature~$\Sigma_{\sA{Cdv}}$. Soit $p$ un \elt de $\Zg$.
On définit $\Izero$, $\Mnz$, $\Vvr$, $\Irn$ et $\Mu$ comme dans le  \thref{fCollval1}. On se place dans la \sad $(GR,\sa{Cvd})$.
\begin{enumerate}
\item [a)] Une preuve dynamique du fait $p = 0$ 
produit une \egt dans $\Zg$ du type 
\[ p^n m(u+j) + i = 0
\]
avec $m \in \Mnz$, $u \in \Mu$, $j \in \Irn$ et $i \in \Izero$.
\item [b)] Une preuve dynamique du fait $p \not= 0$
produit une \egt dans $\Zg$ du type  
\[m (u+j) + i + bp = 0
\]
avec $m \in \Mnz$, $u \in \Mu$, $j \in \Irn$, $i \in \Izero$ and $b \in \Zg $.
\item [c)] Une preuve dynamique du fait $\Vr(p)$ produit une \egt dans $\Zg$ du type 
\[ m ( (u+j)p^{n+1} + a_n
p^n +\cdots + a_1 p + a_0 ) + i = 0
\]
avec $m \in \Mnz$, $u \in \Mu$,
$j \in \Irn$, les $a_k \in \Vvr$ et $i\in \Izero$.
\item [d)] Une preuve dynamique du fait $\Rn(p)$ produit une \egt dans $\Zg$ du type 
\[ m ( (u+j)p^{n+1} + j_n
p^n +\cdots + j_1 p + j_0 ) + i  = 0
\]
avec $m \in \Mnz$,
$u \in\Mu$, $j$
et les $j_k\in \Irn$ et $i \in \Izero$.
\item [e)]  Une preuve dynamique du fait $\U(p)$ 
produit une \egt dans $\Zg$ du type 
\[ m ( (u+j) p^{n+1} + a_n
p^n +\cdots + a_1 p + (u' + j') ) + i = 0 
\] 
avec $m \in \Mnz$,
$u,u'\in \Mu$, $j,j' \in \Irn$, les $a_k$ dans $\Vvr$ et $i \in \Izero$.
\end{enumerate}
\end{ftheorem}

\begin{fcorollary} \label{fcor-prop-factvalf} 
Soient $\gk\subseteq \gA$ deux anneaux, $x\in\gA$ et $\yn\in\gk$.
\begin{enumerate}
\item La règle $\vd x= 0$ est valide dans $\sa{Cvd}(\gA,\gk)$ \ssi $x$ est nilpotent.
\item La règle $\vd x\neq 0$ est valide dans $\sa{Cvd}(\gA,\gk)$ \ssi $x$
est inversible dans~$\gA$.
\item  La règle $\vd \Vr(x)$ est valide dans $\sa{Cvd}(\gA,\gk)$ \ssi $x$
est entier sur $\gk$.
\item  La règle $\Rn(y_1)\vet \dots\vet \Rn(y_n) \vd \Rn(x)$ est valide dans $\sa{Cvd}(\gA,\gk)$
\ssi~$x$ est faiblement entier\footnote{On entend par là que $x$ annule un \pol unitaire dont tous les \coes non dominants sont dans l'\id en question.} sur l'\id $\gen{\yn}$ de $\gk$. 
\end{enumerate}
 
\end{fcorollary}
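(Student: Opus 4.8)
\noindent\emph{Proof idea.} The plan is to read all four items off \thref{fprop-factvalf}, which exhibits the algebraic certificate attached to a provable fact in a \sad of type $\sa{Cvd}$, applied to the \sad $\sa{Cvd}(\gA,\gk)$. For this \sad the presentation $(G,R)$ has $G$ the underlying set of $\gA$ and $R$ the positive diagram of $\gA$ together with the facts $\vd\Vr(z)$ for $z\in\gk$; so in the notation of \thref{fprop-factvalf} one has $\ZZ[G]/\Izero\simeq\gA$, and since $\Rnz=\Ru=\Rrn=\emptyset$ one gets $\Mnz=\Mu=\so1$ and $\Irn=0$, while $\Vvr$ is the subring of $\ZZ[G]$ generated by $\gk$, with image $\gk$ in $\gA$. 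For Item~\textsl{4} I would instead work in the \sad obtained by also adjoining the facts $\vd\Rn(y_i)$, so that validity of $\Rn(y_1)\vet\dots\vet\Rn(y_n)\vd\Rn(x)$ in $\sa{Cvd}(\gA,\gk)$ amounts to validity of $\vd\Rn(x)$ in that enlarged \sad; there $\Rrn=\so{\yn}\subseteq\gk$, so $\Vvr$ still has image $\gk$ but $\Irn$ now has image the ideal $\fa=\gen{\yn}$ of $\gk$, and still $\Mnz=\Mu=\so1$.

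For the forward implications, apply parts (a), (b), (c), (d) of \thref{fprop-factvalf} with $p=x$ and reduce the resulting identities modulo $\Izero$, i.e.\ read them in $\gA$. Part (a) gives $x^r=0$, so $x$ is nilpotent; part (b) gives $1+bx=0$, so $x$ is invertible in $\gA$; part (c) gives a monic relation $x^{e+1}+a_ex^e+\dots+a_0=0$ with all $a_k\in\gk$, so $x$ is integral over $\gk$; part (d) gives $(1+j)x^{e+1}+j_ex^e+\dots+j_0=0$ with $j$ and the $j_k$ in $\fa$, hence $\ov x^{\,e+1}=0$ in $\gA/\fa$. Concluding Item~\textsl{4} from this last identity requires the commutative-algebra fact that a relation of shape $(1+j)x^{e+1}+\sum_{k\le e}j_kx^k=0$ with all the $j$'s in $\fa$ is equivalent to $x$ being weakly integral over $\fa$ — the only discrepancy being the leading coefficient $1+j$, which one disposes of by passing to $z=(1+j)x$ (which does satisfy a genuinely monic relation with coefficients in $\fa$) and then controlling $x\equiv z\pmod{\fa\gA}$; it is probably cleanest to borrow this statement from \cite{fCLR01}.

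For the converses I would produce the short dynamical proofs. If $x^N=0$ in $\gA$, then $x^N=0$ is a fact and, using \tsbf{ac2} to get $x^{2^k}=0$ for $2^k\ge N$, iterating \tsbf{Cv8} proves $\vd x=0$. If $xy=1$ in $\gA$, then \tsbf{Cv1} proves $\vd x\neq0$. If $x^e=\sum_{k<e}(-a_k)x^k$ with $a_k\in\gk$, then the $\vd\Vr(-a_k)$ are facts and \tsbf{Cv9$_n$} proves $\vd\Vr(x)$. Finally, suppose $x^d+c_{d-1}x^{d-1}+\dots+c_0=0$ with $c_k\in\fa=\gen{\yn}\subseteq\gk$: since the $-c_k$ lie in $\gk$, \tsbf{Cv9$_n$} first gives $\vd\Vr(x)$, hence $\Vr(x^k)$ for all $k$ by \tsbf{cv3}; from the hypotheses $\Rn(y_i)$ one gets $\Rn(c_k)$ for each $k$ by writing $c_k=\sum_ib_{ki}y_i$ with $b_{ki}\in\gk$ (so $\Vr(b_{ki})$ is a fact, \tsbf{cv7} gives $\Rn(b_{ki}y_i)$, and \tsbf{cv8} sums these up); then \tsbf{cv7} (with $\Vr(x^k)$ and $\Vr(-1)$) and \tsbf{cv8} give $\Rn(x^d)=\Rn(\sum_{k<d}(-c_k)x^k)$; and finally, choosing $m$ with $2^m\ge d$, \tsbf{cv7} lifts this to $\Rn(x^{2^m})$ via $\Vr(x^{2^m-d})$, whence $m$ applications of \tsbf{Cv5} give $\vd\Rn(x)$.

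The main obstacle is the forward direction of Item~\textsl{4}: it is the one place where the certificate delivered by \thref{fprop-factvalf} does not literally have the shape of the target condition, so one must know (or reprove) the equivalence between $(1+j)x^{e+1}+\sum_{k\le e}j_kx^k=0$ (all $j$'s in $\fa$) and weak integrality of $x$ over $\fa$. Items~\textsl{1}--\textsl{3} are then immediate from the cited structure theorem, and all the converses are routine dynamical manipulations with valid rules of $\sa{Cvd}$.
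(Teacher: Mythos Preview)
The paper gives no explicit proof; the corollary is presented as immediate from \thref{fprop-factvalf}, and your plan to read the four items off that theorem is exactly the intended one. Items~1--3 go through just as you describe, and your direct dynamical proofs for all four converses are correct.

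The forward direction of Item~4, however, has a genuine gap that your fix does not close. You rightly note that part~(d) delivers $(1+j)x^{e+1}+\sum_{k\le e}j_kx^k=0$ with $j,j_k\in\fa$, leading coefficient $1+j$ rather than~$1$. Your substitution $z=(1+j)x$ yields a genuinely monic relation for $z$ over $\fa$, but the step back to $x$ fails: ``$z$ weakly integral over $\fa$ and $x\equiv z\pmod{\fa\gA}$'' does not imply weak integrality of $x$ over $\fa$ (coefficients in $\fa\subseteq\gk$, as the footnote requires). A counterexample: take $\gk=\ZZ$, $\gA=\ZZ[1/3]$, $y_1=2$, $x=-2/3$. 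The relation $3x+2=0$ is a certificate with $j=j_0=2$, and the rule $\Rn(2)\vd\Rn(x)$ is provable in $\sa{Cvd}(\gA,\gk)$ (branch on $\U(3)$ vs.\ $\Rn(3)$ via \tsbf{CV4}; in the $\U(3)$ branch deduce $\Rn(3x)$ from $3x+2=0$ and $\Rn(-2)$, then \tsbf{Cv4} gives $\Rn(x)$; the $\Rn(3)$ branch collapses since $\Rn(3)$ and $\Rn(-2)$ give $\Rn(1)$, whence $\U(0)$ and $\bot$). Yet $-2/3$ is \emph{not} weakly integral over $2\ZZ$: clearing denominators in any relation $(-2/3)^d+\sum_{k<d}c_k(-2/3)^k=0$ with $c_k\in2\ZZ$ forces $(-2)^d\equiv0\pmod 3$, which is impossible. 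So the certificate condition (leading coefficient $\equiv1\pmod\fa$, other coefficients in $\fa$) is strictly weaker than ``monic with other coefficients in $\fa$'', and the commutative-algebra equivalence you propose to borrow is simply false; with the footnote's definition, Item~4 is not a consequence of \thref{fprop-factvalf}.
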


\begin{fremark} \label{frem-cor-prop-factvalf} 
Du point \textsl{3} on déduit facilement en \clama le fait qu'un anneau
intègre~$\gk$ a pour clôture intégrale dans son corps de fractions $\gK$ l'intersection des \advs de $\gK$ qui contiennent $\gk$ \cite[Remark~4.15]{fCLR01}.
\\
Avec le point \textsl{4}, on déduit en \clama que si $(\gk,\fm)$ est un anneau local intègre, l'\id $\fm$ est l'intersection de $\gk$ avec l'\idema d'un \adv du corps de fractions $\gK$ \cite[Corollary~4.7]{fCLR01}. \eoe
    
\end{fremark}

Une conséquence du \thref{fprop-factvalf} est le résultat remarquable suivant \cite[Theorem 4.17]{fCLR01}.
\begin{ftheorem} \label{fthAqv}
Les théories \sa{Aqv} et \sa{Cvdac} prouvent les mêmes règles algébriques.
Il en va de même pour toute théorie intermédiaire.
\end{ftheorem}


\subsection{\vsts formels pour   $\sa{val}$ et $\sa{Cvd}^+$}

Dans cette section on démontre un résultat crucial: un \vst formel  pour la théorie~\sa{val},
comme conséquence des  \thos \ref{fCollval1} et \ref{flemColsimApv}.

On considère un anneau $\gA$, on cherche un \vst formel pour la \sad  $\sa{val}(\gA)$.

Rappelons que la théorie $\sa{Cvd}^+$ est définie au point \textsl{4} de la \dfn \ref{fdefiApvandco} et que la théorie \sa{val1} est l'extension conservative de \sa{val} obtenue en ajoutant le prédicat $\cdot \neq 0$ opposé au prédicat~\hbox{$=  0$}.

\begin{flemma} \label{flemCv+val}~
\begin{enumerate}
\item Le prédicat $x\ndi y$ est le prédicat opposé à $x\di y$ dans la théorie
$\sa{Cvd}^+$.
\item Les axiomes de \sa{val} sont valides dans $\sa{Cvd}^+$. 
\item La \ralg suivante est valide dans \sa{val1}.

\Regles{
\Lab{AV2.5} $\,\,x\neq 0\vet ax \di bx \Vd a \di b$
} 
\end{enumerate}
\end{flemma}
\facile

\Subsubsection{Un \vst formel pour  $\sa{val}(\gA)$}

\begin{fvalsatz}[Un \vst formel pour  $\sa{val}(\gA)$ 
 et $\sa{Cvd}^+\!(\gA)$\footnote{Notez qu'en prenant $\gA=\ZZ$ et pour $a_i,b_i,c_j$ et $d_j$ des \idtrs on trouve un \vst formel \gnl pour la théorie $\sa{Cvd}^+$ ou pour la théorie \sa{val}.
}]\label{fthVstformelval}~\\ 
On considère un anneau commutatif $\gA$ et des \elts $a_i$, $b_i$, $c_j$, $d_j\in\gA$.  \Propeq 
\begin{enumerate}
\item  ~ 

\vspace{-2.4em}
\begin{equation} \label {feqthVstformelval1}
 (a_1,b_1),\dots,(a_n,b_n)  \,\vdash_{\val(\gA)} \,   (c_1,d_1),\dots,(c_m,d_m) 
\end{equation}
\item  ~ 

\vspace{-2.4em} 
\begin{equation} \label {feqthVstformelval2}
a_1\di b_1\vet \dots\vet a_n\di b_n \Vdi{\sA{val1}(\gA)}   c_1\di  d_1\vou \dots\vou  c_m\di d_m
\end{equation}
\item  ~ 

\vspace{-2.4em} 
\begin{equation} \label {feqthVstformelval3}
a_1\di b_1\vet \dots\vet a_n\di b_n \Vdi{\sA{Cvd}^+\!(\gA)}   c_1\di  d_1\vou \dots\vou  c_m\di d_m
\end{equation}
\item 
 En introduisant des \idtrs $X_i$ ($i\in\lrbn$) et $Y_j$ ($j\in\lrbm$)
on a dans l'anneau  $\gA[\uX,\uY]$ une \egt que l'on peut résumer sous la forme suivante 
\begin{equation} \label {feqthVstformelval4}
d \,\big(1+\som_{j=1}^mY_jP_j(\uX,\uY)\big)\equiv 0 \mod  \gen{(X_ia_i-b_i)_{i\in\lrbn},(Y_jd_j-c_j)_{j\in\lrbm}} 
\end{equation}
 où $d$ est dans le \mo engendré par les $d_j$, et les $P_j(\uX,\uY)$ sont dans $\ZZ[\uX,\uY]$.
\item 
 On note  $y_j=\frac{c_j}{d_j}$ vu dans    $\gB=\gA[\frac{1}{d_1\cdots d_m}]$. Introduisons des \idtrs $X_i$ ($i\in\lrbn$). On~a dans l'anneau  $\gB[\uX]$ une \egt que l'on peut résumer sous la forme suivante 
\begin{equation} \label {feqthVstformelval5}
1+\som_{j=1}^my_jP_j(\uX,\uy)\equiv 0 \mod  \gen{(X_ia_i-b_i)_{i\in\lrbn}}
\end{equation}
 où  les $P_j(\uX,\uY)$ sont dans $\ZZ[\uX,\uY]$.
\end{enumerate}
\end{fvalsatz}

%
\begin{proof} Le point \textsl{5} est une simple réécriture du point \textsl{4}.

\smallskip \noindent \textsl{1} $\Leftrightarrow$ \textsl{2}. Le point \textsl{1} signifie par \dfn que
la règle suivante est valide:  
\begin{equation} \label {feqthVstformelval6}
a_1\di b_1\vet \dots\vet a_n\di b_n \Vdi{\sA{val}(\gA)}   c_1\di d_1\vou \dots\vou  c_m\di d_m
\end{equation}
Or \sa{val1} est une extension conservative de \sa{val}.

\smallskip \noindent \textsl{1} $\Rightarrow$ \textsl{3}.  
D'après le point \textsl{2} du lemme \ref{flemCv+val}, la règle \pref{feqthVstformelval3} est valide.

\smallskip \noindent \textsl{3} $\Rightarrow$ \textsl{4}.  
La règle \pref{feqthVstformelval3} est \eqve au collapsus suivant
\begin{equation} \label {feqthVstformelval3+}
a_1\di b_1\vet \dots\vet a_n\di b_n\vet c_1\ndi d_1\vet \dots\vet  c_m\ndi d_m \Vdi{\sA{Cvd}^+\!(\gA)}  \Bot  
\end{equation}
Dans la théorie  $\sa{Cvd}^+$,  $a_i\di b_i$
équivaut à l'existence d'un  $x_i$ tel que $\Vr(x_i)$ et $x_ia_i=b_i$.
Cet~$x_i$ peut être représenté par une variable fraiche $X_i$. De même  $ c_j\ndi  d_j$
équivaut à l'existence d'un~$y_j$ tel que $\Rn(y_j)$, $y_j d_j=c_j$ et $d_j\neq 0$. 
Cet~$y_j$ peut être représenté par une variable fraiche $Y_j$.
La validité du collapsus \pref{feqthVstformelval3+} équivaut donc au collapsus de la famille de conditions 
\[
(\Vr(X_i),\,X_ia_i-b_i=0)_{i\in\lrbn},\; (\Rn(Y_j),\, Y_jd_j-c_j=0,\, d_j\neq 0)_{j\in\lrbm}
\]
dans la \sad construite à partir du diagramme positif de $\gA$ en ajoutant des \gtrs $X_i$ et $Y_j$, \cad dans l'anneau $\gA[\uX,\uY]$. On conclut en appliquant le \vst~\ref{fCollval1} (avec $s=u=1$ et $z=0$).

\smallskip \noindent \textsl{4} $\Rightarrow$ \textsl{2}. 
On suppose avoir une identité \pref{feqthVstformelval4} et l'on veut démontrer la validité de la règle~\pref{feqthVstformelval2}. On  raisonne cas par cas en utilisant $\vd x=0\vou x\neq 0$ et  $\vd a\di b\vou b\di a$.
\\
Dans une branche où l'un des $d_j$ est supposé nul 
la règle \pref{feqthVstformelval2} est valide\footnote{Pour se rassurer, on peut vérifier que l'on a bien une identité \pref{feqthVstformelval4}, mais ce n'est pas le propos ici.}.
On suppose donc tous les $d_j$ non nuls, donc simplifiables (règle \Tsbf{AV2.5}). 
\\
Dans une branche où $a_i=0$, l'hypothèse de \pref{feqthVstformelval2}
implique $b_i=0$ et l'on peut remplacer~\hbox{$a_i$, $b_i$} et~$X_i$ par $0$ 
dans~\pref{feqthVstformelval4}. 
\\
Finalement il reste à montrer \pref{feqthVstformelval2} dans le cas où les $a_i$ et les $d_j$ sont supposés
non nuls donc réguliers. On se  place alors dans $\gB$, anneau total de fractions de $\gA$, dans lequel on obtient une \egt 
\begin{equation} \label {feqthVstformelval60}
0 =_\gB 1+\som_j \frac{c_j}{d_j} P_j\left(\frac{b_1}{a_1},\dots,\frac{b_n}{a_n},
\frac{c_1}{d_1},\dots,\frac{c_m}{d_m}\right)
\end{equation}
pour des \pols $P_j$ à \coes dans $\ZZ$. Soit $\delta$ un majorant des degrés en les $Y_k$ dans les $P_j$ 
et $\epsilon_i$ un majorant des degrés en $X_i$. On pose $d=\prod_k d_k$, on multiplie l'\egt précédente par $u=d^\delta\prod_ia_i^{\epsilon_i}$ et l'on obtient
\begin{equation} \label {feqthVstformelval7}
u  =_\gB \som_j \frac{c_j}{d_j}\, Q_j(b_1,a_1,\dots,b_n,a_n,
c_1,d_1,\dots,c_m,d_m)
\end{equation}
où chaque $Q_j$ est homogène de degré $\delta$ en chaque $(c_k,d_k)$ et de degré $\epsilon_i$ en $(a_i,b_i)$.
En posant $e_j=\prod_{k:k\neq j} d_k$ et en mutlipliant l'\egt précédente par $d$ on obtient une \egt dans $\gA$
\[
ud=\som_j c_j e_j q_j .
\] 
 Nous raisonnons maintenant cas par cas, et vu la conclusion que nous avons en vue, il reste~à traiter le cas où l'on suppose $d_j\di c_j$ pour chaque $j$. 
En raisonnant encore cas par cas on peut supposer que l'un des $c_je_j$, disons $c_1e_1$ par exemple, divise tous les autres. On  obtient alors
\begin{itemize}
\item $u\di q_j$ pour chaque $q_j$ ($u$ divise chacun de leurs \coes),
\item $uc_1 e_1\di c_j e_jq_j$ pour chaque $q_j$,%
\item donc $c_1 e_1u\di\som_j c_j e_j q_j=ud=d_1e_1u$, 
\end{itemize}

\noindent Enfin, la règle \tsbf{AV2.5} nous permet de simplifier par $e_1u$, on obtient $c_1 \di  d_1$.\\
Ouf!
\end{proof}
%

\begin{fremark} \label{fremvst1=>4} 
On pourrait sans doute démontrer plus directement l'implication \textsl{1} $\Rightarrow$ \textsl{4}
sans passer par le \vst formel \ref{fCollval1} pour les \cvds. On démontrerait d'une part que la \prt \pref{feqthVstformelval4} définit une \entrel sur $\gA\times \gA$ (la règle de coupure semble difficile), d'autre part qu'elle \gui{satisfait les
axiomes de \sa{val}}. 
Par exemple la règle \Tsbf{Av2}, $\,\,a \di  b \vet a \di  c \Vd a \di  b + c$
correspond à l'\egt suivante 
$$
(b+c)(1-y(x_1+x_2))\equiv 0 \,\mod\gen{x_1a-b,\,x_2a-c,\,y(b+c)-a}.\eqno\hbox{\eoe}
$$
    \end{fremark}

\Subsubsection{Admissibilité de la règle \tsbf{DIV} pour la \tdij \sa{val}}

\begin{fremark} \label{fremthVstformelval} 
Un corolaire du \vst~\ref{fthVstformelval} est le lemme \ref{flemDivadmissible} concernant l'admissibilité de la règle existentielle \Tsbf{DIV} dans la théorie \sa{val}. 
En effet le \vst formel est démontré pour la théorie $\sa{Cvd}^+$ (voir le point \textsl{4} de la \dfn \ref{fdefiApvandco}) dans laquelle on a introduit le prédicat $\cdot\di\cdot$ avec sa \dfn, et donc pour laquelle la règle~\tsbf{DIV} est valide. Mais le \vst formel fonctionne pour la théorie~\sa{val} toute seule. Donc les \rdijs valides pour le prédicat $x\di y$ restent les mêmes pour la théorie \sa{val} lorsqu'on lui ajoute l'axiome~\tsbf{DIV}. \eoe    
\end{fremark}

\Subsubsection{Un \vst formel  pour  $\sa{val}(\gA,\gk)$}

Une légère variante du  \vst~\ref{fthVstformelval}.

\begin{fvalsatz}[Un Valuativstellensatz formel pour  $\sa{val}(\gA,\gk)$ et $\sa{Cvd}^+\!(\gA,\gk)$]\label{fthVstformelvalbis} 
On considère deux anneaux commutatifs $\gk\subseteq \gA$ et des \elts $a_i$, $b_i$, $c_j$, $d_j\in\gA$. \Propeq 
\begin{enumerate}
\item On~a 
\begin{equation} \label {feqthVstformelvalbis1}
 (a_1,b_1),\dots,(a_n,b_n)  \vdash_{\val(\gA,\gk)}   (c_1,d_1),\dots,(c_m,d_m) 
\end{equation}
%
\item On~a 
\begin{equation} \label {feqthVstformelvalbis1.5}
a_1\di b_1\vet \dots\vet a_n\di b_n \Vdi{\sA{val}(\gA,\gk)}   c_1\di  d_1\vou \dots\vou  c_m\di d_m
\end{equation}
\item On~a 
\begin{equation} \label {feqthVstformelvalbis2}
a_1\di b_1\vet \dots\vet a_n\di b_n \Vdi{\sA{Cvd}^+\!(\gA,\gk)}   c_1\di  d_1\vou \dots\vou  c_m\di d_m
\end{equation}
\item On note  $y_j=\frac{c_j}{d_j}$ vu dans    $\gB=\gA[\frac{1}{d_1\cdots d_m}]$. Introduisons des \idtrs $X_i$ \hbox{($i\in\lrbn$)}. On~a dans l'anneau  $\gB[\uX]$ une \egt que l'on peut résumer sous la forme suivante 
\begin{equation} \label {feqthVstformelVabisl5}
1+\som_{j=1}^my_jP_j(\uX,\uy)\equiv 0 \mod \gen{(X_ia_i-b_i)_{i\in\lrbn}}
\end{equation}
 où  les $P_j(\uX,\uY)$ sont dans $\gk[\uX,\uY]$.
\end{enumerate}
\noindent \emph{Variante.} On sera intéressé plus loin pour expliciter ce que devient l'\eqvc entre les points 2, 3 et 4 lorsque certains des $a_i$ et des $c_j$ sont nuls, \cad lorsqu'on suppose nuls certains $b_i$ ou $d_j$. On note $e_i$ et $f_j$ les \elts de $\gA$ supposés nuls à gauche et à droite de~$\vd$.\\
 \Propeq
\begin{enumerate}\setcounter{enumi}{4}
\item On~a

\vspace{-1.3em} 
\[ 
\begin{array}{c} 
a_1\di b_1\vet \dots\vet a_n\di b_n\vet e_{1}=0\vet \dots\vet e_k=0 \Vdi{\sA{val}(\gA,\gk)}~\hspace{10em} \\[.3em]
~\hspace{5em}   c_1\di  d_1\vou \dots\vou  c_m\di d_m\vou f_1=0\vou \dots\vou f_\ell=0
 \end{array}
\]

\item On note  $y_j=\frac{c_j}{d_j}$ vu dans    $\gB=\gA[\frac{1}{d_1\cdots d_m\cdot f_1\cdots f_\ell}]$. Introduisons des \idtrs $X_i$ ($i\in\lrbn$). On~a dans l'anneau  $\gB[\uX]$ une \egt que l'on peut résumer sous la forme suivante 
\begin{equation} \label {feqthVstformelVabis5}
1+\som_{j=1}^my_jP_j(\uX,\uy)\equiv 0 \mod  \gen{(X_ia_i-b_i)_{i\in\lrbn}, (e_i)_{i\in\lrbk}}
\end{equation}
 où  les $P_j(\uX,\uY)$ sont dans $\gk[\uX,\uY]$.
\end{enumerate}
  
\end{fvalsatz}
\begin{proof} L'équivalence des points \textsl{1}, \textsl{3} et \textsl{4}
correspond à l'\eqvc des points~\textsl{1},~\textsl{3} et~\textsl{5} dans
le \vst \ref{fthVstformelval}. Notez que $\ZZ[\uX,\uY]$ (dans le point \textsl{5}) est maintenant remplacé par $\gk[\uX,\uY]$ (dans le point \textsl{4}) pour tenir compte de l'hypothèse que les \elts de $\gk$ sont déclarés entiers dans les axiomes.
\\
Il reste  à ajouter deux mots pour expliquer la variante,
qui aurait pu être donnée aussi pour le \vst~\ref{fthVstformelval}: l'\eqn \pref{feqthVstformelVabis5} est juste l'analogue de l'\eqn \pref{feqthVstformelVabisl5} pour l'anneau $\aqo\gB{e_1,\dots,e_\ell}$.
\end{proof}

Nous obtenons comme corolaire du \vst~\ref{fthVstformelvalbis} le résultat qui tient la promesse donnée après le lemme
\ref{flem-y-di-x}.

\begin{ftheorem} \label{fth-y-di-x}~\\
Soient $\gk\subseteq \gA$ deux anneaux, $x\in\gA$ et $y\in\gk$. 
\begin{enumerate}
\item La \sad  $\sa{val}(\gA,\gk)$ prouve $y\di x$ \ssi $x$ est entier sur l'\id $\gen{y}$ de $\gk$. 
\item Même chose pour $\sa{Cvd}^+\!(\gA,\gk)$.
\end{enumerate}

\end{ftheorem}
\begin{proof}
\textsl{1}. Cela se déduit de l'\eqvc des points \textsl{2} et \textsl{4} dans le \vst~\ref{fthVstformelvalbis} lorsque  $n=0$ et $m=1$.

\smallskip \noindent \textsl{2}. Utiliser le point \textsl{1} et l'\eqvc des points \textsl{2} et \textsl{3} dans le \vst~\ref{fthVstformelvalbis}.
\end{proof}

\Subsubsection{Le morphisme centre (2)}\label{fsubseccomparvalZar}

Nous réalisons maintenant la promesse faite dans la remarque \ref{frempropdefCentre}. En particulier, cela complète la \demo du \thref{fcor2thVstformelval4}.

\begin{ftheorem}[Le morphisme $\gamma:\ZarA\to \val(\gA,\gA)$ est injectif] \label{fthVstformelval4}~\\
Soient $\gA$ un anneau commutatif et des \elts $a_i$,  $c_j \in\gA$. \Propeq 
\begin{enumerate}
\item On~a 
$$
 (a_1,1),\dots,(a_n,1)  \,\,\vdash_{\val(\gA,\gA)}    (c_1,1),\dots,(c_m,1) 
$$

\item On~a dans l'anneau  $\gA$ une \egt de la forme 
$$
  \big(\prod\nolimits_{i=1}^n a_i\big)^r+\som_{j=1}^mc_jp_j = 0 
$$ 
\item On~a dans le treillis de Zariski de $\gA$
$$\rD(a_1),\dots,\rD(a_n)\,\,\vdash_{\Zar(\gA)}    \rD(c_1),\dots,\rD(c_m)$$
\end{enumerate}
En particulier  le morphisme centre (\dfn \ref{fpropdefCentre})  $\gamma:\ZarA\to \val(\gA,\gA)$ est injectif. 
\end{ftheorem}
%
\begin{proof}
L'\eqvc des points \textsl{1} et \textsl{2} est un cas particulier de celle donnée par le \vst~\ref{fthVstformelvalbis} (points \textsl{1} et \textsl{4} 
: calculer modulo $Xa-1$ dans $\AX$ revient à calculer dans $\gA[\frac 1 a]$).
L'\eqvc des points \textsl{2} et~\textsl{3} est le \nst formel~\ref{fthNstFormel}.
\end{proof}

\Subsubsection{Un autre \vst formel pour $\sa{val}(\gA,\gk)$}

Voici une autre légère variante du  \vst~\ref{fthVstformelval}, particulièrement utile au vu de la remarque \ref{fremthVstformelvalter}.

\begin{fvalsatz}[Un autre Valuativstellensatz formel pour $\sa{val}(\gA,\gk)$ et $\sa{Cvd}^+\!(\gA,\gk)$]\label{fthVstformelvalter}  
On considère deux anneaux commutatifs $\gk\subseteq \gA$  et des \elts  $b_i$ et \hbox{$d_j\in\gA$}. \Propeq 
\begin{enumerate}
\item On~a 
\begin{equation} \label {feqthVstformelvalter1}
 (1,b_1),\dots,(1,b_n)  \,\,\vdash_{\val(\gA,\gk)}   (1,d_1),\dots,(1,d_m) 
\end{equation}
%
\item On~a 
\begin{equation} \label {feqthVstformelvalter2}
 \Vr(b_1)\vet \dots\vet  \Vr(b_n) \Vdi{\sA{Cvd}(\gA,\gk)}    \Vr(d_1)\vou \dots\vou   \Vr(d_m)
\end{equation}
%
\item On~a 
\begin{equation} \label {feqthVstformelvalter2.5}
 \vr(b_1)\vet \dots\vet  \vr(b_n) \Vdi{\sA{val2}(\gA,\gk)}    \vr(d_1)\vou \dots\vou   \vr(d_m)
\end{equation}
%
\item En notant $\gB=\gA[\frac{1}{d_1\cdots d_m}]$
on a dans l'anneau  $\gB$ une \egt  
\begin{equation} \label {feqthVstformelvalter3}
1=\som_{j=1}^md_j^{-1}P_j(b_1,\dots,b_n,d_1^{-1},\dots,d_m^{-1})  
\end{equation}
 où  les $P_j(\Xn,\Ym)$ sont dans $\gk[\uX,\uY]$.
\item (Cas où  $\gA=\gK$ est un \cdi) On~a
\begin{equation} \label {feqthVstformelvalter5} 
 \Vr(b_1)\vet \dots\vet  \Vr(b_n) \Vdi{\sA{Val}(\gK,\gk)}    \Vr(d_1)\vou \dots\vou   \Vr(d_m)
\end{equation}
%
\item (Cas où  $\gA=\gK$ est un \cdi) On~a 
\begin{equation} \label {feqthVstformelvalter6} 
 \rV(b_1),\dots,\rV(b_n)  \,\,\vdash_{\Val(\gK,\gk)}   \rV(d_1),\dots,\rV(d_m) 
\end{equation}
\end{enumerate}
\end{fvalsatz}
%
\begin{proof}
Les premiers points sont un cas particulier du \vst~\ref{fthVstformelval}. Pour l'\eqvc avec les deux derniers (qui sont \eqvs par \dfn du treillis), cela résulte en fait de l'\iso des treillis $\val(\gK,\gk)$ et $\Val(\gK,\gk)$ (\thref{fthValval}). On~a alors $\gB=\gK$.
\end{proof}
%

\begin{fremark} \label{fremthVstformelvalter} 
Comme on le souhaitait, on retrouve bien ici le \vst formel pour le prédicat $\Vr$ donné dans l'article \cite{fCoq2009} 
dans le cas où $\gA$ est un \cdi et les~$b_i$ et~$d_j$ sont non nuls. 
Voir aussi \cite{fCP2001} et \cite{fLom2000}. \eoe    
\end{fremark}



\addcontentsline{toc}{section}{Références}

\small

\end{document}